\documentclass[a4paper]{amsart}
\usepackage{amscd}
\usepackage{amsmath}
\usepackage{amssymb}
\usepackage{amsthm}
\usepackage{bbm}
\usepackage{stmaryrd}
\usepackage[usenames,dvipsnames]{xcolor}
\usepackage[colorlinks,citecolor=blue,urlcolor=blue]{hyperref}

\usepackage{lipsum}

\usepackage[T1]{fontenc}

\newif\ifpdf
\ifx\pdfoutput\undefined
   \pdffalse        
\else
   \pdfoutput=1     
   \pdftrue
\fi

\ifpdf
   \usepackage[pdftex]{graphicx}
\else
   \usepackage{graphicx}
\fi

\numberwithin{equation}{section} \swapnumbers

\newtheorem{satz}{Satz}[section]

\newtheorem{theorem}[satz]{Theorem}
\newtheorem{proposition}[satz]{Proposition}
\newtheorem{corollary}[satz]{Corollary}
\newtheorem{lemma}[satz]{Lemma}
\newtheorem{assumption}[satz]{Assumption}

\newtheorem{definition}[satz]{Definition}

\newtheorem{remark}[satz]{Remark}

\newtheorem{example}[satz]{Example}

\newcommand{\bbr}{\mathbb{R}}
\newcommand{\bbb}{\mathbb{B}}
\newcommand{\bbe}{\mathbb{E}}
\newcommand{\bbn}{\mathbb{N}}
\newcommand{\bbp}{\mathbb{P}}

\newcommand{\cald}{\mathcal{D}}

\newcommand{\calf}{\mathcal{F}}
\newcommand{\calg}{\mathcal{G}}

\newcommand{\call}{\mathcal{L}}
\newcommand{\calm}{\mathcal{M}}
\newcommand{\caln}{\mathcal{N}}
\newcommand{\calp}{\mathcal{P}}

\newcommand{\loc}{{\rm loc}}
\newcommand{\supp}{{\rm supp}}
\newcommand{\Id}{{\rm Id}}
\newcommand{\ran}{{\rm ran}}
\newcommand{\lin}{{\rm lin}}
\newcommand{\tr}{{\rm Tr}}
\newcommand{\pr}{{\rm prox}}

\newcommand{\bbI}{\mathbbm{1}}

\newcommand{\la}{\langle}
\newcommand{\ra}{\rangle}

\DeclareMathOperator{\Tr}{Tr}

\usepackage[textsize=small]{todonotes}
\usepackage{enumerate}

\begin{document}

\title[Stochastic invariance in infinite dimension]{Stochastic invariance in infinite dimension beyond Lipschitz coefficients}

\author{Eduardo Abi Jaber \and Stefan Tappe}
\address{\'{E}cole Polytechnique, Mathematical Finance group, CMAP department, Paris, France}
\email{eduardo.abi-jaber@polytechnique.edu}
\thanks{Eduardo Abi Jaber is  grateful for the financial support from the Chaires FiME-FDD and  Financial Risks  at Ecole Polytechnique.}
\address{Albert Ludwig University of Freiburg, Department of Mathematical Stochastics, Ernst-Zermelo-Stra\ss{}e 1, D-79104 Freiburg, Germany}
\email{stefan.tappe@math.uni-freiburg.de}
\date{August 24, 2026}
\thanks{Stefan Tappe gratefully acknowledges financial support from the Deutsche Forschungsgemeinschaft (DFG, German Research Foundation) -- project number 444121509.}
\begin{abstract}
We establish necessary and sufficient conditions for stochastic invariance of closed subsets in Hilbert spaces for solutions to infinite-dimensional stochastic differential equations (SDEs) under mild assumptions on the coefficients. Our first characterization is formulated in terms of certain normal vectors to the invariance set and requires differentiability only of the dispersion operator, but not of the diffusion coefficient itself. The condition involves a suitable corrected drift expressed through the dispersion operator and its Moore-Penrose pseudoinverse, extending the classical Stratonovich correction term to the present low-regularity setting. Our second characterization is given in terms of the positive maximum principle for the infinitesimal generator of the associated diffusion process.  We illustrate our characterizations in the case of  invariant manifolds.
\end{abstract}

\keywords{Infinite dimensional stochastic differential equation, closed subset, stochastic invariance, infinitesimal generator, positive maximum principle, proximal normal, tangent cone}
\subjclass[2020]{60H10, 60G17, 60J25, 93C15, 46C05, 47B02, 47B10, 46G05}

\maketitle\thispagestyle{empty}

\tableofcontents

\section{Introduction}

Consider an infinite dimensional stochastic differential equation (SDE) of the form
\begin{align}\label{SDE}
dX_t = b(X_t)dt + \sigma(X_t) dW_t, \quad X_0 = x,
\end{align}
driven by a trace class Wiener process $W$. The state space of the SDE \eqref{SDE} is a separable Hilbert space $H$, and the coefficients $b$ and $\sigma$ are continuous mappings of linear growth. We refer to Section \ref{sec-main-results} for more details concerning the precise mathematical framework.

Let $\cald \subset H$ be a closed subset of the state space. We say that the subset $\cald$ is \emph{stochastically invariant} with respect to the diffusion \eqref{SDE} if for each $x \in \cald$ there exists a weak solution $X$ to \eqref{SDE} which stays in $\cald$.

In certain situations, the stochastic invariance of a given subset $\cald \subset H$ has already been studied in the literature. In the finite dimensional case $\dim H < \infty$ we refer, for example, to  \cite{Milian-manifold, Bardi-1, Bardi-2, Aubin-Doss, DP-Frankowska, DP-Frankowska-convex, Eduardo-2, Eduardo-1}, and in the infinite dimensional situation $\dim H = \infty$ we refer, for example, to \cite{Filipovic-inv, Milian, Nakayama, Cannarsa, FTT-manifolds, Tappe-cones-2017, Tappe-cones-2024, Bhaskaran-Tappe}.

In the aforementioned references it is typically assumed that the subset $\cald$ has certain structural properties (for example that $\cald$ is a convex subset or a submanifold) or that the diffusion coefficient $\sigma$ is smooth. The latter assumption on the diffusion coefficient, which is actually assumed in many papers on stochastic invariance, is related with the so-called Stratonovich correction term
\begin{align}\label{eq:introstrato}
 x \mapsto \frac{1}{2} \sum_{j=1}^{\infty} D \sigma^j(x) \sigma^j(x),
\end{align}
which typically appears when formulating conditions on the drift coefficient $b$.

So far, the papers that do not impose smoothness on $\sigma$ in the infinite dimensional situation, make additional assumptions on the subset $\cald$. For example, in \cite{Cannarsa} a condition on the regularity of the distance function $d_{\cald}$ is imposed, in \cite{Tappe-cones-2017, Tappe-cones-2024} the subset $\cald$ is a closed convex cone, and in \cite{Bhaskaran-Tappe} the subset $\cald$ is a finite dimensional submanifold.

The main goal of this paper is to characterize stochastic invariance of a closed subset $\mathcal{D}$ of a Hilbert space exclusively in terms of suitable normal vectors to $\mathcal{D}$, without imposing smoothness assumptions on the diffusion coefficient $\sigma$. More precisely, rather than requiring differentiability of $\sigma$, we assume differentiability only of the dispersion operator  
\[
C := \sigma Q^{1/2}(\sigma Q^{1/2})^*,
\]
where $Q$ denotes the covariance operator of the driving Wiener process $W$.

This approach is inspired by \cite{Eduardo-1}, who addressed the problem in finite dimensions by introducing a modified drift correction involving the Moore-Penrose pseudoinverse of  $C$, thereby extending the classical Stratonovich correction term \eqref{eq:introstrato} to a low-regularity framework. However, the extension to infinite dimensions is highly non-trivial. In particular, even the definition of the corresponding correction term becomes delicate due to  the geometry of its range. We outline these difficulties while presenting our two main characterizations.

As already mentioned, we assume that the coefficients $b$ and $\sigma$ in \eqref{SDE} are continuous mappings of linear growth. Moreover, we assume that the closed subset $\cald \subset H$ satisfies the Heine-Borel property, but apart from that it can be arbitrary. We emphasize that in infinite dimension the continuity and linear growth conditions of the coefficients are generally not sufficient (see, e.g. \cite{GMR} for additional conditions) in order to ensure the existence of weak solutions to the SDE \eqref{SDE}, which is related to the lack of compact subsets of the state space $H$. However, if $\cald$ satisfies the Heine-Borel property, then the invariance conditions presented below ensure the existence of weak $\cald$-valued solutions.

For our first main result (Theorem \ref{thm-main}) we will additionally assume that $C$ is smooth (as already mentioned above) and that the operators $C(x)$, or equivalently $\sigma(x)$, for $x \in \cald$ have closed range. Then, denoting by $\caln_{\cald}^{1,\pr}(x)$ the set of all proximal normals to $\cald$ at $x$ (see \eqref{eq:proximalnormal}) and by $\call$ the infinitesimal generator of the diffusion \eqref{SDE}, the following statements are equivalent:
\begin{enumerate}
\item[(i)] $\cald$ is stochastically invariant with respect to the diffusion \eqref{SDE}.

\item[(ii)] For all $x \in \cald$ and all $u \in \caln_{\cald}^{1,\pr}(x)$ we have
\begin{align}\label{cond-C-zero-intro}
&C(x)u = 0,
\\ \label{cond-drift-N-intro} &\la u, b(x) \ra - \frac{1}{2} \sum_{j=1}^{\infty} \la u, D C^j(x) (C C^+)^j(x) \ra \leq 0.
\end{align}

\item[(iii)] The generator $\call$ satisfies the positive maximum principle.
\end{enumerate}
We remark that smoothness of $C$ is required as the derivative of $C$ appears in \eqref{cond-drift-N-intro}, and the closed range assumption is needed in order to define the Moore-Penrose pseudoinverse $C^+$, which also shows up in \eqref{cond-drift-N-intro}.

For our second main result (Theorem \ref{thm-main-sigma}) we can skip the closed range assumption and only assume that $\sigma$ is smooth. Then the following statements are equivalent:
\begin{enumerate}
\item[(i)] $\cald$ is stochastically invariant with respect to the diffusion \eqref{SDE}.

\item[(ii)] For all $x \in \cald$ and all $u \in \caln_{\cald}^{1,\pr}(x)$, we have
\begin{align}\label{cond-sigma-zero-intro}
&\sigma(x)^* u = 0,
\\ \label{cond-drift-sigma-N-intro}
&\langle u, b(x) \rangle - \frac{1}{2} \sum_{j=1}^{\infty} \langle u, D \sigma^j(x) \sigma^j(x) \rangle \leq 0.
\end{align}
\item[(iii)] The generator $\call$ satisfies the positive maximum principle.
\end{enumerate}
We will also provide several equivalent characterizations of the invariance conditions \eqref{cond-C-zero-intro}--\eqref{cond-drift-sigma-N-intro}.

The remainder of this paper is organized as follows. In Section \ref{sec-main-results} we provide the general mathematical framework and present our main results. In Section \ref{sec-manifolds} we apply our findings to the particular situation where the subset is a finite dimensional submanifold. In Section \ref{S:necessity} we provide the necessity proof, in Section \ref{sec-maximum-principle} we prove that the invariance conditions imply that the positive maximum principle is fulfilled, and in Section \ref{sec-sufficiency} we provide the sufficiency proof. The proofs of further key results is deferred to Sections \ref{S:proofofequalitydriftseries} and \ref{S:proofdoublestochasticinequality}. Moreover, for convenience of the reader, we provide the required background about several topics related to this paper in Appendices \ref{app-geometry}--\ref{app-operators-Hilbert}. This includes geometry in Hilbert spaces, linear operators in Hilbert spaces, and the required results about martingales and smooth functions in Banach spaces.

\section{Presentation of the main results}\label{sec-main-results}

In this section we provide the general mathematical framework and present our main results.

\subsection{The general framework}

In this section we provide the mathematical framework regarding the diffusion \eqref{SDE}. Let $H$ be a separable Hilbert space, and let $Q \in L_1^{++}(H)$ be a nuclear, self-adjoint, positive definite linear operator. There exist an orthonormal basis $\{ e_j \}_{j \in \bbn}$ of $H$ and a sequence $(\lambda_j)_{j \in \bbn} \subset (0,\infty)$ with $\sum_{j \in \bbn} \lambda_j < \infty$ such that
\begin{align}\label{diagonal}
Q e_j = \lambda_j e_j \quad \text{for all $j \in \bbn$.}
\end{align}
The space $H_0 := Q^{1/2}(H)$, equipped with the inner product
\begin{align}\label{new-inner-product}
\la h,g \ra_{H_0} := \la Q^{-1/2} h, Q^{-1/2} g \ra
\end{align}
is another separable Hilbert space, and the system $\{ f_j \}_{j \in \bbn}$ given by $f_j := \sqrt{\lambda_j} e_j$ for each $j \in \bbn$ is an orthonormal basis of $H_0$. Let $L_2^0(H) := L_2(H_0,H)$ be the space of all Hilbert-Schmidt operators from $H_0$ into $H$, which endowed with the Hilbert-Schmidt norm $\| \cdot \|_{L_2^0(H)}$ is another separable Hilbert space.

\begin{lemma}\label{lemma-HS-norm}
The following statements are true:
\begin{enumerate}
\item $Q^{1/2} : (H,\| \cdot \|) \to (H_0,\| \cdot \|_{H_0})$ is an isometric isomorphism.

\item The linear mapping $\Phi_Q : L_2^0(H) \to L_2(H)$ given by $\Phi_Q(T) = T Q^{1/2}$ is an isometric isomorphism.
\end{enumerate}
\end{lemma}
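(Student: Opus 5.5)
For part (1) we work directly with the spectral decomposition \eqref{diagonal}. Since $Q$ is positive definite, $\lambda_j > 0$ for all $j$, so $Q^{1/2}$ is injective; here $Q^{1/2} e_j = \sqrt{\lambda_j} e_j = f_j$. By definition $H_0 = Q^{1/2}(H)$, so $Q^{1/2} : H \to H_0$ is a linear bijection. Its inverse $Q^{-1/2} : H_0 \to H$ is the one appearing in \eqref{new-inner-product}. For $h \in H$ we then have
\begin{align*}
\| Q^{1/2} h \|_{H_0}^2 = \la Q^{-1/2} Q^{1/2} h, Q^{-1/2} Q^{1/2} h \ra = \| h \|^2 .
\end{align*}
Hence $Q^{1/2}$ is an isometric linear bijection, that is, an isometric isomorphism. (This also confirms that $(H_0, \la \cdot,\cdot \ra_{H_0})$ is a Hilbert space, being isometric to $H$, and that $\{ f_j \}$ is an orthonormal basis, being the image of $\{ e_j \}$.)

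For part (2) we compute Hilbert--Schmidt norms using the bases $\{ e_j \}$ of $H$ and $\{ f_j \}$ of $H_0$. Let $T \in L_2^0(H)$. By part (1), $Q^{1/2} \in L(H,H_0)$, so $T Q^{1/2} \in L(H)$, and
\begin{align*}
\| T Q^{1/2} \|_{L_2(H)}^2 = \sum_{j \in \bbn} \| T Q^{1/2} e_j \|^2 = \sum_{j \in \bbn} \| T f_j \|^2 = \| T \|_{L_2^0(H)}^2 .
\end{align*}
Therefore $\Phi_Q$ is well defined, linear and isometric, and in particular injective.

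It remains to show surjectivity, which is the only point needing a little care. Given $S \in L_2(H)$, set $T := S Q^{-1/2}$, where $Q^{-1/2} : H_0 \to H$ is the bounded inverse from part (1). Then $T \in L(H_0,H)$ and $T f_j = S e_j$, so $\sum_j \| T f_j \|^2 = \| S \|_{L_2(H)}^2 < \infty$. Thus $T \in L_2^0(H)$ and $\Phi_Q(T) = S$.

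The main subtlety is not to confuse $Q^{-1/2}$ as an unbounded operator on $H$ with its bounded version from $H_0$ to $H$. Part (1) supplies exactly the latter, so no real obstacle arises.
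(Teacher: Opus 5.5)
Your proof is correct and follows the paper's argument: part (1) comes directly from the definition \eqref{new-inner-product} of the inner product on $H_0$, and part (2) from the computation $\| T \|_{L_2^0(H)}^2 = \sum_j \| T f_j \|^2 = \sum_j \| T Q^{1/2} e_j \|^2 = \| T Q^{1/2} \|_{L_2(H)}^2$. The paper leaves surjectivity of $\Phi_Q$ implicit; your explicit step $T := S Q^{-1/2}$, with $Q^{-1/2}$ the bounded inverse $H_0 \to H$ from part (1), supplies it correctly.
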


\begin{proof}
The first statement follows from the definition \eqref{new-inner-product} of the inner product on $H_0$. For the proof of the second statement, recall that $\{ f_j \}_{j \in \bbn}$ is an orthonormal basis of $H_0$. This gives us
\begin{align*}
\| T \|_{L_2^0(H)}^2 = \sum_{j=1}^{\infty} \| T f_j \|^2 = \sum_{j=1}^{\infty} \| T Q^{1/2} e_j \|^2 = \| T Q^{1/2} \|_{L_2(H)}^2,
\end{align*}
completing the proof.
\end{proof}

Let $b : H \to H$ and $\sigma : H \to L_2^0(H)$ be measurable mappings. For the definition of a $Q$-Wiener process $W$, which appears in the upcoming definition, see, for example \cite[Def. 4.2]{Da_Prato}.

\begin{definition}\label{def-martingale-solution}
Let $x \in H$ be arbitrary. A triplet $(\bbb,W,X)$ is called a \emph{martingale solution} to the SDE \eqref{SDE} with $X_0 = x$ if the following conditions are fulfilled:
\begin{enumerate}
\item $\bbb = (\Omega,\calf,(\calf_t)_{t \in \bbr_+},\bbp)$ is a stochastic basis; that is, a filtered probability space satisfying the usual conditions.

\item $W$ is an $H$-valued $Q$-Wiener process on the stochastic basis $\bbb$.

\item $X$ is an $H$-valued adapted, continuous process such that we have $\bbp$-almost surely
\begin{align*}
\int_0^{t} \big( \| b(X_s) \|_H + \| \sigma(X_s) \|_{L_2^0(H)}^2 \big) ds < \infty, \quad t \in \bbr_+
\end{align*}
and $\bbp$-almost surely
\begin{align*}
X_{t} = x + \int_0^{t} b(X_s) ds + \int_0^{t} \sigma(X_s) dW_s, \quad t \in \bbr_+.
\end{align*}
\end{enumerate}
\end{definition}

\begin{remark}
If there is no ambiguity, we will simply call $X$ a \emph{martingale solution} to the SDE \eqref{SDE} with $X_0 = x$.
\end{remark}

\subsection{The infinitesimal generator}

In this section we introduce the infinitesimal generator of the diffusion \eqref{SDE} and provide some of its properties. Suppose that $b$ and $\sigma$ are continuous. We define $\Sigma : H \to L_2(H)$ as $\Sigma(x) := \sigma(x) Q^{1/2}$ for each $x \in H$. Note that $\Sigma(x) \Sigma(x)^* \in L_1^+(H)$ for each $x \in H$. In what follows, we denote by $C(H)$ the space of all continuous functions $\phi : H \to \bbr$, and for $k \in \bbn$ we denote by $C^k(H)$ the space of functions $\phi : H \to \bbr$ of class $C^k$. Using the conventions from Remark \ref{remark-derivatives}, in the upcoming definition we regard $D \phi(x)$ as an element from $H$, and $D^2 \phi(x)$ as a self-adjoint operator from $L(H)$.

\begin{definition}
The \emph{infinitesimal generator} $\call : C^2(H) \to C(H)$ is defined as
\begin{align}\label{generator-def}
\call \phi(x) := \la D \phi(x), b(x) \ra + \frac{1}{2} \tr \big( D^2 \phi(x) \Sigma(x) \Sigma(x)^* \big), \quad x \in H.
\end{align}
\end{definition}

\begin{remark}\label{rem-law-of-diffusion}
Let $\bar{\sigma} : H \to L_2^0(H)$ be another continuous mapping such that
\begin{align}\label{Sigma-Sigma-star-coincide}
\Sigma(x) \Sigma(x)^* = \bar{\Sigma}(x) \bar{\Sigma}(x)^* \quad \text{for all $x \in H$,}
\end{align}
where $\bar{\Sigma} : H \to L_2(H)$ is analogously defined as $\bar{\Sigma}(x) := \bar{\sigma}(x) Q^{1/2}$ for all $x \in H$. Then the laws of the diffusions \eqref{SDE} and
\begin{align}\label{SDE-bar}
dX_t = b(X_t)dt + \bar{\sigma}(X_t) dW_t, \quad X_0 = x
\end{align}
coincide. Indeed, since the finite dimensional distributions of the solutions to the diffusions \eqref{SDE} and \eqref{SDE-bar} are determined by the corresponding transition semigroups (see \cite[Prop. 4.1.6]{EK}), they are in turn determined by the respective generators, and these coincide due to \eqref{Sigma-Sigma-star-coincide}.
\end{remark}

For each $j \in \bbn$ we define $\sigma^j : H \to H$ as $\sigma^j(x) := \sigma(x) f_j$ for all $x \in H$. Note that $\sigma^j(x) = \Sigma(x) e_j$ for each $x \in H$, and that
\begin{align}\label{norm-of-sigma}
\| \sigma(x) \|_{L_2^0(H)}^2 = \sum_{j=1}^{\infty} \| \sigma^j(x) \|^2, \quad x \in H.
\end{align}

\begin{lemma}\label{lemma-trace-sigma-part-2}
For all $x \in H$ and every linear operator $v \in L(H)$ we have
\begin{align*}
\tr \big( v \Sigma(x) \Sigma(x)^* \big) = \sum_{j=1}^{\infty} \la v \sigma^j(x), \sigma^j(x) \ra.
\end{align*}
\end{lemma}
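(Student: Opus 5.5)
The plan is to compute the trace in the orthonormal basis $\{e_j\}_{j \in \bbn}$ of $H$ from \eqref{diagonal}. First note that the trace is well defined. We have $\Sigma(x) \in L_2(H)$, so $\Sigma(x)^* \in L_2(H)$. The operator $v \Sigma(x)$ is also Hilbert--Schmidt, because $v$ is bounded. Hence $v \Sigma(x) \Sigma(x)^*$ is a product of two Hilbert--Schmidt operators, so it is trace class. For $A, B \in L_2(H)$ one has the cyclicity identity $\tr(AB) = \tr(BA)$, and we apply it with $A = v \Sigma(x)$ and $B = \Sigma(x)^*$. This gives
\begin{align*}
\tr \big( v \Sigma(x) \Sigma(x)^* \big) = \tr \big( \Sigma(x)^* v \Sigma(x) \big).
\end{align*}

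Next, we evaluate the right-hand side in the basis $\{e_j\}_{j \in \bbn}$. The trace of a trace class operator does not depend on the choice of orthonormal basis, so
\begin{align*}
\tr \big( \Sigma(x)^* v \Sigma(x) \big) = \sum_{j=1}^{\infty} \la \Sigma(x)^* v \Sigma(x) e_j, e_j \ra = \sum_{j=1}^{\infty} \la v \Sigma(x) e_j, \Sigma(x) e_j \ra.
\end{align*}
By the identity $\sigma^j(x) = \Sigma(x) e_j$ noted before the lemma, the last sum equals $\sum_{j=1}^{\infty} \la v \sigma^j(x), \sigma^j(x) \ra$, which is the claim.

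For absolute convergence, each term satisfies $|\la v \sigma^j(x), \sigma^j(x) \ra| \le \|v\| \, \|\sigma^j(x)\|^2$. Summing and using \eqref{norm-of-sigma} bounds the series by $\|v\| \, \|\sigma(x)\|_{L_2^0(H)}^2 < \infty$.

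The only step needing care is the cyclicity $\tr(AB) = \tr(BA)$ for Hilbert--Schmidt $A, B$, which is standard. If one prefers to avoid it, there is a direct route: expand $\tr(v \Sigma \Sigma^*)$ in an arbitrary orthonormal basis $\{g_k\}_{k \in \bbn}$, insert Parseval's identity for $\Sigma(x)^* g_k$ with respect to $\{e_j\}_{j \in \bbn}$, and interchange the two sums. The interchange is justified by absolute summability, which follows from the Cauchy--Schwarz inequality together with $\sum_{j,k} |\la \Sigma^* g_k, e_j\ra|^2 = \|\Sigma\|_{L_2(H)}^2$ and $\sum_{j,k} |\la \Sigma^* v^* g_k, e_j \ra|^2 \le \|v\|^2 \|\Sigma\|_{L_2(H)}^2$.
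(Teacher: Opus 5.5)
Your proof is correct and follows the paper's argument: the paper also uses cyclicity of the trace, via its Lemma~\ref{lemma-trace-commute-LR}, to get $\tr(v\Sigma(x)\Sigma(x)^*) = \tr(\Sigma(x)^* v \Sigma(x))$, and then evaluates in the basis $\{e_j\}_{j \in \bbn}$ using $\sigma^j(x) = \Sigma(x)e_j$. Your absolute-convergence bound and the alternative Parseval route are both correct but are extras the argument does not need.
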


\begin{proof}
By Lemma \ref{lemma-trace-commute-LR} we have
\begin{align*}
\tr \big( v \Sigma(x) \Sigma(x)^* \big) &= \tr \big( \Sigma(x)^* v \Sigma(x) \big) = \sum_{j=1}^{\infty} \la \Sigma(x)^* v \Sigma(x) e_j, e_j \ra
\\ &= \sum_{j=1}^{\infty} \la v \Sigma(x) e_j, \Sigma(x) e_j \ra = \sum_{j=1}^{\infty} \la v \sigma^j(x), \sigma^j(x) \ra,
\end{align*}
completing the proof.
\end{proof}

\begin{lemma}\label{lemma-generator-series}
For each $\phi \in C^2(H)$ we have
\begin{align*}
\tr \big( D^2 \phi(x) \Sigma(x) \Sigma(x)^* \big) = \sum_{j=1}^{\infty} \la D^2 \phi(x) \sigma^j(x), \sigma^j(x) \ra, \quad x \in H,
\end{align*}
and hence
\begin{align*}
\call \phi(x) = \la D \phi(x), b(x) \ra + \frac{1}{2} \sum_{j=1}^{\infty} \la D^2 \phi(x) \sigma^j(x), \sigma^j(x) \ra, \quad x \in H,
\end{align*}
or equivalently
\begin{align*}
\call \phi(x) = D \phi(x) b(x) + \frac{1}{2} \sum_{j=1}^{\infty} D^2 \phi(x) ( \sigma^j(x), \sigma^j(x) ), \quad x \in H.
\end{align*}
\end{lemma}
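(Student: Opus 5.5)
The first identity follows by applying Lemma \ref{lemma-trace-sigma-part-2} with the particular choice $v := D^2 \phi(x)$. Fix $\phi \in C^2(H)$ and $x \in H$. By the conventions of Remark \ref{remark-derivatives}, the second derivative $D^2\phi(x)$ is identified with a bounded self-adjoint operator in $L(H)$, so it is an admissible choice of $v$. Lemma \ref{lemma-trace-sigma-part-2} then gives
\begin{align*}
\tr \big( D^2 \phi(x) \Sigma(x) \Sigma(x)^* \big) = \sum_{j=1}^{\infty} \la D^2 \phi(x) \sigma^j(x), \sigma^j(x) \ra .
\end{align*}
The series converges absolutely, with bound $\|D^2\phi(x)\| \, \|\sigma(x)\|_{L_2^0(H)}^2$ by \eqref{norm-of-sigma}. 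Hence no rearrangement issues arise.

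For the second identity, substitute this expression into the definition \eqref{generator-def} of $\call \phi(x)$. This directly yields the representation
\begin{align*}
\call \phi(x) = \la D \phi(x), b(x) \ra + \frac{1}{2} \sum_{j=1}^{\infty} \la D^2 \phi(x) \sigma^j(x), \sigma^j(x) \ra .
\end{align*}

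The third, equivalent form is again a matter of the identifications in Remark \ref{remark-derivatives}. The Fréchet derivative $D\phi(x) \in L(H,\bbr)$ corresponds via Riesz to the vector with $D\phi(x) h = \la D\phi(x), h\ra$. Likewise, the bilinear form $D^2\phi(x)$ corresponds to the operator with $D^2\phi(x)(h,g) = \la D^2\phi(x) h, g\ra$. Applying these with $h = b(x)$ for the first-order term, and with $h=g=\sigma^j(x)$ for each term of the series, gives the last formula.

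There is no real obstacle here. The only point to check is that these identifications are consistent with the conventions of Remark \ref{remark-derivatives}, together with the summability noted above.
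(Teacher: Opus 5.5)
Your proposal is correct and follows the paper's own proof. The paper likewise applies Lemma \ref{lemma-trace-sigma-part-2} with $v = D^2\phi(x)$ and uses the identifications of Remark \ref{remark-derivatives}. Your added remark on absolute convergence via \eqref{norm-of-sigma} is a harmless extra.
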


\begin{proof}
Taking into account the conventions from Remark \ref{remark-derivatives}, this is an immediate consequence of Lemma \ref{lemma-trace-sigma-part-2}.
\end{proof}

We can extend the notion of the infinitesimal generator as follows. Let $G$ be another separable Hilbert space.

\begin{definition}
We define $\call : C^2(H,G) \to C(H,G)$ as
\begin{align}\label{generazor-def-G}
\call \phi(x) := D \phi(x) b(x) + \frac{1}{2} \sum_{j=1}^{\infty} D^2 \phi(x) ( \sigma^j(x), \sigma^j(x) ), \quad x \in H.
\end{align}
\end{definition}

Due to Lemma \ref{lemma-generator-series}, the two definitions \eqref{generator-def} and \eqref{generazor-def-G} coincide in case $G = \bbr$.

\begin{lemma}\label{lemma-generator-growth}
For each $\phi \in C^2(H,G)$ we have
\begin{align*}
\| \call \phi(x) \| \leq \| D \phi(x) \| \, \| b(x) \| + \frac{1}{2} \| D^2 \phi(x) \| \, \| \sigma(x) \|_{L_2^0(H)}^2, \quad x \in H.
\end{align*}
\end{lemma}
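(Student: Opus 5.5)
The estimate follows directly from the defining formula \eqref{generazor-def-G} together with the triangle inequality and the operator norm bounds for the first and second derivatives. I will bound the drift part and the diffusion part of $\call \phi(x)$ separately.

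\textbf{Drift term.} Regard $D\phi(x) \in L(H,G)$. Then
\begin{align*}
\| D\phi(x) b(x) \| \leq \| D \phi(x) \| \, \| b(x) \|.
\end{align*}

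\textbf{Diffusion term.} Regard $D^2 \phi(x)$ as a continuous bilinear map $H \times H \to G$, so that $\| D^2\phi(x)(h,h) \| \leq \| D^2 \phi(x) \| \, \| h \|^2$ for every $h \in H$. Applying this with $h = \sigma^j(x)$ and summing over $j$ gives
\begin{align*}
\sum_{j=1}^{\infty} \| D^2 \phi(x)(\sigma^j(x),\sigma^j(x)) \| \leq \| D^2\phi(x) \| \sum_{j=1}^{\infty} \| \sigma^j(x) \|^2 = \| D^2 \phi(x) \| \, \| \sigma(x) \|_{L_2^0(H)}^2 ,
\end{align*}
where the last equality is \eqref{norm-of-sigma}.

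\textbf{Convergence of the series.} Since the right-hand side above is finite, the series in \eqref{generazor-def-G} converges absolutely in $G$. Its norm is therefore bounded by the sum of the norms of its terms.

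\textbf{Conclusion.} Combining the two bounds via the triangle inequality yields the stated estimate.

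The only point needing care is that the operator norm of $D^2\phi(x)$ is the one from the identification $L(H,L(H,G)) \cong L^{(2)}(H \times H, G)$, as in Remark \ref{remark-derivatives}. That identification is isometric, so no constant is lost, and I expect no genuine obstacle.
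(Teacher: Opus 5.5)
Your proof is correct and follows the paper's argument: you bound the drift term by the operator norm, bound each diffusion summand by $\| D^2\phi(x) \| \, \| \sigma^j(x) \|^2$, and sum using \eqref{norm-of-sigma}. You also note that the series defining $\call \phi(x)$ converges absolutely in $G$, which the paper leaves implicit.
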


\begin{proof}
Indeed, by \eqref{norm-of-sigma} we have
\begin{align*}
\| \call \phi(x) \| &\leq \| D \phi(x) \| \, \| b(x) \| + \frac{1}{2} \sum_{j=1}^{\infty} \| D^2 \phi(x) \| \, \| \sigma^j(x) \|^2
\\ &= \| D \phi(x) \| \, \| b(x) \| + \frac{1}{2} \| D^2 \phi(x) \| \, \| \sigma(x) \|_{L_2^0(H)}^2,
\end{align*}
completing the proof.
\end{proof}

\begin{lemma}\label{lemma-generator-functionals}
Let $\phi \in C^1(H,G)$ and $\ell \in L(G,\bbr)$ be arbitrary. Then for each $x \in H$ the following statements are true:
\begin{enumerate}
\item We have
\begin{align*}
D(\ell \circ \phi)(x)v = \ell \big( D \phi(x)v \big), \quad v \in H.
\end{align*}
\item If $\phi$ is even of class $C^2$, then we have
\begin{align*}
\call ( \ell \circ \phi )(x) = \ell \big( \call \phi(x) \big).
\end{align*}
\end{enumerate}
\end{lemma}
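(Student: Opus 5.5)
The first statement follows from the chain rule. Since $\ell \in L(G,\bbr)$ is a bounded linear map, it is Fréchet differentiable everywhere with $D\ell(y) = \ell$ for all $y \in G$, and $D^2 \ell = 0$. Applying the chain rule to $\ell \circ \phi$ at $x$ therefore gives
\begin{align*}
D(\ell \circ \phi)(x) = D\ell(\phi(x)) \circ D\phi(x) = \ell \circ D\phi(x),
\end{align*}
which is exactly $D(\ell\circ\phi)(x)v = \ell(D\phi(x)v)$ for $v \in H$. Alternatively, one can check directly from the definition of the Fréchet derivative:
\begin{align*}
|\ell(\phi(x+v)) - \ell(\phi(x)) - \ell(D\phi(x)v)| \leq \|\ell\| \, \|\phi(x+v)-\phi(x)-D\phi(x)v\| = o(\|v\|).
\end{align*}
Continuity of $x \mapsto \ell \circ D\phi(x)$ in $L(H,\bbr)$ follows from $\|\ell \circ A\| \leq \|\ell\|\,\|A\|$, so $\ell \circ \phi \in C^1(H)$.

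For the second statement, I would first differentiate the identity from (1) once more. The map $x \mapsto D(\ell\circ\phi)(x) = \Lambda(D\phi(x))$ is the composition of $D\phi : H \to L(H,G)$ with the bounded linear map $\Lambda : L(H,G) \to L(H,\bbr)$, $A \mapsto \ell \circ A$. By the same chain-rule argument this yields
\begin{align*}
D^2(\ell\circ\phi)(x)(v,w) = \ell\big(D^2\phi(x)(v,w)\big), \quad v,w \in H,
\end{align*}
together with continuity of the second derivative, so $\ell \circ \phi \in C^2(H)$.

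Now insert both identities into the definition \eqref{generazor-def-G} with $G=\bbr$, which agrees with \eqref{generator-def} by Lemma \ref{lemma-generator-series}. This gives
\begin{align*}
\call(\ell\circ\phi)(x) = \ell\big(D\phi(x)b(x)\big) + \frac12 \sum_{j=1}^\infty \ell\big(D^2\phi(x)(\sigma^j(x),\sigma^j(x))\big).
\end{align*}
The only point needing care is interchanging $\ell$ with the infinite series. The series $\sum_j D^2\phi(x)(\sigma^j(x),\sigma^j(x))$ converges absolutely in $G$, because its terms are bounded in norm by $\|D^2\phi(x)\|\,\|\sigma^j(x)\|^2$, and these are summable by \eqref{norm-of-sigma}, as in Lemma \ref{lemma-generator-growth}. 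Since $\ell$ is continuous and linear, it commutes with this convergent series, and by linearity we obtain $\call(\ell\circ\phi)(x) = \ell(\call\phi(x))$. No step here is a genuine obstacle; the series interchange is the only place where something beyond bookkeeping is used.
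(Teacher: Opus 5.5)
Your proposal is correct and matches the paper's proof: the first and second order chain rules with $D\ell = \ell$ give both derivative identities, which are then substituted into the series form of the generator. You also justify pulling $\ell$ through the infinite series via absolute convergence from \eqref{norm-of-sigma}, a step the paper leaves implicit.
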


\begin{proof}
Note that $\ell \circ \phi : H \to \bbr$ is also of class $C^1$. By the first order chain rule we have
\begin{align*}
D(\ell \circ \phi)(x)v = D \ell(\phi(x)) \circ D \phi(x) v = \ell \big( D \phi(x)v \big), \quad v \in H.
\end{align*}
If $\phi$ is even of class $C^2$, then $\ell \circ \phi$ is also of class $C^2$, and by the second order chain rule we obtain
\begin{align*}
D^2(\ell \circ \phi)(x)(v,w) = D \ell(\phi(x)) \circ D^2 \phi(x)(v,w) = \ell \big( D^2 \phi(x)(v,w) \big), \quad v,w \in H,
\end{align*}
proving the claimed identities.
\end{proof}

We will use the following version of It\^{o}'s formula. For $k \in \bbn$ we denote by $C_b^k(H,G)$ the space of all $\phi : H \to G$ of class $C^k$ such that $\phi,D \phi,D^2 \phi,\ldots,D^k \phi$ are bounded, and we denote by $C_{b,\loc}^2(H,G)$ the space of all $\phi : H \to G$ such that $\phi|_V$ is of class $C_b^k$ for every open and bounded subset $V \subset H$.

\begin{theorem}\label{thm-Ito}
Let $G$ be another separable Hilbert space, and let $\phi \in C_{b,\loc}^2(H,G)$ be arbitrary. Then we have
\begin{align*}
\phi(X_t) = \phi(x) + \int_0^t \call \phi(X_s) ds + \int_0^t D \phi(X_s) \sigma(X_s) dW_s, \quad t \geq 0.
\end{align*}
\end{theorem}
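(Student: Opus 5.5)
The plan is to reduce the statement to the classical Itô formula for functions of class $C^2$ with bounded and uniformly continuous derivatives on all of $H$ (for instance \cite[Thm. 4.32]{Da_Prato}), using a localization by stopping times. Here $X$ denotes a martingale solution with $X_0 = x$ in the sense of Definition \ref{def-martingale-solution}, and $b$, $\sigma$ are continuous.

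\textbf{Step 1: localization.} For $n \in \bbn$ we introduce the stopping times
\begin{align*}
\tau_n := \inf\{ t \geq 0 : \| X_t \| \geq n \}.
\end{align*}
Since $X$ is continuous, $\tau_n \uparrow \infty$ almost surely. Let $V_n$ be the open ball of radius $n+1$. By assumption, $\phi|_{V_n}$ is of class $C_b^2$.

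\textbf{Step 2: a globally nice replacement for $\phi$.} We choose a smooth cutoff $\chi_n \in C_b^\infty(H,\bbr)$ with $\chi_n = 1$ on the closed ball of radius $n$ and $\supp \chi_n \subset V_n$. A natural choice is $\chi_n(y) = \psi(\| y \|^2)$ with a smooth $\psi : \bbr \to [0,1]$; this works because $y \mapsto \| y \|^2$ is smooth on a Hilbert space. We then set $\phi_n := \chi_n \phi$, extended by zero outside $V_n$. This function belongs to $C_b^2(H,G)$.

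For the classical theorem we also need uniform continuity of $D\phi_n$ and $D^2 \phi_n$ on bounded sets. This is the main technical point. If the cited version only requires continuity and boundedness together with the stochastic integrability already granted in Definition \ref{def-martingale-solution}, no further work is needed. Otherwise, I would first try the version of Itô's formula for $C^2$ functions with merely continuous, locally bounded derivatives. That version is proved by a Taylor expansion along partitions, using that $X$ has compact paths on $[0,t]$, so continuity of $D^2\phi$ on compact sets suffices.

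\textbf{Step 3: apply Itô and pass to the limit.} We apply Itô's formula to $\phi_n$ and the stopped process $X_{t \wedge \tau_n}$. On $[0,\tau_n]$ the process stays in the closed ball of radius $n$, where $\phi_n = \phi$, $D\phi_n = D\phi$ and $D^2\phi_n = D^2\phi$. Hence the drift integrand equals $\call \phi(X_s)$, by the definition \eqref{generazor-def-G} together with Lemma \ref{lemma-trace-sigma-part-2} (applied componentwise via Lemma \ref{lemma-generator-functionals}). The martingale integrand equals $D\phi(X_s)\sigma(X_s)$.

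Integrability of $\int_0^{t \wedge \tau_n} \call\phi(X_s)\,ds$ follows from Lemma \ref{lemma-generator-growth} and the integrability condition in Definition \ref{def-martingale-solution}. Well-definedness of the stochastic integral follows from
\begin{align*}
\| D\phi(X_s)\sigma(X_s) \|_{L_2^0(G)} \leq \| D\phi(X_s) \| \, \| \sigma(X_s) \|_{L_2^0(H)}.
\end{align*}
Both bounds hold on bounded sets. This gives the identity at time $t \wedge \tau_n$. Letting $n \to \infty$, and using $\tau_n \to \infty$ together with the continuity of both sides, yields the claimed formula for every $t \geq 0$ almost surely.
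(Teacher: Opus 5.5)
Your proof is correct in substance, but it is organized differently from the paper's. The paper does no localization. It applies the real-valued It\^{o} formula of \cite[Thm.~2.9]{Atma-book} to each scalar function $\ell\circ\phi$ with $\ell\in L(G,\bbr)$; that formula only asks that the first and second derivatives be continuous and bounded on bounded sets, i.e.\ exactly the $C_{b,\loc}^2$ hypothesis. The paper then rewrites the trace term as a series via Lemma \ref{lemma-generator-series} and pulls $\ell$ out of $D$ and $\call$ via Lemma \ref{lemma-generator-functionals}. The $G$-valued identity follows because $\ell$ commutes with the Bochner and stochastic integrals, and countably many such $\ell$ separate points of the separable space $G$.

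In your version, the stopping times and the cutoff $\phi_n=\chi_n\phi$ do not remove the obstacle you correctly identify. $D\phi_n$ is globally Lipschitz, but $D^2\phi_n$ need not be uniformly continuous on bounded sets, even for a $C_b^2$ function on infinite-dimensional $H$. An example is a locally finite sum of bumps $r_k^2\beta(\|x-e_k\|^2/r_k^2)$ with $r_k\to0$ and $\beta'(0)\neq0$. So the Da Prato--Zabczyk formula you cite first does not apply to $\phi_n$ in general.

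Your fallback is the Taylor expansion along partitions, where uniform continuity of $D^2\phi$ is only needed near the compact path $\{X_s : s\in[0,t]\}$. That is essentially what \cite[Thm.~2.9]{Atma-book} provides, and once you use it the localization becomes redundant. Your route is therefore more self-contained only if you actually carry out that partition argument; the paper's reduction takes two lines.

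Finally, the available formula is scalar, so use Lemma \ref{lemma-generator-functionals} as the paper does. It should reduce the entire $G$-valued statement to the scalar case, not just identify the drift.
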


\begin{proof}
The result is a consequence of It\^{o}'s formula for real-valued functions (see, e.g. \cite[Thm. 2.9]{Atma-book}) as well as Lemmas \ref{lemma-generator-series} and \ref{lemma-generator-functionals}.
\end{proof}

\begin{definition}\label{def-pos-max}
We say that the generator $\call$ satisfies the \emph{positive maximum principle} if $\call \phi(x) \leq 0$ for any $x \in \cald$ and any function $\phi : H \to \bbr$ of class $C^2$ such that $\displaystyle\max_{\mathcal{D}} \phi = \phi(x) \geq 0$.
\end{definition}

\subsection{The invariance property}

In this section we specify the invariance property of the diffusion and discuss further assumptions. In what follows, let us fix a closed subset $\cald \subset H$ of the state space.

\begin{definition}
The subset $\cald$ is said to be \emph{stochastically invariant} with respect to the diffusion \eqref{SDE} if, for all $x \in \cald$, there exists a weak solution $X$ to \eqref{SDE}, starting at $X_0 = x$ such that $X_t \in \cald$ for all $t \geq 0$, almost surely.
\end{definition}

As an immediate consequence of Remark \ref{rem-law-of-diffusion}, we have the following auxiliary result.

\begin{lemma}\label{lemma-invariance-law}
The subset $\cald$ is stochastically invariant with respect to the diffusion \eqref{SDE} if and only if it is stochastically invariant with respect to the diffusion \eqref{SDE-bar}, where $\bar{\sigma} : H \to L_2^0(H)$ is another continuous mapping such that
\begin{align}\label{Sigma-Sigma-star-coincide-on-D}
\Sigma(x) \Sigma(x)^* = \bar{\Sigma}(x) \bar{\Sigma}(x)^* \quad \text{for all $x \in \cald$,}
\end{align}
and where $\bar{\Sigma} : H \to L_2(H)$ is defined as $\bar{\Sigma}(x) := \bar{\sigma}(x) Q^{1/2}$ for all $x \in H$.
\end{lemma}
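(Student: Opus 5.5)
The plan is to reduce the statement to Remark \ref{rem-law-of-diffusion}. The obstacle is that the remark needs \eqref{Sigma-Sigma-star-coincide} on all of $H$, whereas we only have \eqref{Sigma-Sigma-star-coincide-on-D} on $\cald$. By symmetry it suffices to prove one direction. So assume $\cald$ is stochastically invariant for \eqref{SDE}, fix $x \in \cald$, and let $(\bbb,W,X)$ be a martingale solution with $X_0 = x$ and $X_t \in \cald$ for all $t \geq 0$ almost surely. We aim to produce a $\cald$-valued martingale solution to \eqref{SDE-bar} starting at $x$.

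I would work at the level of the martingale problem rather than compare laws globally. Since $X$ stays in $\cald$, the It\^{o} formula (Theorem \ref{thm-Ito}) gives that
\begin{align*}
\phi(X_t) - \phi(x) - \int_0^t \call \phi(X_s)\,ds
\end{align*}
is a local martingale for all $\phi \in C_{b,\loc}^2(H)$. Because $X_s \in \cald$, Lemma \ref{lemma-trace-sigma-part-2} and \eqref{Sigma-Sigma-star-coincide-on-D} show that $\call \phi(X_s) = \bar{\call} \phi(X_s)$, where $\bar{\call}$ is the generator built from $b$ and $\bar{\sigma}$. Indeed, the generator depends on the diffusion only through $\Sigma\Sigma^*$, evaluated at points of $\cald$. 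Hence the law of $X$ solves the martingale problem for $\bar{\call}$, with paths in $\cald$.

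It then remains to pass from a solution of the martingale problem to a weak (martingale) solution of \eqref{SDE-bar}. Applying the martingale property to linear and quadratic test functions $\phi(y) = \la h, y\ra$ and $\la h,y\ra\la g,y\ra$ (localized), we obtain the following. The process $M_t := X_t - x - \int_0^t b(X_s)\,ds$ is a continuous local martingale with operator quadratic variation $\int_0^t \bar{\Sigma}(X_s)\bar{\Sigma}(X_s)^*\,ds$, again using \eqref{Sigma-Sigma-star-coincide-on-D} along the path. A martingale representation theorem in Hilbert spaces (as in \cite{Da_Prato}, on a possibly enlarged stochastic basis) then yields a $Q$-Wiener process $\bar{W}$ with $M_t = \int_0^t \bar{\sigma}(X_s)\,d\bar{W}_s$. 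Therefore $(\bar{\bbb},\bar{W},X)$ is a $\cald$-valued martingale solution of \eqref{SDE-bar}.

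The main obstacle is this representation step when $\bar{\Sigma}(X_s)$ is not injective or lacks closed range. I would handle it as in the standard proof that martingale problem solutions are weak solutions: augment the basis with an independent cylindrical Wiener process to fill the kernel directions. Alternatively, one can invoke Remark \ref{rem-law-of-diffusion} in the form that such solutions are characterized by the generator, which is already justified in the paper via \cite{EK}.
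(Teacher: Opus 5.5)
Your argument is correct, but it is not the paper's route. The paper derives the lemma in one line as an ``immediate consequence'' of Remark~\ref{rem-law-of-diffusion}, i.e.\ it identifies the laws of the two diffusions via their generators and transition semigroups. As you rightly point out, that remark is stated under \eqref{Sigma-Sigma-star-coincide} on all of $H$, and it tacitly uses that the law is determined by the generator. The lemma, however, only assumes \eqref{Sigma-Sigma-star-coincide-on-D} on $\cald$. You instead work pathwise with the given $\cald$-valued solution, so only the coincidence along the path is used and no uniqueness in law is needed. This is the same mechanism the paper itself uses at the end of the proof of Theorem~\ref{thm-suff}: identify the operator quadratic variation of $M$, then invoke a martingale representation theorem. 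Your route is longer but actually covers the hypothesis as stated; the paper's is a one-liner resting on a remark with a stronger hypothesis.

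You can streamline your proof in three places:
\begin{enumerate}
\item The martingale-problem detour is unnecessary. Since $X_s\in\cald$, the process $M_t=\int_0^t\sigma(X_s)\,dW_s$ directly has quadratic variation $\int_0^t\Sigma(X_s)\Sigma(X_s)^*\,ds=\int_0^t\bar\Sigma(X_s)\bar\Sigma(X_s)^*\,ds$. The integrability requirement of Definition~\ref{def-martingale-solution} also transfers, because $\|\bar\sigma(X_s)\|_{L_2^0(H)}^2=\tr(\bar\Sigma(X_s)\bar\Sigma(X_s)^*)=\|\sigma(X_s)\|_{L_2^0(H)}^2$.
\item Non-injectivity or non-closed range of $\bar\Sigma(X_s)$ is not an obstacle, since enlarging the basis by an independent Wiener process handles it. However, no growth condition is assumed in this lemma, so $M$ is only a continuous local martingale. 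Use the local-martingale version of the representation (the construction via L\'evy's characterization goes through verbatim) rather than the square-integrable one.
\item Drop the fallback via Remark~\ref{rem-law-of-diffusion}: its hypothesis is precisely what fails here.
\end{enumerate}
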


\begin{assumption}\label{ass-lin-growth}
The coefficients $b : H \to H$ and $\sigma : H \to L_2^0(H)$ are continuous and satisfy the linear growth condition
\begin{align}\label{linear-growth}
\| b(x) \| + \| \sigma(x) \|_{L_2^0(H)} \leq L (1 + \| x \|) \quad \text{for all $x \in H$}
\end{align}
with some constant $L > 0$.
\end{assumption}

\begin{remark}
By Lemma \ref{lemma-HS-norm} we can express the linear growth condition \eqref{linear-growth} as
\begin{align}\label{linear-growth-2}
\| b(x) \| + \| \Sigma(x) \|_{L_2(H)} \leq L (1 + \| x \|) \quad \text{for all $x \in H$.}
\end{align}
\end{remark}

Now, we define the \emph{dispersion coefficient} $C : \cald \to L_1^+(H)$ as
\begin{align*}
C(x) := \Sigma(x) \Sigma(x)^*, \quad x \in \cald.
\end{align*}

\begin{remark}\label{rem-kernels}
By Lemma \ref{lemma-kernels} we have
\begin{align*}
\ker(C(x)) = \ker(\Sigma(x)^*) = \ker(\sigma(x)^*) \quad \text{for all $x \in \cald$.}
\end{align*}
\end{remark}

\begin{assumption}\label{ass-extension}
The dispersion coefficient $C : \cald \to L_1^+(H)$ can be extended to a function $C : H \to L_1(H)$ of class $C^2$ such that $C(x)$ is self-adjoint for all $x \in H$, and
\begin{align}\label{C-lin-growth}
\| C(x) \|_{L_1(H)}^{1/2} \leq M ( 1 + \| x \| ), \quad  x \in H
\end{align}
for some constant $M > 0$. Note that for convenience of notation this extension is also denoted by $C$.
\end{assumption}

Note that for $x \notin \cald$ we generally have $C(x) \neq \Sigma(x) \Sigma(x)^*$, and that the linear operator $C(x) \in L_1(H)$ does not need to be nonnegative definite.

\begin{assumption}\label{ass-Sigma-self-adjoint}
$\Sigma(x)$ is self-adjoint and nonnegative definite for each $x \in \cald$.
\end{assumption}

Note that Assumption \ref{ass-Sigma-self-adjoint} implies $C(x) = \Sigma(x)^2$, and thus $\Sigma(x) = C(x)^{1/2}$ for all $x \in \cald$. This assumption does not mean a severe restriction. Indeed, define $\bar{\sigma} : H \to L_2^0(H)$ as
\begin{align}\label{Sigma-from-C-Q}
\bar{\sigma}(x) := \bar{\Sigma}(x) Q^{-1/2}, \quad x \in H,
\end{align}
where $\bar{\Sigma} : H \to L_2^+(H)$ is given by
\begin{align}\label{Sigma-from-C}
\bar{\Sigma}(x) &:= |C(x)|^{1/2}, \quad x \in H.
\end{align}
Then we have $\bar{\Sigma}(x) = \bar{\sigma}(x) Q^{1/2}$ for all $x \in H$, and $C(x) = \bar{\Sigma}(x)^2$ for all $x \in \cald$, showing \eqref{Sigma-Sigma-star-coincide-on-D}. Furthermore, Lemma \ref{lemma-construct-sigma} below assures that $\bar{\sigma}$ is continuous. Thus, by Lemma \ref{lemma-invariance-law} the subset $\cald$ is invariant with respect to \eqref{SDE} if and only if it is invariant with respect to \eqref{SDE-bar}.

Moreover, the linear operator $\bar{\Sigma}(x)$ is self-adjoint and nonnegative definite for each $x \in \cald$. Choosing the original extension $C : H \to L_1(H)$ and noting Lemma \ref{lemma-construct-sigma} below, we see that Assumptions \ref{ass-lin-growth}, \ref{ass-extension}, \ref{ass-Sigma-self-adjoint} also hold true for the diffusion \eqref{SDE-bar}.

\begin{lemma}\label{lemma-construct-sigma}
The mapping $\bar{\sigma} : H \to L_2^0(H)$ defined according to \eqref{Sigma-from-C-Q} is continuous, and there is a constant $N > 0$ such that
\begin{align}\label{Sigma-lin-growth}
\| b(x) \| + \| \bar{\sigma}(x) \|_{L_2^0(H)} \leq N ( 1 + \| x \| ) \quad \text{for all $x \in H$.}
\end{align}
\end{lemma}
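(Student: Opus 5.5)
The plan is to factor $\bar{\sigma}$ as a composition of continuous maps and to derive the growth bound from \eqref{C-lin-growth} through Hilbert--Schmidt norms.

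\textbf{Isometry reduction.} By Lemma \ref{lemma-HS-norm}, the map $\Phi_Q^{-1} : L_2(H) \to L_2^0(H)$, $T \mapsto T Q^{-1/2}$, is an isometric isomorphism. Here $T Q^{-1/2}$ is understood on $H_0$, where $Q^{-1/2}$ is the inverse of the isometry $Q^{1/2}: H \to H_0$. Since $\bar{\sigma} = \Phi_Q^{-1} \circ \bar{\Sigma}$ by \eqref{Sigma-from-C-Q}, it suffices to prove two things: that $\bar{\Sigma} : H \to L_2(H)$ is continuous, and that $\| \bar{\Sigma}(x) \|_{L_2(H)}$ grows at most linearly in $\|x\|$.

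\textbf{Growth bound.} For a self-adjoint trace class operator $A$ with eigenvalues $(\mu_k)$, the operator $|A|^{1/2}$ is Hilbert--Schmidt with
\begin{align*}
\big\| |A|^{1/2} \big\|_{L_2(H)}^2 = \sum_k |\mu_k| = \| A \|_{L_1(H)}.
\end{align*}
This follows from the spectral theorem for compact self-adjoint operators. Applying it to $A = C(x)$ and using \eqref{C-lin-growth} gives
\begin{align*}
\| \bar{\sigma}(x) \|_{L_2^0(H)} = \| C(x) \|_{L_1(H)}^{1/2} \leq M(1+\|x\|).
\end{align*}
Adding the bound $\|b(x)\| \leq L(1+\|x\|)$ from Assumption \ref{ass-lin-growth} yields \eqref{Sigma-lin-growth} with $N = L + M$.

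\textbf{Continuity.} The map $C : H \to L_1(H)$ is continuous, since it is of class $C^2$ by Assumption \ref{ass-extension}. It therefore suffices to show that $\Psi : L_1^{sa}(H) \to L_2(H)$, $A \mapsto |A|^{1/2}$, is continuous on the self-adjoint trace class operators. I would do this in two steps.

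First, the map $A \mapsto |A|$ is continuous from $L_1^{sa}(H)$ into $L_1(H)$. A route that avoids fine perturbation theory is to write $|A| = (A^2)^{1/2}$. I would then use the Powers--Størmer-type inequality
\begin{align*}
\| S^{1/2} - T^{1/2} \|_{L_2(H)}^2 \leq \| S - T \|_{L_1(H)}
\end{align*}
for nonnegative trace class operators $S, T$. Applied to $S = A^2$ and $T = B^2$, this gives
\begin{align*}
\| |A| - |B| \|_{L_2}^2 \leq \| A^2 - B^2 \|_{L_1} \leq \|A - B\|_{L_1}\big(\|A\| + \|B\|\big).
\end{align*}
This already yields continuity of $A \mapsto |A|$ in Hilbert--Schmidt norm. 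Trace norm convergence of $|A_n| \to |A|$ then follows from the fact that $\tr|A_n| = \|A_n\|_{L_1} \to \|A\|_{L_1} = \tr|A|$ together with weak convergence of positive operators. This is a standard Grümm-type argument.

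Second, applying the Powers--Størmer inequality again to $S = |A|$ and $T = |B|$ gives
\begin{align*}
\big\| |A|^{1/2} - |B|^{1/2} \big\|_{L_2(H)}^2 \leq \big\| |A| - |B| \big\|_{L_1(H)},
\end{align*}
which proves the continuity of $\Psi$.

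\textbf{Main obstacle.} I expect the main difficulty to be justifying the Powers--Størmer inequality in infinite dimensions, or citing it from the appendix on operators. I would prove it first for finite rank operators via the trace inequality $\tr\big((S^{1/2}-T^{1/2})^2\big) \leq \tr|S-T|$. I would then pass to general nonnegative trace class operators by approximating them with finite rank spectral truncations and using lower semicontinuity of the norms.
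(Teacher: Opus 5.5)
Your proposal is correct, and its top-level structure is the same as the paper's. The paper also writes $\bar{\sigma} = \Phi_Q^{-1} \circ \bar{\Sigma}$ and combines the identity $\| \bar{\Sigma}(x) \|_{L_2(H)} = \| C(x) \|_{L_1(H)}^{1/2}$ with \eqref{linear-growth} and \eqref{C-lin-growth}. It obtains continuity from Corollary \ref{cor-square-abs-continuous}, i.e.\ continuity of $T \mapsto |T|^{1/2}$ from $L_1(H)$ into $L_2(H)$.

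The difference is in how you get trace-norm continuity of $T \mapsto |T|$. The paper (Lemma \ref{lemma-abs-operator-continuous}) cites Kosaki's inequality $\| \, |T| - |S| \, \|_{L_1(H)} \leq ( 2 \| T+S \|_{L_1(H)} \| T-S \|_{L_1(H)} )^{1/2}$. That inequality holds for arbitrary, not necessarily self-adjoint, nuclear operators and gives an explicit modulus of continuity. You instead write $|A| = (A^2)^{1/2}$ and apply Powers--St{\o}rmer to $A^2, B^2$ to get Hilbert--Schmidt continuity. You then upgrade to trace-norm continuity using the fact that, for nonnegative trace class operators, weak convergence plus convergence of traces implies trace-norm convergence.

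Your route avoids the von Neumann-algebra result behind Kosaki's inequality. It needs only Powers--St{\o}rmer, which the paper uses anyway in Lemma \ref{lemma-square-root-continuous}, plus an elementary convergence lemma. The cost is twofold. First, it uses self-adjointness of $C(x)$ for every $x \in H$, not just on $\cald$. Assumption \ref{ass-extension} provides this, but you should say so explicitly when you restrict $\Psi$ to self-adjoint operators. Second, it gives only qualitative continuity in trace norm, which is all the lemma needs.

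On your anticipated obstacle: the paper simply cites the Powers--St{\o}rmer inequality. Your truncation argument also works. The two spectral truncations act on a common finite-dimensional subspace, and both sides of the inequality converge under truncation, so no semicontinuity argument is required.
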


\begin{proof}
By Lemma \ref{lemma-HS-norm} the linear mapping $\Phi_Q : L_2^0(H) \to L_2(H)$ given by $\Phi_Q(T) = T Q^{1/2}$ is an isometric isomorphism, and we have $\bar{\sigma} = \Phi_Q^{-1} \circ \bar{\Sigma}$. Moreover, by Corollary \ref{cor-square-abs-continuous} the mapping $\bar{\Sigma}$ is continuous, and we have
\begin{align*}
\| \bar{\Sigma}(x) \|_{L_2(H)} = \| C(x) \|_{L_1(H)}^{1/2} \quad \text{for all } x \in H.
\end{align*}
Together with \eqref{linear-growth} and \eqref{C-lin-growth}, this provides the desired result.
\end{proof}

For the next assumption, we prepare an auxiliary result.

\begin{lemma}\label{lemma-closed-range}
For each $x \in \cald$ the following statements are equivalent:
\begin{enumerate}
\item[(i)] $C(x)$ has closed range.

\item[(ii)] $\ran(C(x))$ is finite dimensional.

\item[(iii)] $\ran(\Sigma(x))$ is finite dimensional.

\item[(iv)] $\ran(\sigma(x))$ is finite dimensional.
\end{enumerate}
\end{lemma}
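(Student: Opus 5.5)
The plan is to prove the cycle of equivalences by combining two facts about the operators at a fixed $x \in \cald$. First, $C(x)$ is a compact operator: it is trace class, being in $L_1^+(H)$. Second, $C(x) = \Sigma(x)\Sigma(x)^*$ with $\Sigma(x) \in L_2(H)$ Hilbert--Schmidt, hence also compact. The key general fact I will use is that a compact operator between Banach spaces has closed range if and only if its range is finite dimensional.

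\textbf{(i)$\Leftrightarrow$(ii).} If $\ran(C(x))$ is closed, then $C(x)$ restricted to $\ker(C(x))^\perp$ is a bijective bounded operator onto the Banach space $\ran(C(x))$. By the open mapping theorem it has a bounded inverse, so the identity on $\ran(C(x))$ factors through a compact operator and is itself compact. Therefore the closed unit ball of $\ran(C(x))$ is compact, and $\ran(C(x))$ is finite dimensional by Riesz's theorem. The converse is trivial, since finite dimensional subspaces are closed.

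\textbf{(ii)$\Leftrightarrow$(iii).} I relate $\ran(C(x))$ to $\ran(\Sigma(x))$ through kernels and orthogonal complements. By Remark \ref{rem-kernels} (Lemma \ref{lemma-kernels}) we have $\ker(C(x)) = \ker(\Sigma(x)^*)$. Taking orthogonal complements and using that $C(x)$ is self-adjoint gives
\begin{align*}
\overline{\ran(C(x))} = \ker(C(x))^\perp = \ker(\Sigma(x)^*)^\perp = \overline{\ran(\Sigma(x))}.
\end{align*}
A subspace is finite dimensional if and only if its closure is. Hence $\ran(C(x))$ is finite dimensional if and only if $\ran(\Sigma(x))$ is.

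\textbf{(iii)$\Leftrightarrow$(iv).} Here I use $\Sigma(x) = \sigma(x)Q^{1/2}$ and Lemma \ref{lemma-HS-norm}, by which $Q^{1/2} : H \to H_0$ is a bijection. Consequently $\ran(\Sigma(x)) = \sigma(x)(Q^{1/2}H) = \sigma(x)(H_0) = \ran(\sigma(x))$, so the two ranges coincide.

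The only step requiring care is (i)$\Rightarrow$(ii), the compactness argument. It is standard, so I do not expect a genuine obstacle; the rest is bookkeeping with kernels and ranges.
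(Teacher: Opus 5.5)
Your proof is correct. Steps (i)$\Leftrightarrow$(ii) and (iii)$\Leftrightarrow$(iv) are the paper's, except that you prove the compact-operator fact (closed range iff finite-dimensional range) via the open mapping theorem and Riesz's lemma, where the paper simply cites it as Theorem~\ref{thm-closed-range-fd}.

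The genuine difference is in (ii)$\Leftrightarrow$(iii). The paper argues in two steps:
\begin{enumerate}
\item $C(x) = \Sigma(x)\Sigma(x)^*$ has closed range iff $\Sigma(x)$ does (Proposition~\ref{prop-closed-range-char}; the direction from $\Sigma(x)$ to $C(x)$ relies on the closed range theorem);
\item it then applies Theorem~\ref{thm-closed-range-fd} a second time, to the compact Hilbert--Schmidt operator $\Sigma(x)$.
\end{enumerate}
You instead show directly, from Lemma~\ref{lemma-kernels} and self-adjointness, that
\[
\overline{\ran(C(x))} = \ker(C(x))^{\perp} = \ker(\Sigma(x)^*)^{\perp} = \overline{\ran(\Sigma(x))}.
\]
This holds with no closed range hypothesis. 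It needs neither the closed range theorem nor compactness of $\Sigma(x)$, so your opening remark that $\Sigma(x)$ is compact is never actually used.

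Your route is more elementary. It also recovers Remark~\ref{rem-closed-range}: once either range is finite dimensional, both are closed with the same closure, so $\ran(C(x)) = \ran(\Sigma(x))$. The paper's route reuses the closed-range machinery of Proposition~\ref{prop-closed-range-char}, which it needs elsewhere anyway (Remark~\ref{rem-closed-range} and the Moore--Penrose lemmas). Note that Lemma~\ref{lemma-closed-range-square-root} inside that machinery contains essentially your closure computation.
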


\begin{proof}
(i) $\Leftrightarrow$ (ii): This is a consequence of Theorem \ref{thm-closed-range-fd}.

\noindent(ii) $\Leftrightarrow$ (iii): Noting that $C(x) = \Sigma(x) \Sigma(x)^*$, this is a consequence of Proposition \ref{prop-closed-range-char} and Theorem \ref{thm-closed-range-fd}.

\noindent(iii) $\Leftrightarrow$ (iv): We have $\ran(\Sigma(x)) = \sigma(x) Q^{1/2}(H) = \sigma(x)(H_0) = \ran(\sigma(x))$, proving the stated equivalence.
\end{proof}

The following assumption ensures that we can define the Moore-Penrose pseudoinverse $C(x)^+$ for each $x \in \cald$. We refer to Appendix \ref{app-operators-Hilbert} for more details about the Moore-Penrose pseudoinverse in Hilbert spaces.

\begin{assumption}\label{ass-closed-range}
We suppose that $C(x)$ has closed range for each $x \in \cald$, or equivalently, that $\ran(\sigma(x))$ is finite dimensional for each $x \in \cald$.
\end{assumption}

\begin{remark}\label{rem-closed-range}
By Proposition \ref{prop-closed-range-char} the closed range assumption implies that
\begin{align*}
\ran (C(x)) = \ran (\Sigma(x)) = \ran (\sigma(x)) \quad \text{for all $x \in \cald$.}´
\end{align*}
\end{remark}

\subsection{The Heine-Borel property}\label{sec-Heine-Borel}

In this section we introduce the Heine-Borel property and provide some related results. A metric space $(M,d)$ has the \emph{Heine-Borel property} if every closed and bounded subset $A \subset M$ is compact, or equivalently, if for each $x \in M$ and every $r > 0$ the closed ball $\{ y \in M : d(x,y) \leq r \}$ is compact.

\begin{lemma}\label{lemma-HB-equivalent}
Let $(M,d)$ be a metric space, and let $\rho$ be another metric on $M$. Suppose there are mappings $\alpha,\beta : M \to (0,\infty)$ such that
\begin{align}\label{metrics-equivalent}
\alpha(x) \cdot d(x,y) \leq \rho(x,y) \leq \beta(x) \cdot d(x,y), \quad  x,y \in M.
\end{align}
Then $(M,d)$ has the Heine-Borel property if and only if $(M,\rho)$ has the Heine-Borel property.
\end{lemma}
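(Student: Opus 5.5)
By the symmetry of the hypothesis \eqref{metrics-equivalent}, it is enough to prove one implication. Indeed, \eqref{metrics-equivalent} can be rewritten as
\begin{align*}
\beta(x)^{-1} \rho(x,y) \leq d(x,y) \leq \alpha(x)^{-1} \rho(x,y), \quad x,y \in M,
\end{align*}
which is the same condition with the roles of $d$ and $\rho$ exchanged and with the functions $1/\beta, 1/\alpha : M \to (0,\infty)$. So I will assume that $(M,d)$ has the Heine-Borel property and show that $(M,\rho)$ has it too.

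The first step is to check that $d$ and $\rho$ generate the same topology and even the same convergent sequences. Suppose $\rho(x_n,x) \to 0$. Applying the left inequality of \eqref{metrics-equivalent} at the fixed base point $x$ gives $d(x,x_n) \leq \alpha(x)^{-1} \rho(x,x_n) \to 0$. The converse follows in the same way from the right inequality. Because compactness in a metric space is the same as sequential compactness, and the limit of a sequence does not depend on which metric is used, a subset of $M$ is $d$-compact if and only if it is $\rho$-compact.

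Next, fix $x \in M$ and $r > 0$, and let $B_\rho := \{ y \in M : \rho(x,y) \leq r \}$. For $y \in B_\rho$ we have $d(x,y) \leq \alpha(x)^{-1} r$, so $B_\rho$ lies inside the closed $d$-ball about $x$ of radius $r/\alpha(x)$. That ball is $d$-compact by assumption. The set $B_\rho$ is $\rho$-closed, and by the first step it is therefore also $d$-closed. A closed subset of a compact set is compact, so $B_\rho$ is $d$-compact, and hence $\rho$-compact. This shows that every closed $\rho$-ball is compact, which is the equivalent form of the Heine-Borel property given above.

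The one point that needs attention is that $\alpha$ and $\beta$ are not bounded, so the metrics need not be uniformly equivalent and bounded sets for one metric need not be bounded for the other. This is why the argument only works with balls centred at the same point $x$ at which the constants $\alpha(x)$, $\beta(x)$ are evaluated, and why it uses the ball characterization of the Heine-Borel property rather than the definition in terms of arbitrary bounded sets. Once the centre is kept fixed, every step is elementary.
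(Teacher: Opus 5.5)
Your proposal is correct and follows essentially the same route as the paper. Both arguments note that $d$ and $\rho$ generate the same topology, place each closed $\rho$-ball inside the closed $d$-ball with the same centre and radius $r/\alpha(x)$, and conclude because closed subsets of compact sets are compact. The only difference is that you handle the converse direction by symmetry (swapping the roles of $d$ and $\rho$ via $1/\beta, 1/\alpha$), whereas the paper writes out the second inclusion explicitly.
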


\begin{proof}
By \eqref{metrics-equivalent} the two metrics $d$ and $\rho$ generate the same topology on $M$. In particular, for each $x \in M$ and every $r > 0$ we have
\begin{align*}
\{ y \in M : d(x,y) \leq r \} &\subset \{ y \in M : \rho(x,y) \leq \beta(x) r \},
\\ \{ y \in M : \rho(x,y) \leq r \} &\subset \bigg\{ y \in M : d(x,y) \leq \frac{r}{\alpha(x)} \bigg\}.
\end{align*}
Since closed subsets of compact sets are compact, this provides the stated equivalence.
\end{proof}

\begin{assumption}\label{ass-Heine-Borel}
The closed subset $\cald$ has the Heine-Borel property.
\end{assumption}

\begin{remark}
Let us start with some obvious observations:
\begin{enumerate}
\item If the Hilbert space $H$ is finite dimensional, then the closed subset $\cald$ always has the Heine-Borel property.

\item If the subset $\cald$ is compact, then it has the Heine-Borel property.

\item If the subset $\cald$ has the Heine-Borel property, then it is locally compact and $\sigma$-compact.
\end{enumerate}
\end{remark}

For a continuous map $\gamma : [0,1] \to \cald$ we define the \emph{length} $L(\gamma) \in [0,\infty]$ as
\begin{align*}
L(\gamma) := \sup_{\Pi \in \calp} \sum_{[s,t] \in \Pi} \| \gamma(t) - \gamma(s) \|,
\end{align*}
where $\calp$ denotes the set of all partitions $\Pi = \{ 0 = t_0 < t_1 < \ldots < t_n = 1 \}$ for some $n \in \bbn$. Next, we define the so-called \emph{intrinsic metric} (or \emph{length metric}) $d_L : \cald \times \cald \to [0,\infty]$ as
\begin{align*}
d_L(x,y) := \inf \{ L(\gamma) : \gamma \in \Gamma(x,y) \}, \quad x,y \in \cald,
\end{align*}
where $\Gamma(x,y)$ denotes the set of all continuous maps $\gamma : [0,1] \to \cald$ with $\gamma(0) = x$ and $\gamma(1) = y$.

\begin{proposition}\label{prop-length-space}
Suppose that the closed subset $\cald$ is locally compact, and that there is a constant $C > 0$ such that
\begin{align}\label{length-metric-cond}
d_L(x,y) \leq C \| x-y \|, \quad  x,y \in \cald.
\end{align}
Then $\cald$ has the Heine-Borel property.
\end{proposition}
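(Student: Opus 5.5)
The plan is to pass from the norm metric to the intrinsic metric $d_L$ and then run the classical Hopf--Rinow argument for locally compact complete length spaces. This yields the Heine--Borel property for $(\cald,d_L)$, which transfers back to the norm via Lemma \ref{lemma-HB-equivalent}.

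\emph{Step 1: comparison of metrics.} For every continuous $\gamma:[0,1]\to\cald$ with $\gamma(0)=x$ and $\gamma(1)=y$, the trivial partition gives $L(\gamma)\geq\|x-y\|$. Hence $\|x-y\|\le d_L(x,y)$, and together with \eqref{length-metric-cond},
\begin{align*}
\|x-y\| \le d_L(x,y) \le C\|x-y\|, \quad x,y\in\cald.
\end{align*}
In particular $d_L$ is finite. It is a metric: symmetry comes from reversing curves, the triangle inequality from concatenating curves, and definiteness from the lower bound. By Lemma \ref{lemma-HB-equivalent} with $\alpha\equiv1$ and $\beta\equiv C$, it suffices to show that $(\cald,d_L)$ has the Heine--Borel property. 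The two metrics generate the same topology and have the same Cauchy sequences. Since $\cald$ is closed in the complete space $H$, the space $(\cald,d_L)$ is therefore complete and locally compact.

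\emph{Step 2: approximate midpoints.} Fix $x,y\in\cald$, $r\in(0,d_L(x,y))$ and $\varepsilon>0$. Choose $\gamma\in\Gamma(x,y)$ with $L(\gamma)\le d_L(x,y)+\varepsilon$. The map $t\mapsto d_L(x,\gamma(t))$ is continuous, because $d_L\le C\|\cdot\|$ and $\gamma$ is norm continuous. By the intermediate value theorem there is $t$ with $d_L(x,\gamma(t))=r$. Additivity of length under splitting the parameter interval gives
\begin{align*}
d_L(x,\gamma(t))+d_L(\gamma(t),y)\le L(\gamma|_{[0,t]})+L(\gamma|_{[t,1]})=L(\gamma).
\end{align*}
Hence $d_L(\gamma(t),y)\le d_L(x,y)-r+\varepsilon$.

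\emph{Step 3: Hopf--Rinow argument.} Fix $x$ and write $\bar B(x,\rho)$ for the closed $d_L$-ball. Let $R:=\sup\{\rho>0:\bar B(x,\rho)\text{ compact}\}$; local compactness gives $R>0$. Suppose $R<\infty$.

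First I show that $\bar B(x,R)$ is compact, by proving that it is totally bounded; completeness of $(\cald,d_L)$ then gives compactness. Given $\delta>0$, the ball $\bar B(x,R-\delta/2)$ is compact, so it has a finite $\delta/2$-net. By Step 2, applied with $\varepsilon$ small (taking $z=y$ when $d_L(x,y)\le R-\delta/2$), every $y\in\bar B(x,R)$ is within $\delta/2+\varepsilon$ of a point of that ball. So the net is a $\delta$-net of $\bar B(x,R)$.

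Next I show that some larger ball is compact. Cover the compact set $\bar B(x,R)$ by finitely many open balls whose closures are compact. A Lebesgue-number argument gives $\eta>0$ such that the closed $\eta$-neighbourhood of $\bar B(x,R)$ lies in the finite union of these compact closures, and is therefore compact. By Step 2, $\bar B(x,R+\eta/2)$ is contained in this neighbourhood. Being closed, it is compact, which contradicts the definition of $R$. Hence $R=\infty$, every closed $d_L$-ball is compact, and the claim follows from Step 1.

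The main obstacle is Step 2, which is where the length structure enters. It requires the continuity of $t\mapsto d_L(x,\gamma(t))$, which is exactly where \eqref{length-metric-cond} is used, and the additivity of $L$ over subintervals, which is a routine partition-refinement argument. The remaining steps are standard compactness bookkeeping.
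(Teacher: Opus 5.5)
Your proposal is correct and follows the paper's route. You compare $\|\cdot\|$ and $d_L$ via \eqref{length-metric-cond}, deduce that $(\cald,d_L)$ is complete and locally compact, get the Heine--Borel property for $d_L$, and transfer it back with Lemma \ref{lemma-HB-equivalent}. The only difference is that the paper asserts $(\cald,d_L)$ is a length space and cites the Hopf--Rinow--Cohn-Vossen theorem, whereas your Steps 2--3 reprove the needed part of that theorem from approximate midpoints built with norm-length curves, which makes the argument self-contained and avoids checking the length-space property separately.
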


\begin{proof}
Note that the metric space $(\cald,d_L)$ is a so-called length space. Moreover, by \eqref{length-metric-cond} we have
\begin{align*}
d_L(x,y) \leq C \cdot d(x,y) \leq C \cdot d_L(x,y), \quad  x,y \in \cald,
\end{align*}
where the metric $d$ is given by $d(x,y) = \| x-y \|$. Therefore, the two metrics $d_L$ and $d$ generate the same topology on $\cald$, and hence, the length space $(\cald,d_L)$ is locally compact and complete. Consequently, by the Hopf-Rinow-Cohn-Vossen theorem (see \cite[Thm. 2.5.28]{Burago}) the space $(\cald,d_L)$ has the Heine-Borel property, and thus, by Lemma \ref{lemma-HB-equivalent} the space $(\cald,d)$ has the Heine-Borel property as well.
\end{proof}

\begin{remark}
Note that condition \eqref{length-metric-cond} necessarily requires that $d_L(x,y) < \infty$ for all $x,y \in \cald$. For this, the closed subset $\cald$ must at least be path-connected.
\end{remark}

\begin{corollary}
If the closed subset $\cald$ is locally compact and convex, then it has the Heine-Borel property.
\end{corollary}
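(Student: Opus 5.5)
The plan is to apply Proposition \ref{prop-length-space}. Local compactness is assumed, so all that remains is the length-metric inequality \eqref{length-metric-cond}, and I expect convexity to give it with constant $C = 1$. Fix $x,y \in \cald$ and consider the straight segment $\gamma : [0,1] \to H$, $\gamma(t) := (1-t)x + ty$. By convexity of $\cald$ we have $\gamma(t) \in \cald$ for all $t \in [0,1]$. Moreover $\gamma$ is continuous, so $\gamma \in \Gamma(x,y)$.

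Next I compute the length. For any partition $\Pi = \{0 = t_0 < \ldots < t_n = 1\}$ we have
\begin{align*}
\sum_{k=1}^n \| \gamma(t_k) - \gamma(t_{k-1}) \| = \sum_{k=1}^n (t_k - t_{k-1}) \| y - x \| = \| x-y \|.
\end{align*}
Hence $L(\gamma) = \|x-y\|$, and therefore $d_L(x,y) \leq \|x-y\|$. This is \eqref{length-metric-cond} with $C = 1$, and Proposition \ref{prop-length-space} yields the Heine-Borel property.

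There is no real obstacle here. The only point to check is that Proposition \ref{prop-length-space} requires $d_L$ to be finite, and this is ensured by the same segment construction. Completeness of $\cald$, which that proposition uses implicitly, follows because $\cald$ is closed in $H$.
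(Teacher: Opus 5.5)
Your proposal is correct and matches the paper's proof: the paper also applies Proposition \ref{prop-length-space} with $C=1$, noting that convexity gives $d_L(x,y) = \|x-y\|$ via the straight segment. You verify only the inequality $d_L(x,y) \leq \|x-y\|$, which is all that \eqref{length-metric-cond} requires; the reverse inequality holds for any curve by the triangle inequality.
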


\begin{proof}
This is an immediate consequence of Proposition \ref{prop-length-space}, since by convexity of $\cald$ we have $d_L(x,y) = \| x-y \|$ for all $x,y \in \cald$.
\end{proof}

\subsection{Main results}

We are now in place to state our main theorem which provides necessary and sufficient conditions on $(b,C)$ for the stochastic invariance of $\mathcal D$. For this, for an operator $T \in L(H)$ and $j \in \bbn$ we agree on the notation $T^j = T e_j$, where $\{ e_j \}_{j \in \bbn}$ denotes the orthonormal basis such that \eqref{diagonal} is fulfilled.  We denote by $\caln_{\cald}^{1,\pr}(x)$ the set of all proximal normals to $\cald$ at $x$; that is
\begin{align}\label{eq:proximalnormal}
\caln_{\cald}^{1,\pr}(x) := \{ u \in H : d_{\cald}(x+u) = \| u \| \}.
\end{align}
 We will need the concept of weak convergence of series, defined as follows.
{\begin{definition}\label{D:weakconvseries}
Let $(x_j)_{j \in \bbn} \subset H$ and $x \in H$. Then the series $\sum_{j=1}^{\infty} x_j$ \emph{converges weakly} to $x$ if
\begin{align*}
\sum_{j=1}^{\infty} \la u, x_j \ra = \la u,x \ra \quad \text{for all $u \in H.$}
\end{align*}
In this case we write
\begin{align*}
\sigma \text{-} \sum_{j=1}^{\infty} x_j = x.
\end{align*}
\end{definition}}

\begin{remark}\label{rem-weak-conv-ser}
By the Fr\'{e}chet-Riesz theorem, the series $\sum_{j=1}^{\infty} x_j$ converges weakly to some $x \in H$ if there is a continuous linear functional $\ell \in L(H,\bbr)$ such that
\begin{align*}
\sum_{j=1}^{\infty} \la u, x_j \ra = \ell(u) \quad \text{for all $u \in H.$}
\end{align*}
\end{remark}

The necessary and sufficient condition \eqref{cond-drift-N} below will involve the series
\begin{align}\label{eq:seriesCC+}
\sum_{j=1}^{\infty} D C^j(x) (C C^+)^j(x),
\end{align}
which can be shown to be  weakly convergent in the sense of Definition~\ref{D:weakconvseries} for each $x \in \cald$,  thanks to Assumption~\ref{ass-extension},  see  Lemma \ref{lemma-series-1}. In view of the upcoming result, recall that we have introduced the positive maximum principle in Definition \ref{def-pos-max}.

\begin{theorem}\label{thm-main} Let $\cald \subset H$ be a closed subset and  Assumptions \ref{ass-lin-growth}, \ref{ass-extension}, \ref{ass-Sigma-self-adjoint}, \ref{ass-closed-range},  \ref{ass-Heine-Borel}   be in force.
The following statements are equivalent:
\begin{enumerate}
\item[(i)] $\cald$ is stochastically invariant with respect to the diffusion \eqref{SDE}.

\item[(ii)] For all $x \in \cald$ and all $u \in \caln_{\cald}^{1,\pr}(x)$ we have
\begin{align}\label{cond-C-zero}
&C(x)u = 0,
\\ \label{cond-drift-N} &\la u, b(x) \ra - \frac{1}{2} \sum_{j=1}^{\infty} \la u, D C^j(x) (C C^+)^j(x) \ra \leq 0.
\end{align}

\item[(iii)] The generator $\call$ satisfies the positive maximum principle.
\end{enumerate}
\end{theorem}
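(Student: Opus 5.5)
We will prove the cycle (i) $\Rightarrow$ (ii) $\Rightarrow$ (iii) $\Rightarrow$ (i). Throughout we may use Assumption \ref{ass-Sigma-self-adjoint}, so that $\Sigma(x) = C(x)^{1/2}$ on $\cald$. The central algebraic fact, which we isolate first, is an identity of the form
\begin{align*}
\sum_{j=1}^{\infty} \la u, D C^j(x)(CC^+)^j(x) \ra = \sum_{j=1}^{\infty} \la u, D\Sigma^j(x)\Sigma^j(x)\ra \quad \text{whenever } C(x)u=0,
\end{align*}
where the right-hand side is to be understood in a generalized sense. To justify this we differentiate $C = \Sigma\Sigma^*$ along directions in $\ran(C(x))$, which is finite dimensional by Lemma \ref{lemma-closed-range}. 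We then use $CC^+ = P_{\ran C}$ to reduce the series to a finite-rank trace. The weak convergence is supplied by Lemma \ref{lemma-series-1}. The point is that the Stratonovich-type correction only depends on $C$ and its derivative, tested against normals $u$ lying in $\ker C(x)$.

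\textbf{Necessity (i) $\Rightarrow$ (ii).} Fix $x\in\cald$ and a proximal normal $u$. Then $\cald$ lies outside the open ball of radius $\|u\|$ centred at $x+u$, which gives
\begin{align*}
\la u, y-x\ra \le \tfrac{1}{2\|u\|}\|y-x\|^2 \cdot \|u\| \quad \text{for all } y\in\cald .
\end{align*}
Apply this inequality to $y = X_t$ for the invariant solution started at $x$. Take the test function $\phi(y)=\la u,y-x\ra - \tfrac12\|y-x\|^2$, which satisfies $\phi\le 0$ on $\cald$ and $\phi(x)=0$; localize by stopping times and take expectations using Theorem \ref{thm-Ito}. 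The drift part gives $\la u,b(x)\ra$ at first order. The martingale part has quadratic variation of order $t\,\|\Sigma(x)u\|^2$. A small-time law-of-iterated-logarithm argument, or equivalently a scaling argument with $\phi_\lambda(y)=\la u,y-x\ra-\lambda\|y-x\|^2$, forces $\Sigma(x)u=0$, hence $C(x)u=0$.

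For the drift inequality the difficulty is that $\sigma$ is not differentiable. Following \cite{Eduardo-1}, we therefore apply It\^o's formula to $\la u, C(X_t)w\ra$ with $w\in\ran C(x)$, which is legitimate since $C$ is $C^2$. Using $C(x)u=0$ and the nonnegativity of $\la u, C(X_t)u\ra$, we extract the second-order term $\tfrac12\sum_j\la u, DC^j(x)(CC^+)^j(x)\ra$ as the correction that must be subtracted from $\la u,b(x)\ra$. We expect this step to be the main obstacle, namely expanding $\la u, X_t-x\ra$ up to order $t$ while controlling the stochastic integral $\int\la u,\Sigma(X_s)dW_s\ra$. We will write $\Sigma(X_s)^*u$ in terms of $C(X_s)$ via a pseudoinverse approximation, and handle the infinite sums through the finite dimensionality of $\ran C(x)$ together with continuity of $C$ in trace norm.

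\textbf{(ii) $\Rightarrow$ (iii).} Let $\phi\in C^2$ attain its nonnegative maximum over $\cald$ at $x$. If $D\phi(x)=0$, second-order considerations along $\ran C(x)$ should suffice. Otherwise we use the standard fact that $u=-\varepsilon D\phi(x)$, suitably modified, is a limit of proximal normals. More precisely, we perturb $\phi$ by $-\|y-x\|^4$ and use density of proximal normals in the Fr\'echet normal cone, which rests on the Heine--Borel property (Appendix \ref{app-geometry}). Condition \eqref{cond-C-zero} then shows that $D\phi(x)$ is orthogonal to $\ran\Sigma(x)$. A second-order Taylor expansion along curves in $\cald$ tangent to $\ran\Sigma(x)$, whose existence is where \eqref{cond-drift-N} and the corrected drift enter, yields $\tfrac12\tr(D^2\phi\,C)\le -\la D\phi, b\ra$ up to the correction, i.e. $\call\phi(x)\le0$.

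\textbf{(iii) $\Rightarrow$ (i).} Extend the coefficients by $b\circ\pi$ and $\sigma\circ\pi$, where $\pi$ is a measurable nearest-point selection onto $\cald$; nearest points exist by Heine--Borel. We then solve the martingale problem on the locally compact space $\cald$, using Ethier--Kurtz \cite[Thm. 4.5.4]{EK}: the positive maximum principle on $\cald$, together with linear growth and conservativity, gives a $\cald$-valued solution of the martingale problem for $\call$. A representation theorem then produces a weak solution with a $Q$-Wiener process.
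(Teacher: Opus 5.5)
Your cycle and the volatility half of necessity are fine. The latter derives $C(x)u=0$ from the $\sqrt{t}$-scaling of the martingale term, which needs only continuity of $\sigma$. The gap is in the decisive step of the theorem: necessity of \eqref{cond-drift-N} when $\sigma$ is not differentiable. You name it as the main obstacle, and the tools you propose cannot overcome it.

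The inequality $\phi(X^{\tau}_t)\le 0$ has to be used pathwise. In expectation it only gives $\call\phi(x)\le0$, i.e.\ $\la u,b(x)\ra\le\frac12\tr C(x)$ for your $\phi$. Using it pathwise requires identifying the martingale part $\int_0^t\la D\phi(X_s),\sigma(X_s)\,dW_s\ra$, up to $o(t)$, with a double Wiener integral $\int_0^t\int_0^s\gamma_0\,dW_r\,dW_s$. Its coefficient $\gamma_0=D(\sigma^*D\phi)(x)\sigma(x)$ is a derivative of $\sigma$, which does not exist here.

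Your two proposed tools do not supply this:
\begin{enumerate}
\item It\^o's formula for $\la u,C(X_t)w\ra$ controls at most the size $\|\Sigma(X_s)u\|^2=\la C(X_s)u,u\ra$ of the integrand, not this chaos structure.
\item The identity $\Sigma(y)u=\Sigma(y)^+C(y)u$ cannot be linearised by freezing $\Sigma(x)^+$. Eigenvalues of $C$ that vanish at $x$ become positive nearby, and $\Sigma(y)^+$ blows up exactly in those directions. For the same reason your opening identity involving $D\Sigma^j$ has no meaning.
\end{enumerate}

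What is missing is the paper's mechanism:
\begin{enumerate}
\item By Lemma \ref{lemma-invariance-law}, take $\sigma$ adapted to the spectral decomposition of $C$.
\item If $\mu_1(x)>\dots>\mu_r(x)>0$ are simple, the corresponding eigenpairs are $C^2$ near $x$ by the implicit function theorem. This gives a smooth partial root $\sigma_x$.
\item Split $W=\bar W+\bar W^{\perp}$ so that the non-smooth directions are driven only by $\bar W^{\perp}$, and remove them by conditioning on $\calf^{\bar W}$ (Lemma \ref{L:condexphilbert}).
\item Apply It\^o's formula to $D\phi\,\sigma_x$, then the sign lemma for double stochastic integrals (Lemma \ref{lemmadoubleintstohilbert}), then Proposition \ref{P:C=sigma2} for $\sigma_x$ to identify $\frac12\tr(\gamma_0)$ with the correction term.
\item Remove repeated eigenvalues by the change of variables $A_x^{\epsilon}$ and let $\epsilon\to0$.
\end{enumerate}

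There are also smaller problems in (ii) $\Rightarrow$ (iii).
\begin{itemize}
\item The curves staying in $\cald$ come from \eqref{cond-C-zero} via Nagumo's theorem (Theorem \ref{thm-Nagumo-Banach}), not from \eqref{cond-drift-N}.
\item They must be the specific solutions of $y'=C(y)\Sigma(x)^+e_j$, $y(0)=x$. Only then does $\sum_j y_j''(0)$ equal $\sum_j DC(x)[\Sigma(x)^+e_j][C(x)\Sigma(x)^+e_j]$, which is the correction term by Lemma \ref{lemma-series-1}. The drift condition enters only after summing over $j$.
\item The relevant normal is $u=D\phi(x)$, not $-\varepsilon D\phi(x)$.
\item For $\phi\in C^2$, this $u$ already lies in $\caln_{\cald}^p(x)$. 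Combine the local quadratic bound from Taylor, the trivial bound $\la u,y-x\ra\le\|u\|\,\|y-x\|^2/\delta$ for $\|y-x\|\ge\delta$, and Proposition \ref{prop-normal-2}. So no density argument is needed.
\item Approximating by proximal normals at nearby base points would require continuity of the correction term in $x$, which is not available. Heine--Borel plays no role in this step.
\end{itemize}

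For (iii) $\Rightarrow$ (i) your route is the paper's, though the extension $b\circ\pi$, $\sigma\circ\pi$ is superfluous.
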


\begin{proof}
(i) $\Rightarrow$ (ii): This is a consequence of Theorem \ref{thmnecessity}.
\\ (ii) $\Rightarrow$ (iii): This is a consequence of Proposition \ref{prop-pos-max-prin}.
\\ (iii) $\Rightarrow$ (i): This is a consequence of Theorem \ref{thm-suff}.
\end{proof}

\begin{remark}\label{rem-drift-cond}
The series \eqref{eq:seriesCC+} does not need to be norm convergent. However, as already mentioned above, it is weakly convergent, and hence the series appearing in \eqref{cond-drift-N} converges for every $u \in H$. Moreover, according to Theorem \ref{thm-MP-characterizations} we have $C(x) C^+(x) = P_C(x)$ for all $x \in \cald$, where $P_C(x)$ denotes the orthogonal projection on the range of $C(x)$. Therefore, condition \eqref{cond-drift-N} can equivalently be written as
\begin{align}\label{cond-drift-N-projection}
\la u, b(x) \ra - \frac{1}{2} \sum_{j=1}^{\infty} \la u, D C^j(x) P_C^j(x) \ra \leq 0.
\end{align}
\end{remark}

The series \eqref{eq:seriesCC+} is closely related to the more standard Stratonovich series given by  
\begin{align}\label{eq:strato}
\sum_{j=1}^{\infty} D \sigma^j(x) \sigma^j(x),
\end{align}  
in the case where $\sigma$ is differentiable. This connection is made precise in the following proposition. We emphasize that our previous assumptions do not need to be in force for this result; in particular, the linear operators $\Sigma(x)$ do not need to be self-adjoint.

\begin{proposition}\label{P:C=sigma2}
Suppose that $\sigma  \in C^{1}(H,L_2^0(H))$. Then the following statements are true:
\begin{enumerate}
\item The mapping $C : H \to L_1^+(H)$ defined as
\begin{align}\label{C-def-sigma-smooth}
C(x) := \Sigma(x) \Sigma(x)^*, \quad x \in H
\end{align}
is also of class $C^1$.

\item If $\sigma$ is even of class $C_b^{1}$, then $C$ is also of class $C_{b}^1$.

\item For all $x \in H$ the series
\begin{align}\label{series-sigma2}
\sum_{j=1}^{\infty} D C^j(x) P_C^j(x)
\end{align}
is weakly convergent, and for all $u \in H$ we have
\begin{align}\label{series-sigma2-u}
\sum_{j=1}^{\infty} \langle u, D C^j(x) P_C^j(x) \rangle = \tr \big( DC(x) P_C(x)u \big).
\end{align}

\item For all $x \in H$ and all $u \in \ker (\Sigma(x)^*)$ we have
\begin{align}\label{series-sigma2-Stratonovich}
\sum_{j=1}^{\infty} \langle u, D C^j(x) P_C^j(x) \rangle = \sum_{j=1}^{\infty} \langle u, D \sigma^j(x) \sigma^j(x) \rangle.
\end{align}
\end{enumerate}
\end{proposition}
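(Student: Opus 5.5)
We prove the four parts in order. For (1), $\Sigma = \Phi_Q \circ \sigma$ is of class $C^1$ from $H$ into $L_2(H)$, because $\Phi_Q$ is a bounded linear isometry (Lemma \ref{lemma-HS-norm}). The map $\Psi : L_2(H) \times L_2(H) \to L_1(H)$, $\Psi(A,B) = AB^*$, is bounded bilinear, since $\|AB^*\|_{L_1} \le \|A\|_{L_2}\|B\|_{L_2}$. Hence $C = \Psi(\Sigma,\Sigma)$ is of class $C^1$ with
\begin{align*}
DC(x)h = D\Sigma(x)h\,\Sigma(x)^* + \Sigma(x)\,(D\Sigma(x)h)^*, \quad h \in H.
\end{align*}
For (2), the same formula gives $\|DC(x)\| \le 2\|D\Sigma(x)\|\,\|\Sigma(x)\|_{L_2}$ and $\|C(x)\|_{L_1} \le \|\Sigma(x)\|_{L_2}^2$. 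Both are bounded if $\sigma \in C_b^1$. Uniform continuity of $DC$ is not needed, provided $C_b^1$ means only that $C$ and $DC$ are bounded.

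For (3), fix $x$ and write $P := P_C(x)$, the projection onto $\overline{\ran}\, C(x)$, which is always defined. For $u \in H$ consider the bounded operator $T_u : H \to H$, $T_u h := (DC(x)h)^* u$. Self-adjointness of $C$ is not needed at this step. By definition of the adjoint,
\begin{align*}
\la u, DC^j(x) P^j(x) \ra = \la u, (DC(x)Pe_j) e_j \ra = \la T_u P e_j, e_j \ra.
\end{align*}
So the partial sums are exactly the diagonal sums of the operator $T_u P$, and we need $T_u P$ to be trace class. Here we use the product structure of $DC(x)$. We have
\begin{align*}
\la u, (DC(x)h)e_j\ra = \la u, D\Sigma(x)h\,\Sigma(x)^* e_j\ra + \la \Sigma(x)^*u, (D\Sigma(x)h)^* e_j\ra.
\end{align*}
Thus $T_u h = \Sigma(x)(D\Sigma(x)h)^* u + (D\Sigma(x)h)\Sigma(x)^* u$. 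Set $A_u : H \to L_2(H)$, $h \mapsto D\Sigma(x)h$. The second term equals $\Sigma(x)\,[h \mapsto (D\Sigma(x)h)^*u]$, which has the factor $\Sigma(x) \in L_2$ on the left. The first term is $h \mapsto (D\Sigma(x)h)v$ with $v = \Sigma(x)^*u$; its diagonal sums $\sum_j \la (D\Sigma(x)Pe_j)v, e_j\ra$ are controlled because $h \mapsto \la (D\Sigma(x)h)v, w\ra$ is bounded. Nevertheless, a general bounded $T_u$ composed with $P$ need not be trace class.

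The cleaner route, which I would take, is the following. Since $P = C C^+$ on a dense set, we use that $\ran P = \overline{\ran}\,\Sigma(x)$. Write $P = \Sigma(x)\Sigma(x)^+$ in the closed-range case. In general, approximate with $P_n$, the projections onto the spans of the eigenvectors of $C(x)$ with eigenvalue $> 1/n$. Then $P_n = \Sigma(x) R_n$ with $R_n$ bounded, so $T_u P_n$ contains the factor $\Sigma(x) \in L_2$. The identity \eqref{series-sigma2-u} is then obtained by computing $\tr(T_u P)$ via the eigenbasis of $C(x)$ and comparing with the basis $\{e_j\}$. The trace is basis independent once trace class is established. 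I expect this trace-class / convergence step to be the main obstacle. The exact meaning of $\tr(DC(x)Pu)$ also needs care; I would interpret it as $\tr(T_u P)$, equivalently as the trace of the bilinear pairing, and the linear functional $u \mapsto \tr(T_uP)$ is bounded. Remark \ref{rem-weak-conv-ser} then gives weak convergence.

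For (4), take $u \in \ker\Sigma(x)^*$. Then $(D\Sigma(x)h)\Sigma(x)^*u = 0$, so in $\la u,(DC(x)Pe_j)e_j\ra$ only the term $\la u, D\Sigma(x)Pe_j\,\Sigma(x)^*e_j\ra$ survives. Its sum over $j$ equals the trace of the bilinear pairing $(h,k) \mapsto \la u,(D\Sigma(x)Ph)\Sigma(x)^*k\ra$ along the diagonal. This is independent of the orthonormal basis because $\Sigma(x)^*$ is Hilbert–Schmidt. Rewrite it as $\sum_i \la u, (D\Sigma(x)\Sigma(x)e_i) e_i\ra$ using the adjoint to move $\Sigma(x)^*$ across. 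Then $P\Sigma(x) = \Sigma(x)$ removes $P$. Finally $D\Sigma(x)\Sigma(x)e_i\, e_i = D\sigma^i(x)\sigma^i(x)$, since $\sigma^i = \Sigma e_i$. This yields \eqref{series-sigma2-Stratonovich}.
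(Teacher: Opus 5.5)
\textbf{Verdict.} Parts (1) and (2) are fine and match the paper's argument: both use the product rule for the bounded bilinear map $(A,B)\mapsto AB^*$ from $L_2(H)\times L_2(H)$ to $L_1(H)$. The genuine gap is in (3), and it carries over to (4).

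\textbf{The gap.} In (3) you never show that $T_uP$ is nuclear; you only note that it need not be. The finite-rank approximations $T_uP_n$ come with no bound uniform in $n$, and the boundedness of $u\mapsto \tr(T_uP)$ that you assert is unsupported. In (4), the claim that the diagonal sum is basis independent ``because $\Sigma(x)^*$ is Hilbert--Schmidt'' is false. With $B_uh := (D\Sigma(x)h)^*u$, the operator $\Sigma(x)B_uP$ is merely Hilbert--Schmidt. Moving $\Sigma(x)$ to the other side of the trace (Lemma \ref{lemma-trace-commute}) needs a nuclear factor or two Hilbert--Schmidt factors.

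\textbf{The statement fails in this generality.} Take $\Sigma_0\in L_2(H)$ self-adjoint and positive definite, $v\neq 0$, $\Sigma(y)k := \Sigma_0k+\la k,v\ra y$, and $\sigma:=\Phi_Q^{-1}\circ\Sigma$, which is affine and hence $C^\infty$. Then $P_C(0)=\Id$. With $w:=\Sigma_0v\neq 0$ one gets $(DC(0)h)k=\la k,w\ra h+\la k,h\ra w$, so $DC^j(0)P_C^j(0)=\la e_j,w\ra e_j+w$. Hence $\sum_{j\le N}\la w,DC^j(0)P_C^j(0)\ra=\sum_{j\le N}\la w,e_j\ra^2+N\|w\|^2\to\infty$, and the series \eqref{series-sigma2} is not weakly convergent. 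Similarly, let $\Sigma(y)k:=\Sigma_1k+\la k,y\ra e_1$ with $\Sigma_1e_1=0$ and $\Sigma_1e_j=j^{-1}e_j$ for $j\geq 2$, and take $u=e_1\in\ker(\Sigma(0)^*)$. Then both sides of \eqref{series-sigma2-Stratonovich} reduce to $\sum_{j\ge 2}j^{-1}=\infty$. In both examples $\Sigma(0)$ is even self-adjoint and nonnegative.

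\textbf{The paper has the same hole.} It cites Proposition \ref{prop-series-app}, whose proof identifies $\sum_j\la D(C^*u)(x)T(x)e_j,e_j\ra$ with a trace without checking nuclearity. It also applies Lemma \ref{lemma-trace-commute} to $\Sigma(x)$ and $D(\Sigma^*u)(x)P_C(x)$ under the same unverified hypothesis.

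\textbf{How to close it.} Both your argument and the paper's work once $P_C(x)$ has finite rank, i.e.\ $\ran(\sigma(x))$ is finite dimensional (Lemma \ref{lemma-closed-range}); this is the setting of Lemma \ref{lemma-series-1}. Then $\|T_uP_C(x)\|_{L_1(H)}\le \dim\ran(C(x))\,\|DC(x)\|\,\|u\|$, which gives (3) via Remark \ref{rem-weak-conv-ser}. In (4), $B_uP_C(x)$ is nuclear, so Lemma \ref{lemma-trace-commute} and $P_C(x)\Sigma(x)=\Sigma(x)$ give $\tr(\Sigma(x)B_uP_C(x))=\tr(B_uP_C(x)\Sigma(x))=\tr(B_u\Sigma(x))$. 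Your identification $(D\Sigma(x)\Sigma(x)e_i)e_i=D\sigma^i(x)\sigma^i(x)$ then finishes exactly as in the paper.

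\textbf{On the meaning of the trace.} Your reading of the right-hand side of \eqref{series-sigma2-u} as $\tr(T_uP)$ is the one that matches the series. The other reading, $\tr(DC(x)[P_C(x)u])$, is what the continuity argument in Proposition \ref{prop-series-app} effectively uses, and it gives a different number already in $\bbr^2$. For $\Sigma(y)=\Id+\la y,e_2\ra\la\cdot,e_1\ra e_1$ at $x=0$ the series vanishes, while $\tr(DC(0)u)=2\la u,e_2\ra$.
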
  

\begin{proof}
See Section~\ref{S:proofofequalitydriftseries}.
\end{proof}  

\begin{remark}\label{rem-kernels-2}
Note that by the definition \eqref{C-def-sigma-smooth} of $C$ and Lemma \ref{lemma-kernels} we have $\ker(C(x)) = \ker(\Sigma(x)^*) = \ker(\sigma(x)^*)$ for all $x \in H$.
\end{remark}

We can relax the finite-range condition in Assumption~\ref{ass-closed-range} on $C$ to characterize the invariance in terms of the Stratonovich series \eqref{eq:strato}, if we impose additional smoothness on $\sigma$. 

\begin{assumption}\label{ass-sigma-smooth}
$\sigma \in C^{2}(H,L_2^0(H))$.
\end{assumption} 

Clearly, the smoothness assumption on $\sigma$ in Assumption \ref{ass-sigma-smooth} is stronger than that in Assumption~\ref{ass-extension}. Indeed, in $\mathbb{R}$, for $\sigma(x) = \sqrt{x}$, we have $C(x) = x$, which satisfies Assumption~\ref{ass-extension}, but $\sigma$ does not satisfy Assumption \ref{ass-sigma-smooth}.  

The necessary and sufficient conditions can now be stated as follows. 

\begin{theorem}\label{thm-main-sigma}  
Let $\cald \subset H$ be a closed subset, and  Assumptions \ref{ass-lin-growth}, \ref{ass-Sigma-self-adjoint}, \ref{ass-Heine-Borel}, and \ref{ass-sigma-smooth} be in force. The following statements are equivalent:
\begin{enumerate}
\item[(i)] $\cald$ is stochastically invariant with respect to the diffusion \eqref{SDE}.  

\item[(ii)] For all $x \in \cald$ and all $u \in \caln_{\cald}^{1,\pr}(x)$, we have  
\begin{align}\label{cond-sigma-zero}
&\sigma(x)^* u = 0,  
\\ \label{cond-drift-sigma-N}  
&\langle u, b(x) \rangle - \frac{1}{2} \sum_{j=1}^{\infty} \langle u, D \sigma^j(x) \sigma^j(x) \rangle \leq 0.
\end{align}
\item[(iii)] The generator $\call$ satisfies the positive maximum principle.
\end{enumerate}
\end{theorem}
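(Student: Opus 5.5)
The plan is to reduce Theorem \ref{thm-main-sigma} to the machinery behind Theorem \ref{thm-main}, with Proposition \ref{P:C=sigma2} replacing the closed-range hypothesis. By Proposition \ref{P:C=sigma2}(1), Assumption \ref{ass-sigma-smooth} gives that $C(x)=\Sigma(x)\Sigma(x)^*$, defined on all of $H$, is of class $C^1$. Since $\sigma$ is $C^2$, the product rule makes $C$ of class $C^2$ into $L_1(H)$. Linear growth of $\|C(x)\|_{L_1(H)}^{1/2}=\|\Sigma(x)\|_{L_2(H)}$ follows from \eqref{linear-growth-2}. So Assumption \ref{ass-extension} holds with this canonical extension, and by Remark \ref{rem-kernels-2} condition \eqref{cond-sigma-zero} is equivalent to \eqref{cond-C-zero}. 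By Proposition \ref{P:C=sigma2}(4), for every $u\in\ker(\sigma(x)^*)$ the Stratonovich series in \eqref{cond-drift-sigma-N} equals $\sum_j\la u,DC^j(x)P_C^j(x)\ra$. Hence, once \eqref{cond-sigma-zero} holds, condition (ii) of Theorem \ref{thm-main-sigma} is exactly \eqref{cond-C-zero} together with \eqref{cond-drift-N-projection}, the projection form of \eqref{cond-drift-N}.

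The only gap relative to Theorem \ref{thm-main} is Assumption \ref{ass-closed-range}, which is needed only to write $CC^+=P_C$. I will therefore run the three implications of Theorem \ref{thm-main}, namely Theorem \ref{thmnecessity}, Proposition \ref{prop-pos-max-prin} and Theorem \ref{thm-suff}, in their projection formulation \eqref{cond-drift-N-projection}. I will check that closed range enters only through the identity $CC^+=P_C$ and the convergence of the series.

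\textbf{(iii) $\Rightarrow$ (i).} Theorem \ref{thm-suff} uses only the positive maximum principle, continuity, linear growth and the Heine-Borel property. It does not involve $C^+$, so it applies verbatim.

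\textbf{(ii) $\Rightarrow$ (iii).} Take $\phi\in C^2$ with $\max_\cald\phi=\phi(x)\ge0$. Lemma \ref{lemma-generator-series} gives $\call\phi(x)=\la D\phi(x),b(x)\ra+\frac12\sum_j\la D^2\phi(x)\sigma^j,\sigma^j\ra$. With smooth $\sigma$ the argument is more direct than in Proposition \ref{prop-pos-max-prin}. I will use that $D\phi(x)$ is, up to a second-order correction, a proximal-type normal; if needed, replace $\phi$ by $\phi-\varepsilon\|\cdot-x\|^2$ so that $u=\lambda D\phi(x)$ becomes a proximal normal of the sublevel geometry. Then \eqref{cond-sigma-zero} gives $\la D\phi(x),\sigma^j(x)\ra=0$. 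I will then expand $t\mapsto\phi(\text{flow of }\sigma^j)$ to second order along curves $y_j(t)$ in $\cald$, or rather along approximate tangent curves. This should give $\la D^2\phi(x)\sigma^j,\sigma^j\ra\le-\la D\phi(x),D\sigma^j(x)\sigma^j(x)\ra$ in summed form, and combining with \eqref{cond-drift-sigma-N} yields $\call\phi(x)\le0$. Concretely, I will mimic the proof of Proposition \ref{prop-pos-max-prin}, substituting Proposition \ref{P:C=sigma2}(4) wherever the $C C^+$ series appears.

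\textbf{(i) $\Rightarrow$ (ii).} In Theorem \ref{thmnecessity}, the first-order condition $C(x)u=0$ comes from It\^o's formula applied to distance-type test functions and does not use closed range. The drift inequality is obtained by applying Theorem \ref{thm-Ito} to $\phi(y)=\la u,y\ra$ combined with $\la u,\sigma(X_s)\cdot\ra$. That step is where the correction series arises, via expanding $\sigma(X_s)^*u$ along the path. Here smoothness of $\sigma$ lets me apply Theorem \ref{thm-Ito} directly to $y\mapsto\sigma(y)^*u$. This produces $\sum_j\la u,D\sigma^j(x)\sigma^j(x)\ra$ without any pseudoinverse, so I would redo that step with $\sigma$ in place of $C^{1/2}$.

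\textbf{Main obstacle.} The hardest step is the necessity drift inequality without closed range. There the series is only weakly convergent, and the interchange of limits in the small-time expansion must be justified using the trace formula \eqref{series-sigma2-u} and dominated convergence via the $L_2$ bound on $D\sigma$.
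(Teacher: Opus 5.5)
Your framing claim is false: Assumption \ref{ass-closed-range} enters the proof of Theorem \ref{thm-main} structurally, not only through $CC^+=P_C$ and the convergence of the series. So Theorem \ref{thm-main} cannot simply be rerun in projection form.

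\begin{itemize}
\item In Theorem \ref{thmnecessity}, the isomorphism $A_x^{\epsilon}$ and the gap $\epsilon$ of Lemma \ref{lemma-simple-eigenvalues-2} need finitely many positive eigenvalues. For $r=\infty$, $A_x^{\epsilon}$ is not invertible and the gaps accumulate at $0$.
\item The local smooth square root $\sigma_x$ of Proposition \ref{prop:distincteigenoperator} is built from finitely many simple isolated eigenvalues.
\item The control system $y'=C(y)\Sigma(x)^+e_j$ in Proposition \ref{prop-gen-u-v} needs $\Sigma(x)^+$ to be bounded.
\end{itemize}

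Hence ``mimicking Proposition \ref{prop-pos-max-prin}'' fails. Your other idea for (ii) $\Rightarrow$ (iii) is the one that works, and it is the paper's Proposition \ref{prop-gen-u-v-sigma}. By \eqref{cond-sigma-zero}, Proposition \ref{prop-cond-vol} and Theorem \ref{thm-Nagumo-Banach}, $\cald$ is locally invariant for $y'=\sigma^j(y)$. A second-order Taylor expansion along this flow, summed over $j$ and combined with \eqref{cond-drift-sigma-N}, gives the claim. Working directly with $\phi$ is fine and needs no modification of $\phi$. If $\phi\in C^2$ and $\max_{\cald}\phi=\phi(x)$, then $\la D\phi(x),y-x\ra\le K\|y-x\|^2$ on all of $\cald$: use Taylor near $x$ and Cauchy--Schwarz away from $x$. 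So $D\phi(x)\in\caln_{\cald}^p(x)$ by Proposition \ref{prop-normal-2}.

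The genuine gap is in (i) $\Rightarrow$ (ii), and it has two parts.

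\textbf{The test function.} $\phi(y)=\la u,y\ra$ is not maximized on $\cald$ at $x$. You need $\phi(y)=\la u,y-x\ra-\frac{\kappa}{2}\|y-x\|^2$ from Proposition \ref{prop-normal-2}. Its quadratic part contributes at the same order $t$ as the drift, so it must be tracked. In the paper, its contribution $\sum_j\la D^2\phi(x)\sigma^j(x),\sigma^j(x)\ra$ cancels between $\call\phi(x)$ and $\frac12\tr(\gamma_0)$.

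\textbf{The missing mechanism.} After the two applications of It\^o's formula you only have the almost sure inequality \eqref{eqintegraledoublehilbert}. Taking expectations keeps only $\call\phi(x)\le 0$, i.e.\ $\la u,b(x)\ra-\frac{\kappa}{2}\tr C(x)\le 0$, which is not the Stratonovich condition. Justifying an interchange of limits by dominated convergence does not address this. What is needed is the pathwise small-time analysis of Lemma \ref{lemmadoubleintstohilbert}:
\begin{enumerate}
\item Show the remainders are $o(t)$ in probability. This uses \eqref{eqgammacontinuity}, which comes from $\Psi=D\Phi\,\sigma\in C_b^1$, hence from $\sigma\in C^2$ rather than an $L_2$ bound on $D\sigma$.
\item Use Brownian scaling at order $\sqrt t$ to get $\alpha=\sigma(x)^*u=0$.
\item Only then does $\tr(\gamma_0)$ converge, by Lemma \ref{lemma-trace-decomp} via Proposition \ref{P:C=sigma2}.
\item Split $\int_0^t\int_0^s\gamma_0\,dW_r\,dW_s$ into $\frac12\sum_i\gamma_0^{ii}((W_t^i)^2-t)$ plus off-diagonal iterated integrals, and rescale by $t$.
\item Show $\gamma_0$ is symmetric, using that L\'evy areas are symmetric with unbounded support.
\item Isolate the deterministic term to get $\theta_0-\frac12\tr(\gamma_0)\le 0$, which is \eqref{cond-drift-sigma-N}.
\end{enumerate}
Without this argument your plan does not produce the drift condition.
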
  

\begin{proof}
(i) $\Rightarrow$ (ii): This is a consequence of Proposition \ref{prop-nec-sigma-C2}.
\\ (ii) $\Rightarrow$ (iii): This is a consequence of Proposition \ref{prop-gen-u-v-sigma}.
\\ (iii) $\Rightarrow$ (i): This is a consequence of Theorem \ref{thm-suff}.
\end{proof}

\begin{remark}
The series \eqref{eq:strato} does not need to be norm convergent; it does not even need to be weakly convergent. However, due to Proposition \ref{P:C=sigma2}, for every $u \in \ker (\Sigma(x)^*)$ the series appearing in \eqref{cond-drift-sigma-N} converges, and coincides with the series appearing in \eqref{cond-drift-N-projection}. Noting that $\caln_{\cald}^{1,\pr}(x) \subset \ker (\Sigma(x)^*)$ due to \eqref{cond-sigma-zero} and Remark \ref{rem-kernels-2}, this ensures that condition \eqref{cond-drift-sigma-N} is meaningful.
\end{remark}

\subsection{The convergence of the series}

In this section we will show that the series \eqref{eq:seriesCC+} is weakly convergent.

\begin{lemma}\label{lemma-series-1}
For each $x \in \cald$ the following statements are true:
\begin{enumerate}
\item The series \eqref{eq:seriesCC+} is weakly convergent.

\item For all $u \in H$ we have
\begin{align*}
\sum_{j=1}^{\infty} \la u, D C^j(x) (C C^+)^j(x) \ra &= \tr \big( D C(x) ( C(x) C^+(x) ) u \big)
\\ &= \tr \big( D C(x) P_C(x) u \big)
\\ &= \sum_{j=1}^{\infty} \la u, D C(x) (\Sigma(x)^+ e_j) (C(x)\Sigma(x)^+ e_j) \ra.
\end{align*}
\end{enumerate}
\end{lemma}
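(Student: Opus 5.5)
The plan is to reduce everything to the trace of a finite rank operator, using the finite dimensionality of $\ran(C(x))$. Fix $x \in \cald$. We interpret $D C^j(x)$ as the derivative of $y \mapsto C(y) e_j$, so that $D C^j(x) w = (D C(x) w) e_j$ for $w \in H$. By Theorem \ref{thm-MP-characterizations} we have $(C C^+)^j(x) = C(x) C^+(x) e_j = P_C(x) e_j$. This identity will give the second equality once the first is proved.

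\emph{Step 1: rewriting the terms.} For $u \in H$, define the linear operator $T_u : H \to H$ by $T_u w := (D C(x) w) u$. This is continuous, with $\| T_u \| \leq \| D C(x) \|_{L(H,L_1(H))} \| u \|$, since $\| \cdot \|_{L(H)} \leq \| \cdot \|_{L_1(H)}$. Because $C$ takes self-adjoint values on all of $H$ (Assumption \ref{ass-extension}), each derivative $D C(x) w$ is a limit in $L_1(H)$ of self-adjoint difference quotients, hence self-adjoint. Therefore
\begin{align*}
\la u, D C^j(x)(C C^+)^j(x) \ra = \la u, (D C(x) P_C(x) e_j) e_j \ra = \la (D C(x) P_C(x) e_j) u, e_j \ra = \la T_u P_C(x) e_j, e_j \ra.
\end{align*}
This is the meaning of the notation $D C(x) P_C(x) u$, namely the operator $T_u P_C(x)$.

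\emph{Step 2: trace class and weak convergence.} By Assumption \ref{ass-closed-range} and Lemma \ref{lemma-closed-range}, $P_C(x)$ has finite rank $r$. Hence $T_u P_C(x)$ is of finite rank and therefore trace class. The series $\sum_j \la T_u P_C(x) e_j, e_j \ra$ then converges absolutely to $\tr(T_u P_C(x))$, which gives the first two equalities. Moreover,
\begin{align*}
|\tr(T_u P_C(x))| \leq \| T_u P_C(x) \|_{L_1(H)} \leq r \, \| D C(x) \| \, \| u \|,
\end{align*}
and $u \mapsto \tr(T_u P_C(x))$ is linear. So it is a continuous linear functional, and Remark \ref{rem-weak-conv-ser} yields weak convergence of the series \eqref{eq:seriesCC+}, proving (1).

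\emph{Step 3: the last identity.} The term there equals $\la T_u \Sigma(x)^+ e_j, C(x) \Sigma(x)^+ e_j \ra$, and the aim is to sum it as a trace and apply cyclicity. By Assumption \ref{ass-Sigma-self-adjoint}, $\Sigma(x)$ is self-adjoint with $C(x) = \Sigma(x)^2$ and closed, finite dimensional range (Remark \ref{rem-closed-range}). Hence $\Sigma(x)^+$ is self-adjoint, finite rank, and commutes with $\Sigma(x)$ (all act diagonally on $\ran \Sigma(x)$ and vanish on its orthogonal complement). Consequently
\begin{align*}
\sum_j \la T_u \Sigma(x)^+ e_j, C(x) \Sigma(x)^+ e_j \ra = \tr\big(\Sigma(x)^+ C(x) T_u \Sigma(x)^+\big) = \tr\big(T_u \Sigma(x)^+ \Sigma(x)^+ \Sigma(x)^2\big) = \tr\big(T_u P_C(x)\big).
\end{align*}
The middle equality uses Lemma \ref{lemma-trace-commute-LR}, which applies because the operators involved are finite rank.

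The main obstacle is the bookkeeping in Step 1. One must justify that $D C(x) w$ is self-adjoint, and interpret the notation consistently, so that the series really is a trace of a finite rank operator. After that the finite rank of $P_C(x)$ makes all convergence issues trivial.
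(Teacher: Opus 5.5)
Your proof is correct. For the last identity you follow the paper closely: self-adjointness of $DC(x)v$, $(C(x)\Sigma(x)^+)^*=\Sigma(x)^+C(x)$, cyclicity of the trace, and $(\Sigma(x)^+)^2\Sigma(x)^2=P_C(x)$, which the paper writes as $(\Sigma(x)^2)^+C(x)=C^+(x)C(x)$. One bookkeeping point there: Lemma~\ref{lemma-trace-commute-LR} alone only gives $\tr(\Sigma^+CT_u\Sigma^+)=\tr(CT_u(\Sigma^+)^2)$. You need one more cyclic move via Lemma~\ref{lemma-trace-commute}, using $C(x)\in L_1(H)$, to reach $\tr(T_u(\Sigma^+)^2C)$; this is harmless.

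For part (1) and the first two identities your route differs from the paper's. The paper invokes Proposition~\ref{prop-series-app}, which is stated for an arbitrary $T(x)\in L(H)$. You instead use that $P_C(x)$ has finite rank $r$. Then $T_uP_C(x)=D(Cu)(x)P_C(x)$ has finite rank; this operator is what the notation $DC(x)P_C(x)u$ means, as the last line of the paper's proof confirms. Hence its diagonal series converges absolutely to its trace, and $|\tr(T_uP_C(x))|\le r\,\|DC(x)\|\,\|u\|$.

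The paper's route gives a single reusable statement, also used in Proposition~\ref{P:C=sigma2}. Yours gives a justification that actually holds up. The proof of Proposition~\ref{prop-series-app} turns $\sum_j\la D(C^*u)(x)T(x)e_j,e_j\ra$ into a trace without checking that $D(C^*u)(x)T(x)$ is nuclear. It then deduces weak convergence from continuity of $u\mapsto\tr(\Phi u)$ with $\Phi=DC^*(x)T(x)\in L(H,L_1(H))$, i.e.\ of $u\mapsto\tr(DC^*(x)[T(x)u])$. That is not the functional $u\mapsto\tr(D(C^*u)(x)T(x))$ the computation produced. For general $T(x)$ the series can even diverge. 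Take $T(x)=\Id$ and $C(y)=\la\cdot,y\ra w+\la\cdot,w\ra y$; then $\la u,DC^j(x)e_j\ra=\la u,w\ra+\la w,e_j\ra\la u,e_j\ra$, which diverges when summed whenever $\la u,w\ra\neq0$. So the finite-rank consequence of Assumption~\ref{ass-closed-range}, which you use explicitly, is exactly what makes the lemma hold.
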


\begin{proof}
The weak convergence of the series \eqref{eq:seriesCC+} and the first two identities are an immediate consequence of Proposition \ref{prop-series-app} and Theorem \ref{thm-MP-characterizations}. Furthermore, by Corollary \ref{cor-self-adjoint} and Lemma \ref{lemma-adjoint-isometry} the operator $D C(x) v$ is self-adjoint for each $v \in H$. Let us denote by $Cu : H \to H$ the mapping $y \mapsto C(y)u$. Then, by Proposition \ref{prop-smooth-linear}, Lemma \ref{lemma-inverse-rules} and Lemma \ref{lemma-trace-commute} we obtain
\begin{align*}
&\sum_{j=1}^{\infty} \la u, D C(x) (\Sigma(x)^+ e_j) (C(x)\Sigma(x)^+ e_j) \ra = \sum_{j=1}^{\infty} \la D C(x) (\Sigma(x)^+ e_j) u, C(x)\Sigma(x)^+ e_j \ra
\\ &= \sum_{j=1}^{\infty} \la D (C u)(x)(\Sigma(x)^+ e_j), C(x)\Sigma(x)^+ e_j \ra
\\ &= \sum_{j=1}^{\infty} \la \Sigma(x)^+ C(x) D (C u)(x) \Sigma(x)^+ e_j, e_j \ra = \tr \big( \Sigma(x)^+ C(x) D (C u)(x) \Sigma(x)^+ \big)
\\ &= \tr \big( D (Cu)(x) \Sigma(x)^+ \Sigma(x)^+ C(x) \big) = \tr \big( D (Cu)(x) ( \Sigma(x)^2 )^+ C(x) \big)
\\ &= \tr \big( D (Cu)(x) C^+(x) C(x) \big) = \tr \big( D (Cu)(x) C(x) C^+(x) \big)
\\ &= \tr \big( D C(x) ( C(x) C^+(x) ) u \big),
\end{align*}
which establishes the last identity.
\end{proof}

\subsection{Equivalent characterizations}\label{sec-eq-char}

In this section we provide equivalent characterizations of the invariance conditions appearing in Theorems \ref{thm-main} and \ref{thm-main-sigma}. In view of the upcoming findings, let us recall that $T_{\cald}^c(x)$ denotes the \emph{Clarke tangent cone} to $\cald$ at $x$, defined as
\begin{align*}
T_{\cald}^c(x) := \bigg\{ v \in H : \lim_{t \to 0^+ \atop \cald \ni x' \to x} \frac{d_{\cald}(x'+tv)}{t} = 0 \bigg\},
\end{align*}
and that $T_{\cald}^{\sigma}(x)$ denotes the \emph{weak Bouligand tangent cone} (or \emph{weak contingent cone}) to $\cald$ at $x$, defined as the set of all $v \in H$ such that there are sequences $(t_n)_{n \in \bbn} \subset (0,\infty)$ with $t_n \to 0^+$ and $(v_n)_{n \in \bbn} \subset H$ with $v_n \overset{\sigma}{\to} v$ such that $x + t_n v_n \in \cald$ for each $n \in \bbn$. Here the notation $v_n \overset{\sigma}{\to} v$ denotes weak convergence of the sequence; more precisely $\lim_{n \to \infty}\la u,v_n \ra = \la u,v \ra$ for all $u \in H$. Moreover, we denote by $\caln_{\cald}^{c}(x)$ the \emph{Clarke normal cone} to $\cald$ at $x$, defined as the polar cone $\caln_{\cald}^{c}(x) := T_{\cald}^{c}(x)^{\circ}$. For these cones we have the inclusions $\caln_{\cald}^{1,\pr}(x) \subset \caln_{\cald}^{c}(x)$ and $T_{\cald}^c(x) \subset T_{\cald}^{\sigma}(x)$ for each $x \in \cald$. We refer to Appendix \ref{app-geometry} for further details.

\begin{remark}\label{rem-main}
There are several equivalent characterizations of the dispersion operator condition \eqref{cond-C-zero}. Indeed, by Proposition \ref{prop-cond-vol} the following statements are equivalent:
\begin{enumerate}
\item[(i)] For all $x \in \cald$ and all $u \in \caln_{\cald}^{1,\pr}(x)$ we have \eqref{cond-C-zero}.

\item[(ii)] For all $x \in \cald$ and all $u \in \caln_{\cald}^c(x)$ we have \eqref{cond-C-zero}.

\item[(iii)] For all $x \in \cald$ and all $u \in \caln_{\cald}^{1,\pr}(x)$ we have \eqref{cond-sigma-zero}.

\item[(iv)] For all $x \in \cald$ and all $u \in \caln_{\cald}^c(x)$ we have \eqref{cond-sigma-zero}.

\item[(v)] For all $x \in \cald$ and all $w \in H_0$ we have $\sigma(x)w \in T_{\cald}^{c}(x)$

\item[(vi)] For all $x \in \cald$ and all $w \in H_0$ we have $\sigma(x)w \in T_{\cald}^{\sigma}(x)$.
\end{enumerate}
Moreover, according to Lemma \ref{lemma-adjoint-ker-ran}, condition \eqref{cond-sigma-zero} just means that $\sigma(x)w \perp u$ for all $x \in \cald$, all $w \in H_0$ and every proximal normal $u \in \caln_{\cald}^{1,\pr}(x)$.
\end{remark}

The insight of the following remark in particular applies if $\cald$ is a finite dimensional submanifold with boundary of $H$; see Section \ref{sec-manifolds}.

\begin{remark}\label{remark-tangent-cone-fin-dim}
Suppose that the linear space generated by $T_{\cald}^c(x)$ is finite dimensional for each $x \in \cald$. Then the closed range assumption (Assumption \ref{ass-closed-range}) in Theorem \ref{thm-main} can be skipped. Indeed, recall that Assumption \ref{ass-closed-range} is required in order to define the Moore-Penrose pseudoinverse $C^+$, which shows up in condition \eqref{cond-drift-N}. In this particular situation, Assumption \ref{ass-closed-range} already follows from condition \eqref{cond-C-zero}, which is a consequence of Lemma \ref{lemma-closed-range} and Remark \ref{rem-main}.
\end{remark}

\begin{remark}\label{rem-drift-prox-normal}
Concerning the drift condition \eqref{cond-drift-N}, we note that the following statements are equivalent:
\begin{enumerate}
\item[(i)] For all $x \in \cald$ and all $u \in \caln_{\cald}^{1,\pr}(x)$ we have \eqref{cond-drift-N}.

\item[(ii)] For all $x \in \cald$ and all $u \in \caln_{\cald}^p(x)$ we have \eqref{cond-drift-N}.
\end{enumerate}
Moreover, concerning the drift condition \eqref{cond-drift-sigma-N}, we note that the following statements are equivalent:
\begin{enumerate}
\item[(i)] For all $x \in \cald$ and all $u \in \caln_{\cald}^{1,\pr}(x)$ we have \eqref{cond-drift-sigma-N}.

\item[(ii)] For all $x \in \cald$ and all $u \in \caln_{\cald}^p(x)$ we have \eqref{cond-drift-sigma-N}.
\end{enumerate}
Indeed, just note that $\caln_{\cald}^p(x)$ is the cone generated by $\caln_{\cald}^{1,\pr}(x)$.
\end{remark}

The drift conditions in Theorem \ref{thm-main} and Theorem \ref{thm-main-sigma} can be expressed by means of the contingent curvature (see Definition \ref{def-curvature}) as follows.

\begin{remark}\label{rem-curvature-1}
Suppose that condition \eqref{cond-C-zero} is fulfilled for all $x \in \cald$ and $u \in \caln_{\cald}^{1,\pr}(x)$. Then the drift condition \eqref{cond-drift-N} is satisfied if and only if
\begin{align}\label{cond-curvature-C}
\la u,b(x) \ra + \frac{1}{2} \sum_{j=1}^{\infty} {\rm Curv}_{\cald}(x,u) ( (C C^+)^j(x), C^j(x) ) \leq 0
\end{align}
for all $x \in \cald$ and $u \in \caln_{\cald}^{1,\pr}(x)$. Indeed, by Remark \ref{rem-main} we have \eqref{cond-C-zero} for all $x \in \cald$ and $u \in \caln_{\cald}^c(x)$. Therefore, for all $x \in \cald$ and $u \in \caln_{\cald}^c(x)$ we obtain
\begin{align}\label{C-orth-1}
\la u,C^j(x) \ra = \la u,C(x) e_j \ra = \la C(x)u,e_j \ra = 0 \quad \forall j \in \bbn
\end{align}
as well as
\begin{align}\label{C-orth-2}
\la u, (C C^+)^j(x) \ra = \la u,C(x)C^+(x) e_j \ra = \la C(x) u, C^+(x) e_j \ra = 0 \quad \forall j \in \bbn.
\end{align}
Consequently, the stated equivalence \eqref{cond-drift-N} $\Leftrightarrow$ \eqref{cond-curvature-C} follows from Remark \ref{rem-curvature-arguments} and Proposition \ref{prop-curvature}.
\end{remark}

\begin{corollary}
Let $\cald$ be a finitely generated closed convex cone, and let Assumptions \ref{ass-lin-growth}, \ref{ass-extension}, \ref{ass-Sigma-self-adjoint} be in force. Then $\cald$ is stochastically invariant with respect to the diffusion \eqref{SDE} if and only if for all $x \in \cald$ and all $u \in \caln_{\cald}^{1,\pr}(x)$ we have
\begin{align*}
C(x)u = 0 \quad \text{and} \quad \la u,b(x) \ra \leq 0.
\end{align*}
\end{corollary}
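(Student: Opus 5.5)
The plan is to derive the corollary from Theorem \ref{thm-main} together with Remark \ref{remark-tangent-cone-fin-dim}. There are three things to do. First, check that the remaining hypotheses of the theorem hold automatically for a finitely generated closed convex cone. Second, skip Assumption \ref{ass-closed-range}. Third, show that the correction term in \eqref{cond-drift-N} vanishes for proximal normals, so that \eqref{cond-drift-N} reduces to $\la u,b(x)\ra \le 0$.

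\textbf{Step 1: the Heine-Borel property and the closed range assumption.} Let $\cald = \{\sum_{i=1}^m \alpha_i v_i : \alpha_i \ge 0\}$ for finitely many $v_1,\dots,v_m \in H$.
\begin{itemize}
\item $\cald$ is contained in the finite dimensional space $V := \lin\{v_1,\dots,v_m\}$, and it is closed in $V$.
\item Every bounded closed subset of $\cald$ is therefore a bounded closed subset of a finite dimensional space, hence compact. This gives Assumption \ref{ass-Heine-Borel}.
\item The Clarke tangent cone satisfies $T_{\cald}^c(x) \subset V$, so its linear span is finite dimensional.
\end{itemize}
By Remark \ref{remark-tangent-cone-fin-dim}, Assumption \ref{ass-closed-range} can then be dropped in both directions of the equivalence. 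In the necessity direction, \eqref{cond-C-zero} is proved before $C^+$ is needed. Once \eqref{cond-C-zero} holds, it gives $\ran \sigma(x) \subset T^c_{\cald}(x) \subset V$ via Remark \ref{rem-main}(v), and $C^+$ is then well defined. I expect this to be the point requiring the most care: one must confirm that the necessity proof (Theorem \ref{thmnecessity}) establishes \eqref{cond-C-zero} without using closed range, which is exactly the content invoked in Remark \ref{remark-tangent-cone-fin-dim}.

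\textbf{Step 2: the correction term vanishes.} I would use the curvature reformulation in Remark \ref{rem-curvature-1}. Under \eqref{cond-C-zero}, condition \eqref{cond-drift-N} is equivalent to \eqref{cond-curvature-C}. For a convex polyhedral cone, every point $x$ has a neighbourhood $U$ in which $\cald \cap U = (x + T_{\cald}(x)) \cap U$, where $T_{\cald}(x)$ is the polyhedral tangent cone. Hence the contingent curvature ${\rm Curv}_{\cald}(x,u)(v,w)$ vanishes for all tangent directions $v,w$, by Definition \ref{def-curvature}.

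An alternative, more direct argument also works. For $u \in \caln_{\cald}^{1,\pr}(x)$, convexity gives $\la u, y - x\ra \le 0$ for all $y \in \cald$. Because $\cald$ is locally conical at $x$, the tangent directions form a cone $K$ containing $\ran C(y)$ for all $y \in \cald$ near $x$ (again by \eqref{cond-C-zero} and Remark \ref{rem-main}). In particular $\la u, C(y)e_j\ra \le 0$ with equality at $y = x$, so $y \mapsto \la u, C(y)e_j\ra$ attains a maximum at $x$ along $\cald$. The columns $(CC^+)^j(x) = P_C(x)e_j$ lie in $\ran C(x) \subset \lin T_{\cald}(x)$.

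\textbf{Step 3: conclusion.} To finish, I would show that $\la u, DC^j(x)w\ra = 0$ for $w$ in the lineality directions of $\ran C(x)$. Since $C(x)\ge 0$ has range in $K$ and is symmetric, its range lies in the lineality space $K\cap(-K)$, along which $x\pm tw\in\cald$. Differentiating $\la u, C(x \pm t w)e_j\ra \le 0$, which equals $0$ at $t = 0$, then gives $\la u, DC^j(x)w\ra = 0$. Summing over $j$ via Lemma \ref{lemma-series-1} shows the series term is zero, so \eqref{cond-drift-N} reads $\la u,b(x)\ra\le 0$. The equivalence (i) $\Leftrightarrow$ (ii) of Theorem \ref{thm-main} then gives the corollary.
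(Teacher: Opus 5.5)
Your skeleton matches the paper's proof. The Heine--Borel property and the finite-dimensional span of $T^c_\cald(x)$ let you apply Theorem \ref{thm-main} through Remark \ref{remark-tangent-cone-fin-dim}. Everything then hinges on showing $\la u, DC^j(x)(CC^+)^j(x)\ra=0$, which the paper obtains from Remark \ref{rem-curvature-1} and Proposition \ref{prop-curvature-cone}. Your worry in Step~1 is justified: Theorem \ref{thmnecessity} uses Assumption \ref{ass-closed-range} from the outset, through the finite spectral decomposition \eqref{E:spectralC}. So the necessity of \eqref{cond-C-zero} without closed range is not literally proved there, and the paper also just appeals to Remark \ref{remark-tangent-cone-fin-dim} at this point. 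The first-order argument from the proof of Lemma \ref{lemma-short-time-0}, which only needs the non-martingale part to be $o(\sqrt t)$, supplies it using continuity of $\sigma$ alone.

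The genuine gap is in Steps 2--3: both justifications you give for the vanishing of the correction term rest on false claims.

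First, the contingent curvature does not vanish for all tangent $v,w$. Take $\cald=\bbr_+^2$, $x=0$, $u=(-1,0)$, $v=(0,1)$. Choosing $v_n=v$ shows $(s,0)\in D\caln^b_\cald(0,u)v$ for every $s\in\bbr$, so ${\rm Curv}_\cald(0,u)(v,(1,0))=+\infty$. Local conicity alone does not make the curvature zero. It vanishes only for $v\in\caln^c_\cald(x)^\perp$ and $w=\tau(x)$ with $\tau$ as in \eqref{orthogonal-tau}, and that is exactly Proposition \ref{prop-curvature-cone}.

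Second, in the direct argument, $\ran C(y)\subset T_\cald(x)$ fails for nearby $y$. For a polyhedral cone one has $T_\cald(y)\supseteq T_\cald(x)$ near $x$, and \eqref{cond-C-zero} says nothing at interior points. Consequently $\la u,C(\cdot)e_j\ra$ need not be maximal at $x$. Take $\cald=\bbr_+^2$, $x=(1,0)$, $u=(0,-1)$ and $C(y)=y_1^2\big(\begin{smallmatrix}1&-y_2\\-y_2&2y_2^2\end{smallmatrix}\big)$, multiplied by a cutoff to meet \eqref{C-lin-growth}. Then $C\geq 0$ and \eqref{cond-C-zero} holds on $\cald$, yet $\la u,C(y)e_1\ra=y_1^2y_2>0$ for $y_2>0$. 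So the inequality $\la u,C(x\pm tw)e_j\ra\le 0$ in Step 3 is unsupported as written.

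That inequality is true, but for a reason you do not supply, and this reason is the actual content of the corollary. Let $w\in\ran C(x)\subset\caln^c_\cald(x)^\perp$. Lemma \ref{lemma-ccc} applied to $\pm w$ gives $x\pm tw\in\cald$ and $\caln^b_\cald(x)\subset\caln^b_\cald(x\pm tw)$ for small $t\ge 0$. Hence $u\in\caln^c_\cald(x\pm tw)$, and \eqref{cond-C-zero} on Clarke normals (Remark \ref{rem-main}) yields $C(x\pm tw)u=0$, i.e.\ $\la u,C^j(x\pm tw)\ra=0$. Differentiating at $t=0$ gives $\la u,DC^j(x)w\ra=0$. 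With this repair, your Step 3 is precisely the paper's argument (Propositions \ref{prop-curvature} and \ref{prop-curvature-cone}) written without the curvature formalism.
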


\begin{proof}
Since the cone $\cald$ is finitely generated, the Heine-Borel property (Assumption \ref{ass-Heine-Borel}) is fulfilled, and by Lemma \ref{lemma-ccc-tangent-normal} the linear space generated by $T_{\cald}^c(x)$ is finite dimensional for each $x \in \cald$. Now, suppose that condition \eqref{cond-C-zero} is fulfilled. Then, taking into account \eqref{C-orth-1} and \eqref{C-orth-2}, by Proposition \ref{prop-curvature-cone} for all $x \in \cald$ and $u \in \caln_{\cald}^{1,\pr}(x)$ we have
\begin{align*}
{\rm Curv}_{\cald}(x,u) ( (C C^+)^j(x), C^j(x) ) = 0 \quad \forall j \in \bbn.
\end{align*}
Consequently, the result is a consequence of Theorem \ref{thm-main}, Remark \ref{remark-tangent-cone-fin-dim} and Remark \ref{rem-curvature-1}.
\end{proof}

The following condition \eqref{cond-curvature-sigma} has also been derived in \cite{Aubin-Doss} in the finite dimensional setting.

\begin{remark}\label{rem-curvature-2}
Suppose that condition \eqref{cond-sigma-zero} is fulfilled for all $x \in \cald$ and $u \in \caln_{\cald}^{1,\pr}(x)$. Then the drift condition \eqref{cond-drift-sigma-N} is satisfied if and only if
\begin{align}\label{cond-curvature-sigma}
\la u,b(x) \ra + \frac{1}{2} \sum_{j=1}^{\infty} {\rm Curv}_{\cald}(x,u) ( \sigma^j(x), \sigma^j(x) ) \leq 0
\end{align}
for all $x \in \cald$ and $u \in \caln_{\cald}^{1,\pr}(x)$. Indeed, by Remark \ref{rem-main} we have \eqref{cond-sigma-zero} for all $x \in \cald$ and $u \in \caln_{\cald}^c(x)$. Therefore, by Lemma \ref{lemma-adjoint-ker-ran} for all $x \in \cald$ and $u \in \caln_{\cald}^c(x)$ we have
\begin{align}\label{sigma-orth}
\la u,\sigma^j(x) \ra = 0 \quad \forall j \in \bbn.
\end{align}
Consequently, the stated equivalence \eqref{cond-drift-sigma-N} $\Leftrightarrow$ \eqref{cond-curvature-sigma} follows from Remark \ref{rem-curvature-arguments} and Proposition \ref{prop-curvature}.
\end{remark}

\begin{corollary}
Let $\cald$ be a finitely generated closed convex cone, and let Assumptions \ref{ass-lin-growth}, \ref{ass-Sigma-self-adjoint}, \ref{ass-sigma-smooth} be in force. Then $\cald$ is stochastically invariant with respect to the diffusion \eqref{SDE} if and only if for all $x \in \cald$ and all $u \in \caln_{\cald}^{1,\pr}(x)$ we have
\begin{align*}
\sigma(x)^* u = 0 \quad \text{and} \quad \la u,b(x) \ra \leq 0.
\end{align*}
\end{corollary}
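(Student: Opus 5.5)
The plan is to reduce this corollary to Theorem \ref{thm-main-sigma}, in the same way the preceding corollary was reduced to Theorem \ref{thm-main}. Among the hypotheses of Theorem \ref{thm-main-sigma}, Assumptions \ref{ass-lin-growth}, \ref{ass-Sigma-self-adjoint} and \ref{ass-sigma-smooth} are given. So the only thing left to check is the Heine-Borel property, Assumption \ref{ass-Heine-Borel}.

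\textbf{Step 1: the Heine-Borel property.} A finitely generated closed convex cone $\cald = \{ \sum_{i=1}^m \alpha_i g_i : \alpha_i \geq 0 \}$ lies in the finite dimensional subspace $\lin\{g_1,\ldots,g_m\}$. It is closed there, so every closed and bounded subset of $\cald$ is compact.

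\textbf{Step 2: apply the theorem.} Theorem \ref{thm-main-sigma} now says that stochastic invariance is equivalent to the following: for all $x \in \cald$ and $u \in \caln_{\cald}^{1,\pr}(x)$, we have $\sigma(x)^* u = 0$ together with the drift condition \eqref{cond-drift-sigma-N}. It therefore suffices to show that, once $\sigma(x)^*u=0$ holds for all proximal normals, the drift condition \eqref{cond-drift-sigma-N} reduces to $\la u,b(x)\ra \leq 0$.

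\textbf{Step 3: the correction term vanishes.} Assume \eqref{cond-sigma-zero} holds for all proximal normals. By Remark \ref{rem-main} it then holds for all $u \in \caln_{\cald}^c(x)$, and hence \eqref{sigma-orth} gives $\la u,\sigma^j(x)\ra=0$ for all $j$. Remark \ref{rem-curvature-2} then shows that \eqref{cond-drift-sigma-N} is equivalent to the curvature condition \eqref{cond-curvature-sigma}. By Proposition \ref{prop-curvature-cone}, applied with $v=w=\sigma^j(x)$ (vectors orthogonal to $u$), each term satisfies ${\rm Curv}_{\cald}(x,u)(\sigma^j(x),\sigma^j(x))=0$. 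Hence \eqref{cond-curvature-sigma} reduces to $\la u,b(x)\ra\leq 0$, which completes the equivalence.

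The main obstacle is Step 3, namely applying Proposition \ref{prop-curvature-cone}. Its hypotheses, as used in the previous corollary, should only require orthogonality to the normal vector; this is exactly what \eqref{sigma-orth} provides. If the proposition instead needs the vectors to lie in the span of the tangent cone, I would use Lemma \ref{lemma-ccc-tangent-normal} and Remark \ref{rem-main}(v), which give $\sigma^j(x)\in T_{\cald}^c(x)$, to get the required tangency.
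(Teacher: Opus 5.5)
Your proposal is correct and is essentially the paper's own proof. It obtains Heine--Borel from finite generation, then uses \eqref{sigma-orth} with Proposition~\ref{prop-curvature-cone} (taking $\tau=\sigma^j$, which is $C^2$ by Assumption~\ref{ass-sigma-smooth}, and $v=\sigma^j(x)\in\caln_{\cald}^c(x)^{\perp}$) to kill every curvature term, and concludes via Remark~\ref{rem-curvature-2} and Theorem~\ref{thm-main-sigma}; the paper's proof cites Theorem~\ref{thm-main} at that point, but Theorem~\ref{thm-main-sigma} is the one that applies, as you have it. Your fallback is unnecessary: the proposition needs $\sigma^j(y)$ orthogonal to the normal cones at every $y\in\cald$ and $\tau$ of class $C^1$, and \eqref{sigma-orth} with Assumption~\ref{ass-sigma-smooth} supplies exactly that.
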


\begin{proof}
Since the cone $\cald$ is finitely generated, the Heine-Borel property (Assumption \ref{ass-Heine-Borel}) is fulfilled. Now, suppose that condition \eqref{cond-sigma-zero} is fulfilled. Then, taking into account \eqref{sigma-orth}, by Proposition \ref{prop-curvature-cone} for all $x \in \cald$ and $u \in \caln_{\cald}^{1,\pr}(x)$ we have
\begin{align*}
{\rm Curv}_{\cald}(x,u) ( \sigma^j(x), \sigma^j(x) ) = 0 \quad \forall j \in \bbn.
\end{align*}
Therefore, the result is a consequence of Theorem \ref{thm-main} and Remark \ref{rem-curvature-2}.
\end{proof}

As Remark \ref{rem-main} shows, condition \eqref{cond-C-zero} can be characterized by means of tangent cones. Now, we would like to achieve a similar characterization for the drift condition \eqref{cond-drift-N}. For this purpose, we have to impose additional conditions
\begin{itemize}
\item on the regularity of the series \eqref{eq:seriesCC+} or the series \eqref{eq:strato},

\item or on the set $\cald$.
\end{itemize}
In view of the upcoming remark, recall that the series \eqref{eq:seriesCC+} is always weakly convergent.

\begin{remark}\label{rem-drift-aC-cont}
Suppose that the mapping $a_C : \cald \to H$ defined as
\begin{align}\label{aC-def}
a_C(x) := b(x) - \frac{1}{2} \, \sigma \text{-} \sum_{j=1}^{\infty} D C^j(x) (C C^+)^j(x)
\end{align}
is continuous. Then, by Proposition \ref{prop-normal-inward-a-cont} and Remark \ref{rem-more-general-inward-a} the following statements are equivalent:
\begin{itemize}
\item[(i)] For all $x \in \cald$ and all $u \in \caln_{\cald}^{1,\pr}(x)$ we have \eqref{cond-drift-N}.

\item[(ii)] For all $x \in \cald$ and all $u \in \caln_{\cald}^c(x)$ we have \eqref{cond-drift-N}.

\item[(iii)] For all $x \in \cald$ we have $a_C(x) \in T_{\cald}^c(x)$.

\item[(iv)] For all $x \in \cald$ we have $a_C(x) \in T_{\cald}^{\sigma}(x)$.
\end{itemize}
Moreover, the above equivalences remain true under the more general condition that for all elements $x \in \cald$ and $u \in H$, and all sequences $(x_n)_{n \in \bbn} \subset \cald$ and $(u_n)_{n \in \bbn} \subset H$  such that $u_n \in \caln_{\cald}^p(x_n)$ for all $n \in \bbn$ and $x_n \to x$ as well as $u_n \overset{\sigma}{\to} u$ it follows that $\la u_n,a_C(x_n) \ra \to \la u,a_C(x) \ra$.
\end{remark}

The insight of Remark \ref{rem-drift-aC-cont} applies if the mapping $\cald \to L(H)$, $x \mapsto P_C(x)$ is continuous. This is a consequence of Lemma \ref{lemma-series-1} and Lemma \ref{lemma-proj-continuous}.

\begin{remark}\label{rem-drift-a-sigma-cont}
Suppose that the series \eqref{eq:strato} is weakly convergent, and that the mapping $a_{\sigma} : \cald \to H$ defined as
\begin{align}\label{a-sigma-def}
a_{\sigma}(x) := b(x) - \frac{1}{2} \, \sigma \text{-} \sum_{j=1}^{\infty} D \sigma^j(x) \sigma^j(x)
\end{align}
is continuous. Then, by Proposition \ref{prop-normal-inward-a-cont} and Remark \ref{rem-more-general-inward-a} the following statements are equivalent:
\begin{itemize}
\item[(i)] For all $x \in \cald$ and all $u \in \caln_{\cald}^{1,\pr}(x)$ we have \eqref{cond-drift-sigma-N}.

\item[(ii)] For all $x \in \cald$ and all $u \in \caln_{\cald}^c(x)$ we have \eqref{cond-drift-sigma-N}.

\item[(iii)] For all $x \in \cald$ we have $a_{\sigma}(x) \in T_{\cald}^c(x)$.

\item[(iv)] For all $x \in \cald$ we have $a_{\sigma}(x) \in T_{\cald}^{\sigma}(x)$.
\end{itemize}
Moreover, the additional statement from Remark \ref{rem-drift-aC-cont} holds true with $a_C$ replaced by $a_{\sigma}$.
\end{remark}

Now, we will impose an additional condition on the set $\cald$. For the concept of $\varphi$-convexity, which weakens the notion of convexity, and the more general concept of local $\varphi$-convexity we refer to Definitions \ref{def-phi-convex} and \ref{def-local-phi-convex}.

\begin{remark}\label{rem-phi-convex}
Suppose that the set $\cald$ is locally $\varphi$-convex. Denoting by $a_C : \cald \to H$ the mapping \eqref{aC-def}, the following statements are equivalent:
\begin{itemize}
\item[(i)] For all $x \in \cald$ and all $u \in \caln_{\cald}^{1,\pr}(x)$ we have \eqref{cond-drift-N}.

\item[(ii)] For all $x \in \cald$ and all $u \in \caln_{\cald}^c(x)$ we have \eqref{cond-drift-N}.

\item[(iii)] For all $x \in \cald$ we have $a_C(x) \in T_{\cald}^c(x)$.
\end{itemize}
This is a consequence of Proposition \ref{prop-a-phi-convex}.
\end{remark}

\begin{remark}\label{rem-phi-convex-sigma}
Suppose that the set $\cald$ is locally $\varphi$-convex, and that that the series \eqref{eq:strato} is weakly convergent. Denoting by $a_{\sigma} : \cald \to H$ the mapping \eqref{a-sigma-def}, the following statements are equivalent:
\begin{itemize}
\item[(i)] For all $x \in \cald$ and all $u \in \caln_{\cald}^{1,\pr}(x)$ we have \eqref{cond-drift-sigma-N}.

\item[(ii)] For all $x \in \cald$ and all $u \in \caln_{\cald}^c(x)$ we have \eqref{cond-drift-sigma-N}.

\item[(iii)] For all $x \in \cald$ we have $a_{\sigma}(x) \in T_{\cald}^c(x)$.
\end{itemize}
This is a consequence of Proposition \ref{prop-a-phi-convex}.
\end{remark}

The findings of these two remarks in particular apply if for all $x \in \cald$ there is a closed neighborhood $C \subset H$ of $x$ such that $\cald \cap C$ is convex; see Proposition \ref{prop-phi-convex}.

\section{Invariant manifolds}\label{sec-manifolds}

In this section we apply our findings to the situation where the subset is a manifold. More precisely, let $\calm$ be a finite dimensional $C^1$-submanifold with boundary of $H$; we refer to Appendix \ref{app-submanifolds} for further details. We assume that $\calm$ is closed as a subset of $H$.

\begin{theorem}\label{thm-manifold-C1}
Let Assumptions \ref{ass-lin-growth}, \ref{ass-extension}, \ref{ass-Sigma-self-adjoint},  \ref{ass-Heine-Borel} (with $\cald = \calm$)  be in force. The following statements are true:
\begin{enumerate}
\item The submanifold $\calm$ is stochastically invariant with respect to the diffusion \eqref{SDE} if and only if for all $x \in \calm$ and all $w \in H_0$ we have
\begin{align}\label{cond-sigma-manifold}
&\sigma(x)w \in
\begin{cases}
T_x \mathcal{M}, & \text{if $x \in \mathcal{M} \setminus \partial \mathcal{M}$,}
\\ T_x \partial \mathcal{M}, & \text{if $x \in \partial \mathcal{M}$,}
\end{cases}
\end{align}
and for all $x \in \calm$ and all $u \in \caln_{\calm}^{1,\pr}(x)$ we have \eqref{cond-drift-N}.

\item Suppose, in addition, that one of the following conditions is satisfied:
\begin{itemize}
\item The submanifold $\calm$ is of class $C^2$.

\item The mapping $a_C : \calm \to H$ defined in \eqref{aC-def} is continuous.

\item The mapping $\calm \to L(H)$, $x \mapsto P_C(x)$ is continuous.
\end{itemize}
Then the submanifold $\calm$ is stochastically invariant with respect to the diffusion \eqref{SDE} if and only if for all $x \in \calm$ and all $w \in H_0$ we have \eqref{cond-sigma-manifold}, and for all $x \in \calm$ we have
\begin{align}\label{cond-drift-manifold}
b(x) - \frac{1}{2} \, \sigma \text{-} \sum_{j=1}^{\infty} D C^j(x) (C C^+)^j(x) \in
\begin{cases}
T_x \mathcal{M}, & \text{if $x \in \mathcal{M} \setminus \partial \mathcal{M}$,}
\\ (T_x \mathcal{M})_+, & \text{if $x \in \partial \mathcal{M}$.}
\end{cases}
\end{align}
\end{enumerate}
\end{theorem}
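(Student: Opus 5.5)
The plan is to deduce Theorem \ref{thm-manifold-C1} from Theorem \ref{thm-main}, combined with the equivalent characterizations of Section \ref{sec-eq-char} and the description of tangent and normal cones of a finite dimensional $C^1$-submanifold with boundary from Appendix \ref{app-submanifolds}. The first obstacle is that Theorem \ref{thm-main} requires the closed range assumption (Assumption \ref{ass-closed-range}), which is not assumed here. Remark \ref{remark-tangent-cone-fin-dim} handles this. For $x \in \calm$ the Clarke tangent cone $T_{\calm}^c(x)$ equals $T_x\calm$ at interior points and $(T_x\calm)_+$ at boundary points. In both cases it spans the finite dimensional space $T_x\calm$. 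Hence Assumption \ref{ass-closed-range} follows from condition \eqref{cond-C-zero}, and \eqref{cond-C-zero} is necessary for invariance by the necessity part (Theorem \ref{thmnecessity}), whose proof of \eqref{cond-C-zero} does not use $C^+$. So in both directions we may work under the closed range assumption, and Theorem \ref{thm-main} applies with $\cald = \calm$.

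For part (1), it then suffices to show that \eqref{cond-C-zero} for all proximal normals is equivalent to \eqref{cond-sigma-manifold}. By Remark \ref{rem-main}, (i)$\Leftrightarrow$(v), condition \eqref{cond-C-zero} is equivalent to $\sigma(x)w \in T_{\calm}^c(x)$ for all $w \in H_0$. At interior points $T^c_{\calm}(x) = T_x\calm$, which gives the first case. At boundary points $T^c_{\calm}(x) = (T_x\calm)_+$ is a half-space of $T_x\calm$. Since $w \mapsto \sigma(x)w$ is linear, $\sigma(x)(H_0)$ is a linear space contained in this half-space. It must therefore lie in the largest linear subspace of $(T_x\calm)_+$, which is $T_x\partial\calm$. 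Combining this with Theorem \ref{thm-main}, (i)$\Leftrightarrow$(ii), proves (1).

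For part (2), the task is to convert the drift condition \eqref{cond-drift-N} over proximal normals into the tangency statement \eqref{cond-drift-manifold}. In the second and third bullet cases, $a_C$ is continuous; for the third bullet this follows from Lemma \ref{lemma-series-1} and Lemma \ref{lemma-proj-continuous}, as noted after Remark \ref{rem-drift-aC-cont}. Remark \ref{rem-drift-aC-cont}, (i)$\Leftrightarrow$(iii), then yields $a_C(x) \in T^c_{\calm}(x)$, which by the cone identification above is exactly \eqref{cond-drift-manifold}. In the first bullet case ($\calm$ of class $C^2$), I would check that a closed $C^2$-submanifold with boundary is locally $\varphi$-convex. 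Near each point, $\calm$ is the image of a $C^2$ chart on a half-space, and the quadratic remainder of the chart gives $\langle u, y-x\rangle \le \varphi\|u\|\|y-x\|^2$ for normals $u$ and nearby $y$. Remark \ref{rem-phi-convex} then gives the same equivalence with $a_C(x)\in T^c_{\calm}(x)$.

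The main obstacle I expect is this local $\varphi$-convexity verification at boundary points, together with the precise identification of $T^c_{\calm}(x)$ and of $\caln^{1,\pr}_{\calm}(x)$ there. I would first try to reduce to the flat case via a $C^2$ chart and take the constants uniform on a compact neighborhood, which is available because $\calm$ is finite dimensional. If the appendix already supplies the cone descriptions and $\varphi$-convexity, this step reduces to quoting those results.
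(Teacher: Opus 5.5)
Your proposal is correct and follows the paper's own proof essentially step for step. For the cones you use Proposition \ref{prop-submanifold-sigma-regular} and Remark \ref{rem-tang-boundary}; for part (1), Remark \ref{rem-main} with Theorem \ref{thm-main} and Remark \ref{remark-tangent-cone-fin-dim}; and for part (2), Remark \ref{rem-drift-aC-cont} with Lemma \ref{lemma-proj-continuous}, or Proposition \ref{prop-submanifold-phi-regular} with Remark \ref{rem-phi-convex}.

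One correction: you cannot quote Theorem \ref{thmnecessity} to get \eqref{cond-C-zero} before closed range is known. Its proof uses Assumption \ref{ass-closed-range} for more than defining $C^+$: finite rank is what produces the $C^2$ square root $\sigma_x$ and the isomorphism $A_x^{\epsilon}$. Instead, obtain \eqref{cond-C-zero} with merely continuous $\sigma$ from the scaling argument in the proof of Lemma \ref{lemma-short-time-0}. After localization the drift part is $O(t)$, while $t^{-1/2}\int_0^t \sigma(X_s^{\tau})^* D\phi(X_s^{\tau})\,dW_s$ converges in law to a centered Gaussian with variance $\|\sigma(x)^* u\|_{H_0}^2$. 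Since the sum is nonpositive, this forces $\sigma(x)^* u = 0$.
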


\begin{proof}
By Proposition \ref{prop-submanifold-sigma-regular}, for each $x \in \calm$ we have
\begin{align}\label{tangent-cone-manifold}
T_{\calm}^c(x) = T_{\calm}^{\sigma}(x) =
\begin{cases}
T_x \calm, & \text{if $x \in \calm \setminus \partial \calm$,}
\\ (T_x \calm)_+, & \text{if $x \in \partial \calm$.}
\end{cases}
\end{align}
Furthermore, by Remark \ref{rem-tang-boundary} we have
\begin{align}\label{tangent-space-at-boundary}
T_x \partial \calm = (T_x \calm)_+ \cap - (T_x \calm)_+, \quad x \in \partial \calm.
\end{align}
After these preparations, we can prove the two statements as follows:
\begin{enumerate}
\item Taking into account Remark \ref{rem-main} and \eqref{tangent-cone-manifold}, \eqref{tangent-space-at-boundary}, this statement is a consequence of Theorem \ref{thm-main} and Remark \ref{remark-tangent-cone-fin-dim}.

\item If the submanifold $\calm$ is of class $C^2$, then by Proposition \ref{prop-submanifold-phi-regular} it is locally $\varphi$-convex. Consequently, taking into account \eqref{tangent-cone-manifold}, this statement follows from the first statement as well as Remark \ref{rem-drift-aC-cont}, Lemma \ref{lemma-proj-continuous} and Remark \ref{rem-phi-convex}.
\end{enumerate}
\end{proof}

\begin{theorem}\label{thm-manifold-C1-sigma}
Let Assumptions \ref{ass-lin-growth}, \ref{ass-Sigma-self-adjoint},  \ref{ass-Heine-Borel} (with $\cald = \calm$), and \ref{ass-sigma-smooth} be in force. The following statements are true:
\begin{enumerate}
\item The submanifold $\calm$ is stochastically invariant with respect to the diffusion \eqref{SDE} if and only if for all $x \in \calm$ and all $w \in H_0$ we have \eqref{cond-sigma-manifold}, and for all $x \in \calm$ and all $u \in \caln_{\calm}^{1,\pr}(x)$ we have \eqref{cond-drift-sigma-N}.

\item Suppose, in addition, that the series \eqref{eq:strato} is weakly convergent, and that one of the following conditions is satisfied:
\begin{itemize}
\item The submanifold $\calm$ is of class $C^2$.

\item The mapping $a_{\sigma} : \calm \to H$ defined in \eqref{a-sigma-def} is continuous.
\end{itemize}
Then the submanifold $\calm$ is stochastically invariant with respect to the diffusion \eqref{SDE} if and only if for all $x \in \calm$ and all $w \in H_0$ we have \eqref{cond-sigma-manifold}, and for all $x \in \calm$ we have
\begin{align}\label{cond-drift-Stratonovich-manifold}
b(x) - \frac{1}{2} \, \sigma \text{-} \sum_{j=1}^{\infty} D \sigma^j(x) \sigma^j(x) \in
\begin{cases}
T_x \mathcal{M}, & \text{if $x \in \mathcal{M} \setminus \partial \mathcal{M}$,}
\\ (T_x \mathcal{M})_+, & \text{if $x \in \partial \mathcal{M}$.}
\end{cases}
\end{align}
\end{enumerate}
\end{theorem}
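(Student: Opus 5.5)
The plan is to mirror the proof of Theorem \ref{thm-manifold-C1}, with Theorem \ref{thm-main-sigma} taking the place of Theorem \ref{thm-main}.

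\emph{Geometric input.} By Proposition \ref{prop-submanifold-sigma-regular}, for each $x \in \calm$ the Clarke and weak Bouligand tangent cones coincide. They equal $T_x\calm$ at interior points and $(T_x\calm)_+$ at boundary points, as in \eqref{tangent-cone-manifold}. Moreover, by \eqref{tangent-space-at-boundary}, $T_x\partial\calm$ is the lineality space of $(T_x\calm)_+$.

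\emph{First statement.} Apply Theorem \ref{thm-main-sigma} with $\cald=\calm$. Its hypotheses are exactly the ones assumed here. Unlike Theorem \ref{thm-main}, no closed-range assumption is needed, so Remark \ref{remark-tangent-cone-fin-dim} is not required.

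Invariance is then equivalent to \eqref{cond-sigma-zero} together with \eqref{cond-drift-sigma-N} for all proximal normals. By Remark \ref{rem-main} (iii)$\Leftrightarrow$(v), condition \eqref{cond-sigma-zero} for all proximal normals is equivalent to $\sigma(x)w \in T_\calm^c(x)$ for all $w \in H_0$.

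Since $\sigma(x)$ is linear, $\sigma(x)H_0$ is a linear subspace, and $-w$ is admissible whenever $w$ is. Hence $\sigma(x)w \in T^c_\calm(x) \cap -T^c_\calm(x)$. By \eqref{tangent-cone-manifold} and \eqref{tangent-space-at-boundary}, this intersection is $T_x\calm$ at interior points and $T_x\partial\calm$ at boundary points. This gives \eqref{cond-sigma-manifold}. The converse inclusion holds because $T_x\partial\calm \subset (T_x\calm)_+$.

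\emph{Second statement.} Assume the series \eqref{eq:strato} is weakly convergent, so that $a_\sigma$ in \eqref{a-sigma-def} is defined. It suffices to show that \eqref{cond-drift-sigma-N} for all proximal normals is equivalent to $a_\sigma(x) \in T^c_\calm(x)$ for all $x$. By \eqref{tangent-cone-manifold}, that membership is exactly \eqref{cond-drift-Stratonovich-manifold}.

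There are two cases.
\begin{itemize}
\item If $a_\sigma$ is continuous, use Remark \ref{rem-drift-a-sigma-cont}, equivalence (i)$\Leftrightarrow$(iii).
\item If $\calm$ is of class $C^2$, Proposition \ref{prop-submanifold-phi-regular} shows that $\calm$ is locally $\varphi$-convex. Then Remark \ref{rem-phi-convex-sigma} gives the same equivalence.
\end{itemize}

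\emph{Main point to watch.} The drift series is only guaranteed to converge when paired with $u \in \ker(\sigma(x)^*)$. Weak convergence of \eqref{eq:strato} is therefore assumed exactly so that $a_\sigma$ is defined. Beyond this bookkeeping, the argument is a direct assembly of earlier results; the only nontrivial step is the lineality-space argument in the first statement.
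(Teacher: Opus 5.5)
Your proposal is correct and follows the paper's proof essentially verbatim. For the first statement you use Theorem \ref{thm-main-sigma} together with Remark \ref{rem-main} and the identities \eqref{tangent-cone-manifold}, \eqref{tangent-space-at-boundary}. For the second you use Remark \ref{rem-drift-a-sigma-cont} when $a_{\sigma}$ is continuous, and Proposition \ref{prop-submanifold-phi-regular} with Remark \ref{rem-phi-convex-sigma} in the $C^2$ case. The only difference is that you spell out the step the paper leaves implicit: $\sigma(x)H_0$ is a linear subspace, so it lies in $T^c_{\calm}(x) \cap -T^c_{\calm}(x)$.
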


\begin{proof}
As in the proof of Theorem \ref{thm-manifold-C1} we derive \eqref{tangent-cone-manifold}, \eqref{tangent-space-at-boundary} and proceed as follows:
\begin{enumerate}
\item Taking into account Remark \ref{rem-main} and \eqref{tangent-cone-manifold}, \eqref{tangent-space-at-boundary}, this statement is a consequence of Theorem \ref{thm-main-sigma}.

\item If the submanifold $\calm$ is of class $C^2$, then by Proposition \ref{prop-submanifold-phi-regular} it is locally $\varphi$-convex. Consequently, taking into account \eqref{tangent-cone-manifold}, this statement follows from the first statement as well as Remark \ref{rem-drift-a-sigma-cont} and Remark \ref{rem-phi-convex-sigma}.
\end{enumerate}
\end{proof}

Note that Theorem \ref{thm-manifold-C1-sigma} is in line with the findings from earlier works about invariant manifolds in infinite dimension; see, e.g. \cite{Filipovic-inv, Nakayama, FTT-manifolds}. In contrast to these papers, we only require weak convergence of the Stratonovich series \eqref{eq:strato}.

\begin{remark}
Concerning condition \eqref{cond-drift-N} in Theorem \ref{thm-manifold-C1} and condition \eqref{cond-drift-sigma-N} in Theorem \ref{thm-manifold-C1-sigma}, note that $\caln_{\calm}^{1,\pr}(x)$ can in each case be replaced by $\caln_{\calm}^p(x)$, which is due to Remark \ref{rem-drift-prox-normal}. Moreover, for a submanifold $\calm$ and a point $x \in \calm$ the cone $\caln_{\calm}^p(x)$ generated by all proximal normals satisfies the following inclusions:
\begin{enumerate}
\item If $x \in \mathcal{M} \setminus \partial \mathcal{M}$, then we have
\begin{align*}
\caln_{\calm}^p(x) \subset (T_x \calm)^{\perp}.
\end{align*}
\item If $x \in \partial \mathcal{M}$, then we have
\begin{align*}
\caln_{\calm}^p(x) \subset (T_x \calm)^{\perp} \oplus \lin^+ \{ n_x \},
\end{align*}
where $n_x$ denotes the outward pointing normal vector to $\partial \calm$ at $x$.
\end{enumerate}
Recalling that $\caln_{\calm}^p(x) \subset \caln_{\calm}^{\sigma}(x)$,
this is a consequence of Proposition \ref{prop-submanifold-sigma-regular}.
\end{remark}

\section{The necessity proof}\label{S:necessity}

In this section we establish the necessity conditions for stochastic invariance of our two main results. In Section \ref{S:proofnecessitysigma} this is done in the situation of Theorem \ref{thm-main-sigma}, and in Section \ref{S:proofnecessityC} this is done in the situation of Theorem \ref{thm-main}. Both proofs rely on the following key lemma concerning the implications of the non-positivity of expressions involving terms up to double stochastic integrals.

\begin{lemma}\label{lemmadoubleintstohilbert}
	Let $\alpha \in H_0$ and $(\beta_t)_{t\geq 0}$,  $(\gamma_t)_{t\geq 0}$ and  $(\theta_t)_{t\geq 0}$ be predictable processes taking values respectively in $H_0$, $ L_2(H_0)$ and $\mathbb{R}$ and satisfying
	\begin{enumerate}
		\item
		$\beta$ is bounded,
		\item  $\int_{0}^t \|\gamma_s\|_{L_2(H_0)}^2 ds < \infty$, for all $ t\geq 0$,
		\item
		there exists  $\eta >0$ such that
		\begin{equation}\label{eqgammacontinuity}
		\int_{0}^{t} \int_0^s \bbe \big[ \|\gamma_r - \gamma_0\|_{L_2(H_0)}^2 \big] dr ds = O(t^{2+\eta}) \quad \mbox{for } t \rightarrow 0,
		\end{equation}
		\item
		$\theta$ is a.s.~continuous at $0$.
	\end{enumerate}
	Suppose that for all $t \geq 0$
	\begin{equation}\label{eqintegraledoublehilbert}
	\int_{0}^{t} \theta_s ds + \int_{0}^{t} \left( \alpha + \int_{0}^{s} \beta_rdr +  \int_{0}^{s} \gamma_r dW_r \right)  dW_s  \leq 0.
	\end{equation}
	Then we have $\alpha = 0$. If, moreover, the series
		\begin{align}\label{series-gamma-0}
		\tr(\gamma_0) := \sum_{j=1}^{\infty} \la \gamma_0 f_j, f_j \ra_{H_0}
		\end{align}
		converges\footnote{Note that $\gamma_0$ is only assumed to be a Hilbert-Schmidt operator, which does not need to be nuclear. In case $\gamma_0$ is a nuclear operator, then the series in \eqref{series-gamma-0} coincides with the trace of $\gamma_0$.}, then $\gamma_0$ is self-adjoint and we have
		\begin{align*}
		\theta_0-\frac{1}{2}\tr( \gamma_0  ) \leq 0.
		\end{align*}
\end{lemma}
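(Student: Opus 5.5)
The plan is a small-time scaling argument in two stages. First I extract $\alpha=0$ from the $\sqrt t$-scale. Then, with $\alpha=0$, I pass to the $t$-scale and identify a limiting second-order Wiener functional whose sign constraint forces both claims. Throughout, an integral $\int \xi\, dW$ with $\xi\in H_0$ is read as the real stochastic integral against $\langle \xi,\cdot\rangle_{H_0}$, so $\int_0^t \alpha\, dW_s$ is centred Gaussian with variance $t\|\alpha\|_{H_0}^2$. Writing $B^j:=\langle W,f_j\rangle_{H_0}/\dots$ (the coordinates of $W$ in the basis $\{f_j\}$), the $B^j$ are independent standard real Brownian motions. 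Localization is harmless: since $\theta$ is a.s.\ continuous at $0$, on an event of probability arbitrarily close to one $\theta$ is bounded near $0$, and all limits below are taken in probability or in law.

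\emph{Step 1: $\alpha=0$.} Divide \eqref{eqintegraledoublehilbert} by $\sqrt t$ and estimate the terms as $t\to0$.
\begin{itemize}
\item $t^{-1/2}\int_0^t\theta_s\,ds\to0$ a.s.
\item By the It\^o isometry and boundedness of $\beta$, $\int_0^t(\int_0^s\beta_r\,dr)\,dW_s$ has $L^2$-norm $O(t^{3/2})$.
\item $\int_0^t\int_0^s\gamma_r\,dW_r\,dW_s$ has squared $L^2$-norm $\int_0^t\int_0^s\bbe\|\gamma_r\|^2_{L_2(H_0)}\,dr\,ds=O(t^2)$, using \eqref{eqgammacontinuity} together with the $\gamma_0$ part.
\end{itemize}
Hence $t^{-1/2}\int_0^t\alpha\,dW_s\sim N(0,\|\alpha\|_{H_0}^2)$ is the limit in probability of nonpositive quantities. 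This is impossible unless $\|\alpha\|_{H_0}=0$.

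\emph{Step 2: reduction to a fixed functional.} With $\alpha=0$, divide by $t$ instead.
\begin{itemize}
\item The $\beta$-term is $O(t^{3/2})=o(t)$ in $L^2$.
\item Replacing $\gamma_r$ by $\gamma_0$ costs, in squared $L^2$-norm, exactly the quantity in \eqref{eqgammacontinuity}. That is $O(t^{2+\eta})$, hence $o(t^2)$.
\item $t^{-1}\int_0^t\theta_s\,ds\to\theta_0$.
\end{itemize}
It remains to handle $t^{-1}\int_0^t\langle\gamma_0W_s,dW_s\rangle_{H_0}$. By Brownian scaling this has, for every $t$, the same law as $Z:=\sum_{i,j}g_{ij}\int_0^1B^j\,dB^i$, where $g_{ij}=\langle\gamma_0f_j,f_i\rangle_{H_0}$. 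Moreover, it is independent of $\calf_0$, and $\theta_0$ is $\calf_0$-measurable. Passing to the limit in the joint law gives $\bbp(\theta_0+Z>0)=0$, so $c+Z\le0$ a.s.\ for $\bbp_{\theta_0}$-a.e.\ value $c$.

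\emph{Step 3: analysis of $Z$.} Split $g=S+A$ into its symmetric part $S$ and antisymmetric part $A$. Then
\[
Z=\tfrac12 I_2(S)+\textstyle\sum_{i<j}a_{ij}\,\mathcal A_{ij},
\]
where $I_2(S)=\sum_{i,j}s_{ij}(B^i_1B^j_1-\delta_{ij})$ is the double Wiener--It\^o integral, an $L^2$-limit since $S$ is Hilbert--Schmidt, and the $\mathcal A_{ij}$ are L\'evy areas.

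To show $A=0$, I use the bridge decomposition. Conditionally on $B_1$ (restricting first to finitely many coordinates, with tail errors small in $L^2$), the area part has a symmetric conditional law with unbounded support whenever some $a_{ij}\ne0$. Since $c+Z\le0$ a.s., the area part must be bounded above, which contradicts this; hence $\gamma_0$ is self-adjoint.

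Then $Z=\tfrac12 I_2(S)$. Assume $\sum_j s_{jj}=\tr(\gamma_0)$ converges. Fix $N$ with the Hilbert--Schmidt tail of $S$ beyond the first $N$ coordinates small, and consider the positive-probability event on which $|B^1_1|,\dots,|B^N_1|$ are tiny.
\begin{itemize}
\item On this event the finite block of $I_2(S)$ is close to $-\sum_{j\le N}s_{jj}$.
\item The remainder, apart from the diagonal constants $-\sum_{j>N}s_{jj}\to0$, is small in $L^2$ and independent of the first $N$ coordinates, or controlled by Cauchy--Schwarz for the mixed terms.
\end{itemize}
Therefore $c-\tfrac12\tr(\gamma_0)\le\varepsilon$ for all $\varepsilon>0$, i.e.\ $\theta_0-\tfrac12\tr(\gamma_0)\le0$.

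I expect the main obstacle to be this last step. It requires an infinite-dimensional support argument for the second Wiener chaos when $\gamma_0$ is merely Hilbert--Schmidt, together with a rigorous handling of the conditioning on $B_1$ that removes the L\'evy-area part, with control of the tails uniform in the conditioning.
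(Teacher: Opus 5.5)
Your proposal is correct and follows essentially the same route as the paper: the $\sqrt{t}$-scaling that forces $\alpha=0$ (Lemma \ref{lemma-short-time-0}), then the $t$-scaling with $R_t/t\to0$ and Brownian scaling to a fixed second-order functional, then the symmetric, unbounded conditional law of the L\'evy area to make $\gamma_0$ self-adjoint, and finally small endpoint values to get $\theta_0-\frac12\tr(\gamma_0)\le0$ (Lemma \ref{lemma-short-time}). The differences are only in execution: for the antisymmetric part the paper takes the conditional expectation of the inequality with respect to $\sigma(W^i_s,W^j_s : s\ge0)$, which gives an exact two-dimensional inequality and so avoids your truncation with $L^2$-small tails (an $L^2$-small error does not preserve an almost sure inequality, so that route would need extra quantitative control); on the other hand, your $N$-coordinate small-ball argument for the last step supplies the detail behind the paper's one-line conclusion $\delta\le0$ from \eqref{inequ-bcd}, and working with the centred $I_2(S)$ gives self-adjointness even without assuming convergence of \eqref{series-gamma-0}.
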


\begin{proof}
    See Section~\ref{S:proofdoublestochasticinequality}.
\end{proof}

For the rest of this section we consider the mathematical framework from Section \ref{sec-main-results}, and Assumptions \ref{ass-lin-growth}, \ref{ass-Sigma-self-adjoint} will always be in force. The Heine-Borel property (Assumption \ref{ass-Heine-Borel}) of the closed subset $\cald \subset H$ will not be required throughout this section.

\subsection{Necessity under smoothness on $\sigma$: Theorem~\ref{thm-main-sigma}.}\label{S:proofnecessitysigma}

In this section we show that in the situation of Theorem \ref{thm-main-sigma} stochastic invariance implies that the invariance conditions \eqref{cond-sigma-zero} and \eqref{cond-drift-sigma-N} are fulfilled.

\begin{proposition}\label{prop-nec-sigma-C2}
	Suppose that Assumptions \ref{ass-lin-growth}, \ref{ass-Sigma-self-adjoint}, and \ref{ass-sigma-smooth} are in force.
	Assume that $\cald$ is invariant. Then, \eqref{cond-sigma-zero} and \eqref{cond-drift-sigma-N} hold for all $x \in \cald$ and $u \in \mathcal{N}^{1,\pr}_{\mathcal{D}}(x)$.
\end{proposition}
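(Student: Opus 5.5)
Fix $x \in \cald$, a proximal normal $u \in \caln_{\cald}^{1,\pr}(x)$, and a $\cald$-valued martingale solution $X$ with $X_0 = x$. The plan is to apply It\^{o}'s formula to a test function that is non-positive on $\cald$. We then expand the resulting semimartingale to second order, so that it has the form \eqref{eqintegraledoublehilbert}, and conclude with Lemma \ref{lemmadoubleintstohilbert}.

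\textbf{The test function.} The proximal normal inequality gives $\|y-(x+u)\| \geq \|u\|$ for all $y \in \cald$. Equivalently,
\begin{align*}
\la u, y-x\ra - \tfrac12\|y-x\|^2 \leq 0, \quad y \in \cald.
\end{align*}
Set $\phi(y) := \la u, y-x\ra - \frac12\|y-x\|^2$. Then $\phi \in C^2$ with $D\phi(y) = u-(y-x)$ and $D^2\phi = -\Id$. By Theorem \ref{thm-Ito} (after localizing with stopping times $\tau_R$ at which $\|X\|$ exceeds $R$, which is harmless because only small times matter),
\begin{align*}
0 \geq \phi(X_t) = \int_0^t \Big(\la u-(X_s-x), b(X_s)\ra - \tfrac12\|\sigma(X_s)\|_{L_2^0(H)}^2\Big)ds + \int_0^t \la u-(X_s-x), \sigma(X_s)dW_s\ra .
\end{align*}

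\textbf{Second-order expansion.} The drift part is $\int_0^t\theta_s ds$ with $\theta$ continuous and
\begin{align*}
\theta_0 = \la u,b(x)\ra - \tfrac12\|\sigma(x)\|^2_{L_2^0(H)} .
\end{align*}
The integrand of the stochastic integral is $\sigma(X_s)^*(u-(X_s-x)) \in H_0$. To expand it, apply Theorem \ref{thm-Ito} to the $H_0$-valued map $\psi(y) := \sigma(y)^*u$, which is $C^2$ by Assumption \ref{ass-sigma-smooth}. This gives
\begin{align*}
\psi(X_s) = \sigma(x)^*u + \int_0^s \call\psi\,dr + \int_0^s D\psi(X_r)\sigma(X_r)\,dW_r .
\end{align*}
The correction term $-\sigma(X_s)^*(X_s-x)$ is handled by writing $X_s-x$ as its It\^{o} integral. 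Its leading stochastic part is $-\sigma(x)^*\int_0^s\sigma(X_r)dW_r$, and the remainder, of order $s$, is absorbed into $\beta$ and $\theta$. Hence
\begin{align*}
\alpha = \sigma(x)^*u, \qquad \gamma_r = D\psi(X_r)\sigma(X_r) - \sigma(X_r)^*\sigma(X_r) \in L_2(H_0),
\end{align*}
up to the absorbed terms. This packaging is somewhat delicate. I would first localize further, so that $\beta$ is bounded, and move the non-bounded $ds$-pieces into $\theta$ after stochastic Fubini.

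\textbf{Conclusion.} The first part of Lemma \ref{lemmadoubleintstohilbert} gives $\alpha = \sigma(x)^*u = 0$, which is \eqref{cond-sigma-zero}. For the trace, $\la \gamma_0 f_j,f_j\ra_{H_0}$ contains the term
\begin{align*}
\la D(\sigma^*u)(x)\sigma^j(x), f_j\ra_{H_0} = \la u, D\sigma^j(x)\sigma^j(x)\ra ,
\end{align*}
plus the term $-\|\sigma^j(x)\|^2$. The series $\sum_j$ converges by Proposition \ref{P:C=sigma2}(4), since $u \in \ker\sigma(x)^*$. Substituting into $\theta_0-\frac12\tr(\gamma_0) \leq 0$, the two contributions $-\frac12\|\sigma(x)\|^2$ and $+\frac12\|\sigma(x)\|^2$ cancel, which leaves exactly \eqref{cond-drift-sigma-N}.

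\textbf{Main obstacle.} The hardest step is verifying hypothesis \eqref{eqgammacontinuity}, i.e.\ $\int_0^t\int_0^s \bbe\|\gamma_r-\gamma_0\|^2\,dr\,ds = O(t^{2+\eta})$. Since $\sigma$ is only $C^2$ and not Lipschitz globally, I would stop $X$ upon leaving a ball around $x$ where $D\sigma$ and $D^2\sigma$ are bounded. Then $\|\gamma_r-\gamma_0\|$ is controlled by $\|X_r-x\|$ via the local Lipschitz property of $D\psi\,\sigma$ and of $\sigma^*\sigma$. The moment bound $\bbe\|X_{r\wedge\tau}-x\|^2 \leq Kr$ yields $O(t^3)$, so $\eta = 1$. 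The stopping does not affect the inequality at small times, since $\tau>0$ almost surely.
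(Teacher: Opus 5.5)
Your proposal follows the paper's proof: the same test function (your $\kappa=1$ is exactly what $d_{\cald}(x+u)=\|u\|$ gives), It\^{o}'s formula applied first to $\phi$ and then to the $H_0$-valued integrand to reach \eqref{eqintegraledoublehilbert}, and Lemma \ref{lemmadoubleintstohilbert} to get $\sigma(x)^*u=0$ and $\theta_0-\frac12\tr(\gamma_0)\le 0$. The trace identity via Proposition \ref{P:C=sigma2} and the Lipschitz/moment bound giving $O(t^3)$ also match, and the ``delicate packaging'' you flag disappears if you apply Theorem \ref{thm-Ito} to the whole localized $C^2$ map $\Phi(y)=\sigma(y)^*\big(u-(y-x)\big)$ instead of splitting off $(\sigma(X_s)^*-\sigma(x)^*)(X_s-x)$, which is not literally a $dr$-integral and which stochastic Fubini will not turn into one. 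Applying It\^{o} to $\Phi$, as the paper does, directly gives $\alpha=\sigma(x)^*u$, a bounded $\beta_r=\call\Phi(X^\tau_r)$, and $\gamma_r=D\Phi(X^\tau_r)\sigma(X^\tau_r)$ with $\gamma_0=D\psi(x)\sigma(x)-\sigma(x)^*\sigma(x)$ for your $\psi(y)=\sigma(y)^*u$.
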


\begin{proof} 	Let $x \in \cald$ be arbitrary and let $(X,W)$ denote a  {weak solution} starting at $X_0=x$ such that  $X_t \in \mathcal{D}$ for all $t \geq 0 $. We also fix an arbitrary $u \in \mathcal{N}^{1,\pr}_{\mathcal{D}}(x)$.\\
	{\it Step 1.}
	We first claim that there exists a function $\phi \in C^{\infty}(H,\mathbb{R})$ such that $\displaystyle\max_{\mathcal{D}} \phi = \phi(x) = 0$ and $D \phi(x)= u$. Indeed, by Proposition \ref{prop-normal-2} there exists a constant $\kappa = \kappa(x,u) > 0$ such that
	\begin{align}\label{inequ-normal}
	\la u,y-x \ra \leq \frac{\kappa}{2} \| y-x \|^2 \quad \text{for all $y \in \cald$.}
	\end{align}
	We define $\phi : H \to \bbr$ as
	\begin{align*}
	\phi(y) := \la u,y-x \ra - \frac{\kappa}{2} \| y-x \|^2, \quad y \in H.
	\end{align*}
	Then $\phi$ is of class $C^{\infty}$ with first order derivative
	\begin{align*}
	D \phi(y)v &= \la u,v \ra - \kappa \la y-x, v \ra, \quad y,v \in H.
	\end{align*}
	In particular we have $D \phi(x)= u$, where we recall the conventions from Remark \ref{remark-derivatives}. Furthermore, we have $\phi(x) = 0$, and by \eqref{inequ-normal} we have $\phi(y) \leq 0$ for all $y \in \cald$, showing that $\displaystyle\max_{\mathcal{D}} \phi = \phi(x)$. \\
	{\it Step 2.} Choosing an appropriate open and bounded neighborhood $N(x) \subset H$ of $x$ and a stopping time $\tau > 0$ such that $X^{\tau} \in N(x)$, by Proposition \ref{prop-modify-fct-bdd} we may assume that $\phi$ and $\sigma$
	are of class $C_b^2$. Moreover, the stopped process $X^{\tau}$ is bounded, and we have $X^{\tau} \in \cald$.\\
	{\it Step 3.} Since $X^{\tau} \in \cald$, it follows by It\^o's formula (Theorem \ref{thm-Ito}) that, for all $t \geq 0$
	\begin{align}\label{Ito-ineqn-phi}
	0 \geq \phi(X_t^{\tau}) - \phi(x)
	= \int_0^t\mathcal L \phi (X_s^{\tau}) ds + \int_0^t D\phi(X_s^{\tau})\sigma(X_s^{\tau}) dW_s.
	\end{align}
	We introduce the mapping $\Phi : H \to L_2(H_0,\bbr)$ as
	\begin{align*}
	\Phi(y) := D \phi(y) \sigma(y), \quad y \in H.
	\end{align*}
	Recalling the conventions from Remark \ref{remark-derivatives}, we can write this mapping for all $y \in H$ as
	\begin{align*}
	\Phi(y) v = \la D \phi(y), \sigma(y)v \ra = \la \sigma^*(y) D \phi(y), v \ra, \quad v \in H_0.
	\end{align*}
	By the identification $L_2(H_0,\bbr) \cong H_0$ from Lemma \ref{lemma-Riesz-Parseval}, we may identify $\Phi$ with the mapping $\Phi : H \to H_0$ given by
	\begin{align}\label{Phi-in-proof}
	\Phi(y) := \sigma^*(y) D \phi(y) = B( \sigma^*(y), D \phi(y) ).
	\end{align}
	where $B : L_2(H,H_0) \times H \to H_0$ denotes the continuous bilinear operator
	\begin{align*}
	B(T,z) := Tz.
	\end{align*}
	Since according to Lemma \ref{lemma-adjoint-isometry} the linear mapping
	\begin{align}\label{isom-isom-HS}
	L_2(H_0,H) \to L_2(H,H_0), \quad T \mapsto T^*
	\end{align}
	is an isometry, by Proposition \ref{prop-smooth-linear} the mapping $\sigma^* : H \to L_2(H,H_0)$ is also of class $C_b^2$. Since $\phi : H \to \bbr$ is of class $C_b^2$ as well, by Proposition \ref{prop-Leibniz} we have $\Phi \in C_{b}^2(H,H_0)$. Moreover, recalling Remark \ref{rem-Wiener-series-1}, by \eqref{Ito-ineqn-phi} we have for all $t \geq 0$
	\begin{align*}
	0 \geq \int_0^t\mathcal L \phi (X_s^{\tau}) ds + \int_0^t \Phi(X_s^{\tau}) dW_s.
	\end{align*}
	Another application of It\^o's formula (Theorem \ref{thm-Ito}) yields
	\begin{align*}
	\Phi(X_s^{\tau}) = \Phi(x) + \int_0^s\mathcal L \Phi (X_r^{\tau}) dr + \int_0^s D\Phi(X_r^{\tau})\sigma(X_r^{\tau}) dW_r,
	\end{align*}
	where $\call \Phi : H \to H_0$ is defined according to \eqref{generazor-def-G}. Hence, we obtain
	\begin{align*}
	0 \geq \int_0^t\mathcal L \phi (X_s^{\tau}) ds + \int_0^t \bigg( \Phi(x) + \int_0^s\mathcal L \Phi (X_r^{\tau}) dr + \int_0^s D\Phi(X_r^{\tau})\sigma(X_r^{\tau}) dW_r \bigg) dW_s.
	\end{align*}
	Noting Remarks \ref{rem-Wiener-series-1} and \ref{rem-Wiener-series-2}, we can write this inequality as \eqref{eqintegraledoublehilbert}, where the constant $\alpha \in H_0$, the $H_0$-valued continuous process $\beta$, the $L_2(H_0)$-valued continuous process $\gamma$, and the $\bbr$-valued continuous process $\theta$ are given by
	\begin{align*}
	\alpha &:= \Phi(x), \quad \quad \beta_r := \mathcal L \Phi (X_r^{\tau}), \quad r \geq 0,
	\\ \gamma_r &:= \Psi(X_r^{\tau}), \quad r \geq 0, \quad \quad
	\theta_s := \mathcal L \phi (X_s^{\tau}), \quad s \geq 0,
	\end{align*}
	and where $\Psi : H \to L_2(H_0)$ is defined as $\Psi(y) := D \Phi(y) \sigma(y)$ for each $y \in H$.\\
{\it Step 4.}	We check that we can apply   Lemma \ref{lemmadoubleintstohilbert}.  Indeed, the process $\theta$ is continuous. Taking into account the linear growth condition \eqref{linear-growth}, the fact that $\Phi$ is of class $C_b^2$, and the boundedness of $X^{\tau}$, the process $\beta$ is bounded by Lemma \ref{lemma-generator-growth}. Note that
\begin{align*}
\Psi(y) = B(D \Phi(y), \sigma(y)), \quad y \in H,
\end{align*}
where $B : L(H,H_0) \times L_2(H_0,H) \to L_2(H_0)$ denotes the continuous bilinear operator $B(T,S) = TS$. Since $D \Phi : H \to L(H,H_0)$ is of class $C_b^1$ and $\sigma : H \to L_2(H_0,H)$ is of class $C_b^2$, by Proposition \ref{prop-Leibniz} we obtain $\Psi \in C_{b}^1(H,L_2(H_0))$. Therefore, we have $\int_0^t \| \gamma_s \|_{L_2(H_0)}^2 ds < \infty$, for all $t \geq 0$. Moreover, by Proposition \ref{prop-Cb1-Lipschitz} the mapping $\Psi : H \to L_2(H_0)$ is Lipschitz continuous, and hence, the estimate \eqref{eqgammacontinuity} follows from Lemma \ref{lemma-short-time-double-dt} below.\\
{\it Step 5.} We therefore  can apply  Lemma \ref{lemmadoubleintstohilbert} to \eqref{eqintegraledoublehilbert} to deduce  that $\alpha=0$, which implies $D \phi(x) \sigma(x) = 0$. Recalling $D \phi(x) = u$, this gives us $\la u,\sigma(x)w \ra = 0$ for all $w \in H_0$, and hence $\la \sigma(x)^* u,w \ra_{H_0} = 0$ for all $w \in H_0$, showing that $\sigma(x)^* u = 0$. This proves \eqref{cond-sigma-zero}.\\
{\it Step 6.}	Since $\sigma(x)^* u = 0$, by Lemma \ref{lemma-trace-decomp} below the series \eqref{series-gamma-0} converges, and we have
\begin{align}\label{tr-series-expr}
\tr(\gamma_0) = \sum_{j=1}^{\infty} \la D \phi(x), D \sigma^j(x) \sigma^j(x) \ra + \sum_{j=1}^{\infty} \la D^2 \phi(x) \sigma^j(x), \sigma^j(x) \ra.
\end{align}
We therefore  can apply  Lemma \ref{lemmadoubleintstohilbert} again to \eqref{eqintegraledoublehilbert} to deduce  that
	$\theta_0-\frac{1}{2}\Tr(\gamma_0) \leq 0$. Taking into account Lemma \ref{lemma-generator-series}, this inequality combined with \eqref{tr-series-expr} shows that
	\begin{align*}
	0 &\geq \call \phi(x) - \frac{1}{2} \tr ( \gamma_0 )
	= \la D \phi(x), b(x) \ra + \frac{1}{2} \sum_{j=1}^{\infty} \la D^2 \phi(x) \sigma^j(x),\sigma^j(x) \ra
	\\ &\quad - \frac{1}{2} \bigg( \sum_{j=1}^{\infty} \la D \phi(x), D \sigma^j(x) \sigma^j(x) \ra + \sum_{j=1}^{\infty} \la D^2 \phi(x) \sigma^j(x), \sigma^j(x) \ra \bigg)
	\\ &= \langle D\phi(x), b(x) \rangle - \frac{1}{2} \sum_{j=1}^{\infty} \langle D\phi(x), D \sigma^j(x) \sigma^j(x) \rangle.
	\end{align*}
	Recalling that $D\phi(x) = u$, this proves \eqref{cond-drift-sigma-N}.
\end{proof}

\begin{lemma}\label{lemma-short-time-double-dt}
For $t \to 0$ we have
\begin{align*}
\int_0^t \int_0^s \bbe \big[ \| \Psi(X_r^{\tau}) - \Psi(x) \|_{L_2(H_0)}^2 \big] dr ds = O(t^3).
\end{align*}
\end{lemma}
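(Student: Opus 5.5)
We reduce the estimate to a second-moment bound for the increments of the stopped process. By Step 4, $\Psi$ is Lipschitz continuous on $H$ (Proposition \ref{prop-Cb1-Lipschitz}); let $L_\Psi$ denote its Lipschitz constant. Since $X_0^{\tau}=x$, we have
\begin{align*}
\| \Psi(X_r^{\tau}) - \Psi(x) \|_{L_2(H_0)}^2 \leq L_\Psi^2 \, \| X_r^{\tau} - x \|^2 .
\end{align*}
It therefore suffices to prove $\bbe[\|X_r^{\tau}-x\|^2] \leq K r$ for all $r \in [0,1]$, with some constant $K$. Given this bound, the double integral is at most $L_\Psi^2 K \int_0^t \int_0^s r \, dr \, ds = L_\Psi^2 K t^3/6 = O(t^3)$.

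To get the moment bound we use the integral representation of the stopped process,
\begin{align*}
X_r^{\tau} - x = \int_0^{r \wedge \tau} b(X_s)\,ds + \int_0^{r \wedge \tau} \sigma(X_s)\,dW_s = \int_0^{r} \bbI_{\{s \leq \tau\}} b(X_s^{\tau})\,ds + \int_0^{r} \bbI_{\{s \leq \tau\}} \sigma(X_s^{\tau})\,dW_s .
\end{align*}
We then estimate $\|a+c\|^2 \leq 2\|a\|^2 + 2\|c\|^2$ and treat the two terms separately.

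For the drift term we apply Cauchy--Schwarz, which gives the bound $r \int_0^r \|b(X_s^{\tau})\|^2 ds$. For the stochastic integral, the Itô isometry gives $\int_0^r \bbe[\bbI_{\{s\leq\tau\}} \|\sigma(X_s^{\tau})\|_{L_2^0(H)}^2]\,ds$. By Step 2, the stopped process $X^{\tau}$ takes values in the bounded set $N(x)$, so $\|X_s^{\tau}\| \leq R$ for some $R$. The linear growth condition \eqref{linear-growth} then bounds both $\|b(X_s^{\tau})\|$ and $\|\sigma(X_s^{\tau})\|_{L_2^0(H)}$ by $L(1+R)$. Together this gives
\begin{align*}
\bbe[\|X_r^{\tau}-x\|^2] \leq 2L^2(1+R)^2 (r^2 + r) \leq 4L^2(1+R)^2 r, \quad r \in [0,1].
\end{align*}

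The only point that needs care is the second step. The stochastic integral must be a genuine square-integrable martingale so that the Itô isometry applies. This holds because the integrand is bounded by boundedness of $X^{\tau}$, so no localization issue arises. Stopping and reindexing the integrand with $\bbI_{\{s\le\tau\}}$ is routine.
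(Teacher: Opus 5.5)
Your proof is correct and follows the paper's argument. Lipschitz continuity of $\Psi$ reduces the claim to a bound $\bbe[\|X_r^{\tau}-x\|^2]\le Kr$ on $[0,1]$, which comes from Cauchy--Schwarz on the drift, the It\^{o} isometry on the stochastic integral, boundedness of $X^{\tau}$ and \eqref{linear-growth}, and then $\int_0^t\int_0^s r\,dr\,ds=t^3/6$. Your version is slightly more careful than the paper's: you keep the indicator $\bbI_{\{s\le\tau\}}$ in the stopped integrals, and your constant $4L^2(1+R)^2$ accounts for the $r^2$ drift contribution.
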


\begin{proof}
Since $\Psi$ is Lipschitz continuous, there is a constant $M > 0$ such that
\begin{align*}
\| \Psi(x) - \Psi(y) \|_{L_2(H_0)} \leq M \| x-y \| \quad \text{for all $x,y \in H$.}
\end{align*}
Furthermore, since $X^{\tau}$ is bounded, there is a constant $C > 0$ such that $\| X^{\tau} \| \leq C$. Let $t \in [0,1]$ be arbitrary. By the linear growth condition \eqref{linear-growth} we obtain
\begin{align*}
\bbe [ \| X_t^{\tau} - x \|^2 ] & \leq 2 \, \bbe \Bigg[ \bigg\| \int_0^t b(X_s^{\tau}) ds \bigg\|^2 \Bigg] + 2 \, \bbe \Bigg[ \bigg\| \int_0^t \sigma(X_s^{\tau}) d W_s \bigg\|^2 \Bigg]
\\ & \leq 2t \bbe \bigg[ \int_0^t \| b(X_s^{\tau}) \|^2 ds \bigg] + 2 \, \bbe \bigg[ \int_0^t \| \sigma(X_s^{\tau}) \|_{L_2^0(H)}^2 ds \bigg]
\\ &\leq K t,
\end{align*}
where the constant $K > 0$ is given by $K = 2 L^2 (1 + C)^2$, and where $L > 0$ stems from \eqref{linear-growth}. Therefore, we obtain
\begin{align*}
&\int_0^t \int_0^s \bbe \big[ \| \Psi(X_r^{\tau}) - \Psi(x) \|_{L_2(H_0)}^2 \big] dr ds \leq M^2 \int_0^t \int_0^s \bbe \big[ \| X_r^{\tau} - x \|^2 \big] dr ds
\\ &\leq M^2 K \int_0^t \int_0^s r dr ds = \frac{M^2 K}{2} \int_0^t s^2 ds = \frac{M^2 K}{6} t^3.
\end{align*}
Hence, the conclusion follows.
\end{proof}

\begin{lemma}\label{lemma-trace-decomp}
The series \eqref{series-gamma-0} converges, and we have \eqref{tr-series-expr}.
\end{lemma}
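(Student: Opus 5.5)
We compute $\gamma_0 = \Psi(x) = D\Phi(x)\sigma(x)$ explicitly and expand its diagonal entries. Since $\Phi(y) = \sigma^*(y) D\phi(y)$, the Leibniz rule (Proposition \ref{prop-Leibniz}) gives, for $v \in H$,
\begin{align*}
D\Phi(x)v = \big(D\sigma^*(x)v\big) D\phi(x) + \sigma^*(x) D^2\phi(x) v .
\end{align*}
By Proposition \ref{prop-smooth-linear} and the isometry \eqref{isom-isom-HS}, we have $D\sigma^*(x)v = (D\sigma(x)v)^*$. Taking $v = \sigma(x) f_j = \sigma^j(x)$, we obtain
\begin{align*}
\la \gamma_0 f_j, f_j \ra_{H_0} = \la (D\sigma(x)\sigma^j(x))^* D\phi(x), f_j \ra_{H_0} + \la \sigma^*(x) D^2\phi(x)\sigma^j(x), f_j \ra_{H_0}.
\end{align*}
By the adjoint relation, the first term equals $\la D\phi(x), (D\sigma(x)\sigma^j(x)) f_j \ra$. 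The second term equals $\la D^2\phi(x)\sigma^j(x), \sigma(x) f_j\ra = \la D^2\phi(x)\sigma^j(x),\sigma^j(x)\ra$.

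Next we identify $(D\sigma(x)\sigma^j(x)) f_j$ with $D\sigma^j(x)\sigma^j(x)$. Since $\sigma^j = \mathrm{ev}_{f_j}\circ\sigma$ and evaluation at $f_j$ is a bounded linear map $L_2^0(H)\to H$, this follows from the chain rule (as in Lemma \ref{lemma-generator-functionals}). Hence, for each $j$,
\begin{align*}
\la \gamma_0 f_j, f_j \ra_{H_0} = \la D\phi(x), D\sigma^j(x)\sigma^j(x)\ra + \la D^2\phi(x)\sigma^j(x),\sigma^j(x)\ra .
\end{align*}

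It remains to show that both resulting series converge separately; summing them then yields \eqref{tr-series-expr}. The second series converges absolutely by Lemma \ref{lemma-generator-series}, since it is bounded by $\|D^2\phi(x)\|\,\|\sigma(x)\|^2_{L_2^0(H)}$ via \eqref{norm-of-sigma}. The first series is the delicate one, and this is where $\sigma(x)^*u=0$ (Step 5) enters, with $u=D\phi(x)$.

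The plan is to show that $\sum_j \la u, D\sigma^j(x)\sigma^j(x)\ra$ converges using Proposition \ref{P:C=sigma2}(4). Since $\ker(\Sigma(x)^*)=\ker(\sigma(x)^*)$ by Remark \ref{rem-kernels-2}, $u\in\ker(\Sigma(x)^*)$. The proposition then identifies this series with $\sum_j \la u, DC^j(x)P_C^j(x)\ra$, which converges by part (3); there $C$ is $C^1$ by part (1), since $\sigma\in C^2$.

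One must check that the statement of (4) includes convergence of the Stratonovich series, and not merely an equality of two series assumed to converge. If it does not, I would argue directly. Differentiating $\la u, \sigma(y) w\ra$ along suitable directions and using $\sigma(x)^*u=0$ gives
\begin{align*}
\la u, D\sigma^j(x)\sigma^j(x)\ra = \la (D\sigma(x)\sigma^j(x))^* u, f_j\ra_{H_0}.
\end{align*}
I would then try to recognize the sum as a trace of the nuclear operator obtained by composing $T \mapsto \la (D\sigma(x)\,\cdot\,)^* u, \cdot\ra$ with $\Sigma(x)$; a product of two Hilbert–Schmidt maps is nuclear, so the sum is absolutely convergent. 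This convergence is the main obstacle, and I expect it to be handled exactly by citing Proposition \ref{P:C=sigma2}.
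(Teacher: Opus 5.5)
You follow the paper's proof exactly: the same diagonal computation of $\gamma_0$, Lemma \ref{lemma-generator-series} for the $D^2\phi$-series, and Proposition \ref{P:C=sigma2} for $\sum_j\la u,D\sigma^j(x)\sigma^j(x)\ra$. Your worry about that last step is justified, and neither the citation nor your fallback closes it. For $\sigma\in C^2$ with $\sigma(x)^*u=0$ alone, the series can diverge.

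Take $\Sigma(y)w:=\la w,y\ra e_1+\Sigma_0w$ with $\Sigma_0w:=\sum_{k\ge2}k^{-1}\la w,e_k\ra e_k$, and set $\sigma:=\Sigma Q^{-1/2}$. This $\sigma$ is affine, hence smooth with linear growth. At $x=0$, $\Sigma(0)=\Sigma_0$ is self-adjoint and nonnegative, and $u:=e_1\in\ker\Sigma(0)^*$. Since $D\sigma^j(0)v=\la e_j,v\ra e_1$ and $\sigma^j(0)=j^{-1}e_j$ for $j\ge2$, you get $\la u,D\sigma^j(0)\sigma^j(0)\ra=j^{-1}$, which is not summable.

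The same computation gives $\la u,DC^j(0)P_C^j(0)\ra=j^{-1}$ for $j\ge2$, so part (3) of Proposition \ref{P:C=sigma2} fails here as well. That series is the diagonal sum of $w\mapsto\big(DC(0)(P_C(0)w)\big)u=\Sigma_0w$, which is not nuclear; it is not $\tr\big(DC(0)(P_C(0)u)\big)=0$.

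Your fallback fails on the same example. The map $v\mapsto(D\sigma(0)v)^*u$ is the isometry $Q^{1/2}:H\to H_0$, which is not Hilbert--Schmidt. The composed operator is unitarily equivalent to $\Sigma_0$, which is Hilbert--Schmidt but not nuclear. The fallback does work when $\ran\sigma(x)$ is finite dimensional, since then $w\mapsto(D\sigma(x)\sigma(x)w)^*u$ has finite rank.

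The missing ingredient is invariance. It is available where the lemma is used: inside the proof of Proposition \ref{prop-nec-sigma-C2}, after $\alpha=0$ has been established.
\begin{itemize}
\item From \eqref{eqintegraledoublehilbert}, $R_t/t\to0$ in probability, and Brownian scaling, you get $\theta_0+J\le0$ almost surely, where $J:=\int_0^1\int_0^s\gamma_0\,dW_r\,dW_s$.
\item Condition on the $\sigma$-algebra generated by $(W_1^i)_{i\in\bbn}$. This removes the L\'evy areas and leaves $\theta_0+\frac12\sum_k\mu_k(\xi_k^2-1)\le0$. Here $(\mu_k)\in\ell^2$ are the eigenvalues of the symmetric part $S$ of $\gamma_0$, and the $\xi_k$ are i.i.d.\ standard normal.
\item This forces $\mu_k\le0$ for all $k$. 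Working on the event $\{|\xi_k|\le\frac12\text{ for }k\le N\}$, which is independent of the mean-zero tail, it also forces $\sup_N\sum_{k\le N}|\mu_k|<\infty$.
\item Hence $S$ is nuclear, and $\sum_j\la\gamma_0f_j,f_j\ra_{H_0}=\sum_j\la Sf_j,f_j\ra_{H_0}$ converges absolutely.
\end{itemize}
Your termwise identity then gives convergence of the Stratonovich series as a difference of two convergent series, together with \eqref{tr-series-expr}.
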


\begin{proof}
Recall that $u \in \caln_{\cald}^{1,\pr}(x)$ is given by $u = D \phi(x)$. By Remark \ref{rem-kernels-2} we have $u \in \ker (\Sigma(x)^*)$. Hence, by Proposition \ref{P:C=sigma2} we have
\begin{align*}
\sum_{j=1}^{\infty} \la u, D \sigma^j(x) \sigma^j(x) \ra = \sum_{j=1}^{\infty} \la u, DC^j(x) P_C^j(x) \ra = \tr \big( D C(x) P_C(x) u \big),
\end{align*}
and by Lemma \ref{lemma-generator-series} we have
\begin{align*}
\sum_{j=1}^{\infty} \la D^2 \phi(x) \sigma^j(x), \sigma^j(x) \ra = \tr \big( D^2 \phi(x) \Sigma(x) \Sigma(x)^* \big),
\end{align*}
showing that these two series are convergent. Recalling that $\Phi : H \to H_0$ is given by \eqref{Phi-in-proof}, by Proposition \ref{prop-Leibniz} for each $v \in H$ we have
\begin{align*}
D \Phi(x)v &= B( D \sigma^*(x) v, u ) + B( \sigma^*(x), D^2 \phi(x)v )
\\ &= (D \sigma^*(x) v) u + \sigma^*(x) D^2 \phi(x) v,
\end{align*}
where, according to Remark \ref{remark-derivatives} the second order derivative is considered as a self-adjoint operator $D^2 \phi(x) \in L(H)$. Hence, for all $w \in H_0$ we have
\begin{align*}
\gamma_0 w = D \Phi(x) \sigma(x) w = (D \sigma^*(x) \sigma(x) w) u + \sigma^*(x) D^2 \phi(x) \sigma(x) w.
\end{align*}
Note that $\sigma^j = \Psi_{f_j} \circ \sigma$ for all $j \in \bbn$, where for any $w \in H_0$ the continuous linear operator $\Psi_w : L_2(H_0,H) \to H$ is given by $\Psi_w(T) := Tw$. Thus, by Proposition \ref{prop-smooth-linear} we obtain
\begin{align}\label{sigma-j-diff-in-proof}
D \sigma^j(x)v = (D \sigma(x) v)f_j, \quad v \in H \text{ and } j \in \bbn.
\end{align}
Recalling that \eqref{isom-isom-HS} is a linear isometry, by Proposition \ref{prop-smooth-linear} and \eqref{sigma-j-diff-in-proof} we have
\begin{align*}
&\sum_{j=1}^{\infty} \la ( D \sigma^*(x) \sigma(x) f_j ) u, f_j \ra_{H_0} = \sum_{j=1}^{\infty} \la ( D \sigma^*(x) \sigma^j(x) ) u, f_j \ra_{H_0}
\\ &= \sum_{j=1}^{\infty} \la ( D \sigma(x) \sigma^j(x) )^* u, f_j \ra_{H_0} = \sum_{j=1}^{\infty} \la u, (D \sigma(x) \sigma^j(x)) f_j \ra = \sum_{j=1}^{\infty} \la u, D \sigma^j(x) \sigma^j(x) \ra.
\end{align*}
Moreover, we obtain
\begin{align*}
&\sum_{j=1}^{\infty} \la \sigma^*(x) D^2 \phi(x) \sigma(x) f_j, f_j \ra_{H_0} = \sum_{j=1}^{\infty} \la \sigma^*(x) D^2 \phi(x) \sigma^j(x), f_j \ra_{H_0}
\\ &= \sum_{j=1}^{\infty} \la D^2 \phi(x) \sigma^j(x), \sigma(x) f_j \ra = \sum_{j=1}^{\infty} \la D^2 \phi(x) \sigma^j(x), \sigma^j(x) \ra.
\end{align*}
Combining the preceding findings, we see that the series \eqref{series-gamma-0} converges with limit given by \eqref{tr-series-expr}.
\end{proof}

\subsection{Necessity under smoothness on $C$: Theorem~\ref{thm-main}.}\label{S:proofnecessityC}

In this section we show that in the situation of Theorem~\ref{thm-main} stochastic invariance implies that the invariance conditions \eqref{cond-C-zero} and \eqref{cond-drift-N} are fulfilled. In addition to Assumptions \ref{ass-lin-growth}, \ref{ass-Sigma-self-adjoint}, we now also suppose that Assumptions \ref{ass-extension},  \ref{ass-closed-range} are in force. By spectral decomposition, there are functions $q_k : H \to H$ and $\mu_k : H \to \bbr$ for $k \in \bbn$ such that $\sum_{k=1}^{\infty} |\mu_k(y)| < \infty$ for all $y \in H$ and
\begin{align}\label{E:spectralC-y}
C(y) = \sum_{k=1}^{\infty} \mu_k(y) \langle \cdot,q_k(y) \rangle \, q_k(y), \quad y \in H.
\end{align}
Here, for any $y \in H$ the ordering of the eigenvalues is such that $|\mu_k(y)| \geq |\mu_{k+1}(y)|$ for all $k \in \bbn$, and for all $k < l$ with $|\mu_k(y)| = |\mu_l(y)|$ and $\mu_k(y) \neq \mu_l(y)$ we have $\mu_k(y) > 0$ and $\mu_l(y) < 0$. Furthermore, the family $(q_k(y))_{k \in \bbn}$ of eigenvectors is an orthonormal basis of $H$.

Consider the particular situation $y \in \cald$. Since $C(y) \in L_1^+(H)$ has closed range, there exists a finite number $r(y) \in \bbn_0$ such that $\mu_1(y) \geq \mu_2(y) \geq \ldots \geq \mu_{r(y)}(y) > 0$ and $\mu_k(y) = 0$ for all $k > r(y)$. Now, let us fix an arbitrary $x \in \cald$, and set $r := r(x)$. As a consequence of \eqref{E:spectralC-y}, we have the representation
\begin{align}\label{E:spectralC}
C(x) = \sum_{k=1}^r \mu_k(x) \langle \cdot,q_k(x) \rangle \, q_k(x)
\end{align}
with eigenvalues $\mu_1(x) \geq \mu_2(x) \geq \ldots \geq \mu_r(x) > 0$, and the family $(q_k(x))_{k=1,\ldots,r}$ of eigenvectors is an orthonormal system in $H$.

We will treat the case of distinct eigenvalues in Section \ref{sec-sub-nec-1}, and the general situation in Section \ref{sec-sub-nec-2}.

\subsubsection{The drift condition: The case of distinct eigenvalues}\label{sec-sub-nec-1}

In this section we consider the situation, where the dispersion operator has distinct eigenvalues. More precisely, let us fix an arbitrary $x \in \cald$, and suppose that the eigenvalues in the spectral decomposition \eqref{E:spectralC} are such that $\mu_1(x) > \mu_2(x) > \ldots > \mu_r(x) > 0$.

\begin{proposition}\label{prop:distincteigenoperator}
	Let $x \in \mathcal{D}$ be such that the spectral decomposition of $C(x)$ is given by \eqref{E:spectralC} with
	 $\mu_1(x) > \mu_2(x) > \ldots > \mu_r(x) > 0$. Then there exist an open neighborhood $N(x)$ of $x$ as well as functions $ q^x_1,\ldots,q^x_r: N(x) \to H$ and $ \mu^x_1,\ldots,\mu^x_r : N(x) \to \bbr_+$ of class $C^2$ such that
		\begin{enumerate}
			\item[(i)]
			$\mu_k^x(x)=\mu_k(x)$ and $q_k^x(x)=q_k(x)$ for every $k \leq r$,
			\item[(ii)] $\mu_1^x(y) > \ldots > \mu_r^x(y) > 0$ are simple isolated eigenvalues of $C(y)$ with eigenvectors $q_1^x(y),\ldots,q_r^x(y)$, which form an orthonormal system in $H$, for all $y \in N(x)$,
			\item[(iii)] we have $\mu_k(y) = \mu_k^x(y)$ for all $y \in N(x)$ and all $k=1,\ldots,r$,
			\item[(iv)] after changing signs of the function $q_k : N(x) \to H$ in \eqref{E:spectralC-y} at suitable points, if required, we have $q_k(y) = q_k^x(y)$ for all $y \in N(x)$ and all $k=1,\ldots,r$,
			\item[(v)]
			$ \sigma_x: N(x) \to L^0_2(H)$ defined by
			\begin{align}\label{sigma-x-eigenvalues}
			\sigma_x (y) := \sum_{k=1}^r \sqrt{\mu^x_k(y)} \langle \cdot, f_k \rangle_{H_0} \, q^x_k(y)  , \quad  y \in N(x),
			\end{align}
			 is of class $C^2$.
		\end{enumerate}
	Moreover, we have
	\begin{align}
	  ( \sigma_x(x)  Q^{1/2})^* u &= 0, \label{E:sigmabaru} \\
		 \sum_{j=1}^{\infty} \langle u, D \sigma_x^j(x) \sigma_x^j(x) \rangle &= \sum_{j=1}^{\infty} \langle u, D  {C}^j(x) {P^j_C(x)}   \rangle, \label{E:sigmabarc}
	\end{align}
	 for all $u \in \ker(C(x))$.
\end{proposition}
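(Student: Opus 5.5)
Parts (i)--(iv) follow from analytic perturbation theory for isolated simple eigenvalues of a $C^2$ family of self-adjoint operators; \eqref{E:sigmabaru} and \eqref{E:sigmabarc} then follow by applying Proposition \ref{P:C=sigma2} to the locally defined $\sigma_x$.

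\emph{Eigenpairs.} Since $C : H \to L_1(H) \subset L(H)$ is $C^2$ and $C(y)$ is self-adjoint, the eigenvalues $\mu_1(x) > \ldots > \mu_r(x) > 0$ are isolated and simple. Fix disjoint small circles $\Gamma_k$ around $\mu_k(x)$ that avoid $0$ and the rest of the spectrum. For $y$ in a small ball, the resolvent $(C(y)-\lambda)^{-1}$ exists uniformly for $\lambda \in \Gamma_k$. The Riesz projections
\[
P_k(y) := -\frac{1}{2\pi i}\oint_{\Gamma_k}(C(y)-\lambda)^{-1}d\lambda
\]
are then $C^2$, because inversion is smooth and we integrate over a compact contour. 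Their rank is locally constant and equal to $1$. Set $\mu_k^x(y) := \tr(C(y)P_k(y))$ and $q_k^x(y) := P_k(y)q_k(x)/\|P_k(y)q_k(x)\|$ (the denominator is near $1$). Both are $C^2$. This gives (i), and also (ii), since the $P_k(y)$ are orthogonal projections onto distinct eigenspaces of a self-adjoint operator.

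For (iii), the rest of the spectrum of $C(y)$ stays in a small neighborhood of $\{0\}$ by upper semicontinuity of the spectrum. Shrinking $N(x)$, the $r$ largest eigenvalues in modulus are therefore exactly $\mu_1^x(y) > \ldots > \mu_r^x(y)$, ordered as in \eqref{E:spectralC-y}. Statement (iv) follows because these eigenvalues are simple, so the eigenvectors are unique up to sign.

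\emph{Smoothness of $\sigma_x$.} The square root is smooth on $(0,\infty)$ and $\mu_k^x > 0$ on $N(x)$, so $\sqrt{\mu_k^x}$ is $C^2$. The map $(a,q) \mapsto a\langle\cdot,f_k\rangle_{H_0}q$ is continuous bilinear into $L_2^0(H)$. Hence $\sigma_x$ is a finite sum of $C^2$ maps, which is (v) by Proposition \ref{prop-Leibniz}.

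\emph{The identities.} We have $\sigma_x(y)Q^{1/2} = \sum_k \sqrt{\mu_k^x(y)}\langle\cdot,e_k\rangle q_k^x(y)$, using $\langle Q^{1/2}h,f_k\rangle_{H_0} = \langle h,e_k\rangle$ from Lemma \ref{lemma-HS-norm}. Therefore
\[
\Sigma_x(y)\Sigma_x(y)^* = \sum_k \mu_k^x(y)\langle\cdot,q_k^x(y)\rangle q_k^x(y) =: \tilde C(y).
\]
At $y=x$ this equals $C(x)$, and $\ker(\Sigma_x(x)^*) = \ker(C(x))$ by Lemma \ref{lemma-kernels}, which gives \eqref{E:sigmabaru}.

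For \eqref{E:sigmabarc}, we apply Proposition \ref{P:C=sigma2}(3),(4) to $\sigma_x$. Proposition \ref{P:C=sigma2} is stated for maps on all of $H$, but the series only involve derivatives at $x$, so we first extend $\sigma_x$ by a cutoff via Proposition \ref{prop-modify-fct-bdd}. This yields, for $u \in \ker C(x)$,
\[
\sum_j\langle u,D\sigma_x^j(x)\sigma_x^j(x)\rangle = \tr\big(D\tilde C(x)P_{\tilde C}(x)u\big).
\]

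\emph{Main obstacle.} It remains to show that this trace equals $\tr(DC(x)P_C(x)u)$, where $P_{\tilde C}(x) = P_C(x)$. The difficulty is that $\tilde C \neq C$ off $\cald$, and their derivatives differ in general, since $C(y)$ has additional small eigenvalues $\mu_k(y)$ for $k > r$. Write $C(y) - \tilde C(y) = C(y)(I - \Pi(y))$ with $\Pi(y) := \sum_{k\le r}P_k(y)$, which is $C^2$. Then $R(y) := C(y) - \tilde C(y)$ satisfies $R(x) = 0$, and
\[
DR(x)v = DC(x)v\,(I-\Pi(x)) - C(x)D\Pi(x)v.
\]
Setting $P := P_C(x) = \Pi(x)$, we get
\[
\tr\big(DR(x)P\,u\big) = \tr\big(DC(x)(\cdot)u\,(I-P)P\big) - \tr\big(C(x)D\Pi(x)(\cdot)u\,P\big).
\]
The first term vanishes because $(I-P)P = 0$. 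For the second, we identify the operator $v \mapsto \ldots$ as in the proof of Lemma \ref{lemma-series-1} and use cyclicity of the trace together with $PC(x) = C(x)$. This reduces it to terms of the form $\langle u, C(x)\cdot\rangle = \langle C(x)u,\cdot\rangle = 0$. The delicate part is making the identification of the trace of $v \mapsto DR(x)(Pv)u$ rigorous. I would handle it by computing in the basis $q_1(x),\ldots,q_r(x)$ completed to an orthonormal basis, where only finitely many terms are nonzero because of $P$.
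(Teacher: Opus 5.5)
Most of your proposal is sound. Building the eigenpairs from Riesz projections (after complexifying $H$) and getting (iii) from upper semicontinuity of the spectrum is a legitimate alternative to the paper's route. The paper instead uses the implicit function theorem (Lemma \ref{lemma-implicit-eigenvalues}) and the trace-norm eigenvalue estimate of Proposition \ref{prop-eigenvalue-map}, as used in Lemma \ref{lemma-simple-eigenvalues-2}. Your cutoff before invoking Proposition \ref{P:C=sigma2} for the locally defined $\sigma_x$ makes explicit a step the paper passes over. Your decomposition $\tilde C=C\Pi$ is exactly the paper's $C_x=CP_{C_x}$.

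The problem is the last step. You read $\tr(DR(x)P\,u)$ as the trace of $v\mapsto (DR(x)(Pv))u$, which is the paper's convention, and then split $DR(x)w=(DC(x)w)(I-P)-C(x)(D\Pi(x)w)$.

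\emph{First term.} In this reading $(I-P)$ acts on $u$, not on the direction $Pv$. Since $u\in\ker C(x)=\ran(P)^{\perp}$, we have $(I-P)u=u$. So your first term is $\tr(D(Cu)(x)P)$, which is the very quantity you are after, not $0$.

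\emph{Second term.} Here $C(x)$ sits on the output side and never meets $u$. Differentiating $C\Pi=\Pi C$ at $x$ and applying the result to $u$ (using $Pu=0$, $C(x)u=0$) gives $C(x)(D\Pi(x)w)u=P(DC(x)w)u$. Hence your second term equals $\tr(PD(Cu)(x)P)=\tr(D(Cu)(x)P)$ as well.

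For instance, take $H=\mathbb{R}^2$, $x=0$, $C(y)=\begin{pmatrix}1&ay_1\\ ay_1&0\end{pmatrix}$ and $u=e_2$: both of your traces equal $a$. The difference does vanish, but not for the reasons you give. Because neither piece is self-adjoint, ``apply to $u$'' and ``pair with $u$'' give different results piece by piece.

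\emph{Repair.} Pair with $u$, i.e.\ work with $\sum_j\langle u,(DR(x)(Pe_j))e_j\rangle$. Equivalently, take adjoints before evaluating at $u$, which is what the paper does with $(DC_x(x)P^j_{C_x}(x))^*u$. By self-adjointness of $C(x)$ and $DC(x)w$,
\begin{align*}
(DR(x)w)^*u=(I-P)(DC(x)w)u-(D\Pi(x)w)^*C(x)u=(I-P)(DC(x)w)u .
\end{align*}
The sum is therefore $\tr((I-P)D(Cu)(x)P)=\tr(D(Cu)(x)P(I-P))=0$, where cyclicity is legitimate because $P$ has finite rank. In this form your adapted-basis computation works term by term. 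For each basis vector either $Pe_j=0$ or $(I-P)e_j=0$, and the $C(x)D\Pi$ contribution is $\langle C(x)u,\cdot\rangle=0$.
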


Before moving to the proof, we notice that the previous proposition allows one to freeze the vanishing eigenvalues of $C$ at the point $x$.
\begin{remark}\label{R:bsigma2C} Note that $(\sigma_x Q^{1/2})(\sigma_x  Q^{1/2})^*$ is not necessarily equal to $C$   since they do not necessarily have the same eigenvalues when $y \neq x$. Indeed, the  family $(\mu_k(y))_{k=1,\ldots,r}$ does not necessarily exhaust all the eigenvalues of $C(y)$, because $C(y)$ can have eigenvalues vanishing at the point $x$.   However, the eigenvalues match  at the  specific point $x$ so that
\begin{align}\label{C-as-composition}
(\sigma_x(x) Q^{1/2})(\sigma_x (x) Q^{1/2})^* = C(x).
\end{align}
In fact,  since $f_k = Q^{1/2}e_k$,
\begin{align}\label{E:barsigmaQ12}
 \sigma_x (x)Q^{1/2} = \sum_{k =1}^r \sqrt{\mu_k(x)} \langle  Q^{1/2} \cdot, Q^{1/2}e_k \rangle_{H_0} \, q_k(x)=\sum_{k =1}^r \sqrt{\mu_k(x)} \langle  \cdot, e_k \rangle \, q_k(x).
\end{align}
Thus,
\begin{equation}\label{E:barsigmaQ123}
\begin{aligned}
 (\sigma_x(x) Q^{1/2})(\sigma_x (x) Q^{1/2})^* &=  \sum_{k =1}^r \sqrt{\mu_k(x)} \langle (\sigma_x (x) Q^{1/2})^* \cdot, e_k \rangle \, q_k(x) \\
&=  \sum_{k =1}^r \sqrt{\mu_k(x)} \langle  \cdot,  \sigma_x (x) Q^{1/2} e_k \rangle \, q_k(x) \\
&=  \sum_{k =1}^r {\mu_k(x)} \langle  \cdot,  q_k(x) \rangle \, q_k(x)\\
&= C(x).
\end{aligned}
\end{equation}
\end{remark}

\begin{proof}[Proof of Proposition \ref{prop:distincteigenoperator}]
The statements (i)--(v) are consequences of Lemmas \ref{lemma-implicit-eigenvalues}, \ref{lemma-simple-eigenvalues-2} below and the fact that distinct eigenvalues of self-adjoint operators have orthogonal eigenvectors.

Now, let us define $C_x := \Sigma_x \Sigma_x^* : N(x) \to L_1(H)$, where $\Sigma_x := \sigma_x Q^{1/2} : N(x) \to L_2(H)$. By \eqref{C-as-composition} and Lemma \ref{lemma-kernels} we have
\begin{align*}
\ker (C(x)) = \ker (C_x(x)) = \ker ( \Sigma_x(x)^* ) = \ker ( \sigma_x(x)^* ),
\end{align*}
which in particular proves \eqref{E:sigmabaru}. Moreover, for every $y \in N(x)$ an analogous calculation as in \eqref{E:barsigmaQ12} leads to
$$ \Sigma_x (y) = \sum_{k=1}^r \sqrt{\mu_k^x(y)} \langle  Q^{1/2} \cdot, Q^{1/2}e_k \rangle_{H_0} \, q_k^x(y) = \sum_{k=1}^r \sqrt{\mu_k^x(y)} \langle  \cdot, e_k \rangle \, q_k^x(y), $$
and hence, an analogous calculation as in \eqref{E:barsigmaQ123} shows that
\begin{align*}
C_x(y)  = \sum_{k=1}^r \mu^x_k(y) \langle \cdot, q^x_k(y) \rangle \,q^x_k(y)   \quad \text{for all $y \in N(x)$.}
\end{align*}
Let $u \in \ker(C(x))$ be arbitrary. Then it follows from Proposition \ref{P:C=sigma2} (applied with $\sigma_x$ here) that
	\begin{align}\label{E:ubarsigmabarC}
	\sum_{j=1}^{\infty} \langle u, D \sigma_x^j(x) \sigma_x^j(x) \rangle = \sum_{j=1}^{\infty} \langle u, D  C_x^j(x) P^j_{C_x}(x) \rangle,
	\end{align}
	where the orthogonal projection on the range of $C_x(y)$ is given by
$$ P_{C_x}(y) = \sum_{k=1}^r \langle  \cdot,  q^x_k(y) \rangle q^x_k(y) \quad \text{for all $y \in N(x)$.}$$
Thus, $P_{C_x}$ is of class $C^2$ on $N(x)$, and by the previous identities and \eqref{E:spectralC-y} we have
\begin{align*}
C_x(y) = \sum_{k=1}^r \mu_k^x(y) \langle  \cdot, q_k^x(y) \rangle \, q_k^x(y) = C(y) P_{C_x}(y)
\end{align*}
for all $y \in N(x)$, showing that $C_x = C  P_{C_x} $. Hence, we have $C_x = B(C,P_{C_x})$, where
\begin{align*}
B : L_1(H) \times L(H) \to L_1(H)
\end{align*}
denotes the continuous bilinear operator $B(T,S) := TS$. Therefore, by Proposition \ref{prop-Leibniz} it follows that
\begin{align*}
D C_x(y) v &= B( D C(y) v, P_{C_x}(y) ) + B(C(y), P_{C_x}(y) v)
\\ &= DC(y) v P_{C_x}(y) + C(y) D P_{C_x}(y) v, \quad y \in N(x), \quad v \in H.
\end{align*}
	By Corollary \ref{cor-self-adjoint} and Lemma \ref{lemma-adjoint-isometry} the operator $DC(x)v$ is self-adjoint for each $v \in H$. Denoting by $Cu : H \to H$ the mapping $y \mapsto C(y)u$, by Proposition \ref{prop-smooth-linear} we have
	\begin{align*}
	D (Cu)(x)v = ( D C(x) v )u, \quad v \in H.
	\end{align*}
	Observing that $P_{C_x}(x) =P_C(x)$ and recalling that $u \in \ker (C(x))$, the previous two identities yield
	\begin{align*}
	(DC_x(x) P^j_{C_x}(x))^* u &= ( D C(x) P_C^j(x) P_C(x) )^* u + ( C(x) DP_{C_x}(x) P_C^j(x) )^* u
	\\ &= P_C(x) (DC(x) P_C^j(x)  )^*u + ( DP_{C_x}(x) P_C^j(x) )^* C(x) u
	\\ &= P_C(x) (DC(x) P_C^j(x)  ) u = P_C(x) D(Cu)(x) P_C(x) e_j.
	\end{align*}
Thus, by Lemma \ref{lemma-trace-commute} we obtain
\begin{align*}
&\sum_{j=1}^{\infty} \langle u, D  C_x^j(x) P_{C_x}^j(x)  \rangle = \sum_{j=1}^{\infty} \langle u, D  C_x (x) P_{C_x}^j(x) e_j  \rangle = \sum_{j=1}^{\infty}\langle (D  C_x (x) P_{C_x}^j(x) )^* u,  e_j  \rangle \\
&= \sum_{j=1}^{\infty} \langle P_C(x) D(Cu)(x) P_C(x) e_j ,  e_j  \rangle = \tr \big( P_C(x) D(Cu)(x) P_C(x) \big) \\
&= \tr \big( D(Cu)(x) P_C(x) P_C(x) \big) = \tr \big( D(Cu)(x) P_C(x) \big) \\
&=  \sum_{j=1}^{\infty}\langle D(Cu)(x) P^j_C(x)  ,  e_j  \rangle =  \sum_{j=1}^{\infty}\langle (DC(x)   P^j_C(x)) u,  e_j  \rangle \\
&=\sum_{j=1}^{\infty}\langle u, (DC(x)   P^j_C(x)) e_j  \rangle =\sum_{j=1}^{\infty}\langle u,  DC^j(x) P_C^j(x)  \rangle,
\end{align*}
	yielding  \eqref{E:sigmabarc} thanks to  \eqref{E:ubarsigmabarC}.
\end{proof}

\begin{lemma}\label{lemma-implicit-eigenvalues}
Let $x \in \cald$ be arbitrary, let $\mu(x)$ be a simple eigenvalue of $C(x)$, and let $q(x)\in H$ be a corresponding eigenvector with $\| q(x) \| = 1$. Then there exist a neighborhood $N(x) \subset H$ of $x$, and unique mappings $q^x : N(x) \to H$ and $\mu^x : N(x) \to \bbr$ of class $C^2$ with $q^x(x) = q(x)$ and $\mu^x(x) = \mu(x)$ such that
\begin{align*}
C(y) q^x(y) = \mu^x(y) q^x(y) \quad \text{and} \quad \| q^x(y) \| = 1 \quad \text{for all $y \in N(x)$.}
\end{align*}
\end{lemma}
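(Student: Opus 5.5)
We will obtain $q^x$ and $\mu^x$ from the implicit function theorem applied to the map
\begin{align*}
F : H \times H \times \bbr \to H \times \bbr, \quad F(y,q,\mu) := \Big( C(y)q - \mu q, \; \tfrac{1}{2}\big(\| q \|^2 - 1\big) \Big).
\end{align*}
By Assumption \ref{ass-extension}, $C : H \to L_1(H) \hookrightarrow L(H)$ is of class $C^2$. The evaluation $(T,q) \mapsto Tq$ is continuous bilinear, so $F$ is of class $C^2$ by Proposition \ref{prop-Leibniz}. Moreover, $F(x,q(x),\mu(x)) = 0$. Once we show that the partial derivative of $F$ with respect to $(q,\mu)$ at this point is an isomorphism of $H \times \bbr$, the implicit function theorem gives a neighborhood $N(x)$ and unique $C^2$ maps $y \mapsto (q^x(y),\mu^x(y))$ with $F(y,q^x(y),\mu^x(y)) = 0$. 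These are exactly the two stated identities, and uniqueness holds in the sense of the implicit function theorem: locally, near $(q(x),\mu(x))$.

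The partial derivative at $(x,q(x),\mu(x))$ is
\begin{align*}
A(h,s) = \big( (C(x) - \mu(x))h - s\, q(x), \; \la q(x), h \ra \big), \quad (h,s) \in H \times \bbr.
\end{align*}
This is the main step. Write $q := q(x)$ and $\mu := \mu(x)$.

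\emph{Injectivity.} Suppose $A(h,s) = 0$. Since $C(x)$ is self-adjoint, taking the inner product of the first component with $q$ gives
\begin{align*}
s = \la (C(x)-\mu)h, q \ra = \la h, (C(x)-\mu)q \ra = 0.
\end{align*}
Then $h \in \ker(C(x)-\mu) = \lin\{q\}$ because $\mu$ is simple. The second component gives $\la q,h \ra = 0$, so $h = 0$.

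\emph{Surjectivity.} This is where $\mu \neq 0$ matters, which holds in our application since $\mu_k(x) > 0$. Because $C(x)$ is compact and $\mu \neq 0$, the operator $C(x)-\mu$ is Fredholm of index zero with closed range $\lin\{q\}^{\perp}$. It therefore restricts to an isomorphism $T$ of $\lin\{q\}^{\perp}$ onto itself. Given $(g,c) \in H \times \bbr$, we set
\begin{align*}
s := -\la g, q \ra, \qquad h := T^{-1}\big(g - \la g,q \ra q\big) + c\, q.
\end{align*}
Then $(C(x)-\mu)h - sq = g$ and $\la q,h \ra = c$, so $A$ is bijective. By the bounded inverse theorem, $A$ is an isomorphism.

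Finally, if one wants the eigenvalue to remain simple and isolated on $N(x)$, one shrinks $N(x)$ using continuity of $C$ and upper semicontinuity of the spectrum. That refinement belongs to Lemma \ref{lemma-simple-eigenvalues-2}, so it is not needed here.
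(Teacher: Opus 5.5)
Your proof is correct and follows the paper's argument. It uses the same implicit-function-theorem map (up to the harmless factor $\tfrac12$ in the normalization equation), the same partial derivative, and the same explicit inverse built from $(C(x)-\mu(x))^{-1}$ on $\lin\{q(x)\}^{\perp}$. Your appeal to compactness with $\mu(x)\neq 0$ actually justifies the invertibility step that the paper merely asserts from simplicity; the case $\mu(x)=0$ is not a real gap. Under Assumption \ref{ass-closed-range}, $C(x)$ has finite rank for $x\in\cald$, so $0$ can only be a simple eigenvalue when $\dim H<\infty$, and then surjectivity of your operator $A$ follows directly from your injectivity argument.
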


\begin{proof}
The proof follows the ideas presented in \cite{R55} (see also \cite[Theorem (p.177)]{K19}). We define the function $f : H \times (H \times \bbr) \to H \times \bbr$ as
\begin{align*}
f(y,(q,\mu)) := \big( (C(y) - \mu) q, \| q \|^2 - 1 \big).
\end{align*}
Then $f$ is of class $C^2$ with
\begin{align*}
f \big( x,(q(x),\mu(x)) \big) = 0.
\end{align*}
The partial derivative $T := D_2 f(x,(q(x),\mu(x))) \in L(H \times \bbr)$ is given by
\begin{align*}
T(p,\lambda) = \big( (C(x) - \mu(x)) p - \lambda q(x), 2 \la q(x),p \ra \big).
\end{align*}
The linear operator $T$ is an isomorphism. Indeed, note that $H = H_1 \oplus H_2$, where $H_1 = \lin \{ q(x) \}$ and $H_2 = H_1^{\perp}$. Since $\mu(x)$ is a simple eigenvalue, the linear operator $C(x) - \mu(x) \in L(H_2)$ is invertible. Denoting by $P_i : H \to H_i$, $i=1,2$ the corresponding orthogonal projections, we have $P_1 y = \la y,q(x) \ra q(x)$ for all $y \in H$. Thus, a straightforward calculation shows that the inverse of $T$ is given by
\begin{align*}
T^{-1}(r,\kappa) := \Big( \frac{\kappa}{2} q(x) + (C(x) - \mu(x))^{-1} P_2 r, - \la q,r \ra \Big).
\end{align*}
Consequently, the Implicit Function Theorem (Theorem \ref{thm-implicit}) implies that there are a neighborhood $N(x) \subset H$ of $x$ and a unique map $(q^x,\mu^x) : N(x) \to H \times \bbr$ of class $C^2$ such that $q^x(x) = q(x)$, $\mu^x(x) = \mu(x)$ and
\begin{align*}
f \big( y,(q^x(y),\mu^x(y)) \big) = 0 \quad \text{for all $y \in N(x)$.}
\end{align*}
This concludes the proof.
\end{proof}

\begin{lemma}\label{lemma-simple-eigenvalues-2}
Let $x \in \cald$ be arbitrary. Suppose that the spectral decomposition of $C(x)$ is given by \eqref{E:spectralC} with $\mu_1(x) > \mu_2(x) > \ldots > \mu_r(x) > 0$, and let $N(x) \subset H$ be an open neighborhood of $x$. Furthermore, let $\mu_1^x,\ldots,\mu_r^x : N(x) \to \bbr$ and $q_1^x,\ldots,q_r^x : N(x) \to H$ be continuous mappings such that $\mu_k^x(x) = \mu_k(x)$ and $q_k^x(x) = q_k(x)$ for all $k=1,\ldots,r$, and $\mu_k^x(y)$ is an eigenvalue of $C(y)$ with eigenvector $q_k^x(y)$ such that $\| q_k^x(y) \| = 1$ for all $y \in N(x)$ and all $k=1,\ldots,r$. Then there exists $\delta > 0$ such that for all $y \in N(x)$ with $\| x-y \| < \delta$ the following statements are true:
\begin{enumerate}
\item $\mu_1^x(y), \ldots, \mu_r^x(y)$ are simple eigenvalues of $C(y)$, and we have
\begin{align}\label{eigenvalues-ordered}
\mu_1^x(y) > \mu_2^x(y) > \ldots > \mu_r^x(y) > 0.
\end{align}
\item We have $\mu_k(y) = \mu_k^x(y)$ for all $k=1,\ldots,r$.

\item We have $q_k(y) = q_k^x(y)$ or $q_k(y) = - q_k^x(y)$ for all $k=1,\ldots,r$.
\end{enumerate}
\end{lemma}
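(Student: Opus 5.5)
The plan is to perturb the spectrum of the compact self-adjoint operator $C(x)$ and track the perturbation using the continuity of $C$ together with that of the given branches $\mu_k^x$. Because $C : H \to L_1(H)$ is continuous (Assumption \ref{ass-extension}) and $\|\cdot\|_{L(H)} \le \|\cdot\|_{L_1(H)}$, we have $\|C(y)-C(x)\|_{L(H)} \to 0$ as $y \to x$. For self-adjoint operators the eigenvalues, ordered as in \eqref{E:spectralC-y}, are $1$-Lipschitz in the operator norm (Weyl's inequality, via the min–max principle). We apply it separately to the positive and the negative parts of the spectrum of the compact self-adjoint operators $C(y)$. This gives $|\mu_k(y)-\mu_k(x)| \le \|C(y)-C(x)\|$ for $k \le r+1$, where $\mu_{r+1}(x)=0$; if $r+1$ exceeds the number of positive eigenvalues of $C(y)$, we use the convention that the remaining entries are $0$ or negative.

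Set
\[
g := \min\{\mu_1(x)-\mu_2(x),\ldots,\mu_{r-1}(x)-\mu_r(x),\mu_r(x)\} > 0.
\]
Choose $\delta>0$ so small that $\|C(y)-C(x)\| < g/3$, and, by continuity of $\mu_k^x$, also $|\mu_k^x(y)-\mu_k(x)|<g/3$ for $\|y-x\|<\delta$.

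With these choices, the spectrum of $C(y)$ splits into three groups:
\begin{enumerate}
\item for $k \le r$, the eigenvalue $\mu_k(y)$ lies in the interval $I_k := (\mu_k(x)-g/3,\mu_k(x)+g/3)$;
\item all other eigenvalues of $C(y)$ lie within $g/3$ of $0$;
\item for negative eigenvalues we have $|\mu| \le \|C(y)-C(x)\| < g/3$, because $C(x)\ge 0$ and the negative part is controlled by Weyl's inequality applied to $-C$.
\end{enumerate}
The intervals $I_1,\dots,I_r$ and $(-g/3,g/3)$ are pairwise disjoint. Hence each $I_k$ contains exactly one eigenvalue of $C(y)$ counted with multiplicity, namely $\mu_k(y)$.

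For the multiplicity count, I expect it is cleaner to use a Riesz projection argument. The spectral projection $P_k(y)$ onto the spectrum inside a fixed circle around $\mu_k(x)$ of radius $g/2$ depends continuously on $y$. Since $P_k(x)$ has rank one, $P_k(y)$ has rank one near $x$, because projections at distance less than $1$ have equal rank.

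Since $\mu_k^x(y)$ is an eigenvalue of $C(y)$ lying in $I_k$, we conclude $\mu_k^x(y)=\mu_k(y)$ and that this eigenvalue is simple. The ordering \eqref{eigenvalues-ordered} and positivity then follow from the disjointness and ordering of the intervals $I_k$, all of which lie above $2g/3>0$. This proves (1) and (2).

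For (3), simplicity means the eigenspace of $\mu_k(y)$ is one-dimensional. Both $q_k(y)$ and $q_k^x(y)$ are unit vectors in this eigenspace, so $q_k(y)=\pm q_k^x(y)$.

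The main obstacle is making the perturbation statement rigorous for the ordering convention of \eqref{E:spectralC-y}, which interleaves positive and negative eigenvalues by absolute value. I would handle this by working directly with the Riesz projections and the counting argument above, and avoid invoking the indexed Weyl inequality for mixed-sign spectra. The Riesz argument shows that exactly $r$ eigenvalues of $C(y)$, counted with multiplicity, have absolute value larger than $g/3$, and that they are the ones in $I_1,\dots,I_r$. It follows that they occupy the first $r$ positions in the ordering, which gives $\mu_k(y)$ for $k\le r$.
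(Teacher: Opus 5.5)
Your argument is correct, but it uses a different perturbation tool from the paper.

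\textbf{The paper's route.} It works in the trace norm. It invokes Proposition \ref{prop-eigenvalue-map}, the bound $\sum_k|\mu_k(x)-\mu_k(y)|\le\|C(x)-C(y)\|_{L_1(H)}$, stated directly for the interleaved ordering of \eqref{E:spectralC-y}. It combines this with continuity of the branches $\mu_k^x$ (tolerances $\epsilon/8$). This shows that $\mu_1(y),\dots,\mu_r(y)$ lie within $\epsilon/4$ of the $3\epsilon/4$-separated numbers $\mu_k^x(y)$, while $|\mu_k(y)|<\epsilon/4$ for $k>r$. The identification $\mu_k(y)=\mu_k^x(y)$ is then read off from this list.

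\textbf{Your route.} You use only operator-norm continuity of $C$ and Weyl's min--max inequality, applied to $C(y)$ and $-C(y)$ separately. That already gives the full multiplicity count: each $I_k$ contains exactly the $k$-th largest positive eigenvalue of $C(y)$, and all remaining eigenvalues have modulus $<g/3$. So your Riesz-projection step is a correct but redundant second proof of simplicity.

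\textbf{What each buys.} The paper's route is a one-line estimate once Proposition \ref{prop-eigenvalue-map} is granted. Yours is more elementary and works under mere operator-norm continuity of $C$.

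More to the point, your concern about the interleaved ordering is justified: Proposition \ref{prop-eigenvalue-map} fails as stated for that ordering. Take $T=0.5\,\langle\cdot,e_1\rangle e_1$ and $S=T-0.6\,\langle\cdot,e_2\rangle e_2$. The left-hand side is $|0.5+0.6|+|0-0.5|=1.6$, while $\|T-S\|_{L_1(H)}=0.6$. The inequality holds only after separating positive and negative parts of the spectrum (Lidskii). It is this separated version, in the regime $\|C(x)-C(y)\|_{L_1(H)}<\epsilon/8$, that actually justifies the paper's estimate, and your treatment makes that separation explicit.

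Likewise, the paper asserts simplicity of $\mu_k^x(y)$ right after \eqref{eigenvalues-ordered}, which by itself only gives distinctness. Your counting argument supplies that step directly.
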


\begin{proof}
Let us define $\mu_k^x : N(x) \to \bbr$ as $\mu_k^x \equiv 0$ for all $k \geq r+1$. Moreover, we define $\epsilon > 0$ as
\begin{align*}
\epsilon := \min \{ \mu_{k}^x(x) - \mu_{k+1}^x(x) : k = 1,\ldots,r \}.
\end{align*}
Then we have
\begin{align}\label{dist-EW-0}
\mu_k^x(x) - \mu_{k+1}^x(x) \geq \epsilon, \quad k=1,\ldots,r,
\end{align}
and in particular $\mu_r^x(x) \geq \epsilon$. By the continuity of $C$ and $\mu_1^x,\ldots,\mu_r^x$ there exists $\delta > 0$ such that for all $y \in N(x)$ with $\| x-y \| < \delta$ we have
\begin{align}\label{EW-est-1}
\| C(x) - C(y) \|_{L_1(H)} &< \frac{\epsilon}{8},
\\ \label{EW-est-2} \sum_{k=1}^r | \mu_k^x(x) - \mu_k^x(y) | &< \frac{\epsilon}{8}.
\end{align}
Now, let $y \in N(x)$ with $\| x-y \| < \delta$ be arbitrary. By \eqref{dist-EW-0} and \eqref{EW-est-2} we have
\begin{align}\label{dist-EW-1}
\mu_k^x(y) - \mu_{k+1}^x(y) \geq \frac{3\epsilon}{4} > \frac{\epsilon}{2}, \quad k=1,\ldots,r.
\end{align}
In particular, we obtain \eqref{eigenvalues-ordered}, and the eigenvalues $\mu_1^x(y), \ldots, \mu_r^x(y)$ of $C(y)$ are simple. Moreover, by \eqref{dist-EW-1} we have $\mu_r^x(y) > \frac{\epsilon}{2}$, and hence
\begin{align}\label{EW-est-2b}
\mu_1^x(y) > \mu_2^x(y) > \ldots > \mu_r^x(y) > \frac{\epsilon}{2}.
\end{align}
Furthermore, by Proposition \ref{prop-eigenvalue-map} we have
\begin{align*}
\sum_{k=1}^{\infty} | \mu_k(x) - \mu_k(y) | \leq \| C(x) - C(y) \|_{L_1(H)} < \frac{\epsilon}{8}.
\end{align*}
Together with \eqref{EW-est-2} we obtain
\begin{align*}
&\sum_{k=1}^r | \mu_k^x(y) - \mu_k(y) | + \sum_{k=r+1}^{\infty} |\mu_k(y)| = \sum_{k=1}^{\infty} | \mu_k^x(y) - \mu_k(y) |
\\ &\leq \sum_{k=1}^r | \mu_k^x(y) - \mu_k^x(x) | + \sum_{k=1}^{\infty} | \mu_k(x) - \mu_k(y) | < \frac{\epsilon}{4}.
\end{align*}
In particular, we have
\begin{align}\label{EW-est-3}
\sum_{k=1}^r | \mu_k^x(y) - \mu_k(y) |  < \frac{\epsilon}{4}
\end{align}
as well as
\begin{align*}
|\mu_k(y)| < \frac{\epsilon}{4} \quad \text{for all $k \geq r+1$.}
\end{align*}
Thus, in view of \eqref{EW-est-2b} we obtain
\begin{align}\label{EW-est-4}
\mu_r^x(y) > |\mu_k(y)| \quad \text{for all $k \geq r+1$.}
\end{align}
Therefore, by \eqref{dist-EW-1}, \eqref{EW-est-3} and \eqref{EW-est-4} we deduce that
\begin{align*}
\mu_1(y) > \mu_2(y) > \ldots > \mu_r(y) > |\mu_{r+1}(y)|.
\end{align*}
Recalling the ordering of the eigenvalues $(\mu_k(y))_{k \in \bbn}$ in \eqref{E:spectralC-y}, this proves that $\mu_k(y) = \mu_k^x(y)$ for all $k=1,\ldots,r$. Now, since $\mu_k(y)$ is a simple eigenvalue of $C(y)$, and $q_k(y)$ and $q_k^x(y)$ are normalized eigenvectors, we have $q_k(y) = q_k^x(y)$ or $q_k(y) = - q_k^x(y)$ for all $k=1,\ldots,r$.
\end{proof}

\begin{proposition}\label{propdistincteigen}
	Suppose that Assumptions \ref{ass-lin-growth}, \ref{ass-extension}, \ref{ass-Sigma-self-adjoint}, \ref{ass-closed-range} are in force. Assume that $\mathcal{D}$ is stochastically invariant. Let $x \in \cald$ be such that the eigenvalues in the spectral decomposition \eqref{E:spectralC} satisfy $\mu_1(x) > \mu_2(x) > \ldots > \mu_r(x) > 0$.
	Then, \eqref{cond-C-zero} and \eqref{cond-drift-N} hold for all $u \in \mathcal{N}^{1,\pr}_{\mathcal{D}}(x)$.
\end{proposition}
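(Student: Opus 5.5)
The plan is to reduce to the smooth-$\sigma$ case (Proposition \ref{prop-nec-sigma-C2}) by replacing $\sigma$ near $x$ with the locally smooth square root $\sigma_x$ from Proposition \ref{prop:distincteigenoperator}. The difficulty is that $\sigma_x Q^{1/2}(\sigma_x Q^{1/2})^*$ does not coincide with $C$ on $\cald$ away from $x$ (Remark \ref{R:bsigma2C}). So I cannot simply invoke Lemma \ref{lemma-invariance-law}. Instead, I will rerun the It\^o/double-integral argument of Proposition \ref{prop-nec-sigma-C2} directly for the given solution $X$, inserting $\sigma_x$ only where it is harmless.

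First, fix $x$ and $u \in \caln_{\cald}^{1,\pr}(x)$. As in Step 1 of Proposition \ref{prop-nec-sigma-C2}, take $\phi(y) = \la u,y-x\ra - \frac{\kappa}{2}\|y-x\|^2$, which satisfies $\max_\cald\phi=\phi(x)=0$ and $D\phi(x)=u$. Localize with a stopping time $\tau$ so that $X^\tau$ stays in a small ball inside $N(x)$. Since $X^\tau\in\cald$, It\^o's formula gives
\[
0\ge \int_0^t\call\phi(X_s^\tau)\,ds+\int_0^t D\phi(X_s^\tau)\sigma(X_s^\tau)\,dW_s .
\]
Since $\sigma$ itself is not differentiable, I will rewrite the stochastic integrand. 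By Assumption \ref{ass-Sigma-self-adjoint}, $\sigma(y)Q^{1/2}=C(y)^{1/2}$ for $y\in\cald$. For $y\in\cald\cap N(x)$, the top $r$ eigenpairs of $C(y)$ are $(\mu_k^x(y),q_k^x(y))$ by Proposition \ref{prop:distincteigenoperator}(iii),(iv). Hence
\[
C(y)^{1/2}=\Sigma_x(y)+R(y), \qquad R(y):=\sum_{k>r}\sqrt{\mu_k(y)}\,\la\cdot,q_k(y)\ra q_k(y),
\]
with $\Sigma_x(y)\perp R(y)$ in ranges. Write $\tilde W$ for the cylindrical noise $Q^{-1/2}W$ in the basis $\{e_j\}$. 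The stochastic integral then splits into a part driven by $\Sigma_x$, which is smooth, and a remainder driven by $R$, whose quadratic variation $\|R(y)\|_{L_2}^2=\sum_{k>r}\mu_k(y)$ vanishes at $x$. Apply It\^o's formula to $\Phi(y):=\sigma_x(y)^*D\phi(y)$ exactly as in Step 3 of Proposition \ref{prop-nec-sigma-C2} (this is $C^2$ on $N(x)$ after the cutoff of Proposition \ref{prop-modify-fct-bdd}). This produces the form \eqref{eqintegraledoublehilbert} with $\alpha=\sigma_x(x)^*u$ and $\gamma_r = D\Phi(X_r^\tau)\sigma(X_r^\tau)$, together with an extra term $\int_0^t D\phi(X^\tau_s)R(X^\tau_s)\,d\tilde W_s$.

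The main obstacle is controlling the extra $R$-term, so that it can be absorbed into $\theta$ or shown to be negligible at the $t$-scale that Lemma \ref{lemmadoubleintstohilbert} uses. The first thing I would try exploits the Lipschitz property of $C$ and the absence of small nonzero eigenvalues near $x$. Each $\mu_k$ with $k>r$ satisfies $\sum_{k>r}|\mu_k(y)|\le\|C(y)-C(x)\|_{L_1}\le K\|y-x\|$ by Proposition \ref{prop-eigenvalue-map}. Moreover, $\la D\phi(y),q_k(y)\ra$ is small. Since $D\phi(y) = u-\kappa(y-x)$ and $C(x)u=0$, I get $\sum_{k>r}\mu_k(y)\la u,q_k(y)\ra^2=\la C(y)u,u\ra=\la (C(y)-C(x))u,u\ra$. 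At first order this equals $\la DC(x)(y-x)u,u\ra$, which must be $\ge0$ on $\cald$. I would show that this contributes $O(t^{1+\eta})$ in expectation of the quadratic variation, i.e.\ it fits condition \eqref{eqgammacontinuity}. More precisely, I would fold $D\phi R$ into a modified integrand $\alpha+\int\beta+\int\gamma$ whose $\gamma$-part has zero value at $0$, so it leaves $\tr(\gamma_0)$ unchanged. If this direct estimate fails, the fallback is to compare traces: the generator term $\frac12\tr(D^2\phi\, C)$ already uses the full $C$, and the $R$-part only shifts $\gamma_0$ by an operator whose trace against $u$ is $\tr(DC(x)(I-P_C(x))u)$ restricted appropriately. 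I would need to verify that this vanishes or has the right sign.

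Once the reduction holds, Lemma \ref{lemmadoubleintstohilbert} gives $\sigma_x(x)^*u=0$. By \eqref{E:sigmabaru} and Remark \ref{rem-kernels}, this is \eqref{cond-C-zero}. It also gives $\call\phi(x)-\frac12\tr(\gamma_0)\le0$. Computing $\tr(\gamma_0)$ as in Lemma \ref{lemma-trace-decomp} with $\sigma_x$ in place of $\sigma$, and using $\Sigma_x(x)\Sigma_x(x)^*=C(x)$, the $D^2\phi$ terms cancel. What remains is $\la u,b(x)\ra-\frac12\sum_j\la u,D\sigma_x^j(x)\sigma_x^j(x)\ra\le0$. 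By \eqref{E:sigmabarc} and Remark \ref{rem-drift-cond}, this is exactly \eqref{cond-drift-N}.
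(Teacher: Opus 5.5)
Your plan has a genuine gap, at exactly the step you flag as the main obstacle, and neither remedy closes it. The term $M^R_t:=\int_0^t\la R(X^\tau_s)D\phi(X^\tau_s),d\tilde W_s\ra$ is in general of \emph{exact} order $t$, so it cannot be discarded as $o(t)$. It also cannot be folded into Lemma \ref{lemmadoubleintstohilbert}: $y\mapsto R(y)D\phi(y)$ contains $\sqrt{\mu_k(y)}$ with $\mu_k(x)=0$, so it has no It\^o expansion with bounded drift and continuous $\gamma$.

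A minimal example shows this. Take $H=\bbr^2$, $Q=\Id$, $\cald=\bbr\times\bbr_+$, $C(y)=\mathrm{diag}(1,y_2)$, $b\equiv(0,\beta)$ with $\beta>0$, $x=0$ and $u=-e_2$. All hypotheses hold, with $r=1$ and $R(y)=\sqrt{y_2}\,\la\cdot,e_2\ra e_2$. The second coordinate $Y$ of $X$ solves $dY=\beta\,dt+\sqrt{Y}\,dB$ with $Y_0=0$.
\begin{itemize}
\item We have $M^R_t=-(Y_t-\beta t)+O_{L^2}(t^2)$. By scaling, $Y_t/t$ has the nondegenerate law of $Y_1$, so $\bbe[(M^R_t)^2]\sim\beta t^2/2$.
\item Your fallback quantity is $\tr(DC(0)(\Id-P_C(0))u)=-1\neq0$.
\item Keeping the Stratonovich-type term of the non-smooth column $y\mapsto\sqrt{y_2}\,e_2$, namely $-\frac12\la u,\frac12 e_2\ra=\frac14$, gives the condition $\beta\ge\frac14$. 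This is false, since $\cald$ is invariant for every $\beta\ge0$.
\item What compensates is the unbounded It\^o drift $(\beta-\frac14)/(2\sqrt{Y})$ of $\sqrt{Y}$, which again contributes at order $t$.
\end{itemize}
So the non-smooth part carries genuine order-$t$ information and must be eliminated, not estimated.

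The missing idea is the paper's conditioning device.
\begin{itemize}
\item Using Lemma \ref{lemma-invariance-law}, it first replaces $\sigma$ on $N(x)$ by the rotated representation $\sum_k\sqrt{\mu_k(y)}\la\cdot,f_k\ra_{H_0}q_k(y)$. The \emph{fixed} noise coordinates $\bar W=(W^1,\ldots,W^r)$ then drive exactly the smooth $\sigma_x$. The independent complement $\bar W^{\perp}$ drives everything non-smooth.
\item It then applies $\bbe_{\calf_t^{\bar W}}$ to the pathwise inequality. This preserves the sign and kills every $d\bar W^{\perp}$ integral (Lemma \ref{L:condexphilbert}).
\item What remains has $\theta_0=\call\phi(x)$ with the full $C(x)$ and $\gamma_0=D\Phi_x(x)\sigma_x(x)$, so the $D^2\phi$ terms cancel as required.
\end{itemize}
With a self-adjoint square root, the directions carrying $R(y)$ move with $y$. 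No such conditioning is available without first performing this rotation.

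Two smaller points.
\begin{itemize}
\item With the paper's $\Sigma_x$, which maps $e_k\mapsto\sqrt{\mu_k^x}\,q_k^x$, the identity $C(y)^{1/2}=\Sigma_x(y)+R(y)$ is false. Even $\Sigma_x(x)\neq C(x)^{1/2}=\Sigma(x)$ unless $q_k(x)=e_k$ for $k\le r$. You would need $\sum_{k\le r}\sqrt{\mu_k^x}\,\la\cdot,q_k^x\ra q_k^x$ instead.
\item Your use of $C(x)u=0$ inside the estimate is circular. This is harmless for \eqref{cond-C-zero} itself: the crude bound $\sum_{k>r}\mu_k(y)\le K\|y-x\|$ gives $M^R_t=O_{L^2}(t^{3/4})=o(\sqrt{t})$, which is all the proof of Lemma \ref{lemma-short-time-0} uses.
\end{itemize}
So your route does recover \eqref{cond-C-zero}; the gap is in \eqref{cond-drift-N}.
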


\begin{proof}  
	Let $x \in \cald$ be arbitrary and let $(X,W)$ denote a  {weak solution} starting at $X_0=x$ such that  $X_t \in \mathcal{D}$ for all $t \geq 0 $. We also fix an arbitrary $u \in \mathcal{N}^{1,\pr}_{\mathcal{D}}(x)$.\\
	\textit{Step 1.} As in the proof of Proposition \ref{prop-nec-sigma-C2} we show that there exists a function $\phi \in C^{\infty}(H,\mathbb{R})$ such that $\displaystyle\max_{\mathcal{D}} \phi = \phi(x) = 0$ and $D \phi(x)= u$.\\
	\textit{Step 2.} Choosing an appropriate open and bounded neighborhood $N(x) \subset H$ of $x$ as in Proposition~\ref{prop:distincteigenoperator}, and a stopping time $\tau > 0$ such that $X^{\tau} \in N(x)$, by Proposition \ref{prop-modify-fct-bdd} we may assume that $\phi$ and $\sigma_x$ are of class $C_b^2$. Moreover, the stopped process $X^{\tau}$ is bounded, and we have $X^{\tau} \in \cald$.\\
	\textit{Step 3.} Performing the procedure described after Assumption \ref{ass-Sigma-self-adjoint}, we can restrict the study to the situation
	\begin{align}\label{sigma-eigenvalues}
	\sigma(y)= \sum_{k=1}^{\infty} \sqrt{\mu_k(y)}  \langle \cdot, f_k \rangle_{H_0} q_k (y), \quad y \in N(x),
	\end{align}
	since $\sigma(y)Q^{1/2}(\sigma(y)Q^{1/2})^* = C(y) $, meaning that the law of the diffusion is unchanged; see Lemma \ref{lemma-invariance-law}.  Since $\mathcal{D}$ is invariant under the diffusion $X$, we have $\phi(X_t) \leq \phi(x)$, for all $t\geq 0$.  By the above and It\^{o}'s formula (Theorem \ref{thm-Ito}) we obtain \eqref{Ito-ineqn-phi}.
	Recall that the sequence $(W^j)_{j \in \bbn}$ defined as
	\begin{align*}
	W^j := \frac{1}{\sqrt{\lambda_j}} \la W,e_j \ra
	\end{align*}
	is a sequence of independent real-valued standard Wiener processes; see \cite[Prop. 4.3]{Da_Prato}. Moreover, we have $W=\sum_{j=1}^{\infty} \sqrt{\lambda_j} W^j e_j$. The two $H$-valued processes
	\begin{align*}
	\bar{W} := \sum_{k=1}^r \sqrt{\lambda_k} W^k e_k \quad \mbox{and} \quad \bar{W}^{\perp} := \sum_{k>r} \sqrt{\lambda_k} W^k e_k
	\end{align*}
	are independent trace class Wiener processes with covariance operators $\bar{Q} = Q P_V$ and $\bar{Q}^{\perp} = Q (\Id - P_V)$, where $V \subset H$ denotes the subspace $V := \lin \{ e_1,\ldots,e_r \}$, and $P_V$ the orthogonal projection on $V$. Moreover, we have the decomposition $W = \bar W + \bar W^{\perp}$. Let $\sigma_x : N(x) \to L_2^0(H)$ be given as in \eqref{sigma-x-eigenvalues}, and recall that by Proposition~\ref{prop:distincteigenoperator} we have $\mu_k(y) = \mu_k^x(y)$ and $q_k(y) = q_k^x(y)$ for all $k=1,\ldots,r$ and all $y \in N(x)$. Thus, noting \eqref{sigma-eigenvalues} and \eqref{sigma-x-eigenvalues} we have
	\begin{align*}
	\int_{0}^{t} \sigma(X_{s}^{\tau})d\bar{W}_s = \sum_{k=1}^{r} \int_{0}^{t} \sigma(X_{s}^{\tau}) f_k d W_s^k = \sum_{k=1}^{r} \int_{0}^{t} \sigma_x(X_{s}^{\tau}) f_k d W_s^k = \int_{0}^{t} \sigma_x(X_{s}^{\tau})d \bar{W}_s.
	\end{align*}
	Taking also into account Lemma \ref{lemma-associativity} below, the above inequality \eqref{Ito-ineqn-phi} can be written in the form
	\begin{align*}
	0 &\ge \int_{0}^{t } \mathcal{L}\phi(X_{s}^{\tau})ds + \int_{0}^{t} D\phi(X_{s}^{\tau})  \sigma(X_{s}^{\tau})d \bar{W}_s + \int_{0}^{t} D\phi(X_s^{\tau}) \sigma(X_{s}^{\tau})d\bar W^{\perp}_s
	\\ &= \int_{0}^{t } \mathcal{L}\phi(X_{s}^{\tau})ds + \int_{0}^{t} D\phi(X_{s}^{\tau})  \sigma_x(X_{s}^{\tau})d \bar{W}_s + \int_{0}^{t} D\phi(X_s^{\tau}) \sigma(X_{s}^{\tau})d\bar W^{\perp}_s.
	\end{align*}
	Let $(\mathcal{F}_s^{\bar{W}})_{s\ge 0}$ be the completed filtration   generated by $\bar W$. Noting that the subspace $V$ can be expressed as $V = \lin \{ f_1,\ldots,f_r \}$, by \eqref{sigma-x-eigenvalues} we have
	\begin{align}\label{sigma-projection}
	\sigma_x(y) P_V = \sigma_x(y) \quad \text{for all $y \in N(x)$.}
	\end{align}
	Thus, taking the conditional expectation $\bbe_{\calf_s^{\bar{W}}}$, by  Lemma~\ref{L:condexphilbert} below we obtain
	\begin{align*}
	0&\ge \int_{0}^{t} \mathbb{E}_{\mathcal{F}_s^{\bar{W}}} [\mathcal{L}\phi(X_{s}^{\tau})]ds + \int_{0}^{t} \mathbb{E}_{\mathcal{F}_s^{\bar{W}}} [D \phi(X_{s}^{\tau}) \sigma_x(X_{s}^{\tau})]d \bar{W}_s
	\\ &= \int_{0}^{t} \mathbb{E}_{\mathcal{F}_s^{\bar{W}}} [\mathcal{L}\phi(X_{s}^{\tau})]ds + \int_{0}^{t} \mathbb{E}_{\mathcal{F}_s^{\bar{W}}} [D \phi(X_{s}^{\tau}) \sigma_x(X_{s}^{\tau})] P_V d W_s
	\\ &= \int_{0}^{t} \mathbb{E}_{\mathcal{F}_s^{\bar{W}}} [\mathcal{L}\phi(X_{s}^{\tau})]ds + \int_{0}^{t} \mathbb{E}_{\mathcal{F}_s^{\bar{W}}} [D \phi(X_{s}^{\tau}) \sigma_x(X_{s}^{\tau}) P_V ] d W_s
	\\ &= \int_{0}^{t} \mathbb{E}_{\mathcal{F}_s^{\bar{W}}} [\mathcal{L}\phi(X_{s}^{\tau})]ds + \int_{0}^{t} \mathbb{E}_{\mathcal{F}_s^{\bar{W}}} [D \phi(X_{s}^{\tau}) \sigma_x(X_{s}^{\tau})] d W_s.
	\end{align*}
	We introduce $\Phi_x : H \to L_2(H_0,\bbr)$ as
	\begin{align*}
	\Phi_x(y) := D \phi(y) \sigma_x(y), \quad y \in H.
	\end{align*}
	As in the proof of Proposition \ref{prop-nec-sigma-C2} we may regard $\Phi_x$ as a mapping $\Phi_x : H \to H_0$, and we have $\Phi_x \in C_{b}^2(H,H_0)$. Then we can write the previous inequality as
	\begin{align*}
	0 \ge \int_{0}^{t } \mathbb{E}_{\mathcal{F}_s^{\bar{W}}} [\mathcal{L}\phi(X_{s}^{\tau})]ds + \int_{0}^{t} \mathbb{E}_{\mathcal{F}_s^{\bar{W}}} [\Phi_x(X_{s}^{\tau})] d W_s.
	\end{align*}
	Another application of It\^{o}'s formula (Theorem \ref{thm-Ito}) to $\Phi_x(X^{\tau})$ gives us
	\begin{align*}
	0 &\ge \int_{0}^{t } \mathbb{E}_{\mathcal{F}_s^{\bar{W}}} [\mathcal{L}\phi(X_{s}^{\tau})]ds
	\\ &\quad + \int_{0}^{t} \mathbb{E}_{\mathcal{F}_s^{\bar{W}}} \bigg[ \Phi_x(x) + \int_0^s \call \Phi_x(X_r^{\tau})dr + \int_0^s D \Phi_x(X_r^{\tau}) \sigma_x(X_r^{\tau}) dW_r \bigg] dW_s.
	\end{align*}
	Together with Lemma~\ref{L:condexphilbert} below and identity \eqref{sigma-projection} this yields \eqref{eqintegraledoublehilbert},
	where the constant $\alpha \in H_0$, the $H_0$-valued predictable process $\beta$, the $L_2(H_0)$-valued predictable process $\gamma$, and the $\bbr$-valued predictable process $\theta$ are given by
	\begin{align*}
	\alpha &:= \Phi_x(x), \quad \quad
	\beta_r := \mathbb{E}_{\mathcal{F}_{r}^{\bar{W}}} [ \mathcal L \Phi_x (X_r^{\tau}) ], \quad r \geq 0,
	\\ \gamma_r &:= \mathbb{E}_{\mathcal{F}_{r}^{\bar{W}}} [ \Psi_x(X_r^{\tau}) ], \quad r \geq 0, \quad \quad
	\theta_s := \mathbb{E}_{\mathcal{F}_{s}^{\bar{W}}} [ \mathcal L \phi (X_s^{\tau}) ], \quad s \geq 0,
	\end{align*}
	and where $\Psi_x : H \to L_2(H_0)$ is defined as $\Psi_x(y) := D \Phi_x(y) \sigma_x(y)$ for each $y \in H$.\\
	\textit{Step 4.}   We now check that we can apply   Lemma \ref{lemmadoubleintstohilbert}.  Indeed, given $T >0$, by Lemma \ref{lemma-cond-exp} below we have $\theta_s= \bbe_{\calf^{\bar{W}}_T}\left[ \call \phi (X_s^{\tau}) \right]$ for all $s \leq T$, showing that $\theta$ is continuous at zero. Taking into account the linear growth condition \eqref{linear-growth}, the fact that $\Phi_x$ is of class $C_b^2$, and the boundedness of $X^{\tau}$, the process $\beta$ is bounded by Lemma \ref{lemma-generator-growth}. Note that
\begin{align*}
\Psi_x(y) = B(D \Phi_x(y), \sigma_x(y)), \quad y \in H,
\end{align*}
where $B : L(H,H_0) \times L_2(H_0,H) \to L_2(H_0)$ denotes the continuous bilinear operator $B(T,S) = TS$. Since $D \Phi_x : H \to L(H,H_0)$ is of class $C_b^1$ and $\sigma_x : H \to L_2(H_0,H)$ is of class $C_b^2$, by Proposition \ref{prop-Leibniz} we obtain $\Psi_x \in C_{b}^1(H,L_2(H_0))$. Therefore, we have $\int_0^t \| \gamma_s \|_{L_2(H_0)}^2 ds < \infty$, for all $t \geq 0$. Moreover, by Proposition \ref{prop-Cb1-Lipschitz} the mapping $\Psi_x : H \to L_2(H_0)$ is Lipschitz continuous, and hence, by the triangle inequality and the H\"{o}lder inequality for conditional expectations as well as Lemma \ref{lemma-short-time-double-dt} (applied with $\Psi_x$ here) we obtain
	\begin{align*}
	&\int_0^t \int_0^s \bbe \big[ \| \gamma_r - \gamma_0 \|_{L_2(H_0)}^2 \big] dr ds
	\\ &= \int_0^t \int_0^s \bbe \big[ \| \bbe_{\mathcal{F}_{r}^{\bar{W}}} [ \Psi_x(X_r^{\tau}) - \Psi_x(x) ] \|_{L_2(H_0)}^2 \big] dr ds
	\\ &\leq \int_0^t \int_0^s \bbe \big[ \bbe_{\mathcal{F}_{r}^{\bar{W}}} [ \| \Psi_x(X_r^{\tau}) - \Psi_x(x) \|_{L_2(H_0)} ]^2 \big] dr ds
	\\ &\leq \int_0^t \int_0^s \bbe \big[ \bbe_{\mathcal{F}_{r}^{\bar{W}}} [ \| \Psi_x(X_r^{\tau}) - \Psi_x(x) \|_{L_2(H_0)}^2 ] \big] dr ds
	\\ &= \int_0^t \int_0^s \bbe \big[ \| \Psi_x(X_r^{\tau}) - \Psi_x(x) \|_{L_2(H_0)}^2 \big] dr ds = O(t^3).
	\end{align*}
    \textit{Step 5.} We therefore can apply Lemma~\ref{lemmadoubleintstohilbert} to~\eqref{eqintegraledoublehilbert}  to deduce  that $\alpha=0$. Since $\sigma(x) = \sigma_x(x)$, this implies $D \phi(x)\sigma(x) = 0$, and thus $\sigma(x)^* u = 0$. By Remark \ref{rem-kernels} we obtain $C(x)u = 0$, showing \eqref{cond-C-zero}.\\
	\textit{Step 6.} Note that $\gamma_0 = \Psi_x(x) = D \Phi_x(x) \sigma_x(x)$ and $\Phi_x(y) = D \phi(y) \sigma_x(y)$ for all $y \in H$.
	Since $\sigma_x(x)^* u = 0$, we can apply Lemma \ref{lemma-trace-decomp} and deduce that the series \eqref{series-gamma-0} converges with limit
\begin{align}\label{tr-series-expr-2}
\tr(\gamma_0) = \sum_{j=1}^{\infty} \la D \phi(x), D \sigma_x^j(x) \sigma_x^j(x) \ra + \sum_{j=1}^{\infty} \la D^2 \phi(x) \sigma_x^j(x), \sigma_x^j(x) \ra.
\end{align}
	We therefore can apply  Lemma~\ref{lemmadoubleintstohilbert} again  to~\eqref{eqintegraledoublehilbert} to deduce  that $\theta_0-\frac{1}{2}\Tr(\gamma_0  )  \leq 0$. Taking into account Lemma \ref{lemma-generator-series}, this inequality combined with \eqref{tr-series-expr-2} and the fact that $\sigma (x) = \sigma_x(x)$ shows that
	\begin{align*}
	0 &\ge  \call\phi(x)-\frac12 \Tr(\gamma_0) \\
	&= \la D\phi(x), b(x) \ra + \frac{1}{2} \sum_{j=1}^{\infty} \la D^2 \phi(x)  \sigma^j(x),  \sigma^j(x) \ra \\
	&\quad - \frac 12 \bigg( \sum_{j=1}^{\infty} \la D \phi(x), D \sigma_x^j(x) \sigma_x^j(x) \ra + \sum_{j=1}^{\infty} \la D^2 \phi(x) \sigma_x^j(x), \sigma_x^j(x) \ra \bigg) \\
	&= \la D\phi(x), b(x) \ra - \frac{1}{2} \sum_{j=1}^{\infty} \la D\phi(x),  D\sigma_x^j(x) \sigma_x^j(x) \ra.
	\end{align*}
	Recalling that $D \phi(x) = u \in \ker(C(x))$, this is equivalent to \eqref{cond-drift-N} thanks to \eqref{E:sigmabarc} and Remark \ref{rem-drift-cond}.
	\end{proof}

\begin{lemma}\label{lemma-associativity}
Let $G,F$ be separable Hilbert spaces, let $\Phi$ be an $L_2(H_0,G)$-valued predictable bounded process, and let $A$ be an $L(G,F)$-valued predictable bounded process. We define the $G$-valued square-integrable martingale $Y$ as $Y_t := \int_0^t \Phi_s dW_s$, $t \geq 0$. Then we have
\begin{align*}
\int_0^t A_s \Phi_s dW_s = \int_0^t A_s dY_s, \quad t \geq 0.
\end{align*}
\end{lemma}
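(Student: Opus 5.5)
We prove the identity by the usual bootstrapping from elementary integrands to general predictable bounded ones. The quadratic variation of $Y$ is expressed through $\Phi$, and this makes both sides continuous linear in $A$ with respect to the same $L^2$-norm.

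First we describe $Y$ as an integrator. By the standard theory, $Y$ is a continuous square-integrable $G$-valued martingale. Its operator angle bracket is $\langle\langle Y\rangle\rangle_t = \int_0^t (\Phi_s)(\Phi_s)^*\,ds$, where $\Phi_s^*$ is taken with respect to $H_0$. Consequently, for an $L(G,F)$-valued predictable bounded integrand $A$, the stochastic integral $\int_0^t A_s\,dY_s$ is well defined and satisfies the Itô isometry
\begin{align*}
\bbe\Big[\Big\|\int_0^t A_s\,dY_s\Big\|^2\Big] = \bbe\Big[\int_0^t \|A_s \Phi_s\|_{L_2(H_0,F)}^2\,ds\Big].
\end{align*}
The right-hand side is exactly the isometry norm of $\int_0^t A_s\Phi_s\,dW_s$. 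Note also that $A\Phi$ is predictable and $L_2(H_0,F)$-valued, with $\|A_s\Phi_s\|_{L_2}\le \|A_s\|\,\|\Phi_s\|_{L_2}$ bounded. Hence both sides of the claimed identity are well-defined square-integrable martingales.

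Second, we verify the identity for elementary $A$, that is, $A_s = \sum_i \bbI_{(t_i,t_{i+1}]}(s)\,A_i$ with $A_i$ a bounded $\calf_{t_i}$-measurable $L(G,F)$-valued random variable (simple-valued if convenient). In this case the left side is $\sum_i \int_{t_i\wedge t}^{t_{i+1}\wedge t} A_i \Phi_s\,dW_s$ and the right side is $\sum_i A_i(Y_{t_{i+1}\wedge t}-Y_{t_i\wedge t})$. Equality therefore reduces to pulling an $\calf_{t_i}$-measurable bounded operator out of a stochastic integral over $(t_i,t_{i+1}]$. This holds first for $\Phi$ elementary, where it is a direct computation, and then for general $\Phi$ by the $L^2$-isometry in $\Phi$, since $\bbe\big[\|A_i(\cdot)\|^2\big]\le \sup\|A_i\|^2\,\bbe\big[\|\cdot\|^2\big]$.

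Third, we pass to general bounded predictable $A$. We approximate $A$ by elementary $A^n$ such that $\bbe\big[\int_0^t \|(A^n_s-A_s)\Phi_s\|_{L_2(H_0,F)}^2\,ds\big]\to 0$. By the isometries of the first step, both sides converge in $L^2$, so the identity passes to the limit for each $t$. Continuity of both sides then gives indistinguishability.

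The main obstacle is this approximation step. $L(G,F)$ is nonseparable, so strongly measurable approximation of $A$ in operator norm is not available in general. I would first try to work with the strong topology instead. Approximate $A$ by elementary processes with $A^n_s \Phi_s f \to A_s\Phi_s f$ for each $f$, keeping $\sup_n\|A^n\|$ bounded. Then obtain the Hilbert–Schmidt convergence $\|(A^n_s-A_s)\Phi_s\|_{L_2}\to0$ by dominated convergence over the basis $\{f_j\}$, using $\sum_j\|\Phi_s f_j\|^2<\infty$.

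If this becomes delicate, a cleaner alternative avoids approximating $A$ altogether. We check that $M_t := \int_0^t A_s\Phi_s\,dW_s - \int_0^t A_s\,dY_s$ is a continuous square-integrable martingale whose quadratic variation vanishes; its components are computed via the polarized isometry, with cross terms $\int_0^t A_s\Phi_s(A_s\Phi_s)^*\,ds$. A martingale with vanishing quadratic variation is zero, which gives $M=0$ directly.
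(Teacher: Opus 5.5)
Your proposal is correct and takes the same route as the paper. The paper only checks the identity for $A=a\bbI_{(u,v]}$, $\Phi=\varphi\bbI_{(u,v]}$ and leaves the passage to general integrands implicit; you supply that passage through the common isometry norm $\bbe\int_0^t\|A_s\Phi_s\|_{L_2(H_0,F)}^2\,ds$, and your strong-topology approximation correctly handles the nonseparability of $L(G,F)$. One caution: your fallback via vanishing quadratic variation is not really approximation-free, because the cross-variation between $\int A\,dY$ and $\int A\Phi\,dW$ is normally proved by that same density argument, so keep the main line.
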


\begin{proof}
We only sketch the proof and provide the result for elementary processes. Suppose that $\Phi = \varphi \bbI_{(u,v]}$ and $A = a \bbI_{(u,v]}$ with $u < v$, an $L(H,G)$-valued $\calf_u$-measurable random variable $\varphi$, and an $L(G,F)$-valued $\calf_u$-measurable random variable $a$. Then we have
\begin{align*}
Y_t = \int_0^t \Phi_s dW_s = \varphi ( W_{v \wedge t} - W_{u \wedge t} ),
\end{align*}
and hence
\begin{align*}
Y_{v \wedge t} - Y_{u \wedge t} = \varphi(W_{v \wedge t} - W_{u \wedge t}) - \varphi(W_{u \wedge t} - W_{u \wedge t}) = \varphi(W_{v \wedge t} - W_{u \wedge t}).
\end{align*}
Moreover, we have $A \Phi = a \varphi \bbI_{(u,v]}$, and thus
\begin{align*}
\int_0^t A_s \Phi_s dW_s = a \varphi ( W_{v \wedge t} - W_{u \wedge t} ) = a ( Y_{v \wedge t} - Y_{u \wedge t} ) = \int_0^t A_s dY_s,
\end{align*}
which provides the desired identity.
\end{proof}

The following elementary lemma extends \cite[Lemma 5.4]{xio} to the infinite dimensional setting.

\begin{lemma}\label{L:condexphilbert}
Let $\bar{W}$ and $\bar{W}^{\perp}$ be two independent $H$-valued trace class Wiener processes with covariance operators $\bar{Q}, \bar{Q}^{\perp} \in L_1^+(H)$, and let $G$ be another separable Hilbert space. Moreover, we denote by $(\calf_t^{\bar{W}})_{t \geq 0}$ the completed filtration generated by $\bar{W}$.
\begin{enumerate}
\item For any $L_2(\bar{Q}^{1/2}(H),G)$-valued, $(\calf_t)_{t \geq 0}$-predictable, integrable process $\Phi$ we have
\begin{align*}
\mathbb{E}_{\mathcal{F}_t^{\bar{W}}} \left[ \int_{0}^{t} \Phi_s d \bar{W}_s \right] = \int_{0}^{t}  \mathbb{E}_{\mathcal{F}_s^{\bar{W}}}\left[ \Phi_s \right]d \bar{W}_s, \quad t \geq 0.
\end{align*}
\item For any $L_2((\bar{Q}^{\perp})^{1/2}(H),G)$-valued, $(\calf_t)_{t \geq 0}$-predictable, integrable process $\Phi$ we have
\begin{align*}
\mathbb{E}_{\mathcal{F}_t^{\bar{W}}}\left[ \int_{0}^{t} \Phi_s d\bar{W}^{\perp}_s \right] = 0, \quad t \geq 0.
\end{align*}
\item Moreover, it holds similarly for any $G$-valued, $(\calf_t)_{t \geq 0}$-predictable, integrable process $\theta$ that
	\begin{align*}
		\mathbb{E}_{\mathcal{F}_t^{\bar{W}}}\left[ \int_{0}^{t} \theta_s ds \right] = \int_{0}^{t}  \mathbb{E}_{\mathcal{F}_s^{\bar{W}}}\left[ \theta_s \right]ds, \quad t \geq 0.
	\end{align*}
\end{enumerate}
\end{lemma}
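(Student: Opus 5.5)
The plan is to reduce all three statements to the corresponding facts for real-valued Wiener processes by expanding in an orthonormal basis, and then to pass from elementary integrands to general ones by $L^2$-continuity of the conditional expectation. Write $\bar W = \sum_k \sqrt{\bar\lambda_k}\,\bar W^k \bar e_k$ and $\bar W^\perp = \sum_k \sqrt{\bar\lambda_k^\perp}\,\bar W^{\perp,k} \bar e_k^\perp$, where the $\bar W^k$ and $\bar W^{\perp,k}$ are independent real standard Brownian motions (cf.\ \cite[Prop. 4.3]{Da_Prato}). Then $\calf^{\bar W}$ is the completed filtration generated by $(\bar W^k)_k$, and the whole family $(\bar W^{\perp,k})_k$ is independent of $\calf_\infty^{\bar W}$.

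Part (3) I would prove directly. For a bounded $\calf_t^{\bar W}$-measurable $Z$, Fubini gives $\bbe[Z\int_0^t\theta_s ds]=\int_0^t\bbe[Z\theta_s]ds$. Since $\bar W$ has independent increments relative to $(\calf_s)$, conditioning on $\calf_t^{\bar W}$ and on $\calf_s^{\bar W}$ agree for $\calf_s$-measurable variables. Concretely, $\calf_t^{\bar W}=\calf_s^{\bar W}\vee\sigma(\bar W_r-\bar W_s: s\le r\le t)$, and the increments are independent of $\calf_s\supset\sigma(\theta_s)\vee\calf_s^{\bar W}$. Hence $\bbe[Z\theta_s]=\bbe[Z\,\bbe_{\calf_s^{\bar W}}[\theta_s]]$. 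This is the same fact as Lemma \ref{lemma-cond-exp} cited later. It gives the identity, after choosing a jointly measurable (predictable) version of $s\mapsto\bbe_{\calf_s^{\bar W}}[\theta_s]$ via optional projection.

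For (1) and (2), I would first treat elementary integrands $\Phi=\varphi\bbI_{(u,v]}$ with $\varphi$ bounded and $\calf_u$-measurable, taking values in finite-rank operators, so that each stochastic integral is a finite sum $\sum_k \varphi_k(\bar W^k_v-\bar W^k_u)$ in the $G$-valued sense. In (1) I would test against $Z=g(\bar W_{r_1},\dots,\bar W_{r_m})$, a class that generates $\calf_t^{\bar W}$. Splitting the increments at $u$ and using the independence above, one gets $\bbe[Z\varphi_k(\bar W^k_v-\bar W^k_u)]=\bbe[Z\,\bbe_{\calf_u^{\bar W}}[\varphi_k](\bar W^k_v-\bar W^k_u)]$. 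The right-hand side equals the contribution of $\int\bbe_{\calf_s^{\bar W}}[\Phi_s]\,d\bar W_s$, because $\bbe_{\calf_s^{\bar W}}[\varphi]=\bbe_{\calf_u^{\bar W}}[\varphi]$ for $s\in(u,v]$, again by independence of increments.

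In (2) the increments $\bar W^{\perp,k}_v-\bar W^{\perp,k}_u$ are independent of $\calf_u\vee\calf_\infty^{\bar W}$, since $\bar W^\perp$ and $\bar W$ are independent $(\calf_t)$-Wiener processes and hence jointly a Wiener process. So the expectation factorises and vanishes.

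The extension to general predictable square-integrable $\Phi$ (the word ``integrable'' should be read as $\bbe\int_0^t\|\Phi_s\|^2ds<\infty$, which holds in our applications because the integrands are bounded) goes by density of elementary processes. The Itô isometry together with the $L^2$-contractivity of $\bbe_{\calf_t^{\bar W}}$ controls the left-hand sides. On the right-hand side, the map $\Phi\mapsto\bbe_{\calf_\cdot^{\bar W}}[\Phi_\cdot]$ is a contraction in $L^2(\Omega\times[0,t])$ by Jensen's inequality for conditional expectations in Hilbert spaces.

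I expect the main obstacle to be this right-hand side: making $s\mapsto\bbe_{\calf_s^{\bar W}}[\Phi_s]$ a genuinely predictable process in $L_2(\bar Q^{1/2}(H),G)$, so that the stochastic integral is defined. I would handle it with the predictable projection relative to $(\calf^{\bar W}_s)$, which agrees with the conditional expectation for a.e.\ $s$. The reason is that, by the independence argument, conditioning an $\calf_{s-}$-measurable variable on $\calf_{s-}^{\bar W}$ or on $\calf_s^{\bar W}$ coincides, and $\calf_{s-}^{\bar W}=\calf_s^{\bar W}$ by continuity of $\bar W$.
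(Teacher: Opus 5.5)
Your proposal is correct and follows essentially the paper's route: you verify the identities for elementary integrands $\varphi\bbI_{(u,v]}$ using two independence facts, namely that the $\bar W$-increments after $u$ are independent of $\calf_u$ (the content of Lemma \ref{lemma-cond-exp}), and that the $\bar W^{\perp}$-increments after $u$ are independent of $\calf_u\vee\calf^{\bar W}_\infty$. The paper stops at elementary processes, so your density argument (It\^{o} isometry plus the predictable projection onto $(\calf^{\bar W}_s)$) and your direct Fubini proof of (3) just supply details that the paper's sketch omits.
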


\begin{proof}
We only sketch the proof and provide the result for elementary processes. Suppose that $\Phi = \varphi \bbI_{(u,v]}$ with $u < v$, and an $L(H,G)$-valued $\calf_u$-measurable random variable $\varphi$. Then we have
\begin{align*}
	\mathbb{E}_{\mathcal{F}_t^{\bar{W}}}\left[ \Phi_s \right] = \mathbb{E}_{\mathcal{F}_t^{\bar{W}}}[\varphi] \bbI_{(u,v]}(s),
	\end{align*}
	and hence
	\begin{align*}
	\mathbb{E}_{\mathcal{F}_t^{\bar{W}}}\left[ \int_{0}^{t} \Phi_s d \bar{W}_s \right] &= \mathbb{E}_{\mathcal{F}_t^{\bar{W}}}\left[ \varphi (\bar{W}_{v \wedge t} - \bar{W}_{u \wedge t}) \right] = \mathbb{E}_{\mathcal{F}_t^{\bar{W}}}\left[ \varphi \right] (\bar{W}_{v \wedge t} - \bar{W}_{u \wedge t})
	\\ &= \int_{0}^{t}  \mathbb{E}_{\mathcal{F}_t^{\bar{W}}}\left[ \Phi_s \right]d \bar{W}_s = \int_{0}^{t}  \mathbb{E}_{\mathcal{F}_s^{\bar{W}}}\left[ \Phi_s \right]d \bar{W}_s,
	\end{align*}
	where in the last step we have used Lemma \ref{lemma-cond-exp} below. Moreover, by the independence of $\bar{W}$ and $\bar{W}^{\perp}$ we obtain
	\begin{align*}
	\mathbb{E}_{\mathcal{F}_t^{\bar{W}}}\left[ \int_{0}^{t} \Phi_s d\bar{W}^{\perp}_s \right] &= \mathbb{E}_{\mathcal{F}_t^{\bar{W}}}\left[ \varphi (\bar{W}_{v \wedge t}^{\perp} - \bar{W}_{u \wedge t}^{\perp}) \right]
	\\ &= \mathbb{E}_{\mathcal{F}_t^{\bar{W}}}\left[ \mathbb{E}_{\mathcal{F}_t^{\bar{W}} \vee \sigma(\varphi)}\left[ \varphi (\bar{W}_{v \wedge t}^{\perp} - \bar{W}_{u \wedge t}^{\perp}) \right] \right]
	\\ &= \mathbb{E}_{\mathcal{F}_t^{\bar{W}}}\left[ \varphi \, \mathbb{E}_{\mathcal{F}_t^{\bar{W}} \vee \sigma(\varphi)}\left[ \bar{W}_{v \wedge t}^{\perp} - \bar{W}_{u \wedge t}^{\perp} \right] \right] = 0.
	\end{align*}
	Now, suppose that $\theta = \vartheta \bbI_{(u,v]}$ with $u < v$, and an $G$-valued $\calf_u$-measurable random variable $\vartheta$. Then we have
	\begin{align*}
	\mathbb{E}_{\mathcal{F}_t^{\bar{W}}}\left[ \theta_s \right] = \mathbb{E}_{\mathcal{F}_t^{\bar{W}}}[\vartheta] \bbI_{(u,v]}(s),
	\end{align*}
	and hence
	\begin{align*}
		\mathbb{E}_{\mathcal{F}_t^{\bar{W}}}\left[ \int_{0}^{t} \theta_s ds \right] &= \mathbb{E}_{\mathcal{F}_t^{\bar{W}}} [ \vartheta ( u \wedge t - u \wedge s ) ]
		\\ &= \mathbb{E}_{\mathcal{F}_t^{\bar{W}}} [ \vartheta ] \, ( u \wedge t - u \wedge s ) = \int_{0}^{t}  \mathbb{E}_{\mathcal{F}_s^{\bar{W}}}\left[ \theta_s \right]ds,
	\end{align*}
	completing the proof.
\end{proof}

\begin{lemma}\label{lemma-cond-exp}
Let $E$ be a separable Banach space, and let $\gamma$ be an $E$-valued $(\calf_t)_{t \geq 0}$-adapted, integrable process. Furthermore, let $\bar{W}$ be a trace class Wiener process. Then we have
\begin{align*}
\mathbb{E}_{\mathcal{F}_s^{\bar{W}}}[\gamma_s] = \mathbb{E}_{\mathcal{F}_t^{\bar{W}}}[\gamma_s] \quad \text{for all $s \leq t$,}
\end{align*}
where $(\calf_t^{\bar{W}})_{t \geq 0}$ denotes the completed filtration generated by $\bar{W}$.
\end{lemma}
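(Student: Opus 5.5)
We prove the identity $\bbe_{\calf_s^{\bar{W}}}[\gamma_s] = \bbe_{\calf_t^{\bar{W}}}[\gamma_s]$ by writing $\calf_t^{\bar{W}}$ as the join of $\calf_s^{\bar{W}}$ with the $\sigma$-algebra of future increments. Then we use that these increments are independent of $\calf_s$, hence of the pair $(\gamma_s, \calf_s^{\bar{W}})$.

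\emph{Step 1 (decompose the filtration).} For $s \le t$, let $\calg_{s,t}$ be the $\sigma$-algebra generated by the increments $\bar{W}_r - \bar{W}_s$, $r \in [s,t]$. Then $\calf_t^{\bar{W}}$ is the completion of $\sigma(\calf_s^{\bar{W}} \cup \calg_{s,t})$, because $\bar{W}_r = \bar{W}_s + (\bar{W}_r - \bar{W}_s)$.

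\emph{Step 2 (independence).} Since $\bar{W}$ is a Wiener process with respect to $(\calf_t)_{t \ge 0}$, the increments after time $s$ are independent of $\calf_s$. In our application, $\bar{W}$ is built from coordinates of the $Q$-Wiener process $W$, which is an $(\calf_t)$-Wiener process, so $\bar{W}$ is one as well. The random variable $\gamma_s$ is $\calf_s$-measurable by adaptedness, and $\calf_s^{\bar{W}} \subset \calf_s$. Consequently $\calg_{s,t}$ is independent of $\sigma(\gamma_s) \vee \calf_s^{\bar{W}}$.

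\emph{Step 3 (standard lemma).} We use the classical fact: if $\calh$ is independent of $\sigma(Z) \vee \calg$ and $Z$ is integrable, then $\bbe[Z \mid \calg \vee \calh] = \bbe[Z \mid \calg]$. For real-valued $Z$ the proof runs as follows.
\begin{itemize}
\item Consider the $\pi$-system of sets $G \cap K$ with $G \in \calg$ and $K \in \calh$.
\item By independence, $\bbe[Z \bbI_G \bbI_K] = \bbe[Z \bbI_G]\,\bbp(K)$ and $\bbe[\bbe[Z\mid\calg]\bbI_G\bbI_K] = \bbe[Z\bbI_G]\,\bbp(K)$.
\item A Dynkin $\pi$–$\lambda$ argument extends the equality to all of $\calg \vee \calh$, and completion with null sets does not affect it.
\end{itemize}

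\emph{Step 4 (Banach-valued case).} For the $E$-valued case, we use the separability of $E$ and the Bochner conditional expectation. Fix a countable norming set of functionals $\ell_n \in E^*$. Then $\ell_n(\bbe_{\calg}[\gamma_s]) = \bbe_{\calg}[\ell_n(\gamma_s)]$ for every sub-$\sigma$-algebra $\calg$. Applying the scalar result to each $\ell_n(\gamma_s)$ gives two $E$-valued random variables that agree a.s. under every $\ell_n$, hence agree a.s.

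\emph{Main obstacle.} The only delicate point is justifying that $\bar{W}$ has increments independent of $\calf_s$, not merely of its own past $\calf_s^{\bar{W}}$. This requires that $\bar{W}$ be a Wiener process relative to the full filtration $(\calf_t)$, as is implicitly assumed. Everything else is standard measure theory.
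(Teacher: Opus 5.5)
Your proof is correct and follows essentially the same route as the paper. You split $\calf_t^{\bar{W}}$ into $\calf_s^{\bar{W}}$ and the $\sigma$-algebra of increments after time $s$, and you check the defining identity on the $\pi$-system of sets $B \cap C$ using that these increments are independent of $\calf_s$, which, as you point out, needs $\bar{W}$ to be a Wiener process with respect to $(\calf_t)_{t \geq 0}$; the only differences are that you spell out the Dynkin argument and reduce the $E$-valued case to scalars via a countable norming set of functionals, whereas the paper checks the test equation directly for the Bochner conditional expectation.
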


\begin{proof}
Note that the filtration $(\calg_t^{\bar{W}})_{t \geq 0}$ generated by $\bar{W}$ is given by
\begin{align*}
\calg_t^{\bar{W}} = \sigma( \bar{W}_{t_1} - \bar{W}_{t_0}, \ldots, \bar{W}_{t_n} - \bar{W}_{t_{n-1}} : n \in \bbn, 0 = t_0 < t_1 < \ldots < t_n = t ).
\end{align*}
Setting $Y_s := \bbe_{\calf_s^{\bar{W}}} [ \gamma_s ]$, we claim that $\bbe_{\calf_t^{\bar{W}}}[ \gamma_s ] = Y_s$. Of course, the random variable $Y_s$ is $\calf_t^{\bar{W}}$-measurable. In order to verify the test equation, it suffices to consider sets $A \in \calf_t^{\bar{W}}$ of the form $A = B \cap C$ with $B \in \calg_s^{\bar{W}}$ and
\begin{align*}
C \in \sigma( \bar{W}_{t_1} - \bar{W}_{t_0}, \ldots, \bar{W}_{t_n} - \bar{W}_{t_{n-1}} : n \in \bbn, s = t_0 < t_1 < \ldots < t_n = t ).
\end{align*}
Since $W$ is an $(\calf_t)_{t \geq 0}$-Wiener process, it follows that $\calf_s$ and $C$ are independent, and we obtain
\begin{align*}
\bbe[ Y_s \bbI_A ] &= \bbe \big[ \bbI_{B \cap C} \bbe_{\calf_s^{\bar{W}}}[ \gamma_s ] \big] = \bbe \big[ \bbI_C \cdot \bbe_{\calf_s^{\bar{W}}}[ \bbI_B \gamma_s ] \big]
\\ &= \bbe[\bbI_C] \cdot \bbe \big[ \bbe_{\calf_s^{\bar{W}}}[ \bbI_B \gamma_s ] \big] = \bbe[\bbI_C] \cdot \bbe[ \bbI_B \gamma_s ] = \bbe[ \gamma_s \bbI_A ],
\end{align*}
finishing the proof.
\end{proof}

\subsubsection{The drift condition: The general case}\label{sec-sub-nec-2}

We now treat the general case, where no additional conditions on the eigenvalues $\mu_1(x) \geq \mu_2(x) \geq \ldots \geq \mu_r(x) > 0$ in the spectral decomposition \eqref{E:spectralC} are imposed. For this purpose, we prepare some auxiliary results. Let $A \in L(H)$ be a linear isomorphism. We define the closed subset $\cald^A := A \cald$. The new mapping $b_A : H \to H$ defined as
\begin{align}\label{b-A-def}
b_A(x) := A b (A^{-1} x), \quad x \in H
\end{align}
is continuous and satisfies the linear growth condition, due to Assumption \ref{ass-lin-growth}. Consider the mapping $C_A : H \to L_1(H)$ defined as
\begin{align}\label{C-A-def}
C_A(x) := A C(A^{-1} x) A^*, \quad x \in H.
\end{align}
As an immediate consequence of Lemma \ref{lemma-self-adjoint-transfer} we obtain:

\begin{lemma}\label{lemma-C-A-self-adjoint}
The linear operator $C_A(x)$ is self-adjoint and nonnegative definite for each $x \in \cald^A$.
\end{lemma}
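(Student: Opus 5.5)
The plan is to reduce everything to the two defining properties, self-adjointness and nonnegativity, and to check each directly for $C_A(x) = A C(A^{-1}x) A^*$ with $x \in \cald^A$. Presumably this is exactly the content of Lemma \ref{lemma-self-adjoint-transfer}: if $T \in L(H)$ is self-adjoint (respectively nonnegative definite) and $A \in L(H)$, then $A T A^*$ is self-adjoint (respectively nonnegative definite). I will verify both properties by hand so that the argument does not hinge on the precise form of that lemma.

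First, fix $x \in \cald^A = A\cald$. Then $y := A^{-1}x \in \cald$, since $A$ is a linear isomorphism. On $\cald$ the (extended) dispersion coefficient coincides with the original one, so $C(y) = \Sigma(y)\Sigma(y)^* \in L_1^+(H)$. In particular $C(y)$ is self-adjoint and nonnegative definite; self-adjointness is also guaranteed for all points by Assumption \ref{ass-extension}.

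Second, self-adjointness follows from the rule $(STR)^* = R^* T^* S^*$ together with $(A^*)^* = A$:
\begin{align*}
C_A(x)^* = (A C(y) A^*)^* = A C(y)^* A^* = A C(y) A^* = C_A(x).
\end{align*}

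Third, for nonnegativity take any $h \in H$ and compute
\begin{align*}
\la C_A(x) h, h \ra = \la A C(y) A^* h, h \ra = \la C(y) A^* h, A^* h \ra \geq 0,
\end{align*}
using the nonnegativity of $C(y)$ at the vector $A^*h$. Alternatively, $C_A(x) = (A\Sigma(y))(A\Sigma(y))^*$, which is nonnegative by construction.

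As a side remark, $C_A(x)$ is also nuclear, since $L_1(H)$ is an ideal in $L(H)$; this is not needed for the claim.

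There is no real obstacle here. The only point requiring care is that nonnegativity is used at $y = A^{-1}x \in \cald$ and not at an arbitrary point of $H$. The extension of $C$ off $\cald$ need not be nonnegative, which is exactly why the statement is restricted to $x \in \cald^A$.
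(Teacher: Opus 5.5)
Your proof is correct and is essentially the paper's argument. The paper obtains the lemma as an immediate consequence of Lemma \ref{lemma-self-adjoint-transfer} applied to $T = C(A^{-1}x)$, which is self-adjoint and nonnegative definite because $A^{-1}x \in \cald$, and that lemma's proof consists of the same two computations you carried out by hand.
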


Furthermore, by \eqref{C-lin-growth} there is a constant $N > 0$ such that
\begin{align*}
\| C_A(x) \|_{L_1(H)}^{1/2} \leq N ( 1 + \| x \| ) \quad \text{for all $x \in H$,}
\end{align*}
and $\ran(C_A(x))$ is finite dimensional for each $x \in \cald$, due to Assumption \ref{ass-closed-range}.

\begin{lemma}\label{lemma-C-eps-diff}
The mapping $C_A$ is of class $C^2$, and we have
\begin{align*}
D C_A(x) v = A ( DC(A^{-1} x) A^{-1} v ) A^*, \quad x,v \in H.
\end{align*}
\end{lemma}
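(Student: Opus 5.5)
Since $C_A$ is defined in \eqref{C-A-def} as a composition of $C$ with affine and bilinear maps, I would obtain the statement from the chain rule together with the results on smooth linear and bilinear maps from the appendix (Propositions \ref{prop-smooth-linear} and \ref{prop-Leibniz}).

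First, I write $C_A = \Lambda \circ C \circ A^{-1}$. Here $A^{-1} : H \to H$ is a bounded linear isomorphism, so it is of class $C^\infty$ with derivative $D A^{-1}(x) v = A^{-1} v$. The map $\Lambda : L_1(H) \to L_1(H)$ is given by $\Lambda(T) := A T A^*$. It is linear, and by the ideal property of nuclear operators it satisfies $\| A T A^* \|_{L_1(H)} \leq \| A \| \, \| T \|_{L_1(H)} \, \| A^* \|$. Hence $\Lambda$ is a bounded linear operator on $L_1(H)$, and in particular it is $C^\infty$ with $D \Lambda(T) S = A S A^*$.

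Second, by Assumption \ref{ass-extension} the map $C : H \to L_1(H)$ is of class $C^2$. The chain rule (first and second order) then shows that $C \circ A^{-1}$ is of class $C^2$, with
\begin{align*}
D(C \circ A^{-1})(x) v = DC(A^{-1}x)(A^{-1} v).
\end{align*}
Composing with the bounded linear map $\Lambda$ preserves the $C^2$ property, by Proposition \ref{prop-smooth-linear}. It also gives
\begin{align*}
D C_A(x) v = \Lambda\big( DC(A^{-1}x) A^{-1} v \big) = A \big( DC(A^{-1}x) A^{-1} v \big) A^*,
\end{align*}
which is the claimed formula.

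There is no substantial obstacle in this argument. The only point needing care is to check that $T \mapsto A T A^*$ is bounded with respect to the trace-class norm, so that the differentiation takes place in the correct Banach space $L_1(H)$ rather than in $L(H)$. This follows from the two-sided ideal estimate for $L_1(H)$ stated above.
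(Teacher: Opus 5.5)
Your proof is correct and follows essentially the same route as the paper, which also writes $C_A = \Psi \circ (C \circ A^{-1})$ with the bounded linear map $\Psi(T) = ATA^*$ on $L_1(H)$ and then applies Propositions \ref{prop-smooth-linear} and \ref{prop-smooth-linear-two}. Your explicit check of the trace-class bound $\|ATA^*\|_{L_1(H)} \leq \|A\|^2 \|T\|_{L_1(H)}$ via the ideal property (Lemma \ref{lemma-nuc-HS-inequalities}) spells out a point the paper leaves implicit.
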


\begin{proof}
We can express the mapping $C_A$ as $C_A = \Psi \circ \Phi$, where $\Phi : H \to L_1(H)$ is given by $\Phi = C \circ A^{-1}$, and where $\Psi : L_1(H) \to L_1(H)$ is the continuous linear operator given by $\Psi(T) = A T A^*$. Therefore, by Propositions \ref{prop-smooth-linear} and \ref{prop-smooth-linear-two} the mapping $C_A$ is of class $C^2$, and we have
\begin{align*}
D C_A(x) v = D(\Psi \circ \Phi)(x)v = \Psi( D \Phi(x) v ) = A ( D \Phi(x) v ) A^*
\end{align*}
as well as
\begin{align*}
D \Phi(x) v = D(C \circ A^{-1})(x)v = D C (A^{-1} x) A^{-1} v.
\end{align*}
This completes the proof.
\end{proof}

Now, we define $\bar{\sigma}_A : H \to L_2^0(H)$ as
\begin{align}\label{sigma-A-def}
\bar{\sigma}_A(x) := \bar{\Sigma}_A(x) Q^{-1/2}, \quad x \in H,
\end{align}
where $\bar{\Sigma}_A : H \to L_2^+(H)$ is given by
\begin{align*}
\bar{\Sigma}_A(x) := |C_A(x)|^{1/2}, \quad x \in H.
\end{align*}
Then we have $\bar{\Sigma}_A(x) = \bar{\sigma}_A(x) Q^{1/2}$ for all $x \in H$, and in view of Lemma \ref{lemma-C-A-self-adjoint} we have $C_A(x) = \bar{\Sigma}_A(x)^2$ for all $x \in \cald^A$. Furthermore, the linear operator $\bar{\Sigma}_A(x)$ is self-adjoint and nonnegative definite for each $x \in \cald^A$. Moreover, Lemma \ref{lemma-construct-sigma} ensures that $\bar{\sigma}_A$ is continuous and satisfies the linear growth condition. Consequently, Assumptions \ref{ass-lin-growth}, \ref{ass-extension}, \ref{ass-Sigma-self-adjoint}, \ref{ass-closed-range} are also fulfilled for the new diffusion
\begin{align}\label{SDE-Y-bar}
d \tilde{X}_t = b_A(\tilde{X}_t)dt + \bar{\sigma}_A(\tilde{X}_t) dW_t, \quad \tilde{X}_0 = \tilde{x}.
\end{align}

\begin{lemma}\label{lemma-inv-transfer}
Suppose that $\cald$ is stochastically invariant with respect to the diffusion \eqref{SDE}. Then $\cald^A$ is stochastically invariant with respect to the diffusion \eqref{SDE-Y-bar}.
\end{lemma}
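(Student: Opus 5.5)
The plan is to transport a $\cald$-valued solution of \eqref{SDE} through the linear isomorphism $A$, and then replace its diffusion coefficient by $\bar{\sigma}_A$ using the invariance-in-law principle of Lemma \ref{lemma-invariance-law}.

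\textbf{Step 1: transport the solution.} Fix $\tilde{x} \in \cald^A$ and set $x := A^{-1}\tilde{x} \in \cald$. By stochastic invariance of $\cald$ there is a martingale solution $(\bbb,W,X)$ of \eqref{SDE} with $X_0 = x$ and $X_t \in \cald$ for all $t \geq 0$ almost surely. Define $\tilde{X} := AX$. This is continuous, adapted, and $\cald^A$-valued. Since $A$ is bounded linear, it commutes with the Bochner and stochastic integrals; the stochastic case is the easy instance of Lemma \ref{lemma-associativity} with a constant operator. Using $b(X_s) = b(A^{-1}\tilde{X}_s)$, this gives
\begin{align*}
\tilde{X}_t = \tilde{x} + \int_0^t b_A(\tilde{X}_s)\,ds + \int_0^t \tilde{\sigma}(\tilde{X}_s)\,dW_s, \qquad \tilde{\sigma}(y) := A\,\sigma(A^{-1}y).
\end{align*}
The coefficient $\tilde{\sigma}$ is continuous $H \to L_2^0(H)$ with linear growth, and the integrability condition in Definition \ref{def-martingale-solution} transfers because $\|A\sigma\|_{L_2^0(H)} \leq \|A\|\,\|\sigma\|_{L_2^0(H)}$. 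Hence $\cald^A$ is stochastically invariant for the SDE with coefficients $(b_A,\tilde{\sigma})$.

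\textbf{Step 2: compare the diffusion operators on $\cald^A$.} Put $\tilde{\Sigma}(y) := \tilde{\sigma}(y)Q^{1/2} = A\,\Sigma(A^{-1}y)$. Then
\begin{align*}
\tilde{\Sigma}(y)\tilde{\Sigma}(y)^* = A\,\Sigma(A^{-1}y)\Sigma(A^{-1}y)^*A^* = A\,C(A^{-1}y)A^* = C_A(y)
\end{align*}
for every $y \in \cald^A$, because $A^{-1}y \in \cald$ and $C = \Sigma\Sigma^*$ on $\cald$. On the other hand, $\bar{\Sigma}_A(y)^2 = |C_A(y)| = C_A(y)$ on $\cald^A$ by Lemma \ref{lemma-C-A-self-adjoint}, and $\bar{\Sigma}_A(y)$ is self-adjoint there, so $\bar{\Sigma}_A(y)\bar{\Sigma}_A(y)^* = C_A(y)$ on $\cald^A$. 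Thus \eqref{Sigma-Sigma-star-coincide-on-D} holds on $\cald^A$ for the pair $(\tilde{\sigma}, \bar{\sigma}_A)$.

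\textbf{Step 3: conclude.} Apply Lemma \ref{lemma-invariance-law} with drift $b_A$, set $\cald^A$, and the coefficients $\tilde{\sigma}$ and $\bar{\sigma}_A$; the latter is continuous with linear growth by Lemma \ref{lemma-construct-sigma}. This yields stochastic invariance of $\cald^A$ for \eqref{SDE-Y-bar}.

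\textbf{Main obstacle.} The delicate point is Lemma \ref{lemma-invariance-law}. It rests on Remark \ref{rem-law-of-diffusion}, which needs the generators to agree only on $\cald^A$, where the solution lives, not on all of $H$. I would handle this in the following order.
\begin{itemize}
\item[(a)] Use that $\tilde{X}$ solves the martingale problem for the generator built from $(b_A,\tilde{\sigma})$.
\item[(b)] Because $\tilde{X}$ stays in $\cald^A$, $\call\phi(\tilde{X}_s)$ coincides with the generator built from $(b_A,\bar{\sigma}_A)$; so $\tilde{X}$ also solves that martingale problem.
\item[(c)] Invoke the standard equivalence between martingale-problem solutions and weak solutions on an enlarged probability space (martingale representation) to produce a Wiener process $\bar{W}$ with $d\tilde{X} = b_A\,dt + \bar{\sigma}_A\,d\bar{W}$.
\end{itemize}
If that representation step is awkward in infinite dimensions, I would instead construct $\bar{W}$ explicitly. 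Write $\tilde{\Sigma}(y) = \bar{\Sigma}_A(y)U(y)$ with a partial isometry $U(y)$ from the polar decomposition, set $\bar{W} := \int U\,dW + \int (\Id - U^*U)\,dW'$ with an independent auxiliary process $W'$, and check by Lévy's characterization that $\bar{W}$ is a $Q$-Wiener process. The finite-rank structure on $\cald^A$ coming from Assumption \ref{ass-closed-range} helps here.
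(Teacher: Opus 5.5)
Your Steps 1--3 are essentially the paper's proof: push the $\cald$-valued solution forward by $A$ to get a $\cald^A$-valued solution with coefficients $(b_A, A\sigma(A^{-1}\cdot))$, check that $A\Sigma(A^{-1}y)\big(A\Sigma(A^{-1}y)\big)^* = C_A(y)$ for $y \in \cald^A$, and conclude with Lemma \ref{lemma-invariance-law}. The paper simply cites that lemma, so your ``main obstacle'' discussion goes beyond it. Your martingale-problem route (a)--(c), which mirrors the proof of Theorem \ref{thm-suff}, is a reasonable way to justify using agreement only on $\cald^A$. In the polar-decomposition fallback, however, the complementary projection must be $\Id - UU^*$ rather than $\Id - U^*U$, with $U$ transported to $H_0$ via $Q^{1/2}$, for $\bar{W}$ to have covariance $Q$.
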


\begin{proof}
Let $\tilde{x} \in \cald^A$ be arbitrary. We set $x := A^{-1} \tilde{x} \in \cald$. Then there exists a weak solution $X$ to \eqref{SDE}, starting at $X_0 = x$ such that $X_t \in \cald$ for all $t \geq 0$, almost surely. Now, we define the continuous mapping $\sigma_A : H \to L_2^0(H)$ as
\begin{align*}
\sigma_A(x) := A \sigma (A^{-1} x), \quad x \in H.
\end{align*}
Then the process $\tilde{X} := A X$ is a weak solution to the SDE
\begin{align}\label{SDE-Y}
d \tilde{X}_t = b_A(\tilde{X}_t)dt + \sigma_A(\tilde{X}_t) dW_t, \quad \tilde{X}_0 = \tilde{x},
\end{align}
and we have $\tilde{X}_t \in \cald^A$ for all $t \geq 0$, almost surely, showing that $\cald^A$ is invariant with respect to the diffusion \eqref{SDE-Y}. We define $\Sigma_A : H \to L_2^+(H)$ as $\Sigma_A(x) := \sigma_A(x) Q^{1/2}$ for each $x \in H$. Then for all $x \in \cald^A$ we have
\begin{align*}
C_A(x) &= A C (A^{-1} x) A^* = A \Sigma(A^{-1} x) \Sigma(A^{-1} x)^* A^*
\\ &= A \sigma(A^{-1} x) Q^{1/2} (\sigma(A^{-1} x) Q^{1/2})^* A^*
\\ &= A \sigma(A^{-1} x) Q^{1/2} (A \sigma(A^{-1} x) Q^{1/2})^*
\\ &= \sigma_A(x) Q^{1/2} (\sigma_A(x) Q^{1/2})^* = \Sigma_A(x) \Sigma_A(x)^*.
\end{align*}
Thus, by Lemma \ref{lemma-invariance-law} the set $\cald^A$ is also stochastically invariant with respect to the diffusion \eqref{SDE-Y-bar}.
\end{proof}

\begin{theorem}\label{thmnecessity}
	Suppose that Assumptions \ref{ass-lin-growth}, \ref{ass-extension}, \ref{ass-Sigma-self-adjoint}, \ref{ass-closed-range} are in force. Assume that  $\cald$ is stochastically invariant with respect to the diffusion \eqref{SDE}. Then conditions \eqref{cond-C-zero} and \eqref{cond-drift-N} hold for all $x \in \cald$ and $u \in \mathcal{N}^{1,\pr}_{\mathcal{D}}(x)$.
\end{theorem}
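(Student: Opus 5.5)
Fix $x \in \cald$ and $u \in \caln_{\cald}^{1,\pr}(x)$. The plan is to reduce to Proposition \ref{propdistincteigen} (distinct eigenvalues) by a linear change of coordinates $A$ that splits the repeated eigenvalues of $C(x)$, and then transport the conditions back. The machinery for this is already in place: $b_A$ in \eqref{b-A-def}, $C_A$ in \eqref{C-A-def}, Lemmas \ref{lemma-C-A-self-adjoint} and \ref{lemma-C-eps-diff}, and Lemma \ref{lemma-inv-transfer}. Together they show that Assumptions \ref{ass-lin-growth}, \ref{ass-extension}, \ref{ass-Sigma-self-adjoint}, \ref{ass-closed-range} hold for \eqref{SDE-Y-bar}, and that $\cald^A$ is stochastically invariant for it.

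\emph{Choice of $A$.} Write $C(x)=\sum_{k=1}^r \mu_k \la\cdot,q_k\ra q_k$. For small $\epsilon>0$ take $A_\epsilon := \Id + \sum_{k=1}^r \epsilon_k \la \cdot,q_k\ra q_k$ with distinct small $\epsilon_k\ge 0$. This is self-adjoint, invertible, and commutes with $C(x)$. Then $C_{A_\epsilon}(A_\epsilon x) = A_\epsilon C(x) A_\epsilon = \sum_k \mu_k(1+\epsilon_k)^2 \la\cdot,q_k\ra q_k$. Choosing the $\epsilon_k$ (depending on $\epsilon$) so that the values $\mu_k(1+\epsilon_k)^2$ are pairwise distinct makes all nonzero eigenvalues simple. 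Proposition \ref{propdistincteigen} then applies at $\tilde x := A_\epsilon x \in \cald^{A_\epsilon}$. For it we need a proximal normal to $\cald^{A}$ at $\tilde x$.

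\emph{Transfer of normals.} From \eqref{inequ-normal} we have $\la u,y-x\ra\le \frac\kappa2\|y-x\|^2$ for $y\in\cald$. Setting $\tilde u := (A^{-1})^* u$, this gives $\la \tilde u, \tilde y-\tilde x\ra \le \frac{\kappa\|A^{-1}\|^2}{2}\|\tilde y-\tilde x\|^2$ for $\tilde y \in \cald^A$. So $\tilde u$ is a proximal normal (up to scaling, Proposition \ref{prop-normal-2} and Remark \ref{rem-drift-prox-normal}). The conditions are homogeneous in $u$, so positive scaling is harmless. Proposition \ref{propdistincteigen} yields $C_A(\tilde x)\tilde u=0$, that is $A C(x) u = 0$, hence $C(x)u=0$, which is \eqref{cond-C-zero}. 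It also yields
\[
\la \tilde u, b_A(\tilde x)\ra - \tfrac12 \tr\big(DC_A(\tilde x)P_{C_A}(\tilde x)\tilde u\big) \le 0 ,
\]
using Lemma \ref{lemma-series-1} to rewrite the series as a trace. The first term equals $\la u,b(x)\ra$. By Lemma \ref{lemma-C-eps-diff}, $DC_A(\tilde x)v = A\,(DC(x)A^{-1}v)\,A$. Since $A$ commutes with $C(x)$, we get $\ran C_A(\tilde x) = A\ran C(x) = \ran C(x)$, so $P_{C_A}(\tilde x)=P_C(x)=:P$. The trace term is therefore $\tr\big(A(DC(x)A^{-1}P A^{-1}u)A\big)$, read as $\tr\big(v\mapsto A (DC(x)A^{-1}Pv)A\tilde u\big)$ in the convention $\tr(DC(x)P u)=\tr(v\mapsto (DC(x)Pv)u)$.

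\emph{Limit.} The main obstacle is to evaluate this trace and pass $\epsilon\to0$. Write $A=\Id+E_\epsilon$ with $\|E_\epsilon\|_{L_1}\to0$ and $E_\epsilon$ of finite rank on $\ran P$. Then $PA^{-1}=A^{-1}P$ has rank $\le r$, so the trace runs only over the finite dimensional space $\ran P$. On that space the map $v\mapsto A(DC(x)A^{-1}v)A^{-1}u$ is a finite-rank operator depending continuously on $\epsilon$, since $DC(x)$ is bounded into $L_1(H)$. Its trace therefore converges, as $\epsilon\to0$, to $\tr(DC(x)Pu)$. Letting $\epsilon\to 0$ in the inequality gives \eqref{cond-drift-N} in the form \eqref{cond-drift-N-projection}, via Remark \ref{rem-drift-cond}. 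If the direct continuity argument is delicate, the fallback is an explicit computation on the basis $q_1,\dots,q_r$, where the trace equals $\sum_{k\le r}\la u,(DC(x)(1+\epsilon_k)^{-1}q_k)(1+\epsilon_k) q_k\ra$ plus terms involving $E_\epsilon u$. Because $u\perp \ran C(x)$, we have $E_\epsilon u = 0$. Since $A$ acts on each $q_k$ by the scalar $1+\epsilon_k$, the factors $(1+\epsilon_k)^{-1}$ and $(1+\epsilon_k)$ cancel. The trace then equals $\tr(DC(x)Pu)$ exactly, even without the limit.
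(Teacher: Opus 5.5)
Your proposal is correct and follows the paper's argument. A self-adjoint isomorphism $A$, diagonal in the eigenbasis $q_1,\dots,q_r$ and equal to the identity on $\ran(C(x))^{\perp}$, separates the repeated eigenvalues; Lemma \ref{lemma-inv-transfer} and Proposition \ref{propdistincteigen} are applied at $Ax$; and the normal is transported by $(A^*)^{-1}$ as in Lemma \ref{lemma-normal-cones-transformation}.

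The only difference is in the last step, where the paper sends $\epsilon\to0$. You observe instead that $A^*\tilde u=u$ and $AP_C(x)=P_C(x)A$. These give the exact identities $C_A(Ax)\tilde u=AC(x)u$, $\la\tilde u,b_A(Ax)\ra=\la u,b(x)\ra$ and $\tr\big(A\,D(Cu)(x)\,A^{-1}P_C(x)\big)=\tr\big(D(Cu)(x)P_C(x)\big)$, so no limit is needed.

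Your reading $\tr(DC(x)P_C(x)u)=\tr\big(v\mapsto (DC(x)P_C(x)v)u\big)$ is the one consistent with Lemma \ref{lemma-series-1}. The paper's display instead literally evaluates $DC(x)$ at the vector $(A_x^{\epsilon})^{-1}P_C(x)u_\epsilon$, which is $0$ once $C(x)u=0$.
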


\begin{proof}  Let $(\mu_k(x))_{k=1,\ldots,r}$ and $(q_k(x))_{k=1,\ldots,r}$ be as in the spectral decomposition \eqref{E:spectralC} with $\mu_1(x)\ge \mu_2(x) \geq \ldots \geq \mu_r(x) > 0$. Note that the $(\mu_k(x))_{k=1,\ldots,r}$ are not necessarily all distinct.  {We shall perform a change of   variable to reduce to the case of simple spectrum  treated in Proposition \ref{propdistincteigen}.}  To do this, we fix $0 < \epsilon < 1$ and define the self-adjoint linear isomorphism $A^{\epsilon}_x \in L(H)$ given by
\begin{equation*}
 A^{\epsilon}_x  = \sum_{k=1}^{r} (1-\epsilon)^{k/2} \langle \cdot , q_k(x) \rangle \, q_k(x) + \Id_{U_x^{\perp}},
\end{equation*}
where $U_x := \lin \{ q_1(x), \ldots, q_r(x) \}$. Consider the new set $ \mathcal{D}^{\epsilon}:=A_x^{\epsilon} \mathcal{D}$. We define the mappings $b_{\epsilon} : H \to H$, $C_{\epsilon} : H \to L_1(H)$ and $\bar{\sigma}_{\epsilon} : H \to L_2^0(H)$ according to \eqref{b-A-def}, \eqref{C-A-def} and \eqref{sigma-A-def} with $A = A_x^{\epsilon}$. The discussion at the beginning of this section shows that Assumptions \ref{ass-lin-growth}, \ref{ass-extension}, \ref{ass-Sigma-self-adjoint}, \ref{ass-closed-range} are also fulfilled for the new diffusion
\begin{align}\label{SDE-epsilon}
dX_t^{\epsilon}=b_{\epsilon}(X_t^{\epsilon})dt+\bar{\sigma}_{\epsilon}(X_t^{\epsilon})dW_t, \quad X_0^{\epsilon} = x^{\epsilon},
\end{align}
and by Lemma \ref{lemma-inv-transfer} the set $\cald^{\epsilon}$ is invariant with respect to \eqref{SDE-epsilon}. Moreover, the positive eigenvalues of $C_{\epsilon}$ are all distinct at $x^{\epsilon}:=A_x^{\epsilon} x$, as by \eqref{E:spectralC} we have
\begin{align}\label{E:spectralC:eps}
C_{\epsilon}(x^{\epsilon}) = A_x^{\epsilon} C(x) A_x^{\epsilon} = \sum_{k=1}^r (1-\epsilon)^k \mu_k(x) \langle \cdot , q_k(x) \rangle \, q_k(x).
\end{align}
We can therefore apply Proposition \ref{propdistincteigen} to $(X^{\epsilon},\mathcal{D}^{\epsilon})$:
\begin{align}\label{eqcase(ii)}
C_{\epsilon}(x^{\epsilon})u_{\epsilon}=0  \quad \mbox{ and } \quad \langle  u_{\epsilon}, b_{\epsilon}(x^{\epsilon}) \rangle -\frac{1}{2} \sum_{j=1}^{\infty} \la u_{\epsilon}, D C_{\epsilon}^j(x^{\epsilon})(C_{\epsilon}C_{\epsilon}^+)^j(x^{\epsilon})    \rangle \leq 0
	\end{align}
for all  $u_{\epsilon} \in \mathcal{N}^{p}_{A_x^{\epsilon} \mathcal{D}}(x^{\epsilon})$, where we take into account Remark \ref{rem-main}.
Moreover, by Lemma \ref{lemma-normal-cones-transformation} we have $ \mathcal{N}^{p}_{A_x^{\epsilon}\mathcal{D}} (x^{\epsilon}) = (A_x^{\epsilon})^{-1} \mathcal{N}^{p}_{\mathcal{D}}(x)$. Now, let $u \in \mathcal{N}^{p}_{\mathcal{D}}(x)$ be arbitrary and set $u_{\epsilon} := (A_x^{\epsilon})^{-1} u \in \mathcal{N}^{p}_{A_x^{\epsilon}\mathcal{D}} (x^{\epsilon})$. Since $A_x^{\epsilon} \to \Id$ in $L(H)$, we also have $(A_x^{\epsilon})^{-1} \to \Id$ in $L(H)$, because the mapping $I(H) \to I(H)$, $A \mapsto A^{-1}$ is continuous, where $I(H) \subset L(H)$ denotes the open subset of linear isomorphisms; see \cite[Lemma 2.5.5]{Abraham}. Moreover, the mapping
\begin{align*}
\Phi : L(H) \times H \to H, \quad \Phi(T,x) = T x
\end{align*}
is a continuous bilinear operator. Therefore, we obtain
\begin{align*}
\lim_{\epsilon \to 0} u_{\epsilon} = \lim_{\epsilon \to 0} \Phi \big( (A_x^{\epsilon})^{-1},u \big) = \Phi \Big( \lim_{\epsilon \to 0} (A_x^{\epsilon})^{-1}, u \Big) = \Phi(\Id, u) = u.
\end{align*}
Furthermore, the mapping
\begin{align*}
\Psi : L(H) \times L_1(H) \times L(H) \to L_1(H), \quad \Psi(T,S,R) = TSR
\end{align*}
is a continuous three-linear operator. Hence, noting that $L_1(H) \hookrightarrow L(H)$ with continuous embedding, sending $\epsilon \rightarrow 0$ we obtain
\begin{align*}
C_{\epsilon}(x^{\epsilon}) u_{\epsilon} = \Phi(\Psi(A_x^{\epsilon},C(x),A_x^{\epsilon}),u_{\epsilon}) \longrightarrow \Phi(\Psi(\Id,C(x),\Id),u) = C(x)u,
\end{align*}
showing \eqref{cond-C-zero}. Furthermore, sending $\epsilon \rightarrow 0$ we obtain
\begin{align*}
\langle  u_{\epsilon}, b_{\epsilon}(x^{\epsilon}) \rangle = \la u_{\epsilon}, A_x^{\epsilon} b(x) \ra = \la u_{\epsilon}, \Phi(A_x^{\epsilon}, b(x)) \ra \longrightarrow \la u, \Phi(\Id, b(x)) \ra = \la u, b(x) \ra.
\end{align*}
Moreover, the mapping
\begin{align*}
\Pi : L(H,L_1(H)) \times H \to L_1(H), \quad \Pi(T,x) = T x
\end{align*}
is a continuous bilinear operator. By Lemma \ref{lemma-C-eps-diff} we have
\begin{align*}
D C_{\epsilon}(x^{\epsilon}) v = A_x^{\epsilon} ( DC(x) (A_x^{\epsilon})^{-1} v ) A_x^{\epsilon}, \quad v \in H.
\end{align*}
Hence, taking $v = P_C(x) u_{\epsilon}$ we obtain
\begin{align*}
D C_{\epsilon}(x^{\epsilon}) P_C(x) u_{\epsilon} &= A_x^{\epsilon} ( DC(x) (A_x^{\epsilon})^{-1} P_C(x) u_{\epsilon} ) A_x^{\epsilon}
\\ &= \Psi \big( A_x^{\epsilon}, DC(x) ( (A_x^{\epsilon})^{-1} P_C(x) u_{\epsilon} ), A_x^{\epsilon} \big)
\\ &= \Psi \Big( A_x^{\epsilon}, \Pi \big( DC(x), \Phi((A_x^{\epsilon})^{-1},\Phi(P_C(x),u_{\epsilon})) \big), A_x^{\epsilon} \Big).
\end{align*}
Noting that by \eqref{E:spectralC} and \eqref{E:spectralC:eps} we have $\ran(C_{\epsilon}(x^{\epsilon})) = U_x = \ran(C(x))$, it follows that
\begin{align*}
P_{C_{\epsilon}}(x^{\epsilon}) = P_C(x).
\end{align*}
Since by Lemma \ref{lemma-trace-functional} the trace is a continuous linear functional on $L_1(H)$, by Lemma \ref{lemma-series-1} we obtain
\begin{align*}
&\sum_{j=1}^{\infty} \la u_{\epsilon}, D C_{\epsilon}^j(x^{\epsilon})(C_{\epsilon}C_{\epsilon}^+)^j(x^{\epsilon}) \ra = \tr \big( D C_{\epsilon}(x^{\epsilon}) P_C(x) u_{\epsilon}) \big)
\\ &= \tr \, \Psi \Big( A_x^{\epsilon}, \Pi \big( DC(x), \Phi((A_x^{\epsilon})^{-1},\Phi(P_C(x),u_{\epsilon})) \big), A_x^{\epsilon} \Big)
\\ &\longrightarrow \tr \, \Psi \Big( \Id, \Pi \big( DC(x), \Phi(\Id,\Phi(P_C(x),u)) \big), \Id \Big)
\\ &= \tr \big( D C(x) P_C(x) u \big) = \sum_{j=1}^{\infty} \la u, D C^j(x)(C C^+)^j(x) \ra \quad \text{as $\epsilon \rightarrow 0$,}
\end{align*}
showing \eqref{cond-drift-N}. This ends the proof.
\end{proof}

\section{The positive maximum principle}\label{sec-maximum-principle}

In this section we prove that the invariance conditions from our two main results imply that the positive maximum principle is fulfilled. In Section \ref{sec-max-principle-suff-sigma} this is done for the invariance conditions from Theorem \ref{thm-main-sigma}, and in Section \ref{sec-max-principle-suff} this is done for the invariance conditions from Theorem \ref{thm-main}.

Throughout this section we consider the mathematical framework from Section \ref{sec-main-results}, and Assumptions \ref{ass-lin-growth}, \ref{ass-Sigma-self-adjoint} will always be in force. The Heine-Borel property (Assumption \ref{ass-Heine-Borel}) of the closed subset $\cald \subset H$ will not be required in this section. Recall from Definition \ref{def-pos-max} that the generator $\call$ satisfies the positive maximum principle if $\call \phi(x) \leq 0$ for any $x \in \cald$ and any function $\phi : H \to \bbr$ of class $C^2$ such that $\displaystyle\max_{\mathcal{D}} \phi = \phi(x) \geq 0$. The following auxiliary result will be useful; it is an immediate consequence of Lemma \ref{lemma-phi-in-N2}.

\begin{lemma}\label{lemma-pos-max}
Suppose that for all $x \in \cald$ and all $(u,v) \in \caln_{\cald}^2(x)$ we have
\begin{align}\label{tr-inequ-for-pos-max-prin}
\la u,b(x) \ra + \frac{1}{2} \tr ( v C(x) ) \leq 0.
\end{align}
Then the generator $\call$ satisfies the positive maximum principle.
\end{lemma}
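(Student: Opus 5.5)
The plan is to verify the positive maximum principle directly from the definition, using the second order normal cone $\caln_{\cald}^2(x)$ to encode the first and second order information of a test function at a maximum point. Fix $x \in \cald$ and a function $\phi : H \to \bbr$ of class $C^2$ with $\max_{\cald} \phi = \phi(x) \geq 0$. I will invoke Lemma \ref{lemma-phi-in-N2}, which I expect to say that under these conditions $(D\phi(x), D^2\phi(x)) \in \caln_{\cald}^2(x)$, using the conventions of Remark \ref{remark-derivatives} that identify $D\phi(x)$ with an element of $H$ and $D^2\phi(x)$ with a self-adjoint operator in $L(H)$. The point is that $\phi(y) \le \phi(x)$ for all $y \in \cald$, and a second order Taylor expansion of $\phi$ at $x$ shows that $(D\phi(x), D^2\phi(x))$ is exactly a second order normal pair in the sense of the definition of $\caln_{\cald}^2(x)$ given in the appendix.

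Next I rewrite the generator. By the definition \eqref{generator-def} of $\call$ and the definition of the dispersion coefficient, $C(x) = \Sigma(x)\Sigma(x)^*$ for $x \in \cald$. Hence
\begin{align*}
\call \phi(x) = \la D\phi(x), b(x) \ra + \frac{1}{2} \tr \big( D^2\phi(x) C(x) \big).
\end{align*}
Setting $u := D\phi(x)$ and $v := D^2\phi(x)$, this is precisely the left-hand side of \eqref{tr-inequ-for-pos-max-prin}. The hypothesis then gives $\call \phi(x) \leq 0$, which is the positive maximum principle of Definition \ref{def-pos-max}.

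The only substantive step is the membership $(D\phi(x), D^2\phi(x)) \in \caln_{\cald}^2(x)$. I take it from Lemma \ref{lemma-phi-in-N2}. If that lemma's definition of $\caln_{\cald}^2$ uses a non-strict Taylor inequality, such as $\la u, y-x\ra + \frac12 \la v(y-x), y-x\ra \le o(\|y-x\|^2)$ on $\cald$, then membership is immediate from Taylor's theorem with a Peano remainder. If instead it uses a strict or exact form, I would first perturb $\phi$ to $\phi - \delta\|\cdot - x\|^2$. This perturbation adds $-\delta \tr(C(x))$ to the generator, and that term vanishes as $\delta \to 0$ because $C(x)$ is trace class. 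Beyond this, the argument is pure bookkeeping, so I expect no real obstacle.
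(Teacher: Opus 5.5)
Your proposal is correct and follows the paper's route exactly: Lemma \ref{lemma-phi-in-N2} gives $(D\phi(x), D^2\phi(x)) \in \caln_{\cald}^2(x)$, and since $C(x) = \Sigma(x)\Sigma(x)^*$ on $\cald$, the hypothesis \eqref{tr-inequ-for-pos-max-prin} with $u = D\phi(x)$, $v = D^2\phi(x)$ is precisely $\call\phi(x) \leq 0$. Definition \ref{def-normal-cones} uses the non-strict $\epsilon\|y-x\|^2$ form, so your perturbation fallback is not needed.
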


\subsection{Positive maximum principle under smoothness on $\sigma$: Theorem \ref{thm-main-sigma}}\label{sec-max-principle-suff-sigma}

In this section we show that the invariance conditions from Theorem \ref{thm-main-sigma} imply that the positive maximum principle is fulfilled. Besides Assumptions \ref{ass-lin-growth}, \ref{ass-Sigma-self-adjoint}, we suppose that Assumption \ref{ass-sigma-smooth} is fulfilled, and that for all $x \in \cald$ and all $u \in \caln_{\cald}^{1,\pr}(x)$ we have \eqref{cond-sigma-zero} and \eqref{cond-drift-sigma-N}. For any $j \in \bbn$ let us consider the $H$-valued deterministic ordinary differential equation (ODE)
\begin{align}\label{ODE-sigma}
\left\{
\begin{array}{rcl}
y'(t) & = & \sigma^j(y(t))
\\ y(0) & = & x.
\end{array}
\right.
\end{align}

\begin{lemma}\label{lemma-det-system}
The subset $\cald$ is locally invariant for the ODE \eqref{ODE-sigma}.
\end{lemma}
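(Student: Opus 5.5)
We will apply the classical Nagumo-type viability theorem for ODEs with locally Lipschitz vector fields on closed subsets of a Hilbert space. It says that $\cald$ is locally invariant for $y' = f(y)$ as soon as $\la u, f(x) \ra \leq 0$ for all $x \in \cald$ and all proximal normals $u \in \caln_{\cald}^{1,\pr}(x)$. Equivalently, it suffices that $f(x) \in T^c_{\cald}(x)$ for all $x \in \cald$; for $C^1$ (hence locally Lipschitz) $f$ the proximal-normal condition is the sharp one (Clarke--Ledyaev--Stern--Wolenski). We take $f = \sigma^j$.

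First we check the regularity. By Assumption \ref{ass-sigma-smooth}, $\sigma \in C^2(H,L_2^0(H))$. The map $\Psi_{f_j} : L_2^0(H) \to H$, $T \mapsto T f_j$, is continuous and linear, so Proposition \ref{prop-smooth-linear} gives $\sigma^j = \Psi_{f_j} \circ \sigma \in C^2(H,H)$. In particular $\sigma^j$ is locally Lipschitz (Proposition \ref{prop-Cb1-Lipschitz}, applied on bounded neighbourhoods). Hence the ODE \eqref{ODE-sigma} has a unique local solution for every $x \in H$.

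Second we check the normal condition, with equality rather than an inequality. Let $x \in \cald$ and $u \in \caln_{\cald}^{1,\pr}(x)$. By \eqref{cond-sigma-zero} we have $\sigma(x)^*u = 0$, and therefore
\begin{align*}
\la u, \sigma^j(x) \ra = \la u, \sigma(x) f_j \ra = \la \sigma(x)^* u, f_j \ra_{H_0} = 0 .
\end{align*}
This holds for every proximal normal, so it also holds for the negated field $-\sigma^j$. By Remark \ref{rem-main}, $\sigma^j(x) \in T^c_{\cald}(x)$ for all $x \in \cald$, since $T^c_{\cald}(x)$ is a convex cone and $\pm\sigma(x)f_j$ both lie in it.

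Third we invoke the viability result, which we plan to prove by a distance-function argument. Let $y$ be the solution starting at $x \in \cald$ and set $g(t) := d_{\cald}(y(t))^2$. For small $t$, local compactness is not needed; we only need that near $x$ almost every point has a (not necessarily unique) closest point, or we argue via proximal subgradients of $d_{\cald}$. Concretely, for $z$ near $\cald$ with an approximate projection $\bar z \in \cald$, the vector $z - \bar z$ is, up to small error, a proximal normal at $\bar z$. Using the Lipschitz bound on $\sigma^j$ we then get
\begin{align*}
\la z - \bar z, \sigma^j(z) \ra \leq \la z - \bar z, \sigma^j(\bar z) \ra + L\|z - \bar z\|^2 = L\, d_{\cald}(z)^2 .
\end{align*}
This gives a Dini derivative estimate $D^+ g \leq 2L g$, and Gronwall together with $g(0) = 0$ yields $g \equiv 0$ on a small interval. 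Since $\cald$ is closed, $y(t) \in \cald$ there.

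The main obstacle is the infinite-dimensional existence of nearest points, because $\cald$ is not assumed locally compact in this section. We handle it using the density of points admitting projections (Lau/Edelstein theorem: in a Hilbert space, points with a nearest point in $\cald$ are dense) together with the Lipschitz continuity of $d_{\cald}$. Alternatively, we cite the known Hilbert-space viability theorem for locally Lipschitz fields formulated with proximal normals, for which the computation above verifies the hypothesis.
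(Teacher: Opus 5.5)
Your argument is correct, and its first two steps match the paper's. The paper also derives $\sigma^j(y)=\sigma(y)f_j\in T^c_{\cald}(y)$ for all $y\in\cald$ from \eqref{cond-sigma-zero} via Proposition \ref{prop-cond-vol} (equivalently Remark \ref{rem-main}).

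You diverge at the invariance step. The paper uses Proposition \ref{prop-modify-fct-bdd} to make $\sigma$ of class $C_b^2$, so that $\sigma^j$ is globally Lipschitz (Proposition \ref{prop-Cb1-Lipschitz}); the cutoff is a nonnegative scalar factor, so tangency survives it. It then cites the Banach-space Nagumo theorem, Theorem \ref{thm-Nagumo-Banach}, whose hypothesis $\sigma^j(y)\in T^b_{\cald}(y)$ follows from $T^c_{\cald}(y)\subset T^b_{\cald}(y)$. Nearest points never appear.

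You instead propose to prove a proximal-normal invariance theorem yourself, via a Gronwall bound on $g(t)=d_{\cald}(y(t))^2$. This works directly with proximal normals, but it costs you the Edelstein--Lau density theorem, and your sketch of that step is loose. ``Almost every point'' is meaningless in $H$, and an approximate projection does not give an approximate proximal normal.

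The clean version is as follows. Fix $t$, and for each small $h>0$ choose $z'$ with $\|z'-y(t)\|\le h^2$ that admits a nearest point $\bar z'\in\cald$. Then $z'-\bar z'$ is an exact proximal normal at $\bar z'$. Expanding $g(t+h)\le\|y(t+h)-\bar z'\|^2$ with the local Lipschitz and sup bounds of $\sigma^j$ gives $g(t+h)\le(1+2Lh)g(t)+O(h^2)$, hence $D^+g\le 2Lg$.

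Since your second step already produced the Clarke tangency, the paper's shorter route was available at no extra cost.

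One minor point: a $C^2$ map on an infinite-dimensional space need not be Lipschitz on bounded sets, only on small balls around each point (cf.\ Proposition \ref{prop-restrict-Cb}). That local property is all you need here.
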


\begin{proof}
Let $x \in \cald$ be arbitrary. Choosing an appropriate open and bounded neighborhood $N(x) \subset H$ of $x$, by Proposition \ref{prop-modify-fct-bdd} we may assume that $\sigma$ is of class $C_b^2$, which in particular implies that $\sigma^j$ is Lipschitz continuous; see Proposition \ref{prop-Cb1-Lipschitz}. Thus, taking into account condition \eqref{cond-sigma-zero} and Proposition \ref{prop-cond-vol}, the stated result is a consequence of Theorem \ref{thm-Nagumo-Banach}.
\end{proof}

\begin{lemma}\label{lemma-derivatives-ODE}
For each $x \in \cald$ we have
\begin{align*}
y(0) &= x,
\\ y'(0) &= \sigma^j(x),
\\ y''(0) &= D \sigma^j(x) \sigma^j(x),
\end{align*}
where $y$ denotes the local solution to the ODE \eqref{ODE-sigma}.
\end{lemma}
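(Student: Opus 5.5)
The first two identities come straight from the ODE \eqref{ODE-sigma}, and the third follows by differentiating the ODE once more along its solution and applying the chain rule. Under Assumption \ref{ass-sigma-smooth} the map $\sigma : H \to L_2^0(H)$ is of class $C^2$. Write $\sigma^j = \Psi_{f_j} \circ \sigma$ with the continuous linear operator $\Psi_{f_j}(T) := T f_j$, as in the proof of Lemma \ref{lemma-trace-decomp}. By Proposition \ref{prop-smooth-linear}, $\sigma^j : H \to H$ is then of class $C^2$, with $D\sigma^j(x)v = (D\sigma(x)v)f_j$. In particular $\sigma^j$ is locally Lipschitz. Localising as in Lemma \ref{lemma-det-system} (Proposition \ref{prop-modify-fct-bdd}), the ODE \eqref{ODE-sigma} has a unique local solution $y : (-\delta,\delta) \to H$, respectively $[0,\delta)$, of class $C^1$.

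The initial condition gives $y(0) = x$ directly. Since $y$ satisfies $y'(t) = \sigma^j(y(t))$ for all $t$, evaluating at $t = 0$ yields $y'(0) = \sigma^j(x)$.

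For the second derivative, note that $y'$ is the composition $\sigma^j \circ y$ of the $C^1$ map $\sigma^j$ with the $C^1$ curve $y$. The first order chain rule therefore shows that $y'$ is differentiable, so $y$ is of class $C^2$, with
\begin{align*}
y''(t) = D\sigma^j(y(t))\, y'(t) = D\sigma^j(y(t))\, \sigma^j(y(t)).
\end{align*}
Setting $t = 0$ gives $y''(0) = D\sigma^j(x)\sigma^j(x)$. If $y$ is only defined on $[0,\delta)$, all derivatives at $0$ are understood as one-sided, and the same chain rule argument applies verbatim.

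The only point requiring care is that the local solution exists and is unique. With $C^1$ regularity of $\sigma^j$ this is the standard Picard–Lindelöf theorem in Banach spaces. Everything else is routine, so I do not expect a genuine obstacle.
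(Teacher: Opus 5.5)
Your proposal is correct and matches the paper's proof: $y(0)=x$ and $y'(0)=\sigma^j(x)$ are read off from the ODE \eqref{ODE-sigma}, and $y''(0)=D\sigma^j(x)\sigma^j(x)$ follows by differentiating $t\mapsto\sigma^j(y(t))$ with the chain rule and evaluating at $t=0$. Your extra remarks on the $C^2$-regularity of $\sigma^j=\Psi_{f_j}\circ\sigma$ and on local well-posedness spell out details the paper leaves implicit.
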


\begin{proof}
It is clear that $y(0) = x$ and $y'(0) = \sigma^j(y(0)) = \sigma^j(x)$. Moreover, we have
\begin{align*}
y''(0) &= \frac{d}{dt} y'(t)|_{t=0} = \frac{d}{dt} \sigma^j(y(t))|_{t=0} = D \sigma^j(y(t)) y'(t) |_{t=0}
\\ &= D \sigma^j(y(0)) y'(0) = D \sigma^j(x) \sigma^j(x),
\end{align*}
completing the proof.
\end{proof}

\begin{proposition}\label{prop-gen-u-v-sigma}
Suppose that conditions \eqref{cond-sigma-zero} and \eqref{cond-drift-sigma-N} are satisfied for all $x \in \cald$ and all $u \in \caln_{\cald}^{1,\pr}(x)$. Then for all $x \in \cald$ and all $(u,v) \in \caln_{\cald}^2(x)$ we have \eqref{tr-inequ-for-pos-max-prin}.
\end{proposition}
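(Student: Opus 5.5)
We fix $x \in \cald$ and $(u,v) \in \caln_{\cald}^2(x)$. By the definition of second order normals (Appendix \ref{app-geometry}), we will use that $u \in \caln_{\cald}^p(x)$ and
\begin{align*}
\la u, y-x \ra + \tfrac{1}{2} \la v(y-x), y-x \ra \leq o(\| y-x \|^2), \quad \cald \ni y \to x.
\end{align*}
By Remark \ref{rem-drift-prox-normal} and positive homogeneity, conditions \eqref{cond-sigma-zero} and \eqref{cond-drift-sigma-N} hold for $u$. Lemma \ref{lemma-trace-sigma-part-2} gives $\tr(vC(x)) = \sum_{j} \la v\sigma^j(x),\sigma^j(x)\ra$. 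Therefore it suffices to prove, for each $j \in \bbn$,
\begin{align*}
\la u, D\sigma^j(x)\sigma^j(x) \ra + \la v \sigma^j(x), \sigma^j(x) \ra \leq 0. \qquad (\ast)
\end{align*}
Indeed, summing $(\ast)$ over $j$ yields $\sum_j \la v\sigma^j(x),\sigma^j(x)\ra \leq -\sum_j \la u, D\sigma^j(x)\sigma^j(x)\ra$. Here the right-hand series converges by Proposition \ref{P:C=sigma2}, since $u \in \ker(\Sigma(x)^*)$. Combined with \eqref{cond-drift-sigma-N}, this gives
\begin{align*}
\la u,b(x)\ra + \tfrac12 \tr(vC(x)) \leq \la u,b(x)\ra - \tfrac12\sum_j \la u, D\sigma^j(x)\sigma^j(x)\ra \leq 0.
\end{align*}

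To prove $(\ast)$ we use the deterministic flows. By Lemma \ref{lemma-det-system}, the local solution $y$ of the ODE \eqref{ODE-sigma} stays in $\cald$ for $t \in [0,\delta)$. Since $\sigma$ is $C^2$, $y$ is $C^2$, and Lemma \ref{lemma-derivatives-ODE} yields the Taylor expansion
\begin{align*}
y(t) - x = t\sigma^j(x) + \tfrac{t^2}{2} D\sigma^j(x)\sigma^j(x) + o(t^2).
\end{align*}
We plug $y = y(t)$ into the second order normal inequality. The term $\la u, t\sigma^j(x)\ra$ vanishes because $\sigma(x)^*u=0$ (Remark \ref{rem-main}), and $\|y(t)-x\| = O(t)$. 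We obtain
\begin{align*}
\tfrac{t^2}{2}\la u, D\sigma^j(x)\sigma^j(x)\ra + \tfrac{t^2}{2}\la v\sigma^j(x),\sigma^j(x)\ra \leq o(t^2).
\end{align*}
Dividing by $t^2$ and letting $t \downarrow 0$ gives $(\ast)$.

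The main subtlety is making sure the convention for $\caln^2_{\cald}(x)$ in the appendix matches the sign and factor $\tfrac12$ used above, i.e.\ is consistent with Lemma \ref{lemma-pos-max} and Lemma \ref{lemma-phi-in-N2}. If the convention instead requires $(u,v)$ with $\la u,y-x\ra + \tfrac12\la v(y-x),y-x\ra \le o$, the argument is unchanged. Otherwise we adjust the factor accordingly. A second point is that the summation step relies on convergence of the Stratonovich-type series against $u$, which Proposition \ref{P:C=sigma2} guarantees exactly because $u \in \ker\Sigma(x)^*$. The remaining issue is the local invariance in Lemma \ref{lemma-det-system}, which uses Nagumo's theorem with the Lipschitz localization. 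We take it as given.
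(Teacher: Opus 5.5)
Your proof is correct, and its core matches the paper's. Both use the flow $y'=\sigma^j(y)$ inside $\cald$ (Lemma \ref{lemma-det-system}) and the second-order expansion from Lemma \ref{lemma-derivatives-ODE}. Both insert this into \eqref{normal-order-2} using $\la u,\sigma^j(x)\ra=0$, then sum over $j$ and combine with \eqref{cond-drift-sigma-N}. Parametrizing by $t$ instead of $\sqrt{h}$ is cosmetic. The convention in \eqref{normal-order-2} has exactly the sign and factor $\frac12$ you use, so your hedging paragraph can be dropped.

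You differ from the paper in how you treat a pair $(u,v)\in\caln_{\cald}^2(x)$ whose first component is not a proximal normal. The paper first proves \eqref{tr-inequ-for-pos-max-prin} for $u\in\caln_{\cald}^{1,\pr}(x)$. It then extends to general $u$ via Proposition \ref{prop-normal-inward-b-cont}, using the continuous function $x'\mapsto\frac12\tr(vC(x'))$. You instead observe that $u\in\caln_{\cald}^p(x)$ automatically, so both invariance conditions pass to $u$ by homogeneity and no limiting argument is needed.

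That observation does not follow directly from the definition, so add the justification:
\begin{itemize}
\item Choosing $\epsilon=1$ in \eqref{normal-order-2} gives $\la u,y-x\ra\le(1+\frac12\|v\|)\|y-x\|^2$ for $y\in\cald$ with $\|y-x\|<\delta$.
\item For $\|y-x\|\ge\delta$ one trivially has $\la u,y-x\ra\le\|u\|\,\|y-x\|\le\delta^{-1}\|u\|\,\|y-x\|^2$.
\item Hence \eqref{inner-prod-normal-prox} holds on all of $\cald$, and Proposition \ref{prop-normal-2} gives $u\in\caln_{\cald}^p(x)$.
\end{itemize}

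Your route is the more robust one. Invoking Proposition \ref{prop-normal-inward-b-cont} with a fixed $v$ requires $\la u',b(x')\ra+\frac12\tr(vC(x'))\le 0$ at every $x'\in\cald$ and every proximal normal $u'$ there. The paper's first step only delivers this when $(u',v)\in\caln_{\cald}^2(x')$. Your observation is what actually closes the general case. Equivalently, write $u=\lambda u'$ with $u'$ proximal and $\lambda\ge 0$. For $\lambda>0$ one has $(u',v/\lambda)\in\caln_{\cald}^2(x)$, and for $\lambda=0$ we have $u=0$, which is itself proximal.
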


\begin{proof}
Let $x \in \cald$ and $(u,v) \in \caln_{\cald}^2(x)$ be arbitrary. By Lemma \ref{lemma-cones-1-2} we have $u \in \caln_{\cald}^{1}(x)$. Let us first assume that $u \in \caln_{\cald}^{1,\pr}(x)$. Furthermore, let $j \in \bbn$ be arbitrary. By Lemma \ref{lemma-det-system} there exists $T > 0$ such that
\begin{align}\label{invariant-y-sigma}
y(\sqrt{h}) \in \cald \quad \text{for all $h \in [0,T]$,}
\end{align}
where $y : [0,T] \to H$ denotes the local solution to the ODE \eqref{ODE-sigma}. Furthermore, by Taylor's theorem (see, e.g. \cite[Thm. 2.4.15]{Abraham}) we have
\begin{equation}\label{Taylor-ODE}
\begin{aligned}
y(\sqrt{h}) &= y(0) + D y(0) \sqrt{h} + \frac{1}{2} D^2 y(0) h + R(h) h^2
\\ &= y(0) + \sqrt{h} y'(0) + \frac{h}{2} y''(0) + R(h) h^2, \quad h \in [0,T],
\end{aligned}
\end{equation}
where the remainder term $R : [0,T] \to H$ is a continuous mapping with $R(0) = 0$. By Lemma \ref{lemma-derivatives-ODE} we obtain
\begin{align}\label{eqn-Taylor-sigma}
y(\sqrt{h}) - x = \sqrt{h} \sigma^j(x) + \frac{h}{2} D \sigma^j(x) \sigma^j(x) + R(h) h^2, \quad h \in [0,T].
\end{align}
In view of Lemma \ref{lemma-adjoint-ker-ran}, condition \eqref{cond-sigma-zero} implies $\la u, \sigma^j(x) \ra = 0$. Therefore, we obtain
\begin{align*}
\la u, y(\sqrt{h}) - x \ra = \frac{h}{2} \la u, D \sigma^j(x) \sigma^j(x) \ra + h^2 \la u, R(h) \ra, \quad h \in [0,T],
\end{align*}
and hence
\begin{align*}
\lim_{h \downarrow 0} \frac{\la u, y(\sqrt{h}) - x \ra}{h} = \frac{1}{2} \la u, D \sigma^j(x) \sigma^j(x) \ra.
\end{align*}
Furthermore, evaluating $\la v(y(\sqrt{h}) - x), y(\sqrt{h}) - x \ra$ using \eqref{eqn-Taylor-sigma} we obtain
\begin{align*}
\lim_{h \downarrow 0} \frac{\la v(y(\sqrt{h}) - x), y(\sqrt{h}) - x \ra}{h} = \la v \sigma^j(x), \sigma^j(x) \ra.
\end{align*}
In view of the last two identities, we arrive at
\begin{align*}
&\lim_{h \downarrow 0} \frac{1}{h} \bigg( \la u, y(\sqrt{h}) - x \ra + \frac{1}{2} \la v(y(\sqrt{h}) - x), y(\sqrt{h}) - x \ra \bigg)
\\ &= \frac{1}{2} \Big( \la u, D \sigma^j(x) \sigma^j(x) \ra + \la v \sigma^j(x), \sigma^j(x) \ra \Big).
\end{align*}
Noting that $(u,v) \in \caln_{\cald}^2(x)$, by Definition \ref{def-normal-cones}, the continuity of $y$ and \eqref{invariant-y-sigma} we obtain
\begin{align}\label{limit-ODE}
\lim_{h \downarrow 0} \frac{1}{h} \Big( \la u, y(\sqrt{h}) - x \ra + \frac{1}{2} \la v(y(\sqrt{h}) - x), y(\sqrt{h}) - x \ra \Big) \leq 0,
\end{align}
and it follows that
\begin{align*}
\la u, D \sigma^j(x) \sigma^j(x) \ra + \la v \sigma^j(x), \sigma^j(x) \ra \leq 0.
\end{align*}
Therefore, using Lemma \ref{lemma-trace-sigma-part-2} and \eqref{cond-drift-sigma-N} we arrive at
\begin{align*}
\la u,b(x) \ra + \frac{1}{2} \tr ( v C(x) ) &= \la u, b(x) \ra - \frac{1}{2} \sum_{j=1}^{\infty} \la u, D \sigma^j(x) \sigma^j(x) \ra
\\ &\quad + \frac{1}{2} \sum_{j=1}^{\infty} \la u, D \sigma^j(x) \sigma^j(x) \ra + \frac{1}{2} \sum_{j=1}^{\infty} \la v \sigma^j(x), \sigma^j(x) \ra \leq 0,
\end{align*}
which proves \eqref{tr-inequ-for-pos-max-prin} in case $u \in \caln_{\cald}^{1,\pr}(x)$. Since the mapping
\begin{align*}
L(H) \times L_1(H) \to L_1(H), \quad (T,S) \mapsto TS
\end{align*}
is a continuous bilinear operator, and the trace is a continuous linear functional on $L_1(H)$ due to Lemma \ref{lemma-trace-functional}, the mapping
\begin{align*}
\cald \to H, \quad x \mapsto \tr ( v C(x) )
\end{align*}
is continuous. Consequently, using Proposition \ref{prop-normal-inward-b-cont} completes the proof.
\end{proof}

\begin{proposition}\label{prop-pos-max-prin-sigma}
Suppose that Assumptions \ref{ass-lin-growth}, \ref{ass-Sigma-self-adjoint}, \ref{ass-sigma-smooth} are in force, and that conditions \eqref{cond-sigma-zero} and \eqref{cond-drift-sigma-N} are satisfied for all $x \in \cald$ and all $u \in \caln_{\cald}^{1,\pr}(x)$. Then the generator $\call$ satisfies the positive maximum principle.
\end{proposition}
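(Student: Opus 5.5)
This follows directly from Proposition \ref{prop-gen-u-v-sigma} together with Lemma \ref{lemma-pos-max}. By hypothesis, conditions \eqref{cond-sigma-zero} and \eqref{cond-drift-sigma-N} hold for all $x \in \cald$ and all proximal normals $u \in \caln_{\cald}^{1,\pr}(x)$. Assumptions \ref{ass-lin-growth}, \ref{ass-Sigma-self-adjoint} and \ref{ass-sigma-smooth} are in force, which is the standing setting of Section \ref{sec-max-principle-suff-sigma}.

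The first step invokes Proposition \ref{prop-gen-u-v-sigma}. It yields, for every $x \in \cald$ and every second order normal pair $(u,v) \in \caln_{\cald}^2(x)$, the inequality
\begin{align*}
\la u,b(x) \ra + \frac{1}{2} \tr ( v C(x) ) \leq 0,
\end{align*}
that is, \eqref{tr-inequ-for-pos-max-prin}. Here $C(x) = \Sigma(x)\Sigma(x)^*$ on $\cald$.

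The second step applies Lemma \ref{lemma-pos-max}, whose hypothesis is exactly \eqref{tr-inequ-for-pos-max-prin}. It gives the positive maximum principle for $\call$.

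The step underlying Lemma \ref{lemma-pos-max} is the observation, via Lemma \ref{lemma-phi-in-N2}, that if $\max_{\cald}\phi = \phi(x)$ then $(D\phi(x), D^2\phi(x)) \in \caln_{\cald}^2(x)$. By \eqref{generator-def}, $\call\phi(x)$ is then exactly the left-hand side of \eqref{tr-inequ-for-pos-max-prin} for this pair. We take that lemma as given.

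Thus no genuine obstacle remains. The only point to verify is that the hypotheses of Proposition \ref{prop-gen-u-v-sigma} coincide with ours. That proposition uses the local invariance of $\cald$ under the ODE \eqref{ODE-sigma} from Lemma \ref{lemma-det-system}, which requires Assumption \ref{ass-sigma-smooth} and condition \eqref{cond-sigma-zero}; both are assumed here.
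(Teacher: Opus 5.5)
Your proposal is correct and is exactly the paper's argument. The paper also obtains this proposition as an immediate consequence of Proposition~\ref{prop-gen-u-v-sigma}, which gives \eqref{tr-inequ-for-pos-max-prin} for all $x \in \cald$ and $(u,v) \in \caln_{\cald}^2(x)$, combined with Lemma~\ref{lemma-pos-max}.
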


\begin{proof}
This is an immediate consequence of Lemma \ref{lemma-pos-max} and Proposition \ref{prop-gen-u-v-sigma}.
\end{proof}

\subsection{Positive maximum principle under smoothness on $C$: Theorem \ref{thm-main}}\label{sec-max-principle-suff}

In this section we show that the invariance conditions from Theorem \ref{thm-main} imply that the positive maximum principle is fulfilled. Besides Assumptions \ref{ass-lin-growth}, \ref{ass-Sigma-self-adjoint}, we suppose that Assumptions \ref{ass-extension}, \ref{ass-closed-range} are in force, and that for all $x \in \cald$ and all $u \in \caln_{\cald}^{1,\pr}(x)$ we have \eqref{cond-C-zero} and \eqref{cond-drift-N}. For any $j \in \bbn$ let us consider the $H$-valued deterministic control system
\begin{align}\label{det-control-system}
\left\{
\begin{array}{rcl}
y'(t) & = & a_j(x,y(t))
\\ y(0) & = & x,
\end{array}
\right.
\end{align}
where for $x \in H$ and $j \in \bbn$ the mapping $a_j(x,\cdot) : H \to H$ is defined as
\begin{align*}
a_j(x,y) := C(y) \Sigma(x)^+ e_j, \quad y \in H.
\end{align*}

\begin{lemma}\label{lemma-det-control-system}
The subset $\cald$ is locally invariant for the deterministic control system \eqref{det-control-system}.
\end{lemma}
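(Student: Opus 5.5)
We will follow the proof of Lemma \ref{lemma-det-system} and apply the Nagumo-type result Theorem \ref{thm-Nagumo-Banach} to the vector field $a_j(x,\cdot) : H \to H$ with $x \in \cald$ and $j \in \bbn$ fixed. The two ingredients are local Lipschitz continuity of $a_j(x,\cdot)$ and a tangency (subtangential) condition along $\cald$. Once both are available, Theorem \ref{thm-Nagumo-Banach} yields that $\cald$ is locally invariant for \eqref{det-control-system}.

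\emph{Regularity.} By Assumption \ref{ass-closed-range} and Lemma \ref{lemma-closed-range}, the pseudoinverse $\Sigma(x)^+$ is a bounded operator, so $w := \Sigma(x)^+ e_j \in H$ is a fixed vector. Hence $a_j(x,y) = C(y)w$, which is the composition of $C$ with the continuous linear map $L_1(H) \to H$, $T \mapsto Tw$. Since $C$ is of class $C^2$ by Assumption \ref{ass-extension}, Proposition \ref{prop-smooth-linear} shows that $y \mapsto a_j(x,y)$ is $C^2$. After localising to an open bounded neighbourhood via Proposition \ref{prop-modify-fct-bdd}, it is $C_b^1$ and therefore Lipschitz by Proposition \ref{prop-Cb1-Lipschitz}.

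\emph{Tangency.} For every $y \in \cald$ we need $a_j(x,y) = C(y)w \in T_{\cald}^c(y)$, or an equivalent condition in terms of normals. For $u \in \caln_{\cald}^{1,\pr}(y)$, condition \eqref{cond-C-zero} at the point $y$ gives $C(y)u = 0$. Because $C(y)$ is self-adjoint, this yields $\la u, C(y)w \ra = \la C(y)u, w \ra = 0$. Thus $C(y)w \perp u$ for all proximal normals $u$, and by Remark \ref{rem-main}, i.e.\ Proposition \ref{prop-cond-vol} (passage from proximal to Clarke normals), $\ran(C(y)) \subset T_{\cald}^c(y)$. In particular $a_j(x,y) \in T_{\cald}^c(y)$ for all $y \in \cald$.

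Note that the tangency condition has to hold at every $y \in \cald$ near $x$, and not only at $x$. It is essential here that \eqref{cond-C-zero} is assumed at all points of $\cald$, and that $a_j(x,\cdot)$ uses $C(y)$, which equals $\Sigma(y)\Sigma(y)^*$ on $\cald$, rather than $\sigma(y)$, whose range might not be tangent. The main point to check is exactly this: that Proposition \ref{prop-cond-vol} applies to the map $y \mapsto C(y)w$ on $\cald$. Orthogonality to proximal normals then upgrades to membership in the Clarke tangent cone, as in Remark \ref{rem-main}, which is the hypothesis needed by Theorem \ref{thm-Nagumo-Banach}.
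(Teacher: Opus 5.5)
Your proposal is correct and follows the paper's proof. You localise $C$ via Proposition \ref{prop-modify-fct-bdd} to get Lipschitz continuity of $a_j(x,\cdot)$, deduce from \eqref{cond-C-zero} (assumed at all points of $\cald$) via Proposition \ref{prop-cond-vol} that $a_j(x,y)$ is tangent at every $y \in \cald$, and conclude with Theorem \ref{thm-Nagumo-Banach}, using $T_{\cald}^c(y) \subset T_{\cald}^b(y)$; the only cosmetic difference is that the paper writes $a_j(x,y) = \Sigma(y)(\Sigma(y)\Sigma(x)^+ e_j)$ to invoke item (ix) of Proposition \ref{prop-cond-vol} directly, whereas you go through self-adjointness of $C(y)$ and the Clarke-normal version of \eqref{cond-C-zero}, which amounts to the same thing.
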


\begin{proof}
Let $x \in \cald$ be arbitrary. Choosing an appropriate open and bounded neighborhood $N(x) \subset H$ of $x$, by Proposition \ref{prop-modify-fct-bdd} we may assume that $C : H \to L_1(H)$ is of class $C_b^2$, which in particular implies that $a_j(x,\cdot)$ is Lipschitz continuous; see Proposition \ref{prop-Cb1-Lipschitz}. In view of Proposition \ref{prop-cond-vol}, condition \eqref{cond-C-zero} implies that for all $y \in \cald$ we have
\begin{align*}
a_j(x,y) = \Sigma(y) \big( \Sigma(y) \Sigma(x)^+ e_j \big) \in T_{\cald}^b(y).
\end{align*}
Therefore, by Theorem \ref{thm-Nagumo-Banach} the subset $\cald$ is locally invariant for the ODE
\begin{align*}
\left\{
\begin{array}{rcl}
y'(t) & = & a_j(x,y(t))
\\ y(0) & = & y,
\end{array}
\right.
\end{align*}
which gives the stated result by choosing the initial point $y=x$.
\end{proof}

\begin{lemma}\label{lemma-derivatives-controll}
For each $x \in \cald$ we have
\begin{align*}
y(0) &= x,
\\ y'(0) &= C(x) \Sigma(x)^+ e_j,
\\ y''(0) &= D C(x) [\Sigma(x)^+ e_j] [ C(x) \Sigma(x)^+ e_j ],
\end{align*}
where $y$ denotes the local solution to the deterministic control system \eqref{det-control-system}.
\end{lemma}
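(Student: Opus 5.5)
The argument mirrors the proof of Lemma \ref{lemma-derivatives-ODE}, with the ODE \eqref{ODE-sigma} replaced by the control system \eqref{det-control-system}. The point is that $x$ enters $a_j(x,\cdot)$ only as a frozen parameter. So the right-hand side $y \mapsto a_j(x,y) = C(y)\Sigma(x)^+ e_j$ is a fixed function of $y$ alone, of class $C^2$ on $H$ by Assumption \ref{ass-extension}. By Lemma \ref{lemma-det-control-system} (localized via Proposition \ref{prop-modify-fct-bdd}, so that $a_j(x,\cdot)$ is Lipschitz) there is a unique local solution $y$ on some $[0,T]$. Since $y' = a_j(x,y)$ is a composition of $C^1$ maps with the continuous $y$, the solution $y$ is in fact $C^2$.

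The first two identities are immediate: $y(0)=x$ holds by definition, and $y'(0) = a_j(x,y(0)) = C(x)\Sigma(x)^+ e_j$.

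For the second derivative, write $a_j(x,\cdot) = \Psi_{w} \circ C$ with the fixed vector $w := \Sigma(x)^+ e_j \in H$. Here $\Psi_w : L_1(H) \to H$, $T \mapsto Tw$, is a continuous linear map, since $L_1(H) \hookrightarrow L(H)$ continuously. By Proposition \ref{prop-smooth-linear} we then get
\[
D a_j(x,\cdot)(z)v = (DC(z)v)w, \quad z,v \in H,
\]
and we write this as $DC(z)[v][w]$. Differentiating $t \mapsto a_j(x,y(t))$ by the chain rule gives
\[
y''(t) = DC(y(t))[y'(t)][w].
\]
Evaluating at $t=0$ and inserting $y(0)=x$ and $y'(0)=C(x)\Sigma(x)^+e_j$ yields
\[
y''(0) = DC(x)[C(x)\Sigma(x)^+e_j][\Sigma(x)^+e_j].
\]

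It remains to match the order of the brackets in the statement, which reads $DC(x)[\Sigma(x)^+e_j][C(x)\Sigma(x)^+e_j]$. So we must show $(DC(x)p)q = (DC(x)q)p$ for $p = C(x)\Sigma(x)^+e_j$ and $q = \Sigma(x)^+e_j$. This symmetry does not hold in general, so it is the one genuine obstacle. If it fails, the notation in the statement should instead be read as $DC(x)$ applied in the direction $C(x)\Sigma(x)^+e_j$ and then evaluated at $\Sigma(x)^+e_j$. That reading is consistent with Lemma \ref{lemma-series-1}, where $\la u, DC(x)(\Sigma(x)^+e_j)(C(x)\Sigma(x)^+e_j)\ra$ is handled through the self-adjointness of $DC(x)v$ (Corollary \ref{cor-self-adjoint}).

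I would fix the convention by checking it against the intended use. This lemma feeds, in the next step, into the pairing $\la u, y''(0)\ra$ with $u \in \ker C(x)$, and that pairing must reduce to $\tr(DC(x)P_C(x)u)$ via Lemma \ref{lemma-series-1}. Beyond this bookkeeping of which slot is the direction of differentiation, the argument is the same routine chain-rule computation as in Lemma \ref{lemma-derivatives-ODE}.
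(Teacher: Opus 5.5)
Your proof is correct and is the paper's own argument: with $w=\Sigma(x)^+e_j$ frozen, the chain rule gives $y''(0)=D_ya_j(x,y(0))\,y'(0)=(DC(x)[C(x)w])\,w$. The paper's bracket $DC(x)[\Sigma(x)^+e_j][C(x)\Sigma(x)^+e_j]$ is exactly $D_ya_j(x,x)$, the derivative of $y\mapsto C(y)\Sigma(x)^+e_j$, applied to $y'(0)$, which is your reading. Your side remark is slightly off, though: Lemma~\ref{lemma-series-1} uses the opposite slot convention (there $DC(x)(\Sigma(x)^+e_j)$ is the self-adjoint operator $DC(x)v$ with $v=\Sigma(x)^+e_j$), so the two expressions differ for each $j$. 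This is harmless downstream, because with $D(Cu)(x)v=(DC(x)v)u$ the two $j$-sums paired with $u$ equal $\tr(D(Cu)(x)\,C(x)C^+(x))$ and $\tr(D(Cu)(x)\,C^+(x)C(x))$, and these coincide since $CC^+=C^+C$.
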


\begin{proof}
It is clear that $y(0) = x$ and
\begin{align*}
y'(0) = a_j(x,y(0)) = a_j(x,x) = C(x) \Sigma(x)^+ e_j.
\end{align*}
Moreover, we have
\begin{align*}
y''(0) &= \frac{d}{dt} y'(t) |_{t=0} = \frac{d}{dt} a_j(x,y(t)) |_{t=0} = D_y a_j(x,y(t)) y'(t) |_{t=0}
\\ &= D_y a_j(x,y(0)) y'(0) = D C(x) [\Sigma(x)^+ e_j] [ C(x) \Sigma(x)^+ e_j ],
\end{align*}
completing the proof.
\end{proof}

\begin{lemma}\label{lemma-series-2}
For each $x \in \cald$ and each self-adjoint operator $v \in L(H)$ we have
\begin{align*}
\tr \big( v C(x) \big) = \sum_{j=1}^{\infty} \la v C(x) \Sigma(x)^+ e_j, C(x) \Sigma(x)^+ e_j \ra.
\end{align*}
\end{lemma}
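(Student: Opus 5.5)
The plan is to rewrite the right-hand side as a trace of a single operator and then use cyclicity of the trace together with the Moore--Penrose identities. Fix $x \in \cald$ and write $\Sigma = \Sigma(x)$, $C = C(x) = \Sigma^2$; this uses Assumption \ref{ass-Sigma-self-adjoint}, so $\Sigma$ is self-adjoint and nonnegative. By Assumption \ref{ass-closed-range} and Lemma \ref{lemma-closed-range}, $\ran(\Sigma)$ is finite dimensional and closed, so $\Sigma^+$ is a bounded operator. Moreover $\Sigma^+$ is self-adjoint and $(\Sigma^+)^2 = (\Sigma^2)^+ = C^+$, as already used in the proof of Lemma \ref{lemma-series-1} via Lemma \ref{lemma-inverse-rules}.

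First I will express each summand as $\la v C\Sigma^+ e_j, C\Sigma^+ e_j\ra = \la (C\Sigma^+)^* v C\Sigma^+ e_j, e_j\ra$. Since $(C\Sigma^+)^* = \Sigma^+ C$, the summand is $\la \Sigma^+ C v C \Sigma^+ e_j, e_j \ra$. The operator $T := \Sigma^+ C v C \Sigma^+$ is nuclear, because $C \in L_1(H)$ and $L_1(H)$ is an ideal in $L(H)$. Hence the series converges (absolutely) and equals $\tr(T)$ by the definition of the trace in any orthonormal basis. This is the same step as in the proof of Lemma \ref{lemma-trace-sigma-part-2}.

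Next I will apply cyclicity of the trace (Lemma \ref{lemma-trace-commute} / Lemma \ref{lemma-trace-commute-LR}), which is legitimate since one factor is trace class and the others are bounded. This gives
\begin{align*}
\tr(T) = \tr\big( v C \Sigma^+ \Sigma^+ C \big) = \tr\big( v C C^+ C \big) = \tr\big( v C \big),
\end{align*}
where the last equality is the Moore--Penrose identity $C C^+ C = C$ from Theorem \ref{thm-MP-characterizations}.

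The only delicate point is justifying the cyclic rearrangement with a trace-class factor placed in the middle of bounded operators. I will handle it by grouping $T = (\Sigma^+)\,(C v C \Sigma^+)$, where the second factor is nuclear, so that $\tr(AB) = \tr(BA)$ applies with $A$ bounded and $B$ nuclear. Notably, self-adjointness of $v$ is not needed for the argument, only its boundedness.
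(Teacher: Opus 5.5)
Your argument is correct and is essentially the paper's proof read in the opposite direction. The paper inserts $C(x) = C(x)(\Sigma(x)^+)^2 C(x)$ (Lemma \ref{lemma-inverse-rules}), uses cyclicity to rewrite $\tr(vC(x))$ as $\tr\big((C(x)\Sigma(x)^+)^* v\, C(x)\Sigma(x)^+\big)$, and then expands this trace in the basis $\{e_j\}$; that is your chain of identities taken from left to right. Your remark that self-adjointness of $v$ is never used is also right, and the paper's proof does not use it either.
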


\begin{proof}
Recalling that $C(x) = \Sigma(x)^2$, by Lemma \ref{lemma-inverse-rules} and Lemma \ref{lemma-trace-commute} we have
\begin{align*}
\tr \big( v C(x) \big) &= \tr \big( v C(x) (\Sigma(x)^+)^2 C(x) \big)
\\ &= \tr \big( v C(x) \Sigma(x)^+ (C(x) \Sigma(x)^+)^* \big)
\\ &= \tr \big( (C(x) \Sigma(x)^+)^* v C(x) \Sigma(x)^+ \big)
\\ &= \sum_{j=1}^{\infty} \la ( C(x) \Sigma(x)^+ )^* v C(x) \Sigma(x)^+ e_j, e_j \ra
\\ &= \sum_{j=1}^{\infty} \la v C(x) \Sigma(x)^+ e_j, C(x) \Sigma(x)^+ e_j \ra,
\end{align*}
completing the proof.
\end{proof}

\begin{proposition}\label{prop-gen-u-v}
Suppose that conditions \eqref{cond-sigma-zero} and \eqref{cond-drift-sigma-N} are satisfied for all $x \in \cald$ and all $u \in \caln_{\cald}^{1,\pr}(x)$. Then for all $x \in \cald$ and all $(u,v) \in \caln_{\cald}^2(x)$ we have \eqref{tr-inequ-for-pos-max-prin}.
\end{proposition}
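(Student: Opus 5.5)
The plan is to reduce the hypotheses as stated to the $C$-formulation and then run the deterministic control-system argument built up in this section (Lemmas \ref{lemma-det-control-system}, \ref{lemma-derivatives-controll}, \ref{lemma-series-2}). Note that \eqref{cond-drift-sigma-N} only makes sense if $\sigma^j$ is differentiable at the points of $\cald$. I will therefore read the statement as including the standing requirement that $\sigma$ be of class $C^1$, which is what is needed to apply Proposition \ref{P:C=sigma2}. This reading is an assumption I am adding, not something the statement supplies.

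\emph{Step 1: translate the hypotheses.} Fix $x \in \cald$ and $u \in \caln_{\cald}^{1,\pr}(x)$. By Remark \ref{rem-kernels}, $\ker(\sigma(x)^*) = \ker(C(x))$, so \eqref{cond-sigma-zero} gives $C(x)u = 0$, which is \eqref{cond-C-zero}. By Remark \ref{rem-main} this then holds for all Clarke normals, and hence $\Sigma(y) w \in T_{\cald}^{c}(y)$ for all $y \in \cald$; this is what Lemma \ref{lemma-det-control-system} uses. Next, since $u \in \ker(\Sigma(x)^*)$, Proposition \ref{P:C=sigma2}(4) yields
\begin{align*}
\sum_{j=1}^{\infty} \la u, D\sigma^j(x)\sigma^j(x) \ra = \sum_{j=1}^{\infty} \la u, DC^j(x) P_C^j(x) \ra .
\end{align*}
By Lemma \ref{lemma-series-1} and Remark \ref{rem-drift-cond}, the right-hand side equals
\begin{align*}
\sum_{j=1}^{\infty} \la u, DC(x)[\Sigma(x)^+e_j][C(x)\Sigma(x)^+e_j] \ra .
\end{align*}
Thus \eqref{cond-drift-sigma-N} is equivalent to \eqref{cond-drift-N} written in this last form.

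\emph{Step 2: second-order estimate along the control system.} First take $(u,v) \in \caln_{\cald}^2(x)$ with $u \in \caln_{\cald}^{1,\pr}(x)$, and fix $j$. By Lemma \ref{lemma-det-control-system}, the solution $y$ of \eqref{det-control-system} satisfies $y(\sqrt h) \in \cald$ for small $h \ge 0$. Taylor expansion together with Lemma \ref{lemma-derivatives-controll} gives
\begin{align*}
y(\sqrt h) - x = \sqrt h\, C(x)\Sigma(x)^+e_j + \tfrac h2\, DC(x)[\Sigma(x)^+e_j][C(x)\Sigma(x)^+e_j] + o(h).
\end{align*}
Since $\la u, C(x)\Sigma(x)^+ e_j \ra = \la C(x)u, \Sigma(x)^+ e_j \ra = 0$, the order-$\sqrt h$ term drops out of $\la u, y(\sqrt h)-x\ra$. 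Dividing the defining inequality of $\caln_{\cald}^2(x)$ by $h$, exactly as in \eqref{limit-ODE}, gives
\begin{align*}
\la u, DC(x)[\Sigma(x)^+e_j][C(x)\Sigma(x)^+e_j]\ra + \la vC(x)\Sigma(x)^+e_j, C(x)\Sigma(x)^+e_j\ra \le 0 .
\end{align*}

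\emph{Step 3: summation and extension to all $(u,v)$.} Sum the inequality of Step 2 over $j$. Both series converge: the first by Lemma \ref{lemma-series-1}, and the second equals $\tr(vC(x))$ by Lemma \ref{lemma-series-2}. Adding this to the drift inequality from Step 1 yields \eqref{tr-inequ-for-pos-max-prin} whenever $u$ is a proximal normal. By Lemma \ref{lemma-cones-1-2}, a general $(u,v) \in \caln_{\cald}^2(x)$ has $u \in \caln_{\cald}^1(x)$. Since $x \mapsto \tr(vC(x))$ is continuous on $\cald$, Proposition \ref{prop-normal-inward-b-cont} extends the inequality to all such pairs, as in the proof of Proposition \ref{prop-gen-u-v-sigma}.

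The main obstacle is Step 1. Identifying the Stratonovich series with the $C$-series requires differentiability of $\sigma$, which the surrounding section does not assume. I would first try to justify it through Proposition \ref{P:C=sigma2} under the implicit $C^1$ requirement. Failing that, the fallback is to treat \eqref{cond-drift-sigma-N} as shorthand for \eqref{cond-drift-N}, in which case Step 1 reduces to the kernel identity and Lemma \ref{lemma-series-1}.
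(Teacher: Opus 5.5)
Your Steps 2 and 3 are exactly the paper's proof: the control system \eqref{det-control-system}, the expansion from Lemma \ref{lemma-derivatives-controll}, the vanishing of the $\sqrt h$-term via \eqref{cond-C-zero}, and the summation with Lemmas \ref{lemma-series-1} and \ref{lemma-series-2}.

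The hypotheses \eqref{cond-sigma-zero}, \eqref{cond-drift-sigma-N} in the statement are a slip carried over from Proposition \ref{prop-gen-u-v-sigma}. The standing assumptions of this subsection, the paper's proof, and Proposition \ref{prop-pos-max-prin} all use \eqref{cond-C-zero} and \eqref{cond-drift-N}. So your fallback is the intended reading, and Step 1 reduces to Lemma \ref{lemma-series-1}. Your primary route would need one more check anyway. Proposition \ref{P:C=sigma2} is about $\Sigma\Sigma^*$ on all of $H$, not the given extension $C$. Under \eqref{cond-C-zero}, the two derivatives do agree in the directions $P_C^j(x)\in\ran\Sigma(x)\subset T_{\cald}^b(x)$ by Lemma \ref{lemma-derivative-sequence}, but this has to be said.

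There is one genuine gap, inherited from the paper's proofs of Propositions \ref{prop-gen-u-v-sigma} and \ref{prop-gen-u-v}: the final appeal to Proposition \ref{prop-normal-inward-b-cont}.
\begin{itemize}
\item Fix $v$ and set $c(x):=\frac12\tr(vC(x))$. Hypothesis (i) of that proposition demands $\la u,b(x)\ra+c(x)\le 0$ for \emph{all} $x\in\cald$ and \emph{all} $u\in\caln_{\cald}^{1,\pr}(x)$.
\item Steps 2--3 only give this for those $u$ with $(u,v)\in\caln_{\cald}^2(x)$.
\item Hypothesis (i) is false in general. Take $v=\Id$ and $u=0$, which is a proximal normal at every point; then (i) would force $\tr C(x)\le 0$, i.e.\ $C\equiv 0$ on $\cald$.
\end{itemize}

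The fix is that no extension is needed, because every $(u,v)\in\caln_{\cald}^2(x)$ already has $u\in\caln_{\cald}^p(x)$. To see this, take $\epsilon=1$ in \eqref{normal-order-2}. This gives $\la u,y-x\ra\le(1+\frac12\|v\|)\|y-x\|^2$ for $y\in\cald$ with $\|y-x\|<\delta$. For $\|y-x\|\ge\delta$ you have the trivial bound $\la u,y-x\ra\le\|u\|\,\|y-x\|\le\frac{\|u\|}{\delta}\|y-x\|^2$. Hence $u\in\caln_{\cald}^p(x)$ by Proposition \ref{prop-normal-2}.

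If $u=0$, Steps 2--3 apply directly, since $0$ is a proximal normal. If $u\neq 0$, write $u=\lambda w$ with $\lambda>0$ and $w\in\caln_{\cald}^{1,\pr}(x)$. Since $\caln_{\cald}^2(x)$ is a cone, $(w,\lambda^{-1}v)\in\caln_{\cald}^2(x)$, and your Steps 2--3 apply to this pair. Multiplying the resulting inequality by $\lambda$ gives \eqref{tr-inequ-for-pos-max-prin} for $(u,v)$.
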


\begin{proof}
Let $x \in \cald$ and $(u,v) \in \caln_{\cald}^2(x)$ be arbitrary. By Lemma \ref{lemma-cones-1-2} we have $u \in \caln_{\cald}^{1}(x)$. Let us first assume that $u \in \caln_{\cald}^{1,\pr}(x)$. Furthermore, let $j \in \bbn$ be arbitrary. By Lemma \ref{lemma-det-control-system} there exists $T > 0$ such that we have \eqref{invariant-y-sigma}, where $y : [0,T] \to H$ denotes the local solution to the deterministic control system \eqref{det-control-system}. As in the proof of Proposition \ref{prop-gen-u-v-sigma}, we perform the second order Taylor expectation \eqref{Taylor-ODE}, and by Lemma \ref{lemma-derivatives-controll} we obtain
\begin{equation}\label{eqn-Taylor}
\begin{aligned}
y(\sqrt{h}) - x &= \sqrt{h} C(x) \Sigma(x)^+ e_j + \frac{h}{2} DC(x) [ \Sigma(x)^+ e_j ] [ C(x) \Sigma(x)^+ e_j ]
\\ &\quad + R(h) h^2, \quad h \in [0,T].
\end{aligned}
\end{equation}
By Lemma \ref{lemma-inverse-rules} and \eqref{cond-C-zero} we have
\begin{align*}
\la u, C(x) \Sigma(x)^+ e_j \ra = \la (C(x) \Sigma(x)^+)^* u, e_j \ra = \la \Sigma(x)^+ C(x) u, e_j \ra = 0.
\end{align*}
Therefore, we obtain
\begin{align*}
\la u, y(\sqrt{h}) - x \ra = \frac{h}{2} \la u, DC(x) [ \Sigma(x)^+ e_j ] [ C(x) \Sigma(x)^+ e_j ] \ra + h^2 \la u, R(h) \ra, \quad h \in [0,T],
\end{align*}
and hence
\begin{align*}
\lim_{h \downarrow 0} \frac{\la u, y(\sqrt{h}) - x \ra}{h} = \frac{1}{2} \la u, DC(x) [ \Sigma(x)^+ e_j ] [ C(x) \Sigma(x)^+ e_j ] \ra.
\end{align*}
Furthermore, evaluating $\la v(y(\sqrt{h}) - x), y(\sqrt{h}) - x \ra$ using \eqref{eqn-Taylor} we obtain
\begin{align*}
\lim_{h \downarrow 0} \frac{\la v(y(\sqrt{h}) - x), y(\sqrt{h}) - x \ra}{h} = \la v(C(x) \Sigma(x)^+ e_j), C(x) \Sigma(x)^+ e_j \ra.
\end{align*}
In view of the last two identities, we arrive at
\begin{align*}
&\lim_{h \downarrow 0} \frac{1}{h} \bigg( \la u, y(\sqrt{h}) - x \ra + \frac{1}{2} \la v(y(\sqrt{h}) - x), y(\sqrt{h}) - x \ra \bigg)
\\ &= \frac{1}{2} \Big( \la u, DC(x) [ \Sigma(x)^+ e_j ] [ C(x) \Sigma(x)^+ e_j ] \ra + \la v(C(x) \Sigma(x)^+ e_j), C(x) \Sigma(x)^+ e_j \ra \Big).
\end{align*}
As in the proof of Proposition \ref{prop-gen-u-v-sigma}, we have \eqref{limit-ODE}, and it follows that
\begin{align*}
\la u, DC(x) [ \Sigma(x)^+ e_j ] [ C(x) \Sigma(x)^+ e_j ] \ra + \la v(C(x) \Sigma(x)^+ e_j), C(x) \Sigma(x)^+ e_j \ra \leq 0.
\end{align*}
Therefore, using Lemma \ref{lemma-series-1}, Lemma \ref{lemma-series-2} and \eqref{cond-drift-N} we arrive at
\begin{align*}
\la u,b(x) \ra + \frac{1}{2} \tr ( v C(x) ) &= \la u, b(x) \ra - \frac{1}{2} \sum_{j=1}^{\infty} \la u, D C^j(x) (C C^+)^j(x) \ra
\\ &\quad + \frac{1}{2} \sum_{j=1}^{\infty} \la u, DC(x) [ \Sigma(x)^+ e_j ] [ C(x) \Sigma(x)^+ e_j ] \ra
\\ &\quad + \frac{1}{2} \sum_{j=1}^{\infty} \la v C(x) \Sigma(x)^+ e_j, C(x) \Sigma(x)^+ e_j \ra \leq 0,
\end{align*}
which proves \eqref{tr-inequ-for-pos-max-prin} in case $u \in \caln_{\cald}^{1,\pr}(x)$. Now, using Proposition \ref{prop-normal-inward-b-cont} as in the proof of Proposition \ref{prop-gen-u-v-sigma} completes the proof.
\end{proof}

\begin{proposition}\label{prop-pos-max-prin}
Suppose that Assumptions \ref{ass-lin-growth},  \ref{ass-extension}, \ref{ass-Sigma-self-adjoint}, \ref{ass-closed-range} are in force, and that conditions \eqref{cond-C-zero} and \eqref{cond-drift-N} are satisfied for all $x \in \cald$ and all $u \in \caln_{\cald}^{1,\pr}(x)$. Then the generator $\call$ satisfies the positive maximum principle.
\end{proposition}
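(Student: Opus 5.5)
This follows the pattern of Proposition \ref{prop-pos-max-prin-sigma}: combine Lemma \ref{lemma-pos-max} with Proposition \ref{prop-gen-u-v}. By Lemma \ref{lemma-pos-max}, it suffices to show that \eqref{tr-inequ-for-pos-max-prin} holds for all $x \in \cald$ and all $(u,v) \in \caln_{\cald}^2(x)$. That inequality is exactly what Proposition \ref{prop-gen-u-v} delivers. Its statement nominally lists conditions \eqref{cond-sigma-zero} and \eqref{cond-drift-sigma-N}, but its proof only invokes \eqref{cond-C-zero} and \eqref{cond-drift-N}. These are precisely our present hypotheses, so I would read the hypotheses of Proposition \ref{prop-gen-u-v} in that form.

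I would first recheck the ingredients of that proof under the present assumptions (Assumptions \ref{ass-lin-growth}, \ref{ass-extension}, \ref{ass-Sigma-self-adjoint}, \ref{ass-closed-range}).

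\begin{enumerate}
\item Assumption \ref{ass-closed-range} guarantees that $\Sigma(x)^+$ and $C(x)^+$ exist. Assumption \ref{ass-Sigma-self-adjoint} gives $C(x) = \Sigma(x)^2$ on $\cald$.
\item Local invariance of $\cald$ for the control system \eqref{det-control-system} is Lemma \ref{lemma-det-control-system}. It uses the $C^2$ extension from Assumption \ref{ass-extension}, localized via Proposition \ref{prop-modify-fct-bdd}, together with \eqref{cond-C-zero} through Proposition \ref{prop-cond-vol}.
\item The second order expansion of $y(\sqrt{h})$ follows from Lemma \ref{lemma-derivatives-controll}.
\item The identities of Lemma \ref{lemma-series-1} and Lemma \ref{lemma-series-2} rewrite $\tr(vC(x))$ and the drift series $\sum_j \la u, DC^j(x)(CC^+)^j(x)\ra$ as sums over $j$ of the per-$j$ quantities. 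Summing the per-$j$ inequalities and adding \eqref{cond-drift-N} yields \eqref{tr-inequ-for-pos-max-prin} for $u \in \caln_{\cald}^{1,\pr}(x)$.
\end{enumerate}

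The extension from proximal normals $u$ to general $(u,v) \in \caln_{\cald}^2(x)$ uses Proposition \ref{prop-normal-inward-b-cont}. That proposition needs continuity of $x \mapsto \tr(vC(x))$ on $\cald$. This follows because $C : H \to L_1(H)$ is continuous (even $C^2$) and the trace is a continuous linear functional on $L_1(H)$ by Lemma \ref{lemma-trace-functional}.

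The only delicate point is the mismatch in the hypotheses as stated in Proposition \ref{prop-gen-u-v}. I would make sure that no step of its proof silently uses smoothness of $\sigma$, which is not assumed here. As far as I can see, every step uses only $C$ and $\Sigma(x)^+$, so the argument carries over verbatim. Once \eqref{tr-inequ-for-pos-max-prin} is established, Lemma \ref{lemma-pos-max} (via Lemma \ref{lemma-phi-in-N2}) gives the positive maximum principle for $\call$.
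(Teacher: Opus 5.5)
You follow the paper's route exactly: Lemma \ref{lemma-pos-max} plus Proposition \ref{prop-gen-u-v}, whose stated hypotheses are indeed misprinted and should read \eqref{cond-C-zero}, \eqref{cond-drift-N}. However, the extension step you reduce to a continuity check does not go through as justified. The written proofs of Propositions \ref{prop-gen-u-v-sigma} and \ref{prop-gen-u-v} end with the same appeal.

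To use Proposition \ref{prop-normal-inward-b-cont} with $c(x') := \frac{1}{2}\tr(vC(x'))$ for a \emph{fixed} $v$, you must verify its hypothesis (i): $\la u',b(x')\ra + c(x') \le 0$ for \emph{every} $x' \in \cald$ and \emph{every} $u' \in \caln_{\cald}^{1,\pr}(x')$. The trajectory argument only gives this when $(u',v) \in \caln_{\cald}^2(x')$, and in general (i) is false. Indeed, $u'=0$ is a proximal normal at every point. Such a $v$ can be $\Id$: for a closed ball $\cald$ of radius $r$, one has $(r\nu,\Id) \in \caln_{\cald}^2(x)$ at a boundary point $x$ with outward unit normal $\nu$. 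For $v = \Id$, hypothesis (i) would force $\tr(C(x')) \le 0$ for all $x' \in \cald$, i.e.\ $C \equiv 0$ on $\cald$. Running the trajectory argument directly at a Fr\'{e}chet normal that is not proximal does not help either, since \eqref{cond-drift-N} is only assumed on $\caln_{\cald}^{1,\pr}(x)$.

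The repair is that no extension is needed. Let $(u,v) \in \caln_{\cald}^2(x)$ and choose $\delta>0$ for $\epsilon = 1$ in Definition \ref{def-normal-cones}. Then $\la u,y-x\ra \le (1+\frac{1}{2}\|v\|)\|y-x\|^2$ for $y \in \cald$ with $\|y-x\|<\delta$. For $\|y-x\| \ge \delta$ we have $\la u,y-x\ra \le \|u\|\,\|y-x\| \le \delta^{-1}\|u\|\,\|y-x\|^2$. So Proposition \ref{prop-normal-2} yields $u \in \caln_{\cald}^p(x)$, i.e.\ $u = \lambda u_0$ with $\lambda \ge 0$ and $u_0 \in \caln_{\cald}^{1,\pr}(x)$. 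Conditions \eqref{cond-C-zero} and \eqref{cond-drift-N} are positively homogeneous in $u$. Together with $(u,v)\in\caln_{\cald}^2(x)$, they are the only properties of $u$ that the trajectory argument uses. Hence that argument gives \eqref{tr-inequ-for-pos-max-prin} directly for all $(u,v) \in \caln_{\cald}^2(x)$.

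One smaller point for your recheck concerns the second derivative: $y''(0) = \big(DC(x)[C(x)\Sigma(x)^+e_j]\big)\Sigma(x)^+e_j$, since $C$ is differentiated in the direction $y'(0)$. So $\la u,y''(0)\ra$ need not coincide termwise with the summand in the last identity of Lemma \ref{lemma-series-1}. Only the sums over $j$ agree: both equal $\tr\big(D(Cu)(x)P_C(x)\big)$, where $(Cu)(y) := C(y)u$. This follows by cyclicity of the trace, because $\Sigma(x)^+$ is self-adjoint of finite rank with $(\Sigma(x)^+)^2 = C(x)^+$.
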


\begin{proof}
This is an immediate consequence of Lemma \ref{lemma-pos-max} and Proposition \ref{prop-gen-u-v}.
\end{proof}

\section{The sufficiency proof}\label{sec-sufficiency}

In this section we prove that the positive maximum principle implies stochastic invariance. Throughout this section we consider the mathematical framework from Section \ref{sec-main-results}, and Assumptions \ref{ass-lin-growth}, \ref{ass-Heine-Borel} will always be in force. Moreover, we assume that the generator $\call$ satisfies the positive maximum principle. Let us point out that Assumptions \ref{ass-extension}, \ref{ass-Sigma-self-adjoint}, \ref{ass-closed-range} will not be required in this section.

Note that the Heine-Borel property (Assumption \ref{ass-Heine-Borel}) implies that $\cald$ is locally compact, and that the sets
\begin{align*}
\cald_n := \{ x \in \cald : \| x \| \leq n \}, \quad n \in \bbn
\end{align*}
are compact subsets of $H$. Recall that a continuous function $f : \cald \to \bbr$ \emph{vanishes at infinity} if for each $\epsilon > 0$ there exists a compact subset $K \subset \cald$ such that $| f(x) | < \epsilon$ for all $x \in \cald \setminus K$. We denote by $C_0(\cald)$ the Banach space of real-valued continuous functions $f : \cald \to \bbr$ vanishing at infinity, equipped with the supremum norm
\begin{align}\label{sup-norm}
\| f \|_{\infty} = \sup_{x \in \cald} | f(x) |, \quad f \in C_0(\cald).
\end{align}
For $k \in \bbn \cup \{ \infty \}$ we denote by $C_0^k(\cald)$ the space of all functions $\phi \in C_0(\cald)$ which can be extended to a function $\Phi \in C^k(H)$.

\begin{lemma}\label{lemma-domain-dense-pre}
The space $C_0^{\infty}(\cald)$ is dense in $C_0(\cald)$ with respect to the supremum norm \eqref{sup-norm}.
\end{lemma}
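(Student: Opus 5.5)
The plan is to use the Stone--Weierstrass theorem in its locally compact form: if $\cald$ is a locally compact Hausdorff space and $\mathcal{A} \subset C_0(\cald)$ is a subalgebra that separates points and vanishes nowhere (for every $x \in \cald$ some $f \in \mathcal{A}$ has $f(x) \neq 0$), then $\mathcal{A}$ is dense in $C_0(\cald)$ with respect to \eqref{sup-norm}. By Assumption \ref{ass-Heine-Borel}, $\cald$ is locally compact, and it is Hausdorff as a subspace of $H$. So it suffices to check three things: $C_0^{\infty}(\cald)$ is a subalgebra of $C_0(\cald)$, it separates points, and it vanishes nowhere.

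For the algebra property, take $\phi, \psi \in C_0^{\infty}(\cald)$ with extensions $\Phi, \Psi \in C^{\infty}(H)$. Then $\Phi + \Psi$ and $\Phi \Psi$ are $C^{\infty}$ on $H$; this uses the Leibniz rule, Proposition \ref{prop-Leibniz}, applied to the continuous bilinear multiplication $\bbr \times \bbr \to \bbr$. Their restrictions $\phi + \psi$ and $\phi\psi$ lie in $C_0(\cald)$. Closure under scalar multiples is trivial.

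Both separation and non-vanishing come from one family of functions that are smooth and have bounded support. Fix $\chi \in C^{\infty}(\bbr)$ with $\chi = 1$ on $(-\infty,1]$, $\chi = 0$ on $[4,\infty)$ and $0 \le \chi \le 1$. For $z \in H$ and $r > 0$ define
\begin{align*}
\Phi_{z,r}(y) := \chi\big( \| y - z \|^2 / r^2 \big), \quad y \in H.
\end{align*}
This is $C^{\infty}$ on $H$, because $y \mapsto \| y - z \|^2$ is a smooth quadratic polynomial. (Using $\|\cdot\|$ itself would fail to be smooth at $z$, which is why the squared norm is used.) The function $\Phi_{z,r}$ vanishes outside the closed ball $\bar{B}(z,2r)$. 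Its restriction to $\cald$ therefore vanishes outside $\cald \cap \bar{B}(z,2r)$, which is closed and bounded, hence compact by the Heine--Borel property. So $\Phi_{z,r}|_{\cald} \in C_0^{\infty}(\cald)$.

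With this family the remaining two conditions are immediate. For non-vanishing at $x \in \cald$, use $\Phi_{x,1}(x) = 1$. For separation of $x \neq x'$ in $\cald$, choose $r := \| x - x' \|/3$; then $\Phi_{x,r}(x) = 1$ while $\Phi_{x,r}(x') = \chi(9) = 0$.

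The only real difficulty is the vanishing-at-infinity requirement. In infinite dimensions, bounded smooth bump functions do not have compact support in $H$. This is exactly where Assumption \ref{ass-Heine-Borel} is needed: it guarantees that the trace of the support on $\cald$ is compact. Once that is settled, Stone--Weierstrass completes the proof.
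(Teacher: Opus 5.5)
Your proof is correct, but it takes a different route from the paper. The paper first extends $f$ to a bounded continuous $F$ on $H$ by the Tietze extension theorem. It then invokes the smooth approximation theorem of \cite{Azagra} for separable Hilbert spaces to obtain $\Psi_n \in C^{\infty}(H)$ with $|F(x) - \Psi_n(x)| \leq \frac{1}{n(1+\|x\|)}$. The weighted error bound, combined with compactness of the sets $\cald_m$, is what shows that $\Psi_n|_{\cald}$ vanishes at infinity.

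You instead verify the hypotheses of the locally compact Stone--Weierstrass theorem. This needs only the elementary smooth bumps $\chi(\|y-z\|^2/r^2)$, plus the Heine--Borel property to make their traces on $\cald$ compactly supported. Your route therefore avoids the deep infinite-dimensional approximation theorem entirely.

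It also yields more than the statement asks. Your bumps already lie in $C_c^{\infty}(\cald)$, and $C_c^{\infty}(\cald)$ is itself a subalgebra, since supports of sums and products lie in unions and intersections of the supports. So the same argument gives density of $C_c^{\infty}(\cald)$ in $C_0(\cald)$ directly. This makes the subsequent Lemma \ref{lemma-domain-dense} and its cutoff argument unnecessary.

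In return, the paper's approach gives a quantitative, weighted approximation of the extension $F$ on all of $H$, not only on $\cald$, and it does not rely on Stone--Weierstrass. For the later martingale-problem argument, however, only density in $C_0(\cald)$ is used, so your argument is the more economical one.
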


\begin{proof}
Let $f \in C_0(\cald)$ be arbitrary. By the Tietze extension theorem (see, e.g. \cite[Kor. B.1.6]{Werner}) there exists a bounded extension $F \in C(H)$ such that $\| f \|_{\infty} = \| F \|_{\infty}$. Let $n \in \bbn$ be arbitrary. By \cite[Thm. 1.1]{Azagra} there is a function $\Psi_n \in C^{\infty}(H)$ such that
\begin{align*}
| F(x) - \Psi_n(x) | \leq \frac{1}{n (1 + \| x \|)}, \quad x \in H.
\end{align*}
Setting $\psi_n := \Psi_n|_{\cald}$, we obtain
\begin{align*}
| f(x) - \psi_n(x) | \leq \frac{1}{n (1 + \| x \|)}, \quad  x \in \cald.
\end{align*}
It remains to show that $\psi_n \in C_0^{\infty}(\cald)$. For this purpose, let $\epsilon > 0$ be arbitrary. There is a compact subset $K \subset \cald$ such that
\begin{align*}
| f(x) | < \frac{\epsilon}{2}, \quad x \in \cald \setminus K.
\end{align*}
Furthermore, there exists an index $m \in \bbn$ such that
\begin{align*}
\frac{1}{n(1+m)} \leq \frac{\epsilon}{2}.
\end{align*}
We define the compact subset $C := K \cup \cald_m$. Then for all $x \in \cald \setminus C$ we obtain
\begin{align*}
| \psi_n(x) | \leq | f(x) | + | f(x) - \psi_n(x) | < \frac{\epsilon}{2} + \frac{1}{n (1 + \| x \|)} \leq \epsilon,
\end{align*}
completing the proof.
\end{proof}

Recall that the \emph{support} of a function $f : \cald \to \bbr$ is defined as
\begin{align*}
{\rm supp}(f) := \overline{\{ x \in \cald : f(x) \neq 0 \}}.
\end{align*}
We denote by $C_c(\cald)$ the space of all continuous functions $f : \cald \to \bbr$ with compact support. For $k \in \bbn \cup \{ \infty \}$ we denote by $C_c^k(\cald)$ the space of all functions $\phi \in C_c(\cald)$ which can be extended to a function $\Phi \in C^k(H)$. Obviously, we have $C_c(\cald) \subset C_0(\cald)$ and $C_c^k(\cald) \subset C_0^k(\cald)$ for each $k \in \bbn \cup \{ \infty \}$.

\begin{lemma}\label{lemma-domain-dense}
The space $C_c^{\infty}(\cald)$ is dense in $C_0(\cald)$ with respect to the supremum norm \eqref{sup-norm}.
\end{lemma}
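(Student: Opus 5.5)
The plan is to reduce the statement to Lemma \ref{lemma-domain-dense-pre} by multiplying with smooth cut-off functions. Since $C_c^{\infty}(\cald) \subset C_0(\cald)$, it suffices to approximate an arbitrary $f \in C_0(\cald)$. Given $\epsilon > 0$, Lemma \ref{lemma-domain-dense-pre} first yields $\psi \in C_0^{\infty}(\cald)$ with $\| f - \psi \|_{\infty} < \epsilon/2$, together with an extension $\Psi \in C^{\infty}(H)$. Because $\psi$ vanishes at infinity, there is a compact $K \subset \cald$ with $|\psi(x)| < \epsilon/2$ for $x \in \cald \setminus K$. Since $K$ is bounded, $K \subset \cald_m$ for some $m \in \bbn$.

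Next I fix a cut-off function. Take $\chi \in C^{\infty}(\bbr)$ with $0 \leq \chi \leq 1$, $\chi(s) = 1$ for $s \leq m^2$ and $\chi(s) = 0$ for $s \geq (m+1)^2$, and set $\rho(x) := \chi(\| x \|^2)$. This function is $C^{\infty}$ on $H$, since $x \mapsto \| x \|^2$ is a continuous quadratic form and hence smooth, so no smooth bump function on the Hilbert space is needed beyond this composition. Then $\Phi := \rho \Psi \in C^{\infty}(H)$, and its restriction $\phi := \Phi|_{\cald}$ is supported in $\cald_{m+1}$. That set is closed and bounded in $\cald$, hence compact by the Heine-Borel property (Assumption \ref{ass-Heine-Borel}). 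Therefore $\phi \in C_c^{\infty}(\cald)$.

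Finally I estimate $\| \psi - \phi \|_{\infty} = \sup_{x \in \cald} (1 - \rho(x)) |\psi(x)|$. This vanishes on $\cald_m \supset K$. Off $K$ it is bounded by $|\psi(x)| < \epsilon/2$. Hence $\| f - \phi \|_{\infty} < \epsilon$, which proves the density.

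There is no real obstacle here. The only points requiring care are that the support is compact, which is exactly where the Heine-Borel property enters, and that the cut-off is built through the norm squared rather than the norm, so that it is smooth at the origin.
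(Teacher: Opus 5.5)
Your proof is correct and follows essentially the same route as the paper. You reduce via Lemma \ref{lemma-domain-dense-pre} to approximating an element of $C_0^{\infty}(\cald)$ and multiply its smooth extension by a radial cut-off (the paper takes it from Lemma \ref{lemma-bump-r-R}, which is your $\chi(\|x\|^2)$ construction); the Heine-Borel property then gives compact support, and vanishing at infinity controls the error off $\cald_m$ — the paper only phrases this as density of $C_c^{\infty}(\cald)$ in $C_0^{\infty}(\cald)$ via a sequence $\phi_n$ rather than in a single $\epsilon/2$ step.
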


\begin{proof}
In view of Lemma \ref{lemma-domain-dense-pre} it suffices to prove that $C_c^{\infty}(\cald)$ is dense in $C_0^{\infty}(\cald)$. Let $f \in C_0^{\infty}(\cald)$ be arbitrary. Furthermore, let $n \in \bbn$ be arbitrary. By Lemma \ref{lemma-bump-r-R} there is a function $\varphi_n : H \to [0,1]$ of class $C^{\infty}$ such that $\varphi_n(x) = 1$ for all $x \in H$ with $\| x \| \leq n$ and $\varphi_n(x) = 0$ for all $x \in H$ with $\| x \| \geq n+1$. We define $\Phi_n := \varphi_n \cdot f : H \to \bbr$, which is also of class $C^{\infty}$ due to Proposition \ref{prop-Leibniz}. Moreover, we set $\phi_n := \Phi_n|_{\cald} : \cald \to \bbr$. Then we have $\phi_n(x) = 0$ for all $x \in \cald \setminus \cald_{n+1}$, showing that $\phi_n$ has compact support.

Now, let $\epsilon > 0$ be arbitrary. There exists a compact subset $K \subset \cald$ such that
\begin{align}\label{f-on-comp}
|f(x)| < \epsilon, \quad x \in \cald \setminus K.
\end{align}
Since $K$ is compact, it is also bounded. Hence, there exists $n \in \bbn$ such that $K \subset \cald_{n}$. Let $x \in \cald$ be arbitrary. If $\| x \| > n$, then we have $x \in \cald \setminus \cald_n$, and by \eqref{f-on-comp} we obtain
\begin{align*}
| f(x) - \phi_n(x) | = |f(x) - \varphi_n(x) f(x)| = |1 - \varphi_n(x)| \cdot |f(x)| < \epsilon.
\end{align*}
Furthermore, if $\| x \| \leq n$, then we have $| f(x) - \phi_n(x) | = 0$, completing the proof.
\end{proof}

Recall that the generator $\call : C^2(H) \to C(H)$ is given by \eqref{generator-def}. In what follows, we will consider the generator $\call$ on the domain $\cald(\call) := C_c^2(\cald)$. This is done as follows. We define the linear operator $\call : \cald(\call) \to C(\cald)$ as
\begin{align*}
\call \phi := \call \Phi|_{\cald}, \quad \phi \in  \cald(\call),
\end{align*}
where $\Phi \in C^2(H)$ denotes any extension of $\phi$. Since $\call$ satisfies the positive maximum principle, this definition does not depend on the choice of the extension $\Phi$. More precisely, we have the following auxiliary result.

\begin{lemma}
Let $\phi \in \cald(\call)$ be arbitrary, and let $\Phi, \Psi \in C^2(H)$ be two extensions of $\phi$. Then we have $\call \Phi|_{\cald} = \call \Psi|_{\cald}$.
\end{lemma}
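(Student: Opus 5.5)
The plan is to apply the positive maximum principle (Definition \ref{def-pos-max}) to the difference of the two extensions, once with each sign. Set $\Theta := \Phi - \Psi \in C^2(H)$. Since $\Phi$ and $\Psi$ both extend $\phi$, the function $\Theta$ vanishes identically on $\cald$. The generator $\call$ is linear on $C^2(H)$, because both terms in \eqref{generator-def} are linear in the function. Hence $\call \Phi - \call \Psi = \call \Theta$, and it suffices to show that $\call \Theta(x) = 0$ for every $x \in \cald$.

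Fix $x \in \cald$. Because $\Theta|_{\cald} \equiv 0$, we have $\max_{\cald} \Theta = 0 = \Theta(x) \geq 0$. The positive maximum principle therefore gives $\call \Theta(x) \leq 0$. The function $-\Theta$ is also of class $C^2$ and vanishes on $\cald$, so the same reasoning yields $\call(-\Theta)(x) \leq 0$. By linearity this means $-\call \Theta(x) \leq 0$. Combining the two inequalities gives $\call \Theta(x) = 0$, and hence $\call \Phi(x) = \call \Psi(x)$.

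There is no real obstacle here. The only points to check are the following:
\begin{itemize}
\item Definition \ref{def-pos-max} asks for an arbitrary $C^2$ function whose maximum over $\cald$ is attained at $x$ with a nonnegative value, and $\pm\Theta$ qualify because their maximum over $\cald$ is exactly $0$ and is attained at every point.
\item Linearity of $\call$, which is immediate from \eqref{generator-def}.
\end{itemize}
No compactness of the support or Heine--Borel property is needed for this step.
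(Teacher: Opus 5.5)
Your proof is correct and follows the paper's argument: the paper likewise applies the positive maximum principle to $\Phi - \Psi$ and $\Psi - \Phi$, both of which vanish on $\cald$, and combines the two inequalities. Using $\pm\Theta$ and invoking the linearity of $\call$ explicitly, as you do, is just a cleaner write-up of the same step.
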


\begin{proof}
Define $f,g \in C^2(H)$ as $f := \Phi - \Psi$ and $g := \Psi - \Phi$. Then we have $f|_{\cald} = 0$ and $g|_{\cald} = 0$. Now, let $x \in \cald$ be arbitrary. Then we have $\displaystyle\max_{\mathcal{D}} f = f(x) \geq 0$ and $\displaystyle\max_{\mathcal{D}} g = g(x) \geq 0$. Since the generator $\call$ satisfies the positive maximum principle, we obtain $\call f(x) \leq 0$ and $\call g(x) \leq 0$, and hence $\call \Phi(x) = \call \Psi(x)$.
\end{proof}

The next auxiliary result shows that the generator is a linear operator $\call : \cald(\call) \to C_c(\cald)$.

\begin{lemma}\label{lemma-L-in-C-c}
We have $\call(\cald(\call)) \subset C_c(\cald)$.
\end{lemma}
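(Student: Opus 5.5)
We have to show that for every $\phi \in \cald(\call) = C_c^2(\cald)$ the function $\call \phi = \call \Phi|_{\cald}$ is continuous on $\cald$ and has compact support. Continuity is immediate: $\call \Phi \in C(H)$ because $b$ and $\sigma$ are continuous (Assumption \ref{ass-lin-growth}), $\Phi$ is $C^2$, and $x \mapsto \Sigma(x)\Sigma(x)^*$ is continuous into $L_1(H)$. The substance is therefore the support statement. We will show $\supp(\call\phi) \subset \supp(\phi)$, with a slight enlargement only if needed.

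The approach exploits the freedom in choosing the extension, which the preceding lemma guarantees. Let $K := \supp(\phi)$, a compact subset of $\cald$. Fix $x \in \cald \setminus K$. Since $K$ is closed in $\cald$ and $\cald$ is closed in $H$, the set $K$ is closed in $H$, so $\delta := d(x,K) > 0$. By Lemma \ref{lemma-bump-r-R} (translated and rescaled) we pick a smooth $\chi : H \to [0,1]$ with $\chi = 1$ on $\{ y : \| y - x \| \geq \delta/2 \}$ and $\chi = 0$ on $\{ y : \| y - x \| \leq \delta/4 \}$. Then $\Psi := \chi \Phi$ is of class $C^2$ by Proposition \ref{prop-Leibniz}. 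On $\cald$ it agrees with $\phi$: near $x$ we have $\phi = 0$ anyway, since the ball of radius $\delta/2$ around $x$ misses $K$, and elsewhere $\chi = 1$. So $\Psi$ is another extension of $\phi$. Because $\Psi$ vanishes on the open ball $B(x,\delta/4)$, all its derivatives vanish at $x$, and hence $\call \Psi(x) = 0$. By the extension-independence lemma, $\call\phi(x) = \call\Psi(x) = 0$.

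It follows that $\{ x \in \cald : \call\phi(x) \neq 0 \} \subset K$. Taking closures gives $\supp(\call\phi) \subset K$, which is compact, so $\call\phi \in C_c(\cald)$. The argument is simply the locality of a differential operator combined with the well-definedness lemma; we do not even need the Heine-Borel property beyond the given compactness of $K$.

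The only step needing care is checking that $\Psi$ really restricts to $\phi$ on $\cald$ at points of $\cald$ that lie in the region where $0 < \chi < 1$. These points lie in $B(x,\delta/2)$, which is disjoint from $K$, so $\phi = 0$ there and $\Psi = 0 = \phi$ as well. With that verified, the proof is complete.
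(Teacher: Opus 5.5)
Your proof is correct. It differs from the paper's in how the cutoff is used.

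The paper uses one global radial cutoff. It picks $n$ with $\supp(\phi)\subset\cald_n$ and multiplies the given extension $\Phi$ by a smooth $\varphi_n$ that equals $1$ on $\{\|y\|\le n\}$ and $0$ on $\{\|y\|\ge n+1\}$. It notes that $\varphi_n\Phi$ is again an extension of $\phi$, so $\call\phi$ vanishes outside $\cald_{n+1}$. Finally it uses the Heine--Borel property (Assumption~\ref{ass-Heine-Borel}) to conclude that $\cald_{n+1}$ is compact.

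You instead take a separate local cutoff around each point $x\in\cald\setminus\supp(\phi)$. This gives the sharper locality statement $\supp(\call\phi)\subset\supp(\phi)$. It also avoids Heine--Borel entirely, because compactness of $\supp(\phi)$ is already part of the hypothesis $\phi\in C_c^2(\cald)$.

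Both arguments rest on the same extension-independence lemma, and hence on the positive maximum principle. You need a different auxiliary extension for each point, whereas the paper needs only one, so its version is slightly shorter. Since Heine--Borel is a standing assumption in that section, the gain does not matter for the paper. Still, your argument shows the inclusion $\call(\cald(\call))\subset C_c(\cald)$ holds for any closed $\cald$ once $\call$ satisfies the positive maximum principle.

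One trivial edge case: if $\supp(\phi)=\emptyset$, then $d(x,K)=\infty$, and any finite $\delta>0$ works in place of $\delta=d(x,K)$.
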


\begin{proof}
Let $\phi \in \cald(\call)$ be arbitrary, and let $\Phi \in C^2(H)$ be an extension of $\phi$. Since $\supp(\phi)$ is compact, it is also bounded. Hence, there exists an index $n \in \bbn$ such that $\supp(\phi) \subset \cald_n$. By Lemma \ref{lemma-bump-r-R} there is a function $\varphi_n : H \to [0,1]$ of class $C^{\infty}$ such that $\varphi_n(x) = 1$ for all $x \in H$ with $\| x \| \leq n$ and $\varphi_n(x) = 0$ for all $x \in H$ with $\| x \| \geq n+1$. We define $\Psi := \varphi_n \cdot \Phi : H \to \bbr$, which is of class $C^2(H)$ due to Proposition \ref{prop-Leibniz}. The mapping $\Psi$ is also and extension of $\phi$, and the support of $\call \phi = \call \Psi|_{\cald}$ is contained in $\cald_{n+1}$.
\end{proof}

Since $\cald$ is locally compact, we can consider the one-point compactification $\cald^{\Delta} := \cald \cup  \{ \Delta \}$, where $\Delta$ denotes the point at infinity. Then $\cald^{\Delta}$ is a compact Hausdorff space; see \cite[p. 185, 186]{Cohn}. Furthermore, by \cite[Prop. 7.1.13 and Lemma 7.1.14]{Cohn} the one-point compactification $\cald^{\Delta}$ is metrizable. For a function $\cald \to \bbr$ we have $f \in C_0(\cald)$ if and only if the extension $f^{\Delta} : \cald^{\Delta} \to \bbr$ defined by
\begin{align*}
f^{\Delta}(x) =
\begin{cases}
f(x), & \text{if } x \in \cald,
\\ 0, & \text{if } x = \Delta,
\end{cases}
\end{align*}
is continuous; see p. 206, Exercise 3 in \cite{Cohn}. Therefore the following auxiliary result holds true.

\begin{lemma}\label{lemma-compact-continuous}
Let $f \in C_0(\cald)$ be arbitrary, and let $f^{\Delta} : \cald^{\Delta} \to \bbr$ be an extension of $f$. Then we have $f^{\Delta} \in C(\cald^{\Delta})$ if and only if $f^{\Delta}(\Delta) = 0$.
\end{lemma}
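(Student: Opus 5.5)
The statement follows from the characterization of $C_0(\cald)$ recorded just before it: a function $f : \cald \to \bbr$ lies in $C_0(\cald)$ if and only if its extension by $0$ at $\Delta$ is continuous on $\cald^{\Delta}$. So the plan is to reduce both directions to this fact. We also use that $\cald$ is open in $\cald^{\Delta}$, so continuity of $f^{\Delta}$ at points of $\cald$ is equivalent to continuity of $f$ there. The only real issue is therefore continuity at $\Delta$.

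For the direction ``$\Leftarrow$'', suppose $f^{\Delta}(\Delta) = 0$. Then $f^{\Delta}$ coincides with the extension by zero of $f$. Since $f \in C_0(\cald)$, the cited exercise from \cite{Cohn} gives $f^{\Delta} \in C(\cald^{\Delta})$.

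For ``$\Rightarrow$'', suppose $f^{\Delta}$ is continuous, and let $\tilde f$ denote the extension of $f$ by zero, which is continuous by the same exercise. The two functions $f^{\Delta}$ and $\tilde f$ agree on $\cald$.

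The one point to check is that $\cald$ is dense in $\cald^{\Delta}$, i.e. that $\Delta$ is not isolated. This holds exactly when $\cald$ is not compact. In that case, two continuous functions into the Hausdorff space $\bbr$ that agree on a dense set agree everywhere, so $f^{\Delta}(\Delta) = \tilde f(\Delta) = 0$.

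If $\cald$ is compact, $\Delta$ is an isolated point. Then any value $f^{\Delta}(\Delta)$ gives a continuous extension, and the ``only if'' direction fails as literally stated. I expect this to be the main obstacle. I would handle it by noting that in the paper's setting the relevant extensions are those in $C_0$-type spaces on $\cald^{\Delta}$, or equivalently by restricting to noncompact $\cald$. Alternatively I would phrase the lemma with the convention that functions on $\cald^{\Delta}$ are required to vanish at $\Delta$, which is how it is used later for the Feller semigroup construction. Apart from this remark, the argument consists only of the density/uniqueness step above.
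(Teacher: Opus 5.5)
Your proof is correct and follows the paper, which simply derives the lemma from the cited exercise in \cite{Cohn}; your density/uniqueness step is exactly what that citation leaves implicit for the ``only if'' direction. Your caveat is also right: for compact $\cald$ (which the paper allows), the set $\{\Delta\} = \cald^{\Delta} \setminus \cald$ is open, so e.g.\ $f \equiv 0$ with $f^{\Delta}(\Delta) = 1$ is continuous and the ``only if'' part (and with it the identity \eqref{domain-Delta}) fails as stated; this is harmless, since the later arguments only use the ``if'' direction once $\cald(\call^{\Delta})$ is defined as the right-hand side of \eqref{domain-Delta} or $\cald$ is assumed noncompact.
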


Now we extend the generator $\call$ to a linear operator $\call^{\Delta} : \cald(\call^{\Delta}) \to C(\cald^{\Delta})$ as follows. First, we introduce the domain
\begin{align*}
\cald(\call^{\Delta}) := \{ f \in C(\cald^{\Delta}) : f|_{\cald} \in \cald(\call) \}.
\end{align*}
Note that by Lemma \ref{lemma-compact-continuous} we have
\begin{align}\label{domain-Delta}
\cald(\call^{\Delta}) = \{ f : \cald^{\Delta} \to \bbr : f|_{\cald} \in \cald(\call) \text{ and } f(\Delta) = 0 \}.
\end{align}
Furthermore, for each $f \in \cald(\call^{\Delta})$ we set
\begin{align*}
(\call^{\Delta} f)(x) :=
\begin{cases}
(\call f)(x), & x \in \cald,
\\ 0, & x = \Delta.
\end{cases}
\end{align*}
Then by Lemma \ref{lemma-compact-continuous} and Lemma \ref{lemma-L-in-C-c} we have indeed $\call^{\Delta}(\cald(\call^{\Delta})) \subset C(\cald^{\Delta})$.

Recall that a $\cald^{\Delta}$-valued adapted process $X$ on some stochastic basis is called a \emph{solution of the martingale problem for $\call^{\Delta}$} if for each $f \in \cald(\call^{\Delta})$ the process
\begin{align*}
f(X_t) - \int_0^t (\call^{\Delta} f)(X_s) ds, \quad t \in \bbr_+
\end{align*}
is a martingale. If additionally $\bbp \circ X_0^{-1} = \nu$ for some probability measure $\nu \in \calp(\cald^{\Delta})$, then we say that $X$ is a \emph{solution of the martingale problem for $(\call^{\Delta},\nu)$}.

\begin{proposition}\label{prop-mart-sol-1}
For every probability measure $\nu \in \calp(\cald^{\Delta})$ there exists a $\cald^{\Delta}$-valued c\`{a}dl\`{a}g solution of the martingale problem for $(\call^{\Delta},\nu)$.
\end{proposition}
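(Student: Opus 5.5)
The plan is to invoke the classical existence theorem for martingale problems on compact metrizable spaces, namely Ethier--Kurtz \cite[Thm.~4.5.4]{EK}. It says the following. Let $E$ be a compact metric space. Let $A \subset C(E) \times C(E)$ be a linear operator whose domain is dense in $C(E)$, and suppose $A$ satisfies the positive maximum principle. Then for every $\nu \in \calp(E)$ there exists a solution of the martingale problem for $(A,\nu)$ with sample paths in $D_E[0,\infty)$. Some versions additionally require $(1,0) \in A$, or a conservative linear span. Since $\cald^{\Delta}$ is a compact metrizable Hausdorff space, as noted above, the task reduces to checking three things for $\call^{\Delta}$: that it is linear, that its domain is dense, and that it satisfies the positive maximum principle on $\cald^{\Delta}$.

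\emph{Density.} I would use Lemma~\ref{lemma-domain-dense}, which gives density of $C_c^\infty(\cald) \subset C_c^2(\cald) = \cald(\call)$ in $C_0(\cald)$. By \eqref{domain-Delta}, the domain $\cald(\call^\Delta)$ corresponds exactly to those functions vanishing at $\Delta$. Its closure in $C(\cald^\Delta)$ is therefore $\{f \in C(\cald^\Delta) : f(\Delta)=0\}$, which has codimension one. To obtain a dense domain I would enlarge the operator by adding the constants, setting $\call^\Delta 1 := 0$. The linear span $\cald(\call^\Delta) \oplus \bbr$ is then dense in $C(\cald^\Delta)$, because any $g \in C(\cald^\Delta)$ can be written as $g = (g - g(\Delta)) + g(\Delta)$. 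This extension is consistent: on $\cald$ the generator kills constants, since $\call 1 = 0$ by \eqref{generator-def}. A solution of the martingale problem for the enlarged operator is in particular a solution for $\call^\Delta$, so nothing is lost.

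\emph{Positive maximum principle on $\cald^\Delta$.} Take $f = \phi + c$ with $\phi \in \cald(\call^\Delta)$ and $c \in \bbr$. Suppose $f$ attains a nonnegative maximum over $\cald^\Delta$ at a point $x_0$. There are two cases.
\begin{itemize}
\item If $x_0 = \Delta$, then $\call^\Delta f(\Delta) = 0 \le 0$ by definition.
\item If $x_0 \in \cald$, let $\Phi \in C^2(H)$ extend $\phi|_{\cald}$. Then $\max_{\cald}(\Phi + c) = \Phi(x_0) + c \ge 0$. Definition~\ref{def-pos-max} applied to the $C^2$ function $\Phi + c$ gives $\call(\Phi + c)(x_0) = \call\Phi(x_0) \le 0$, which is exactly $\call^\Delta f(x_0) \le 0$.
\end{itemize}
Linearity of $\call^\Delta$ is immediate. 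It is well defined because, by the preceding lemma, $\call\phi$ does not depend on the chosen extension.

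\emph{Main obstacle.} The delicate point is matching the exact hypotheses of the chosen existence theorem. One issue is whether it demands that the domain be dense with constants included, which the enlargement above handles. Another is whether it produces a c\`adl\`ag $\cald^\Delta$-valued process directly, or only a process with sample paths in $D_{\cald^\Delta}[0,\infty)$ after passing to a modification. In \cite[Thm.~4.5.4]{EK} the conclusion is a solution with sample paths in $D_E[0,\infty)$ for compact $E$, provided $A$ satisfies the positive maximum principle and has dense domain. These are precisely the properties verified above, so the claim should follow once the compactness and metrizability of $\cald^\Delta$, from \cite{Cohn}, are invoked.
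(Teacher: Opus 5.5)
Your proposal is correct and is essentially the paper's proof: both combine the density from Lemma~\ref{lemma-domain-dense} with the positive maximum principle and then invoke \cite[Thm.~4.5.4]{EK}. The only difference is that you adjoin the constants and check the positive maximum principle on $\cald^{\Delta}$ by hand, whereas the paper applies the cited theorem directly to $\call$ on $C_0(\cald)$ via Lemma~\ref{lemma-compact-continuous} and leaves the passage to $\cald^{\Delta}$ to that theorem. Adjoining the constants is genuinely needed for the compact-space form you quote, since on a compact $E$ a dense domain and the positive maximum principle alone do not guarantee a solution (e.g.\ $Af=-f$ on $C(E)$).
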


\begin{proof}
By Lemma \ref{lemma-domain-dense} the domain $\cald(\call)$ is dense in $C_0(\cald)$. Moreover, the linear operator $\call$ satisfies the positive maximum principle. Therefore, taking account Lemma \ref{lemma-compact-continuous}, by \cite[Thm. 4.5.4]{EK} the statement follows.
\end{proof}

For convenience of the reader, we will briefly recall some further notions. For this purpose, let $S$ be a metric space. A sequence $(f_n)_{n \in \bbn} \subset B(S)$ of bounded functions is said to converge \emph{boundedly and pointwise} to a bounded function $f \in B(S)$ if $\sup_{n \in \bbn} \| f_n \|_{\infty} < \infty$ and $\lim_{n \to \infty} f_n(x) = f(x)$ for every $x \in S$; we denote this by
\begin{align*}
\text{bp-lim}_{n \to \infty} f_n = f.
\end{align*}
A subset $M \subset B(S) \times B(S)$ is called \emph{bp-closed} if for all sequences $(f_n,g_n)_{n \in \bbn}$ and all $(f,g) \in B(S) \times B(S)$ such that $\text{bp-lim}_{n \to \infty} f_n = f$ and $\text{bp-lim}_{n \to \infty} g_n = g$ we have $(f,g) \in M$. The \emph{bp-closure} of $M \subset B(S) \times B(S)$ is the smallest bp-closed subset of $B(S) \times B(S)$ that contains $M$.

As in \cite[Sec. 1.4]{EK}, we will consider $\call^{\Delta}$ as the multivalued linear operator given by its graph
\begin{align*}
\call^{\Delta} = \{ (f,\call^{\Delta} f) : f \in \cald(\call^{\Delta}) \}.
\end{align*}

\begin{lemma}\label{lemma-bp-closure}
The pair $(\bbI_{\cald},0)$ is in the bp-closure of $\call^{\Delta} \cap (C(\cald^{\Delta}) \times B(\cald^{\Delta}))$.
\end{lemma}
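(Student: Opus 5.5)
We construct an explicit sequence $(f_n,\call^{\Delta} f_n)$ with $f_n \in \cald(\call^{\Delta})$ that converges boundedly and pointwise to $(\bbI_{\cald},0)$. The natural choice is cutoffs. By Lemma \ref{lemma-bump-r-R}, for each $n \in \bbn$ there is a $C^{\infty}$ function $\varphi_n : H \to [0,1]$ with $\varphi_n(x)=1$ for $\|x\| \leq n$ and $\varphi_n(x)=0$ for $\|x\| \geq 2n$ (or $n+1$ suitably rescaled). We set $f_n := \varphi_n|_{\cald}$ on $\cald$ and $f_n(\Delta):=0$. Since $\supp(f_n|_{\cald}) \subset \cald_{2n}$ is compact by the Heine-Borel property, we have $f_n|_{\cald} \in C_c^2(\cald) = \cald(\call)$, and hence $f_n \in \cald(\call^{\Delta})$ by \eqref{domain-Delta}. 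Pointwise, $f_n \to \bbI_{\cald}$ on $\cald^{\Delta}$ (the value at $\Delta$ is $0$ for all $n$), and $\|f_n\|_\infty \leq 1$.

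Next we need $\call^{\Delta} f_n \to 0$ boundedly and pointwise. At $\Delta$ this holds trivially. For $x \in \cald$, the value $\call\varphi_n(x)$ vanishes as soon as $\|x\| < n$, because $\varphi_n \equiv 1$ on a neighbourhood of $x$, so $D\varphi_n(x)=0$ and $D^2\varphi_n(x)=0$. This gives pointwise convergence. For uniform boundedness, Lemma \ref{lemma-generator-growth} together with the linear growth \eqref{linear-growth} gives
\[
|\call \varphi_n(x)| \leq \|D\varphi_n(x)\|\, L(1+\|x\|) + \tfrac12 \|D^2\varphi_n(x)\|\, L^2(1+\|x\|)^2 .
\]
Nonzero contributions only occur for $n \leq \|x\| \leq 2n$. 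It therefore suffices to have $\sup_x \|D\varphi_n(x)\| \leq K/n$ and $\sup_x \|D^2\varphi_n(x)\| \leq K/n^2$ with a constant $K$ independent of $n$.

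The main obstacle is obtaining this scaling from the bump function lemma. The plan is to take one fixed $C^\infty$ function $\chi : \bbr \to [0,1]$ with $\chi = 1$ on $(-\infty,1]$ and $\chi = 0$ on $[4,\infty)$, and to set $\varphi_n(x) := \chi(\|x\|^2/n^2)$. This is $C^\infty$ on $H$ because $x \mapsto \|x\|^2$ is smooth. Differentiating gives
\[
D\varphi_n(x) = \tfrac{2}{n^2}\chi'(\cdot)\, x, \qquad D^2\varphi_n(x) = \tfrac{4}{n^4}\chi''(\cdot)\, x\otimes x + \tfrac{2}{n^2}\chi'(\cdot)\,\Id .
\]
On the region $\|x\| \leq 2n$ this yields $\|D\varphi_n\| \leq 4\|\chi'\|_\infty/n$ and $\|D^2\varphi_n\| \leq (16\|\chi''\|_\infty + 2\|\chi'\|_\infty)/n^2$. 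Combined with $1+\|x\| \leq 1+2n \leq 3n$, the growth bound becomes uniformly bounded in $n$ and $x$. Hence $\call^{\Delta} f_n$ converges boundedly and pointwise to $0$, which proves that $(\bbI_{\cald},0)$ lies in the bp-closure.
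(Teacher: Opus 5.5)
Your proposal is correct and follows essentially the same argument as the paper: radial cutoffs equal to $1$ on $\{\|x\| \le n\}$ and $0$ on $\{\|x\| \ge 2n\}$, whose first and second derivatives are $O(1/n)$ and $O(1/n^2)$ on the annulus, which offsets the linear growth of $b$ and $\sigma$ and gives a uniform bound on the generator. The only difference is cosmetic. The paper composes a fixed profile with $\|x\|/n$ and needs the bound $\|D^2\eta(x)\| \le 2/\|x\|$ for the norm function $\eta(x)=\|x\|$, whereas your choice $\chi(\|x\|^2/n^2)$ is smooth on all of $H$ and avoids that estimate.
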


\begin{proof}
There is a function $f \in C^{\infty}(\bbr;[0,1])$ such that $f(y) = 1$ for all $y \leq 1$ and $f(y) = 0$ for all $y \geq 2$. For each $n \in \bbn$ we define $\ell_n \in L(\bbr)$ as
\begin{align*}
\ell_n(y) := \frac{y}{n}, \quad y \in \bbr
\end{align*}
and $f_n := f \circ \ell_n$. Then we have $f_n \in C^{\infty}(\bbr;[0,1])$ with $f_n(y) = 1$ for all $y \leq n$ and $f_n(y) = 0$ for all $y \geq 2n$ as well as
\begin{align}\label{fn-diff}
f_n(y) = f \Big( \frac{y}{n} \Big), \quad f_n'(y) = \frac{1}{n} \cdot f' \Big( \frac{y}{n} \Big), \quad f_n''(y) = \frac{1}{n^2} \cdot f'' \Big( \frac{y}{n} \Big)
\end{align}
for all $y \in \bbr$. The norm function $\eta : H \setminus \{ 0 \} \to (0,\infty)$ given by
\begin{align*}
\eta(x) := \| x \|, \quad x \in H \setminus \{ 0 \}
\end{align*}
is of class $C^{\infty}$ with first and second order derivatives
\begin{align*}
D \eta(x)v &= \frac{\la x,v \ra}{\| x \|}, \quad x \in H \setminus \{ 0 \} \text{ and } v \in H,
\\ D^2 \eta(x)(v,w) &= \frac{\la v,w \ra}{\| x \|} - \frac{\la x,v \ra \la x,w \ra}{\| x \|^3}, \quad x \in H \setminus \{ 0 \} \text{ and } v,w \in H.
\end{align*}
Therefore, for all $x \in H \setminus \{ 0 \}$ we have
\begin{align}\label{eta-est}
\| D \eta(x) \| \leq 1 \quad \text{and} \quad \| D^2 \eta(x) \| \leq \frac{2}{\| x \|}.
\end{align}
For each $n \in \bbn$ we define $\Phi_n : H \to \bbr$ as
\begin{align*}
\Phi_n(x) := f_n(\| x \|), \quad x \in H.
\end{align*}
Then we have $\Phi_n \in C^{\infty}(H)$ with $\Phi_n(x) = 1$ for all $x \in H$ with $\| x \| \leq n$ and $\Phi_n(x) = 0$ for all $x \in H$ with $\| x \| \geq 2n$. In particular, it follows that
\begin{align}\label{bp-Phi-1}
\| \Phi_n \|_{\infty} \leq 1, \quad  n \in \bbn.
\end{align}
Furthermore, by the first and second order chain rules, for all $x \in H \setminus \{ 0 \}$ we obtain
\begin{align*}
D(f_n \circ \eta)(x) &= Df_n(\eta(x)) D \eta(x) = f_n'(\eta(x)) D \eta(x),
\\ D^2(f_n \circ \eta)(x) &= D^2 f_n(\eta(x)) \circ ( D \eta(x), D \eta(x) ) + Df_n(\eta(x)) \circ D^2 \eta(x).
\end{align*}
Therefore, by \eqref{eta-est} and \eqref{fn-diff}, for all $x \in H \setminus \{ 0 \}$ we have the estimates
\begin{align*}
\| D(f_n \circ \eta)(x) \| &\leq | f_n'(\eta(x)) | = \frac{1}{n} \bigg| f' \bigg( \frac{\| x \|}{n} \bigg) \bigg|,
\\ \| D^2(f_n \circ \eta)(x) \| &\leq | f_n''(\eta(x)) | + | f_n'(\eta(x)) | \cdot \frac{2}{\| x \|}
\\ &= \frac{1}{n^2} \bigg| f'' \bigg( \frac{\| x \|}{n} \bigg) \bigg| + \frac{2}{n \| x \|} \bigg| f' \bigg( \frac{\| x \|}{n} \bigg) \bigg|.
\end{align*}
Thus, using the linear growth condition \eqref{linear-growth-2}, for all $x \in H$ with $n \leq \| x \| \leq 2n$ we obtain
\begin{align*}
\| D\Phi_n(x)b(x) \| \leq \| D\Phi_n(x) \| \, \| b(x) \| \leq \frac{\| f' \|_{\infty}}{n} L (1 + 2n) \leq 3 L \| f' \|_{\infty}
\end{align*}
as well as
\begin{align*}
\| D^2 \Phi_n(x) C(x) \| &\leq \| D^2 \Phi_n(x) \| \, \| C(x) \| \leq \bigg( \frac{\| f'' \|_{\infty}}{n^2} + \frac{2\| f' \|_{\infty}}{n \| x \|} \bigg) \| \Sigma(x) \|^2
\\ &\leq \bigg( \frac{\| f'' \|_{\infty}}{n^2} + \frac{2\| f' \|_{\infty}}{n^2} \bigg) L^2(1 + 2n)^2 \leq 9L^2 \big( \| f'' \|_{\infty} + 2\| f' \|_{\infty} \big).
\end{align*}
Consequently, there is a constant $M > 0$ such that
\begin{align}\label{bp-Phi-2}
\| \call \Phi_n \|_{\infty} \leq M, \quad n \in \bbn.
\end{align}
Now, we set $\phi_n := \Phi_n|_{\cald} : \cald \to \bbr$ for each $n \in \bbn$. Then we have $\phi_n(x) = 0$ for all $x \in \cald \setminus \cald_{2n}$, and hence $\phi_n \in \cald(\call)$. Finally, we define the extension $\phi_n^{\Delta} : \cald^{\Delta} \to \bbr$ as
\begin{align*}
\phi_n^{\Delta}(x) :=
\begin{cases}
\phi_n(x), & \text{if $x \in \cald$,}
\\ 0, & \text{if $x = \Delta$.}
\end{cases}
\end{align*}
Then by \eqref{domain-Delta} we have $\phi_n^{\Delta} \in \cald(\call^{\Delta})$ for each $n \in \bbn$. Moreover, by \eqref{bp-Phi-1} and \eqref{bp-Phi-2} we obtain
\begin{align*}
\text{bp-}\lim_{n \to \infty} \phi_n^{\Delta} = \bbI_{\cald} \quad \text{and} \quad \text{bp-}\lim_{n \to \infty} \call^{\Delta} \phi_n^{\Delta} = 0,
\end{align*}
completing the proof.
\end{proof}

\begin{proposition}\label{prop-mart-sol-2}
For every probability measure $\nu \in \calp(\cald)$ there exists a $\cald$-valued c\`{a}dl\`{a}g solution of the martingale problem for $(\call,\nu)$.
\end{proposition}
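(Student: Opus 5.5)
This follows the standard Ethier--Kurtz route for removing the cemetery point. Given $\nu \in \calp(\cald)$, regard it as a measure on $\cald^{\Delta}$ with $\nu(\{\Delta\}) = 0$. By Proposition \ref{prop-mart-sol-1} there exists a $\cald^{\Delta}$-valued c\`{a}dl\`{a}g solution $X$ of the martingale problem for $(\call^{\Delta},\nu)$. It remains to show that $X$ almost surely never visits $\Delta$, that is, $\bbp(X_t \in \cald \text{ for all } t \geq 0) = 1$. Once this is shown, $X$ is a $\cald$-valued c\`{a}dl\`{a}g process. Moreover, $f(X_t) - \int_0^t \call f(X_s)\,ds$ is a martingale for every $f \in \cald(\call)$, because $f^{\Delta}$ and $\call^{\Delta} f^{\Delta}$ coincide with $f$ and $\call f$ on $\cald$. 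So $X$ solves the martingale problem for $(\call,\nu)$.

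\textbf{Step 1: the martingale property persists under bp-limits.} The set of pairs $(f,g) \in B(\cald^{\Delta}) \times B(\cald^{\Delta})$ for which $f(X_t) - \int_0^t g(X_s)\,ds$ is a martingale is bp-closed. This follows from dominated convergence: the conditional expectations and the time integrals pass to the limit under bounded pointwise convergence. Hence, by Lemma \ref{lemma-bp-closure}, this set contains $(\bbI_{\cald},0)$. Concretely, along the sequence $\phi_n^{\Delta}$ constructed in that lemma, which is uniformly bounded with $\call^{\Delta}\phi_n^{\Delta}$ uniformly bounded and tending to zero, the process $\bbI_{\cald}(X_t)$ is a martingale. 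Therefore
\[
\bbp(X_t \in \cald) = \bbe[\bbI_{\cald}(X_t)] = \bbe[\bbI_{\cald}(X_0)] = \nu(\cald) = 1 \quad \text{for all } t \geq 0.
\]

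\textbf{Step 2: from fixed times to all times.} Step 1 gives $X_t \in \cald$ almost surely for each fixed $t$. Taking a countable union over rational $t$, almost surely $X_t \in \cald$ for all $t \in \mathbb{Q}_+$.

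This step is the main obstacle, since $\cald$ is open in $\cald^{\Delta}$ and is not closed. Right-continuity does not by itself prevent the path from touching $\Delta$, for instance at a left limit or at an isolated jump time. I would argue along the lines of \cite[Prop. 4.3.8 / Thm. 4.3.8]{EK}. Use the nonnegative supermartingale structure: take $\phi_n^{\Delta}$ with the compactly supported bump functions of Lemma \ref{lemma-bp-closure}, and use the optional sampling theorem at the exit times
\[
\tau_n := \inf\{t : \|X_t\| \geq n \text{ or } X_t = \Delta\}.
\]
The bound $\|\call\Phi_n\|_\infty \leq M$ from \eqref{bp-Phi-2} is actually $O(1)$ only on the annulus $n \leq \|x\| \leq 2n$. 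I would refine this to show $\bbp(\tau_n \leq T) \to 0$ as $n \to \infty$. Optional stopping applied to $\phi_n^{\Delta}(X_{t\wedge\tau_{2n}})$ gives an estimate of $1 - \bbp(\tau \leq T)$ in terms of an integrated generator term. Alternatively, invoke directly the Ethier--Kurtz result: if $(\bbI_{\cald},0)$ lies in the bp-closure and $\nu(\cald) = 1$, then the c\`{a}dl\`{a}g solution has sample paths in $\cald$ almost surely.

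Combining Steps 1 and 2 yields the $\cald$-valued c\`{a}dl\`{a}g solution of the martingale problem for $(\call,\nu)$.
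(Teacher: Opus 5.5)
This is the paper's own argument: the paper proves the proposition in one line from Proposition \ref{prop-mart-sol-1}, Lemma \ref{lemma-bp-closure} and \cite[Thm. 4.3.8]{EK}, using that $\cald$ is open in the metrizable space $\cald^{\Delta}$, and your Step 1 together with your fallback on that Ethier--Kurtz result is exactly this, so granting the citation (as the paper does) your proof is fine. One correction and one completion: the target must be paths in $D_{\cald}[0,\infty)$, not merely $X_t \in \cald$ for all $t$, since a left limit $X_{t-}$ could still be $\Delta$; your own stopping-time sketch does deliver this without any refinement of \eqref{bp-Phi-2}, because $\call^{\Delta}\phi_n^{\Delta}$ vanishes off $\{n \leq \|x\| \leq 2n\}$ and at $\Delta$, so optional stopping at $\tau_{2n} \wedge T$ gives $\bbp(\tau_{2n} > T) \geq \int \phi_n \, d\nu - M \int_0^T \bbp(X_s \in \cald, \, \|X_s\| \geq n) \, ds \to 1$, and on $\{\tau_{2n} > T\}$ both $X_t$ and $X_{t-}$ lie in the compact set $\cald_{2n}$ for all $t \leq T$.
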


\begin{proof}
Recall that $\cald$ is open in $\cald^{\Delta}$, and that $\cald^{\Delta}$ is metrizable. Therefore, the statement is a consequence of Proposition \ref{prop-mart-sol-1}, Lemma \ref{lemma-bp-closure} and \cite[Thm. 4.3.8]{EK}.
\end{proof}

\begin{proposition}\label{prop-mart-sol-3}
For every probability measure $\nu \in \calp(\cald)$ there exists a $\cald$-valued continuous solution of the martingale problem for $(\call,\nu)$.
\end{proposition}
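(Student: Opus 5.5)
Starting from the $\cald$-valued c\`{a}dl\`{a}g solution $X$ of the martingale problem for $(\call,\nu)$ given by Proposition \ref{prop-mart-sol-2}, we show that $X$ has a continuous modification, or equivalently that almost surely it has no jumps. Since $\cald$ need not be compact, the plan is to use the standard criterion of Ethier--Kurtz for continuity of solutions of martingale problems (\cite[Prop. 4.2.9]{EK}). That criterion asks, for each $x \in \cald$ and each $\delta > 0$, for a test function $\phi \in \cald(\call)$ with $\phi(x) = 0$ and $\phi = 1$ on $\{ y \in \cald : \| y - x \| \geq \delta \}$ (at least within a compact set) such that $\call \phi(x) = 0$.

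To build such $\phi$ we use a quadratic-type bump, and a cutoff handles the non-compactness. Fix $x \in \cald$ and $\delta > 0$. Take $g \in C^\infty(\bbr;[0,1])$ with $g(s) = s^2$ near $0$, $g \equiv 1$ on $[\delta^2/?,\infty)$ after rescaling; more simply, let $\Phi(y) := h(\| y-x \|^2)$ with $h$ smooth, $h(0)=0$, $h'(0)=h''(0)=0$ (e.g.\ $h$ vanishing on $[0,\delta^2/4]$) and $h \equiv 1$ on $[\delta^2,\infty)$. Then $D\Phi(x) = 0$ and $D^2\Phi(x) = 0$, so $\call \Phi(x) = 0$ by \eqref{generator-def}. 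To land in the domain $C_c^2(\cald)$, multiply the function $1 - \Phi$ by the cutoffs $\Phi_n$ from the proof of Lemma \ref{lemma-bp-closure}, and use the bp-approximation as in Lemma \ref{lemma-bp-closure}. Equivalently, work with $\cald^\Delta$ and functions $f_n := 1 - \Phi_n (1-\Phi)$ on $\cald$, extended by $1$ at $\Delta$. These are continuous on the compact metrizable space $\cald^\Delta$, differ from a constant by an element of $\cald(\call^\Delta)$, satisfy $f_n(x)=0$, equal $1$ off the $\delta$-ball, and have $\call f_n(x) = 0$ for $n > \|x\|$.

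Next we apply the criterion to the process viewed in $\cald^\Delta$ (which by Proposition \ref{prop-mart-sol-2} almost surely stays in $\cald$). This yields that the total jump size $\sum_{s \le t} \bbI\{ \|X_s - X_{s-}\| > \delta\}$ vanishes almost surely for every $\delta$. The underlying computation is that for each such $f$ the process $\sum_{s\le t} f(X_s)\bbI$-type compensator computation gives $\bbe \sum_{s \leq t} f(X_s)$ with $X_{s-}$ fixed equal to $\int_0^t \call f(X_s)\,ds$-contributions that vanish. More concretely, we use $f_{x}$ parametrized continuously in $x$ over a countable dense subset of the compact sets $\cald_m$, and then take the union over $\delta = 1/k$. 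Hence $X$ is almost surely continuous, and it is $\cald$-valued with $\bbp \circ X_0^{-1} = \nu$ and still solves the martingale problem.

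The main obstacle is the passage from pointwise test functions to an almost sure no-jump statement uniformly over starting points, since $\cald$ is only locally compact. We handle it by localizing with the stopping times $\tau_m := \inf\{ t : \|X_t\| \geq m \}$. On $[0,\tau_m)$ the process lives in the compact set $\cald_m$, and finitely many balls of radius $\delta/4$ cover $\cald_m$. The hypotheses of \cite[Prop. 4.2.9]{EK} are then verified for the family $\{ f_{x_i} \}$ with centers $x_i$. The fact that $\call f_{x_i}$ vanishes on the ball $B(x_i,\delta/2)$ (not only at $x_i$, since $h$ vanishes there) makes the martingale argument work: $f_{x_i}(X_t) - \int_0^t \call f_{x_i}(X_s)ds$ is a martingale whose jumps from inside the small ball are of size $1$ and cannot be compensated.
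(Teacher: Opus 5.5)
Your route differs from the paper's, and the idea in your last paragraph can be made to work. As written, though, the decisive step, passing from the test functions to ``$X$ has almost surely no jumps'', is not carried out.

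You delegate that step to \cite[Prop.~4.2.9]{EK}. That result sits in the section of Ethier--Kurtz on Markov jump and Feller processes, so it presupposes a Markov process with a Feller semigroup. The solution from Proposition~\ref{prop-mart-sol-2} comes from \cite[Thm.~4.5.4]{EK} with no uniqueness, so it is not known to be Markov, let alone Feller. For such a general solution, the pointwise condition $\call\phi(x)=0$ at the centre that you first formulate is not enough. The sentence in your third paragraph meant to supply ``the underlying computation'' does not state a meaningful identity either.

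The paper avoids this by using the two-variable function $f(x,y)=-\varphi(\|x-y\|^2)$, with $\varphi(0)=1$, $\varphi'(0)=\varphi''(0)=0$ and $\varphi<1$ off $0$. It checks three things:
\begin{enumerate}
\item $f(\cdot,y)\in\cald(\call)$;
\item $f(x,y)-f(y,y)$ is bounded away from $0$ on compacts when $\|x-y\|\ge\epsilon$;
\item $g(x,y)=\call f(\cdot,y)(x)\to g(x,x)=0$ as $y\to x$.
\end{enumerate}
It then invokes \cite[Problem~4.11.19]{EK}, which is formulated for arbitrary c\`{a}dl\`{a}g solutions of a martingale problem.

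To close the gap with your own idea, use that $\call\psi$ vanishes on a whole ball and argue by optional stopping. Fix $z\in\cald$, $\delta>0$, $n>\|z\|+\delta$, and set $\psi:=\Phi_n(1-\Phi)|_{\cald}$ with $\Phi=h(\|\cdot-z\|^2)$. Then $\psi\in\cald(\call)$, $0\le\psi\le 1$, $\psi=1$ on $\{\|y-z\|\le\delta/2\}$, $\psi=0$ on $\{\|y-z\|\ge\delta\}$, and $\call\psi=0$ on $\{\|y-z\|<\delta/2\}$. No passage to $\cald^{\Delta}$ or to constants is needed, since $X$ is already $\cald$-valued.

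Define the stopping times:
\begin{itemize}
\item $\rho_1$ is the first time with $\|X_t-z\|\le\delta/4$;
\item $\tau_k$ is the first time after $\rho_k$ with $\|X_t-z\|\ge\delta/2$;
\item $\rho_{k+1}$ is the first time after $\tau_k$ with $\|X_t-z\|\le\delta/4$.
\end{itemize}
Apply optional stopping to the martingale $\psi(X_t)-\int_0^t\call\psi(X_s)\,ds$ between $\rho_k\wedge T$ and $\tau_k\wedge T$. On that interval the integral vanishes, and $\psi(X_{\rho_k})=1$, so
\begin{align*}
\bbe\big[(1-\psi(X_{\tau_k\wedge T}))\bbI_{\{\rho_k\le T\}}\big]=0.
\end{align*}
Hence almost surely $\|X_{\tau_k}-z\|<\delta$ on $\{\tau_k\le T\}$.

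Now suppose a jump occurs at time $s\le T$ with $\|X_{s-}-z\|<\delta/4$ and $\|X_s-X_{s-}\|\ge 2\delta$. It would have to occur at some $\tau_k$, since c\`{a}dl\`{a}g paths give only finitely many $\tau_k\le T$. It would also land outside the $\delta$-ball, so almost surely there is no such jump. Cover $\cald_m$ by finitely many balls $B(z_i,\delta/4)$, which is possible by Heine--Borel. Taking unions over $m$, $T$ and $\delta=1/k$ then yields continuity.

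Compared with the paper, this version is self-contained and needs no continuity of the generator in the centre, at the cost of the stopping-time bookkeeping. The paper instead uses a single function of $(x,y)$ that is flat on the diagonal and outsources the discretisation-and-Fatou argument to Ethier--Kurtz.
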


\begin{proof}
There exists a function $\varphi \in C_c^{\infty}(\bbr)$ such that $\varphi(0) = 1$, $\varphi'(0) = \varphi''(0) = 0$ and $\varphi(x) < 1$ for all $x \in \bbr$ with $x \neq 0$. We define the symmetric function $f : \cald \times \cald \to \bbr$ as
\begin{align*}
f(x,y) := -\varphi(\| x-y \|^2), \quad x,y \in \cald.
\end{align*}
Let $\theta : H \to \bbr_+$ be the square of the norm function; that is
\begin{align}\label{theta-norm-2}
\theta(z) := \| z \|^2, \quad z \in H.
\end{align}
Then $\theta$ is of class $C^{\infty}$ and we can express $f$ as
\begin{align*}
f(x,y) = -(\varphi \circ \theta)(x-y), \quad x,y \in \cald.
\end{align*}
Let $y \in \cald$ be arbitrary. We define $\Phi_y : H \to \bbr$ as
\begin{align*}
\Phi_y(x) := -(\varphi \circ \theta)(x-y), \quad x \in H.
\end{align*}
Then we have $\Phi_y \in C_c^{\infty}(H)$ and $f(\cdot,y) = \Phi_y|_{\cald}$, showing that $f(\cdot,y) \in \cald(\call)$.

Let $\epsilon > 0$ and $K \subset \cald$ be compact. Then for all $x,y \in K$ with $\| x-y \| \geq \epsilon$ we have
\begin{align*}
f(x,y) - f(y,y) = 1 - \varphi(\| x-y \|^2) \geq 1 - \varphi(\epsilon) > 0,
\end{align*}
because $\varphi(\epsilon) < 1$. Therefore, we obtain
\begin{align*}
\inf \{ f(x,y) - f(y,y) : x,y \in K \text{ with } \| x-y \| \geq \epsilon \} > 0.
\end{align*}
Now, we define $g : \cald \times \cald \to \bbr$ as
\begin{align*}
g(x,y) := \call f(\cdot,y)(x), \quad x,y \in \cald.
\end{align*}
Then we have $(f(\cdot,y),g(\cdot,y)) \in \call$ for each $y \in \cald$, where we recall that we consider $\call$ as the multivalued linear operator given by its graph
\begin{align*}
\call = \{ (f, \call f) : f \in \cald(\call) \}.
\end{align*}
Let $z \in H$ be arbitrary. By the definition \eqref{theta-norm-2} we obtain the first and second order derivatives
\begin{align*}
D \theta(z) v &= 2 \la z,v \ra, \quad v \in H,
\\ D^2 \theta(z)(v,w) &= 2 \la v,w \ra, \quad v,w \in H.
\end{align*}
Therefore, by the first and second order chain rules we obtain
\begin{align*}
D (\varphi \circ \theta)(z)v &= D \varphi(\theta(z)) D \theta(z)v = 2 \varphi'(\| z \|^2) \la z,v \ra, \quad v \in H,
\\ D^2 (\varphi \circ \theta)(z)(v,w) &= D^2 \varphi(\theta(z)) \circ ( D \theta(z)v, D\theta(z)w ) + D \varphi(\theta(z)) \circ D^2 \theta(z)(v,w)
\\ &= 4 \varphi''(\| z \|^2) \la z,v \ra \la z,w \ra + 2 \varphi'(\| z \|^2) \la v,w \ra, \quad v,w \in H.
\end{align*}
Let $x \in \cald$ be arbitrary. Then we have
\begin{align*}
g(x,y) = \call \Phi_y(x) = \la D \Phi_y(x), b(x) \ra + \frac{1}{2} \tr \big( D^2 \Phi_y(x) C(x) \big), \quad y \in \cald.
\end{align*}
Taking into account Remark \ref{remark-derivatives} we have
\begin{align*}
\la D \Phi_y(x), v \ra = -D_x (\varphi \circ \theta)(x-y)v = -2 \varphi'(\| x-y \|^2) \la x-y,v \ra, \quad v \in H
\end{align*}
as well as
\begin{align*}
\la D^2 \Phi_y(x)v,w \ra &= - \la D_x^2 (\varphi \circ \theta)(x-y)v,w \ra
\\ &= -4 \varphi''(\| x-y \|^2) \la x-y,v \ra \la x-y,w \ra
\\ &\quad - 2 \varphi'(\| x-y \|^2) \la v,w \ra, \quad v,w \in H.
\end{align*}
Since $\varphi'(0) = \varphi''(0) = 0$, it follows that
\begin{align*}
\lim_{y \to x} g(x,y) = g(x,x) = 0.
\end{align*}
Finally, note that the metric space $\cald$ is separable, because it is a subset of the separable Hilbert space $H$. Now, the statement is a consequence of Proposition \ref{prop-mart-sol-2} and Problem 19 on page 265 in \cite{EK}.
\end{proof}

\begin{lemma}\label{lemma-loc-MT-problem}
Let $X$ be a $\cald$-valued continuous solution of the martingale problem for $\call$ such that $X_0 = \xi$ for some $\xi \in \cald$. Then for each $\phi \in C^2(\cald)$ the process $M$ given by
\begin{align*}
M_t := \phi(X_t) - \int_0^t (\call \phi)(X_s)ds, \quad t \in \bbr_+
\end{align*}
is a local martingale.
\end{lemma}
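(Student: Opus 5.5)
The plan is to localize by stopping times and reduce to the martingale property for compactly supported test functions in $\cald(\call)$. Here $C^2(\cald)$ is read as the functions $\phi$ on $\cald$ that admit an extension $\Phi \in C^2(H)$, so that $\call\phi = \call\Phi|_{\cald}$ is well defined by the extension-independence lemma above.

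\textbf{Localization.} For $n \in \bbn$ set
\begin{align*}
\tau_n := \inf\{ t \geq 0 : \| X_t \| \geq n \}.
\end{align*}
Since $X$ is continuous and adapted, $\tau_n$ is a stopping time. Continuity of paths also gives $\tau_n \uparrow \infty$ almost surely, since a continuous path is bounded on compact time intervals. For $n > \|\xi\|$ we have $X_{t\wedge\tau_n} \in \cald_n$ for all $t$. Strictly, $\|X_{\tau_n}\| = n$, so $\cald_n$ is the correct closed bound.

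\textbf{Cut-off.} Take the bump function $\varphi_{n+1}$ from Lemma \ref{lemma-bump-r-R}, equal to $1$ on $\{\|x\|\leq n+1\}$ and $0$ on $\{\|x\| \geq n+2\}$. Set $\Psi_n := \varphi_{n+1}\Phi \in C^2(H)$, which is $C^2$ by Proposition \ref{prop-Leibniz}. Its restriction $\psi_n := \Psi_n|_{\cald}$ has support in the compact set $\cald_{n+2}$, so $\psi_n \in C_c^2(\cald) = \cald(\call)$.

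Moreover, $\Psi_n = \Phi$ on the open ball $\{\|x\| < n+1\}$. Hence $D\Psi_n = D\Phi$ and $D^2\Psi_n = D^2\Phi$ there, which gives $\call\psi_n = \call\phi$ on $\cald_n$. This pointwise equality follows from formula \eqref{generator-def}, since $\call$ only involves derivatives at the point.

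\textbf{Martingale property for the cut-off.} By the definition of a solution of the martingale problem,
\begin{align*}
M^n_t := \psi_n(X_t) - \int_0^t \call\psi_n(X_s)\,ds
\end{align*}
is a martingale. By optional stopping, the stopped process $(M^n)^{\tau_n}$ is also a martingale. On $[0,\tau_n]$ the process $X$ stays in $\cald_n$, where $\psi_n = \phi$ and $\call\psi_n = \call\phi$. Therefore
\begin{align*}
(M^n)^{\tau_n}_t = \phi(X_{t\wedge\tau_n}) - \int_0^{t\wedge\tau_n}\call\phi(X_s)\,ds = M_{t\wedge\tau_n}.
\end{align*}
So $M^{\tau_n}$ is a martingale for every large $n$, and since $\tau_n \uparrow\infty$, $M$ is a local martingale.

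\textbf{Main obstacle.} The only delicate point is ensuring that the stopped process lies in a region where $\psi_n$ and $\phi$ coincide together with their first and second derivatives. Using the margin $n$ versus $n+1$ handles this, since agreement on an open set containing $\cald_n$ suffices.

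A secondary point is that $M$ is adapted and continuous, hence progressively measurable. The integral $\int_0^t \call\phi(X_s)\,ds$ is well defined pathwise because $\call\phi$ is continuous and $X$ is bounded on compact time intervals. No integrability issue arises beyond that.
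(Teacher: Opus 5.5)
Your proof is correct and follows the paper's argument: localize with $\tau_n = \inf\{ t \geq 0 : \| X_t \| \geq n \}$, multiply an extension $\Phi \in C^2(H)$ by a bump function to get an element of $\cald(\call)$, and identify the stopped martingale $(M^n)^{\tau_n}$ with $M^{\tau_n}$. The only difference is your extra margin (bump equal to $1$ up to radius $n+1$ instead of $n$). This is a harmless precaution: the derivatives of the paper's bump already vanish on the closed ball $\{ \| x \| \leq n \}$ by continuity, so $\call \phi_n = \call \phi$ there as well.
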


\begin{proof}
We define the localizing sequence $(\tau_n)_{n \in \bbn}$ of stopping times as
\begin{align*}
\tau_n := \inf \{ t \in \bbr_+ : \| X_t \| \geq n \}, \quad n \in \bbn.
\end{align*}
There exists a function $\Phi \in C^2(H)$ such that $\phi = \Phi|_{\cald}$. Let $n \in \bbn$ with $n \geq \| \xi \|$ be arbitrary. By Lemma \ref{lemma-bump-r-R} there is a function $\varphi_n : H \to [0,1]$ of class $C^{\infty}$ such that $\varphi_n(x) = 1$ for all $x \in H$ with $\| x \| \leq n$ and $\varphi_n(x) = 0$ for all $x \in H$ with $\| x \| \geq n+1$. We define the function $\Phi_n := \varphi_n \cdot \Phi$, which belongs to $C_c^2(H)$ by virtue of Proposition \ref{prop-Leibniz}. Therefore, setting $\phi_n := \Phi_n|_{\cald}$, we have $\phi_n \in \cald(\call)$. Since $X$ is a solution of the martingale problem for $\call$, the process $M^n$ given by
\begin{align*}
M_t^n := \phi_n(X_t) - \int_0^t (\call \phi_n)(X_s)ds, \quad t \in \bbr_+
\end{align*}
is a martingale. Therefore, the stopped process $(M^n)^{\tau_n}$ is also a martingale. Furthermore, we have $M^{\tau_n} = (M^n)^{\tau_n}$, proving that $M$ is a local martingale.
\end{proof}

\begin{theorem}\label{thm-suff}
Suppose that Assumptions \ref{ass-lin-growth}, \ref{ass-extension}, \ref{ass-Sigma-self-adjoint}, \ref{ass-Heine-Borel} are in force, and that the generator $\call$ satisfies the positive maximum principle. Then $\cald$ is stochastically invariant with respect to the diffusion \eqref{SDE}.
\end{theorem}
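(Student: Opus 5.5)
Given $x \in \cald$, Proposition \ref{prop-mart-sol-3} applied with $\nu = \delta_x$ yields a $\cald$-valued continuous process $X$ with $X_0 = x$ solving the martingale problem for $\call$. By Lemma \ref{lemma-loc-MT-problem}, for every $\phi \in C^2(\cald)$ (restriction of a $C^2(H)$ function) the process $\phi(X_t) - \int_0^t \call\phi(X_s)\,ds$ is a continuous local martingale. The plan is to upgrade this to a weak (martingale) solution of \eqref{SDE} in the sense of Definition \ref{def-martingale-solution}. This is the classical equivalence between the martingale problem and weak solutions, here in Hilbert space. Since $X$ lives in $\cald$, the SDE only needs to be solved with coefficients evaluated on $\cald$. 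There $\Sigma(y)\Sigma(y)^* = C(y)$, and by Assumption \ref{ass-Sigma-self-adjoint} we have $\Sigma(y) = C(y)^{1/2}$.

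\textbf{Step 1 (identify the semimartingale characteristics).} For $v \in H$ take $\phi(y) = \la v, y\ra$. This gives that $M^v_t := \la v, X_t - x - \int_0^t b(X_s)ds\ra$ is a continuous local martingale. Next take $\phi(y) = \la v,y\ra\la w,y\ra$. By the usual polarization/product argument,
\[
\langle M^v, M^w\rangle_t = \int_0^t \la C(X_s) v, w\ra\, ds .
\]
Linear growth of $b$ and of $\|\Sigma\|_{L_2}$, together with localization via the stopping times $\tau_n$ from Lemma \ref{lemma-loc-MT-problem}, makes everything well defined. So $M_t := X_t - x - \int_0^t b(X_s)ds$ is an $H$-valued continuous local martingale whose operator quadratic variation is $\int_0^t C(X_s)\,ds$. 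The trace of this is finite since $\tr C(y) = \|\Sigma(y)\|_{L_2}^2$.

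\textbf{Step 2 (martingale representation).} Apply the infinite-dimensional martingale representation theorem (Da Prato--Zabczyk, Thm.~8.2, or its local version). This requires enlarging the probability space by an independent cylindrical or $Q$-Wiener process. It produces a $Q$-Wiener process $W$ on an extended stochastic basis such that
\[
M_t = \int_0^t \Sigma(X_s) Q^{-1/2}\, dW_s = \int_0^t \sigma(X_s)\, dW_s .
\]
Concretely, one builds a cylindrical Wiener process $\tilde W$ from $\int \Sigma(X_s)^+ dM_s$ plus an independent cylindrical part on $\ker \Sigma(X_s)$. One then sets $W := Q^{1/2}\tilde W$, so that $\int \sigma(X_s) dW_s = \int \Sigma(X_s) d\tilde W_s$. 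The self-adjointness $\Sigma = C^{1/2}$ is convenient here. Because $\Sigma$ is continuous, the integrand is predictable, and the integrability condition in Definition \ref{def-martingale-solution} follows from linear growth and the continuity of $X$. The process $X$ stays in $\cald$ because it takes values there by construction, so $\cald$ is stochastically invariant.

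\textbf{Main obstacle.} The delicate point is Step 2 in infinite dimensions without a closed-range assumption. The representation requires handling the pseudo-inverse of $\Sigma(X_s)$ measurably, which I would do via a limit of Tikhonov regularizations $(\Sigma+\epsilon)^{-1}$, or simply by citing the general representation theorem for $H$-valued local martingales with absolutely continuous quadratic variation $\int_0^t \Sigma\Sigma^*\,ds$. A smaller issue is that Step 1 uses test functions that are unbounded. This is handled exactly by the localization in Lemma \ref{lemma-loc-MT-problem}, which allows arbitrary $C^2$ functions.
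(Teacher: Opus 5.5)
Your proposal is correct and follows the paper's proof. You obtain a continuous $\cald$-valued solution of the martingale problem from Proposition \ref{prop-mart-sol-3}, then use Lemma \ref{lemma-loc-MT-problem} with linear and quadratic test functions to show that $X_t - \int_0^t b(X_s)\,ds$ is a continuous local martingale with quadratic variation $\int_0^t C(X_s)\,ds$, and finish with the martingale representation theorem on an extended basis.

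The only difference is that the paper first upgrades $M$ to a square-integrable martingale, via localization, Doob's inequality, Gronwall and Fatou, so that it can apply the representation theorem in its square-integrable form. You instead invoke a local version directly. That is legitimate given the explicit construction you sketch (the pseudo-inverse of $\Sigma$ plus independent noise on its kernel). But Da Prato--Zabczyk's Thm.~8.2 itself is stated for square-integrable martingales, so you need either that local version or the paper's moment estimate.
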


\begin{proof}
We will show that for each $\xi \in \cald$ there exists a $\cald$-valued weak solution $X$ to the SDE \eqref{SDE} with $X_0 = \xi$. Indeed, by Proposition \ref{prop-mart-sol-3} there exists a $\cald$-valued continuous solution $X$ of the martingale problem for $\call$ with $X_0 = \xi$. By the continuity of $b$ we can define the adapted process $M$ as
\begin{align}\label{M-MT-problem}
M_t := X_t - \int_0^t b(X_s) ds, \quad t \in \bbr_+.
\end{align}
Let $h \in H$ be arbitrary. Moreover, let $\Phi^h : H \to \bbr$ be the continuous linear functional given by $\Phi^h := \la h,\cdot \ra$, and set $\phi^h := \Phi^h|_{\cald} \in C^{\infty}(\cald)$. Since $D \Phi^h(x) = \la h,\cdot \ra$ for each $x \in H$ and $D^2 \Phi^h = 0$, we have
\begin{align*}
(\call \phi^h)(x) = \la h,b(x) \ra, \quad x \in \cald.
\end{align*}
Hence, by Lemma \ref{lemma-loc-MT-problem} the process $\la h,M \ra$ is a local martingale for each $h \in H$. Consequently, by Lemma \ref{lemma-loc-MT-functionals} the process $M$ is a local martingale.

Now, let $h,g \in H$ be arbitrary. Moreover, let $\Phi^{h,g} : H \to \bbr$ be the function given by the product $\Phi^{h,g} := \Phi^h \cdot \Phi^g$, and set $\phi^{h,g} := \Phi^{h,g}|_{\cald} \in C^{\infty}(\cald)$. Let $x \in H$ be arbitrary. Taking into account Remark \ref{remark-derivatives}, by the product rule (Proposition \ref{prop-Leibniz}) we obtain the first and second order derivatives
\begin{align*}
\la D \Phi^{h,g}(x), v \ra &= \la h,x \ra \la g,v \ra + \la h,v \ra \la g,x \ra, \quad v \in H,
\\ \la D^2 \Phi^{h,g}(x) v,w \ra &= 2 \la h,v \ra \la g,w \ra, \quad v,w \in H.
\end{align*}
Since $\Sigma(x) \Sigma(x)^*$ is self-adjoint, we have
\begin{align*}
\frac{1}{2} \tr \big( D^2 \Phi^{h,g}(x) \Sigma(x) \Sigma(x)^* \big) &= \frac{1}{2} \sum_{j=1}^{\infty} \la D^2 \Phi^{h,g}(x) \Sigma(x) \Sigma(x)^* e_j,e_j \ra
\\ &= \sum_{j=1}^{\infty} \la h, \Sigma(x) \Sigma(x)^* e_j \ra \la g,e_j \ra
\\ &= \sum_{j=1}^{\infty} \la \Sigma(x) \Sigma(x)^* h,e_j \ra \la e_j,g \ra = \la \Sigma(x) \Sigma(x)^* h,g \ra.
\end{align*}
Therefore, we obtain
\begin{align*}
(\call \phi^{h,g})(x) = \la h,x \ra \la g,b(x) \ra + \la g,x \ra \la h,b(x) \ra + \la C(x) h,g \ra, \quad x \in \cald.
\end{align*}
Hence, by Lemma \ref{lemma-loc-MT-problem} the process $N^{h,g}$ given by
\begin{align*}
N_t^{h,g} &:= \la h,X_t \ra \la g,X_t \ra
\\ &\quad - \int_0^t \Big( \la h,X_s \ra \la g,b(X_s) \ra + \la g,X_s \ra \la h,b(X_s) \ra + \la C(X_s) h,g \ra \Big) ds
\end{align*}
for each $t \in \bbr_+$ is a local martingale. Performing analogous calculations as in \cite[p. 315]{Karatzas-Shreve} it follows that the process $\widetilde{N}^{h,g}$ given by
\begin{align*}
\widetilde{N}_t^{h,g} := \langle h, M_t \rangle \langle g, M_t \rangle - \int_0^t \la C(X_s)h, g \ra ds, \quad t \in \bbr_+
\end{align*}
is a local martingale. Therefore, the quadratic covariation of the two local martingales $\la M,h \ra$ and $\la M,g \ra$ is given by
\begin{align}\label{co-var-h-g}
\la \, \la M,h \ra, \la M,g \ra \, \ra_t = \int_0^t \la C(X_s)h, g \ra ds, \quad t \in \bbr_+.
\end{align}
Now, we wish to show that the local martingale $M$ is actually a square-integrable martingale. Let $T \in \bbr_+$ be an arbitrary finite time horizon. We define the localizing sequence $(\tau_n)_{n \in \bbn}$ of stopping times as
\begin{align*}
\tau_n := \inf \{ t \in [0,T] : \| X_t \| \geq n \}.
\end{align*}
Let $n \in \bbn$ with $n \geq \| \xi \|$ be arbitrary. Then we have
\begin{align*}
\| X^{\tau_n} \| \leq n.
\end{align*}
Furthermore, by \eqref{M-MT-problem} and the linear growth condition \eqref{linear-growth} we have
\begin{align*}
\| M^{\tau_n} \| \leq \| X^{\tau_n} \| + TL (1 + \| X^{\tau_n} \| ) \leq n + TL(1+n).´
\end{align*}
Thus $M^{\tau_n}$ is a continuous, bounded local martingale, and hence by Lemma \ref{lemma-bounded-loc-MT} it is a square-integrable martingale. Now, we also fix an arbitrary $t \in [0,T]$. By \eqref{M-MT-problem} we have
\begin{align*}
\bbe \bigg[ \sup_{s \in [0,t]} \| X_s^{\tau_n} \|^2 \bigg] \leq 2 \, \bbe \bigg[ \sup_{s \in [0,t]} \| M_s^{\tau_n} \|^2 \bigg] + 2 \, \bbe \Bigg[ \sup_{s \in [0,t]} \bigg\| \int_0^s b(X_u^{\tau_n}) du \bigg\|^2 \Bigg].
\end{align*}
By the linear growth condition \eqref{linear-growth} we obtain
\begin{equation}\label{b-calculation}
\begin{aligned}
&\bbe \Bigg[ \sup_{s \in [0,t]} \bigg\| \int_0^s b(X_u^{\tau_n}) du \bigg\|^2 \Bigg] \leq t \, \bbe \bigg[ \int_0^t \| b(X_u^{\tau_n}) \|^2 du \bigg]
\\ &\leq t L^2 \, \bbe \bigg[ \int_0^t \big( 1 + \| X_u^{\tau_n} \| \big)^2 du \bigg] \leq 2 t L^2 \, \bbe \bigg[ \int_0^t \big( 1 + \| X_u^{\tau_n} \|^2 \big) du \bigg]
\\ &\leq 2t L^2 \bigg( t + \int_0^t \bbe \bigg[ \sup_{u \in [0,s]} \| X_u^{\tau_n} \|^2 \bigg] ds \bigg).
\end{aligned}
\end{equation}
Moreover, noting that $M_0 = \xi$, by Doob's maximal inequality for martingales in Banach spaces (see, e.g. \cite[Thm. 2.2.7]{Liu-Roeckner}), the monotone convergence theorem, identity \eqref{co-var-h-g} and the linear growth condition \eqref{linear-growth-2} we have
\begin{align*}
&\bbe \bigg[ \sup_{s \in [0,t]} \| M_s^{\tau_n} \|^2 \bigg] \leq 4 \, \bbe \big[ \| M_t^{\tau_n} \|^2 \big] = 4 \, \bbe \bigg[ \sum_{j=1}^{\infty} | \la M_t^{\tau_n},e_j \ra |^2 \bigg] = 4 \sum_{j=1}^{\infty} \bbe \big[ | \la M_t^{\tau_n},e_j \ra |^2 \big]
\\ &= 4 \sum_{j=1}^{\infty} \Big( \bbe \big[ | \la \xi,e_j \ra |^2 \big] + \bbe \big[ \la \, \la M^{\tau_n},e_j \ra, \la M^{\tau_n},e_j \ra \, \ra_t \big] \Big)
\\ &= 4 \| \xi \|^2 + 4 \sum_{j=1}^{\infty} \bbe \bigg[ \int_0^t \la C(X_s^{\tau_n}) e_j,e_j \ra ds \bigg] = 4 \| \xi \|^2 + 4 \, \bbe \bigg[ \int_0^t \tr ( C(X_s^{\tau_n}) ) ds \bigg]
\\ &= 4 \| \xi \|^2 + 4 \, \bbe \bigg[ \int_0^t \| C(X_s^{\tau_n}) \|_{L_1(H)} ds \bigg] \leq 4 \| \xi \|^2 + 4 \, \bbe \bigg[ \int_0^t \| \Sigma(X_s^{\tau_n}) \|_{L_2(H)}^2 ds \bigg]
\\ &\leq 4 \| \xi \|^2 + 4 L^2 \, \bbe \bigg[ \int_0^t \big( 1 + \| X_s^{\tau_n} \| \big)^2 ds \bigg]
\\ &\leq 4 \| \xi \|^2 + 8 L^2 \bigg( t + \int_0^t \bbe \bigg[ \sup_{u \in [0,s]} \| X_u^{\tau_n} \|^2 \bigg] ds \bigg).
\end{align*}
Summing up, there is a constant $C > 0$, only depending on $\xi$, $T$ and $L$, such that
\begin{align*}
\bbe \bigg[ \sup_{s \in [0,t]} \| X_s^{\tau_n} \|^2 \bigg] \leq C \bigg( 1 + \int_0^t \bbe \bigg[ \sup_{u \in [0,s]} \| X_u^{\tau_n} \|^2 \bigg] ds \bigg), \quad t \in [0,T].
\end{align*}
Therefore, by Gronwall's inequality we deduce that
\begin{align*}
\bbe \bigg[ \sup_{t \in [0,T]} \| X_t^{\tau_n} \|^2 \bigg] \leq C e^{CT}.
\end{align*}
Thus, using Fatou's lemma we obtain
\begin{align*}
\bbe \bigg[ \sup_{t \in [0,T]} \| X_t \|^2 \bigg] = \bbe \bigg[ \lim_{n \to \infty} \sup_{t \in [0,T]} \| X_t^{\tau_n} \|^2 \bigg] \leq \liminf_{n \to \infty} \bbe \bigg[ \sup_{t \in [0,T]} \| X_t^{\tau_n} \|^2 \bigg] \leq C e^T.
\end{align*}
Therefore, by \eqref{M-MT-problem} and \eqref{b-calculation} with $t = T$ and $\tau_n = T$ we have
\begin{align*}
\bbe \bigg[ \sup_{t \in [0,T]} \| M_t \|^2 \bigg] \leq 2 \, \bbe \bigg[ \sup_{t \in [0,T]} \| X_t \|^2 \bigg] + 2 \, \bbe \Bigg[ \sup_{t \in [0,T]} \bigg\| \int_0^t b(X_s) ds \bigg\|^2 \Bigg] < \infty.
\end{align*}
Consequently, by Proposition \ref{prop-Banach-square-MT} the local martingale $M$ is a square-integrable martingale. Moreover, by \eqref{co-var-h-g} the quadratic variation $\langle\!\langle M, M \rangle\!\rangle$ is given by
\begin{align*}
\langle\!\langle M, M \rangle\!\rangle_t = \int_0^t C(X_s) ds, \quad t \in \bbr_+.
\end{align*}
Therefore, by the martingale representation theorem (see, e.g. \cite[Thm. 2.7]{Atma-book}) there exists an $H$-valued $Q$-Wiener process $W$ on an extended stochastic basis such that
\begin{align*}
M_t = \int_0^t \sigma(X_s) d W_s, \quad t \in \bbr_+.
\end{align*}
Therefore, by \eqref{M-MT-problem} the $\cald$-valued process $X$ is a weak solution to the SDE \eqref{SDE}.
\end{proof}

\section{Proof of Proposition~\ref{P:C=sigma2}}\label{S:proofofequalitydriftseries}

In this section we provide the proof of Proposition~\ref{P:C=sigma2}.

\begin{proof}[Proof of Proposition~\ref{P:C=sigma2}]
By Lemma \ref{lemma-HS-norm} the linear mapping $\Phi_Q : L_2^0(H) \to L_2(H)$ given by $\Phi_Q(T) = T Q^{1/2}$ is an isometric isomorphism. Noting that $\Sigma = \Phi_Q \circ \sigma$, by Proposition \ref{prop-smooth-linear} the mapping $\Sigma : H \to L_2(H)$ is of class $C^1$ (and even of class $C_b^1$, provided $\sigma$ is of class $C_b^1$), and we have
\begin{align}\label{expr-tr-2}
D \Sigma(x) v = ( D \sigma(x) v ) Q^{1/2}, \quad x,v \in H.
\end{align}
Furthermore, note that $C = B(\Sigma,\Sigma^*)$, where $B : L_2(H) \times L_2(H) \to L_1(H)$ denotes the continuous bilinear operator given by $B(T,S) := TS$. Moreover, by Lemma \ref{lemma-adjoint-isometry} the linear mapping
\begin{align*}
L_1(H) \to L_1(H), \quad T \mapsto T^*
\end{align*}
is an isometry. Therefore, by Propositions \ref{prop-Leibniz} and \ref{prop-smooth-linear} the mapping $C : H \to L_1^+(H)$ is of class $C^1$ (and even of class $C_b^1$, provided $\sigma$ is of class $C_b^1$), and we have
\begin{equation}\label{expr-C-diff}
 \begin{aligned}
DC(x)v &= B(D \Sigma(x) v, \Sigma(x)^*) + B(\Sigma(x), D \Sigma^*(x) v)
\\ &= (D \Sigma(x)v) \Sigma(x)^* + \Sigma(x) (D \Sigma(x)v)^*, \quad x,v \in H.
\end{aligned}
\end{equation}
For any $u \in H$ we denote by $\Phi_u : L_1(H) \to H$ the continuous linear operator given by $\Phi_u(S) := S u$. Then we have $C^j = \Phi_{e_j} \circ C$ for each $j \in \bbn$. Hence, applying Proposition \ref{prop-series-app} we obtain that the series \eqref{series-sigma2} is weakly convergent, and that for each $u \in H$ we have \eqref{series-sigma2-u}. Let us fix an arbitrary $x \in H$. By Proposition \ref{prop-smooth-linear} and \eqref{expr-C-diff} we obtain
\begin{align*}
D C^j(x) P_C^j(x) &= ( D C(x) P_C^j(x) ) e_j
\\ &= (D \Sigma(x) P_C^j(x)) \Sigma(x)^* e_j + \Sigma(x) (D \Sigma(x) P_C^j(x))^* e_j, \quad j \in \bbn.
\end{align*}
After noticing that for all $u \in \ker \Sigma(x)^*$,
$$\langle u, \Sigma(x) (D \Sigma(x) P_C^j(x))^* e_j \rangle=\langle \Sigma(x)^* u, (D \Sigma(x) P_C^j(x))^* e_j  \rangle =0, \quad j \in \bbn,$$
we see that
\begin{align}\label{E:DC3}
\sum_{j=1}^{\infty} \la u, D C^j(x) P_C^j(x) \ra = \sum_{j=1}^{\infty} \la u, (D \Sigma(x) P_C^j(x)) \Sigma(x)^* e_j \ra, \quad u \in \ker \Sigma(x)^*.
\end{align}
For any $u \in H$ we denote by $\Sigma^* u : H \to H$ the mapping $\Sigma^* u := \Phi_u \circ \Sigma^*$. Then, applying Proposition \ref{prop-smooth-linear} twice we have
\begin{align}\label{expr-tr-0}
D ( \Sigma^* u )(x)v = (D \Sigma^*(x) v) u = ( D \Sigma(x) v )^* u, \quad x,u,v \in H.
\end{align}
In particular, for $v=P_C^j(x)$ we obtain
\begin{align}\label{expr-tr-1}
D(\Sigma^* u) (x) P_C(x) e_j = ( D \Sigma(x)P_C^j(x) )^* u, \quad x,u \in H.
\end{align}
Note that $\sigma^j = \Psi_{f_j} \circ \sigma$ for all $j \in \bbn$, where for any $u \in H_0$ the continuous linear operator $\Psi_u : L_2(H_0,H) \to H$ is given by $\Psi_u(T) := Tu$. Hence, by Proposition \ref{prop-smooth-linear} we obtain
\begin{align}\label{expr-sigma-j-diff}
D \sigma^j(x)v = (D \sigma(x) v)f_j, \quad x,v \in H \text{ and } j \in \bbn.
\end{align}
Moreover, by Lemma \ref{lemma-orth-proj-range} we have
\begin{align}\label{proj-range}
P_C(x) \Sigma(x) = \Sigma(x), \quad x \in H.
\end{align}
Thus, plugging the expressions \eqref{expr-tr-1}, \eqref{proj-range}, \eqref{expr-tr-0}, \eqref{expr-tr-2} and \eqref{expr-sigma-j-diff} back in \eqref{E:DC3} yields
\begin{align*}
&\sum_{j=1}^{\infty} \la u, DC^j(x) P_C^j(x) \ra = \sum_{j=1}^{\infty} \la u, ( D \Sigma(x) P_C^j(x) ) \Sigma(x)^* e_j \ra
\\ &= \sum_{j=1}^{\infty} \la \Sigma(x) ( D \Sigma(x) P_C^j(x) )^* u, e_j \ra = \sum_{j=1}^{\infty} \la \Sigma(x) D(\Sigma^* u) (x) P_C(x) e_j, e_j \ra
\\ &= \tr \big( \Sigma(x) D(\Sigma^* u) (x) P_C(x) \big) = \tr \big( D(\Sigma^* u) (x) P_C(x) \Sigma(x) \big) = \tr \big( D(\Sigma^* u) (x) \Sigma(x) \big)
\\ &= \sum_{j=1}^{\infty} \la D(\Sigma^* u) (x) \Sigma(x) e_j, e_j \ra = \sum_{j=1}^{\infty} \la (D \Sigma(x) \Sigma(x) e_j )^* u, e_j \ra
\\ &= \sum_{j=1}^{\infty} \la u, (D \Sigma(x) \Sigma(x) e_j ) e_j \ra = \sum_{j=1}^{\infty} \la u, (D \sigma(x) \Sigma(x) e_j ) Q^{1/2} e_j \ra
\\ &= \sum_{j=1}^{\infty} \la u, (D \sigma(x) \sigma(x) Q^{1/2} e_j ) Q^{1/2} e_j \ra = \sum_{j=1}^{\infty} \la u, (D \sigma(x) \sigma(x) f_j ) f_j \ra
\\ &= \sum_{j=1}^{\infty} \la u, (D \sigma(x) \sigma^j(x)) f_j \ra = \sum_{j=1}^{\infty} \la u, D \sigma^j(x) \sigma^j(x) \ra
\end{align*}
for all $x \in H$ and $u \in \ker \Sigma(x)^*$, where we have used Lemma \ref{lemma-trace-commute} in the third line of the calculation. This proves \eqref{series-sigma2-Stratonovich}.
\end{proof}

\section{Proof of Lemma~\ref{lemmadoubleintstohilbert}}\label{S:proofdoublestochasticinequality}

In this section we provide the proof of Lemma~\ref{lemmadoubleintstohilbert}. For this purpose, recall that the sequence $(W^i)_{i \in \bbn}$ defined as
\begin{align*}
W^i := \frac{1}{\sqrt{\lambda_i}} \la W,e_i \ra
\end{align*}
is a sequence of independent real-valued standard Wiener processes; see \cite[Prop. 4.3]{Da_Prato}. Here $\{ e_i \}_{i \in \bbn}$ denotes the orthonormal basis of $H$ and $(\lambda_i)_{i \in \bbn} \subset (0,\infty)$ the sequence such that \eqref{diagonal} is fulfilled.

\begin{remark}\label{rem-Wiener-series-1}
Let $Y$ be a predictable $H_0$-valued process. Using the identification $H_0 \cong L_2(H_0,\bbr)$ from Lemma \ref{lemma-Riesz-Parseval}, we can consider the $\bbr$-valued It\^{o} integral
\begin{align*}
\int_0^t Y_s dW_s = \sum_{i=1}^{\infty} \int_0^t \la Y_s,f_i \ra_{H_0} dW_s^i,
\end{align*}
where the series converges unconditionally. Let us also recall from Proposition \ref{prop-Riesz-Parseval-1} that $H_0 \to \ell^2(\bbn)$, $y \mapsto ( \la y,f_i \ra_{H_0} )_{i \in \bbn}$ is an isometric isomorphism.
\end{remark}

\begin{remark}\label{rem-Wiener-series-2}
Let $Z$ be a predictable $L_2(H_0)$-valued process. Using the identification $L_2(H_0) \cong L_2(H_0,L_2(H_0,\bbr))$ from Proposition \ref{prop-Riesz-Parseval-2}, we can consider the $\bbr$-valued double It\^{o} integral
\begin{align*}
\int_0^t \bigg( \int_0^s Z_r dW_r \bigg) dW_s &= \int_0^t \bigg( \sum_{i=1}^{\infty} \int_0^s Z_r f_i dW_r^i \bigg) dW_s
\\ &= \sum_{j=1}^{\infty} \int_0^t \bigg\la \sum_{i=1}^{\infty} \int_0^s Z_r f_i dW_r^i, f_j \bigg\ra_{H_0} dW_s^j
\\ &= \sum_{j=1}^{\infty} \int_0^t \bigg( \sum_{i=1}^{\infty} \int_0^s \la Z_r f_i,f_j \ra_{H_0} dW_r^i \bigg) dW_s^j
\\ &= \sum_{i,j=1}^{\infty} \int_0^t \int_0^s \la Z_r f_i,f_j \ra_{H_0} dW_r^i dW_s^j,
\end{align*}
where the series converges unconditionally. Let us also recall from Proposition \ref{prop-Riesz-Parseval-2} that
\begin{align}\label{isom-H0-N-N}
L_2(H_0) \to \ell^2(\bbn \times \bbn), \quad T \mapsto \big( \la T f_i, f_j \ra_{H_0} \big)_{i,j \in \bbn}
\end{align}
is an isometric isomorphism.
\end{remark}

\begin{remark}\label{rem-Wiener-series-3}
Suppose that $\gamma \in L_2(H_0)$ is constant. Setting $\gamma^{ij} := \la \gamma f_i,f_j \ra_{H_0}$ for all $i,j \in \bbn$, by Remark \ref{rem-Wiener-series-2} we obtain
\begin{equation}\label{series-split}
\begin{aligned}
\int_0^t \bigg( \int_0^s \gamma dW_r \bigg) dW_s &= \sum_{i,j=1}^{\infty} \gamma^{ij} \int_0^t W_s^i dW_s^j
\\ &= \sum_{i=1}^{\infty} \frac{\gamma^{ii}}{2} \big( (W_t^i)^2 - t \big) + \sum_{i \neq j=1}^{\infty} \gamma^{ij} \int_0^t W_s^i dW_s^j.
\end{aligned}
\end{equation}
In the last step, we have used that
$$(W^i_t)^2 = 2 \int_0^t W^i_s d W^i_s + t, \quad i \in \bbn,$$
which follows from integration by parts. Moreover, note that the two series in the last line of \eqref{series-split} are also unconditionally convergent. For example, using the isometric isomorphism \eqref{isom-H0-N-N}, let us define $\widetilde{\gamma} \in L_2(H_0)$ as $\widetilde{\gamma}^{ii} := \gamma^{ii}$ for all $i \in \bbn$ and $\widetilde{\gamma}^{ij} := 0$ for $i \neq j$. Then we have
\begin{align*}
\int_0^t \bigg( \int_0^s \widetilde{\gamma} dW_r \bigg) dW_s = \sum_{i=1}^{\infty} \gamma^{ii} \int_0^t W_s^i dW_s^i = \sum_{i=1}^{\infty} \frac{\gamma^{ii}}{2} \big( (W_t^i)^2 - t \big).
\end{align*}
\end{remark}

\begin{proof}[Proof of Lemma~\ref{lemmadoubleintstohilbert}]
Let us agree on the notation
\begin{align}\label{notation-H0-1}
\alpha^i &:= \la \alpha,f_i \ra_{H_0}, \quad i \in \bbn,
\\ \label{notation-H0-2} \beta^i &:= \la \beta,f_i \ra_{H_0}, \quad i \in \bbn,
\\ \label{notation-H0-3} \gamma^{ij} &:= \la \gamma f_i, f_j \ra_{H_0}, \quad i,j \in \bbn.
\end{align}
Then, in view of Remarks \ref{rem-Wiener-series-1} and \ref{rem-Wiener-series-2} we can write \eqref{eqintegraledoublehilbert} as
\begin{align*}
\int_{0}^{t} \theta_s ds + \sum_{i=1}^{\infty} \alpha^i W_t^i + \sum_{i=1}^{\infty} \int_0^t \int_0^s \beta_r^i dr dW_s^i + \sum_{i,j=1}^{\infty} \int_0^t \int_0^s \gamma_r^{ij} d W_r^i d W_s^j \leq 0.
\end{align*}
This reduces to
\begin{align}\label{ineqn-short-time-proof}
\theta_0 t + \sum_{i=1}^{\infty} \alpha^i W^i_t + \int_{0}^{t} \bigg( \int_{0}^{s} \gamma_0  dW_r \bigg)  dW_s + R_t \leq 0,
\end{align}
where the remainder is given by
	\begin{eqnarray*}
		R_t &=& \int_{0}^{t} (\theta_s - \theta_0) ds + \int_{0}^{t}   \int_{0}^{s} \beta_rdr   dW_s  +  \int_{0}^{t}\int_{0}^{s} (\gamma_r -\gamma_0) dW_r   dW_s \\
		&=:&  R^1_t + R^2_t + R^3_t.
	\end{eqnarray*}
	In view of Lemma \ref{lemma-short-time-0} below, for the first conclusion $\alpha = 0$ it suffices to show that  $R_t/t \overset{\bbp}{\rightarrow} 0$ when $t \to 0$. To see this, first note that using L'H\^{o}pital's rule we have $R^1_t = o(t)$  a.s.~since $\theta$ is continuous at $0$. Moreover, since $\beta$ is bounded, there is a constant $C > 0$ such that $\| \beta_t(\omega) \|_{H_0} \leq C$ for all $(\omega,t) \in \Omega \times \bbr_+$. Hence, recalling Remark \ref{rem-Wiener-series-1}, by the It\^{o} isometry we obtain
	\begin{align*}
	\bbe \Bigg[ \bigg| \frac{R_t^2}{t} \bigg|^2 \Bigg] &= \frac{1}{t^2} \bbe \Bigg[ \bigg| \int_0^t \int_0^s \beta_r dr dW_s \bigg|^2 \Bigg] = \frac{1}{t^2} \bbe \Bigg[ \int_0^t \bigg\| \int_0^s \beta_r dr \bigg\|_{H_0}^2 ds \Bigg]
	\\ &\leq \frac{1}{t^2} \bbe \Bigg[ \int_0^t \bigg( \int_0^s \| \beta_r \|_{H_0} dr \bigg)^2 ds \Bigg] \leq \frac{C^2}{t^2} \bbe \bigg[ \int_0^t s^2 ds \bigg]
	\\ &= \frac{C^2}{t^2} \cdot \frac{t^3}{3} = \frac{C^2}{3} \cdot t \to 0 \quad \text{for $t \to 0$.}
	\end{align*}
	Therefore, we have $R^2_t = o(t)$ in $L^2$, and hence in probability.  Finally, recalling Remarks \ref{rem-Wiener-series-1} and \ref{rem-Wiener-series-2}, by the It\^{o} isometry and \eqref{eqgammacontinuity} we derive
	\begin{align*}
	\bbe \Bigg[ \bigg| \frac{R_t^3}{t} \bigg|^2 \Bigg] &= \frac{1}{t^2} \bbe \Bigg[ \bigg| \int_{0}^{t}\int_{0}^{s} (\gamma_r -\gamma_0) dW_r   dW_s \bigg|^2 \Bigg]
	\\ &= \frac{1}{t^2} \bbe \Bigg[ \int_{0}^{t} \bigg\| \int_{0}^{s} (\gamma_r -\gamma_0) dW_r \bigg\|_{H_0}^2 ds  \Bigg]
	\\ &= \frac{1}{t^2} \bbe \Bigg[ \int_{0}^{t} \int_{0}^{s} \| \gamma_r -\gamma_0 \|_{L_2(H_0)}^2 dr ds  \Bigg] \to 0 \quad \text{for $t \to 0$.}
	\end{align*}
	Thus, we have $R^3_t = o(t)$ in $L^2$, and hence in probability. By applying Lemma \ref{lemma-short-time-0} we conclude that $\alpha = 0$.

	Now suppose that additionally the series \eqref{series-gamma-0} converges. By Remark \ref{rem-Wiener-series-3} we can also express \eqref{ineqn-short-time-proof} as
	\begin{equation*}
	\bigg( \theta_0- \frac{1}{2} \mbox{Tr}(\gamma_0) \bigg) t + \sum_{i=1}^{\infty} \alpha^i W^i_t + \sum_{i=1}^{\infty} \frac{\gamma^{ii}_0}{2} (W^i_t)^2 + \sum_{1 \leq i \neq j}^{\infty} \gamma^{ij}_0 \int_0^t W^i_s d W^j_s + R_t \leq 0.
	\end{equation*}
	Consequently, applying Lemma \ref{lemma-short-time} with $\delta = \theta_0 - \frac{1}{2} \tr(\gamma_0)$ concludes the proof.
\end{proof}

\begin{lemma}\label{lemma-short-time-0}
Let $R$ be a real-valued process such that $\lim_{t \downarrow 0} \frac{R_t}{t} = 0$ in probability. Let $\alpha \in H_0$, $\widetilde{\gamma} \in L_2(H_0)$, and $\delta \in \bbr$ be such that for all $t \geq 0$
\begin{align*}
\sum_{i=1}^{\infty} \alpha^i W_t^i + \int_0^t \bigg( \int_0^s \widetilde{\gamma} \, dW_r \bigg) dW_s + \delta t + R_t \leq 0,
\end{align*}
where the notation is according to \eqref{notation-H0-1}. Then we have $\alpha = 0$.
\end{lemma}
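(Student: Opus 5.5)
We argue by contradiction: assume $\alpha \neq 0$ and show that the inequality is violated with positive probability at small times. The strategy is to rescale by $\sqrt{t}$, so that the first-order Gaussian term dominates everything else.

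First we divide the inequality by $\sqrt{t}$. Since $R_t/t \to 0$ in probability, also $R_t/\sqrt{t} \to 0$ in probability, and trivially $\delta \sqrt{t} \to 0$.

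Next we handle the double integral. Set $I_t := \int_0^t (\int_0^s \widetilde{\gamma}\, dW_r) dW_s$. By the It\^{o} isometry, via Remarks \ref{rem-Wiener-series-1} and \ref{rem-Wiener-series-2},
\begin{align*}
\bbe[I_t^2] = \int_0^t s \, \| \widetilde{\gamma} \|_{L_2(H_0)}^2 \, ds = \frac{t^2}{2} \| \widetilde{\gamma} \|_{L_2(H_0)}^2 .
\end{align*}
Hence $I_t/\sqrt{t} \to 0$ in $L^2$, and therefore in probability.

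The remaining term is $G_t := t^{-1/2} \sum_{i} \alpha^i W_t^i$. Because the $W^i$ are independent standard Brownian motions, $G_t$ is a centered Gaussian random variable with variance $\sum_i (\alpha^i)^2 = \| \alpha \|_{H_0}^2$. Its law does not depend on $t$, and we call it $\caln(0, \|\alpha\|_{H_0}^2)$. The series converges in $L^2$ by Remark \ref{rem-Wiener-series-1}.

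Now suppose $\alpha \neq 0$ and set $p := \bbp(G_t > 1) > 0$, which is independent of $t$. The remaining terms converge to $0$ in probability, so we may choose $t$ small enough that
\begin{align*}
\bbp\big( |\delta| \sqrt{t} + |I_t|/\sqrt{t} + |R_t|/\sqrt{t} > 1/2 \big) < p/2 .
\end{align*}
On the event $\{G_t > 1\}$, minus this exceptional set, the rescaled left-hand side exceeds $1/2 > 0$. This event has probability at least $p/2 > 0$, contradicting the assumption that the inequality holds almost surely for every $t$. Hence $\alpha = 0$.

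The argument is short. The only delicate point is that the quantity $\sum_i \alpha^i W^i_t$ must be read as an $L^2$-convergent series, so that its Gaussian law, with variance $t\|\alpha\|_{H_0}^2$, is legitimate. This follows from the isometry $H_0 \cong \ell^2(\bbn)$ in Remark \ref{rem-Wiener-series-1}.
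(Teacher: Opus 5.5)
Your proof is correct and follows the paper's route. You rescale by $\sqrt{t}$ and show that the double-integral, drift and remainder terms vanish in probability; the paper cites Lemma \ref{lemma-short-time-gamma} for the double integral, whereas you redo the It\^{o} isometry for constant $\widetilde{\gamma}$. You then use Brownian scaling to identify $t^{-1/2}\sum_i \alpha^i W_t^i$ with a fixed $\caln(0,\|\alpha\|_{H_0}^2)$ variable. Your explicit contradiction on an event of probability at least $p/2$ just spells out the step the paper compresses into ``this implies $I_1^1 \leq 0$ almost surely, and hence $\alpha = 0$''.
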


\begin{proof}
Let us define the processes $I^1$, $I^2$ and $L$ as
\begin{align}\label{I-1-def}
I_t^1 &:= \sum_{i=1}^{\infty} \alpha^i W_t^i, \quad t \geq 0,
\\ \label{I-2-def} I_t^2 &:= \int_0^t \bigg( \int_0^s \widetilde{\gamma} \, dW_r \bigg) dW_s, \quad t \geq 0,
\\ \label{L-def} L_t &:= I_t^1 + I_t^2 + \delta t + R_t, \quad t \geq 0.
\end{align}
Then we have $L_t \leq 0$ for all $t \geq 0$, and by Remark \ref{rem-Wiener-series-1} we obtain
\begin{align}\label{I-1-identity}
I_t^1 = \int_0^t \alpha \, dW_s, \quad t \geq 0.
\end{align}
By Lemma \ref{lemma-short-time-gamma} below we have
\begin{align*}
\frac{I_t^2 + \delta t + R_t}{\sqrt{t}} \overset{\bbp}{\to} 0 \quad \text{as $t \to 0$.}
\end{align*}
Moreover, by the scaling property of Brownian motion we have
\begin{align*}
\frac{I_t^1}{\sqrt{t}} = \sum_{i=1}^{\infty} \alpha^i \frac{W_t^i}{\sqrt{t}} \overset{d}{=} \sum_{i=1}^{\infty} \alpha^i W_1^i = I_1^1.
\end{align*}
We have $L_t / \sqrt{t} \leq 0$ for all $t > 0$. This implies $I_1^1 \leq 0$ almost surely, and hence $\alpha = 0$.
\end{proof}

\begin{lemma}\label{lemma-short-time-gamma}
Let $(\gamma_t)_{t \geq 0}$ be a bounded, predictable $L_2(H_0)$-valued process. We define the real-valued process $I$ as
\begin{align*}
I_t := \int_0^t \int_0^s \gamma_r d W_r dW_s, \quad t \geq 0.
\end{align*}
Then for each $\delta \in (0,1)$ we have
\begin{align*}
\frac{I_t}{t^{\delta}} \overset{L^2}{\to} 0 \quad \text{for $t \to 0$.}
\end{align*}
\end{lemma}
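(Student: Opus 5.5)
The plan is a direct second-moment bound by applying the It\^{o} isometry twice, as in the estimates for $R^2$ and $R^3$ in the proof of Lemma~\ref{lemmadoubleintstohilbert}. Since $\gamma$ is bounded, there is a constant $K>0$ with $\| \gamma_r(\omega) \|_{L_2(H_0)} \leq K$ for all $(\omega,r)$.

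First we define the $H_0$-valued inner integral $J_s := \int_0^s \gamma_r \, dW_r$, using the identification from Remark~\ref{rem-Wiener-series-2}. The It\^{o} isometry for $L_2(H_0)$-valued integrands, equivalently for the $L_2(H_0,L_2(H_0,\bbr))$ identification, gives
\begin{align*}
\bbe\big[ \| J_s \|_{H_0}^2 \big] = \bbe \bigg[ \int_0^s \| \gamma_r \|_{L_2(H_0)}^2 dr \bigg] \leq K^2 s .
\end{align*}
In particular, $J$ is a predictable $H_0$-valued process that is square-integrable on $[0,t]\times\Omega$. The outer integral $I_t = \int_0^t J_s \, dW_s$ is therefore well defined in the sense of Remark~\ref{rem-Wiener-series-1}.

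Next we apply the It\^{o} isometry to the outer integral:
\begin{align*}
\bbe\big[ |I_t|^2 \big] = \bbe \bigg[ \int_0^t \| J_s \|_{H_0}^2 ds \bigg] = \int_0^t \bbe\big[\| J_s \|_{H_0}^2\big] ds \leq K^2 \int_0^t s \, ds = \frac{K^2}{2} t^2 .
\end{align*}
Here Fubini is justified because the integrand is nonnegative. It follows that
\begin{align*}
\bbe\big[ |I_t / t^{\delta}|^2 \big] \leq \frac{K^2}{2} t^{2-2\delta} \to 0 \quad \text{as } t \to 0,
\end{align*}
for every $\delta \in (0,1)$, and in fact the bound holds up to $\delta<1$ with room to spare.

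The only genuinely delicate point is the justification that the double integral defined by the series in Remark~\ref{rem-Wiener-series-2} coincides with the iterated stochastic integral, so that both isometries apply. I would argue through the isometric isomorphism \eqref{isom-H0-N-N}: the inner integral has coordinates $\sum_i \int_0^s \gamma_r^{ij} dW_r^i$, and since the $W^i$ are independent, the isometry reduces to $\sum_{i,j} \bbe \int_0^s |\gamma_r^{ij}|^2 dr = \bbe\int_0^s \|\gamma_r\|_{L_2(H_0)}^2 dr$. All other steps are routine.
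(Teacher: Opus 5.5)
Your proposal is correct and follows the paper's proof: bound $\|\gamma_r\|_{L_2(H_0)}$ uniformly, apply the It\^{o} isometry to the outer integral and then to the inner one, and obtain $\bbe[|I_t|^2] \leq \frac{K^2}{2} t^2$, which gives $\bbe[|I_t/t^{\delta}|^2] \leq \frac{K^2}{2} t^{2-2\delta} \to 0$. Your explicit Fubini step and your coordinate check of the isometry via \eqref{isom-H0-N-N} only spell out what the paper takes from Remarks~\ref{rem-Wiener-series-1} and~\ref{rem-Wiener-series-2}.
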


\begin{proof}
Since $\gamma$ is bounded, there is a constant $C > 0$ such that $\| \gamma_t(\omega) \|_{L_2(H_0)} \leq C$ for all $(\omega,t) \in \Omega \times \bbr_+$. Recalling Remarks \ref{rem-Wiener-series-1} and \ref{rem-Wiener-series-2},
by the It\^{o} isometry we obtain
	\begin{align*}
	\bbe \Bigg[ \bigg| \frac{I_t}{t^{\delta}} \bigg|^2 \Bigg] &= \frac{1}{t^{2 \delta}} \bbe \Bigg[ \bigg| \int_{0}^{t}\int_{0}^{s} \gamma_r dW_r   dW_s \bigg|^2 \Bigg]
	= \frac{1}{t^{2 \delta}} \bbe \Bigg[ \int_{0}^{t} \bigg\| \int_{0}^{s} \gamma_r dW_r \bigg\|_{H_0}^2 ds  \Bigg]
	\\ &= \frac{1}{t^{2 \delta}} \bbe \Bigg[ \int_{0}^{t} \int_{0}^{s} \| \gamma_r \|_{L_2(H_0)}^2 dr ds  \Bigg] \leq \frac{C^2}{t^{2 \delta}} \int_0^t s ds = \frac{C^2}{t^{2 \delta}} \cdot \frac{t^2}{2} \to 0
	\end{align*}
as $t \to 0$, completing the proof.
\end{proof}

Now we present an infinite dimensional version of \cite[Lemma 2.1]{buc}. For the specification \eqref{specification-operator} of the linear operator $\widetilde{\gamma} \in L_2(H_0)$ below we use that \eqref{isom-H0-N-N} is an isometric isomorphism.

\begin{lemma}\label{lemma-short-time}
Let $R$ be a real-valued process such that $\lim_{t \downarrow 0} \frac{R_t}{t} = 0$ in probability. Let $\alpha, \beta \in H_0$, $\gamma \in L_2(H_0)$, and $\delta \in \bbr$ be such that for all $t \geq 0$
\begin{align*}
\sum_{i=1}^{\infty} \alpha^i W_t^i + \sum_{i=1}^{\infty} \beta^i (W_t^i)^2 + \sum_{i \neq j} \gamma^{ij} \int_0^t W_s^i d W_s^j + \delta t + R_t \leq 0,
\end{align*}
where the notation is according to \eqref{notation-H0-1}--\eqref{notation-H0-3}. Then the following statements are true:
\begin{enumerate}
\item We have $\alpha = 0$.

\item We have $\beta^i \leq 0$ for all $i \in \bbn$.

\item The operator $\gamma$ is self-adjoint.

\item Consider the operator $\widetilde{\gamma} \in L_2(H_0)$, specified as
\begin{align}\label{specification-operator}
\widetilde{\gamma}^{ij} := \la \widetilde{\gamma} f_i,f_j \ra_{H_0} :=
\begin{cases}
\gamma^{ij}, & \text{if $i \neq j$,}
\\ 2 \beta^i, & \text{if $i = j$.}
\end{cases}
\end{align}
Then $\widetilde{\gamma}$ is self-adjoint, and we have $-\widetilde{\gamma} \in L_2^+(H_0)$.

\item We have $\delta \leq 0$.
\end{enumerate}
\end{lemma}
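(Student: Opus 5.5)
Throughout, write $L_t$ for the left-hand side of the assumed inequality, so that $L_t \leq 0$ for all $t \geq 0$. The plan is to follow the finite-dimensional argument of \cite[Lemma 2.1]{buc} through a sequence of scalings and projections onto finitely many Brownian coordinates, and to reduce each statement to an elementary fact about a nonpositive random variable.

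\emph{Statement (1).} Since $\sum_i \beta^i ((W^i_t)^2 - t)$ and $\sum_{i\neq j}\gamma^{ij}\int_0^t W^i_s dW^j_s$ are double Itô integrals of constant Hilbert-Schmidt kernels (Remark \ref{rem-Wiener-series-3}), Lemma \ref{lemma-short-time-gamma} shows that they are $o(\sqrt t)$ in $L^2$. Moreover $\sum_i\beta^i t = O(t)$, because $\beta \in H_0 \cong \ell^2$; if absolute summability fails, I would instead keep $\beta$ inside the compensated form. Hence we are exactly in the situation of Lemma \ref{lemma-short-time-0}, which gives $\alpha = 0$.

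\emph{Statements (2)--(5).} With $\alpha = 0$, divide by $t$ and use Brownian scaling $W^i_{t\cdot}/\sqrt t \overset{d}{=} W^i_\cdot$. Then $L_t/t$ has the same law as
\[
Z := \sum_i \beta^i (W^i_1)^2 + \sum_{i\neq j}\gamma^{ij}\int_0^1 W^i_s dW^j_s + \delta,
\]
up to the term $R_t/t$, which tends to $0$ in probability. Since $L_t/t \leq 0$, we get $Z\le 0$ a.s. The whole remaining argument consists in extracting information from $Z \leq 0$.

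To obtain the finite-dimensional structure, condition on $\calf^{(N)}$, the $\sigma$-algebra generated by $W^1,\dots,W^N$. The terms involving an index $>N$ have conditional expectation that can be computed: the coordinates with $i>N$ contribute $\beta^i$ each, and the mixed integrals have conditional mean $0$. This yields
\[
Z_N + \sum_{i>N}\beta^i \leq 0,
\]
where $Z_N$ is the finite-dimensional analogue of $Z$ built from the indices $\leq N$.

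Now apply the finite-dimensional lemma \cite[Lemma 2.1]{buc} to $Z_N$, or argue directly as follows. Writing $\int_0^1 W^i dW^j$ in terms of the symmetric part $W^iW^j$ and the Lévy area, we see that the Lévy areas have full support conditionally on the endpoint values. Hence the antisymmetric part of $(\gamma^{ij})_{i,j\leq N}$ must vanish, which gives self-adjointness (3). What remains is the quadratic form $\frac12\la \widetilde\gamma_N W_1, W_1\ra + \delta - \frac12\sum_{i\le N} 2\beta^i$ (appropriately compensated) being $\leq 0$ for Gaussian $W_1$ with full support. Taking $W_1$ large in arbitrary directions gives $-\widetilde\gamma_N \geq 0$, and in particular $\beta^i\le0$, which is (2). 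Letting $N\to\infty$ gives (4). Evaluating near $W_1 = 0$ gives $\delta + \sum_{i>N}\beta^i \leq 0$ after the correct bookkeeping of the compensators; since $\sum_{i>N}\beta^i\to 0$, this yields (5).

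\emph{Main obstacle.} The main obstacle is the conditioning step and its bookkeeping. One has to justify rigorously that the a.s. inequality survives conditional expectation onto finitely many coordinates. This needs integrability: $R_t/t$ only converges in probability, so the limit $Z\le0$ should be taken first, and $Z$ itself is in $L^2$. One also has to track exactly which $t$-terms arise from $(W^i)^2$ versus the compensated form, so that the final constant is indeed $\delta$ and not shifted by a possibly non-summable $\sum\beta^i$. I would first try to assume $\sum_i|\beta^i|<\infty$, which holds in the application because $\beta^i = \gamma_0^{ii}/2$ with convergent trace series, and then check whether the general case needs it.
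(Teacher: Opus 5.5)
Your proposal is correct, and its skeleton matches the paper's:
\begin{itemize}
\item Lemma \ref{lemma-short-time-0} gives $\alpha=0$.
\item Brownian scaling passes from $L_t\le 0$ and $R_t/t\to 0$ in probability to $Z\le 0$ almost surely.
\item Conditioning on Brownian coordinates and the L\'evy area remove the antisymmetric part of $\gamma$.
\end{itemize}

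The genuine difference is what you condition on. The paper conditions on a single pair $\sigma(W^i,W^j)$; you condition on the block $\calf^{(N)}$ generated by $W^1,\dots,W^N$.

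Pairwise conditioning is cheaper for (3). Only one L\'evy area appears, and its conditional law on $\{|W_1^i|,|W_1^j|<\epsilon\}$ is symmetric with unbounded support. Your version needs joint full support of $(W_1^1,\dots,W_1^N)$ together with all areas $(L^{ij})_{i<j\le N}$. This is true by the support theorem for the Brownian step-$2$ lift, but you can avoid it by conditioning once more on a pair.

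In exchange, your block conditioning gives more.
\begin{itemize}
\item The pairwise inequality only controls the $2\times 2$ principal submatrices of $\widetilde{\gamma}$, which does not give $-\widetilde{\gamma}\in L_2^+(H_0)$.
\item It produces the constant $\delta+\sum_{k\neq i,j}\beta^k$, which does not tend to $\delta$.
\end{itemize}
The paper therefore draws (4) and (5) from the unconditioned infinite-dimensional inequality, where ``evaluating near $W_1=0$'' is not directly meaningful. Your inequality $\frac12\la\widetilde{\gamma}_N w,w\ra+\delta+\sum_{i>N}\beta^i\le 0$ for all $w\in\bbr^N$, obtained from continuity and full support, is exactly what makes these steps rigorous. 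Scaling $w$ gives $-\widetilde{\gamma}_N\ge 0$. Setting $w=0$ gives the bound on the constant. Letting $N\to\infty$, together with boundedness of $\widetilde{\gamma}$, gives (4) and (5).

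Two bookkeeping corrections.

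First, $\beta\in\ell^2$ does not imply that $\sum_i\beta^i$ converges, but no extra assumption is needed. Since $\sum_i\beta^i((W_t^i)^2-t)$ converges a.s.\ and in $L^2$, the series $\sum_i\beta^i(W_t^i)^2$ in the hypothesis converges if and only if $\sum_i\beta^i$ does, so this convergence is implicit in the statement. It does three things for you:
\begin{itemize}
\item it yields $Z\in L^1$, which your conditioning needs;
\item it lets you write the $\beta$- and $\gamma$-terms in (1) as a double It\^o integral with kernel $\widetilde{\gamma}$ plus $t\sum_i\beta^i$, so Lemma \ref{lemma-short-time-0} applies with $\delta+\sum_i\beta^i$ in place of $\delta$;
\item after (2) it becomes absolute convergence.
\end{itemize}

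Second, there is no compensation in the conditioning step. For $i>N$ we have $\bbe[\beta^i(W_1^i)^2\mid\calf^{(N)}]=\beta^i$, so the constant is exactly $\delta+\sum_{i>N}\beta^i$. Drop the intermediate expression $\delta-\sum_{i\le N}\beta^i$: it is wrong. For example, take $\beta^1=-1$, all other coefficients zero and $\delta=0$; the hypothesis holds, but $\delta-\beta^1=1>0$.
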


\begin{proof}
Let us define the processes $I^1$, $I^2$ and $L$ by \eqref{I-1-def},
\begin{align*}
I_t^2 := \sum_{i=1}^{\infty} \beta^i (W_t^i)^2 + \sum_{i \neq j} \gamma^{ij} \int_0^t W_s^i d W_s^j, \quad t \geq 0
\end{align*}
and \eqref{L-def}. Then we have $L_t \leq 0$ for all $t \geq 0$, by Remark \ref{rem-Wiener-series-1} we obtain \eqref{I-1-identity}, and by Remark \ref{rem-Wiener-series-3} the process $I^2$ coincides with the right-hand side of \eqref{I-2-def}. Therefore, by Lemma \ref{lemma-short-time-0} we deduce that $\alpha = 0$. Furthermore, we have $L_t / t \leq 0$ for all $t > 0$. Since $\frac{1}{t} \int_0^t W_s^i dW_s^j \overset{d}{=} \int_0^1 W_s^i dW_s^j$ for all $i,j \in \bbn$, this implies
\begin{align}\label{inequ-bcd}
\sum_{i=1}^{\infty} \beta^i (W_1^i)^2 + \sum_{i \neq j} \gamma^{ij} \int_0^1 W_s^i d W_s^j + \delta \leq 0.
\end{align}
Let us fix arbitrary $i,j \in \bbn$ with $i \neq j$. Taking conditional expectation with respect to $\sigma( W_s^i, W_s^j : s \geq 0 )$, we obtain
\begin{align*}
\beta^i (W_1^i)^2 + \beta^j (W_1^j)^2 + \gamma^{ij} \int_0^1 W_s^i dW_s^j + \gamma^{ji} \int_0^1 W_s^j dW_s^i + \delta + \sum_{k \neq i,j} |\beta^k|^2 \leq 0.
\end{align*}
Introducing the L\'{e}vy area
\begin{align*}
L^{ij} := \int_0^1 W_s^i dW_s^j - \int_0^1 W_s^j dW_s^i,
\end{align*}
using integration by parts we have
\begin{align*}
\gamma^{ij} \int_0^1 W_s^i dW_s^j + \gamma^{ji} \int_0^1 W_s^j dW_s^i = \frac{\gamma^{ij} + \gamma^{ji}}{2} W_1^i W_1^j + \frac{\gamma^{ij} - \gamma^{ji}}{2} L^{ij}.
\end{align*}
Thus, we obtain $\bbp$-almost surely
\begin{align}\label{inequ-W1}
\beta^i (W_1^i)^2 + \beta^j (W_1^j)^2 + \frac{\gamma^{ij} + \gamma^{ji}}{2} W_1^i W_1^j + \frac{\gamma^{ij} - \gamma^{ji}}{2} L^{ij} + \delta + \sum_{k \neq i,j} |\beta^k|^2 \leq 0.
\end{align}
Let $\epsilon > 0$ be arbitrary, and consider the event $B_{\epsilon} := \{ W_1^i,W_1^j \in (-\epsilon,\epsilon) \}$. Then we have $B_{\epsilon} \in \calf$ with $\bbp(B_{\epsilon}) > 0$. The inequality \eqref{inequ-W1} also holds $\bbp( \,\cdot\, | B_{\epsilon} )$-almost surely. Moreover, also under $\bbp( \,\cdot\, | B_{\epsilon} )$ the distribution of $L^{ij}$ is symmetric and has unbounded support. Therefore, we have $\gamma^{ij} = \gamma^{ji}$. Using Proposition \ref{prop-Riesz-Parseval-2}, this shows that the operators $\gamma$ and $\widetilde{\gamma}$ are self-adjoint. Furthermore, using integration by parts we have
\begin{align*}
W_1^i W_1^j = \int_0^1 W_s^i dW_s^j + \int_0^1 W_s^j dW_s^i
\end{align*}
for all $i,j \in \bbn$ with $i \neq j$. Therefore, the inequality \eqref{inequ-bcd} becomes
\begin{align*}
\sum_{i=1}^{\infty} \beta^i (W_1^i)^2 + \sum_{i < j}^{\infty} \gamma^{ij} W_1^i W_1^j + \delta \leq 0.
\end{align*}
We conclude that $\beta^i \leq 0$ for each $i \in \bbn$, and that $\delta \leq 0$. Moreover, in view of Propositions \ref{prop-Riesz-Parseval-1} and \ref{prop-Riesz-Parseval-2} we have $\widetilde{\gamma} \in L_2(H_0)$, because $\beta \in H_0$, and we have $- \widetilde{\gamma} \in L_2^+(H_0)$, because $\beta^i \leq 0$ for each $i \in \bbn$.
\end{proof}

\newpage

\begin{appendix}

\begin{center}

{\Large{\textbf{APPENDIX}}}

\vspace{0.5cm}

{\Large{\textbf{Geometry, stochastic processes and smooth functions in infinite dimension}}}

\vspace{1.0cm}

\end{center}

In this appendix we provide the required background about several topics related to this paper. More precisely, we present the required results about geometry in Hilbert spaces in Appendix \ref{app-geometry}, about submanifolds with boundary in Appendix \ref{app-submanifolds}, about martingales in Banach spaces in Appendix \ref{app-martingales-Banach}, about smooth functions in Banach spaces in Appendix \ref{app-functions-Banach}, and about linear operators in Hilbert spaces in Appendix \ref{app-operators-Hilbert}.

\section{Geometry in Hilbert spaces}\label{app-geometry}

In this appendix we provide the required results about geometry in Hilbert spaces. We start with various notions of tangent cones, which can also be defined for closed subsets in Banach spaces. Thus, for the beginning of this section let $E$ be a Banach space, and let $\cald \subset E$ be a closed subset. Recall that the distance function $d_{\cald} : E \to \bbr_+$ is defined as
\begin{align*}
d_{\cald}(x) := \inf_{y \in \cald} \| x-y \|, \quad x \in E.
\end{align*}
For $x \in E$ and $r > 0$ we denote by $K(x,r) \subset X$ the closed ball
\begin{align*}
K(x,r) := \{ y \in X : \| y-x \| \leq r \}
\end{align*}
around $x$ with radius $r$. The following auxiliary result will be useful. Its proof is straightforward, and therefore omitted.

\begin{lemma}\label{lemma-distance}
Let $x \in E$ and $r > 0$ be arbitrary. Then we have
\begin{align*}
d_{\cald}(y) = d_{\cald \cap K(x,r)}(y)
\end{align*}
for all $y \in E$ with $\| y-x \| < \frac{r}{2}$.
\end{lemma}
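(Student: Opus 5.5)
Fix $y \in E$ with $\| y-x \| < \frac{r}{2}$. The plan is to compare the two infima defining the distance functions directly. Since $\cald \cap K(x,r) \subset \cald$, an infimum over a smaller set is larger, which gives
\begin{align*}
d_{\cald}(y) \leq d_{\cald \cap K(x,r)}(y),
\end{align*}
with the convention $\inf \emptyset = \infty$ in case $\cald \cap K(x,r)$ is empty.

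For the reverse inequality I will show that points of $\cald$ outside the ball $K(x,r)$ can be discarded from the infimum defining $d_{\cald}(y)$. First I need a bound on $d_{\cald}(y)$. Here I will use that $x \in \cald$; this is implicit in how the lemma is applied, where $x$ is always a point of $\cald$. Then
\begin{align*}
d_{\cald}(y) \leq \| y-x \| < \frac{r}{2}.
\end{align*}

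Next take any $z \in \cald$ with $\| z-x \| > r$. By the triangle inequality,
\begin{align*}
\| z-y \| \geq \| z-x \| - \| x-y \| > r - \frac{r}{2} = \frac{r}{2} > d_{\cald}(y).
\end{align*}
So every $z \in \cald$ with $\| z-y \| \leq \frac{r}{2}$ must lie in $K(x,r)$. Given $\epsilon > 0$ with $d_{\cald}(y) + \epsilon \leq \frac{r}{2}$, choose $z \in \cald$ with $\| z-y \| < d_{\cald}(y) + \epsilon$. Then $z \in \cald \cap K(x,r)$, and hence
\begin{align*}
d_{\cald \cap K(x,r)}(y) < d_{\cald}(y) + \epsilon.
\end{align*}
Letting $\epsilon \to 0$ gives the reverse inequality, and the two inequalities together yield the claimed equality.

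The only delicate point is the hypothesis $x \in \cald$. Without it the statement can fail: if $\cald$ is nonempty but does not meet $K(x,r)$, the left-hand side is finite while the right-hand side is $+\infty$. I would therefore state explicitly that $x \in \cald$ is assumed, as in every use of the lemma. Everything else in the argument is routine.
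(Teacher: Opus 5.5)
Your proof is correct. It is the routine comparison of infima that the paper has in mind when it omits the proof as straightforward. You are also right that $x \in \cald$ must be assumed; this holds in every application, e.g.\ Lemma~\ref{lemma-tangent-local} and Proposition~\ref{prop-normal-local}. The identity fails without it even when $\cald \cap K(x,r) \neq \emptyset$: in $E = \bbr$ with $x = 0$, $r = 1$, $\cald = \{ -\frac{6}{5}, 1 \}$ and $y = -\frac{2}{5}$, one gets $d_{\cald}(y) = \frac{4}{5}$ but $d_{\cald \cap K(x,r)}(y) = \frac{7}{5}$.
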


\begin{definition}\label{def-cones}
Let $x \in \cald$ be arbitrary.
\begin{enumerate}
\item The \emph{Clarke tangent cone} to $\cald$ at $x$ is defined as
\begin{align*}
T_{\cald}^c(x) := C_{\cald}(x) := \bigg\{ v \in E : \lim_{t \to 0^+ \atop \cald \ni x' \to x} \frac{d_{\cald}(x'+tv)}{t} = 0 \bigg\}.
\end{align*}

\item The \emph{adjacent cone} (or \emph{intermediate cone}) to $\cald$ at $x$ is defined as
\begin{align*}
T_{\cald}^a(x) := T_{\cald}^{\flat}(x) := \bigg\{ v \in E : \lim_{t \to 0^+} \frac{d_{\cald}(x+tv)}{t} = 0 \bigg\}.
\end{align*}

\item The \emph{Bouligand tangent cone} (or \emph{contingent cone}) to $\cald$ at $x$ is defined as
\begin{align*}
T_{\cald}^b(x) := T_{\cald}(x) := \bigg\{ v \in E : \liminf_{t \to 0^+} \frac{d_{\cald}(x+tv)}{t} = 0 \bigg\}.
\end{align*}

\item The \emph{weak Bouligand tangent cone} (or \emph{weak contingent cone}) to $\cald$ at $x$, denoted by $T_{\cald}^{\sigma}(x)$, consists of all $v \in E$ such that there are sequences $(t_n)_{n \in \bbn} \subset (0,\infty)$ with $t_n \to 0^+$ and $(v_n)_{n \in \bbn} \subset E$ with $v_n \overset{\sigma}{\to} v$ such that $x + t_n v_n \in \cald$ for each $n \in \bbn$.
\end{enumerate}
\end{definition}

Recall that a subset $C \subset E$ is called a \emph{cone} if $\lambda x \in C$ for all $\lambda \geq 0$ and $x \in C$. It is easily checked that for each $x \in \cald$ the sets $T_{\cald}^{c}(x)$, $T_{\cald}^{a}(x)$, $T_{\cald}^b(x)$ and $T_{\cald}^{\sigma}(x)$ are indeed cones.

\begin{remark}
There are various notations of these cones in the literature. The notations $T_{\cald}^c(x)$ and $T_{\cald}^b(x)$ are used in \cite{Colombo}, whereas the notations $C_{\cald}(x)$, $T_{\cald}^{\flat}(x)$ and $T_{\cald}(x)$ are used in \cite{Aubin_Fr}. The notation $T_{\cald}^{\sigma}(x)$ is used in \cite{Colombo} and \cite{Aubin_Fr}.
\end{remark}

The following result provides characterizations of the elements of $T_{\cald}^{c}(x)$, $T_{\cald}^{a}(x)$ and $T_{\cald}^b(x)$.

\begin{proposition}\cite[p. 122, 128]{Aubin_Fr}\label{prop-cones}
Let $x \in \cald$ and $v \in E$ be arbitrary.
\begin{enumerate}
\item We have $v \in T_{\cald}^c(x)$ if and only if for every sequence $(t_n)_{n \in \bbn} \subset (0,\infty)$ with $t_n \to 0^+$ and every sequence $(x_n)_{n \in \bbn} \subset \cald$ with $x_n \to x$ there exists a sequence $(v_n)_{n \in \bbn} \subset E$ with $v_n \to v$ such that $x_n + t_n v_n \in \cald$ for each $n \in \bbn$.

\item We have $v \in T_{\cald}^{a}(x)$ if and only if for every sequence $(t_n)_{n \in \bbn} \subset (0,\infty)$ with $t_n \to 0^+$ there exists a sequence $(v_n)_{n \in \bbn} \subset E$ with $v_n \to v$ such that $x + t_n v_n \in \cald$ for each $n \in \bbn$.

\item We have $v \in T_{\cald}^b(x)$ if and only if there are sequences $(t_n)_{n \in \bbn} \subset (0,\infty)$ with $t_n \to 0^+$ and $(v_n)_{n \in \bbn} \subset E$ with $v_n \to v$ such that $x + t_n v_n \in \cald$ for each $n \in \bbn$.
\end{enumerate}
\end{proposition}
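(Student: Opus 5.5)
The plan is to reduce all three characterizations to two elementary estimates relating $d_{\cald}$ to ``approximate difference quotients'', and then to use the sequential description of limits and lower limits in metric spaces. The two estimates come first. Let $y \in E$, $t > 0$ and $w \in E$ be such that $y + tw \in \cald$. Then for every $v \in E$
\begin{align*}
d_{\cald}(y+tv) \leq \| (y+tv) - (y+tw) \| = t \| v-w \|.
\end{align*}
Conversely, for $y \in \cald$, $t>0$, $v \in E$ and any $\eta > 0$, the definition of $d_{\cald}$ as an infimum gives a point $z \in \cald$ with $\| y+tv-z \| \leq d_{\cald}(y+tv) + t\eta$. Setting $w := (z-y)/t$ gives $y + tw = z \in \cald$ and
\begin{align*}
\| v-w \| \leq \frac{d_{\cald}(y+tv)}{t} + \eta.
\end{align*}
So up to an arbitrarily small error, $d_{\cald}(y+tv)/t$ equals the smallest distance from $v$ to a vector $w$ with $y+tw \in \cald$.

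For (1), I would first rewrite the definition of $T_{\cald}^c(x)$ sequentially. Because $d_{\cald} \geq 0$, the joint limit over $t \to 0^+$ and $\cald \ni x' \to x$ equals $0$ if and only if $d_{\cald}(x_n+t_nv)/t_n \to 0$ for all sequences $t_n \to 0^+$ and $(x_n) \subset \cald$ with $x_n \to x$. This is the standard sequential characterization of limits in the metric space $(0,\infty) \times \cald$ at the point $(0,x)$. Given $v \in T_{\cald}^c(x)$ and such sequences, I apply the second estimate with $y = x_n$, $t = t_n$, $\eta = 1/n$. This produces $v_n$ with $x_n + t_nv_n \in \cald$ and $\|v-v_n\| \leq d_{\cald}(x_n+t_nv)/t_n + 1/n \to 0$. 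Conversely, if such $v_n$ always exist, the first estimate gives $d_{\cald}(x_n+t_nv)/t_n \leq \|v-v_n\| \to 0$ along every admissible pair of sequences, and hence $v \in T_{\cald}^c(x)$.

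Part (2) is the same argument with $x_n \equiv x$. In the definition of $T_{\cald}^a(x)$ the limit is taken only over $t \to 0^+$, so it is equivalent to $d_{\cald}(x+t_nv)/t_n \to 0$ for every sequence $t_n \to 0^+$. For (3), the key observation is that $\liminf_{t\to 0^+} d_{\cald}(x+tv)/t = 0$ holds, because the quantity is nonnegative, if and only if there is some sequence $t_n \to 0^+$ along which $d_{\cald}(x+t_nv)/t_n \to 0$. Applying the two estimates along that single sequence then gives both directions.

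None of the steps is deep. The only point needing care is the passage between the joint limit over $(t,x')$ in the definition of the Clarke cone and its sequential reformulation. One has to note that a function $(t,x') \mapsto F(t,x') \geq 0$ on $(0,\infty)\times\cald$ tends to $0$ at $(0,x)$ exactly when $F(t_n,x_n) \to 0$ along all sequences converging to $(0,x)$. If this failed, there would be $\epsilon>0$ and a sequence $(t_n,x_n) \to (0,x)$ with $F(t_n,x_n) \geq \epsilon$, which contradicts the sequential condition. The choice $\eta = 1/n$ in the second estimate handles the fact that the infimum defining $d_{\cald}$ need not be attained.
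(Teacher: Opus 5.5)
Your proof is correct and complete. The two distance estimates, combined with the sequential characterizations of the joint limit, the one-sided limit and the lower limit of the nonnegative quotient $d_{\cald}(x'+tv)/t$, give all three equivalences. The paper itself gives no argument and simply cites Aubin--Frankowska, where these characterizations are derived directly from the distance-function definitions in essentially the same way.
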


\begin{lemma}\label{lemma-cones-inclusions}
Let $x \in \cald$ be arbitrary. Then the following statements are true:
\begin{enumerate}
\item We have the inclusions $T_{\cald}^{c}(x) \subset T_{\cald}^{a}(x) \subset T_{\cald}^b(x) \subset T_{\cald}^{\sigma}(x)$.

\item If $\dim E < \infty$, then we have $T_{\cald}^b(x) = T_{\cald}^{\sigma}(x)$.
\end{enumerate}
\end{lemma}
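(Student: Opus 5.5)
The plan is to prove the chain of inclusions in (1) directly from Definition \ref{def-cones}, using the sequential characterizations of Proposition \ref{prop-cones}. Then (2) follows by comparing weak and strong convergence in finite dimension.

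For $T_{\cald}^{c}(x) \subset T_{\cald}^{a}(x)$, take $v \in T_{\cald}^c(x)$ and specialize the joint limit in the definition of the Clarke cone to the constant choice $x' = x \in \cald$. The resulting one-variable limit $\lim_{t \to 0^+} d_{\cald}(x+tv)/t$ exists and equals $0$, so $v \in T_{\cald}^a(x)$. Equivalently, one can apply Proposition \ref{prop-cones}(1) with the constant sequence $x_n = x$ and compare with Proposition \ref{prop-cones}(2).

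For $T_{\cald}^{a}(x) \subset T_{\cald}^b(x)$, note that the existence of the limit $0$ forces the $\liminf$ to be $0$, since distances are nonnegative.

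For $T_{\cald}^b(x) \subset T_{\cald}^{\sigma}(x)$, use Proposition \ref{prop-cones}(3). It yields sequences $t_n \to 0^+$ and $v_n \to v$ in norm with $x + t_n v_n \in \cald$. Norm convergence implies weak convergence, because $|\ell(v_n) - \ell(v)| \leq \| \ell \| \, \| v_n - v \|$ for every $\ell \in E^*$. Hence $v_n \overset{\sigma}{\to} v$ and $v \in T_{\cald}^{\sigma}(x)$.

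For (2), one inclusion is already in (1), so it remains to show $T_{\cald}^{\sigma}(x) \subset T_{\cald}^b(x)$ when $\dim E < \infty$. Given $v \in T_{\cald}^{\sigma}(x)$, pick the sequences $t_n \to 0^+$ and $v_n \overset{\sigma}{\to} v$ with $x + t_n v_n \in \cald$. Fix a basis of $E$ with coordinate functionals $\ell_1,\ldots,\ell_d \in E^*$. Weak convergence gives $\ell_k(v_n) \to \ell_k(v)$ for each $k$, so $v_n \to v$ coordinatewise. Since all norms on a finite-dimensional space are equivalent, this means $v_n \to v$ in norm, and Proposition \ref{prop-cones}(3) gives $v \in T_{\cald}^b(x)$.

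There is no genuine obstacle here. The only point requiring care is the first inclusion: one must note that the Clarke limit is a joint limit over $t \to 0^+$ and $\cald \ni x' \to x$, which contains the restriction to $x' = x$ as a special case. Everything else is elementary.
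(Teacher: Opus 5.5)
Your proposal is correct and follows the paper's route. The paper simply cites the sequential characterizations of Proposition \ref{prop-cones}; you make each inclusion explicit, and you spell out the finite-dimensional equivalence of weak and norm convergence that the paper leaves implicit in part (2).
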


\begin{proof}
This is an immediate consequence of Proposition \ref{prop-cones}.
\end{proof}

\begin{lemma}\label{lemma-tangent-cones}
For each $x \in \cald$ the following statements are true:
\begin{enumerate}
\item $T_{\cald}^c(x)$ is a closed convex cone.

\item $T_{\cald}^{a}(x)$ and $T_{\cald}^b(x)$ are closed cones.
\end{enumerate}
\end{lemma}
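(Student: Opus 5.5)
The statement to prove is Lemma \ref{lemma-tangent-cones}: $T_{\cald}^c(x)$ is a closed convex cone, and $T_{\cald}^{a}(x)$ and $T_{\cald}^b(x)$ are closed cones. The cone property was already observed after Definition \ref{def-cones}, so only closedness and convexity need proof. Both follow from the sequential characterizations in Proposition \ref{prop-cones} together with the Lipschitz property of the distance function. In general $|d_{\cald}(y)-d_{\cald}(z)| \leq \| y-z \|$ for all $y,z \in E$. Hence for fixed $t>0$ and $x' \in \cald$ we get
\begin{align*}
\bigg| \frac{d_{\cald}(x'+tv)}{t} - \frac{d_{\cald}(x'+tw)}{t} \bigg| \leq \| v-w \|, \quad v,w \in E.
\end{align*}

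\emph{Closedness.} Let $v_k \to v$ with each $v_k$ in the respective cone. The estimate above gives
\begin{align*}
\limsup \frac{d_{\cald}(x'+tv)}{t} \leq \limsup \frac{d_{\cald}(x'+tv_k)}{t} + \| v-v_k \|,
\end{align*}
and the same inequality holds with $\liminf$ on both sides. For the Clarke cone the limsup is taken over $t \to 0^+$ and $\cald \ni x' \to x$; for the adjacent cone it is taken over $t \to 0^+$ with $x'=x$. For the Bouligand cone we take $x'=x$ and use $\liminf$. In each case the first term on the right vanishes, so the left-hand side is at most $\| v-v_k \|$. Letting $k \to \infty$ shows that the limsup (or liminf) for $v$ is zero. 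Since $d_{\cald} \geq 0$, this gives $v$ in the cone.

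\emph{Convexity of $T_{\cald}^c(x)$.} Because it is a cone, it suffices to show closure under addition. Let $v,w \in T_{\cald}^c(x)$, and fix sequences $t_n \to 0^+$ and $\cald \ni x_n \to x$. By Proposition \ref{prop-cones}(1) there are $w_n \to w$ with $x_n' := x_n + t_n w_n \in \cald$. Since $t_n w_n \to 0$, we have $x_n' \to x$. Applying Proposition \ref{prop-cones}(1) to $v$ along the sequences $x_n'$ and $t_n$ yields $v_n \to v$ with $x_n' + t_n v_n = x_n + t_n(v_n+w_n) \in \cald$. As $v_n + w_n \to v+w$, the converse direction of Proposition \ref{prop-cones}(1) gives $v+w \in T_{\cald}^c(x)$.

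There is no real obstacle here. The only point needing care is that in the convexity step the base points $x_n'$ are allowed to move, which the Clarke characterization in Proposition \ref{prop-cones}(1) permits. This freedom is exactly what fails for the adjacent and Bouligand cones, which is why convexity is asserted only for $T_{\cald}^c(x)$.
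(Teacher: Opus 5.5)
Your proof is correct. The paper gives no argument of its own here; it simply cites Aubin--Frankowska for two facts: that $T_{\cald}^c(x)$ is closed and convex, and that $T_{\cald}^a(x)$ and $T_{\cald}^b(x)$ are closed.

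You instead give a self-contained argument. The $1$-Lipschitz estimate for $d_{\cald}$ handles closedness of all three cones in one stroke:
\begin{itemize}
\item for $T_{\cald}^c(x)$ you take $\limsup$ over $(t,x')$;
\item for $T_{\cald}^a(x)$ you take $\limsup$ over $t$ with $x'=x$;
\item for $T_{\cald}^b(x)$ you take $\liminf$ over $t$ with $x'=x$.
\end{itemize}
For convexity you apply the sequential characterization in Proposition \ref{prop-cones}(1) twice, along the moving base points $x_n + t_n w_n \to x$. This shows $T_{\cald}^c(x)$ is closed under addition, and hence convex because it is a cone.

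This is the standard textbook argument. What it buys is that the lemma no longer depends on locating the precise statements in the reference. It does, however, still rest on Proposition \ref{prop-cones}, which the paper takes from the same source. The citation is shorter. Your version makes transparent why convexity is special to the Clarke cone: it needs the freedom to move the base point. That property genuinely fails for $T_{\cald}^b(x)$, as the example following Lemma \ref{lemma-tangent-local} shows.
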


\begin{proof}
By \cite[Prop. 4.1.6]{Aubin_Fr} the cone $T_{\cald}^c(x)$ is closed and convex. Furthermore, by \cite[p. 127, 121]{Aubin_Fr} the cones $T_{\cald}^{a}(x)$ and $T_{\cald}^b(x)$ are closed.
\end{proof}

The following result is easily verified. Therefore, we omit the proof.

\begin{lemma}
For each $x \in \cald$ and each $v \in E$ the following statements are true:
\begin{enumerate}
\item We have $v \in T_{\cald}^b(x)$ if and only if there are sequences $(t_n)_{n \in \bbn} \subset (0,\infty)$ with $t_n \to 0^+$ and $(x_n)_{n \in \bbn} \subset \cald$ such that
\begin{align*}
v = \lim_{n \to \infty} \frac{x_n - x}{t_n}.
\end{align*}

\item We have $v \in T_{\cald}^{\sigma}(x)$ if and only if there are sequences $(t_n)_{n \in \bbn} \subset (0,\infty)$ with $t_n \to 0^+$ and $(x_n)_{n \in \bbn} \subset \cald$ such that
\begin{align*}
v = \sigma \text{-} \lim_{n \to \infty} \frac{x_n - x}{t_n}.
\end{align*}
\end{enumerate}
\end{lemma}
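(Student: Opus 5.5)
The statement to prove is the last unnamed lemma: for $x \in \cald$ and $v \in E$, (1) $v \in T_{\cald}^b(x)$ iff there are $t_n \to 0^+$ and $x_n \in \cald$ with $v = \lim_n (x_n - x)/t_n$; and (2) the analogous statement holds for $T_{\cald}^{\sigma}(x)$ with weak limits. The plan is to reduce both parts to the sequential descriptions already available, via the change of variables $x_n = x + t_n v_n$, equivalently $v_n = (x_n - x)/t_n$.

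For part (1) I would invoke Proposition \ref{prop-cones}(3). It characterizes $T_{\cald}^b(x)$ by sequences $t_n \to 0^+$ and $v_n \to v$ in norm with $x + t_n v_n \in \cald$.
\begin{itemize}
\item[$\Rightarrow$:] Given such sequences, set $x_n := x + t_n v_n \in \cald$. Then $(x_n - x)/t_n = v_n \to v$.
\item[$\Leftarrow$:] Given $x_n \in \cald$ and $t_n \to 0^+$ with $(x_n - x)/t_n \to v$, define $v_n := (x_n - x)/t_n$. Then $x + t_n v_n = x_n \in \cald$ and $v_n \to v$, so Proposition \ref{prop-cones}(3) gives $v \in T_{\cald}^b(x)$.
\end{itemize}

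For part (2) the same substitution works directly with Definition \ref{def-cones}(4). That definition is already phrased in terms of sequences $t_n \to 0^+$ and $v_n \overset{\sigma}{\to} v$ with $x + t_n v_n \in \cald$, so no auxiliary proposition is needed. The maps $v_n \mapsto x + t_n v_n$ and $x_n \mapsto (x_n - x)/t_n$ are inverse to each other for each fixed $n$. The convergence condition transfers unchanged, because both sides refer to the same sequence $v_n = (x_n - x)/t_n$.

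I see no real obstacle in either part. The only point to check is that $t_n > 0$, so that dividing by $t_n$ is legitimate; this holds by assumption.
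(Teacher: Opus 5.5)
Your proof is correct and is the routine verification the paper has in mind when it omits the proof as ``easily verified.'' The substitution $v_n = (x_n - x)/t_n$, equivalently $x_n = x + t_n v_n$, turns Proposition~\ref{prop-cones}(3) into part (1) and Definition~\ref{def-cones}(4) into part (2). In both cases the convergence condition refers to the same sequence $v_n$, so it carries over unchanged.
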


The following result shows that all these tangent cones are local objects.

\begin{lemma}\label{lemma-tangent-local}
Let $x \in \cald$ be arbitrary. Then for any closed neighborhood $C \subset E$ of $x$ we have $T_{\cald}^c(x) = T_{\cald \cap C}^c(x)$, $T_{\cald}^a(x) = T_{\cald \cap C}^a(x)$, $T_{\cald}^b(x) = T_{\cald \cap C}^b(x)$ and $T_{\cald}^{\sigma}(x) = T_{\cald \cap C}^{\sigma}(x)$.
\end{lemma}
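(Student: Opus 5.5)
The plan is to reduce everything to the elementary observation that, for $x \in \cald$ and a closed neighborhood $C$ of $x$, the distance functions $d_{\cald}$ and $d_{\cald \cap C}$ agree near $x$. Choose $r>0$ with $K(x,r) \subset C$. Since $\cald \cap K(x,r) \subset \cald \cap C \subset \cald$, we have
\begin{align*}
d_{\cald}(y) \leq d_{\cald \cap C}(y) \leq d_{\cald \cap K(x,r)}(y), \quad y \in E.
\end{align*}
By Lemma \ref{lemma-distance}, the outer terms coincide for $\| y-x \| < \frac{r}{2}$. Hence $d_{\cald}(y) = d_{\cald \cap C}(y)$ for all $y$ with $\|y-x\| < \frac{r}{2}$.

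For the Clarke, adjacent and Bouligand cones we then use their definitions via limits of $d(\cdot)/t$. In the Clarke case the relevant points are $x'+tv$ with $x' \to x$ and $t \to 0^+$. For the adjacent and Bouligand cones they are $x + tv$ with $t \to 0^+$. In every case these points eventually lie in the ball $\|y - x\| < r/2$.

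Two further details must be checked. First, in the Clarke cone the base points $x'$ range over $\cald$ in one definition and over $\cald \cap C$ in the other. This does not matter, because for $x'$ close enough to $x$ we have $x' \in \cald$ if and only if $x' \in \cald \cap C$. Second, the limit or liminf depends only on the behaviour of the function as $(x',t) \to (x,0^+)$, or as $t \to 0^+$. Combining these with the local equality of distance functions gives the three equalities directly.

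For the weak Bouligand cone, the inclusion $T^{\sigma}_{\cald \cap C}(x) \subset T^{\sigma}_{\cald}(x)$ is trivial. For the converse, take $v \in T^{\sigma}_{\cald}(x)$ with $t_n \to 0^+$, $v_n \overset{\sigma}{\to} v$ and $x + t_n v_n \in \cald$. A weakly convergent sequence is bounded by the uniform boundedness principle, so $\sup_n \|v_n\| < \infty$. Therefore $\|t_n v_n\| \to 0$, and for $n$ large $x + t_n v_n \in K(x,r) \subset C$. Discarding finitely many terms, which preserves both $t_n \to 0^+$ and $v_n \overset{\sigma}{\to} v$, shows $v \in T^{\sigma}_{\cald \cap C}(x)$.

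The only mildly delicate step is this last one, where boundedness of weakly convergent sequences must be invoked. Everything else is routine bookkeeping from the local equality of distance functions.
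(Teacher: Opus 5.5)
Your proof is correct and takes essentially the paper's route: the paper obtains the Clarke, adjacent and Bouligand cases from Lemma \ref{lemma-distance} (or alternatively from the sequential characterizations in Proposition \ref{prop-cones}), and the weak Bouligand case from its sequential definition, as you do. Your sandwich $d_{\cald} \leq d_{\cald \cap C} \leq d_{\cald \cap K(x,r)}$, and your use of the boundedness of weakly convergent sequences, supply exactly the details the paper's one-line proof leaves implicit.
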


\begin{proof}
The proof is a consequence of Proposition \ref{prop-cones}. For the tangent cones $T_{\cald}^c(x)$, $T_{\cald}^a(x)$ and $T_{\cald}^b(x)$, the statement also follows from Lemma \ref{lemma-distance}.
\end{proof}

The following is a rotated version of the example on page 161 in \cite{Aubin_Fr}. It illustrates that the inclusions in Lemma \ref{lemma-cones-inclusions} can be strict.

\begin{example}
Consider the state space $E = \bbr^2$ and the closed subset
\begin{align*}
\cald = \Big\{ \Big( \frac{1}{n},0 \Big) : n \in \bbn \Big\} \cup \{ (0,y) : y \in \bbr_+ \}.
\end{align*}
Then we have
\begin{align*}
T_{\cald}^c(0) &= \{ 0 \},
\\ T_{\cald}^a(0) &= \{ (0,y) : y \in \bbr_+ \},
\\ T_{\cald}^b(0) &= T_{\cald}^{\sigma}(0) = \{ (x,0) : x \in \bbr_+ \} \cup \{ (0,y) : y \in \bbr_+ \}.
\end{align*}
\end{example}

The following example can be found in \cite[Example 7.1.c]{Colombo}. It shows that the inclusion $T_{\cald}^b(x) \subset T_{\cald}^{\sigma}(x)$ can be strict in infinite dimension.

\begin{example}
Suppose that $E$ is an infinite dimensional separable Hilbert space with orthonormal basis $\{ e_n : n \in \bbn_0 \}$. We define the closed subset
\begin{align*}
\cald := \{ 0 \} \cup \bigg\{ \frac{e_n + e_0}{n} : n \in \bbn \bigg\}.
\end{align*}
Then we have
\begin{align*}
T_{\cald}^b(0) = \{ 0 \} \quad \text{and} \quad T_{\cald}^{\sigma}(0) = \{ \lambda e_0 : \lambda \geq 0 \}.
\end{align*}
To illustrate this, take $v := e_0$. We choose $t_n := \frac{1}{n}$ and $v_n := e_n + e_0$ for each $n \in \bbn$. Then we have $t_n v_n \in \cald$ for each $n \in \bbn$, and by Parseval's identity we have $v_n \overset{\sigma}{\to} v$, but of course $\| v_n - v \| \not \to 0$.
\end{example}

In what follows, let $\cald \subset H$ be a closed subset of a Hilbert space $H$.

\begin{definition}\label{def-polar}
For a subset $A \subset H$ the \emph{polar} of $A$ is defined as
\begin{align*}
A^{\circ} := \{ y \in H : \la y,x \ra \leq 1 \text{ for all } x \in A \}.
\end{align*}
\end{definition}

Note that for two subsets $A_1,A_2 \subset H$ with $A_1 \subset A_2$ we have $A_2^{\circ} \subset A_1^{\circ}$.

\begin{remark}\label{rem-polar-cone}
For a cone $C \subset H$ we have
\begin{align*}
C^{\circ} = \{ y \in H : \la y,x \ra \leq 0 \text{ for all } x \in C \} = \bigcap_{x \in C} \{ y \in H : \la y,x \ra \leq 0 \},
\end{align*}
showing that $C^{\circ}$ is a closed convex cone; the so-called \emph{polar cone} of $C$.
\end{remark}

\begin{definition}
Let $A \subset H$ be a subset.
\begin{enumerate}
\item The \emph{convex hull} ${\rm co} \, A$ consists of all $x \in H$ of the form $x = \sum_{i=1}^n \lambda_i x_i$ with $x_1,\ldots,x_n \in A$ and $\lambda_1,\ldots,\lambda_n \geq 0$ for some $n \in \bbn$ such that $\sum_{i=1}^n \lambda_i = 1$.

\item The \emph{closed convex hull} $\overline{\rm co} \, A$ is the smallest closed convex set including $A$.
\end{enumerate}
\end{definition}

\begin{remark}
According to \cite[Lemma 5.27 (6)]{Aliprantis-Border} the closed convex hull is the closure of ${\rm co} \, A$; that is $\overline{\rm co} \, A = \overline{{\rm co} \, A}$.
\end{remark}

\begin{lemma}\label{lemma-double-polar}
For a cone $C \subset H$ the following statements are true:
\begin{enumerate}
\item We have $C \subset C^{\circ \circ}$.

\item If $C$ is a closed convex cone, then we have $C = C^{\circ \circ}$.
\end{enumerate}
\end{lemma}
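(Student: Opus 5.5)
The statement to prove is Lemma \ref{lemma-double-polar}: for a cone $C \subset H$ we have $C \subset C^{\circ\circ}$, with equality when $C$ is a closed convex cone. By Remark \ref{rem-polar-cone}, the polar of a cone is the set $\{ y : \la y,x \ra \leq 0 \text{ for all } x \in C \}$, and it is itself a (closed convex) cone. So $C^{\circ\circ} = \{ z : \la z,y \ra \leq 0 \text{ for all } y \in C^{\circ} \}$.

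For the first statement I would argue directly. Let $x \in C$ and $y \in C^{\circ}$. Then $\la y,x \ra \leq 0$ by the description in Remark \ref{rem-polar-cone}. Hence $\la x,y \ra \leq 0$ for every $y \in C^{\circ}$. Since $C^{\circ}$ is a cone, this is exactly the condition $x \in C^{\circ\circ}$.

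For the second statement it remains to show $C^{\circ\circ} \subset C$. I would argue by contradiction using the projection theorem in Hilbert spaces.

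1. Suppose $z \in C^{\circ\circ} \setminus C$. Since $C$ is nonempty (it contains $0$), closed and convex, there is a unique nearest point $p := P_C z$. It is characterized by $\la z-p, c-p \ra \leq 0$ for all $c \in C$.
2. Use the cone property by testing with $c = 0$ and $c = 2p$. This gives $\la z-p,p \ra = 0$. Consequently $\la z-p, c \ra \leq 0$ for all $c \in C$, so $y := z-p \in C^{\circ}$.
3. Now $z \in C^{\circ\circ}$ gives $\la z, y \ra \leq 0$.
4. On the other hand, $\la z,y \ra = \la z-p,y \ra + \la p,y \ra = \|z-p\|^2 + 0 > 0$, because $z \notin C$ forces $z \neq p$.

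This contradiction proves $C^{\circ\circ} \subset C$, and together with the first statement, equality.

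The only slightly delicate step is step 2, which extracts the orthogonality $\la z-p,p\ra = 0$ from the variational inequality. Both test points are legitimate because $C$ is a cone, so $0 \in C$ and $2p \in C$. Everything else is routine. Alternatively, one could invoke the Hahn–Banach separation theorem and then normalize the separating constant using the cone property, but the projection argument is self-contained in the Hilbert setting.
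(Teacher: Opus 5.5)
Your proof is correct. It differs from the paper's mainly in the key tool.

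For the first statement, the paper cites the Bipolar Theorem from Aliprantis--Border. You give the direct one-line argument, which is all that is needed.

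For the reverse inclusion, the paper takes exactly the alternative you mention at the end. For $x \notin C$ it invokes the geometric Hahn--Banach theorem to obtain $y$ with $\la y,x\ra > \sup_{z\in C}\la y,z\ra$. The cone property then forces this supremum to be $0$, so $y\in C^{\circ}$ and $\la y,x\ra>0$, i.e.\ $x\notin C^{\circ\circ}$.

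Your projection argument produces the strictly separating vector explicitly as $y = z - P_C z$. The variational inequality says that $\sup_{c\in C}\la z-p,c\ra = \la z-p,p\ra$. Your tests $c=0$ and $c=2p$ force this value to be $0$, which is precisely the paper's normalization of the separating constant.

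What your route buys is self-containedness: you need only the variational characterization of the metric projection, with no appeal to Hahn--Banach or the Bipolar Theorem. What the paper's route buys is brevity and generality, since strict separation works verbatim in Banach (indeed locally convex) spaces with the polar taken in the dual. The projection argument is tied to the Hilbert structure.

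One small point: both arguments tacitly use $C\neq\emptyset$. Under the paper's definition the empty set is vacuously a closed convex cone, and then $C^{\circ}=H$ and $C^{\circ\circ}=\{0\}\neq C$. So your assertion that $C$ contains $0$ relies on a convention the statement needs anyway.
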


\begin{proof}
The first statement is a consequence of the Bipolar Theorem; see \cite[Thm. 5.103 (2)]{Aliprantis-Border}. For the second statement we have to show the converse inclusion $C^{\circ \circ} \subset C$. Let $x \in H$ with $x \notin C$ be arbitrary. By the geometric Hahn-Banach theorem there exists $y \in H$ such that
\begin{align*}
\la y,x \ra > \sup \{ \la y,z \ra : z \in C \}.
\end{align*}
Since $C$ is a cone, we have $\sup \{ \la y,z \ra : y \in C \} \leq 0$, and hence
\begin{align*}
\la y,x \ra > 0 \geq \la y,z \ra \quad \quad \text{for all $z \in C$.}
\end{align*}
Now we obtain $y \in C^{\circ}$, and hence $x \notin C^{\circ \circ}$.
\end{proof}

\begin{definition}
Let $x \in \cald$ be arbitrary.
\begin{enumerate}
\item The \emph{Clarke normal cone} to $\cald$ at $x$ is defined as $\caln_{\cald}^{c}(x) := T_{\cald}^{c}(x)^{\circ}$.

\item The \emph{adjacent normal cone} (or \emph{intermediate normal cone}) to $\cald$ at $x$ is defined as $\caln_{\cald}^{a}(x) := T_{\cald}^{a}(x)^{\circ}$.

\item The \emph{Bouligand normal cone} (or \emph{contingent normal cone}) to $\cald$ at $x$ is defined as $\caln_{\cald}^b(x) := T_{\cald}^b(x)^{\circ}$.

\item The \emph{weak Bouligand normal cone} (or \emph{weak contingent normal cone}) to $\cald$ at $x$ is defined as $\caln_{\cald}^{\sigma}(x) := T_{\cald}^{\sigma}(x)^{\circ}$.
\end{enumerate}
\end{definition}

\begin{lemma}\label{lemma-normal-cones}
For each $x \in \cald$ the following statements are true:
\begin{enumerate}
\item $\caln_{\cald}^{\sigma}(x)$, $\caln_{\cald}^b(x)$, $\caln_{\cald}^{a}(x)$ and $\caln_{\cald}^{c}(x)$ are closed convex cones, and we have
\begin{align*}
\caln_{\cald}^{\sigma}(x) \subset \caln_{\cald}^b(x) \subset \caln_{\cald}^{a}(x) \subset \caln_{\cald}^{c}(x).
\end{align*}
\item We have $T_{\cald}^{c}(x) = \caln_{\cald}^{c}(x)^{\circ}$,  $T_{\cald}^{a}(x) \subset \caln_{\cald}^{a}(x)^{\circ}$, $T_{\cald}^b(x) \subset \caln_{\cald}^b(x)^{\circ}$ and $T_{\cald}^{\sigma}(x) \subset \caln_{\cald}^{\sigma}(x)^{\circ}$.
\end{enumerate}
\end{lemma}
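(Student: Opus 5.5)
The plan is to derive everything from the inclusions of tangent cones in Lemma \ref{lemma-cones-inclusions} together with elementary properties of polars from Remark \ref{rem-polar-cone} and Lemma \ref{lemma-double-polar}.

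For part (1), we first use Remark \ref{rem-polar-cone}. Since each of $T_{\cald}^{\sigma}(x)$, $T_{\cald}^b(x)$, $T_{\cald}^{a}(x)$, $T_{\cald}^{c}(x)$ is a cone, its polar is a closed convex cone. Next, the chain
\begin{align*}
T_{\cald}^{c}(x) \subset T_{\cald}^{a}(x) \subset T_{\cald}^b(x) \subset T_{\cald}^{\sigma}(x)
\end{align*}
from Lemma \ref{lemma-cones-inclusions} reverses under taking polars. This uses the observation after Definition \ref{def-polar} that $A_1 \subset A_2$ implies $A_2^{\circ} \subset A_1^{\circ}$, and it gives exactly the stated chain of normal cones.

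For part (2), the inclusions $T \subset T^{\circ\circ}$ for the adjacent, Bouligand and weak Bouligand cones follow directly from Lemma \ref{lemma-double-polar}(1), since each of these sets is a cone. For the Clarke cone we need equality. By Lemma \ref{lemma-tangent-cones}(1), $T_{\cald}^c(x)$ is a closed convex cone, so Lemma \ref{lemma-double-polar}(2) gives $T_{\cald}^c(x) = T_{\cald}^c(x)^{\circ\circ} = \caln_{\cald}^{c}(x)^{\circ}$.

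No step is a genuine obstacle. The only point needing care is that the equality in (2) requires closedness and convexity. These hold for the Clarke cone but not in general for the other three cones, which is why only inclusions are claimed for them.
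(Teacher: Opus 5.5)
Your proposal is correct and follows the paper's proof: part (1) comes from the tangent cone inclusions in Lemma \ref{lemma-cones-inclusions}, reversed under polarity, together with Remark \ref{rem-polar-cone}. Part (2) comes from Lemma \ref{lemma-double-polar}, using Lemma \ref{lemma-tangent-cones} (closedness and convexity of $T_{\cald}^c(x)$) for the equality in the Clarke case.
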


\begin{proof}
The first statement is a consequence of Lemma \ref{lemma-cones-inclusions} and Remark \ref{rem-polar-cone}, and the second statement follows from Lemma \ref{lemma-tangent-cones} and Lemma \ref{lemma-double-polar}.
\end{proof}

The following result shows that all these normal cones are local objects. It is an immediate consequence of Lemma \ref{lemma-tangent-local}.

\begin{lemma}\label{lemma-normal-local-1}
Let $x \in \cald$ be arbitrary. Then for any closed neighborhood $C \subset H$ of $x$ we have $\caln_{\cald}^c(x) = \caln_{\cald \cap C}^c(x)$, $\caln_{\cald}^a(x) = \caln_{\cald \cap C}^a(x)$, $\caln_{\cald}^b(x) = \caln_{\cald \cap C}^b(x)$ and $\caln_{\cald}^{\sigma}(x) = \caln_{\cald \cap C}^{\sigma}(x)$.
\end{lemma}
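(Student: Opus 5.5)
The statement is Lemma \ref{lemma-normal-local-1}. It says that each normal cone is unchanged if $\cald$ is replaced by $\cald \cap C$, for any closed neighborhood $C$ of $x$. The plan is to reduce this directly to the corresponding statement for tangent cones, Lemma \ref{lemma-tangent-local}, and then pass to polars.

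First, note that $\cald \cap C$ is again a closed subset of $H$ and that $x \in \cald \cap C$. Hence all four tangent cones and all four normal cones to $\cald \cap C$ at $x$ are well defined.

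Next, apply Lemma \ref{lemma-tangent-local} with $E = H$. This gives equality of the tangent cones:
\[
T_{\cald}^c(x) = T_{\cald \cap C}^c(x), \quad T_{\cald}^a(x) = T_{\cald \cap C}^a(x), \quad T_{\cald}^b(x) = T_{\cald \cap C}^b(x), \quad T_{\cald}^{\sigma}(x) = T_{\cald \cap C}^{\sigma}(x).
\]

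Finally, each normal cone is defined as the polar of the corresponding tangent cone, for instance $\caln_{\cald}^c(x) = T_{\cald}^c(x)^{\circ}$. Taking polars of both sides of each equality therefore gives the four claimed identities at once.

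There is no real obstacle. The only point to check is that the polar operation in Definition \ref{def-polar} depends only on the set itself, which is immediate. All of the genuine content already sits in Lemma \ref{lemma-tangent-local}: via Proposition \ref{prop-cones}, only sequences $x_n \to x$ and $x + t_n v_n$ with $t_n \to 0$ matter, and these eventually lie in $C$. For $v_n \overset{\sigma}{\to} v$ this still holds, because weakly convergent sequences are bounded, so $t_n v_n \to 0$ in norm.
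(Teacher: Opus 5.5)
Your proposal is correct and is exactly the paper's argument: the paper also derives this as an immediate consequence of Lemma~\ref{lemma-tangent-local}, taking polars of the equal tangent cones. Your closing remark on why Lemma~\ref{lemma-tangent-local} holds is a sensible sanity check but is not needed for this step.
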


\begin{definition}\label{def-normal-cones}
Let $x \in \cald$ be arbitrary.
\begin{enumerate}
\item An element $u \in H$ is called a \emph{first order Fr\'{e}chet normal} to $\cald$ at $x$ if for each $\epsilon > 0$ there exists $\delta > 0$ such that
\begin{align}\label{normal-order-1}
\la u,y-x \ra \leq \epsilon \| y-x \|
\end{align}
for all $y \in \cald$ with $\| y-x \| < \delta$.

\item The \emph{first order normal cone} to $\cald$ at $x$, denoted by $\caln_{\cald}^1(x)$, is defined as the set of all first order Fr\'{e}chet normals to $\cald$ at $x$.

\item A pair $(u,v) \in H \times L(H)$ with a self-adjoint operator $v$ is called a \emph{second order Fr\'{e}chet normal} to $\cald$ at $x$ if for each $\epsilon > 0$ there exists $\delta > 0$ such that
\begin{align}\label{normal-order-2}
\la u,y-x \ra + \frac{1}{2} \la v(y-x), y-x \ra \leq \epsilon \| y-x \|^2
\end{align}
for all $y \in \cald$ with $\| y-x \| < \delta$.

\item The \emph{second order normal cone} to $\cald$ at $x$, denoted by $\caln_{\cald}^2(x)$, is defined as the set of all second order Fr\'{e}chet normals to $\cald$ at $x$.
\end{enumerate}
\end{definition}

\begin{remark}
It is easy to check that $\caln_{\cald}^1(x)$ and $\caln_{\cald}^2(x)$ are convex cones for each $x \in \cald$.
\end{remark}

The following result shows that also the just introduced normal cones are also local objects. It is immediately verified by checking the properties of first and second order Fr\'{e}chet normals from Definition \ref{def-normal-cones}.

\begin{lemma}\label{lemma-normal-local-2}
Let $x \in \cald$ be arbitrary. Then for any closed neighborhood $C \subset H$ of $x$ we have $\caln_{\cald}^1(x) = \caln_{\cald \cap C}^1(x)$ and $\caln_{\cald}^2(x) = \caln_{\cald \cap C}^2(x)$.
\end{lemma}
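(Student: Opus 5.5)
The lemma to prove is Lemma \ref{lemma-normal-local-2}: for any closed neighborhood $C$ of $x$, the first and second order normal cones of $\cald$ and of $\cald \cap C$ at $x$ coincide. The proof should be a direct two-inclusion argument from Definition \ref{def-normal-cones}. The key observation is that both defining inequalities \eqref{normal-order-1} and \eqref{normal-order-2} only need to hold for points $y$ with $\| y-x \| < \delta$, where $\delta > 0$ may be shrunk freely. Since $C$ is a neighborhood of $x$, there is $r > 0$ with $K(x,r) \subset C$. Hence for all $y$ with $\| y-x \| < r$ we have $y \in \cald$ if and only if $y \in \cald \cap C$.

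\emph{Inclusion $\caln_{\cald}^1(x) \subset \caln_{\cald \cap C}^1(x)$.} Let $u \in \caln_{\cald}^1(x)$ and $\epsilon > 0$, and let $\delta$ be the radius provided for $\cald$. Because $\cald \cap C \subset \cald$, the same $\delta$ works for $\cald \cap C$.

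\emph{Inclusion $\caln_{\cald \cap C}^1(x) \subset \caln_{\cald}^1(x)$.} Let $u \in \caln_{\cald \cap C}^1(x)$ and $\epsilon > 0$, and let $\delta$ be the radius provided for $\cald \cap C$. Set $\delta' := \min\{\delta, r\}$. Every $y \in \cald$ with $\| y-x \| < \delta'$ lies in $K(x,r) \subset C$, hence in $\cald \cap C$, so \eqref{normal-order-1} holds for it.

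The second order cones are handled word for word the same way. Only the inequality changes to \eqref{normal-order-2} with the pair $(u,v)$, and self-adjointness of $v$ is unaffected.

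There is no real obstacle here. The only point needing care is the choice $\delta' = \min\{\delta, r\}$, which uses that $C$ contains a ball around $x$. This is exactly what being a neighborhood provides; closedness of $C$ is only needed so that $\cald \cap C$ is again a closed set.
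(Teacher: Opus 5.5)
Your proposal is correct and is the same argument the paper has in mind: the paper only says the lemma is "immediately verified by checking the properties of first and second order Fr\'{e}chet normals from Definition \ref{def-normal-cones}". Your two inclusions, using $\cald \cap C \subset \cald$ in one direction and shrinking to $\delta' = \min\{\delta, r\}$ with $K(x,r) \subset C$ in the other, are exactly that verification written out.
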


\begin{proposition}\cite[Prop. 3.1]{Borwein}\label{prop-normal-1}
We have $\caln_{\cald}^1(x) = \caln_{\cald}^{\sigma}(x)$ for each $x \in \cald$.
\end{proposition}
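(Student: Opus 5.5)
We have to show $\caln_{\cald}^1(x) = T_{\cald}^{\sigma}(x)^{\circ}$ for a fixed $x \in \cald$. We prove the two inclusions separately. Throughout we use two facts: in a Hilbert space bounded sequences have weakly convergent subsequences, and weak limits satisfy $\la u,v\ra = \lim_n \la u,v_n\ra$.

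\emph{Inclusion $\caln_{\cald}^1(x) \subset \caln_{\cald}^{\sigma}(x)$.} Let $u \in \caln_{\cald}^1(x)$ and $v \in T_{\cald}^{\sigma}(x)$. By definition there are $t_n \to 0^+$ and $v_n \overset{\sigma}{\to} v$ with $y_n := x + t_n v_n \in \cald$.
\begin{itemize}
\item By the uniform boundedness principle, $\sup_n \| v_n \| =: K < \infty$. Hence $\| y_n - x \| \le t_n K \to 0$.
\item Fix $\epsilon > 0$ and take $\delta$ from \eqref{normal-order-1}. For large $n$ we have $\| y_n - x\| < \delta$, so
\begin{align*}
t_n \la u, v_n \ra \le \epsilon t_n \| v_n \| \le \epsilon t_n K.
\end{align*}
\item Dividing by $t_n$ gives $\la u,v_n\ra \le \epsilon K$. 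Letting $n \to \infty$ and using weak convergence yields $\la u,v\ra \le \epsilon K$.
\item Since $\epsilon$ is arbitrary, $\la u,v \ra \le 0$, so $u \in T_{\cald}^{\sigma}(x)^{\circ}$.
\end{itemize}

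\emph{Inclusion $\caln_{\cald}^{\sigma}(x) \subset \caln_{\cald}^1(x)$.} We argue by contradiction. Suppose $u \in \caln_{\cald}^{\sigma}(x)$ but $u \notin \caln_{\cald}^1(x)$.
\begin{itemize}
\item Then there are $\epsilon > 0$ and $y_n \in \cald$ with $y_n \to x$, $y_n \neq x$, and $\la u, y_n - x \ra > \epsilon \| y_n - x \|$.
\item Set $t_n := \| y_n - x \| \to 0^+$ and $w_n := (y_n - x)/t_n$. These are unit vectors with $x + t_n w_n = y_n \in \cald$.
\item By weak sequential compactness of the unit ball, pass to a subsequence with $w_n \overset{\sigma}{\to} w$. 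By Definition \ref{def-cones}, $w \in T_{\cald}^{\sigma}(x)$.
\item Weak convergence gives $\la u, w \ra = \lim_n \la u, w_n \ra \ge \epsilon > 0$.
\item This contradicts $u \in T_{\cald}^{\sigma}(x)^{\circ}$, which requires $\la u, w \ra \le 0$ (Remark \ref{rem-polar-cone}).
\end{itemize}

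\emph{Where the difficulty lies.} The main obstacle is this second inclusion. The limit $w$ could be $0$, but that case is harmless because $\la u,w\ra \ge \epsilon$ already rules it out. The real point is the choice of the weak tangent cone: the strong contingent cone would not suffice here, since the normalized difference quotients $w_n$ need not have strongly convergent subsequences in infinite dimension.
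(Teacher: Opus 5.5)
Your proof is correct and complete. The paper gives no argument of its own here: it only quotes the result from Borwein (Prop.~3.1 there), so your proposal replaces a citation with a self-contained proof.

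Your two inclusions form the natural direct argument:
\begin{itemize}
\item For $\caln_{\cald}^1(x) \subset \caln_{\cald}^{\sigma}(x)$ you use only that weakly convergent sequences are bounded. This forces $x + t_n v_n \to x$, so the Fr\'{e}chet inequality \eqref{normal-order-1} applies and can be divided by $t_n$.
\item For $\caln_{\cald}^{\sigma}(x) \subset \caln_{\cald}^1(x)$ you take a weakly convergent subsequence of the normalized difference quotients. This produces an element of $T_{\cald}^{\sigma}(x)$ that violates the polar inequality.
\end{itemize}

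What your version adds is transparency about where the structure of $H$ enters. You need only boundedness of weakly convergent sequences and weak sequential compactness of the unit ball, i.e.\ reflexivity. It also shows why the weak contingent cone is the right object.

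With the strong cone $T_{\cald}^b(x)$ in its place, the second inclusion fails. The example in the appendix, $\cald = \{0\} \cup \{ (e_n+e_0)/n : n \in \bbn \}$, shows this: there $\caln_{\cald}^b(0) = H$, while $\caln_{\cald}^1(0) = \{ u \in H : \la u,e_0 \ra \leq 0 \}$. This confirms your closing remark.

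The paper's citation is shorter, and suffices for its purposes since the result is standard.
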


\begin{lemma}\label{lemma-cones-1-2}
Let $x \in \cald$ be arbitrary. Then for all $(u,v) \in \caln_{\cald}^2(x)$ we have $u \in \caln_{\cald}^1(x)$.
\end{lemma}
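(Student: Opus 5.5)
The plan is to work directly from the definitions of second and first order Fréchet normals (Definition \ref{def-normal-cones}). Fix $x \in \cald$ and $(u,v) \in \caln_{\cald}^2(x)$, and let $\epsilon > 0$ be given. The goal is a $\delta > 0$ such that $\la u, y-x \ra \leq \epsilon \| y-x \|$ for all $y \in \cald$ with $\| y-x \| < \delta$; this is exactly $u \in \caln_{\cald}^1(x)$.

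First apply \eqref{normal-order-2} with the tolerance $\epsilon' := 1$. This gives some $\delta_1 > 0$ such that for all $y \in \cald$ with $\| y-x \| < \delta_1$ we have
\begin{align*}
\la u,y-x \ra \leq \| y-x \|^2 - \frac{1}{2} \la v(y-x),y-x \ra \leq \Big( 1 + \frac{1}{2}\| v \| \Big) \| y-x \|^2 .
\end{align*}
The second inequality only uses that $v \in L(H)$ is bounded, so that $|\la v(y-x),y-x\ra| \leq \| v \| \, \| y-x \|^2$.

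Now set $K := 1 + \frac{1}{2}\| v \|$ and choose
\begin{align*}
\delta := \min \{ \delta_1, \epsilon / K \}.
\end{align*}
For $y \in \cald$ with $\| y-x \| < \delta$ we then get
\begin{align*}
\la u,y-x \ra \leq K \| y-x \| \cdot \| y-x \| \leq \epsilon \| y-x \|,
\end{align*}
which is \eqref{normal-order-1}, and hence $u \in \caln_{\cald}^1(x)$.

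The argument has no real obstacle. It just trades the quadratic remainder for a linear one on a sufficiently small ball, using boundedness of $v$. The only point to check is that the tolerance in the second order definition can be fixed once, independently of $\epsilon$, which is legitimate.
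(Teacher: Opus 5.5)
Your proof is correct and is essentially the paper's argument: both bound the quadratic term by $|\langle v(y-x),y-x\rangle| \leq \|v\|\,\|y-x\|^2$ and then shrink the radius so that $\|y-x\|^2 \leq (\text{const})^{-1}\epsilon\,\|y-x\|$. The only cosmetic difference is that you fix the second-order tolerance at $1$, whereas the paper reuses $\epsilon$ and takes $\eta = \min\{\delta, 1/2, \epsilon/\|v\|\}$. Since your $K \geq 1$, you also avoid the paper's convention $\epsilon/0 := \infty$ for $v = 0$.
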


\begin{proof}
Let $\epsilon > 0$ be arbitrary. There is $\delta > 0$ such that \eqref{normal-order-2} for all $y \in \cald$ with $\| y-x \| < \delta$. With the convention $\frac{\epsilon}{0} := \infty$, we define $\eta > 0$ as
\begin{align*}
\eta := \min \bigg\{ \delta, \frac{1}{2}, \frac{\epsilon}{\| v \|} \bigg\}.
\end{align*}
Then for all $y \in \cald$ with $\| y-x \| < \eta$ we obtain
\begin{align*}
\la u,y-x \ra &\leq \epsilon \| y-x \|^2 - \frac{1}{2} \la v(y-x), y-x \ra
\\ &\leq \epsilon \| y-x \|^2 + \frac{1}{2} | \la v(y-x), y-x \ra |
\\ &\leq \epsilon \| y-x \|^2 + \frac{1}{2} \| v \| \| y-x \|^2
\\ &\leq \bigg( \epsilon \eta + \frac{1}{2} \| v \| \eta \bigg) \| y-x \| \leq \epsilon \| y-x \|,
\end{align*}
showing \eqref{normal-order-1}.
\end{proof}

In the following two results we use the conventions from Remark \ref{remark-derivatives}.

\begin{lemma}\label{lemma-phi-in-N1}
Let $\phi : H \to \bbr$ be of class $C^1$, and let $x \in \cald$ be such that $\displaystyle\max_{\mathcal{D}} \phi = \phi(x)$. Then we have
\begin{align}\label{der-in-normal-cone-1}
D \phi(x) \in \caln_{\cald}^1(x).
\end{align}
\end{lemma}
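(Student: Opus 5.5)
The plan is to verify the defining inequality \eqref{normal-order-1} directly from the first order Taylor expansion of $\phi$ at $x$; no normal-cone machinery is needed. Write $u := D\phi(x) \in H$, using the conventions from Remark \ref{remark-derivatives}, so that $D\phi(x)h = \la u,h \ra$ for all $h \in H$.

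The first step is the differentiability estimate. Since $\phi$ is Fréchet differentiable at $x$, for each $\epsilon > 0$ there exists $\delta > 0$ such that
\begin{align*}
| \phi(y) - \phi(x) - \la u, y-x \ra | \leq \epsilon \| y-x \| \quad \text{for all $y \in H$ with $\| y-x \| < \delta$.}
\end{align*}
Only differentiability at the single point $x$ is used; the $C^1$ assumption is more than enough.

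The second step uses the maximum property. For $y \in \cald$ we have $\phi(y) \leq \phi(x)$ because $\max_{\cald} \phi = \phi(x)$. Hence for every $y \in \cald$ with $\| y-x \| < \delta$ we get
\begin{align*}
\la u, y-x \ra \leq \phi(x) - \phi(y) + \epsilon \| y-x \| \leq \epsilon \| y-x \|.
\end{align*}
This is exactly \eqref{normal-order-1}, so $u$ is a first order Fréchet normal and \eqref{der-in-normal-cone-1} follows.

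There is no real obstacle. The only point requiring care is the identification of $D\phi(x) \in L(H,\bbr)$ with an element of $H$ via the Riesz representation, as fixed in Remark \ref{remark-derivatives}.
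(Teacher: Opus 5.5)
Your argument is essentially the paper's. The paper writes the first-order expansion of $\phi$ at $x$ with a continuous Taylor remainder $R$ satisfying $R(0)=0$, while you use the $\epsilon$--$\delta$ form of Fr\'{e}chet differentiability; both then combine it with $\phi(y)\le\phi(x)$ for $y\in\cald$.

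Fix the sign slip in your display, though. The estimate rearranges to $\la u,y-x\ra \le \phi(y)-\phi(x)+\epsilon\|y-x\|$, and it is $\phi(y)-\phi(x)\le 0$ that yields $\la u,y-x\ra\le\epsilon\|y-x\|$. With $\phi(x)-\phi(y)$ as you wrote it, your second inequality is false whenever $\phi(y)<\phi(x)$.
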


\begin{proof}
By Taylor's theorem (see, e.g. \cite[Thm. 2.4.15]{Abraham}) we have
\begin{align*}
\phi(x+z) = \phi(x) + \la D\phi(x),z \ra + R(z) \| z \|, \quad z \in H,
\end{align*}
where the remainder term $R : H \to \bbr$ is a continuous mapping with $R(0) = 0$. Since $\displaystyle\max_{\mathcal{D}} \phi = \phi(x)$, we obtain
\begin{align*}
\la D\phi(x),y-x \ra &= \phi(y) - \phi(x) - R(y-x) \| y-x \|
\\ &\leq | R(y-x) | \cdot \| y-x \|.
\end{align*}
By the continuity of $R$, this proves \eqref{der-in-normal-cone-1}.
\end{proof}

\begin{lemma}\label{lemma-phi-in-N2}
Let $\phi : H \to \bbr$ be of class $C^2$, and let $x \in \cald$ be such that $\displaystyle\max_{\mathcal{D}} \phi = \phi(x)$. Then we have
\begin{align}\label{der-in-normal-cone}
(D \phi(x), D^2 \phi(x)) \in \caln_{\cald}^2(x).
\end{align}
\end{lemma}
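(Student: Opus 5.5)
The plan is to mirror the proof of Lemma \ref{lemma-phi-in-N1}, using the second order Taylor expansion instead of the first order one. By Taylor's theorem (see, e.g. \cite[Thm. 2.4.15]{Abraham}) applied to the $C^2$ function $\phi$ at $x$, and with the conventions from Remark \ref{remark-derivatives}, we write
\begin{align*}
\phi(x+z) = \phi(x) + \la D\phi(x), z \ra + \frac{1}{2} \la D^2\phi(x) z, z \ra + R(z) \| z \|^2, \quad z \in H,
\end{align*}
where the remainder $R : H \to \bbr$ satisfies $R(z) \to 0$ as $z \to 0$. Continuity of $R$ at $0$ is all that is needed. It follows from the integral form of the remainder together with the continuity of $D^2 \phi$ at $x$, since
\begin{align*}
|R(z)| \leq \sup_{s \in [0,1]} \| D^2\phi(x+sz) - D^2\phi(x) \|.
\end{align*}

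Next, we substitute $z = y - x$ for $y \in \cald$. Since $\max_{\cald} \phi = \phi(x)$, we have $\phi(y) - \phi(x) \leq 0$, and hence
\begin{align*}
\la D\phi(x), y-x \ra + \frac{1}{2} \la D^2\phi(x)(y-x), y-x \ra = \phi(y) - \phi(x) - R(y-x)\|y-x\|^2 \leq |R(y-x)| \, \| y-x \|^2 .
\end{align*}
Given $\epsilon > 0$, we choose $\delta > 0$ such that $|R(z)| \leq \epsilon$ whenever $\|z\| < \delta$. This yields exactly inequality \eqref{normal-order-2} with $u = D\phi(x)$ and $v = D^2\phi(x)$.

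Finally, we check the side condition in Definition \ref{def-normal-cones}, namely that $v$ is self-adjoint. This holds because, under Remark \ref{remark-derivatives}, the second derivative $D^2\phi(x)$ is identified with a self-adjoint operator in $L(H)$ via the symmetric bilinear form $D^2\phi(x)(\cdot,\cdot)$ (Schwarz's theorem).

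The only mildly delicate step is justifying the $o(\|z\|^2)$ remainder in infinite dimensions under mere $C^2$ regularity. The estimate via the integral remainder given above handles this, so no real obstacle is expected.
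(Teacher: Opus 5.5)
Your proof is correct and is essentially the paper's argument: a second order Taylor expansion of $\phi$ at $x$ with remainder $R(z)\|z\|^2$ and $R$ continuous at $0$, combined with $\phi(y) \leq \phi(x)$ for $y \in \cald$, gives inequality \eqref{normal-order-2} for $(D\phi(x), D^2\phi(x))$. Your integral-remainder bound for $R$ and your check that $D^2\phi(x)$ is self-adjoint (via Remark \ref{remark-derivatives}) spell out what the paper takes directly from Taylor's theorem and its derivative conventions.
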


\begin{proof}
By Taylor's theorem (see, e.g. \cite[Thm. 2.4.15]{Abraham}) we have
\begin{align*}
\phi(x+z) = \phi(x) + \la D\phi(x),z \ra + \frac{1}{2} \la D^2 \phi(x)z,z \ra + R(z) \| z \|^2, \quad z \in H,
\end{align*}
where the remainder term $R : H \to \bbr$ is a continuous mapping with $R(0) = 0$. Since $\displaystyle\max_{\mathcal{D}} \phi = \phi(x)$, we obtain
\begin{align*}
\la D\phi(x),y-x \ra + \frac{1}{2} \la D^2 \phi(x)(y-x),y-x \ra &= \phi(y) - \phi(x) - R(y-x) \| y-x \|^2
\\ &\leq | R(y-x) | \cdot \| y-x \|^2.
\end{align*}
By the continuity of $R$, this proves \eqref{der-in-normal-cone}.
\end{proof}

In view of the upcoming definition, note that for each $x \in \cald$ we have
\begin{align*}
d_{\cald}(x+u) \leq \| u \|, \quad u \in H.
\end{align*}

\begin{definition}
Let $x \in \cald$ be arbitrary.
\begin{enumerate}
\item A vector $u \in H$ is called a \emph{proximal normal} to $\cald$ at $x$ if $d_{\cald}(x+u) = \| u \|$.

\item We denote by $\caln_{\cald}^{1,\pr}(x)$ the set of all proximal normals to $\cald$ at $x$; that is
\begin{align*}
\caln_{\cald}^{1,\pr}(x) := \{ u \in H : d_{\cald}(x+u) = \| u \| \}.
\end{align*}

\item We denote by $\caln_{\cald}^p(x)$ the cone generated by all proximal normals to $\cald$ at $x$; that is
\begin{align*}
\caln_{\cald}^p(x) := \{ \lambda u : \lambda \geq 0 \text{ and } u \in \caln_{\cald}^{1,\pr}(x) \}.
\end{align*}
\end{enumerate}
\end{definition}

\begin{lemma}\label{lemma-prox-normal}
For each $x \in \cald$ the following statements are true:
\begin{enumerate}
\item We have $\mathcal{N}^{1,\pr}_{\mathcal{D}}(x) \subset \caln_{\cald}^p(x)$.

\item For each $u \in \caln_{\cald}^{1,\pr}(x)$ and each $t \in [0,1]$ we have $tu \in \caln_{\cald}^{1,\pr}(x)$.

\item The set $\caln_{\cald}^p(x)$ is a cone having the representations
\begin{align*}
\caln_{\cald}^p(x) &= \{ u \in H : tu \in \caln_{\cald}^{1,\pr}(x) \text{ for some } t > 0 \}
\\ &= \{ u \in H : d_{\cald}(x+tu) = t \| u \| \text{ for some } t > 0 \}.
\end{align*}
Moreover, for all $u \in \caln_{\cald}^p(x)$ and $t > 0$ such that $d_{\cald}(x+tu) = t \| u \|$ we have
\begin{align*}
d_{\cald}(x+su) = s \| u \|, \quad s \in [0,t].
\end{align*}
\end{enumerate}
\end{lemma}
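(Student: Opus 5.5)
The three statements follow from the definition of a proximal normal, which says that the point $x+u$ has distance exactly $\|u\|$ to $\cald$. They also use the elementary inequality $d_{\cald}(z) \leq \|z - x\|$ for $x \in \cald$. I will treat them in the order (2), (1), (3), since (1) and (3) both lean on the monotonicity property (2).

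For (2), let $u \in \caln_{\cald}^{1,\pr}(x)$ and $t \in [0,1]$. Since $x \in \cald$, we already have $d_{\cald}(x+tu) \leq t\|u\|$, so only the reverse inequality is needed. Suppose instead that $d_{\cald}(x+tu) < t\|u\|$. Then there is $y \in \cald$ with $\|x+tu-y\| < t\|u\|$. The triangle inequality gives
\begin{align*}
\|x+u-y\| \leq \|x+u-(x+tu)\| + \|x+tu-y\| < (1-t)\|u\| + t\|u\| = \|u\|,
\end{align*}
which contradicts $d_{\cald}(x+u) = \|u\|$. This is the only nontrivial step, and it is just this one triangle-inequality argument.

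For (1), take $\lambda = 1$ in the definition of $\caln_{\cald}^p(x)$.

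For (3), the set $\caln_{\cald}^p(x)$ is a cone because $\mu(\lambda u) = (\mu\lambda)u$. For the first representation:
\begin{itemize}
\item If $u = \lambda w$ with $w$ a proximal normal and $\lambda > 0$, then $t = 1/\lambda$ gives $tu = w \in \caln_{\cald}^{1,\pr}(x)$.
\item If $\lambda = 0$, then $u = 0$, and $0$ is trivially a proximal normal, so any $t > 0$ works.
\item Conversely, if $tu \in \caln_{\cald}^{1,\pr}(x)$ for some $t > 0$, then $u = t^{-1}(tu)$ lies in $\caln_{\cald}^p(x)$.
\end{itemize}
The second representation is the first one with the definition $d_{\cald}(x+tu) = \|tu\| = t\|u\|$ written out.

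Finally, suppose $d_{\cald}(x+tu) = t\|u\|$, i.e. $tu$ is a proximal normal. For $s \in [0,t]$, apply (2) with factor $s/t \in [0,1]$ to $tu$. This gives $su \in \caln_{\cald}^{1,\pr}(x)$, that is, $d_{\cald}(x+su) = s\|u\|$.
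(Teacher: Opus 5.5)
Your proof is correct and follows the same route as the paper. The key step (2) is the same triangle-inequality contradiction; the paper writes it as $d_{\cald}(x+u) \leq d_{\cald}(x+tu) + (1-t)\|u\| < \|u\|$, and you argue through an explicit near-minimizer $y \in \cald$. Parts (1) and (3) you derive in full, where the paper calls them obvious or an immediate consequence.
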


\begin{proof}
The first statement is obvious. For the second statement, suppose there is $t \in (0,1)$ such that $d_{\cald}(x+tu) < t \| u \|$. Then we arrive at the contradiction
\begin{align*}
d_{\cald}(x+u) \leq d_{\cald}(x+tu) + (1-t) \| u \| < \| u \|.
\end{align*}
Now, the third statement is an immediate consequence.
\end{proof}

\begin{proposition}\label{prop-normal-2}
For all $x \in \cald$ and $u \in H$ the following statements are equivalent:
\begin{enumerate}
\item[(i)] We have $u \in \caln_{\cald}^p(x)$.

\item[(ii)] There exists $t = t(x,u) > 0$ such that
\begin{align}\label{inner-prod-normal-prox}
\langle u,y-x \rangle \leq \frac{1}{2t} \| y-x \|^2 \quad \text{for all $y \in \cald$.}
\end{align}
\end{enumerate}
\end{proposition}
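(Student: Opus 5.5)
We prove the two implications separately, reducing everything to the metric projection characterization of proximal normals in a Hilbert space. Fix $x \in \cald$ and $u \in H$.

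\emph{(i) $\Rightarrow$ (ii).} If $u = 0$ there is nothing to prove, so let $u \neq 0$. By Lemma \ref{lemma-prox-normal} there is $t > 0$ with $d_{\cald}(x+tu) = t\|u\|$. Since $x \in \cald$ and $\|(x+tu)-x\| = t\|u\|$, the point $x$ is a nearest point of $\cald$ to $x+tu$. Hence
\begin{align*}
\| x+tu-y \|^2 \geq \| tu \|^2 \quad \text{for all } y \in \cald .
\end{align*}
Expanding the left-hand side as $\|tu\|^2 - 2t\la u,y-x\ra + \|y-x\|^2$ and cancelling, we obtain $2t\la u,y-x\ra \leq \|y-x\|^2$, which is \eqref{inner-prod-normal-prox}.

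\emph{(ii) $\Rightarrow$ (i).} Suppose \eqref{inner-prod-normal-prox} holds for some $t>0$. Reversing the computation above gives $\|x+tu-y\|^2 \geq t^2\|u\|^2$ for all $y \in \cald$. Therefore $d_{\cald}(x+tu) \geq t\|u\|$. The reverse inequality $d_{\cald}(x+tu) \leq t\|u\|$ holds because $x \in \cald$. Thus $d_{\cald}(x+tu) = t\|u\|$, and the representation in Lemma \ref{lemma-prox-normal}(3) yields $u \in \caln_{\cald}^p(x)$.

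The only point needing care is that the representation from Lemma \ref{lemma-prox-normal}(3) provides the required $t>0$; with that in hand, both directions are just the polarization identity in $H$.
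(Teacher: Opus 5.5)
Your proof is correct and is essentially the paper's argument: for fixed $t>0$, the identity $d_{\cald}(x+tu)=t\|u\|$ is equivalent to $\|tu\| \leq \|(x+tu)-y\|$ for all $y \in \cald$, which after expanding the square is exactly \eqref{inner-prod-normal-prox}, and the representation of $\caln_{\cald}^p(x)$ in Lemma~\ref{lemma-prox-normal} gives both directions. Your separate treatment of $u=0$ is harmless but unnecessary, since that representation already covers it.
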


\begin{proof}
Let $t > 0$ be arbitrary. Then we have $d_{\cald}(x + tu) = t \| u \|$ if and only if $d_{\cald}(x + tu) \geq t \| u \|$, and this is equivalent to
\begin{align}\label{dist-normal-prox}
\| tu \| \leq \| (x+tu) - y \| \quad \text{for all $y \in \cald$.}
\end{align}
Furthermore, for each $y \in \cald$ we have
\begin{align*}
\| (x+tu) - y \|^2 = \| (x-y) + tu \|^2 = \| x-y \|^2 + 2t \la x-y, u \ra + t^2 \| u \|^2
\end{align*}
Therefore, we have \eqref{dist-normal-prox} if and only if we have \eqref{inner-prod-normal-prox}. Taking into account Lemma \ref{lemma-prox-normal}, this completes the proof.
\end{proof}

For the upcoming result recall that the adjoint $A^*$ of a linear isomorphism $A \in L(H)$ is also invertible with $(A^*)^{-1} = (A^{-1})^*$.

\begin{lemma}\label{lemma-normal-cones-transformation}
Let $A \in L(H)$ be a linear isomorphism. Then we have
\begin{align*}
\caln_{A \cald}^p(Ax) = (A^*)^{-1} \caln_{\cald}^p(x) \quad \text{for each $x \in \cald$.}
\end{align*}
\end{lemma}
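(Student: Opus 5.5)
We will use the characterization of $\caln_{\cald}^p(x)$ from Proposition~\ref{prop-normal-2}, which is stated purely in terms of inner products and norms. A linear change of variables then transports that inequality, with only the constant $t$ changing. It suffices to prove the inclusion
\begin{align*}
(A^*)^{-1} \caln_{\cald}^p(x) \subset \caln_{A\cald}^p(Ax) \quad \text{for all $x \in \cald$.}
\end{align*}
The reverse inclusion follows by applying this inclusion to the isomorphism $A^{-1}$, the closed set $A\cald$ and the point $Ax$. This gives $(A^{-1})^{*\,-1}\caln_{A\cald}^p(Ax) \subset \caln_{\cald}^p(x)$, that is, $A^*\caln_{A\cald}^p(Ax) \subset \caln_{\cald}^p(x)$, using $(A^{-1})^* = (A^*)^{-1}$ as recalled before the statement. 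Note that $A\cald$ is closed because $A$ is a homeomorphism, so Proposition~\ref{prop-normal-2} applies to it.

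For the main inclusion, let $u \in \caln_{\cald}^p(x)$. By Proposition~\ref{prop-normal-2} there is $t>0$ with
\begin{align*}
\la u, y-x \ra \leq \frac{1}{2t}\|y-x\|^2 \quad \text{for all } y \in \cald.
\end{align*}
Set $w := (A^*)^{-1}u$ and let $z = Ay \in A\cald$ be arbitrary, with $y \in \cald$. Then
\begin{align*}
\la w, z - Ax\ra = \la (A^*)^{-1}u, A(y-x)\ra = \la u, y-x\ra.
\end{align*}
We then use
\begin{align*}
\|y-x\| = \|A^{-1}(z-Ax)\| \leq \|A^{-1}\|\,\|z-Ax\|.
\end{align*}
Combining these, we obtain
\begin{align*}
\la w, z-Ax\ra \leq \frac{\|A^{-1}\|^2}{2t}\|z-Ax\|^2 = \frac{1}{2t'}\|z-Ax\|^2, \qquad t' := \frac{t}{\|A^{-1}\|^2} > 0.
\end{align*}
Hence, by Proposition~\ref{prop-normal-2}, $w \in \caln_{A\cald}^p(Ax)$. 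The case $u=0$ is trivial.

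There is no real obstacle here. The only points to watch are the correct placement of adjoints and inverses, which is what makes the inner product identity exact, and the fact that Proposition~\ref{prop-normal-2} characterizes the cone $\caln^p$ rather than the set of proximal normals $\caln^{1,\pr}$. Because of this, the change in the constant $t$ is harmless.
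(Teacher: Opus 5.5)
Your proof is correct and takes the same approach as the paper: both transport the inequality characterization of Proposition~\ref{prop-normal-2} through $A$ and rescale the constant $t$ by an operator norm. The only difference is that you obtain the inclusion $\caln_{A\cald}^p(Ax) \subset (A^*)^{-1}\caln_{\cald}^p(x)$ by applying the other inclusion to $A^{-1}$, $A\cald$ and $Ax$, whereas the paper verifies it directly with $s := t/\|A\|^2$.
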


\begin{proof}
We fix an arbitrary $x \in \cald$. Let $u \in \caln_{A \cald}^p(Ax)$ be arbitrary. By Proposition \ref{prop-normal-2} there exists $t = t(x,u) > 0$ such that
\begin{align}\label{inner-prod-n-1}
\la u, A(y-x) \ra \leq \frac{1}{2t} \| A(y-x) \|^2 \quad \text{for all $y \in \cald$.}
\end{align}
Defining $s = s(x,u) > 0$ as $s := t / \| A \|^2$, we obtain
\begin{align}\label{inner-prod-n-2}
\la A^* u, y-x \ra \leq \frac{1}{2s} \| y-x \|^2 \quad \text{for all $y \in \cald$,}
\end{align}
and hence, by Proposition \ref{prop-normal-2} we have $u \in (A^*)^{-1} \caln_{\cald}^p(x)$.

Conversely, let $u \in (A^*)^{-1} \caln_{\cald}^p(x)$ be arbitrary. By Proposition \ref{prop-normal-2} there exists $s = s(x,u) > 0$ such that \eqref{inner-prod-n-2} is fulfilled. Defining $t = t(x,u) > 0$ as $t := s / \| A^{-1} \|^2$, we obtain \eqref{inner-prod-n-1}. Thus, by Proposition \ref{prop-normal-2} we have $u \in \caln_{A \cald}^p(Ax)$.
\end{proof}

\begin{lemma}\label{lemma-N-p-1}
We have $\caln_{\cald}^p(x) \subset \caln_{\cald}^1(x)$ for each $x \in \cald$.
\end{lemma}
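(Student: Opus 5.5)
Take $u \in \caln_{\cald}^p(x)$; the goal is to verify the defining inequality \eqref{normal-order-1} of a first order Fr\'{e}chet normal. We will use the quadratic characterization of proximal normals in Proposition \ref{prop-normal-2}.

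By Proposition \ref{prop-normal-2} there exists $t = t(x,u) > 0$ such that
\begin{align*}
\la u, y-x \ra \leq \frac{1}{2t} \| y-x \|^2 \quad \text{for all $y \in \cald$.}
\end{align*}
Now fix $\epsilon > 0$ and set $\delta := 2t\epsilon$. For every $y \in \cald$ with $\| y-x \| < \delta$ we get
\begin{align*}
\la u, y-x \ra \leq \frac{\| y-x \|}{2t} \, \| y-x \| \leq \frac{\delta}{2t} \| y-x \| = \epsilon \| y-x \|,
\end{align*}
which is exactly \eqref{normal-order-1}. Therefore $u \in \caln_{\cald}^1(x)$.

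The case $u = 0$ needs no separate treatment. It lies in both cones trivially, and it is also covered by the argument above, since $d_{\cald}(x) = 0$ means Proposition \ref{prop-normal-2} applies with any $t$.

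No step here is a real obstacle. The only point to watch is that the constant $t$ in Proposition \ref{prop-normal-2} depends only on $x$ and $u$, not on $y$. That is what lets the quadratic bound turn into a linear bound with an arbitrarily small constant once we restrict to a small ball around $x$.
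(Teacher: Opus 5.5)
Your proposal is correct and follows the paper's proof exactly: invoke Proposition \ref{prop-normal-2} to obtain $t = t(x,u) > 0$ with the quadratic bound, set $\delta := 2t\epsilon$, and read off \eqref{normal-order-1}. Your remarks about $u = 0$ and the independence of $t$ from $y$ are accurate and need no further comment.
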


\begin{proof}
Let $u \in \caln_{\cald}^p(x)$ be arbitrary. By Proposition \ref{prop-normal-2} there exists $t = t(x,u) > 0$ such that \eqref{inner-prod-normal-prox} is satisfied. Let $\epsilon > 0$ be arbitrary. Setting $\delta := 2t \epsilon$, we obtain
\begin{align*}
\la u,y-x \ra \leq \frac{1}{2t} \| y-x \|^2 \leq \frac{\delta}{2t} \| y-x \| = \epsilon \| y-x \|
\end{align*}
for all $y \in \cald$ with $\| x-y \| < \delta$, showing \eqref{normal-order-1}.
\end{proof}

\begin{proposition}\label{prop-normal-cones-inclusion}
For each $x \in \cald$ the following statements are true:
\begin{enumerate}
\item We have the inclusions
\begin{align*}
\mathcal{N}^{1,\pr}_{\mathcal{D}}(x) \subset \caln_{\cald}^p(x) \subset \caln_{\cald}^{1}(x) = \caln_{\cald}^{\sigma}(x) \subset \caln_{\cald}^b(x) \subset \caln_{\cald}^{a}(x) \subset \caln_{\cald}^{c}(x).
\end{align*}
\item We have $t u \in \caln_{\cald}^{1,\pr}(x)$ for all $u \in \caln_{\cald}^{1,\pr}(x)$ and $t \in [0,1]$.

\item $\caln_{\cald}^p(x)$ is a cone.

\item $\caln_{\cald}^{1}(x), \caln_{\cald}^{\sigma}(x), \caln_{\cald}^b(x), \caln_{\cald}^{a}(x)$ and $\caln_{\cald}^{c}(x)$ are closed convex cones.
\end{enumerate}
\end{proposition}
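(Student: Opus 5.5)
The plan is to assemble the statement from results already established in this appendix, taking the four items in order.

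\emph{Item (1), the chain of inclusions.} The first inclusion $\caln_{\cald}^{1,\pr}(x) \subset \caln_{\cald}^p(x)$ is Lemma \ref{lemma-prox-normal}(1). The second, $\caln_{\cald}^p(x) \subset \caln_{\cald}^1(x)$, is Lemma \ref{lemma-N-p-1}, which rests on the quadratic bound of Proposition \ref{prop-normal-2}. The equality $\caln_{\cald}^1(x) = \caln_{\cald}^{\sigma}(x)$ is the Borwein result, Proposition \ref{prop-normal-1}. The remaining chain
\[
\caln_{\cald}^{\sigma}(x) \subset \caln_{\cald}^b(x) \subset \caln_{\cald}^a(x) \subset \caln_{\cald}^c(x)
\]
is Lemma \ref{lemma-normal-cones}(1). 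That lemma is itself obtained by taking polars of the reversed tangent inclusions in Lemma \ref{lemma-cones-inclusions}, using that $A_1 \subset A_2$ implies $A_2^{\circ} \subset A_1^{\circ}$.

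\emph{Items (2) and (3).} Item (2) is exactly Lemma \ref{lemma-prox-normal}(2). Item (3) follows from the definition of $\caln_{\cald}^p(x)$ as $\{\lambda u : \lambda \ge 0,\ u \in \caln_{\cald}^{1,\pr}(x)\}$, which is closed under nonnegative scaling since $\mu(\lambda u) = (\mu\lambda)u$. Alternatively one can cite Lemma \ref{lemma-prox-normal}(3).

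\emph{Item (4).} For $\caln_{\cald}^{\sigma}(x)$, $\caln_{\cald}^b(x)$, $\caln_{\cald}^a(x)$ and $\caln_{\cald}^c(x)$, each is by definition the polar of a cone, so Remark \ref{rem-polar-cone} shows it is a closed convex cone; this is also recorded in Lemma \ref{lemma-normal-cones}(1). For $\caln_{\cald}^1(x)$ the cleanest route is the equality $\caln_{\cald}^1(x) = \caln_{\cald}^{\sigma}(x)$ from item (1), which transfers the property.

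As an independent check for item (4), closedness and convexity of $\caln_{\cald}^1(x)$ can be read off Definition \ref{def-normal-cones}. The defining inequality $\la u, y-x\ra \le \epsilon\|y-x\|$ is preserved under nonnegative combinations (rescale $\epsilon$ and take the minimum of the $\delta$'s). It is also preserved under norm limits: if $u_n \to u$, then $\la u,y-x\ra \le \la u_n,y-x\ra + \|u-u_n\|\,\|y-x\|$, so choose $n$ with $\|u-u_n\| < \epsilon/2$ and then the $\delta$ for $u_n$ at level $\epsilon/2$.

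No step here is a real obstacle. The only nontrivial ingredient, the equality $\caln_{\cald}^1 = \caln_{\cald}^{\sigma}$, is imported from Borwein, so the proof is bookkeeping among the cited results.
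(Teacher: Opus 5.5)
Your proposal is correct and is essentially the paper's proof, which likewise derives the statement from Lemma~\ref{lemma-normal-cones}, Proposition~\ref{prop-normal-1}, Lemma~\ref{lemma-prox-normal} and Lemma~\ref{lemma-N-p-1}. Your direct check that $\caln_{\cald}^1(x)$ is a closed convex cone is a harmless extra: it makes item (4) for $\caln_{\cald}^1(x)$ independent of the Borwein equality.
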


\begin{proof}
This is a consequence of Lemma \ref{lemma-normal-cones}, Proposition \ref{prop-normal-1}, Lemma \ref{lemma-prox-normal} and Lemma \ref{lemma-N-p-1}.
\end{proof}

We refer to Example \ref{example-graph} below for an example which shows that the inclusion $\caln_{\cald}^p(x) \subset \caln_{\cald}^{1}(x)$ can be strict.

\begin{proposition}\label{prop-normal-local}
Let $x \in \cald$ be arbitrary, and let $C \subset H$ be a closed neighborhood of $x$. Then we have $\caln_{\cald}^p(x) = \caln_{\cald \cap C}^p(x)$, $\caln_{\cald}^1(x) = \caln_{\cald \cap C}^1(x)$, $\caln_{\cald}^{\sigma}(x) = \caln_{\cald \cap C}^{\sigma}(x)$, $\caln_{\cald}^b(x) = \caln_{\cald \cap C}^b(x)$, $\caln_{\cald}^a(x) = \caln_{\cald \cap C}^a(x)$ and $\caln_{\cald}^c(x) = \caln_{\cald \cap C}^c(x)$.
\end{proposition}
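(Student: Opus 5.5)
The statement to prove is Proposition \ref{prop-normal-local}: every normal cone at $x$ depends only on $\cald$ near $x$. My plan is to split it into two groups.

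\textbf{The polar and Fr\'{e}chet-type cones.} The cones $\caln^1$, $\caln^{\sigma}$, $\caln^b$, $\caln^a$ and $\caln^c$ are already covered.
\begin{itemize}
\item The four polar cones $\caln_{\cald}^{\sigma}(x)$, $\caln_{\cald}^b(x)$, $\caln_{\cald}^a(x)$, $\caln_{\cald}^c(x)$ are handled by Lemma \ref{lemma-normal-local-1}. That lemma follows because the corresponding tangent cones coincide by Lemma \ref{lemma-tangent-local}, and polars of equal sets are equal.
\item The first order normal cone $\caln_{\cald}^1(x)$ is handled by Lemma \ref{lemma-normal-local-2}.
\end{itemize}
So I would simply cite these two lemmas. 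The only genuinely new claim is the one about the proximal normal cone $\caln_{\cald}^p(x)$.

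\textbf{The proximal cone, easy inclusion.} I would use the characterization in Proposition \ref{prop-normal-2}: $u \in \caln_{\cald}^p(x)$ if and only if there is $t>0$ with $\la u,y-x\ra \leq \frac{1}{2t}\|y-x\|^2$ for all $y \in \cald$. Since $\cald \cap C \subset \cald$, any $t$ that works for $\cald$ also works for $\cald \cap C$. This gives $\caln_{\cald}^p(x) \subset \caln_{\cald\cap C}^p(x)$. Note that $\cald \cap C$ is closed and contains $x$, so Proposition \ref{prop-normal-2} applies to it.

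\textbf{The proximal cone, reverse inclusion.} Let $u \in \caln_{\cald\cap C}^p(x)$ with constant $t>0$, and pick $r>0$ such that the closed ball $K(x,r) \subset C$.
\begin{itemize}
\item For $y \in \cald$ with $\|y-x\| \leq r$, we have $y \in \cald \cap C$, so the inequality holds with $t$.
\item For $y \in \cald$ with $\|y-x\| > r$, the Cauchy--Schwarz inequality gives
\[
\la u,y-x\ra \leq \|u\|\,\|y-x\| \leq \frac{\|u\|}{r}\|y-x\|^2 .
\]
\end{itemize}
Set $s := \min\{t, r/(2\|u\|)\}$, with the convention that $s=t$ if $u=0$. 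Then $\frac{1}{2s} \geq \max\{\frac{1}{2t}, \frac{\|u\|}{r}\}$, so the inequality holds on all of $\cald$ with constant $s$. By Proposition \ref{prop-normal-2}, $u \in \caln_{\cald}^p(x)$.

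The only step requiring any care is the choice of the new constant $s$ for points outside the ball, and the argument above settles it directly.
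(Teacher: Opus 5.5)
Your proof is correct, and it takes a different route from the paper only for the proximal cone. For $\caln^1$, $\caln^{\sigma}$, $\caln^b$, $\caln^a$ and $\caln^c$ you do exactly what the paper does, namely cite Lemmas \ref{lemma-normal-local-1} and \ref{lemma-normal-local-2}.

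For $\caln^p$, the paper argues through the distance function in two steps:
\begin{enumerate}
\item Take $K(x,r)\subset C$, so that $\cald\cap K(x,r)\subset\cald\cap C\subset\cald$. By Lemma \ref{lemma-distance}, the functions $d_{\cald}$ and $d_{\cald\cap C}$ then coincide on the ball of radius $r/2$ around $x$.
\item By Lemma \ref{lemma-prox-normal}, the relation $d(x+tu)=t\|u\|$ persists when $t$ is decreased. So one may place $x+tu$ inside that ball, where the two distance functions agree.
\end{enumerate}

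You instead work with the quadratic inequality of Proposition \ref{prop-normal-2}. You handle the far points $\|y-x\|>r$ by Cauchy--Schwarz, at the cost of shrinking the constant to $s=\min\{t,r/(2\|u\|)\}$.

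What your route buys: it is self-contained, needing neither Lemma \ref{lemma-distance} (whose proof the paper omits) nor the monotonicity statement in Lemma \ref{lemma-prox-normal}. It also gives an explicit bound on how the proximal constant changes under localization.

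What the paper's route buys: it is more geometric, and it reuses the same distance-function localization that underlies Lemma \ref{lemma-tangent-local} for the Clarke, adjacent and Bouligand tangent cones.
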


\begin{proof}
In view of Lemma \ref{lemma-normal-local-1} and Lemma \ref{lemma-normal-local-2}, we only have to show $\caln_{\cald}^p(x) = \caln_{\cald \cap C}^p(x)$, which is a consequence of Lemma \ref{lemma-distance} and Lemma \ref{lemma-prox-normal}.
\end{proof}

\begin{definition}
For each $x \in \cald$ we define $R_{\cald}(x)$ as the set consisting of all $u \in H$ such that there are sequences $(x_n)_{n \in \bbn} \subset \cald$ and $(u_n)_{n \in \bbn} \subset H$ with $u_n \in \caln_{\cald}^p(x_n)$ for each $n \in \bbn$ as well as $x_n \to x$ and $u_n \overset{\sigma}{\to} u$.
\end{definition}

\begin{proposition}\label{prop-normal-cone-limits}
We have $\caln_{\cald}^c(x) = \overline{\rm co} \, R_{\cald}(x)$ for each $x \in \cald$.
\end{proposition}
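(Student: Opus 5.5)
We prove the two inclusions separately. For $\overline{\rm co}\,R_{\cald}(x) \subset \caln_{\cald}^c(x)$ we show that limits of proximal normals are Clarke normals. For the reverse inclusion we pass to polars and use the Bipolar Theorem (Lemma \ref{lemma-double-polar}) together with a description of the Clarke tangent cone through the distance function $d_{\cald}$.

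\emph{Step 1: $R_{\cald}(x) \subset \caln_{\cald}^c(x)$.} Since $\caln_{\cald}^c(x)$ is a closed convex cone (Proposition \ref{prop-normal-cones-inclusion}), this step yields $\overline{\rm co}\,R_{\cald}(x) \subset \caln_{\cald}^c(x)$. Let $u \in R_{\cald}(x)$, with $x_n \to x$, $u_n \in \caln_{\cald}^p(x_n)$ and $u_n \overset{\sigma}{\to} u$. The sequence $(u_n)$ is bounded by the uniform boundedness principle. By Proposition \ref{prop-normal-2} there are $s_n > 0$ such that
\begin{align*}
\la u_n, y-x_n \ra \leq \frac{1}{2 s_n} \| y-x_n \|^2 \quad \text{for all $y \in \cald$.}
\end{align*}
Fix $v \in T_{\cald}^c(x)$ and a sequence $\epsilon_n \downarrow 0$. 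By the definition of the Clarke cone, the limit of $d_{\cald}(x'+tv)/t$ is uniform as $x' \to x$ in $\cald$. Hence we may pick $t_n > 0$ with $t_n \leq s_n \epsilon_n$ and $d_{\cald}(x_n + t_n v) \leq \epsilon_n t_n$. Then there are $y_n \in \cald$ with $\| y_n - x_n - t_n v \| \leq 2 \epsilon_n t_n$. Inserting $y_n$ into the proximal inequality and dividing by $t_n$ gives
\begin{align*}
\la u_n, v \ra \leq 2 \epsilon_n \| u_n \| + \frac{t_n}{2 s_n} ( \| v \| + 2 \epsilon_n )^2 \to 0 .
\end{align*}
Weak convergence then gives $\la u, v \ra = \lim_n \la u_n, v \ra \leq 0$, so $u \in T_{\cald}^c(x)^{\circ} = \caln_{\cald}^c(x)$.

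\emph{Step 2: $\caln_{\cald}^c(x) \subset \overline{\rm co}\,R_{\cald}(x)$.} The set $R_{\cald}(x)$ is a cone, so $K := \overline{\rm co}\,R_{\cald}(x)$ is a closed convex cone. By Lemma \ref{lemma-double-polar} we have $K = K^{\circ\circ}$. Since $\caln_{\cald}^c(x) = T_{\cald}^c(x)^{\circ}$, it suffices to show $K^{\circ} \subset T_{\cald}^c(x)$. In other words, we must show: if $\la u, v \ra \leq 0$ for all $u \in R_{\cald}(x)$, then $v \in T_{\cald}^c(x)$. For this we use that $v \in T_{\cald}^c(x)$ if and only if Clarke's generalized directional derivative satisfies $d_{\cald}^{\circ}(x;v) \leq 0$, where $d_{\cald}$ is $1$-Lipschitz (standard; see \cite{Aubin_Fr}). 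We then represent
\begin{align*}
d_{\cald}^{\circ}(x;v) = \limsup_{z \to x, \ \zeta \in \partial_P d_{\cald}(z)} \la \zeta, v \ra ,
\end{align*}
where $\partial_P$ denotes the proximal subdifferential. This is the Hilbert-space limiting representation of the Clarke generalized gradient as the closed convex hull of weak limits of proximal subgradients. It is proved via the proximal density theorem and the mean value inequality (Clarke--Ledyaev--Stern--Wolenski).

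\emph{Step 3: identifying the proximal subgradients.} It remains to check that every weak limit of such $\zeta$ lies in $R_{\cald}(x)$, with norm at most $1$. There are two cases. If $z \in \cald$, then $\partial_P d_{\cald}(z) \subset \caln_{\cald}^p(z)$ by Proposition \ref{prop-normal-2}. If $z \notin \cald$ and $\partial_P d_{\cald}(z) \neq \emptyset$, then $z$ has a unique nearest point $p \in \cald$, and $\partial_P d_{\cald}(z) = \{ (z-p)/\| z-p \| \}$. This vector lies in $\caln_{\cald}^p(p)$ by the definition of proximal normals, and $\| p - x \| \leq 2 \| z - x \| \to 0$. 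Combining, $d_{\cald}^{\circ}(x;v) \leq \sup \{ \la u, v \ra : u \in R_{\cald}(x) \} \leq 0$, hence $v \in T_{\cald}^c(x)$.

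The main obstacle is the limiting representation in Step 2. In infinite dimensions nearest points need not exist, so it genuinely requires the density of points admitting proximal subgradients (Lau/Edelstein-type results). I would cite it rather than reprove it. The remaining steps are the elementary estimates above.
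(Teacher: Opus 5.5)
Your proof is correct, but it takes a different route from the paper. The paper proves the statement in one line, citing Loewen's proximal normal formula in Hilbert spaces (\cite[Prop. 3.1 and Thm. 3.7]{Loewen}), which is exactly the identity $\caln_{\cald}^c(x) = \overline{\rm co}\,R_{\cald}(x)$.

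You split the two inclusions.
\begin{itemize}
\item You prove $\overline{\rm co}\,R_{\cald}(x) \subset \caln_{\cald}^c(x)$ by hand, using only Proposition \ref{prop-normal-2} and the definition of $T_{\cald}^c(x)$.
\item For the reverse inclusion you pass to polars and reduce to the Clarke--Ledyaev--Stern--Wolenski representation of the Clarke generalized gradient of a Lipschitz function through weak limits of proximal subgradients. You apply this to the $1$-Lipschitz function $d_{\cald}$, and then identify $\partial_P d_{\cald}$ on and off $\cald$ with (normalized) proximal normals.
\end{itemize}
The only direction of the tangent-cone characterization you need, $d_{\cald}^{\circ}(x;v) \leq 0 \Rightarrow v \in T_{\cald}^c(x)$, is immediate: restrict the $\limsup$ to base points in $\cald$, where $d_{\cald}$ vanishes.

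What your route buys is transparency. It isolates the elementary half and shows that the only deep input is the density and mean-value machinery behind the limiting-gradient formula. The cost is that this cited result is of essentially the same depth as the proximal normal formula itself. So for the hard inclusion you gain no self-containedness over the paper's citation.

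Two small points should be tightened, though neither affects validity.
\begin{itemize}
\item In Step 1 you fix $\epsilon_n \downarrow 0$ in advance. The uniformity in the definition of $T_{\cald}^c(x)$ only gives $t_n$ with $d_{\cald}(x_n + t_n v) \leq \epsilon_n t_n$ once $\| x_n - x \|$ is below the threshold associated with $\epsilon_n$. Either pass to a subsequence of $(x_n,u_n)$, which keeps $u_n \overset{\sigma}{\to} u$, or choose $\epsilon_n$ adapted to $\| x_n - x \|$.
\item In Step 3, the proximal subgradient inequality for $d_{\cald}$ at $z \in \cald$ is only local, whereas Proposition \ref{prop-normal-2} needs the inequality for all $y \in \cald$. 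You can globalize it: for $\| y - z \| \geq \eta$, use $\la \zeta, y - z \ra \leq (\| \zeta \| / \eta) \| y - z \|^2$.
\end{itemize}
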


\begin{proof}
This follows from the proximal normal formula in Hilbert spaces (see \cite[Prop. 3.1 and Thm. 3.7]{Loewen}).
\end{proof}

\begin{proposition}\label{prop-C-N}
Let $C : \cald \to L(H)$ be a continuous mapping such that $C(x)$ is self-adjoint for each $x \in H$. Then the following statements are equivalent:
\begin{enumerate}
\item[(i)] We have $C(x)u = 0$ for all $x \in \cald$ and $u \in \caln_{\cald}^{1,\pr}(x)$.

\item[(ii)] We have $C(x)u = 0$ for all $x \in \cald$ and $u \in \caln_{\cald}^{c}(x)$.
\end{enumerate}
\end{proposition}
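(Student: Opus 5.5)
The implication (ii) $\Rightarrow$ (i) is trivial, because $\caln_{\cald}^{1,\pr}(x) \subset \caln_{\cald}^c(x)$ by Proposition \ref{prop-normal-cones-inclusion}. The work is in (i) $\Rightarrow$ (ii). My plan is to pass from proximal normals to the whole Clarke normal cone by combining three ingredients: linearity of $C(x)$, continuity of $C$, and the proximal normal formula of Proposition \ref{prop-normal-cone-limits}, which says $\caln_{\cald}^c(x) = \overline{\rm co}\, R_{\cald}(x)$.

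First, I extend the condition from $\caln_{\cald}^{1,\pr}(x)$ to the cone $\caln_{\cald}^p(x)$. Any $u \in \caln_{\cald}^p(x)$ can be written as $u = \lambda w$ with $\lambda \geq 0$ and $w \in \caln_{\cald}^{1,\pr}(x)$. By linearity, $C(x)u = \lambda C(x) w = 0$.

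Second, I show that $C(x)u = 0$ for every $u \in R_{\cald}(x)$. Take sequences $x_n \to x$ in $\cald$ and $u_n \in \caln_{\cald}^p(x_n)$ with $u_n \overset{\sigma}{\to} u$. By the first step, $C(x_n)u_n = 0$. Fix $h \in H$ and use self-adjointness to write
\begin{align*}
\la C(x)u, h \ra = \la u_n, C(x)h \ra + \la u - u_n, C(x) h \ra = \la u_n, (C(x) - C(x_n)) h \ra + \la u - u_n, C(x)h \ra,
\end{align*}
where the second equality uses $\la u_n, C(x_n)h \ra = \la C(x_n)u_n, h \ra = 0$. The last term tends to $0$ because $u_n$ converges weakly to $u$. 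For the first term, weakly convergent sequences are bounded by the uniform boundedness principle, so $|\la u_n, (C(x)-C(x_n))h \ra| \leq \sup_n \|u_n\| \, \|C(x)-C(x_n)\| \, \|h\| \to 0$ by continuity of $C$ into $L(H)$. Since $h$ is arbitrary, $C(x)u = 0$.

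Third, I note that $\ker C(x)$ is a closed linear subspace of $H$, hence closed and convex. It contains $R_{\cald}(x)$ by the second step, so it contains $\overline{\rm co}\, R_{\cald}(x) = \caln_{\cald}^c(x)$. This gives (ii).

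The main obstacle is the limiting step. Only weak convergence of the normals $u_n$ is available, so continuity of $C$ alone does not give $C(x_n)u_n \to C(x)u$. The pairing trick above is what makes it work, and it requires both the self-adjointness of $C(x_n)$ and the boundedness of weakly convergent sequences; both are available here.
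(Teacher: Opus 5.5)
Your proposal is correct and follows essentially the same route as the paper: extend to $\caln_{\cald}^p(x)$ by linearity, pass to weak limits in $R_{\cald}(x)$ using self-adjointness and the boundedness of weakly convergent sequences, and then apply the proximal normal formula $\caln_{\cald}^c(x) = \overline{\rm co}\, R_{\cald}(x)$. The only difference is cosmetic. You handle the convex hull and the closure in one step by observing that $\ker C(x)$ is a closed linear subspace, whereas the paper treats ${\rm co}\, R_{\cald}(x)$ and its closure separately.
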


\begin{proof}
By Proposition \ref{prop-normal-cones-inclusion} we only need to show (i) $\Rightarrow$ (ii). Let $x \in \cald$ be arbitrary. Furthermore, let $u \in \caln_{\cald}^{p}(x)$ be arbitrary. Then there are $\lambda \geq 0$ and $v \in \caln_{\cald}^{1,\pr}(x)$ such that $u = \lambda v$, and we obtain
\begin{align*}
C(x) u = C(x)(\lambda v) = \lambda C(x)v = 0.
\end{align*}
Now, assume that $u \in R_{\cald}(x)$. Then there are sequences $(x_n)_{n \in \bbn} \subset \cald$ and $(u_n)_{n \in \bbn} \subset H$  such that $u_n \in \caln_{\cald}^p(x_n)$ for all $n \in \bbn$ and we have $x_n \to x$ as well as $u_n \overset{\sigma}{\to} u$. Furthermore, let $v \in H$ be arbitrary. Then we have
\begin{align*}
&| \langle C(x_n) u_n - C(x) u, v \rangle | = | \langle u_n, C(x_n)v \rangle - \langle u, C(x) v \rangle |
\\ &\leq | \langle u_n, (C(x_n)-C(x))v \rangle | + |\langle u_n-u,C(x)v \rangle|
\\ &\leq \| u_n \| \, \| (C(x_n) - C(x))v \| + |\langle u_n-u,C(x)v \rangle| \to 0,
\end{align*}
because the sequence $(u_n)_{n \in \bbn}$ is bounded by the uniform boundedness principle. Therefore, we obtain
\begin{align*}
\langle C(x)u,v \rangle = \lim_{n \to \infty} \langle C(x_n)u_n,v \rangle = 0.
\end{align*}
Since $v \in H$ was arbitrary, we deduce that $C(x) u = 0$. Next, let $u \in {\rm co} \, R_{\cald}(x)$ be arbitrary. Then we have $u = \sum_{i=1}^n \lambda_i u_i$ with $u_1,\ldots,u_n \in R_{\cald}(x)$ and $\lambda_1,\ldots,\lambda_n \geq 0$ such that $\sum_{i=1}^n \lambda_i = 1$. Therefore, we obtain
\begin{align*}
C(x)u = C(x) \bigg( \sum_{i=1}^n \lambda_i u_i \bigg) = \sum_{i=1}^n \lambda_i C(x) u_i = 0.
\end{align*}
Now, let $u \in \caln_{\cald}^{c}(x)$ be arbitrary. By Proposition \ref{prop-normal-cone-limits} there is a sequence $(u_n)_{n \in \bbn} \subset {\rm co} \, R_{\cald}(x)$ such that $u_n \to u$, and we obtain
\begin{align*}
C(x)u = \lim_{n \to \infty} C(x) u_n = 0,
\end{align*}
completing the proof.
\end{proof}

\begin{proposition}\label{prop-normal-inward-a-cont}
Let $a : \cald \to H$ be a mapping such that for all elements $x \in \cald$ and $u \in H$, and all sequences $(x_n)_{n \in \bbn} \subset \cald$ and $(u_n)_{n \in \bbn} \subset H$  such that $u_n \in \caln_{\cald}^p(x_n)$ for all $n \in \bbn$ and $x_n \to x$ as well as $u_n \overset{\sigma}{\to} u$ it follows that $\la u_n,a(x_n) \ra \to \la u,a(x) \ra$. Then the following statements are equivalent:
\begin{enumerate}
\item[(i)] We have $\la u,a(x) \ra \leq 0$ for all $x \in \cald$ and $u \in \caln_{\cald}^{1,\pr}(x)$.

\item[(ii)] We have $\la u,a(x) \ra \leq 0$ for all $x \in \cald$ and $u \in \caln_{\cald}^{c}(x)$.

\item[(iii)] We have $a(x) \in T_{\cald}^c(x)$ for all $x \in \cald$.

\item[(iv)] We have $a(x) \in T_{\cald}^{\sigma}(x)$ for all $x \in \cald$.
\end{enumerate}
\end{proposition}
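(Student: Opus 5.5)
I will prove the cycle (i) $\Rightarrow$ (ii) $\Rightarrow$ (iii) $\Rightarrow$ (iv) $\Rightarrow$ (i). The implications (ii) $\Rightarrow$ (i) and (iii) $\Rightarrow$ (iv) are immediate: $\caln_{\cald}^{1,\pr}(x) \subset \caln_{\cald}^c(x)$ by Proposition \ref{prop-normal-cones-inclusion}, and $T_{\cald}^c(x) \subset T_{\cald}^{\sigma}(x)$ by Lemma \ref{lemma-cones-inclusions}.

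\textbf{(iv) $\Rightarrow$ (i).} Take $u \in \caln_{\cald}^{1,\pr}(x)$. By Proposition \ref{prop-normal-cones-inclusion}, $u \in \caln_{\cald}^{\sigma}(x) = T_{\cald}^{\sigma}(x)^{\circ}$. Since $a(x) \in T_{\cald}^{\sigma}(x)$, we get $\la u,a(x) \ra \leq 0$. No continuity hypothesis is needed for this step.

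\textbf{(ii) $\Rightarrow$ (iii).} By Lemma \ref{lemma-normal-cones}, $T_{\cald}^c(x) = \caln_{\cald}^c(x)^{\circ}$. So (ii) says exactly that $a(x)$ lies in the polar of the Clarke normal cone, which is $T_{\cald}^c(x)$. No continuity is needed here either.

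\textbf{(i) $\Rightarrow$ (ii).} This is the main step, and it copies the scheme of the proof of Proposition \ref{prop-C-N}.
\begin{enumerate}
\item By positive homogeneity, (i) extends from $\caln_{\cald}^{1,\pr}(x)$ to the cone $\caln_{\cald}^p(x)$.
\item Let $u \in R_{\cald}(x)$. Then there are $x_n \to x$ in $\cald$ and $u_n \in \caln_{\cald}^p(x_n)$ with $u_n \overset{\sigma}{\to} u$. Each term satisfies $\la u_n, a(x_n) \ra \leq 0$. The standing hypothesis on $a$ gives $\la u_n,a(x_n) \ra \to \la u,a(x) \ra$, hence $\la u,a(x) \ra \leq 0$.
\item The map $u \mapsto \la u, a(x)\ra$ is linear, so the inequality passes to convex combinations, i.e.\ to ${\rm co}\, R_{\cald}(x)$.
\item The map is also norm continuous in $u$, so the inequality passes to the closure $\overline{\rm co}\, R_{\cald}(x)$.
\item By Proposition \ref{prop-normal-cone-limits}, $\overline{\rm co}\, R_{\cald}(x) = \caln_{\cald}^c(x)$, which gives (ii).
\end{enumerate}

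The only delicate point is the use of the proximal normal formula (Proposition \ref{prop-normal-cone-limits}), which involves weak limits. This is exactly why the hypothesis on $a$ is stated along weakly convergent sequences of proximal normals rather than as plain continuity. With that hypothesis in hand, I expect no further obstacle.
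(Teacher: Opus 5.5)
Your proposal is correct and follows essentially the same route as the paper. The easy implications come from the polar relations of Lemma~\ref{lemma-normal-cones} and the cone inclusions, and (i)~$\Rightarrow$~(ii) goes through $\caln_{\cald}^p(x)$, $R_{\cald}(x)$, ${\rm co}\, R_{\cald}(x)$ and the proximal normal formula (Proposition~\ref{prop-normal-cone-limits}), exactly as in the paper. The only difference is cosmetic: you close a cycle of implications, whereas the paper states (ii)~$\Leftrightarrow$~(iii) directly.
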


\begin{proof}
The equivalence (ii) $\Leftrightarrow$ (iii) and the implication (iv) $\Rightarrow$ (i) are a consequence of Lemma \ref{lemma-normal-cones}, and the implication (iii) $\Rightarrow$ (iv) follows from Lemma \ref{lemma-cones-inclusions}. Thus, it remains to show (i) $\Rightarrow$ (ii): Let $x \in \cald$ be arbitrary. Furthermore, let $u \in \caln_{\cald}^{p}(x)$ be arbitrary. Then there are $\lambda \geq 0$ and $v \in \caln_{\cald}^{1,\pr}(x)$ such that $u = \lambda v$, and we obtain
\begin{align*}
\la u,a(x) \ra = \la \lambda v,a(x) \ra = \lambda \la v,a(x) \ra \leq 0.
\end{align*}
Now, assume that $u \in R_{\cald}(x)$. Then there are sequences $(x_n)_{n \in \bbn} \subset \cald$ and $(u_n)_{n \in \bbn} \subset H$  such that $u_n \in \caln_{\cald}^p(x_n)$ for all $n \in \bbn$ and we have $x_n \to x$ as well as $u_n \overset{\sigma}{\to} u$. By assumption, we obtain
\begin{align*}
\langle u,a(x) \rangle = \lim_{n \to \infty} \langle u_n,a(x_n) \rangle \leq 0.
\end{align*}
Next, let $u \in {\rm co} \, R_{\cald}(x)$ be arbitrary. Then we have $u = \sum_{i=1}^n \lambda_i u_i$ with $u_1,\ldots,u_n \in R_{\cald}(x)$ and $\lambda_1,\ldots,\lambda_n \geq 0$ such that $\sum_{i=1}^n \lambda_i = 1$. Therefore, we obtain
\begin{align*}
\la u,a(x) \ra = \bigg\la \sum_{i=1}^n \lambda_i u_i, a(x) \bigg\ra = \sum_{i=1}^n \lambda_i \la u_i, a(x) \ra \leq 0.
\end{align*}
Now, let $u \in \caln_{\cald}^{c}(x)$ be arbitrary. By Proposition \ref{prop-normal-cone-limits} there is a sequence $(u_n)_{n \in \bbn} \subset {\rm co} \, R_{\cald}(x)$ such that $u_n \to u$, and we obtain
\begin{align*}
\la u,a(x) \ra = \lim_{n \to \infty} \la u_n,a(x) \ra \leq 0,
\end{align*}
completing the proof.
\end{proof}

\begin{remark}\label{rem-more-general-inward-a}
Note that Proposition \ref{prop-normal-inward-a-cont} in particular applies if the mapping $a : \cald \to H$ is continuous. Indeed, then for elements $x \in \cald$ and $u \in H$, and sequences $(x_n)_{n \in \bbn} \subset \cald$ and $(u_n)_{n \in \bbn} \subset H$ such that $x_n \to x$ as well as $u_n \overset{\sigma}{\to} u$ we have
\begin{align*}
| \la u_n,a(x_n) \ra - \la u,a(x) \ra | &\leq | \la u_n,a(x_n)-a(x) \ra | + | \la u_n-u,a(x) \ra |
\\ &\leq \| u_n \| \, \| a(x_n)-a(x) \| + | \la u_n-u,a(x) \ra | \to 0,
\end{align*}
because the sequence $(u_n)_{n \in \bbn}$ is bounded by the uniform boundedness principle.
\end{remark}

In the following result we denote by $P_C(x)$ the orthogonal projection on the closure of the range of $C(x)$ for each $x \in \cald$.

\begin{lemma}\label{lemma-proj-continuous}
Let $C : H \to L_1(H)$ be a mapping of class $C^1$ such that the mapping $\cald \to L(H)$, $x \mapsto P_C(x)$ is continuous. Let $x \in \cald$, $u \in H$ and $(x_n)_{n \in \bbn} \subset \cald$, $(u_n)_{n \in \bbn} \subset H$ be such that $\ran (C(x))$ is finite dimensional and $x_n \to x$ as well as $u_n \overset{\sigma}{\to} u$. Then we have
\begin{align*}
\tr \big( DC(x_n) P_C(x_n) u_n \big) \to \tr \big( DC(x) P_C(x) u \big) \quad \text{for $n \to \infty$.}
\end{align*}
\end{lemma}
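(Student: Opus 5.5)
We want to show $\tr ( DC(x_n) P_C(x_n) u_n ) \to \tr ( DC(x) P_C(x) u )$. The plan is to rewrite the trace so that the weakly convergent sequence $u_n$ only ever meets finite-rank or trace class objects that converge in norm. Since $DC(y)v$ is self-adjoint for every $y,v$ (Corollary \ref{cor-self-adjoint}), and the trace of a trace class operator is a continuous linear functional on $L_1(H)$ (Lemma \ref{lemma-trace-functional}), linearity in $v$ gives
\begin{align*}
\tr \big( DC(y) P_C(y) u \big) = \la DC(y)^{\#} P_C(y), u \ra ,
\end{align*}
for $y = x_n$ and $y = x$. Here $DC(y)^{\#} : L(H) \to H$ denotes the adjoint-type map $\la DC(y)^{\#} S, v \ra := \tr ( S \, DC(y) v)$, which is well defined and bounded with $\| DC(y)^{\#} S\| \le \|S\| \, \|DC(y)\|_{L(H,L_1(H))}$. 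Note that $\tr(DC(y)(P_C(y)u))$ is linear in $P_C(y)u$, and by the trace cyclicity (Lemma \ref{lemma-trace-commute}) one can write it as $\tr(P_C(y)\, D(Cu)(y))$.

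The first step is therefore to define the vectors $w_n := DC(x_n)^{\#} P_C(x_n)$ and $w := DC(x)^{\#} P_C(x)$ in $H$, via the Fréchet--Riesz theorem. Then the claim becomes $\la w_n, u_n \ra \to \la w, u\ra$. I split this as
\begin{align*}
\la w_n, u_n \ra - \la w, u\ra = \la w_n - w, u_n \ra + \la w, u_n - u \ra .
\end{align*}
The second term tends to $0$ by weak convergence. For the first term, the sequence $(u_n)$ is bounded by the uniform boundedness principle, so it suffices to show $\| w_n - w\| \to 0$.

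For the norm estimate I write
\begin{align*}
\| w_n - w \| \le \| DC(x_n) - DC(x)\|_{L(H,L_1(H))} \, \| P_C(x_n)\| + \|DC(x)\|_{L(H,L_1(H))} \, \| P_C(x_n) - P_C(x)\| .
\end{align*}
Both pieces come from the bound $|\tr(S\,T)| \le \|S\| \, \|T\|_{L_1(H)}$. The first piece tends to $0$ because $C$ is of class $C^1$ and $\|P_C(x_n)\| \le 1$. The second piece tends to $0$ by the assumed continuity of $x \mapsto P_C(x)$ in $L(H)$.

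The main subtlety I expect is justifying that the trace functional is well defined and continuous in the needed operator norm. Specifically, $\tr(S T)$ for $S \in L(H)$ bounded and $T \in L_1(H)$ must be controlled by the operator norm of $S$ rather than a trace norm. This is the standard duality between $L(H)$ and $L_1(H)$, and it should be available from the appendix on operators.

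The finite dimensionality of $\ran(C(x))$ is only needed to identify $P_C(x)$ with $C(x)C^+(x)$, matching the series form from Lemma \ref{lemma-series-1}. It is not needed for the convergence itself.
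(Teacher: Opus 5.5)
Your argument breaks at the first display. The identity
\[
\tr\big(DC(y)P_C(y)u\big) = \la DC(y)^{\#}P_C(y), u\ra = \tr\big(P_C(y)\,DC(y)u\big)
\]
is false.

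In the paper's proof of this lemma, $DC(y)P_C(y)u$ means $DC(y)\in L(H,L_1(H))$ evaluated at the \emph{vector} $P_C(y)u$; the proof uses the evaluation map $(T,v)\mapsto Tv$ on $L(H,L_1(H))\times H$. So the derivative is taken in the direction $P_C(y)u$. Your functional instead differentiates in the direction $u$ and then multiplies by $P_C(y)$.

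The bilinear map $(v,a)\mapsto (DC(y)v)a$ is not symmetric. Self-adjointness of $DC(y)v$ only gives symmetry of $(a,b)\mapsto \la (DC(y)v)a,b\ra$. So neither linearity in $v$ nor cyclicity of the trace lets you move $P_C(y)$ out of the direction slot. Likewise $\tr(P_C(y)\,D(Cu)(y))$, which feeds $u$ into the operator slot, is in general yet another quantity.

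Here is a concrete counterexample satisfying all hypotheses. Take $C(y):=\la y,e_1\ra\,\la\cdot,e_2\ra e_2$, which is linear with $DC(y)=C$, and $\cald:=\{y\in H:\la y,e_1\ra\ge 1\}$, so that $P_C\equiv\la\cdot,e_2\ra e_2$ on $\cald$. For $x=u=e_1$ you get
\begin{itemize}
\item $\tr(DC(x)[P_C(x)u])=\tr(C(0))=0$, and also $\tr(P_C(x)\,D(Cu)(x))=0$;
\item whereas $\tr(P_C(x)\,DC(x)u)=\tr(\la\cdot,e_2\ra e_2)=1$.
\end{itemize}
Your estimate on $\|w_n-w\|$ is correct. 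But it proves convergence of $\tr(P_C(x_n)\,DC(x_n)u_n)$, which is not the quantity in the lemma.

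The dualization idea can be repaired with the correct representing vector. You already note that the trace is linear in $P_C(y)u$; use the Riesz vector of that functional. By Lemma \ref{lemma-trace-functional}, $v\mapsto \tr(DC(y)v)$ is a bounded functional of norm at most $\|DC(y)\|_{L(H,L_1(H))}$. Let $\ell_y\in H$ be its Riesz representative. Then
\[
\tr(DC(y)[P_C(y)u])=\la \ell_y,P_C(y)u\ra=\la P_C(y)\ell_y,u\ra .
\]
Set $w_n:=P_C(x_n)\ell_{x_n}$ and $w:=P_C(x)\ell_x$. Then
\[
\|w_n-w\|\le \|DC(x_n)-DC(x)\|_{L(H,L_1(H))}+\|P_C(x_n)-P_C(x)\|\,\|DC(x)\|_{L(H,L_1(H))}\to 0,
\]
and your splitting $\la w_n-w,u_n\ra+\la w,u_n-u\ra$ finishes the proof.

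This route differs from the paper's. The paper shows $P_C(x_n)u_n\to P_C(x)u$ in norm, using that the finite-rank projection $P_C(x)$ turns $u_n-u\overset{\sigma}{\to}0$ into a norm-null sequence. It then invokes continuity of evaluation and of the trace. The corrected dual route only ever pairs the weakly convergent $u_n$ with a norm-convergent vector. So, as you suspected, it needs neither $\dim\ran(C(x))<\infty$ nor self-adjointness of $DC(y)v$.
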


\begin{proof}
For each $n \in \bbn$ we have
\begin{align*}
\| P_C(x_n) u_n - P_C(x) u \| &\leq \| P_C(x_n) u_n - P_C(x) u_n \| + \| P_C(x) u_n - P_C(x) u \|
\\ &\leq \| P_C(x_n) - P_C(x) \|_{L(H)} \| u_n \| + \| P_C(x) (u_n-u) \|.
\end{align*}
Since the weakly convergent sequence $(u_n)_{n \in \bbn}$ is bounded as a consequence of the uniform boundedness principle, by the continuity of the mapping $\cald \to L(H)$, $x \mapsto P_C(x)$ we obtain
\begin{align*}
\| P_C(x_n) - P_C(x) \|_{L(H)} \| u_n \| \to 0 \quad \text{for $n \to \infty$.}
\end{align*}
Now, set $d := \dim \ran(C(x))$ and let $\{ e_1,\ldots,e_d \}$ be an orthonormal basis of $\ran(C(x))$. Then we have
\begin{align*}
P_C(x) v = \sum_{k=1}^d \la v, e_k \ra e_k \quad \text{for all $v \in H$.}
\end{align*}
We define the sequence $(v_n)_{n \in \bbn} \subset H$ as $v_n := u_n - u$ for each $n \in \bbn$. Then we have $v_n \overset{\sigma}{\to} 0$, and hence
\begin{align*}
\| P_C(x) v_n \| = \sum_{k=1}^d \la v_n, e_k \ra e_k \to 0 \quad \text{for $n \to \infty$.}
\end{align*}
Consequently, we obtain
\begin{align*}
P_C(x_n) u_n \to P_C(x) u \quad \text{for $n \to \infty$.}
\end{align*}
Now, we introduce the continuous bilinear operator
\begin{align*}
\Phi : L(H,L_1(H)) \times H \to L_1(H), \quad \Phi(T,x) := Tx.
\end{align*}
Since the trace is a continuous linear functional on $L_1(H)$ due to Lemma \ref{lemma-trace-functional}, we arrive at
\begin{align*}
\tr \big( \Phi(D C(x_n), P_C(x_n)u_n ) \big) \to \tr \big( \Phi(D C(x), P_C(x)u ) \big) \quad \text{for $n \to \infty$,}
\end{align*}
completing the proof.
\end{proof}

\begin{proposition}\label{prop-normal-inward-b-cont}
Let $b : \cald \to H$ and $c : \cald \to \bbr$ be continuous mappings. Then the following statements are equivalent:
\begin{enumerate}
\item[(i)] We have $\la u,b(x) \ra + c(x) \leq 0$ for all $x \in \cald$ and $u \in \caln_{\cald}^{1,\pr}(x)$.

\item[(ii)] We have $\la u,b(x) \ra + c(x) \leq 0$ for all $x \in \cald$ and $u \in \caln_{\cald}^{c}(x)$.
\end{enumerate}
\end{proposition}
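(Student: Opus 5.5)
The direction (ii) $\Rightarrow$ (i) is immediate from the inclusion $\caln_{\cald}^{1,\pr}(x) \subset \caln_{\cald}^c(x)$ in Proposition \ref{prop-normal-cones-inclusion}. For (i) $\Rightarrow$ (ii) I would copy the scheme of Proposition \ref{prop-normal-inward-a-cont} and Proposition \ref{prop-C-N}, applied to the affine functional
\begin{align*}
f_x(u) := \la u,b(x) \ra + c(x), \quad u \in H.
\end{align*}
The chain is: proximal normals at $x$, then the cone $\caln_{\cald}^p(x)$, then $R_{\cald}(x)$, then ${\rm co}\, R_{\cald}(x)$, and finally its norm closure $\caln_{\cald}^c(x)$ via Proposition \ref{prop-normal-cone-limits}.

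Three of the four steps are routine.

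\emph{The $R_{\cald}(x)$ step.} Take $x_n \to x$ in $\cald$ and $u_n \in \caln_{\cald}^p(x_n)$ with $u_n \overset{\sigma}{\to} u$. By the uniform boundedness principle $(u_n)$ is bounded, and $b$ is continuous. The estimate from Remark \ref{rem-more-general-inward-a} then gives $\la u_n,b(x_n) \ra \to \la u,b(x) \ra$, while continuity of $c$ gives $c(x_n) \to c(x)$. Hence $f_x(u) = \lim_n f_{x_n}(u_n) \leq 0$.

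\emph{Convex hulls.} Since $f_x$ is affine and the weights of a convex combination sum to $1$, we have $f_x(\sum_i \lambda_i u_i) = \sum_i \lambda_i f_x(u_i) \leq 0$.

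\emph{Norm closure.} $f_x$ is norm continuous, so $f_x \leq 0$ passes to $\overline{\rm co}\, R_{\cald}(x) = \caln_{\cald}^c(x)$.

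The main obstacle is the first step, going from $\caln_{\cald}^{1,\pr}(x)$ to the cone $\caln_{\cald}^p(x)$. In Proposition \ref{prop-normal-inward-a-cont} this step was free by homogeneity. Here $c(x)$ does not scale with $u$. Hypothesis (i) with $u = 0$ gives $c(x) \leq 0$. For $u = \lambda v$ with $v$ a proximal normal and $\lambda > 1$, however, we need $\lambda \la v,b(x) \ra + c(x) \leq 0$. That only follows if $\la v,b(x) \ra \leq 0$, and (i) gives this only when $c(x) = 0$. The first thing I would try is to obtain $\la v,b(x)\ra \le 0$ from (i) at nearby points, using part (iii) of Lemma \ref{lemma-prox-normal} together with the limiting procedure above.

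I expect this attempt to fail, and I believe the statement is false as worded. Take $H = \bbr^2$, $\cald$ the complement of the open unit disc, and $x = (1,0)$. The proximal normals at $x$ are exactly $(-s,0)$ with $s \in [0,1]$, whereas $\caln_{\cald}^c(x)$ is the full ray $\{(-s,0) : s \geq 0\}$. With the constant maps $b \equiv (-1,0)$ and $c \equiv -1$, condition (i) reads $s - 1 \leq 0$ for $s \in [0,1]$ and holds everywhere on $\cald$. Condition (ii) fails at $x$ for $s > 1$.

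So I would run the argument above for a corrected statement, in one of two forms:
\begin{enumerate}
\item Replace (i) by "for all $u \in \caln_{\cald}^p(x)$". Then the first step disappears and the three remaining steps go through verbatim.
\item Let $c$ depend on $(x,u)$, positively homogeneous in $u$ and jointly continuous along sequences $x_n \to x$, $u_n \overset{\sigma}{\to} u$. Then the scaling step goes through as in Proposition \ref{prop-normal-inward-a-cont}.
\end{enumerate}
The application in Proposition \ref{prop-gen-u-v-sigma} uses $c(x) = \frac{1}{2}\tr(vC(x))$ with $v$ tied to the second-order normal $(u,v)$. Whichever corrected form is proved, that application must be checked against it.
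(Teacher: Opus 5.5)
Your analysis is correct, and it is sharper than the paper's own argument. The paper proves (i) $\Rightarrow$ (ii) by exactly your chain $\caln_{\cald}^{1,\pr}(x) \to \caln_{\cald}^p(x) \to R_{\cald}(x) \to {\rm co}\, R_{\cald}(x) \to \caln_{\cald}^c(x)$. In the first step it simply asserts $\la \lambda v, b(x) \ra + c(x) = \lambda \la v, b(x) \ra + c(x) \leq 0$ for $\lambda \geq 0$ and $v \in \caln_{\cald}^{1,\pr}(x)$. That does not follow from (i) when $\lambda > 1$ and $\la v, b(x) \ra > 0$, which is precisely the failure you isolated.

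Your counterexample is valid. For $\cald = \{ y \in \bbr^2 : \| y \| \geq 1 \}$ one has $d_{\cald}(z) = \max\{ 0, 1 - \| z \| \}$. So for $\| y \| = 1$ the equation $d_{\cald}(y+u) = \| u \|$ forces equality $\| y+u \| + \| u \| = \| y \|$ in the triangle inequality, whence $\caln_{\cald}^{1,\pr}(y) = \{ -sy : s \in [0,1] \}$; interior points only admit $u = 0$. With $b \equiv (-1,0)$ and $c \equiv -1$, condition (i) reads $s y_1 - 1 \leq 0$ and holds on all of $\cald$. At $x = (1,0)$, however, $(-2,0) \in \caln_{\cald}^p(x) \subset \caln_{\cald}^c(x)$ gives $\la (-2,0), b(x) \ra + c(x) = 1 > 0$. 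Conceptually, since $\caln_{\cald}^c(x)$ is a cone containing $0$, condition (ii) is equivalent to $c(x) \leq 0$ together with $\la u, b(x) \ra \leq 0$ for all $u \in \caln_{\cald}^c(x)$. Condition (i), by contrast, only tests a possibly bounded set of vectors.

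Your version (1) is correct, with the three remaining steps exactly as you describe. By the same observation, though, it is just Proposition \ref{prop-normal-inward-a-cont} (with $a = b$) plus the sign condition $c \leq 0$. Your version (2) has a gap at the convex-hull step. Once $c$ depends on $u$, the map $u \mapsto \la u, b(x) \ra + c(x,u)$ is no longer affine, so besides positive homogeneity you also need $c(x,\cdot)$ to be convex (hence sublinear). Neither version fits the application. There $v$ is attached to $u$ through $(u,v) \in \caln_{\cald}^2(x)$ and is not a function of $(x,u)$. Indeed, for a fixed $v$, hypothesis (i) with $u = 0$ would already force $\tr(vC(y)) \leq 0$ for every $y \in \cald$.

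The positive-maximum-principle step can nevertheless be repaired without this proposition. If $(u,v) \in \caln_{\cald}^2(x)$, taking $\epsilon = 1$ in \eqref{normal-order-2} gives $\la u, y-x \ra \leq (1 + \frac{1}{2} \| v \|) \| y-x \|^2$ for all $y \in \cald$ close to $x$. Proposition \ref{prop-normal-2}, applied to $\cald$ intersected with a small closed ball, together with Proposition \ref{prop-normal-local}, then yields $u \in \caln_{\cald}^p(x)$. This strengthens Lemma \ref{lemma-cones-1-2}.

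Now write $u = \lambda w$ with $w \in \caln_{\cald}^{1,\pr}(x)$ and $\lambda > 0$; if $u = 0$, it is itself a proximal normal. The pair $(w, \lambda^{-1} v)$ again lies in the cone $\caln_{\cald}^2(x)$. The proximal-normal case already treated in the proofs of Propositions \ref{prop-gen-u-v-sigma} and \ref{prop-gen-u-v} applies to this pair, and \eqref{tr-inequ-for-pos-max-prin} is jointly homogeneous in $(u,v)$. So the proposition itself must be corrected or dropped, but the implication it was meant to serve survives.
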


\begin{proof}
By Proposition \ref{prop-normal-cones-inclusion} we only need to show (i) $\Rightarrow$ (ii). Let $x \in \cald$ be arbitrary. Furthermore, let $u \in \caln_{\cald}^{p}(x)$ be arbitrary. Then there are $\lambda \geq 0$ and $v \in \caln_{\cald}^{1,\pr}(x)$ such that $u = \lambda v$, and we obtain
\begin{align*}
\la u,b(x) \ra + c(x) = \la \lambda v,b(x) \ra + c(x) = \lambda \la v,b(x) \ra + c(x) \leq 0.
\end{align*}
Now, assume that $u \in R_{\cald}(x)$. Then there are sequences $(x_n)_{n \in \bbn} \subset \cald$ and $(u_n)_{n \in \bbn} \subset H$  such that $u_n \in \caln_{\cald}^p(x_n)$ for all $n \in \bbn$ and we have $x_n \to x$ as well as $u_n \overset{\sigma}{\to} u$. By the continuity of $b$ and $c$, we obtain
\begin{align*}
\langle u,b(x) \rangle + c(x) = \lim_{n \to \infty} \big( \langle u_n,b(x_n) \rangle + c(x_n) \big) \leq 0.
\end{align*}
Indeed, we have
\begin{align*}
| \la u_n,b(x_n) \ra - \la u,b(x) \ra | &\leq | \la u_n,b(x_n)-b(x) \ra | + | \la u_n-u,b(x) \ra |
\\ &\leq \| u_n \| \, \| b(x_n)-b(x) \| + | \la u_n-u,b(x) \ra | \to 0,
\end{align*}
because the sequence $(u_n)_{n \in \bbn}$ is bounded by the uniform boundedness principle.

Next, let $u \in {\rm co} \, R_{\cald}(x)$ be arbitrary. Then we have $$u = \sum_{i=1}^n \lambda_i u_i$$ with $u_1,\ldots,u_n \in R_{\cald}(x)$ and $\lambda_1,\ldots,\lambda_n \geq 0$ such that $\sum_{i=1}^n \lambda_i = 1$. Therefore, we obtain
\begin{align*}
\la u,b(x) \ra + c(x) = \bigg\la \sum_{i=1}^n \lambda_i u_i, b(x) \bigg\ra + c(x) = \sum_{i=1}^n \lambda_i \big( \la u_i, b(x) \ra + c(x) \big) \leq 0.
\end{align*}
Now, let $u \in \caln_{\cald}^{c}(x)$ be arbitrary. By Proposition \ref{prop-normal-cone-limits} there is a sequence $(u_n)_{n \in \bbn} \subset {\rm co} \, R_{\cald}(x)$ such that $u_n \to u$, and we obtain
\begin{align*}
\la u,b(x) \ra + c(x) = \lim_{n \to \infty} \big( \la u_n,b(x) \ra + c(x) \big) \leq 0.
\end{align*}
This completes the proof.
\end{proof}

\begin{proposition}\label{prop-cond-vol}
Let $H_0$ be another Hilbert space, let $R \in L(H,H_0)$ be a linear isomorphism, and let $\sigma : \cald \to L(H_0,H)$ be a continuous mapping. We define $\Sigma : \cald \to L(H)$ as $\Sigma(x) := \sigma(x) R$ for each $x \in \cald$, and $C : \cald \to L(H)$ as $C(x) := \Sigma(x) \Sigma(x)^*$ for each $x \in \cald$. Then the following statements are equivalent:
\begin{enumerate}
\item[(i)] We have $C(x)u = 0$ for all $x \in \cald$ and $u \in \caln_{\cald}^{1,\pr}(x)$.

\item[(ii)] We have $C(x)u = 0$ for all $x \in \cald$ and $u \in \caln_{\cald}^{c}(x)$.

\item[(iii)] We have $\Sigma(x)^* u = 0$ for all $x \in \cald$ and $u \in \caln_{\cald}^{1,\pr}(x)$.

\item[(iv)] We have $\Sigma(x)^* u = 0$ for all $x \in \cald$ and $u \in \caln_{\cald}^{c}(x)$.

\item[(v)] We have $\sigma(x)^* u = 0$ for all $x \in \cald$ and $u \in \caln_{\cald}^{1,\pr}(x)$.

\item[(vi)] We have $\sigma(x)^* u = 0$ for all $x \in \cald$ and $u \in \caln_{\cald}^{c}(x)$.

\item[(vii)] We have $\sigma(x)w \in T_{\cald}^c(x)$ for all $x \in \cald$ and $w \in H_0$.

\item[(viii)] We have $\sigma(x)w \in T_{\cald}^{\sigma}(x)$ for all $x \in \cald$ and $w \in H_0$.

\item[(ix)] We have $\Sigma(x) v \in T_{\cald}^c(x)$ for all $x \in \cald$ and $v \in H$.

\item[(x)] We have $\Sigma(x) v \in T_{\cald}^{\sigma}(x)$ for all $x \in \cald$ and $v \in H$.
\end{enumerate}
\end{proposition}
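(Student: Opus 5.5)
We prove Proposition \ref{prop-cond-vol} by chaining three groups of equivalences: the operator-level ones (i)$\Leftrightarrow$(iii)$\Leftrightarrow$(v) and (ii)$\Leftrightarrow$(iv)$\Leftrightarrow$(vi); the passage from proximal normals to Clarke normals (i)$\Leftrightarrow$(ii); and the passage from normals to tangents (vi)$\Leftrightarrow$(vii)$\Leftrightarrow$(viii), together with its $\Sigma$-versions (ix), (x).

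\emph{Operator-level step.} This is pointwise linear algebra, for fixed $x$ and $u$. By Lemma \ref{lemma-kernels} we have $\ker C(x) = \ker \Sigma(x)^*$. Moreover $\Sigma(x)^* = R^* \sigma(x)^*$ with $R^*$ injective, since $R$ is an isomorphism. Hence $\ker \Sigma(x)^* = \ker \sigma(x)^*$. This gives (i)$\Leftrightarrow$(iii)$\Leftrightarrow$(v) and (ii)$\Leftrightarrow$(iv)$\Leftrightarrow$(vi) simultaneously.

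\emph{From proximal to Clarke normals.} The implication (ii)$\Rightarrow$(i) is immediate from the inclusion $\caln^{1,\pr}_{\cald}(x) \subset \caln^c_{\cald}(x)$ in Proposition \ref{prop-normal-cones-inclusion}. For (i)$\Rightarrow$(ii) we apply Proposition \ref{prop-C-N}, whose hypotheses hold because $C$ is continuous on $\cald$ with self-adjoint values. Its proof runs as follows. One scales to the cone $\caln^p_{\cald}(x)$. One passes to the limiting set $R_{\cald}(x)$, using weak convergence $u_n \overset{\sigma}{\to} u$, boundedness of $(u_n)$, and continuity of $C$. Finally one takes convex combinations and norm closures, using the proximal normal formula of Proposition \ref{prop-normal-cone-limits}.

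\emph{From normals to tangents.} Assume (vi). For $w \in H_0$ and $u \in \caln^c_{\cald}(x)$ we get $\la u, \sigma(x)w\ra = \la \sigma(x)^*u, w\ra_{H_0} = 0 \le 0$. So $\sigma(x)w \in \caln^c_{\cald}(x)^\circ = T^c_{\cald}(x)$, where the last equality is Lemma \ref{lemma-normal-cones}; this is (vii). The implication (vii)$\Rightarrow$(viii) follows from $T^c \subset T^\sigma$ in Lemma \ref{lemma-cones-inclusions}.

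For (viii)$\Rightarrow$(v), take $u \in \caln^{1,\pr}_{\cald}(x) \subset \caln^1_{\cald}(x) = \caln^\sigma_{\cald}(x)$, using Proposition \ref{prop-normal-1}. Since $\caln^\sigma_{\cald}(x) = T^\sigma_{\cald}(x)^\circ$, we get $\la u, \sigma(x)w\ra \le 0$ for all $w$. Replacing $w$ by $-w$ gives equality, hence $\sigma(x)^*u = 0$.

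Finally, (ix) and (x) are equivalent to (vii) and (viii) respectively. The reason is that $\{\Sigma(x)v : v\in H\} = \{\sigma(x)w : w \in H_0\}$, because $R$ maps $H$ onto $H_0$.

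\emph{Expected obstacle.} The only nontrivial ingredient is (i)$\Rightarrow$(ii). There the weak limits in $R_{\cald}(x)$ must be handled, and this relies on the boundedness of weakly convergent sequences. It is already packaged in Proposition \ref{prop-C-N}, so the main care lies in checking that its hypotheses hold, in particular continuity of $C$ into $L(H)$. The remaining steps are linearity and polarity arguments.
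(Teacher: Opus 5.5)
Your proof is correct. The kernel identities $\ker C(x)=\ker\Sigma(x)^*=\ker\sigma(x)^*$ and the use of Proposition~\ref{prop-C-N} for (i)$\Rightarrow$(ii) are exactly what the paper does. Continuity of $C$ into $L(H)$ is immediate from $C(x)=\sigma(x)RR^*\sigma(x)^*$ and the isometry $T\mapsto T^*$.

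Where you diverge is in how the tangent-cone conditions are attached. The paper obtains (vii)$\Leftrightarrow$(viii)$\Leftrightarrow$(ix)$\Leftrightarrow$(x) by invoking Proposition~\ref{prop-normal-inward-a-cont} for the continuous maps $x\mapsto\sigma(x)w$. This runs the proximal-normal-formula limiting argument a second time. It then links the two groups by a duality at a fixed point, namely $\caln^c_{\cald}(x)\subset\ker\Sigma(x)^*=\ran(\Sigma(x))^\circ$ if and only if $\ran(\Sigma(x))\subset T^c_{\cald}(x)$, which gives (iv)$\Leftrightarrow$(ix).

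You instead close the cycle (vi)$\Rightarrow$(vii)$\Rightarrow$(viii)$\Rightarrow$(v), and the last step is pointwise. From $\caln^{1,\pr}_{\cald}(x)\subset\caln^\sigma_{\cald}(x)=T^\sigma_{\cald}(x)^\circ$ you get $\la u,\sigma(x)w\ra\le 0$. The symmetry $w\mapsto -w$ of the subspace $\ran\sigma(x)$ then upgrades this to orthogonality.

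Your route is more economical: the only non-pointwise ingredient is the single use of Proposition~\ref{prop-C-N}. It exploits exactly the two-sidedness of a linear condition, which is unavailable for the one-sided drift conditions. The paper's route is more modular, because it reuses the general machinery of Proposition~\ref{prop-normal-inward-a-cont} that it needs anyway for Remarks~\ref{rem-drift-aC-cont} and~\ref{rem-drift-a-sigma-cont}.
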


\begin{proof}
(i) $\Leftrightarrow$ (ii) $\Leftrightarrow$ (iii) $\Leftrightarrow$ (iv) $\Leftrightarrow$ (v) $\Leftrightarrow$ (vi): Note that $R^*$ is also an isomorphism. Taking also into account Lemma \ref{lemma-kernels}, for each $x \in \cald$ we have
\begin{align*}
\ker(C(x)) = \ker(\Sigma(x)^*) = \ker ( R^* \sigma(x)^* ) = \ker(\sigma(x)^*),
\end{align*}
and hence, these equivalences are a consequence of Proposition \ref{prop-C-N}.

\noindent(vii) $\Leftrightarrow$ (viii) $\Leftrightarrow$ (ix) $\Leftrightarrow$ (x): Recalling that $R$ is an isomorphism, the stated equivalences are a consequence of Proposition \ref{prop-normal-inward-a-cont}.

\noindent(iv) $\Leftrightarrow$ (ix): Let $x \in \cald$ be arbitrary. By Lemma \ref{lemma-adjoint-ker-ran} we have $$\ker( \Sigma(x)^* ) = \ran( \Sigma(x) )^{\circ}.$$ Therefore we have $\caln_{\cald}^c(x) \subset \ker( \Sigma(x)^* )$ if and only if $\ker( \Sigma(x)^* )^{\circ} \subset \caln_{\cald}^c(x)^{\circ}$,  and by Lemma \ref{lemma-normal-cones} this is equivalent to $\ran( \Sigma(x) ) \subset T_{\cald}^c(x)$.
\end{proof}

Now, let us recall the definition of the contingent curvature. For this purpose, it will be convenient to introduce the notation
\begin{align*}
{\rm Graph}(\caln_{\cald}^b) := \{ (x,u) : x \in \cald \text{ and } u \in \caln_{\cald}^b(x) \}.
\end{align*}

\begin{definition}
For $(x,u) \in {\rm Graph}(\caln_{\cald}^b)$ the \emph{contingent derivative} $D \caln_{\cald}^b(x,u)v$ of $\caln_{\cald}^b$ at $(x,u)$ in direction $v \in H$ is defined as the set of all $\mu \in H$ such that there are sequences $(t_n)_{n \in \bbn} \subset (0,\infty)$ with $t_n \to 0^+$ and $(v_n,\mu_n)_{n \in \bbn} \subset H \times H$ with $(v_n,\mu_n) \to (v,\mu)$ such that $x + t_n v_n \in \cald$ and $u + t_n \mu_n \in \caln_{\cald}^b(x+t_n v_n)$ for all $n \in \bbn$.
\end{definition}

\begin{remark}
Note that $D \caln_{\cald}^b(x,u)v = \emptyset$ for $v \notin T_{\cald}^b(x)$, which is a consequence of Proposition \ref{prop-cones}.
\end{remark}

\begin{definition}\label{def-curvature}
The \emph{contingent curvature} of $\cald$ at $(x,u) \in {\rm Graph}(\caln_{\cald}^b)$ is defined as
\begin{align*}
{\rm Curv}_{\cald}(x,u)(v,w) := \sup \{ \la \mu,w \ra : \mu \in D \caln_{\cald}^b(x,u)v \} \quad \forall v,w \in T_{\cald}^b(x).
\end{align*}
\end{definition}

\begin{remark}\label{rem-curvature-arguments}
Let $(x,u) \in {\rm Graph}(\caln_{\cald}^b)$ be arbitrary. In the sequel, we will often consider ${\rm Curv}_{\cald}(x,u)(v,\tau(x))$ for $v \in \caln_{\cald}^c(x)^{\perp}$ and a mapping $\tau : \cald \to H$ such that condition \eqref{orthogonal-tau} below is fulfilled. In this regard, note the following:
\begin{itemize}
\item By Lemma \ref{lemma-normal-cones} and Lemma \ref{lemma-cones-inclusions} we have
\begin{align}\label{inclusions-curvature}
\caln_{\cald}^c(x)^{\perp} \subset \caln_{\cald}^c(x)^{\circ} = T_{\cald}^c(x) \subset T_{\cald}^b(x) \quad \forall x \in \cald.
\end{align}
\item If the mapping $\tau$ is continuous, then condition \eqref{orthogonal-tau} below implies that $\tau(x) \in T_{\cald}^b(x)$ for all $x \in \cald$. This is a consequence of Proposition \ref{prop-normal-inward-a-cont} and Remark \ref{rem-more-general-inward-a},
\end{itemize}
\end{remark}

The following result generalizes the first part of \cite[Lemma 2.3]{DP-Frankowska}.

\begin{proposition}\label{prop-curvature}
Let $\tau : H \to H$ be a mapping of class $C^1$ such that
\begin{align}\label{orthogonal-tau}
\la u,\tau(x) \ra = 0 \quad \forall x \in \cald \quad \forall u \in \caln_{\cald}^b(x).
\end{align}
Then the following statements are true:
\begin{enumerate}
\item For all $(x,u) \in {\rm Graph}(\caln_{\cald}^b)$, $v \in T_{\cald}^b(x)$ and $\mu \in D \caln_{\cald}^b(x,u)v$ we have
\begin{align*}
{\rm Curv}_{\cald}(x,u)(v,\tau(x)) = \la \mu, \tau(x) \ra.
\end{align*}
\item For all $(x,u) \in {\rm Graph}(\caln_{\cald}^b)$ and $v \in T_{\cald}^b(x)$ we have
\begin{align*}
{\rm Curv}_{\cald}(x,u)(v,\tau(x)) = -\la u, D \tau(x) v \ra.
\end{align*}
\end{enumerate}
\end{proposition}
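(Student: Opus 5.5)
Part (2) comes first, since (1) follows from it. Fix $(x,u) \in {\rm Graph}(\caln_{\cald}^b)$, $v \in T_{\cald}^b(x)$ and $\mu \in D \caln_{\cald}^b(x,u)v$. By definition there are sequences $t_n \to 0^+$ and $(v_n,\mu_n) \to (v,\mu)$ with $x + t_n v_n \in \cald$ and $u + t_n \mu_n \in \caln_{\cald}^b(x+t_n v_n)$. We apply the orthogonality hypothesis \eqref{orthogonal-tau} twice. At the point $x_n := x+t_n v_n \in \cald$ with normal $u+t_n\mu_n$ it gives
\begin{align*}
\la u + t_n \mu_n, \tau(x+t_n v_n) \ra = 0,
\end{align*}
and at the point $x$ with normal $u$ it gives $\la u,\tau(x) \ra = 0$. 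Subtracting and dividing by $t_n$ yields
\begin{align*}
\Big\la u, \frac{\tau(x+t_nv_n)-\tau(x)}{t_n} \Big\ra + \la \mu_n, \tau(x+t_nv_n) \ra = 0.
\end{align*}

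Next we pass to the limit. Since $\tau$ is of class $C^1$, we have $(\tau(x+t_nv_n)-\tau(x))/t_n \to D\tau(x)v$; this is the only analytic step. It follows from the mean value inequality, $\|\tau(x+t_nv_n)-\tau(x)-t_nD\tau(x)v_n\| \le t_n\|v_n\|\sup_{s\in[0,1]}\|D\tau(x+st_nv_n)-D\tau(x)\|$, together with $v_n \to v$ and continuity of $D\tau$. In the second term, $\mu_n \to \mu$ strongly and $\tau(x+t_nv_n) \to \tau(x)$, so $\la \mu_n,\tau(x_n)\ra \to \la \mu,\tau(x)\ra$. Hence $\la \mu,\tau(x)\ra = -\la u, D\tau(x)v\ra$ for every $\mu \in D\caln_{\cald}^b(x,u)v$.

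Since this value does not depend on $\mu$, the supremum in Definition \ref{def-curvature} with $w=\tau(x)$ equals it whenever $D\caln_{\cald}^b(x,u)v \neq \emptyset$. This proves both statements in that case.

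The main obstacle is that the definition is only meaningful when $\tau(x) \in T_{\cald}^b(x)$ and the contingent derivative is nonempty; otherwise the supremum is $-\infty$ and (2) fails as stated. I would handle this as follows. In statement (1) a $\mu$ is given, so nonemptiness holds there. For (2) I would read the statement as tacitly assuming $D\caln_{\cald}^b(x,u)v \neq \emptyset$, or as holding under the paper's convention; I would note this explicitly. For the admissibility of $\tau(x)$ as an argument of ${\rm Curv}_{\cald}$, I would invoke Remark \ref{rem-curvature-arguments}.
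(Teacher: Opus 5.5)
Your argument is correct and is essentially the paper's proof. You apply \eqref{orthogonal-tau} at $x+t_nv_n$ with normal $u+t_n\mu_n$ and at $x$ with normal $u$, divide by $t_n$, and pass to the limit; your mean value inequality simply replaces Lemma \ref{lemma-derivative-sequence}. This gives $\la \mu,\tau(x)\ra = -\la u, D\tau(x)v\ra$ independently of $\mu$. Your caveat is also well taken: the paper's proof of (2) likewise tacitly assumes $D\caln_{\cald}^b(x,u)v \neq \emptyset$, since otherwise the supremum defining ${\rm Curv}_{\cald}$ is taken over the empty set.
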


\begin{proof}
Let us fix arbitrary $(x,u) \in {\rm Graph}(\caln_{\cald}^b)$, $v \in T_{\cald}^b(x)$ and $\mu \in D \caln_{\cald}^b(x,u)v$. Then there are sequences $(t_n)_{n \in \bbn} \subset (0,\infty)$ with $t_n \to 0^+$ and $(v_n,\mu_n)_{n \in \bbn} \subset H \times H$ with $(v_n,\mu_n) \to (v,\mu)$ such that $x + t_n v_n \in \cald$ and $u + t_n \mu_n \in \caln_{\cald}^b(x + t_n v_n)$ for all $n \in \bbn$. By \eqref{orthogonal-tau} we have
\begin{align*}
\la u + t_n \mu_n, \tau(x + t_n v_n) \ra = 0 \quad \forall n \in \bbn,
\end{align*}
and hence
\begin{align*}
\la \mu_n, \tau(x + t_n v_n) \ra + \bigg\la u, \frac{\tau(x + t_n v_n) - \tau(x)}{t_n} \bigg\ra = 0 \quad \forall n \in \bbn.
\end{align*}
Sending $n \to \infty$, by Lemma \ref{lemma-derivative-sequence} we obtain
\begin{align*}
\la \mu, \tau(x) \ra = - \la u, D \tau(x) v \ra.
\end{align*}
Since $\mu \in D \caln_{\cald}^b(x,u)v$ was arbitrary, this completes the proof.
\end{proof}

Now, we will compute the contingent curvature for closed convex cones. For this purpose, we prepare two auxiliary results.

\begin{lemma}\label{lemma-ccc-tangent-normal}\cite[Lemma 4.2.5]{Aubin_Fr}
Suppose that $\cald$ is a closed convex cone. Then we have
\begin{align*}
T_{\cald}^b(x) = \cald + \lin \{ x \} \quad \text{and} \quad \caln_{\cald}^b(x) = \cald^{\circ} \cap \{ x \}^{\perp} \quad \forall x \in \cald.
\end{align*}
\end{lemma}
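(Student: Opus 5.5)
The statement is Lemma \ref{lemma-ccc-tangent-normal}: for a closed convex cone $\cald$ and $x\in\cald$, we have $T_{\cald}^b(x)=\overline{\cald+\lin\{x\}}$ and $\caln_{\cald}^b(x)=\cald^{\circ}\cap\{x\}^{\perp}$. Note the closure: $\cald+\lin\{x\}$ need not be closed in general, while $T^b_{\cald}(x)$ always is. For finitely generated cones, the case used later, the sum is closed and no closure is needed. I would prove the tangent cone identity first and then obtain the normal cone by taking polars.

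\emph{Tangent cone.} For ``$\supset$'', let $v=d+\lambda x$ with $d\in\cald$ and $\lambda\in\bbr$. For small $t>0$ we have $1+t\lambda>0$. Since $\cald$ is a convex cone, $x+t(d+\lambda x)=(1+t\lambda)x+td\in\cald$. Hence $d_{\cald}(x+tv)=0$, so $v\in T^b_{\cald}(x)$. Because $T^b_{\cald}(x)$ is closed (Lemma \ref{lemma-tangent-cones}), the closure of $\cald+\lin\{x\}$ is also contained in it.

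For ``$\subset$'', use Proposition \ref{prop-cones}: pick $t_n\to0^+$ and $v_n\to v$ with $x+t_nv_n\in\cald$. Write
\begin{align*}
v_n=\frac{(x+t_nv_n)-x}{t_n}=\frac{1}{t_n}(x+t_nv_n)+\Big(-\frac{1}{t_n}\Big)x .
\end{align*}
The first term lies in $\cald$ by the cone property, so $v_n\in\cald+\lin\{x\}$. Letting $n\to\infty$ gives $v\in\overline{\cald+\lin\{x\}}$.

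\emph{Normal cone.} Polars do not see closures, so $\caln^b_{\cald}(x)=T^b_{\cald}(x)^{\circ}=(\cald+\lin\{x\})^{\circ}$. An element $y$ lies in this polar exactly when $\la y,d\ra+\lambda\la y,x\ra\le0$ for all $d\in\cald$ and all $\lambda\in\bbr$. Taking $\lambda=0$ gives $y\in\cald^{\circ}$. Taking $d=0$ and $\lambda=\pm1$ gives $\la y,x\ra=0$. Conversely, if $y\in\cald^\circ$ and $\la y,x\ra=0$, the expression above equals $\la y,d\ra\le0$, so $y$ lies in the polar.

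The only delicate point is the closure in the tangent cone identity. In the finitely generated case, $\cald+\lin\{x\}$ is itself finitely generated, being generated by the generators of $\cald$ together with $-x$, so it is closed and the formula holds without closure.
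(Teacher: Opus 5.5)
Your argument is correct and complete. The paper gives no proof of its own here and only cites \cite[Lemma 4.2.5]{Aubin_Fr}. Your direct verification is the standard route: the inclusion via $(1+t\lambda)x+td\in\cald$, the reverse inclusion via the sequential description of $T_{\cald}^b(x)$, and the polar computation.

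Your insistence on the closure is needed, because the identity as printed fails already in $\bbr^3$. Take the Lorentz cone $\cald=\{(a,b,c)\in\bbr^3 : c\geq\sqrt{a^2+b^2}\}$, $x=(1,0,1)$ and $v=(0,1,0)$. Writing $v=k+\lambda x$ with $k\in\cald$ would force $-\lambda\geq\sqrt{\lambda^2+1}$, so $v\notin\cald+\lin\{x\}$. On the other hand, $(0,1,\epsilon)=(s,1,s+\epsilon)-s x\in\cald+\lin\{x\}$ for $s\geq 1/(2\epsilon)$. Hence $v\in\overline{\cald+\lin\{x\}}=T_{\cald}^b(x)$. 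As you say, the normal-cone identity survives because $A^{\circ}=\overline{A}^{\circ}$.

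The same example shows why your closing remark matters for the paper. The proof of Lemma \ref{lemma-ccc} writes $v=\lambda x+y$ with $y\in\cald$ for $v\in\caln_{\cald}^c(x)^{\perp}$. In the example, $\caln_{\cald}^c(x)=\{s(1,0,-1):s\geq 0\}$, so $v=(0,1,0)\in\caln_{\cald}^c(x)^{\perp}$, yet $x+tv=(1,t,1)\notin\cald$ for every $t\neq 0$. Hence Lemma \ref{lemma-ccc}, and Proposition \ref{prop-curvature-cone} which relies on it, are only justified when $\cald+\lin\{x\}$ is closed, for instance for finitely generated cones. That is precisely the setting of the two corollaries where the paper uses them.
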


For the next auxiliary result recall the inclusions \eqref{inclusions-curvature}.

\begin{lemma}\label{lemma-ccc}
Suppose that $\cald$ is a closed convex cone. Let $x \in \cald$ and $v \in \caln_{\cald}^c(x)^{\perp}$ be arbitrary. Then there exists $\epsilon > 0$ such that $x + tv \in \cald$ and $\caln_{\cald}^b(x) \subset \caln_{\cald}^b(x + tv)$ for all $t \in [0,\epsilon]$.
\end{lemma}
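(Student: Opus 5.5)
The plan is to reduce both assertions to the geometry of the minimal face of $\cald$ containing $x$. Let $F$ be the smallest face of $\cald$ containing $x$, so that $x$ lies in the relative interior of $F$. By Lemma \ref{lemma-ccc-tangent-normal}, and since for convex sets all the tangent cones coincide, we have
\begin{align*}
T_{\cald}^c(x) = T_{\cald}^b(x) = \overline{\cald + \lin\{x\}}, \qquad \caln_{\cald}^c(x) = \caln_{\cald}^b(x) = \cald^{\circ} \cap \{x\}^{\perp}.
\end{align*}
So the hypothesis $v \in \caln_{\cald}^c(x)^{\perp}$ means that $v \perp u$ for every $u \in \cald^{\circ}$ with $\la u,x \ra = 0$. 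Equivalently, $v$ lies in the lineality space $T_{\cald}^c(x) \cap -T_{\cald}^c(x)$ of the tangent cone.

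\emph{Step 1.} Identify this lineality space with $\lin F = F - F$. The key tool is the separation argument already used in Lemma \ref{lemma-double-polar}. If $v \notin \lin F$, separate $v$ from the subspace $\lin F$ and from the cone $\cald$, choosing a separating functional that vanishes on $F$. This produces some $u \in \cald^{\circ} \cap \{x\}^{\perp}$ with $\la u,v \ra \neq 0$, which is a contradiction. This step is exactly where the structure of $\cald$ enters, and I expect it to be the main obstacle. For a finitely generated cone, which is the setting of the corollaries in Section \ref{sec-eq-char}, $\cald$ has finitely many faces. Each face is exposed by a normal vector, and $\cald + \lin\{x\} = \cald + \lin F$ is closed, so the identification holds. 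For a strictly curved cone, however, the identification fails. For instance, for the ice-cream cone in $\bbr^3$ at a boundary point $x \neq 0$, the normal cone is a single ray, its orthogonal complement is two dimensional, but $\lin F = \lin\{x\}$. In that example $x + tv \notin \cald$ for a transversal tangent $v$. I would therefore carry out the proof under finite generation, using this step as the place where that hypothesis is invoked, and state explicitly that the general wording of the lemma needs this restriction.

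\emph{Step 2.} Since $v \in \lin F$ and $x \in \mathrm{ri}\, F$, which is relatively open in the finite dimensional space $\lin F$, there is $\epsilon > 0$ such that $x + tv \in F \subset \cald$ for all $t \in [0,\epsilon]$. Moreover, after shrinking $\epsilon$ if necessary, $x + tv$ remains in $\mathrm{ri}\, F$.

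\emph{Step 3.} Prove the normal cone inclusion. Let $u \in \caln_{\cald}^b(x) = \cald^{\circ} \cap \{x\}^{\perp}$. The linear functional $\la u, \cdot \ra$ is nonpositive on $\cald$ and vanishes at the relative interior point $x$ of $F$. Hence it vanishes on all of $F$, because a nonpositive linear function attaining $0$ at a relative interior point of a convex set is identically zero on that set. In particular $\la u, x + tv \ra = 0$, so $u \in \cald^{\circ} \cap \{x+tv\}^{\perp} = \caln_{\cald}^b(x+tv)$ by Lemma \ref{lemma-ccc-tangent-normal}. This gives $\caln_{\cald}^b(x) \subset \caln_{\cald}^b(x+tv)$ for all $t \in [0,\epsilon]$, completing the argument.
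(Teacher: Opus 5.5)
You are right that the statement is false as stated, and your counterexample works. Take $\cald=\{(a,b,c)\in\bbr^3 : c\geq\sqrt{a^2+b^2}\}$ and $x=(1,0,1)$. Then $T_{\cald}^c(x)=\{v : v_1\leq v_3\}$ and $\caln_{\cald}^c(x)=\{\lambda(1,0,-1):\lambda\geq 0\}$. So $v=(0,1,0)\in\caln_{\cald}^c(x)^{\perp}$, but $x+tv=(1,t,1)\notin\cald$ for every $t>0$.

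This exposes a genuine error in the paper's proof. Its first step writes $v=\lambda x+y$ with $y\in\cald$, using $\caln_{\cald}^c(x)^{\perp}\subset T_{\cald}^c(x)\subset T_{\cald}^b(x)$ and the formula $T_{\cald}^b(x)=\cald+\lin\{x\}$ from Lemma \ref{lemma-ccc-tangent-normal}. That formula needs a closure, $T_{\cald}^b(x)=\overline{\cald+\lin\{x\}}$, since tangent cones are closed (Lemma \ref{lemma-tangent-cones}). In your example $(0,1,0)$ lies in the closure but not in $\cald+\lin\{x\}$. The normal-cone formula $\caln_{\cald}^b(x)=\cald^{\circ}\cap\{x\}^{\perp}$ is unaffected, because polars do not see closures. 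The lemma is only used, through Proposition \ref{prop-curvature-cone}, for finitely generated cones, where $\cald+\lin\{x\}$ is closed. So restricting to that case, as you do, is the right repair.

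Within that restriction your face argument is correct, but it is more roundabout than needed. Once $\cald+\lin\{x\}$ is closed, the paper's short argument works verbatim:
\begin{itemize}
\item $x+tv=(1+t\lambda)x+ty\in\cald$ for $0\leq t\leq 1/(2|\lambda|)$;
\item for $u\in\caln_{\cald}^b(x)\subset\caln_{\cald}^c(x)$ the hypothesis gives $\la u,v\ra=0$ directly, hence $u\in\cald^{\circ}\cap\{x+tv\}^{\perp}=\caln_{\cald}^b(x+tv)$.
\end{itemize}
No faces or separation are needed.

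Your Step 1 is also a little loose as written. The separation must be applied to whichever of $\pm v$ lies outside the closed cone $\cald+\lin F$. That one of them does follows from the fact that the lineality space of $\cald+\lin F$ is exactly $\lin F$. This fact holds for every closed convex cone, as a consequence of $\cald\cap\lin F=F$ and the face property.

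In return, your route gives a sharper conclusion and a clear diagnosis. It shows $\caln_{\cald}^c(x)^{\perp}=\lin F$ and that $x+tv$ stays in $\mathrm{ri}\,F$, so in fact $\caln_{\cald}^b(x+tv)=\caln_{\cald}^b(x)$. It also isolates what fails for curved cones: the lineality space of the closure $\overline{\cald+\lin F}$ can be strictly larger than $\lin F$.
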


\begin{proof}
By Lemma \ref{lemma-ccc-tangent-normal}, for $x \in \cald$ and $v \in \caln_{\cald}^c(x)^{\perp}$ we have $v = \lambda x + y$ for some $\lambda \in \bbr$ and $y \in \cald$. If $\lambda = 0$, then we have $x + tv = x + ty \in \cald$ for all $t \geq 0$. Therefore, we may assume $\lambda \neq 0$. Setting $\epsilon := \frac{1}{2 |\lambda|}$, we obtain $|t \lambda| \leq \frac{1}{2}$, and hence
\begin{align*}
x + tv = (1 + t \lambda) x + t y \in \cald \quad \forall t \in [0,\epsilon].
\end{align*}
Now, let $u \in \caln_{\cald}^b(x)$ be arbitrary. By Lemma \ref{lemma-normal-cones} we have $v \in \caln_{\cald}^b(x)^{\perp}$, and hence $\la u,v \ra = 0$. Moreover, by Lemma \ref{lemma-ccc-tangent-normal} we have $u \in \cald^{\circ}$ and $\la u,x \ra = 0$, and it follows that $\la u,x+tv \ra = 0$. Thus, using Lemma \ref{lemma-ccc-tangent-normal} again we arrive at $u \in \caln_{\cald}^b(x + tv)$, completing the proof.
\end{proof}

\begin{proposition}\label{prop-curvature-cone}
Suppose that $\cald$ is a closed convex cone. Let $\tau : H \to H$ be a mapping of class $C^1$ such that \eqref{orthogonal-tau} is fulfilled. Then for all $(x,u) \in {\rm Graph}(\caln_{\cald}^b)$ and $v \in \caln_{\cald}^c(x)^{\perp}$ we have
\begin{align*}
{\rm Curv}_{\cald}(x,u)(v,\tau(x)) = 0.
\end{align*}
\end{proposition}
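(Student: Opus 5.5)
The natural route is the first part of Proposition \ref{prop-curvature}. Once we know that $v \in T_{\cald}^b(x)$, it gives
\begin{align*}
{\rm Curv}_{\cald}(x,u)(v,\tau(x)) = \la \mu, \tau(x) \ra \quad \text{for every } \mu \in D \caln_{\cald}^b(x,u)v .
\end{align*}
So it suffices to exhibit one element $\mu \in D \caln_{\cald}^b(x,u)v$ with $\la \mu,\tau(x) \ra = 0$. The obvious candidate is $\mu = 0$. The membership $v \in T_{\cald}^b(x)$ is guaranteed by the inclusions \eqref{inclusions-curvature}, since $v \in \caln_{\cald}^c(x)^{\perp}$. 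Hence ${\rm Curv}_{\cald}(x,u)(v,\tau(x))$ is well defined, and by Proposition \ref{prop-curvature} it equals $\la \mu,\tau(x)\ra$ for any admissible $\mu$.

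To show $0 \in D \caln_{\cald}^b(x,u)v$, I would use Lemma \ref{lemma-ccc}. It provides $\epsilon > 0$ such that, for all $t \in [0,\epsilon]$,
\begin{align*}
x + tv \in \cald \quad \text{and} \quad \caln_{\cald}^b(x) \subset \caln_{\cald}^b(x+tv).
\end{align*}
Take any sequence $t_n \to 0^+$ with $t_n \le \epsilon$, and put $v_n := v$ and $\mu_n := 0$. Then $x + t_n v_n \in \cald$ holds for all $n$. Moreover $u + t_n \mu_n = u$, and $u \in \caln_{\cald}^b(x) \subset \caln_{\cald}^b(x+t_n v)$. Since trivially $(v_n,\mu_n) \to (v,0)$, the definition of the contingent derivative gives $0 \in D\caln_{\cald}^b(x,u)v$.

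Combining the two steps, part (1) of Proposition \ref{prop-curvature} yields ${\rm Curv}_{\cald}(x,u)(v,\tau(x)) = \la 0,\tau(x)\ra = 0$. Alternatively, part (2) shows that the sup is attained at a single value, so the set of admissible $\mu$ being nonempty is all that matters.

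The only real content is Lemma \ref{lemma-ccc}, which is already available, so I expect no serious obstacle. The one point to double-check is that $u \in \caln_{\cald}^b(x)$, which holds because $(x,u) \in {\rm Graph}(\caln_{\cald}^b)$. With that, the normal cone inclusion from Lemma \ref{lemma-ccc} applies directly to $u$.
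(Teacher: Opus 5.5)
Your argument is exactly the paper's proof. Lemma \ref{lemma-ccc} with the constant sequences $v_n = v$, $\mu_n = 0$ gives $0 \in D\caln_{\cald}^b(x,u)v$, and part (1) of Proposition \ref{prop-curvature} then yields the value $\la 0,\tau(x)\ra = 0$.

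The problem is that the step you call the only real content is not valid for an arbitrary closed convex cone. The proof of Lemma \ref{lemma-ccc} writes $v = \lambda x + y$ with $y \in \cald$, relying on $T_{\cald}^b(x) = \cald + \lin\{x\}$ from Lemma \ref{lemma-ccc-tangent-normal}. For a closed convex cone one only has $T_{\cald}^b(x) = \overline{\cald + \lin\{x\}}$, and the closure cannot be dropped, already in $\bbr^3$.

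Take $H = \bbr^3$, the Lorentz cone $\cald = \{(a,b,c) : \sqrt{a^2+b^2} \le c\}$, $x = (1,0,1)$, $u = (1,0,-1)$, $v = (0,1,0)$, and the linear map $\tau(a,b,c) = (-b,a,0)$.
\begin{itemize}
\item The hypotheses hold. We have $\caln_{\cald}^b(x) = \caln_{\cald}^c(x) = \{\lambda u : \lambda \geq 0\}$, so $(x,u) \in {\rm Graph}(\caln_{\cald}^b)$ and $v \in \caln_{\cald}^c(x)^{\perp}$.
\item Condition \eqref{orthogonal-tau} holds. At a boundary point $r(\cos\theta,\sin\theta,1)$ with $r>0$, the normal cone is spanned by $(\cos\theta,\sin\theta,-1)$, which is orthogonal to $\tau = r(-\sin\theta,\cos\theta,0)$. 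At interior points the normal cone is $\{0\}$, and at the vertex $\tau(0) = 0$.
\item Lemma \ref{lemma-ccc} fails. We have $x + tv = (1,t,1) \notin \cald$ for every $t > 0$. Indeed $v \notin \cald + \lin\{x\}$, since $v - sx = (-s,1,-s) \notin \cald$ for all $s \in \bbr$.
\item The statement itself fails. For $t_n \to 0^+$, the points $x_n := (\cos t_n,\sin t_n,1) \in \cald$ and the normals $(\cos t_n,\sin t_n,-1) \in \caln_{\cald}^b(x_n)$ show that $\mu = (0,1,0) \in D\caln_{\cald}^b(x,u)v$. Hence, by Proposition \ref{prop-curvature},
\begin{align*}
{\rm Curv}_{\cald}(x,u)(v,\tau(x)) = \la (0,1,0),(0,1,0) \ra = 1 = -\la u, D\tau(x)v \ra \neq 0.
\end{align*}
In particular $0 \notin D\caln_{\cald}^b(x,u)v$.
\end{itemize}

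So no argument can establish the proposition in the stated generality. The missing ingredient is a hypothesis ensuring that $\cald + \lin\{x\}$ is closed for each $x \in \cald$, so that $T_{\cald}^b(x) = \cald + \lin\{x\}$ really holds. This is the case when $\cald$ is finitely generated, because then $\cald + \lin\{x\}$ is again a finitely generated cone and hence closed. That is the only situation in which the paper uses the proposition, namely the two corollaries in Section \ref{sec-eq-char}. Under that extra hypothesis Lemma \ref{lemma-ccc} is correct, and your proof, which coincides with the paper's, is complete.
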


\begin{proof}
By Lemma \ref{lemma-ccc} we have $0 \in D \caln_{\cald}^b(x,u)v$. Thus, by Proposition \ref{prop-curvature} the assertion follows.
\end{proof}

Now, we will provide the required results about $\varphi$-convex sets.

\begin{definition}\label{def-phi-convex}
The closed set $\cald$ is called \emph{$\varphi$-convex} if there exists a continuous function $\varphi : \cald \times \cald \to \bbr_+$ such that for all $x,y \in \cald$ and $u \in \caln_{\cald}^{\sigma}(x)$ we have
\begin{align*}
\la u,y-x \ra \leq \varphi(x,y) \, \| u \| \, \| y-x \|^2.
\end{align*}
\end{definition}

The following characterization of $\varphi$-convexity will be useful.

\begin{lemma}\label{lemma-phi-conv-char}\cite[Prop. 6.2]{Colombo}
The closed set $\cald$ is $\varphi$-convex if and only if there exists a continuous function $\psi : \cald \to \bbr_+$ such that for all $x,y \in \cald$ and $u \in \caln_{\cald}^p(x)$ we have
\begin{align*}
\la u,y-x \ra \leq \psi(x) \, \| u \| \, \| y-x \|^2.
\end{align*}
\end{lemma}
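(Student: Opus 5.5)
The plan is to prove both directions by comparing proximal normals with general $\caln_{\cald}^{\sigma}$-normals. Two ingredients do the work: the proximal-normal characterization of Proposition \ref{prop-normal-2}, and the description of the Clarke normal cone through limits of proximal normals in Proposition \ref{prop-normal-cone-limits}.

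\emph{($\Rightarrow$).} Suppose $\cald$ is $\varphi$-convex with function $\varphi$, and set $\psi(x) := \varphi(x,x)$. This choice is too naive, since $\varphi(x,y)$ depends on $y$ and we need a bound uniform in $y$. So I would localize. Fix $x \in \cald$ and $u \in \caln_{\cald}^p(x) \subset \caln_{\cald}^{\sigma}(x)$, the inclusion coming from Proposition \ref{prop-normal-cones-inclusion}. For $y$ with $\| y-x \| \le 1$, continuity of $\varphi$ on the compact-free setting is a problem, so instead I would define
\begin{align*}
\psi(x) := \sup \{ \varphi(x,y) : y \in \cald, \ \| y-x \| \le r(x) \} + (\text{far-field term}).
\end{align*}
For $y$ far from $x$, with $\| y-x \| \ge r$, the trivial estimate $\la u,y-x \ra \le \| u \| \, \| y-x \| \le r^{-1} \| u \| \, \| y-x \|^2$ suffices. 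For $y$ near $x$, continuity of $\varphi$ at $(x,x)$ gives $\varphi(x,y) \le \varphi(x,x)+1$ when $\| y-x \| < r(x)$.

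One then needs a continuous majorant $\psi$ of the resulting lower semicontinuous-type bound $\varphi(x,x)+1+1/r(x)$. I would choose $r(x)$ via continuity of $\varphi$ on neighborhoods of the diagonal, so that the bound is locally bounded, and then take a continuous function dominating a locally bounded function on the metric space $\cald$, using a partition of unity.

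\emph{($\Leftarrow$).} Given $\psi$, set $\varphi(x,y) := \psi(x)$, which is continuous. The inequality then holds for $u \in \caln_{\cald}^p(x)$, and it is linear and closed in $u$ after multiplying by the norm: it reads $\la u,y-x\ra - \psi(x)\|u\|\|y-x\|^2 \le 0$. Since $\|\cdot\|$ is convex, the inequality is preserved under convex combinations, and it is preserved under norm limits.

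What remains is to pass from $\caln_{\cald}^p(x)$ to $\caln_{\cald}^{\sigma}(x) \subset \caln_{\cald}^c(x) = \overline{\rm co}\, R_{\cald}(x)$. This requires handling weak limits $u_n \overset{\sigma}{\to} u$ with $u_n \in \caln_{\cald}^p(x_n)$ and $x_n \to x$. Here $\la u_n, y-x_n\ra \to \la u,y-x\ra$ and $\psi(x_n)\to\psi(x)$, but $\|u\| \le \liminf \|u_n\|$ points the wrong way.

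This weak-limit step is the main obstacle. I would first try to rescale so that the weak limit norm issue disappears, or restrict to $\caln_{\cald}^{\sigma}(x)$ directly via Proposition \ref{prop-normal-1} ($\caln^1 = \caln^\sigma$). Failing that, I would simply cite \cite[Prop. 6.2]{Colombo}, which is where this equivalence is established.
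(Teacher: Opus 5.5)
The paper gives no argument for this lemma; it only cites \cite[Prop. 6.2]{Colombo}. Your ($\Rightarrow$) direction is sound once the radius is chosen uniformly. Joint continuity of $\varphi$ at $(x_0,x_0)$ gives $\delta>0$ with $\varphi(x,y)\le\varphi(x_0,x_0)+1$ whenever $x,y\in\cald$ satisfy $\|x-x_0\|,\|y-x_0\|<\delta$. So for $x\in\cald$ with $\|x-x_0\|<\delta/2$, splitting into $\|y-x\|<\delta/2$ and $\|y-x\|\ge\delta/2$ yields the constant $\varphi(x_0,x_0)+1+2/\delta$ for all $u\in\caln_{\cald}^p(x)\subset\caln_{\cald}^{\sigma}(x)$. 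A partition of unity on $\cald$ then gives a continuous majorant.

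The gap is in ($\Leftarrow$), which is exactly the direction the paper relies on in Proposition \ref{prop-phi-convex}. Your justification that the inequality survives convex combinations ``since $\|\cdot\|$ is convex'' is backwards. For $u=\sum_i\lambda_iu_i$ you only get the bound $\psi(x)\big(\sum_i\lambda_i\|u_i\|\big)\|y-x\|^2$, and $\sum_i\lambda_i\|u_i\|\ge\|u\|$. Indeed, for fixed $w=y-x$ and $c>0$, the set $\{u:\la u,w\ra\le c\|u\|\}$ is the complement of an open convex cone and is in general not convex. As you noticed, the sharp inequality does not survive weak limits either, since $\|u\|\le\liminf_n\|u_n\|$ points the wrong way. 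As written, ($\Leftarrow$) rests entirely on the citation.

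The missing idea is that you never need to pass the \emph{sharp} inequality to a limit. By Proposition \ref{prop-normal-2}, membership in $\caln_{\cald}^p(x)$ only requires \emph{some} quadratic constant. Once $u\in\caln_{\cald}^p(x)$, the hypothesis itself supplies the sharp constant $\psi(x)\|u\|$.

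Concretely, let $u_n\in\caln_{\cald}^p(x_n)$ with $x_n\to x$ and $u_n\overset{\sigma}{\to}u$. Then $\|u_n\|\le M$ by uniform boundedness. Letting $n\to\infty$ in $\la u_n,y-x_n\ra\le\psi(x_n)M\|y-x_n\|^2$ gives $\la u,y-x\ra\le\psi(x)M\|y-x\|^2$ for all $y\in\cald$. Hence $u\in\caln_{\cald}^p(x)$ (take $t=1/(2\psi(x)M)$, or any $t>0$ if $\psi(x)M=0$), that is, $R_{\cald}(x)\subset\caln_{\cald}^p(x)$.

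Adding quadratic bounds shows, via Proposition \ref{prop-normal-2}, that $\caln_{\cald}^p(x)$ is a convex cone. Under the hypothesis it is also norm closed: $\la v_n,y-x\ra\le\psi(x)\|v_n\|\,\|y-x\|^2$ does pass to norm limits, after which Proposition \ref{prop-normal-2} applies again. Consequently, by Propositions \ref{prop-normal-cones-inclusion} and \ref{prop-normal-cone-limits}, $\caln_{\cald}^{\sigma}(x)\subset\caln_{\cald}^c(x)=\overline{\rm co}\,R_{\cald}(x)\subset\caln_{\cald}^p(x)$. So $\varphi(x,y):=\psi(x)$ satisfies the definition of $\varphi$-convexity. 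As a by-product you get $\caln_{\cald}^p(x)=\caln_{\cald}^c(x)$, which is what the paper draws from the same citation for (iii) $\Rightarrow$ (iv) in Proposition \ref{prop-phi-convex}.
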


\begin{definition}\label{def-local-phi-convex}
The set $\cald$ is called \emph{locally $\varphi$-convex} if for each $x \in \cald$ there exists a closed neighborhood $C \subset H$ of $x$ such that $\cald \cap C$ is $\varphi$-convex.
\end{definition}

Let us recall the following well-known result about convex sets.

\begin{lemma}\label{lemma-dist-fct}
Suppose that the closed subset $\cald$ is convex, and let $x_0 \in H$ be arbitrary. Then for all $x \in \cald$ the following statements are equivalent:
\begin{enumerate}
\item[(i)] $\| x_0 - x \| = d_{\cald}(x_0)$.

\item[(ii)] $\la x_0-x, y-x \ra \leq 0$ for all $y \in \cald$.
\end{enumerate}
\end{lemma}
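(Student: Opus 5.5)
The plan is to prove the two implications separately, using the nearest-point characterization in Hilbert space. The basic fact is that $\|x_0 - x\| = d_{\cald}(x_0)$ holds if and only if $\|x_0 - x\| \le \|x_0 - y\|$ for all $y \in \cald$, since $x \in \cald$ always gives $d_{\cald}(x_0) \le \|x_0 - x\|$.

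For (ii) $\Rightarrow$ (i), fix $y \in \cald$ and expand
\begin{align*}
\|x_0 - y\|^2 = \|x_0 - x\|^2 - 2\la x_0 - x, y - x\ra + \|y - x\|^2.
\end{align*}
By (ii) the middle term is nonnegative, so $\|x_0 - y\| \ge \|x_0 - x\|$. Taking the infimum over $y \in \cald$ yields (i). This direction does not use convexity at all.

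For (i) $\Rightarrow$ (ii), convexity is essential. Fix $y \in \cald$. For $t \in (0,1]$ the point $x + t(y - x)$ lies in $\cald$, so (i) gives
\begin{align*}
\|x_0 - x\|^2 \le \|x_0 - x - t(y - x)\|^2 = \|x_0 - x\|^2 - 2t\la x_0 - x, y - x\ra + t^2\|y - x\|^2.
\end{align*}
Cancelling $\|x_0 - x\|^2$ and dividing by $2t > 0$ leaves
\begin{align*}
\la x_0 - x, y - x\ra \le \tfrac{t}{2}\|y - x\|^2.
\end{align*}
Letting $t \downarrow 0$ then gives (ii).

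I do not expect a real obstacle here. The only point needing care is to restrict to $t \in (0,1]$, so that the segment point $x + t(y - x)$ stays in $\cald$ and the convexity assumption applies.
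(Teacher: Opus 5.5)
Your argument is correct and complete. The expansion of $\|x_0-y\|^2$ gives (ii) $\Rightarrow$ (i) without using convexity, and the one-sided variation along the segment $x+t(y-x)\in\cald$, $t\in(0,1]$, followed by $t\downarrow 0$ gives (i) $\Rightarrow$ (ii).

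The paper does not prove this lemma; it simply recalls it as a well-known fact about convex sets. What you wrote is the standard proof of this variational characterization of the nearest point. Closedness of $\cald$ is not needed for the equivalence itself, since $x\in\cald$ is given.
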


The next result shows that convex sets are indeed examples of $\varphi$-convex sets.

\begin{lemma}\label{lemma-convex-is-varphi-convex}
Suppose that the closed subset $\cald$ is convex. Then for all $x,y \in \cald$ and $u \in \caln_{\cald}^p(x)$ we have $\la u,y-x \ra \leq 0$.
\end{lemma}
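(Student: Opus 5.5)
The approach is to reduce the statement to the characterization of proximal normals already established in Proposition \ref{prop-normal-2}, combined with the elementary projection characterization from Lemma \ref{lemma-dist-fct}.

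Fix $x, y \in \cald$ and $u \in \caln_{\cald}^p(x)$. If $u = 0$ there is nothing to prove. Otherwise, by Lemma \ref{lemma-prox-normal} (third statement) there exists $t > 0$ such that $d_{\cald}(x + tu) = t\|u\| = \|(x+tu) - x\|$. Thus $x$ is a nearest point of $\cald$ to the point $x_0 := x + tu$.

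Since $\cald$ is closed and convex, Lemma \ref{lemma-dist-fct}, implication (i) $\Rightarrow$ (ii), applies to $x_0$ and gives $\la x_0 - x, y - x \ra \leq 0$. Substituting $x_0 - x = tu$ yields $t \la u, y-x\ra \le 0$. Dividing by $t > 0$ gives $\la u, y - x \ra \le 0$, as claimed.

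Should one prefer not to rely on Lemma \ref{lemma-dist-fct}, which is stated without proof, there is a self-contained route. By Proposition \ref{prop-normal-2} there is $t>0$ with
\begin{align*}
\la u, z-x\ra \le \frac{1}{2t}\|z-x\|^2 \quad \text{for all } z \in \cald.
\end{align*}
Convexity gives $z = x + s(y-x) \in \cald$ for $s\in(0,1]$. Inserting this yields
\begin{align*}
s\la u,y-x\ra \le \frac{s^2}{2t}\|y-x\|^2 ,
\end{align*}
and dividing by $s$ and letting $s\downarrow 0$ gives the claim.

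There is no real obstacle here. The only point requiring care is remembering that the definition of $\caln_{\cald}^p(x)$ involves an unspecified scaling $t$, which is why the inequality has to be divided by $t$ or by $s$.
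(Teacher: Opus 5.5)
Your main argument is correct and matches the paper's proof essentially verbatim: you take $t>0$ with $d_{\cald}(x+tu)=t\|u\|$ from Lemma \ref{lemma-prox-normal}, apply Lemma \ref{lemma-dist-fct} to $x_0=x+tu$, and divide by $t$. Your alternative route is also correct; it inserts $x+s(y-x)$ into the inequality of Proposition \ref{prop-normal-2} and lets $s\downarrow 0$, and it is a valid self-contained substitute for the unproved Lemma \ref{lemma-dist-fct}.
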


\begin{proof}
Let $x \in \cald$ and $u \in \caln_{\cald}^p(x)$ be arbitrary. By Lemma \ref{lemma-prox-normal} we have $t \| u \| = d_{\cald}(x+tu)$ for some $t > 0$. Hence, setting $x_0 := x+tu$ we have $\| x_0 - x \| = d_{\cald}(x_0)$. Thus, by Lemma \ref{lemma-dist-fct} we obtain
\begin{align*}
\la u,y-x \ra = \frac{1}{t} \la x_0-x,y-x \ra \leq 0 \quad \text{for all $y \in \cald$,}
\end{align*}
completing the proof.
\end{proof}

\begin{proposition}\label{prop-phi-convex}
Consider the following four statements:
\begin{itemize}
\item[(i)] $\cald$ is convex.

\item[(ii)] For each $x \in \cald$ there exists a closed neighborhood $C \subset H$ of $x$ such that $\cald \cap C$ is convex.

\item[(iii)] $\cald$ is locally $\varphi$-convex.

\item[(iv)] We have $\caln_{\cald}^p(x) = \caln_{\cald}^c(x)$ for each $x \in \cald$.
\end{itemize}
Then we have the implications (i) $\Rightarrow$ (ii) $\Rightarrow$ (iii) $\Rightarrow$ (iv).
\end{proposition}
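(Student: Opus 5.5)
The plan is to prove the three implications separately; the main work lies in (iii) $\Rightarrow$ (iv).

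(i) $\Rightarrow$ (ii) is trivial: take $C := H$, which is a closed neighborhood of every $x$.

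(ii) $\Rightarrow$ (iii): fix $x \in \cald$ and a closed neighborhood $C$ with $\cald \cap C$ convex. By Lemma \ref{lemma-convex-is-varphi-convex}, applied to the closed convex set $\cald \cap C$, we get $\la u, y-x' \ra \leq 0$ for all $x',y \in \cald \cap C$ and $u \in \caln_{\cald \cap C}^p(x')$. Hence the characterization in Lemma \ref{lemma-phi-conv-char} holds with $\psi \equiv 0$, so $\cald \cap C$ is $\varphi$-convex.

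(iii) $\Rightarrow$ (iv): fix $x \in \cald$ and a closed neighborhood $C$ with $\cald \cap C$ $\varphi$-convex. By Proposition \ref{prop-normal-local} all the normal cones of $\cald$ and of $\cald \cap C$ at $x$ coincide. We may therefore replace $\cald$ by $\cald \cap C$ and assume $\cald$ itself is $\varphi$-convex with some continuous $\varphi : \cald \times \cald \to \bbr_+$. The inclusion $\caln_{\cald}^p(x) \subset \caln_{\cald}^c(x)$ holds by Proposition \ref{prop-normal-cones-inclusion}. For the converse I will close the chain
\begin{align*}
\caln_{\cald}^c(x) \subset \caln_{\cald}^{\sigma}(x) \subset \caln_{\cald}^p(x).
\end{align*}

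\emph{Second inclusion.} Let $u \in \caln_{\cald}^{\sigma}(x)$. By continuity of $\varphi$ there is $r > 0$ with $\varphi(x,y) \leq \varphi(x,x) + 1$ for all $y \in \cald$ with $\| y-x \| < r$. For such $y$, $\varphi$-convexity gives $\la u, y-x \ra \leq (\varphi(x,x)+1) \| u \| \, \| y-x \|^2$. For $y \in \cald$ with $\| y-x \| \geq r$, Cauchy--Schwarz gives $\la u, y-x \ra \leq \| u \| \, \| y-x \| \leq \frac{\| u \|}{r} \| y-x \|^2$. So \eqref{inner-prod-normal-prox} holds for all $y \in \cald$ with some $t > 0$, and Proposition \ref{prop-normal-2} yields $u \in \caln_{\cald}^p(x)$.

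\emph{First inclusion.} By Proposition \ref{prop-normal-cone-limits}, $\caln_{\cald}^c(x) = \overline{\rm co} \, R_{\cald}(x)$. Since $\caln_{\cald}^{\sigma}(x)$ is a closed convex cone, it suffices to show $R_{\cald}(x) \subset \caln_{\cald}^{\sigma}(x)$. Take $u_n \in \caln_{\cald}^p(x_n) \subset \caln_{\cald}^{\sigma}(x_n)$ with $x_n \to x$ and $u_n \overset{\sigma}{\to} u$, and let $M := \sup_n \| u_n \| < \infty$ (uniform boundedness principle). For fixed $y \in \cald$, $\varphi$-convexity gives
\begin{align*}
\la u_n, y-x_n \ra \leq \varphi(x_n,y) \, M \, \| y-x_n \|^2 .
\end{align*}
On the left, $\la u_n, y-x_n \ra = \la u_n, y-x \ra + \la u_n, x-x_n \ra \to \la u,y-x \ra$, since the second term is bounded by $M\| x-x_n \| \to 0$. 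On the right we use continuity of $\varphi$. Letting $n \to \infty$ gives $\la u, y-x \ra \leq M \varphi(x,y) \| y-x \|^2$. Again by continuity of $\varphi$, the right-hand side is $o(\| y-x \|)$ as $y \to x$, so $u \in \caln_{\cald}^1(x) = \caln_{\cald}^{\sigma}(x)$ by Proposition \ref{prop-normal-1}.

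I expect the main obstacle to be this passage from proximal normals at nearby points to weak limits at $x$. It hinges on the uniformity supplied by the continuity of $\varphi$ jointly in both variables, on the weak-times-strong convergence in the pairing, and on the identity $\caln^1 = \caln^{\sigma}$.
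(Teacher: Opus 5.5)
Your proof is correct. For (i) $\Rightarrow$ (ii) and (ii) $\Rightarrow$ (iii) you argue exactly as the paper does.

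For (iii) $\Rightarrow$ (iv) your route differs. The paper only localizes via Proposition \ref{prop-normal-local} and then invokes \cite[Prop.~6.2]{Colombo}. You instead prove the implication from the paper's own toolkit, in two steps:
\begin{enumerate}
\item The near/far splitting, combined with Proposition \ref{prop-normal-2}, shows that $\varphi$-convexity upgrades every element of $\caln_{\cald}^{\sigma}(x)$ to a proximal normal.
\item Joint continuity of $\varphi$ and boundedness of weakly convergent sequences show that the $\varphi$-convexity inequality passes to the limits defining $R_{\cald}(x)$. The proximal normal formula (Proposition \ref{prop-normal-cone-limits}) then yields $\caln_{\cald}^c(x) \subset \caln_{\cald}^{\sigma}(x)$.
\end{enumerate}

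Passing through $\caln_{\cald}^{\sigma}(x)$ rather than working with $\caln_{\cald}^p(x)$ directly is the right choice. The set $\caln_{\cald}^p(x)$ is not known a priori to be closed and convex, so you could not take the closed convex hull inside it.

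The paper's citation buys brevity. Your route makes explicit exactly which properties are used: continuity of $\varphi$ in both variables, weak--strong continuity of the pairing, and $\caln_{\cald}^1 = \caln_{\cald}^{\sigma}$. It still rests, however, on the externally sourced proximal normal formula behind Proposition \ref{prop-normal-cone-limits}.
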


\begin{proof}
(i) $\Rightarrow$ (ii): This implication is obvious.

\noindent(ii) $\Rightarrow$ (iii): In view of Lemma \ref{lemma-convex-is-varphi-convex}, this follows from Lemma \ref{lemma-phi-conv-char} by choosing $\psi \equiv 0$.

\noindent(iii) $\Rightarrow$ (iv): Taking into account Proposition \ref{prop-normal-local}, this is a consequence of \cite[Prop. 6.2]{Colombo}.
\end{proof}

\begin{proposition}\label{prop-a-phi-convex}
Suppose that $\cald$ is locally $\varphi$-convex. Then for a mapping $a : H \to H$ the following statements are equivalent:
\begin{enumerate}
\item[(i)] We have $\la u,a(x) \ra \leq 0$ for all $x \in \cald$ and $u \in \caln_{\cald}^{1,\pr}(x)$.

\item[(ii)] We have $\la u,a(x) \ra \leq 0$ for all $x \in \cald$ and $u \in \caln_{\cald}^c(x)$.

\item[(iii)] We have $a(x) \in T_{\cald}^c(x)$ for all $x \in \cald$.
\end{enumerate}
\end{proposition}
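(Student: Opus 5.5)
The implications (ii) $\Leftrightarrow$ (iii) and (iii) $\Rightarrow$ (i) need no convexity, so I would dispose of them first. Indeed, $T_{\cald}^c(x) = \caln_{\cald}^c(x)^{\circ}$ by Lemma \ref{lemma-normal-cones}. This gives (ii) $\Leftrightarrow$ (iii) directly from the definition of the polar cone in Remark \ref{rem-polar-cone}. Since $\caln_{\cald}^{1,\pr}(x) \subset \caln_{\cald}^c(x)$ by Proposition \ref{prop-normal-cones-inclusion}, (ii) trivially implies (i). The substance of the statement is therefore (i) $\Rightarrow$ (ii), and here, unlike in Proposition \ref{prop-normal-inward-a-cont}, no continuity of $a$ is available. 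This is exactly where local $\varphi$-convexity must replace the limiting argument over $R_{\cald}(x)$.

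The plan for (i) $\Rightarrow$ (ii) is as follows.

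\textbf{Step 1.} Fix $x \in \cald$ and $u \in \caln_{\cald}^c(x)$. By Definition \ref{def-local-phi-convex}, choose a closed neighborhood $C$ of $x$ such that $\cald \cap C$ is $\varphi$-convex.

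\textbf{Step 2.} By the implication (iii) $\Rightarrow$ (iv) of Proposition \ref{prop-phi-convex}, applied to the $\varphi$-convex set $\cald \cap C$, we get $\caln_{\cald \cap C}^p(x) = \caln_{\cald \cap C}^c(x)$. Proposition \ref{prop-normal-local} then yields $\caln_{\cald}^c(x) = \caln_{\cald \cap C}^c(x) = \caln_{\cald \cap C}^p(x) = \caln_{\cald}^p(x)$. Hence $u \in \caln_{\cald}^p(x)$.

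\textbf{Step 3.} Write $u = \lambda v$ with $\lambda \geq 0$ and $v \in \caln_{\cald}^{1,\pr}(x)$. Then $\la u, a(x) \ra = \lambda \la v, a(x) \ra \leq 0$ by (i), which is (ii).

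The only genuinely nontrivial point is the identity $\caln_{\cald}^p(x) = \caln_{\cald}^c(x)$ for locally $\varphi$-convex sets. This is already packaged in Proposition \ref{prop-phi-convex} via \cite[Prop. 6.2]{Colombo}. The main care needed is to confirm that the localization is legitimate: Proposition \ref{prop-phi-convex} requires (iv) for every point of a locally $\varphi$-convex set, and the statement itself is local. To use it, I would apply Proposition \ref{prop-phi-convex} to the globally $\varphi$-convex closed set $\cald \cap C$ at the point $x$, which lies in $\cald \cap C$, and then transfer the result back to $\cald$ using Proposition \ref{prop-normal-local}, which is valid for both $\caln^p$ and $\caln^c$. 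No assumptions on $a$ beyond those stated are needed, since all steps are pointwise in $x$.
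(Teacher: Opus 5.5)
Your proposal is correct and follows the paper's proof: (ii) $\Leftrightarrow$ (iii) via $T_{\cald}^c(x) = \caln_{\cald}^c(x)^{\circ}$ (Lemma \ref{lemma-normal-cones}), (ii) $\Rightarrow$ (i) by the inclusion $\caln_{\cald}^{1,\pr}(x) \subset \caln_{\cald}^c(x)$, and (i) $\Rightarrow$ (ii) by using Proposition \ref{prop-phi-convex} to get $\caln_{\cald}^c(x) = \caln_{\cald}^p(x)$ and then writing $u = \lambda v$. Your explicit localization through $\cald \cap C$ and Proposition \ref{prop-normal-local} only spells out what the proof of Proposition \ref{prop-phi-convex} already does, so you could equally apply that proposition to $\cald$ directly, as the paper does.
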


\begin{proof}
(i) $\Rightarrow$ (ii): Let $x \in \cald$ and $u \in \caln_{\cald}^c(x)$ be arbitrary. By Proposition \ref{prop-phi-convex} we have $u \in \caln_{\cald}^p(x)$. Hence, there are $v \in \caln_{\cald}^{1,\pr}(x)$ and $\lambda \geq 0$ such that $u = \lambda v$, and it follows that $\la u,a(x) \ra = \lambda \la v,a(x) \ra \leq 0$.

\noindent(ii) $\Rightarrow$ (i): This implication is obvious.

\noindent(ii) $\Leftrightarrow$ (iii): This equivalence is a consequence of Lemma \ref{lemma-normal-cones}.
\end{proof}

\section{Submanifolds with boundary}\label{app-submanifolds}

In this appendix we provide the required results about submanifolds with boundary. For further details, we refer to \cite[Sec. 3]{FTT-appendix}. Let $H$ be a Hilbert space and let $m \in \mathbb{N}$ be a positive integer. Throughout this section, we will use the notation
\begin{align*}
\mathbb{R}_+^m = \mathbb{R}_+ \times \mathbb{R}^{m-1} = \{ y \in
\mathbb{R}^m : y_1 \geq 0 \}.
\end{align*}
We consider the relative topology on $ \mathbb{R}^m_+ $. Let $V$ be an open subset in $\mathbb{R}_+^m$, i.e., there exists an open set $ \tilde{V} \subset \mathbb{R}^m $ such that $ \tilde{V} \cap \mathbb{R}_+^m = V $. We denote by
\begin{align*}
\partial V = \{ y \in V : y_1 = 0 \}
\end{align*}
the set of all boundary points of $ V $. Let $k \in \bbn \cup \{ \infty \}$ be arbitrary.

\begin{definition}\label{def-Ck-map}
A map $ \phi: V \subset \mathbb{R}^m_+ \to H $ is
called a \emph{$ C^k $-map}, if there is an open set $ \tilde{V} \subset
\mathbb{R}^m $ together with a $ C^k $-map $ \tilde{\phi}: \tilde{V}
\to H $ such that $ \tilde{V} \cap \mathbb{R}_+^m = V $ and $
\tilde{\phi}|_{V} = \phi $.
\end{definition}

For a $C^k$-map $ \phi: V \subset \mathbb{R}^m_+ \to H $ and $y \in V$ we define the derivative $D \phi(y) := D \tilde{\phi}(y)$.
Note that this definition does not depend on the choice of $\tilde{\phi}$.

\begin{definition}
A nonempty subset $\mathcal{M} \subset H$ is called an \emph{$m$-dimensional $C^k$-submanifold with boundary of $H$} if for every $x \in
\mathcal{M}$ there exist an open neighborhood $U \subset H$ of $x$, a subset $V \subset \mathbb{R}^m_+$ which is open with respect to the relative topology, and a mapping $\phi \in C^k(V,H)$ such
that:
\begin{enumerate}
\item The mapping $\phi : V \rightarrow U \cap \mathcal{M}$ is a homeomorphism.
\item $D \phi(y) \in L(\bbr^m,H)$ is one-to-one for all $y \in V$.
\end{enumerate}
The mapping $\phi$ is called a \emph{local parametrization} of $\mathcal{M}$ around $x$.
\end{definition}

In what follows, let $\mathcal{M}$ be an $m$-dimensional $C^k$-submanifold with boundary of $H$.

\begin{definition}
The \emph{boundary} $\partial \calm$ is defined as the set of all points $x \in \calm$ such that $\phi^{-1}(x) \in \partial V$ for some local parametrization $\phi : V \to U \cap \calm$ around $x$.
\end{definition}

\begin{remark}
The boundary $\partial \calm$ is a submanifold without boundary of dimension $m-1$, and parametrizations of $ \partial \mathcal{M} $ are provided by restricting parametrizations $ \phi: V \to U \cap \calm $ to
the boundary $ \partial V $.
\end{remark}

By \cite[Lemma 6.1.1]{fillnm} and \cite[Lemma 3.3]{FTT-appendix} the following definitions of the tangent spaces $T_x \mathcal{M}$ and ${(T_x \mathcal{M})}_+$ do not depend on the choice of the parametrization.

\begin{definition}
Let $x \in \mathcal{M}$ be arbitrary, and let $\phi : V \subset \mathbb{R}_+^m \rightarrow U \cap \mathcal{M}$ be a local parametrization around $x$.
\begin{enumerate}
\item The \emph{tangent space} to $\mathcal{M}$ at $x$ is the subspace
\begin{align*}
T_x \mathcal{M} := D \phi(y) \mathbb{R}^m, \quad y = \phi^{-1}(x) \in V.
\end{align*}
\item For $ x \in \partial \mathcal{M} $ we can distinguish a half
space in $ T_x \mathcal{M} $, namely the closed convex cone of all inward pointing directions in $\mathcal{M}$, given by
\begin{align*}
{(T_x \mathcal{M})}_+ := D \phi(y) \mathbb{R}^m_+, \quad y =
\phi^{-1}(x) \in \partial V.
\end{align*}
\end{enumerate}
\end{definition}

\begin{remark}\label{rem-tang-boundary}
Let $x \in \mathcal{M}$ be arbitrary and let $\phi : V \subset \mathbb{R}_+^m \rightarrow U \cap \mathcal{M}$ be a local parametrization around $x$. Then we have
\begin{align*}
T_x \partial \mathcal{M} = D \phi(y) \partial \mathbb{R}_+^m, \quad y = \phi^{-1}(x) \in \partial V.
\end{align*}
In particular, we see that
\begin{align*}
T_x \partial \mathcal{M} &= (T_x \mathcal{M})_+ \cap - (T_x \mathcal{M})_+ \subset (T_x \mathcal{M})_+.
\end{align*}
\end{remark}

From now on, we assume that the submanifold $\calm$ is closed as a subset of $H$.

\begin{lemma}\label{lemma-submanifold-tang-1}
For each $x \in \calm$ the following statements are true:
\begin{enumerate}
\item If $x \in \calm \setminus \partial \calm$, then we have $T_x \calm \subset T_{\calm}^c(x)$.

\item If $x \in \partial \calm$, then we have $(T_x \calm)_+ \subset T_{\calm}^c(x)$.
\end{enumerate}
\end{lemma}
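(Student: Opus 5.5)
The plan is to use Proposition \ref{prop-cones}(1) directly. That is, we show that for each tangent vector $v$ (in $T_x\calm$ for an interior point, in $(T_x\calm)_+$ for a boundary point), every pair of sequences $t_n \to 0^+$ and $x_n \to x$ with $x_n \in \calm$ admits $v_n \to v$ with $x_n + t_n v_n \in \calm$. Fix a local parametrization $\phi : V \to U \cap \calm$ around $x$ with $y := \phi^{-1}(x)$, and write $v = D\phi(y) w$. Here $w \in \bbr^m$ is arbitrary in case (1), and $w \in \bbr^m_+$ in case (2). Since $\phi$ is a homeomorphism onto $U \cap \calm$ and $U$ is open, we have $x_n \in U \cap \calm$ for large $n$. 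Hence $y_n := \phi^{-1}(x_n) \in V$ is well defined, and $y_n \to y$ by continuity of $\phi^{-1}$.

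Next, set $z_n := y_n + t_n w$ and $v_n := (\phi(z_n) - \phi(y_n))/t_n$, so that $x_n + t_n v_n = \phi(z_n) \in \calm$. The points $z_n$ lie in $V$ for large $n$, for the following reasons:
\begin{itemize}
\item In case (1), $y \notin \partial V$, so $y_1 > 0$ and a small ball around $y$ in $\bbr^m$ lies in $\tilde V \cap \{ y_1 > 0 \} \subset V$. Since both $y_n \to y$ and $z_n \to y$, they eventually lie in this ball.
\item In case (2), $w_1 \geq 0$ and $(y_n)_1 \geq 0$ give $(z_n)_1 \geq 0$. Moreover $z_n \to y \in \tilde V$ with $\tilde V$ open, so $z_n \in \tilde V \cap \bbr^m_+ = V$.
\end{itemize}
We must also check that $\phi(z_n)$ is well defined as a point of $\calm$; this holds because $\phi$ maps $V$ into $\calm$.

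For the convergence $v_n \to v$, we use the $C^1$ extension $\tilde\phi$ on $\tilde V$. By the mean value inequality on a small convex ball $B \subset \tilde V$ around $y$ (containing the segments $[y_n, z_n]$ for large $n$), we get
\begin{align*}
\| v_n - D\phi(y) w \| \leq \sup_{\xi \in [y_n, z_n]} \| D\tilde\phi(\xi) - D\tilde\phi(y) \| \, \| w \|.
\end{align*}
The right-hand side tends to $0$ by continuity of $D\tilde\phi$, since $[y_n, z_n]$ shrinks to $y$. Thus $v_n \to v$, and Proposition \ref{prop-cones}(1) yields $v \in T^c_\calm(x)$ in both cases. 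Finitely many initial indices can be handled by choosing $v_n$ arbitrarily, for instance $v_n := 0$ when $x_n \in \calm$, since only the tail matters for the limit.

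The main subtlety is ensuring that $x_n$ eventually lies in the chart domain, so that all nearby points of $\calm$ are captured by one parametrization. This is exactly the homeomorphism property of $\phi$ onto the relatively open set $U \cap \calm$. The remaining point is the boundary case, where the half-space constraint $(z_n)_1 \geq 0$ requires $w \in \bbr^m_+$; this is why only inward-pointing directions are obtained there.
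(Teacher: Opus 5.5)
Your proof is correct and follows the paper's argument essentially step for step. Both verify the sequential characterization of $T^c_{\calm}(x)$ from Proposition \ref{prop-cones}, pull $x_n$ back to $y_n = \phi^{-1}(x_n)$, and take $v_n = (\phi(y_n + t_n w) - x_n)/t_n$. The differences are cosmetic: you bound $v_n - D\phi(y)w$ with the mean value inequality where the paper uses Taylor's theorem with a continuous remainder $R$, and you treat the membership $y_n + t_n w \in V$ (interior versus boundary) and the finitely many initial indices more explicitly than the paper's ``we may assume''.
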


\begin{proof}
Let $v \in T_x \calm$ be arbitrary. If $x \in \partial \calm$, then we even assume that $v \in (T_x \calm)_+$. Let $\phi : V \to U \cap \calm$ be a local parametrization around $x$. We set $y := \phi^{-1}(x) \in V$. Then $\phi$ extends to a mapping $\phi : \tilde{V} \to H$ of class $C^k$ such that $\tilde{V} \cap \bbr_+^m = V$, and we may assume that $\tilde{V} \subset \bbr^m$ is an open, convex neighborhood of $y$. We set $w := D \phi(y)^{-1} v \in \bbr^m$. If $x \in \partial \calm$, then we have $y \in \partial V$ and $w \in \bbr_+^m$.

We wish to show that $v \in T_{\calm}^c(x)$ by using Proposition \ref{prop-cones}. For this purpose, let $(t_n)_{n \in \bbn} \subset (0,\infty)$ with $t_n \to 0^+$ be arbitrary, and let $(x_n)_{n \in \bbn} \subset \calm$ with $x_n \to x$ be arbitrary. Since $U$ is an open neighborhood of $x$, we may assume that $x_n \in U \cap \calm$ for each $n \in \bbn$. We define the sequence $(y_n)_{n \in \bbn} \subset V$ as $y_n := \phi^{-1}(x_n) \in V$. Then we have $y_n \to y$, and hence $y_n + t_n w \to y$. Recall that in case $x \in \partial \calm$ we have $w \in \bbr_+^m$. Therefore, and since $V \subset \bbr_+^m$ is an open neighborhood of $y$ with respect to the relative topology, we may assume that $y_n + t_n w \in V$ for each $n \in \bbn$.

Since $\tilde{V}$ is open and convex, we may apply Taylor's theorem (see, e.g. \cite[Thm. 2.4.15]{Abraham}), which provides the existence of a continuous mapping $R : \tilde{V} \times \tilde{V} \to L(\bbr^m,H)$ such that $R(z,z) = 0$ for all $z \in \tilde{V}$, and for each $n \in \bbn$ we have
\begin{align*}
\phi(y_n + t_n w) &= \phi(y_n) + D \phi(y_n) t_n w + R(y_n,y_n+t_n w) \cdot t_n w
\\ &= x_n + t_n \big( D \phi(y_n) w + R(y_n,y_n+t_n w) w \big).
\end{align*}
We define the sequence $(v_n)_{n \in \bbn} \subset H$ as
\begin{align*}
v_n := D \phi(y_n) w + R(y_n,y_n+t_n w) w, \quad n \in \bbn.
\end{align*}
Then we have
\begin{align*}
\lim_{n \to \infty} v_n = D \phi(y)w = v,
\end{align*}
and for each $n \in \bbn$ we have
\begin{align*}
x_n + t_n v_n = \phi(y_n + t_n w) \in \calm.
\end{align*}
Consequently, by Proposition \ref{prop-cones} we deduce that $v \in T_{\calm}^c(x)$.
\end{proof}

\begin{lemma}\label{lemma-submanifold-tang-2}
For each $x \in \calm$ the following statements are true:
\begin{enumerate}
\item If $x \in \calm \setminus \partial \calm$, then we have $T_{\calm}^{\sigma}(x) \subset T_x \calm$.

\item If $x \in \partial \calm$, then we have $T_{\calm}^{\sigma}(x) \subset (T_x \calm)_+$.
\end{enumerate}
\end{lemma}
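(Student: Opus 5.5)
We prove both inclusions at once by pulling a weak tangent direction back to parameter space through a local parametrization. Let $v \in T_{\calm}^{\sigma}(x)$. Then there are $t_n \to 0^+$ and $v_n \overset{\sigma}{\to} v$ with $x_n := x + t_n v_n \in \calm$. Since weakly convergent sequences are bounded (uniform boundedness principle), we have $\| x_n - x \| \leq t_n \sup_k \| v_k \| \to 0$, so $x_n \to x$ in norm. Fix a local parametrization $\phi : V \to U \cap \calm$ around $x$ with $y := \phi^{-1}(x)$, and extend $\phi$ to a $C^k$-map on an open convex set $\tilde V \subset \bbr^m$ as in the proof of Lemma \ref{lemma-submanifold-tang-1}. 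For large $n$ we then have $x_n \in U \cap \calm$. Because $\phi$ is a homeomorphism onto $U \cap \calm$, the points $y_n := \phi^{-1}(x_n) \in V$ satisfy $y_n \to y$.

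The key step is to show that the finite-dimensional quotients $w_n := (y_n - y)/t_n$ are bounded. By Taylor's theorem (as in Lemma \ref{lemma-submanifold-tang-1}) we have
\begin{align*}
t_n v_n = \phi(y_n) - \phi(y) = \big( D\phi(y) + R(y,y_n) \big)(y_n - y), \quad R(y,y_n) \to 0.
\end{align*}
Since $D\phi(y) \in L(\bbr^m,H)$ is injective on a finite-dimensional space, there is $c > 0$ with $\| D\phi(y) w \| \geq c \| w \|$ for all $w \in \bbr^m$. Hence for large $n$ we get $\| t_n v_n \| \geq \frac{c}{2} \| y_n - y \|$, so $\| w_n \| \leq \frac{2}{c} \sup_k \| v_k \|$. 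By compactness in $\bbr^m$ we may pass to a subsequence with $w_n \to w \in \bbr^m$. Dividing the Taylor identity by $t_n$ gives
\begin{align*}
v_n = D\phi(y) w_n + R(y,y_n) w_n \to D\phi(y) w
\end{align*}
in norm. Weak limits are unique, so $v = D\phi(y) w \in T_x \calm$. This proves the first statement.

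For the second statement, let $x \in \partial \calm$, so $y \in \partial V$ and $y_1 = 0$. Since $y_n \in V \subset \bbr_+^m$, we have $(w_n)_1 = (y_n)_1 / t_n \geq 0$, and therefore $w_1 \geq 0$, i.e.\ $w \in \bbr_+^m$. Consequently $v = D\phi(y) w \in D\phi(y) \bbr_+^m = (T_x \calm)_+$.

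The main obstacle is the boundedness of $(w_n)$, which is where the infinite dimensionality could cause trouble. It is overcome by the injectivity of $D\phi(y)$ on $\bbr^m$ together with the fact that weak convergence still gives a norm bound on $(v_n)$. Once $(w_n)$ is bounded, the finite dimension of the parameter space upgrades weak convergence of $(v_n)$ to norm convergence along a subsequence.
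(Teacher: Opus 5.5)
Your proof is correct and follows the paper's argument: you pull $x_n = x + t_n v_n$ back through a local parametrization, set $w_n = (y_n - y)/t_n$, Taylor expand $\phi$ at $y$, and then use finite dimensionality of the parameter space together with the sign of $(w_n)_1$ in the boundary case. Your explicit bound $\|w_n\| \leq \frac{2}{c}\sup_k \|v_k\|$, obtained from the injectivity of $D\phi(y)$, is a welcome addition: the paper's passage from $v_n$ to $z_n = D\phi(y) w_n$ needs $R(y,y+t_n w_n) w_n \to 0$, and so relies on this boundedness only implicitly. The remaining difference, extracting a convergent subsequence of $(w_n)$ in $\bbr^m$ rather than invoking closedness of $T_x \calm$ and $(T_x \calm)_+$ in $H$, is cosmetic.
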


\begin{proof}
Let $v \in T_x \calm$ be arbitrary, and let $\phi : V \to U \cap \calm$ be a local parametrization around $x$. We set $y := \phi^{-1}(x) \in V$. Then $\phi$ extends to a mapping $\phi : \tilde{V} \to H$ of class $C^k$ such that $\tilde{V} \cap \bbr_+^m = V$, and we may assume that $\tilde{V} \subset \bbr^m$ is an open, convex neighborhood of $y$. If $x \in \partial \calm$, then we have $y \in \partial V$.

By Definition \ref{def-cones} there are sequences $(t_n)_{n \in \bbn} \subset (0,\infty)$ with $t_n \to 0^+$ and $(v_n)_{n \in \bbn} \subset H$ with $v_n \overset{\sigma}{\to} v$ such that $x_n := x + t_n v_n \in \calm$ for each $n \in \bbn$. The sequence $(v_n)_{n \in \bbn}$ is bounded, and hence $t_n v_n \to 0$, implying that $x_n \to x$. Thus, defining the sequence $(y_n)_{n \in \bbn} \subset V$ as $y_n := \phi^{-1}(x_n)$ for each $n \in \bbn$, we obtain $y_n \to y$. Now, we define the sequence $(w_n)_{n \in \bbn} \subset \bbr^m$ as
\begin{align*}
w_n := \frac{y_n - y}{t_n}, \quad n \in \bbn.
\end{align*}
If $x \in \partial \calm$, then we have $y \in \partial V$, and hence $w_n \in \bbr_+^m$ for each $n \in \bbn$. Moreover, for each $n \in \bbn$ we have
\begin{align*}
x + t_n v_n = x_n = \phi(y_n) = \phi(y + t_n w_n).
\end{align*}
Since $\tilde{V}$ is open and convex, we may apply Taylor's theorem (see, e.g. \cite[Thm. 2.4.15]{Abraham}), which provides the existence of a continuous mapping $R : \tilde{V} \times \tilde{V} \to L(\bbr^m,H)$ such that $R(z,z) = 0$ for all $z \in \tilde{V}$, and for each $n \in \bbn$ we have
\begin{align*}
\phi(y + t_n w_n) &= \phi(y) + D \phi(y) (t_n w_n) + R(y,y+t_n w_n) \cdot t_n w_n
\\ &= x + t_n \big( D \phi(y) w_n + R(y,y+t_n w_n) w_n \big).
\end{align*}
Now, it follows that
\begin{align*}
v_n = \frac{\phi(y+t_n w_n) - x}{t_n} = D \phi(y) w_n + R(y,y+t_n w_n) w_n, \quad n \in \bbn.
\end{align*}
We define $z_n := D \phi(y) w_n$ for each $n \in \bbn$. Then we have $z_n \in T_x \calm$ for each $n \in \bbn$, and in case $x \in \partial \calm$ we even have $z_n \in (T_x \calm)_+$ for each $n \in \bbn$. Furthermore, we have $t_n w_n = y_n - y \to 0$, and therefore we obtain
\begin{align*}
v = \sigma \text{-} \lim_{n \to \infty} v_n = \sigma \text{-} \lim_{n \to \infty} z_n = \lim_{n \to \infty} z_n.
\end{align*}
Since $T_x \calm$ and $(T_x \calm)_+$ are closed subsets of $H$, we deduce that $v \in T_x \calm$, and in case $x \in \partial \calm$ we even have $v \in (T_x \calm)_+$.
\end{proof}

The following auxiliary result is a slight reformulation of \cite[Lemma 3.7]{FTT-appendix}. For a subset $A \subset H$ we use the notation
\begin{align*}
A^- := \{ y \in H : \la y,x \ra \leq 0 \text{ for all } x \in A \}.
\end{align*}

\begin{lemma}
For each $x \in \partial \calm$ there exists a unique vector $n_x \in  (T_x \partial \calm)^{\perp}$ with $-n_x \in (T_x \calm)_+$ and $\| n_x \| = 1$ such that
\begin{align}\label{normal-v-id-0}
T_x \calm = T_x \partial \calm \oplus \lin \{ n_x \}.
\end{align}
Moreover, for each $x \in \partial \calm$ we have
\begin{align}\label{normal-v-id-1}
T_x \partial \calm &= T_x \calm \cap \{ n_x \}^{\perp},
\\ \label{normal-v-id-2} (T_x \calm)_+ &= T_x \calm \cap \{ n_x \}^-.
\end{align}
\end{lemma}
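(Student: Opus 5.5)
We work in a local parametrization $\phi : V \to U \cap \calm$ around $x \in \partial\calm$ with $y := \phi^{-1}(x) \in \partial V$, and transport the standard half-space structure of $\bbr^m_+$ to $T_x \calm$ via the injective linear map $A := D\phi(y) \in L(\bbr^m,H)$. Write $e_1$ for the first standard basis vector of $\bbr^m$, so that $\partial \bbr^m_+ = \{e_1\}^{\perp}$ and $\bbr^m_+ = \{ w : w_1 \geq 0 \}$. By Remark \ref{rem-tang-boundary}, $T_x\partial\calm = A\,\partial\bbr^m_+$ is an $(m-1)$-dimensional subspace of the $m$-dimensional space $T_x\calm = A\bbr^m$, and $(T_x\calm)_+ = A\bbr^m_+$.

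For existence, we take $n_x$ to be the unique unit vector in $T_x\calm \cap (T_x\partial\calm)^{\perp}$ that has the appropriate sign. The orthogonal complement of $T_x\partial\calm$ inside the $m$-dimensional space $T_x\calm$ is one-dimensional, so it is spanned by a unit vector $\pm n$. This already gives the decomposition \eqref{normal-v-id-0}.

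To fix the sign, note that $Ae_1 \notin T_x\partial\calm$ because $A$ is injective. Hence $Ae_1$ has a nonzero component along $n$, and we choose $n_x$ so that $\la n_x, Ae_1\ra < 0$. Now any $v = Aw$ can be written as $v = w_1 Ae_1 + A w'$ with $w' \in \partial\bbr^m_+$. This gives $\la n_x, v\ra = w_1 \la n_x, Ae_1\ra$, so $\la n_x, v \ra \leq 0$ if and only if $w_1 \geq 0$. This equivalence proves \eqref{normal-v-id-2}. In the same way, $\la n_x, v\ra = 0$ if and only if $w_1 = 0$, which proves \eqref{normal-v-id-1}.

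Since $n_x \in T_x\calm$ and $\la n_x, n_x \ra = 1 > 0$, we get $n_x \notin (T_x\calm)_+$ by \eqref{normal-v-id-2}. On the other hand, $\la -n_x, n_x\ra = -1 \leq 0$, so \eqref{normal-v-id-2} gives $-n_x \in (T_x\calm)_+$.

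Uniqueness follows because $(T_x\partial\calm)^{\perp}$ together with the condition $\|n\| = 1$ and \eqref{normal-v-id-0} leaves only the two candidates $\pm n$, where \eqref{normal-v-id-0} forces $n \in T_x\calm$. The candidate $+n_x$ is not in $(T_x\calm)_+$, as shown above, so the requirement $-n_x \in (T_x\calm)_+$ singles out one of the two.

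One might read the statement as allowing $n_x \notin T_x\calm$, but then \eqref{normal-v-id-0} would fail. So $n_x \in T_x\calm$ is forced, and we record this explicitly in the proof.

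The main point needing care is that the construction does not depend on the chosen parametrization. This holds because the sets $T_x\calm$, $(T_x\calm)_+$ and $T_x\partial\calm$ are themselves parametrization-independent, as noted before their definition. The vector $n_x$ is characterized purely in terms of these sets, so it is independent as well.
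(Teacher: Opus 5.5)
Your argument is correct and complete. The orthogonal complement of the hyperplane $T_x \partial \calm$ inside $T_x \calm$ is a line. Injectivity of $D\phi(y)$ gives $D\phi(y)e_1 \notin T_x\partial\calm$, which lets you fix the sign by requiring $\la n_x, D\phi(y)e_1 \ra < 0$. The identity $\la n_x, D\phi(y)w \ra = w_1 \la n_x, D\phi(y)e_1 \ra$ then yields \eqref{normal-v-id-1} and \eqref{normal-v-id-2}. From these you read off $-n_x \in (T_x\calm)_+$ and $n_x \notin (T_x\calm)_+$, and hence uniqueness. Incidentally, $-n_x \in (T_x\calm)_+ \subset T_x\calm$ already forces $n_x \in T_x\calm$, so you do not need the direct sum for that step.

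The paper gives no proof of its own here. It simply imports the statement as a reformulation of \cite[Lemma 3.7]{FTT-appendix}, so your proof is a self-contained substitute rather than a variant of an argument in the text.

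Note that your key identity is exactly Lemma \ref{lemma-normal-scalar} with $\kappa(x) = \la n_x, D\phi(y)e_1 \ra < 0$. The paper cites that lemma separately from \cite[Lemma 3.9]{FTT-appendix}. In your route this scalar representation comes first and everything else follows from it. So the later lemma, which is used in the proof of Proposition \ref{prop-submanifold-phi-regular}, comes out as a byproduct.
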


\begin{definition}
For each $x \in \partial \calm$ we call $n_x$ the \emph{outward pointing normal vector} to $\partial \calm$ at $x$.
\end{definition}

\begin{remark}
In \cite{FTT-appendix} the inward pointing normal vector $\eta_x := -n_x$ was considered.
\end{remark}

The following result shows that the submanifold $\calm$ is a so-called \emph{$\sigma$-regular} set, which means that $\caln_{\cald}^{\sigma}(x) = \caln_{\cald}^c(x)$ for each $x \in \cald$; cf. \cite[Def. 4.3]{Colombo}. Note that the decomposition \eqref{manifolds-cones-4} below refers to the direct sum decomposition $H = (T_x \calm)^{\perp} \oplus T_x \calm$ of the Hilbert space.

\begin{proposition}\label{prop-submanifold-sigma-regular}
For each $x \in \calm$ the following statements are true:
\begin{enumerate}
\item If $x \in \calm \setminus \partial \calm$, then we have
\begin{align}\label{manifolds-cones-1}
T_{\calm}^c(x) &= T_{\calm}^{\sigma}(x) = T_x \calm,
\\ \label{manifolds-cones-2} \caln_{\calm}^{\sigma}(x) &= \caln_{\calm}^c(x) = (T_x \calm)^{\perp}.
\end{align}
\item If $x \in \partial \calm$, then we have
\begin{align}\label{manifolds-cones-3}
T_{\calm}^c(x) &= T_{\calm}^{\sigma}(x) = (T_x \calm)_+,
\\ \label{manifolds-cones-4} \caln_{\calm}^{\sigma}(x) &= \caln_{\calm}^c(x) = (T_x \calm)^{\perp} \oplus \lin^+ \{ n_x \}.
\end{align}
\end{enumerate}
\end{proposition}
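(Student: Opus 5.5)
The plan is to obtain the two tangent-cone identities \eqref{manifolds-cones-1} and \eqref{manifolds-cones-3} by sandwiching, and then to read off the normal cones by polarity.

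\emph{Tangent cones.} By Lemma \ref{lemma-submanifold-tang-1} we have $T_x\calm \subset T_{\calm}^c(x)$ for interior points and $(T_x\calm)_+ \subset T_{\calm}^c(x)$ for boundary points. Lemma \ref{lemma-cones-inclusions} gives $T_{\calm}^c(x) \subset T_{\calm}^{\sigma}(x)$. Lemma \ref{lemma-submanifold-tang-2} then closes the chain with $T_{\calm}^{\sigma}(x) \subset T_x\calm$, respectively $\subset (T_x\calm)_+$. This yields \eqref{manifolds-cones-1} and \eqref{manifolds-cones-3}.

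\emph{Normal cones.} Since $T_{\calm}^c(x) = T_{\calm}^{\sigma}(x)$, taking polars gives $\caln_{\calm}^c(x) = \caln_{\calm}^{\sigma}(x)$ directly from the definitions. It remains to compute the polar of $T_x\calm$ and of $(T_x\calm)_+$.
\begin{itemize}
\item[\emph{Interior points.}] The polar of a closed linear subspace is its orthogonal complement, so $\caln_{\calm}^c(x) = (T_x\calm)^\perp$. This is \eqref{manifolds-cones-2}.
\item[\emph{Boundary points.}] By \eqref{normal-v-id-0} and \eqref{normal-v-id-2} we have $(T_x\calm)_+ = T_x\partial\calm \oplus \{\lambda n_x : \lambda \le 0\}$. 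Decompose $u = u_1 + u_2$ with $u_1 \in (T_x\calm)^\perp$ and $u_2 \in T_x\calm$. Then $\langle u, v\rangle \le 0$ for all $v \in (T_x\calm)_+$ becomes $\langle u_2, v\rangle \le 0$ on that cone.
\begin{itemize}
\item[--] Testing with $\pm v$ for $v \in T_x\partial\calm$ gives $u_2 \perp T_x\partial\calm$.
\item[--] By \eqref{normal-v-id-0}, $u_2$ is therefore a multiple $\mu n_x$.
\item[--] Testing with $v = -n_x$ gives $\mu \ge 0$.
\end{itemize}
Conversely, every $u_1 + \mu n_x$ with $\mu \ge 0$ is nonpositive on the cone. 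This gives $\caln_{\calm}^c(x) = (T_x\calm)^\perp \oplus \lin^+\{n_x\}$, which is \eqref{manifolds-cones-4}.
\end{itemize}

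The proof is mostly bookkeeping. The only point requiring care is the boundary-case polar computation, specifically making the description of $(T_x\calm)_+$ via $n_x$ precise. It also uses that $T_x\calm$ is finite dimensional, hence closed, so the orthogonal decomposition is available.
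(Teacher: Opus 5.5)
Your proposal is correct and follows essentially the same route as the paper. The paper also obtains the tangent-cone identities by sandwiching $T_{\calm}^c(x)\subset T_{\calm}^{\sigma}(x)$ between Lemmas \ref{lemma-submanifold-tang-1} and \ref{lemma-submanifold-tang-2}, and then computes the normal cones as polars. In the boundary case both arguments rest on $T_x\calm = T_x\partial\calm \oplus \lin\{n_x\}$ together with \eqref{normal-v-id-2}. Your orthogonal splitting $u=u_1+u_2$ is only a repackaging of the paper's choice $\lambda=\la y,n_x\ra$ followed by its check that $y-\lambda n_x\perp T_x\calm$.
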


\begin{proof}
The identities \eqref{manifolds-cones-1} and \eqref{manifolds-cones-3} are consequences of Lemmas \ref{lemma-submanifold-tang-1} and \ref{lemma-submanifold-tang-2}. Recalling that $\caln_{\calm}^{\sigma}(x) = T_{\calm}^{\sigma}(x)^{\circ}$ and $\caln_{\calm}^{c}(x) = T_{\calm}^{c}(x)^{\circ}$, we deduce that $T_{\calm}^c(x) = T_{\calm}^{\sigma}(x)$ for each $x \in \calm$. This shows \eqref{manifolds-cones-2}, because $T_x \calm$ is a subspace of $H$. In order to show \eqref{manifolds-cones-4}, let $x \in \partial \calm$ be arbitrary, and recall that
\begin{align*}
\caln_{\calm}^c(x) = (T_x \calm)_+^{\circ}.
\end{align*}
Let $y \in (T_x \calm)^{\perp}$ and $\lambda \geq 0$ be arbitrary. Then for each $z \in (T_x \calm)_+$ we have $\la y,z \ra = 0$, and by \eqref{normal-v-id-2} we have $\la n_x,z \ra \leq 0$. Therefore, we obtain
\begin{align*}
\la y + \lambda n_x,z \ra = \la y,z \ra + \lambda \la n_x,z \ra \leq 0,
\end{align*}
showing that $y + \lambda n_x \in \caln_{\calm}^c(x)$.

Conversely, let $y \in \caln_{\calm}^c(x) = (T_x \calm)_+^{\circ}$ be arbitrary. We define $\lambda := \la y,n_x \ra$. Then we have $\lambda \geq 0$, because $-n_x \in (T_x \calm)_+$. Decomposing $y = (y - \lambda n_x) + \lambda n_x$, we wish to show that $y - \lambda n_x \in (T_x \calm)^{\perp}$. For this purpose, let $z \in T_x \calm$ be arbitrary. By \eqref{normal-v-id-0} we have $z = w + \mu n_x$ with $w \in T_x \partial \calm$ and $\mu \in \bbr$. Since $y \in (T_x \calm)_+^{\circ} \subset (T_x \partial \calm)^{\circ} = (T_x \partial \calm)^{\perp}$, we have $\la y,w \ra = 0$, and since $n_x \in (T_x \partial \calm)^{\perp}$, we have $\la n_x,w \ra = 0$. Recalling that $\| n_x \| = 1$, we obtain
\begin{align*}
\la y - \lambda n_x, z \ra &= \la y - \lambda n_x, w + \mu n_x \ra = \la y,w \ra + \mu \la y,n_x \ra - \lambda \la n_x,w \ra - \lambda \mu
\\ &= \mu ( \la y,n_x \ra - \lambda ) = 0,
\end{align*}
showing that $y - \lambda n_x \in (T_x \calm)^{\perp}$.
\end{proof}

Recalling that in \cite{FTT-appendix} the inward pointing normal vector $\eta_x := -n_x$ was considered, the following result is a reformulation of \cite[Lemma 3.9]{FTT-appendix}.

\begin{lemma}\label{lemma-normal-scalar}
Let $\phi : V \subset \bbr_+^m \to U \cap \calm$ be a local parametrization. Then, for every $x \in U \cap \partial \calm$ there exists a unique number $\kappa(x) < 0$ such that
\begin{align*}
\la n_x,D\phi(y)v \ra = \kappa(x) \la e_1,v \ra \quad \text{for all $v \in \bbr^m$,}
\end{align*}
where $y = \phi^{-1}(x)$.
\end{lemma}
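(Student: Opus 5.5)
The plan is to study the linear functional $\ell : \bbr^m \to \bbr$ given by $\ell(v) := \la n_x, D\phi(y)v \ra$, where $y = \phi^{-1}(x) \in \partial V$. Since any linear functional on $\bbr^m$ that vanishes on the hyperplane $\{ v : \la e_1,v \ra = 0 \}$ is a multiple of $\la e_1,\cdot \ra$, it suffices to show three things: that $\ell$ vanishes on that hyperplane, that $\kappa(x) := \ell(e_1)$ is negative, and that $\kappa(x)$ is unique.

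\emph{Vanishing on the hyperplane.} Note that $\{ v \in \bbr^m : \la e_1, v\ra = 0\} = \partial \bbr^m_+$. By Remark \ref{rem-tang-boundary}, for every such $v$ we have $D\phi(y)v \in T_x \partial\calm$. By construction $n_x \in (T_x\partial\calm)^\perp$, so $\ell(v) = 0$. Writing $v = \la e_1,v\ra e_1 + (v - \la e_1,v\ra e_1)$ and using linearity then gives
\begin{align*}
\ell(v) = \kappa(x) \la e_1, v\ra \quad \text{for all } v \in \bbr^m, \quad \text{where } \kappa(x) := \la n_x, D\phi(y)e_1\ra.
\end{align*}

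\emph{Sign of $\kappa(x)$.} Since $e_1 \in \bbr^m_+$, we have $D\phi(y)e_1 \in (T_x\calm)_+$. By \eqref{normal-v-id-2}, $(T_x\calm)_+ = T_x\calm \cap \{n_x\}^-$, so $\kappa(x) \le 0$.

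The main point is strictness, which I would prove by contradiction. Suppose $\kappa(x) = 0$. Then $\ell \equiv 0$, so $n_x \perp D\phi(y)\bbr^m = T_x\calm$. On the other hand, \eqref{normal-v-id-0} gives $n_x \in T_x\calm$, hence $\|n_x\|^2 = 0$. This contradicts $\|n_x\| = 1$, so $\kappa(x) < 0$.

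\emph{Uniqueness.} If a number $\kappa$ satisfies the identity for all $v$, evaluating at $v = e_1$ forces $\kappa = \la n_x, D\phi(y)e_1\ra = \kappa(x)$.
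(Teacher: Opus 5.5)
Your proof is correct. The paper gives no argument of its own for this lemma. It cites it as a reformulation of \cite[Lemma 3.9]{FTT-appendix}, with the inward normal $\eta_x = -n_x$. Your proposal therefore supplies a self-contained derivation from Remark \ref{rem-tang-boundary} and the properties of $n_x$ recorded in the preceding lemma. The hyperplane $\partial \bbr^m_+$ is mapped by $D\phi(y)$ into $T_x \partial \calm$, which is orthogonal to $n_x$, and this forces $\ell = \kappa(x) \la e_1, \cdot \ra$. The sign and strictness then follow from \eqref{normal-v-id-2} and \eqref{normal-v-id-0}. Like the paper, you take for granted that $y = \phi^{-1}(x) \in \partial V$ for the given parametrization (invariance of the boundary); this is already implicit in the definition of $(T_x \calm)_+$.

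A slightly shorter route gives $\kappa(x) \le 0$ and $\kappa(x) \neq 0$ in one step, using $-n_x \in (T_x \calm)_+$ from the same lemma. Write $-n_x = D\phi(y) w$ with $w \in \bbr^m_+$. Then $-1 = \la n_x, D\phi(y) w \ra = \kappa(x)\, w_1$ with $w_1 \ge 0$, so necessarily $w_1 > 0$ and $\kappa(x) = -1/w_1 < 0$.
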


\begin{proposition}\label{prop-submanifold-phi-regular}
Suppose that the submanifold $\calm$ is of class $C^2$. Then $\calm$ is locally $\varphi$-convex, and for each $x \in \calm$ the following statements are true:
\begin{enumerate}
\item If $x \in \calm \setminus \partial \calm$, then we have
\begin{align*}
\caln_{\calm}^p(x) = \caln_{\calm}^c(x) = (T_x \calm)^{\perp}.
\end{align*}

\item If $x \in \partial \calm$, then we have
\begin{align*}
\caln_{\calm}^p(x) = \caln_{\calm}^c(x) = (T_x \calm)^{\perp} \oplus \lin^+ \{ n_x \}.
\end{align*}
\end{enumerate}
\end{proposition}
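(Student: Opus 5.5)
The plan is to establish local $\varphi$-convexity first; the two normal cone identities then follow from results already available.

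\emph{Step 1: local $\varphi$-convexity.} Fix $x_0 \in \calm$ and a $C^2$ local parametrization $\phi : V \to U \cap \calm$ around $x_0$. Extend it to a $C^2$ map on an open convex set $\tilde V \subset \bbr^m$. Choose a compact neighborhood $K \subset V$ of $y_0 = \phi^{-1}(x_0)$ and a closed ball $C$ around $x_0$ small enough that $\calm \cap C \subset \phi(K)$; this is possible since $\phi$ is a homeomorphism onto $U \cap \calm$. On $\tilde K$, a convex compact neighborhood containing $K$, the quantities $\|D^2\phi\|$ and $\|D\phi(y)^{-1}\|$ (inverse on the range) are bounded, say by $M$ and $L$. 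Now take $x = \phi(y)$ and $x' = \phi(y')$ in $\calm \cap C$, and $u \in \caln_{\calm \cap C}^p(x)$. Taylor's theorem gives
\begin{align*}
x' - x = D\phi(y)(y'-y) + \rho, \quad \|\rho\| \leq \tfrac{M}{2}\|y'-y\|^2 ,
\end{align*}
and the inverse bound gives $\|y'-y\| \leq L'\|x'-x\|$ after possibly shrinking $C$.

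\emph{Step 2: bound the first-order term.} I claim $\la u, D\phi(y)w \ra \leq 0$ for every $w$ with $y + w \in \bbr_+^m$, that is, for $w$ in the tangent cone of $V$ at $y$. This holds because $D\phi(y)w \in T^c_{\calm}(x)$ by Lemma~\ref{lemma-submanifold-tang-1}, together with $u \in \caln^c_{\calm}(x)$ from Proposition~\ref{prop-normal-cones-inclusion} and locality (Proposition~\ref{prop-normal-local}). Since $y' - y$ is such a direction ($y' \in \bbr^m_+$), we get
\begin{align*}
\la u, x'-x \ra \leq \|u\|\,\|\rho\| \leq \tfrac{M L'^2}{2}\|u\|\,\|x'-x\|^2 .
\end{align*}
Lemma~\ref{lemma-phi-conv-char} with constant $\psi$ then shows that $\calm \cap C$ is $\varphi$-convex.

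\emph{Step 3: the normal cones.} Proposition~\ref{prop-phi-convex} (iii)$\Rightarrow$(iv), combined with locality, gives $\caln^p_{\calm}(x) = \caln^c_{\calm}(x)$. The explicit forms $(T_x\calm)^\perp$ and $(T_x\calm)^\perp \oplus \lin^+\{n_x\}$ are then read off from Proposition~\ref{prop-submanifold-sigma-regular}.

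The main obstacle is Step 1: controlling the localization so that one parametrization covers $\calm \cap C$ with uniform constants, and inverting $\phi$ with a Lipschitz bound. Since $D\phi$ is injective and $\phi$ is a homeomorphism onto its image, a compactness and inverse-function argument on the finite-dimensional parameter side should handle this.
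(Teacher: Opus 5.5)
Your proposal is correct, at the same level of rigor as the paper, and it has the same skeleton. You localize with a $C^2$ parametrization whose inverse is Lipschitz, Taylor-expand, show that the first-order term is nonpositive, deduce $\varphi$-convexity of $\calm\cap C$, and then read off the cones from Propositions~\ref{prop-phi-convex} and~\ref{prop-submanifold-sigma-regular}.

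Where you differ is in how you get the sign of the first-order term. The paper splits into cases. At interior points it uses $u\in(T_x\calm)^{\perp}$, and at boundary points it uses the decomposition $u=v+\lambda n_x$ together with Lemma~\ref{lemma-normal-scalar}. You argue uniformly: $D\phi(y)(y'-y)\in T^c_{\calm}(x)$ by Lemma~\ref{lemma-submanifold-tang-1}, while $u\in\caln^c_{\calm}(x)=T^c_{\calm}(x)^{\circ}$. For the $\varphi$-convexity part, your route needs neither $n_x$ nor the explicit form of the normal cones. Because you estimate the remainder by $\|u\|\,\|\rho\|$ for the whole vector $u$, you also sidestep a slip in the paper's boundary case. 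There the bound by $\|v\|$ silently drops the quadratic part of $\lambda\la n_x,y-x\ra$; the correct estimate is by $\|u\|$, exactly as in your argument. Working with $\caln^p$ and Lemma~\ref{lemma-phi-conv-char} instead of $\caln^{\sigma}$ and Definition~\ref{def-phi-convex} is immaterial, since your first-order argument applies to every $u\in\caln^c_{\calm}(x)$.

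One caveat, which the paper's proof shares, concerns the passage from $u\in\caln^p_{\calm\cap C}(x)$ to $u\in\caln^c_{\calm}(x)$ via Proposition~\ref{prop-normal-local}. That step needs $C$ to be a neighborhood of $x$, not merely of $x_0$, and it fails at points $x\in\calm\cap\partial C$. For example, if $\calm$ is a line and $C$ a ball, the outward tangent direction at an endpoint of the segment $\calm\cap C$ is a proximal normal of $\calm\cap C$ but not of $\calm$.

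A simple repair is to take $C:=\bar{B}(x_0,\epsilon)\cup\phi(P)$ with $P:=\bar{B}(y_0,\rho)\cap\bbr^m_+$, and $\epsilon$ so small that $\bar{B}(x_0,\epsilon)\cap\calm\subset\phi(P)$. Then $\calm\cap C=\phi(P)$ with $P$ convex, so $\phi(y+s(y'-y))\in\calm\cap C$ for $s\in[0,1]$. Letting $s\downarrow0$ in the inequality of Proposition~\ref{prop-normal-2} gives $\la u,D\phi(y)(y'-y)\ra\le0$ directly, without any appeal to locality.
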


\begin{proof}
Let $x_0 \in \calm$ be arbitrary, and let $\phi : V \to U \cap \calm$ be a local parametrization around $x_0$. We set $y_0 := \phi^{-1}(x_0) \in V$. Then $\phi$ extends to a mapping $\phi : \tilde{V} \to H$ of class $C^2$ such that $\tilde{V} \cap \bbr_+^m = V$, and we may assume that $\tilde{V} \subset \bbr^m$ is an open, convex neighborhood of $y_0$. Moreover, as a consequence of the inverse mapping theorem (see \cite[Prop. 6.1.1]{fillnm}) we may assume that $\Phi := \phi^{-1} : \phi(\tilde{V}) \to \tilde{V}$ is Lipschitz continuous.

By Taylor's theorem (see, e.g. \cite[Thm. 2.4.15]{Abraham}) there is a continuous mapping $R : \tilde{V} \times \tilde{V} \to L_s^2(\bbr^m,H)$ with $R(v,v) = 0$ for all $v \in \tilde{V}$ such that for all $u,v \in \tilde{V}$ we have
\begin{align*}
\phi(v) - \phi(u) &= D \phi(u) \cdot (v-u) + \frac{1}{2} D^2 \phi(u) \cdot (v-u,v-u)
\\ &\quad + R(u,v) \cdot (v-u,v-u).
\end{align*}
Hence, for all $x,y \in U \cap \calm$ we obtain
\begin{align*}
y-x &= \phi(\Phi(y)) - \phi(\Phi(x))
\\ &= D \phi(\Phi(x)) \cdot (\Phi(y)-\Phi(x)) + \frac{1}{2} D^2 \phi(\Phi(x)) (\Phi(y)-\Phi(x), \Phi(y)-\Phi(x))
\\ &\quad + R(\Phi(x),\Phi(y)) \cdot (\Phi(y)-\Phi(x), \Phi(y)-\Phi(x)).
\end{align*}
Now, let $C \subset U$ be a closed neighborhood of $x_0$. Denoting by $L$ the Lipschitz constant of $\Phi$, we define the continuous mapping $\varphi : (C \cap \calm) \times (C \cap \calm) \to \bbr_+$ as
\begin{align*}
\varphi(x,y) := L^2 \bigg\| \frac{1}{2} D^2 \phi(\Phi(x)) + R(\Phi(x),\Phi(y)) \bigg\|.
\end{align*}
Let $x,y \in C \cap \calm$ and $u \in \caln_{C \cap \calm}^{\sigma}(x)$ be arbitrary. Suppose first that $x \in \calm \setminus \partial \calm$. Taking into account Proposition \ref{prop-normal-local}, by Proposition \ref{prop-submanifold-sigma-regular} we have $u \in (T_x \calm)^{\perp}$, and hence
\begin{align*}
\la u,y-x \ra \leq \varphi(x,y) \, \| u \| \, \| y-x \|^2.
\end{align*}
Now, consider the case $x \in \partial \calm$. By Proposition \ref{prop-submanifold-sigma-regular} we have $u \in (T_x \calm)^{\perp} \oplus \lin^+ \{ n_x \}$. Thus we have $u = v + \lambda n_x$ for some $v \in (T_x \calm)^{\perp}$ and $\lambda \geq 0$. Set $z := \Phi(x)$ and $w := \Phi(y)$. Then we have $z \in \partial \bbr_+^m$, and hence $w-z \in \bbr_+^m$. By Proposition \ref{lemma-normal-scalar} we obtain
\begin{align*}
\la n_x,D \phi(z) ( w-z ) \ra = \kappa(x) \la e_1, w-z \ra \leq 0.
\end{align*}
Furthermore, since $n_x \in T_x \calm$ with $\| n_x \| = 1$, by orthogonality we have $\| u \|^2 = \| v \|^2 + \lambda^2$, and hence $\| v \| \leq \| u \|$. Consequently, we arrive at
\begin{align*}
\la u,y-x \ra = \la v,y-x \ra + \lambda \la n_x,y-x \ra &\leq \varphi(x,y) \, \| v \| \, \| y-x \|^2
\\ &\leq \varphi(x,y) \, \| u \| \, \| y-x \|^2.
\end{align*}
This proves that $\calm$ is locally $\varphi$-convex. Now, the remaining statements are a consequence of Proposition \ref{prop-phi-convex} and Proposition \ref{prop-submanifold-sigma-regular}.
\end{proof}

The following example shows that for a $C^1$-submanifold $\calm$ the inclusion $$\caln_{\calm}^p(x) \subset \caln_{\calm}^{\sigma}(x)$$ may be strict.

\begin{example}\label{example-graph}
Consider the state space $H=\bbr^2$ and the submanifold
\begin{align*}
\calm = \{ (x,|x|^{3/2}) : x \in \bbr \}.
\end{align*}
Then $\calm$ is a $C^1$-submanifold, which is not of class $C^2$. According to Proposition \ref{prop-submanifold-sigma-regular} we have $\caln_{\calm}^{\sigma}(0) = \lin \{ e_2 \}$. We will show that $\caln_{\calm}^p(0) = - \lin^+ \{ e_2 \}$. Indeed, each $y \in \calm$ is of the form $y = (x,|x|^{3/2})$ for some $x \in \bbr$. Therefore, we have
\begin{align*}
\frac{\la e_2,y \ra}{\| y \|^2} = \frac{|x|^{3/2}}{|x|^2 + |x|^3}.
\end{align*}
Moreover, concerning the reciprocal we have
\begin{align*}
\frac{|x|^2 + |x|^3}{|x|^{3/2}} = |x|^{1/2} + |x|^{3/2} \to 0 \quad \text{for $x \to 0$.}
\end{align*}
Using Proposition \ref{prop-normal-2} we deduce that $-e_2 \in \caln_{\calm}^p(0)$, whereas $e_2 \notin \caln_{\calm}^p(0)$. This shows $\caln_{\calm}^p(0) = - \lin^+ \{ e_2 \}$.
\end{example}

\section{Martingales in Banach spaces}\label{app-martingales-Banach}

In this appendix we provide the required results about martingales in Banach spaces. Let $E$ be a Banach space, and let $(\Omega,\calf,(\calf_t)_{t \in \bbr_+},\bbp)$ be a filtered probability space satisfying the usual conditions.

\begin{lemma}\label{lemma-MT-Banach}
Let $M$ be an $E$-valued adapted process such that $\bbe[\| M_t \|] < \infty$ for all $t \in \bbr_+$. Then the following statements are equivalent:
\begin{enumerate}
\item[(i)] $M$ is a martingale.

\item[(ii)] $x'(M)$ is a martingale for each $x' \in E'$.
\end{enumerate}
\end{lemma}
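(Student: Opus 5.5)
The implication (i) $\Rightarrow$ (ii) is immediate. For each $x' \in E'$ the process $x'(M)$ is real-valued and adapted. It is integrable because $|x'(M_t)| \leq \|x'\| \, \|M_t\|$. Since $x'$ is a bounded linear operator, it commutes with the Bochner conditional expectation, so
\begin{align*}
\bbe[x'(M_t) \,|\, \calf_s] = x'(\bbe[M_t \,|\, \calf_s]) = x'(M_s), \quad s \leq t.
\end{align*}

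For (ii) $\Rightarrow$ (i) we fix $s \leq t$ and set $Y := \bbe[M_t \,|\, \calf_s]$. This Bochner conditional expectation exists because $\bbe[\|M_t\|] < \infty$. The task is to show $Y = M_s$ almost surely. By the same commutation property and hypothesis (ii), for every $x' \in E'$ we have
\begin{align*}
x'(Y) = \bbe[x'(M_t) \,|\, \calf_s] = x'(M_s) \quad \text{almost surely,}
\end{align*}
where the null set depends on $x'$.

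The main obstacle is passing from ``$x'(Y) = x'(M_s)$ almost surely for each $x'$'' to ``$Y = M_s$ almost surely'', since $E'$ is in general uncountable and the exceptional null sets cannot simply be united.

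First, strongly measurable random variables are almost surely separably valued. So there is a closed separable subspace $E_0 \subset E$ with $Y, M_s \in E_0$ almost surely. Here I assume the paper's framework (Bochner integrability) or that $E$ is separable.

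Next, by Hahn--Banach applied to a countable dense subset of $E_0$, there is a countable norming family $(x_n')_{n \in \bbn} \subset E'$ with $\|x_n'\| \leq 1$ such that $\|z\| = \sup_n |x_n'(z)|$ for all $z \in E_0$.

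Finally, we take the union of the countably many null sets corresponding to the functionals $x_n'$, together with the null set off which $Y$ and $M_s$ lie in $E_0$. Outside this null set we have $x_n'(Y - M_s) = 0$ for all $n$, hence $\|Y - M_s\| = 0$. This proves the martingale property of $M$.
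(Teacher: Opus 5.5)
Your proof is correct, but it takes a different route from the paper. The paper never compares random variables. It uses the characterization that an integrable adapted $E$-valued process $N$ is a martingale if and only if $\bbe[N_t \bbI_A] = \bbe[N_s \bbI_A]$ for all $s \leq t$ and $A \in \calf_s$. Since $x'(\bbe[M_t \bbI_A]) = \bbe[x'(M_t)\bbI_A]$, condition (ii) says exactly that the two \emph{deterministic} vectors $\bbe[M_t \bbI_A]$ and $\bbe[M_s \bbI_A]$ agree under every $x' \in E'$. They therefore coincide by Hahn--Banach, and no $x'$-dependent exceptional sets ever appear.

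You work instead with $Y = \bbe[M_t \,|\, \calf_s]$ as a random variable. This forces you to confront the uncountably many null sets, which you resolve correctly via essential separability and a countable norming family.

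The paper's argument is shorter because the almost-sure identification is absorbed into the standard fact that the test-set condition determines the Bochner conditional expectation uniquely. That uniqueness statement itself rests on essential separability of strongly measurable random variables. Your version makes this dependence explicit, together with the implicit strong-measurability assumption behind $\bbe[\|M_t\|] < \infty$ for a possibly nonseparable $E$.
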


\begin{proof}
An adapted process $N$ with values in some Banach space $F$ satisfying $\bbe[\| N_t \|] < \infty$ for all $t \in \bbr_+$ is a martingale if and only if $\bbe[N_t \bbI_A] = \bbe[N_s \bbI_A]$ for all $s \leq t$ and $A \in \calf_s$. Recalling that for all $x,y \in E$ we have $x = y$ if and only if $x'(x) = x'(y)$ for all $x' \in E'$, this provides the stated equivalence (i) $\Leftrightarrow$ (ii).
\end{proof}

\begin{lemma}\label{lemma-Doob-Banach}
Let $M$ be an $E$-valued c\`{a}dl\`{a}g, adapted process such that $\bbe[\| M_\tau \|] < \infty$ for each stopping time $\tau$. Then the following statements are equivalent:
\begin{enumerate}
\item[(i)] $M$ is a martingale.

\item[(ii)] For each stopping time $\tau$ we have $\bbe[M_{\tau}] = \bbe[M_0]$.

\item[(iii)] For each stopping time $\tau$ the stopped process $M^{\tau}$ is a martingale.
\end{enumerate}
\end{lemma}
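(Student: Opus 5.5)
The plan is to prove the cycle (i) $\Rightarrow$ (ii) $\Rightarrow$ (iii) $\Rightarrow$ (i), reducing (ii) and (iii) to the optional stopping characterization for real-valued processes via functionals. Only the integrability hypothesis $\bbe[\| M_\tau \|] < \infty$ for every stopping time $\tau$ is used to make all expectations below well defined as Bochner integrals.

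\emph{(i) $\Rightarrow$ (ii).} Let $\tau$ be a stopping time and $x' \in E'$. By Lemma \ref{lemma-MT-Banach} the real process $x'(M)$ is a càdlàg martingale, and $\bbe[|x'(M_\tau)|] \leq \| x' \| \, \bbe[\| M_\tau \|] < \infty$. The goal is $\bbe[x'(M_\tau)] = \bbe[x'(M_0)]$. For bounded $\tau$ this is the classical optional sampling theorem. For general $\tau$ I would apply it to $\tau \wedge n$ and let $n \to \infty$. This limit is the main obstacle, since it requires uniform integrability of $(M_{\tau \wedge n})_n$, which the hypothesis gives only along $\tau$ itself. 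If the limit cannot be justified, I would read stopping times as bounded ones, which is the convention under which this lemma is standard and all that is used later (localization on $[0,T]$). Once the scalar identity holds, the fact that $x'$ commutes with Bochner integrals gives $x'(\bbe[M_\tau]) = x'(\bbe[M_0])$ for all $x'$, and Hahn–Banach yields $\bbe[M_\tau] = \bbe[M_0]$.

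\emph{(ii) $\Rightarrow$ (iii).} Fix $\tau$. For any stopping time $\rho$, the map $\rho \wedge \tau$ is again a stopping time and $M^\tau_\rho = M_{\rho\wedge\tau}$. Applying (ii) to $\rho\wedge\tau$ gives $\bbe[M^\tau_\rho] = \bbe[M^\tau_0]$ for every $\rho$. It then suffices to show that (ii) implies the martingale property for any process $N$ satisfying these hypotheses; here $N = M^\tau$. Take $s \leq t$ and $A \in \calf_s$, and set $\rho := s\bbI_A + t\bbI_{A^c}$, which is a stopping time. Comparing $\bbe[N_\rho] = \bbe[N_0]$ with $\bbe[N_t] = \bbe[N_0]$ gives $\bbe[N_s\bbI_A] = \bbe[N_t\bbI_A]$. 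Integrability of each $N_t$ holds because a deterministic time is a stopping time. This test equation is exactly the martingale criterion from the proof of Lemma \ref{lemma-MT-Banach}, so $M^\tau$ is a martingale.

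\emph{(iii) $\Rightarrow$ (i).} Take $\tau \equiv t$ deterministic for each $t$. Then $M^t$ is a martingale, and for $s \leq t$ and $A \in \calf_s$ we get $\bbe[M_t\bbI_A] = \bbe[M^t_t \bbI_A] = \bbe[M^t_s\bbI_A] = \bbe[M_s\bbI_A]$. Hence $M$ is a martingale.
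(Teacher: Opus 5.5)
Your cycle is correct under the bounded-stopping-time reading and is essentially the paper's route. The paper scalarizes via Lemma \ref{lemma-MT-Banach} and cites \cite[Lemma I.1.44]{Jacod-Shiryaev} for the real-valued equivalence, whereas you prove that equivalence by hand: optional sampling plus Hahn--Banach for (i) $\Rightarrow$ (ii), the two-valued time $s\bbI_A + t\bbI_{A^c}$ for (ii) $\Rightarrow$ (iii), and deterministic $\tau$ for (iii) $\Rightarrow$ (i).

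Your hedge on unbounded $\tau$ is not just caution but necessary, because for unbounded $\tau$ the implication (i) $\Rightarrow$ (ii) is false. Take $M_t = \exp(W_t - t/2)$ and $\tau = \inf\{ t : M_t \leq 1/2 \}$, which is a.s.\ finite since $M_t \to 0$. Fatou gives $\bbe[\| M_\sigma \|] \leq 1$ for every stopping time $\sigma$ (with $M_\infty := 0$), yet $\bbe[M_\tau] = 1/2 \neq 1 = \bbe[M_0]$. The bounded reading is therefore the right one, and it is how the paper actually uses the lemma, with stopping times bounded by $T$.
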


\begin{proof}
This is a consequence of Lemma \ref{lemma-MT-Banach} and \cite[Lemma I.1.44]{Jacod-Shiryaev}.
\end{proof}

\begin{lemma}\label{lemma-loc-MT-functionals}
Let $M$ be an $E$-valued continuous, adapted process such that $M_0$ is bounded. Then the following statements are equivalent:
\begin{enumerate}
\item[(i)] $M$ is a local martingale.

\item[(ii)] $x'(M)$ is a local martingale for each $x \in E'$.
\end{enumerate}
\end{lemma}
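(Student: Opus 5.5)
The implication (i) $\Rightarrow$ (ii) is immediate, since every $x' \in E'$ is a continuous linear map. If $(\tau_n)$ localizes $M$, then $x'(M^{\tau_n}) = x'(M)^{\tau_n}$ is a real martingale, because conditional expectations commute with bounded linear functionals. So the whole difficulty lies in (ii) $\Rightarrow$ (i).

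For the converse, choose one localizing sequence that does not depend on $x'$:
\begin{align*}
\tau_n := \inf\{ t \geq 0 : \| M_t \| \geq n \}, \quad n \in \bbn.
\end{align*}
Because $M$ is continuous and adapted and the filtration satisfies the usual conditions, each $\tau_n$ is a stopping time and $\tau_n \uparrow \infty$. Since $M_0$ is bounded, we may restrict to $n > \| M_0 \|_{\infty}$, and then $\| M^{\tau_n} \| \leq n$ by continuity. Fix $x' \in E'$. The process $x'(M)^{\tau_n} = x'(M^{\tau_n})$ is a local martingale, as a stopped local martingale, and it is bounded by $n \| x' \|$. A bounded real local martingale is a true martingale by dominated convergence along its localizing sequence. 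Thus $x'(M^{\tau_n})$ is a martingale for every $x' \in E'$.

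Now apply Lemma \ref{lemma-MT-Banach} to the process $N := M^{\tau_n}$. It is adapted and satisfies $\bbe[\| N_t \|] \leq n < \infty$, and all of its functionals are martingales. The lemma therefore gives that $M^{\tau_n}$ is an $E$-valued martingale for each $n$, so $M$ is a local martingale with localizing sequence $(\tau_n)$.

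The main point needing care is that the localizing sequence in (ii) a priori depends on $x'$, and $E'$ may be uncountable. This is exactly why the proof replaces those sequences with the hitting times $\tau_n$ of the norm, which work for all $x'$ simultaneously. Boundedness of $M_0$ is used only so that $\| M^{\tau_n} \| \leq n$ holds from time $0$ onwards. If measurability of $\tau_n$ needs a justification, it follows from the début theorem, or directly because $\{\| M \| \geq n\}$ is closed and $M$ is continuous.
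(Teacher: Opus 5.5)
Your proposal is correct and follows the paper's proof essentially verbatim: the norm hitting times $\tau_n$ serve as one localizing sequence for all $x'$ at once, boundedness of $M^{\tau_n}$ makes each $x'(M^{\tau_n})$ a true martingale, and Lemma \ref{lemma-MT-Banach} then shows that $M^{\tau_n}$ is a martingale. Restricting to $n > \|M_0\|_\infty$ is harmless but not needed, since $\|M^{\tau_n}\| \le \max(n, \|M_0\|_\infty)$ for every $n$.
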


\begin{proof}
We only need to prove (ii) $\Rightarrow$ (i). We define the localizing sequence $(\tau_n)_{n \in \bbn}$ of stopping times as
\begin{align*}
\tau_n := \inf \{ t \in \bbr_+ : \| M_t \| \geq n \}, \quad n \in \bbn.
\end{align*}
Let $n \in \bbn$ be arbitrary. Then the stopped process $M^{\tau_n}$ is bounded. Hence, for each $x' \in E'$ the process $x'(M^{\tau_n})$ is a real-valued continuous, bounded local martingale, and hence a martingale. Therefore, by Lemma \ref{lemma-MT-Banach} we deduce that $M^{\tau_n}$ is a martingale. Consequently, the process $M$ is a local martingale.
\end{proof}

\begin{lemma}\label{lemma-bounded-loc-MT}
Let $M$ be an $E$-valued continuous, bounded local martingale. Then $M$ is a martingale.
\end{lemma}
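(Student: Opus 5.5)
The plan is to reduce to the real-valued case via the stopping characterization in Lemma \ref{lemma-Doob-Banach}. Since $M$ is bounded, say $\| M_t \| \leq K$ for all $t$ and $\omega$, we have $\bbe[\| M_\tau \|] \leq K < \infty$ for every stopping time $\tau$. Moreover $M$ is continuous, hence c\`{a}dl\`{a}g, so Lemma \ref{lemma-Doob-Banach} applies. It therefore suffices to verify $\bbe[M_\tau] = \bbe[M_0]$ for each bounded stopping time $\tau$; alternatively, we check directly that $\bbe[M_t \bbI_A] = \bbe[M_s \bbI_A]$ for $s \leq t$ and $A \in \calf_s$.

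First choose a localizing sequence $(\tau_n)_{n \in \bbn}$ of stopping times with $\tau_n \uparrow \infty$ almost surely such that each $M^{\tau_n}$ is a martingale. Fix $s \leq t$ and $A \in \calf_s$. The martingale property of $M^{\tau_n}$, expressed through the Bochner integral, gives
\begin{align*}
\bbe[M_{t \wedge \tau_n} \bbI_A] = \bbe[M_{s \wedge \tau_n} \bbI_A].
\end{align*}
As $n \to \infty$ we have $M_{t \wedge \tau_n} \to M_t$ and $M_{s \wedge \tau_n} \to M_s$ pointwise almost surely, because $\tau_n \uparrow \infty$. All these random variables are bounded in norm by $K$, so the dominated convergence theorem for Bochner integrals lets us pass to the limit on both sides. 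This yields $\bbe[M_t \bbI_A] = \bbe[M_s \bbI_A]$. Adaptedness and integrability are clear, so $M$ is a martingale. The argument can equally be phrased through Lemma \ref{lemma-MT-Banach}: apply each $x' \in E'$, note that $x'(M)$ is a bounded real local martingale, and use the classical result.

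The main point needing care is the meaning of ``local martingale'' in the Banach setting. Here I take it to mean that there exist stopping times $\tau_n \uparrow \infty$ with $M^{\tau_n}$ a martingale; in particular the definition should not require $M_0$ to be integrable separately, which boundedness gives anyway. Once this is settled, the dominated convergence step is routine, so I expect no real obstacle.
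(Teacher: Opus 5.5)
Your proof is correct, but your main route differs slightly from the paper's. The paper proves the lemma in one line: for each $x' \in E'$, the process $x'(M)$ is a bounded real-valued local martingale, hence a martingale by the classical scalar result. It then concludes with Lemma \ref{lemma-MT-Banach}, which rests on the characterization $\bbe[N_t \bbI_A] = \bbe[N_s \bbI_A]$ together with the fact that continuous functionals separate points of $E$. This is exactly the alternative you mention at the end of your second paragraph.

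Your primary argument instead carries out the dominated convergence step directly in $E$ with Bochner integrals. It therefore needs neither the scalar theorem nor the separation of points by functionals. It also shows that continuity plays no role: only the uniform bound $K$ and $\tau_n \uparrow \infty$ are used. The paper's version buys brevity by delegating the same limiting argument to the scalar case and reassembling it through Lemma \ref{lemma-MT-Banach}.

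One small remark: the detour through Lemma \ref{lemma-Doob-Banach} in your first paragraph is never used, since you verify the martingale identity directly, so it can be dropped.
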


\begin{proof}
For each $x' \in E'$ the process $x'(M)$ is a real-valued continuous, bounded local martingale, and hence a martingale. By Lemma \ref{lemma-MT-Banach} we deduce that $M$ is a martingale.
\end{proof}

\begin{proposition}\label{prop-Banach-square-MT}
Let $M$ be an $E$-valued continuous local martingale such that $M_0$ is bounded and
\begin{align}\label{E-sup}
\bbe \bigg[ \sup_{t \in [0,T]} \| M_t \|^2 \bigg] < \infty \quad \text{for each $T \in \bbr_+$.}
\end{align}
Then $M$ is a square-integrable martingale.
\end{proposition}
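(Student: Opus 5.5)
The plan is a localization argument: stop $M$ so that it becomes a bounded martingale, check the martingale property there, and use the uniform bound \eqref{E-sup} to remove the localization.

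First, I will introduce the localizing sequence $\tau_n := \inf\{t \in \bbr_+ : \|M_t\| \geq n\}$ for $n \in \bbn$ with $n$ larger than the bound of $\|M_0\|$. Since $M$ is continuous and $M_0$ is bounded, each stopped process $M^{\tau_n}$ is bounded by $n$. It is also a local martingale, since stopping preserves the local martingale property. By Lemma \ref{lemma-bounded-loc-MT}, each $M^{\tau_n}$ is therefore a martingale. Moreover $\tau_n \uparrow \infty$ almost surely by continuity of $M$.

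Next, I fix $0 \le s \le t$ and $A \in \calf_s$. The martingale property of the stopped processes gives $\bbe[M_{t\wedge\tau_n}\bbI_A] = \bbe[M_{s\wedge\tau_n}\bbI_A]$. For every $r \le t$ we have the pathwise bound $\|M_{r\wedge\tau_n}\| \le \sup_{u\in[0,t]}\|M_u\|$, and this supremum is integrable by \eqref{E-sup}. I also have $M_{r\wedge\tau_n}\to M_r$ almost surely. Bochner dominated convergence in $E$ then yields $\bbe[M_t\bbI_A]=\bbe[M_s\bbI_A]$. Together with adaptedness and $\bbe\|M_t\|<\infty$, which also follows from \eqref{E-sup}, this shows that $M$ is a martingale; equivalently, I could pass through functionals as in Lemma \ref{lemma-MT-Banach}.

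Finally, square integrability is immediate, since $\bbe\|M_t\|^2 \le \bbe[\sup_{u\in[0,t]}\|M_u\|^2]<\infty$ for every $t$.

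The main obstacle is the limit passage. It needs a dominating function uniform in $n$, which \eqref{E-sup} provides, and it requires dominated convergence for Bochner integrals in the Banach space $E$. If Bochner measurability is an issue, I will instead argue scalarly: apply the same domination to $x'(M)$ for each $x'\in E'$, and then invoke Lemma \ref{lemma-MT-Banach}.
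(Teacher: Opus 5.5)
Your proposal is correct and follows essentially the paper's argument: the same localization $\tau_n$, Lemma \ref{lemma-bounded-loc-MT} to make each stopped process a martingale, and dominated convergence with the integrable majorant $\sup_{u \in [0,t]} \| M_u \|$ supplied by \eqref{E-sup}. The only cosmetic difference is in the identity you pass to the limit in: the paper uses $\bbe[M_{\tau \wedge \tau_n}] = \bbe[M_0]$ for bounded stopping times $\tau$ and concludes via the optional-sampling characterization of Lemma \ref{lemma-Doob-Banach}, whereas you use the test-set identity $\bbe[M_{t \wedge \tau_n} \bbI_A] = \bbe[M_{s \wedge \tau_n} \bbI_A]$, $A \in \calf_s$, which works equally well.
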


\begin{proof}
By \eqref{E-sup} the process $M$ is square-integrable. Thus, it suffices to show that $(M_t)_{t \in [0,T]}$ is a martingale for each $T \in \bbr_+$. Let $T \in \bbr_+$ be arbitrary and define the localizing sequence $(\tau_n)_{n \in \bbn}$ of stopping times as
\begin{align*}
\tau_n := \inf \{ t \in [0,T] : \| M_t \| \geq n \}, \quad n \in \bbn.
\end{align*}
Then $(M_t^{\tau_n})_{t \in [0,T]}$ is a continuous, bounded local martingale, and by Lemma \ref{lemma-bounded-loc-MT} we deduce that $(M_t^{\tau_n})_{t \in [0,T]}$ is a martingale. Let $\tau \leq T$ be an arbitrary stopping time. Using Lemma \ref{lemma-Doob-Banach} we obtain
\begin{align*}
\bbe[M_{\tau \wedge \tau_n}] = \bbe[M_{\tau}^{\tau_n}] = \bbe[M_0^{\tau_n}] = \bbe[M_0].
\end{align*}
Note that by \eqref{E-sup} we have $\bbe[\| M_{\tau} \|] < \infty$. Furthermore, taking into account the continuity of the sample paths of $M$, by \eqref{E-sup} and Lebesgue's dominated convergence theorem we have
\begin{align*}
\lim_{n \to \infty} \bbe \big[ \| M_{\tau \wedge \tau_n} - M_{\tau} \| \big] = 0.
\end{align*}
Since the expectation operator $\bbe : L^1(\Omega,\calf,\bbp) \to E$ given by the Bochner integral is a continuous linear operator, we obtain
\begin{align*}
\bbe[M_{\tau}] = \lim_{n \to \infty} \bbe[M_{\tau \wedge \tau_n}] = \bbe[M_0].
\end{align*}
By Lemma \ref{lemma-Doob-Banach}, this proves that $(M_t)_{t \in [0,T]}$ is a martingale.
\end{proof}

\section{Smooth functions in Banach spaces}\label{app-functions-Banach}

In this appendix we provide the required results about smooth functions in Banach spaces. In what follows, let $E,F,G,H$ be Banach spaces, and let $k \in \bbn_0 \cup \{ \infty \}$ be a nonnegative integer, which may be infinite. For a function $f : U \to F$ defined on a subset $U \subset E$ we agree on the notation $D^0 f := f$. Furthermore, we agree that in case $k = \infty$ the notation $j=0,\ldots,k$ means $j \in \bbn_0$.

\begin{definition}
Let $U \subset E$ be an open subset, and let $f : U \to F$ be a function of class $C^k$.
\begin{enumerate}
\item We say that $f$ is of class $C_b^k$ if the functions $D^j f$, $j=0,\ldots,k$ are bounded.

\item We say that $f$ is of class $C_{b,\loc}^k$ if $f|_V : V \to F$ is of class $C_b^k$ for every open and bounded subset $V \subset U$.
\end{enumerate}
\end{definition}

\begin{proposition}\cite[Prop. 2.4.8]{Abraham}\label{prop-Cb1-Lipschitz}
Suppose that $U \subset E$ is open and convex. Then every mapping $f : U \to F$ of class $C_b^1$ is Lipschitz continuous.
\end{proposition}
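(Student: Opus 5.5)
The statement is the standard fact that a $C_b^1$ map on an open convex set is Lipschitz. The plan is to use the mean value inequality along line segments, which stay inside $U$ by convexity.

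\textbf{Setup.} Set $M := \sup_{z \in U} \| Df(z) \|_{L(E,F)}$, which is finite because $f$ is of class $C_b^1$. Fix $x,y \in U$. By convexity the segment $\gamma(t) := x + t(y-x)$, $t \in [0,1]$, lies in $U$.

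\textbf{Step 1: reduce to scalar functions.} The composition $g := f \circ \gamma : [0,1] \to F$ is of class $C^1$ with $g'(t) = Df(\gamma(t))(y-x)$ by the chain rule. Fix a functional $\ell \in F'$ with $\| \ell \| \leq 1$. Then $\ell \circ g$ is a real-valued $C^1$ function on $[0,1]$.

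\textbf{Step 2: bound each scalar increment.} Apply the fundamental theorem of calculus (or the scalar mean value theorem) to $\ell \circ g$:
\begin{align*}
\ell\big( f(y) - f(x) \big) = \int_0^1 \ell\big( Df(\gamma(t))(y-x) \big) \, dt \leq M \| y-x \|.
\end{align*}

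\textbf{Step 3: pass back to the norm.} By the Hahn--Banach theorem there is $\ell \in F'$ with $\| \ell \| \leq 1$ and $\ell(f(y)-f(x)) = \| f(y)-f(x) \|$. Inserting this $\ell$ into Step 2 yields
\begin{align*}
\| f(y)-f(x) \| \leq M \| y-x \| \quad \text{for all } x,y \in U,
\end{align*}
which is the Lipschitz property with constant $M$.

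There is no real obstacle. The only subtle point is that Rolle-type mean value equalities fail for vector-valued maps, so one must argue through functionals as above; alternatively one could use the Bochner integral identity $f(y)-f(x) = \int_0^1 Df(\gamma(t))(y-x)\,dt$ directly. This is exactly the content of \cite[Prop. 2.4.8]{Abraham}, so in the paper the result can simply be cited.
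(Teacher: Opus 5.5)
Your proof is correct: it is the standard mean value inequality argument (a norming functional from Hahn--Banach, or equivalently the integral identity $f(y)-f(x)=\int_0^1 Df(x+t(y-x))(y-x)\,dt$), which gives $\|f(y)-f(x)\|\le \sup_{U}\|Df\|\,\|y-x\|$. That is the content of \cite[Prop.~2.4.8]{Abraham}; the paper gives no proof of its own and simply cites that result.
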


\begin{proposition}[Leibniz or product rule]\label{prop-Leibniz}
Let $U \subset E$ be an open subset, let $f : U \to F$, $g : U \to G$ be differentiable mappings, and let $B \in L^{(2)}(F \times G,H)$ be a continuous bilinear operator. We define the new mapping
\begin{align*}
B(f,g) : U \to H, \quad x \mapsto B(f(x),g(x)).
\end{align*}
Then the following statements are true:
\begin{enumerate}
\item[(a)] The mapping $B(f,g)$ is also differentiable, and we have
\begin{align}\label{Leibniz-rule}
D(B(f,g))(x)v = B(Df(x)v,g(x)) + B(f(x),Dg(x)v)
\end{align}
for all $x \in U$ and $v \in E$.

\item[(b)] If $f$ and $g$ are of class $C^k$, then $B(f,g)$ is also of class $C^k$.

\item[(c)] If $f$ and $g$ are of class $C_b^k$, then $B(f,g)$ is also of class $C_b^k$.
\end{enumerate}
\end{proposition}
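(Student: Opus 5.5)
The plan is to prove the three parts of Proposition~\ref{prop-Leibniz} in order. For (a) we expand the bilinear form and control the cross term by the boundedness of $B$; (b) and (c) then follow by induction on $k$, re-applying (a) to the formula it produces.

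\textbf{Part (a).} Fix $x \in U$ and $v \in E$ small with $x+v \in U$. Bilinearity gives
\begin{align*}
B(f(x+v),g(x+v)) - B(f(x),g(x)) = B(f(x+v)-f(x),g(x)) + B(f(x),g(x+v)-g(x)) + B(f(x+v)-f(x),g(x+v)-g(x)).
\end{align*}
\begin{itemize}
\item In the first term we replace $f(x+v)-f(x)$ by $Df(x)v$; the error is $o(\|v\|)$, since $\|B(a,b)\| \leq \|B\|\,\|a\|\,\|b\|$ and $f$ is differentiable at $x$.
\item The second term is handled the same way, replacing $g(x+v)-g(x)$ by $Dg(x)v$.
\item The third term is bounded by $\|B\|\,\|f(x+v)-f(x)\|\,\|g(x+v)-g(x)\| = O(\|v\|^2)$, because differentiable maps are locally Lipschitz at the point.
\end{itemize}
This yields \eqref{Leibniz-rule}. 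The right-hand side is linear and bounded in $v$, so it is indeed the Fréchet derivative.

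\textbf{Part (b).} We argue by induction on $k$, the case $k=0$ being continuity of $B$. Write
\begin{align*}
D(B(f,g)) = B_1(Df,g) + B_2(f,Dg),
\end{align*}
where $B_1 : L(E,F)\times G \to L(E,H)$ is $B_1(T,b)v := B(Tv,b)$, and $B_2 : F \times L(E,G) \to L(E,H)$ is $B_2(a,S)v := B(a,Sv)$. Both $B_1$ and $B_2$ are continuous bilinear with norm at most $\|B\|$. If $f,g \in C^k$, then $Df,g$ and $f,Dg$ lie in $C^{k-1}$, so by the induction hypothesis applied to $B_1,B_2$ the derivative $D(B(f,g))$ is $C^{k-1}$. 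Hence $B(f,g)$ is $C^k$; the case $k=\infty$ follows by applying this for every finite $k$.

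\textbf{Part (c).} The same induction works, now tracking bounds. Iterating the formula, $D^j B(f,g)$ is a finite sum (binomial-type) of terms $\widetilde B(D^i f, D^{j-i} g)$ with continuous bilinear $\widetilde B$ of norm at most $\|B\|$. Each such term is bounded by $\|B\|\,\sup\|D^i f\|\,\sup\|D^{j-i}g\|$, so all $D^j B(f,g)$, $j \le k$, are bounded.

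The only step needing any care is the bookkeeping identifying $B_1,B_2$ as continuous bilinear maps on the operator spaces. Everything else is routine.
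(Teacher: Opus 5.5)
Your proof is correct and follows essentially the paper's route. For (a) and (b) the paper only cites a standard textbook result, where you give the usual self-contained argument (bilinear expansion, then induction on $k$ via the auxiliary continuous bilinear maps $B_1$, $B_2$ on the operator spaces); for (c) the paper invokes exactly the multidimensional Leibniz rule obtained by iterating \eqref{Leibniz-rule}, and bounds each term $\widetilde{B}(D^i f, D^{j-i} g)$ just as you do.
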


\begin{proof}
Parts (a) and (b) are immediate consequences of \cite[Thm. 2.4.4]{Abraham}, and part (c) is a consequence of the multidimensional Leibniz rule, which follows inductively from \eqref{Leibniz-rule}.
\end{proof}

\begin{proposition}[Chain rule]\label{prop-smooth-chain}
Let $U \subset E$ be an open subset, and let $f : U \to F$, $g : F \to G$ be differentiable mappings. Then the following statements are true:
\begin{enumerate}
\item[(a)] The composition $g \circ f : E \to G$ is also differentiable, and we have
\begin{align*}
D (g \circ f)(x) = Dg(f(x)) \circ Df(x), \quad x \in U.
\end{align*}
\item[(b)] If $f$ and $g$ are of class $C^k$, then $g \circ f$ is also of class $C^k$.

\item[(c)] If $f$ and $g$ are of class $C_b^k$, then $g \circ f$ is also of class $C_b^k$.

\item[(d)] If $f$ and $g$ are of class $C_{b,\loc}^k$, then $g \circ f$ is of class $C_{b,\loc}^k$.
\end{enumerate}
\end{proposition}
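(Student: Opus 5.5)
The argument reduces to the classical one-variable facts via the chain rule for compositions of two maps, proving (a) first and then inducting on $k$ for (b)--(d).

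For (a), I use the definition of Fréchet differentiability. Write $f(x+h)=f(x)+Df(x)h+r_f(h)$ with $\|r_f(h)\|=o(\|h\|)$, and $g(y+k)=g(y)+Dg(y)k+r_g(k)$ with $\|r_g(k)\|=o(\|k\|)$. Substitute $y=f(x)$ and $k=f(x+h)-f(x)$. Since $\|k\|\le(\|Df(x)\|+1)\|h\|$ for small $h$, the term $r_g(k)$ is $o(\|h\|)$. The term $Dg(f(x))r_f(h)$ is also $o(\|h\|)$. This gives the formula $D(g\circ f)(x)=Dg(f(x))\circ Df(x)$; alternatively one can simply cite \cite[Thm. 2.4.3]{Abraham}.

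For (b), I induct on $k$; the case $k=\infty$ follows from the finite cases. The case $k=0$ is continuity of compositions. For the step, write
\begin{align*}
D(g\circ f)=\Gamma\big(Dg\circ f,\,Df\big),
\end{align*}
where $\Gamma:L(F,G)\times L(E,F)\to L(E,G)$, $\Gamma(S,T)=ST$, is a continuous bilinear operator. If $f,g\in C^{k}$, then $Dg\in C^{k-1}$ and $f\in C^{k-1}$. By the induction hypothesis $Dg\circ f\in C^{k-1}$, and $Df\in C^{k-1}$. Proposition \ref{prop-Leibniz}(b) then gives $D(g\circ f)\in C^{k-1}$, hence $g\circ f\in C^k$.

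For (c), the same induction works with $C_b$ in place of $C$. Boundedness of $g\circ f$ is immediate from boundedness of $g$. For $j\ge1$, $D(g\circ f)=\Gamma(Dg\circ f,Df)$, where $Dg\circ f$ and $Df$ are of class $C_b^{k-1}$ by the induction hypothesis applied to $Dg\in C_b^{k-1}$ and $f\in C_b^{k-1}$. Proposition \ref{prop-Leibniz}(c) then yields $D(g\circ f)\in C_b^{k-1}$. Alternatively, the higher-order Faà di Bruno formula expresses $D^j(g\circ f)$ as a finite sum of terms $D^ig(f(x))(D^{l_1}f(x),\dots,D^{l_i}f(x))$, each bounded by products of sup-norms.

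The main obstacle is (d). Boundedness of $f$ on a bounded open $V$ only tells us that $f(V)$ is bounded, not that it lies in some open bounded set on which $g$ has bounded derivatives. The fix is to note that $f(V)\subset W:=\{y:\|y\|<R\}$ for some $R$ with $W$ open and bounded. On $V$, every factor $D^ig(f(x))$ is then bounded by $\sup_W\|D^ig\|<\infty$, since $g|_W\in C_b^k$, and every factor $D^lf(x)$ is bounded since $f|_V\in C_b^k$. Repeating the inductive argument of (c) on $V$ gives $(g\circ f)|_V\in C_b^k$.
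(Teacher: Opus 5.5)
Your proof is correct. It is more self-contained than the paper's, which simply cites \cite[Thm. 2.4.3]{Abraham} for (a) and (b), and the higher-order chain rule (the Fa\`a di Bruno formula) on \cite[p. 88]{Abraham} for (c) and (d).

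Instead of Fa\`a di Bruno, you induct on $k$ using the identity $D(g\circ f)=\Gamma(Dg\circ f,Df)$, where $\Gamma(S,T)=ST$ is continuous bilinear. This reduces (b) and (c) to the product rule (Proposition \ref{prop-Leibniz}(b),(c)) applied one derivative lower, and so avoids the combinatorics of the higher-order formula. It is also not circular, since the $C^k$ and $C_b^k$ parts of the product rule can be proved by induction from \eqref{Leibniz-rule} without using the chain rule. One small technicality: Proposition \ref{prop-Leibniz} is stated for differentiable maps. At the bottom step $k-1=0$ you should therefore just observe directly that $\Gamma(Dg\circ f,Df)$ is continuous, or start the induction at $k=1$.

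Your treatment of (d) makes explicit the one point the citation glosses over: $f(V)$ is bounded, hence contained in an open ball $W$ on which $g$ is $C_b^k$. To reuse the argument from (c), you need it in a slightly more general form, where $g$ is defined only on an open set $W\supset f(V)$ rather than on all of $F$. This is harmless, because nothing in your induction uses that $g$ is globally defined.
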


\begin{proof}
Parts (a) and (b) follow from \cite[Thm. 2.4.3]{Abraham}, and parts (c) and (d) are a consequence of the higher order chain rule; see \cite[p. 88]{Abraham}.
\end{proof}

\begin{proposition}\label{prop-smooth-linear}
Let $U \subset E$ be an open subset, let $f : U \to F$ be differentiable, and let $T \in L(F,G)$ be a continuous linear operator. Then the following statements are true:
\begin{enumerate}
\item[(a)] The composition $T \circ f : U \to G$ is also differentiable, and we have
\begin{align*}
D(T \circ f)(x)v = T ( D f(x)v ) \quad \text{for all $x \in U$ and $v \in E$.}
\end{align*}

\item[(b)] If $f$ is of class $C^k$, then $T \circ f$ is also of class $C^k$, and for all $j=0,\ldots,k$ we have
\begin{align}\label{comp-linear-f}
D^j(T \circ f) = T \circ D^j f.
\end{align}

\item[(c)] If $f$ is of class $C_b^k$, then $T \circ f$ is also of class $C_b^k$.
\end{enumerate}
\end{proposition}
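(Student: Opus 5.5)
Statement (a) is the ordinary chain rule for the composition $T \circ f$, combined with the fact that a continuous linear map is its own derivative. Statements (b) and (c) then follow from (a) by induction on the order of differentiation.

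\emph{Part (a).} Fix $x \in U$. Since $f$ is differentiable at $x$, we can write $f(x+h) = f(x) + Df(x)h + r(h)$ with $\|r(h)\|/\|h\| \to 0$ as $h \to 0$. Applying $T$ and using linearity gives
\begin{align*}
T f(x+h) = T f(x) + T(Df(x)h) + T r(h),
\end{align*}
and $\|T r(h)\| \leq \|T\| \, \|r(h)\| = o(\|h\|)$. So $T \circ f$ is differentiable at $x$ with derivative $T \circ Df(x) \in L(E,G)$. Alternatively, we can invoke Proposition \ref{prop-smooth-chain}(a) with $g = T$, using $DT(y) = T$ for all $y \in F$.

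\emph{Part (b).} We prove \eqref{comp-linear-f} by induction on $j$, starting from $D^0(T \circ f) = T \circ f$. For the inductive step, let $\widehat{T}_j : L^{(j)}(E,F) \to L^{(j)}(E,G)$ be the map $\widehat{T}_j(S) := T \circ S$. This map is linear and continuous with $\|\widehat{T}_j\| \leq \|T\|$. Suppose $D^j(T \circ f) = \widehat{T}_j \circ D^j f$ and $j < k$. Then $D^j f$ is differentiable, so part (a), applied with $\widehat{T}_j$ in place of $T$, gives
\begin{align*}
D^{j+1}(T \circ f) = \widehat{T}_j \circ D^{j+1} f.
\end{align*}
Under the canonical identification of $L(E, L^{(j)}(E,G))$ with $L^{(j+1)}(E,G)$, this is exactly $T \circ D^{j+1} f$. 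The same identity shows that $D^{j+1}(T \circ f)$ is continuous whenever $D^{j+1} f$ is, because $\widehat{T}_{j+1}$ is continuous. This gives $T \circ f \in C^k$.

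\emph{Part (c).} By \eqref{comp-linear-f}, $\|D^j(T \circ f)(x)\| \leq \|T\| \, \|D^j f(x)\|$ for every $j$ and every $x \in U$. Hence each derivative of $T \circ f$ is bounded whenever the corresponding derivative of $f$ is.

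The only step needing care is the bookkeeping in (b): we must check that the canonical identification of iterated derivatives with multilinear maps commutes with post-composition by $T$. This is a direct evaluation on $(v_1, \ldots, v_{j+1})$, so we expect no real obstacle.
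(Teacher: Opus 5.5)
Your proposal is correct and takes essentially the same route as the paper, which obtains (a) and (b) from the chain rule (Proposition \ref{prop-smooth-chain}) and (c) from the formula \eqref{comp-linear-f}. The only difference is that you write out the induction through the lifted operators $\widehat{T}_j$, which the paper leaves implicit.
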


\begin{proof}
Parts (a) and (b) are immediate consequences of the chain rule (Proposition \ref{prop-smooth-chain}), and part (c) follows from the formula \eqref{comp-linear-f}.
\end{proof}

\begin{corollary}\label{cor-self-adjoint}
Let $U \subset E$ be an open subset, let $f : U \to F$ be a mapping of class $C^k$, and let $T \in L(F)$ be a continuous linear operator such that $T f(x) = f(x)$ for all $x \in U$. Then for all $j=0,\ldots,k$ we have
\begin{align*}
T \circ D^j f = D^j f.
\end{align*}
\end{corollary}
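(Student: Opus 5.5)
The statement to prove is Corollary \ref{cor-self-adjoint}: if $T \in L(F)$ satisfies $T f(x) = f(x)$ for all $x \in U$, then $T \circ D^j f = D^j f$ for $j=0,\ldots,k$. The plan is to combine Proposition \ref{prop-smooth-linear} with the fact that two $C^k$ maps which agree on the open set $U$ have equal derivatives there.

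\textbf{Step 1: composition.} Since $T \in L(F,F)$ is a continuous linear operator and $f$ is of class $C^k$, Proposition \ref{prop-smooth-linear}(b) says that $T \circ f$ is of class $C^k$. It also gives the formula
\begin{align*}
D^j (T \circ f) = T \circ D^j f, \quad j=0,\ldots,k.
\end{align*}
Here $T \circ D^j f$ means that $T$ acts on the values of the symmetric $j$-linear maps. Concretely, $(T \circ D^j f)(x)(v_1,\ldots,v_j) = T\big(D^j f(x)(v_1,\ldots,v_j)\big)$.

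\textbf{Step 2: identity of functions.} By hypothesis, $T \circ f = f$ holds as functions on $U$. Because $U$ is open, derivatives of any order are determined by the function on $U$. Hence $D^j(T \circ f) = D^j f$ on $U$ for every $j \le k$, which can also be seen by a trivial induction on $j$.

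\textbf{Step 3: conclusion.} Combining Steps 1 and 2 gives $T \circ D^j f = D^j(T \circ f) = D^j f$ for each $j = 0,\ldots,k$. In the case $k = \infty$ this holds for every $j \in \bbn_0$, which is exactly the paper's convention.

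There is no genuine obstacle here. The only care needed is the interpretation of $T \circ D^j f$ as post-composition on multilinear maps, which is precisely the sense of formula \eqref{comp-linear-f}. The self-adjointness application in the paper, namely taking $T$ to be the adjoint map acting on $L_1(H)$, then follows directly.
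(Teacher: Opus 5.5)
Your proof is correct and follows the paper's route. The paper also treats this as an immediate consequence of Proposition \ref{prop-smooth-linear}: formula \eqref{comp-linear-f} gives $D^j(T\circ f) = T\circ D^j f$, and since $T\circ f = f$ on the open set $U$, this equals $D^j f$.
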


\begin{proof}
This is an immediate consequence of Proposition \ref{prop-smooth-linear}.
\end{proof}

The following result is also a consequence of the chain rule (Proposition \ref{prop-smooth-chain}).

\begin{proposition}\label{prop-smooth-linear-two}
Let $S \in L(E,F)$ be a linear operator, and let $f : F \to G$ be a differentiable function. Then the following statements are true:
\begin{enumerate}
\item[(a)] The composition $f \circ S : E \to G$ is also differentiable, and we have
\begin{align*}
D ( f \circ S )(x) = Df(Sx) \circ S, \quad x \in E,
\end{align*}
or, in other words,
\begin{align*}
D ( f \circ S )(x)v = Df(Sx)Sv \quad \text{for all $x,v \in E$.}
\end{align*}
\item[(b)] If $f$ is of class $C^k$, then $f \circ S$ is also of class $C^k$.
\end{enumerate}
\end{proposition}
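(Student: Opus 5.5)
The plan is to reduce both parts to the chain rule, Proposition \ref{prop-smooth-chain}, applied to $f \circ S$ with $S$ regarded as a differentiable map $E \to F$. First I verify the required properties of $S$. Since $S \in L(E,F)$ is linear and continuous, it is Fréchet differentiable with $DS(x) = S$ for every $x \in E$: the remainder $S(x+v) - Sx - Sv$ vanishes identically. The map $x \mapsto DS(x) = S$ is constant, so $D^2 S = 0$ and all higher derivatives vanish. Hence $S$ is of class $C^\infty$, and in particular of class $C^k$ for every $k$.

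For part (a), I apply Proposition \ref{prop-smooth-chain}(a) with $U = E$, with $S$ in the role of the inner map and $f$ in the role of the outer map. This gives directly
\begin{align*}
D(f \circ S)(x) = Df(Sx) \circ DS(x) = Df(Sx) \circ S, \quad x \in E.
\end{align*}
Evaluating both sides at $v \in E$ yields the pointwise form $D(f \circ S)(x)v = Df(Sx)Sv$.

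For part (b), suppose $f$ is of class $C^k$. Since $S$ is of class $C^\infty \supset C^k$, Proposition \ref{prop-smooth-chain}(b) gives that $f \circ S$ is of class $C^k$.

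I expect no genuine obstacle here. The only point needing any care is the observation that a bounded linear operator is $C^\infty$ with derivative equal to itself, which follows immediately from the definition of the Fréchet derivative. Everything else is a direct invocation of the chain rule.
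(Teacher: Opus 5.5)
Your proposal is correct and takes the same approach as the paper, which derives this result directly from the chain rule (Proposition \ref{prop-smooth-chain}). Your observation that $S$ is of class $C^\infty$ with $DS(x) = S$ is exactly what is needed to apply parts (a) and (b) of that proposition with $U = E$.
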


\begin{proposition}\label{prop-restrict-Cb}
Let $U \subset E$ be an open subset, and let $f : U \to F$ be a mapping of class $C^k$ for some finite number $k \in \bbn_0$. For each $x \in U$ there exists an open neighborhood $U_0 \subset U$ of $x$ such that $f|_{U_0}$ is of class $C_b^k$.
\end{proposition}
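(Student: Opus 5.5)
The plan is to derive this from continuity of the derivatives of $f$ on a small closed ball. Proposition \ref{prop-restrict-Cb} concerns a single map $f \in C^k(U,F)$ with $k$ finite, and $C_b^k$ on $U_0$ means that $D^j f|_{U_0}$ is bounded for $j=0,\ldots,k$. Each $D^j f : U \to L^{(j)}(E,F)$ is continuous for $j \le k$. This follows from the definition of class $C^k$: $D^j f$ is differentiable, hence continuous, for $j<k$, and $D^k f$ is continuous by assumption.

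Fix $x \in U$ and choose $r_0>0$ with $K(x,r_0) \subset U$, which is possible since $U$ is open. For each $j \in \{0,\ldots,k\}$, continuity of $D^j f$ at $x$ with tolerance $1$ gives $r_j \in (0,r_0]$ such that
\begin{align*}
\| D^j f(y) - D^j f(x) \| < 1 \quad \text{for all $y \in U$ with $\| y-x \| < r_j$.}
\end{align*}

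Set $r := \min\{ r_0, r_1, \ldots, r_k \} > 0$. This minimum is positive because only finitely many indices are involved, and this is the only place where finiteness of $k$ is used. Define $U_0 := \{ y \in E : \| y-x \| < r \}$, an open ball contained in $U$. Then for $y \in U_0$ and every $j$ we get $\| D^j f(y) \| \le \| D^j f(x) \| + 1$, so all $D^j f$, $j=0,\ldots,k$, are bounded on $U_0$. The derivatives of $f|_{U_0}$ coincide with the restrictions $D^j f|_{U_0}$, since differentiation is local on the open set $U_0$, so $f|_{U_0}$ is of class $C_b^k$.

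As a bonus, $U_0$ is convex, so the neighborhood also satisfies the hypothesis of Proposition \ref{prop-Cb1-Lipschitz}, which is how the result is used elsewhere in the paper. There is no real obstacle here. The only subtle point is the case $k=\infty$, where the minimum over infinitely many radii could be zero; this is why the statement restricts to finite $k$.
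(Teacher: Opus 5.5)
Your proof is correct and is essentially the paper's argument: the paper sets $r := 1 + \sum_{j=0}^k \| D^j f(x) \|$ and takes $U_0 := \bigcap_{j=0}^k \{ y \in U : \| D^j f(y) \| < r \}$, which is open by continuity of the finitely many maps $\| D^j f \|$, the same mechanism you use. Your choice of an open ball is a small improvement, because it directly gives the convex neighborhood that the paper later needs in Lemma \ref{lemma-derivative-sequence}; the paper's own $U_0$ would first have to be shrunk to a ball.
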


\begin{proof}
Recalling the notation $D^0 f := f$, we define the finite number $r > 0$ as
\begin{align*}
r := 1 + \sum_{j=0}^k \| D^j f(x) \|.
\end{align*}
Now, we define the subset $U_0 \subset U$ as
\begin{align*}
U_0 := \bigcap_{j=0}^k \{ y \in U : \| D^j f(y) \| < r \}.
\end{align*}
By the definition of $r$ and the continuity of $\| D^j f \|$ for $j=0,\ldots,k$, the subset $U_0$ is an open neighborhood of $x$, and $f|_{U_0}$ is of class $C_b^k$.
\end{proof}

\begin{lemma}\label{lemma-derivative-sequence}
Let $U \subset E$ be an open subset, and let $f : U \to F$ be a mapping of class $C^1$. Let $x \in U$ and $v \in E$ be arbitrary. Let $(t_n)_{n \in \bbn} \subset \bbr \setminus \{ 0 \}$ and $(v_n)_{n \in \bbn} \subset E$ be sequences such that $t_n \to 0$ and $v_n \to v$. Then we have
\begin{align*}
D f(x) v = \lim_{n \to \infty} \frac{f(x+t_n v_n) - f(x)}{t_n}.
\end{align*}
\end{lemma}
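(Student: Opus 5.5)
The plan is to reduce the statement to the first order Taylor expansion of $f$ at $x$, as provided by the Taylor theorem already used in Lemmas \ref{lemma-phi-in-N1} and \ref{lemma-phi-in-N2} (see \cite[Thm. 2.4.15]{Abraham}). Since $U$ is open and $t_n v_n \to 0$ (the sequence $(v_n)$ is convergent, hence bounded, and $t_n \to 0$), there is an index $n_0$ such that $x + t_n v_n$ lies in a fixed open ball $B \subset U$ around $x$ for all $n \geq n_0$. Discarding finitely many terms, we may therefore assume $x + t_n v_n \in B$ for all $n$.

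On the convex set $B$ we write, by Taylor's theorem,
\begin{align*}
f(x+z) = f(x) + Df(x) z + R(z) z, \quad x + z \in B,
\end{align*}
where $R : B - x \to L(E,F)$ is continuous with $R(0) = 0$. An alternative is the mean value form $R(z) = \int_0^1 (Df(x+sz) - Df(x)) \, ds$, which is continuous with $R(0)=0$ because $Df$ is continuous; either version suffices. Inserting $z = t_n v_n$ and dividing by $t_n \neq 0$, we obtain
\begin{align*}
\frac{f(x+t_n v_n) - f(x)}{t_n} = Df(x) v_n + R(t_n v_n) v_n.
\end{align*}
This identity is valid for both signs of $t_n$, because $t_n$ is a scalar and the linear operators $Df(x)$ and $R(t_n v_n)$ commute with scalar multiplication.

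It then remains to pass to the limit. First, $Df(x) v_n \to Df(x) v$ by the continuity of the bounded linear operator $Df(x)$. Second, $\| R(t_n v_n) v_n \| \leq \| R(t_n v_n) \| \, \sup_m \| v_m \| \to 0$, since $t_n v_n \to 0$, $R$ is continuous at $0$ with $R(0) = 0$, and $(v_n)$ is bounded. Combining the two limits yields the claimed identity. There is no genuine obstacle here; the only point requiring care is confirming that the remainder in the Taylor theorem is operator-valued and continuous, which is exactly where the $C^1$ assumption (rather than mere differentiability at $x$) is used.
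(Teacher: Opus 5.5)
Your proof is correct, but it takes a different route from the paper. You expand $f$ directly along the perturbed increments $t_n v_n$ with a first-order Taylor formula, so the difference quotient becomes $Df(x)v_n + R(t_nv_n)v_n$, and you then handle the two terms separately.

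The paper never expands along $t_nv_n$. It uses Propositions \ref{prop-restrict-Cb} and \ref{prop-Cb1-Lipschitz} to make $f$ Lipschitz, with some constant $L$, on a convex neighborhood $U_0$ of $x$. It then estimates
\begin{align*}
\bigg\| \frac{f(x+t_nv) - f(x+t_nv_n)}{t_n} \bigg\| \leq L \| v - v_n \|,
\end{align*}
which reduces the claim to the ordinary directional derivative of $f$ at $x$ along the fixed vector $v$. This is the classical argument that local Lipschitz continuity plus a directional derivative gives the Hadamard-type limit. It genuinely uses the $C^1$ hypothesis, through local Lipschitz continuity. On the other hand, it only needs the G\^{a}teaux derivative at $x$.

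Your route needs less than your closing remark suggests. The only property of the remainder you use is $\| R(t_nv_n)v_n \| \to 0$. Since
\[
R(t_nv_n)v_n = \frac{f(x+t_nv_n) - f(x) - Df(x)t_nv_n}{t_n},
\]
its norm vanishes when $t_nv_n = 0$. Otherwise it equals $\|f(x+t_nv_n)-f(x)-Df(x)t_nv_n\|/\|t_nv_n\|$ times the bounded quantity $\|v_n\|$. That already tends to $0$ as soon as $f$ is Fr\'{e}chet differentiable at the single point $x$. So neither the continuity of $R$ away from $0$ nor the $C^1$ assumption is actually used. Your argument proves the lemma under the weaker hypothesis that $f$ is merely Fr\'{e}chet differentiable at $x$. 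The paper's argument instead works under local Lipschitz continuity plus directional differentiability at $x$. Both sets of hypotheses are implied by $C^1$.
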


\begin{proof}
By Proposition \ref{prop-restrict-Cb} there exists an open convex neighborhood $U_0 \subset U$ of $x$ such that $f|_{U_0}$ is of class $C_b^1$. By Proposition \ref{prop-Cb1-Lipschitz} the mapping $f|_{U_0}$ is Lipschitz continuous. Since $t_n \to 0$, may assume that $x + t_n v \in U_0$ and $x + t_n v_n \in U_0$ for each $n \in \bbn$. Denoting by $L > 0$ the Lipschitz constant of $f|_{U_0}$, we have
\begin{align*}
\bigg\| \frac{f(x + t_n v) - f(x + t_n v_n)}{t_n} \bigg\| \leq \frac{L |t_n| \, \|v-v_n\|}{|t_n|} = L\|v-v_n\| \to 0 \quad \text{as $n \to \infty$,}
\end{align*}
and it follows that
\begin{align*}
Df(x)v &= \lim_{n \to \infty} \frac{f(x + t_n v) - f(x)}{t_n}
\\ &= \lim_{n \to \infty} \frac{f(x + t_n v_n) - f(x)}{t_n} + \lim_{n \to \infty} \frac{f(x + t_n v) - f(x + t_n v_n)}{t_n}
\\ &= \lim_{n \to \infty} \frac{f(x+t_n v_n) - f(x)}{t_n}.
\end{align*}
This completes the proof.
\end{proof}

\begin{lemma}\label{lemma-bump-r-R}
Let $H$ be a Hilbert space, and let $0 < r < R < \infty$ be arbitrary. There exists a function $\varphi : H \to [0,1]$ of class $C_b^{\infty}$ such that $\varphi(x) = 1$ for all $x \in H$ with $\| x \| \leq r$ and $\varphi(x) = 0$ for all $x \in H$ with $\| x \| \geq R$.
\end{lemma}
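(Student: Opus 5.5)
The plan is to reduce everything to a smooth cutoff of the real line composed with a smooth function of the norm. Since $x \mapsto \|x\|$ fails to be smooth at $0$, I will use the squared norm $\theta(x) := \|x\|^2$ instead. It is a continuous quadratic form, so it is of class $C^\infty$ with
\begin{align*}
D\theta(x)v = 2\langle x,v\rangle, \qquad D^2\theta(x)(v,w) = 2\langle v,w\rangle,
\end{align*}
and all higher derivatives vanish.

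First, I fix a function $h \in C^\infty(\bbr;[0,1])$ with $h(s) = 1$ for $s \leq r^2$ and $h(s) = 0$ for $s \geq R^2$. This is the standard construction: take $g(s) = e^{-1/s}$ for $s>0$ and $g(s) = 0$ otherwise, and set
\begin{align*}
h(s) := \frac{g(R^2 - s)}{g(R^2-s) + g(s - r^2)}.
\end{align*}
The denominator is never zero because $r < R$. The function $h$ is smooth, and every derivative $h^{(j)}$ has compact support in $[r^2,R^2]$ (apart from $h$ itself, which is constant outside that interval). Hence $h$ and all its derivatives are bounded.

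Next, I define $\varphi := h \circ \theta$. By Proposition \ref{prop-smooth-chain}, $\varphi$ is of class $C^\infty$. It takes values in $[0,1]$, equals $1$ on $\{\|x\|\le r\}$ and equals $0$ on $\{\|x\|\ge R\}$.

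The step I expect to need the most care is the boundedness of all derivatives, because $\theta$ itself is not of class $C_b^k$ on $H$: its first derivative grows linearly in $\|x\|$. So the global statement (c) of the chain rule does not apply directly. I would instead use the higher order chain rule (Fa\`a di Bruno). For each $j$, $D^j\varphi(x)$ is a finite sum of terms $h^{(k)}(\theta(x))$ composed with products of $D^{i}\theta(x)$, $i\in\{1,2\}$. These terms vanish unless $r^2 \le \|x\|^2 \le R^2$, apart from the trivial region $\|x\|<r$ where $\varphi$ is constant. On the annulus $\|x\|\le R$ we have $\|D\theta(x)\| \le 2R$ and $\|D^2\theta(x)\| = 2$. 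Each term is therefore bounded by a constant depending only on $j$, $R$ and $\sup|h^{(k)}|$.

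Equivalently, the argument can be localized. Choose $\tilde\theta$ agreeing with $\theta$ on the ball of radius $R+1$, note that $\theta$ restricted to that ball is $C_b^\infty$, and apply Proposition \ref{prop-smooth-chain}(d) together with the fact that $\varphi$ is locally constant outside the annulus. This shows $\varphi \in C_b^\infty$.
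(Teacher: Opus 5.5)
Your proposal is correct and follows essentially the paper's route. The paper composes a smooth real cutoff with $\|\cdot\|^2$, notes that $\|\cdot\|^2 \in C_{b,\loc}^\infty$ so that Proposition~\ref{prop-smooth-chain}(d) gives local boundedness of all derivatives (exactly your localization variant), and then uses that $\varphi$ vanishes outside the ball of radius $R$ to conclude $\varphi \in C_b^\infty$. Your thresholds $r^2$ and $R^2$ are the right ones for composing with $\|x\|^2$; the paper's proof writes the cutoff conditions as $|y|^2 \le r$ and $|y|^2 \ge R$, which is a slip.
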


\begin{proof}
There exists a function $f : \bbr \to [0,1]$ of class $C^{\infty}$ such that $f(y) = 1$ for all $y \in \bbr$ with $|y|^2 \leq r$ and $f(y) = 0$ for all $y \in \bbr$ with $|y|^2 \geq R$. Since $f$ has compact support, it is even of class $C_b^{\infty}$. Now, we define $g : H \to \bbr_+$ as $g(x) = \| x \|^2$, $x \in H$. Then $g$ is of class $C_{b,\loc}^{\infty}$. Furthermore, we define $\varphi := f \circ g : H \to [0,1]$. By Proposition \ref{prop-smooth-chain} the mapping $\varphi$ is of class $C_{b,\loc}^{\infty}$, and we have $\varphi(x) = 1$ for all $x \in H$ with $\| x \| \leq r$ and $\varphi(x) = 0$ for all $x \in H$ with $\| x \| \geq R$. The latter property ensures that $\varphi$ is even of class $C_b^{\infty}$.
\end{proof}

\begin{proposition}\label{prop-modify-fct-bdd}
Let $H$ be a Hilbert space, let $U \subset H$ be an open subset, and let $f : U \to F$ be a mapping of class $C^k$ for some finite number $k \in \bbn_0$. For each $x \in U$ there exist an open neighborhood $N(x) \subset U$ of $x$ and a mapping $\hat{f} : H \to F$ of class $C_b^k$ such that $\hat{f}|_{N(x)} = f|_{N(x)}$.
\end{proposition}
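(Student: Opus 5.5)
We reduce to the global cut-off construction of Lemma \ref{lemma-bump-r-R} combined with the local boundedness from Proposition \ref{prop-restrict-Cb}. First, by Proposition \ref{prop-restrict-Cb} there is an open neighborhood $U_0 \subset U$ of $x$ such that $f|_{U_0}$ is of class $C_b^k$. We then shrink this to a ball: choose $\rho > 0$ with $K(x,3\rho) \subset U_0$, and set $N(x) := \{ y \in H : \| y-x \| < \rho \}$.

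Next we build a smooth cut-off. By Lemma \ref{lemma-bump-r-R}, applied with $r = \rho$ and $R = 2\rho$ and translated by $x$, there is $\varphi : H \to [0,1]$ of class $C_b^{\infty}$ with $\varphi = 1$ on $K(x,\rho)$ and $\varphi = 0$ outside the open ball of radius $2\rho$ around $x$. The translation preserves the $C_b^\infty$ property by Proposition \ref{prop-smooth-linear-two}, together with the chain rule for the affine map $y \mapsto y - x$. We then define
\begin{align*}
\hat f(y) := \begin{cases} \varphi(y) f(y), & y \in U_0, \\ 0, & y \in H \setminus U_0. \end{cases}
\end{align*}

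We verify the required properties in order.

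\emph{Agreement on $N(x)$.} Clearly $\hat f = f$ on $N(x)$, since $\varphi = 1$ there.

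\emph{Class $C^k$.} Every $y \in H$ lies in one of the two open sets $U_0$ or $H \setminus K(x,2\rho)$. On the second set $\hat f$ vanishes identically, because $\varphi = 0$ outside the ball of radius $2\rho$. On $U_0$, the map $\hat f$ is the product $B(\varphi, f)$ with the continuous bilinear map $B : \bbr \times F \to F$, $(\lambda, z) \mapsto \lambda z$, so it is $C^k$ by Proposition \ref{prop-Leibniz}(b). On the overlap of the two sets both descriptions vanish and hence agree, so $\hat f$ is $C^k$ on all of $H$.

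\emph{Bounded derivatives.} Proposition \ref{prop-Leibniz}(c) applies on $U_0$, since $\varphi|_{U_0}$ and $f|_{U_0}$ are both $C_b^k$. It gives bounds for $D^j \hat f$, $j = 0,\ldots,k$, on $U_0$, and these derivatives vanish identically outside $K(x,2\rho)$. Hence all the $D^j \hat f$ are bounded on $H$, and $\hat f \in C_b^k(H,F)$.

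The only delicate point is the gluing of $C^k$ regularity across the boundary of $U_0$. It is handled by choosing the radius so that the support of $\varphi$ lies strictly inside $U_0$, namely $2\rho < 3\rho$; every point then has a neighborhood on which one of the two formulas applies.
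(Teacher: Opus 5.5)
Your proof is correct and follows the same route as the paper: localize with Proposition~\ref{prop-restrict-Cb}, multiply by a cut-off from Lemma~\ref{lemma-bump-r-R}, and apply the Leibniz rule (Proposition~\ref{prop-Leibniz}). Your version is more careful than the paper's on three points: you center the bump at $x$ (the paper's bump is centered at the origin), you define $\hat f$ explicitly off $U_0$ by extension by zero, and you keep the support of $\varphi$ strictly inside $U_0$, which makes the $C^k$ gluing immediate (in the paper, the closure of $\{\varphi \neq 0\}$ may reach the boundary of the ball contained in $U_0$).
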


\begin{proof}
By Proposition \ref{prop-restrict-Cb} there is an open neighborhood $U_0 \subset U$ of $x$ such that $f|_{U_0}$ is of class $C_b^k$. There exists $R > 0$ such that $\{ y \in H : \| x-y \| < R \} \subset U_0$. We set $r := \frac{R}{2}$ and define the subset $N(x) \subset U_0$ as the open ball
\begin{align*}
N(x) := \{ y \in H : \| x-y \| < r \}.
\end{align*}
Then $N(x)$ is an open neighborhood of $x$. Furthermore, by Lemma \ref{lemma-bump-r-R} there exists a function $\varphi : H \to [0,1]$ of class $C_b^{\infty}$ such that $\varphi(x) = 1$ for all $x \in H$ with $\| x \| \leq r$ and $\varphi(x) = 0$ for all $x \in H$ with $\| x \| \geq R$. Using Proposition \ref{prop-Leibniz}, the mapping $\hat{f} := \varphi \cdot f : H \to F$ is of class $C_b^k$, and moreover we have $\hat{f}|_{N(x)} = f|_{N(x)}$.
\end{proof}

We will need the following version of the abstract implicit function theorem; see, e.g. \cite[Thm. I.5.9]{Lang}.

\begin{theorem}[Implicit Function Theorem]\label{thm-implicit}
Let $U \subset E$, $V \subset F$ be open and $f : U \times V \to G$ be of class $C^k$ for some $k \in \bbn$. We assume there are $x_0 \in U$ and $y_0 \in V$ such that $f(x_0,y_0) = 0$, and that the linear operator $D_2 f(x_0,y_0) \in L(F,G)$ is an isomorphism. Then there are an open neighborhood $U_0$ of $x_0$ and a unique map $g : U_0 \to V$ of class $C^k$ such that $g(x_0) = y_0$ and
\begin{align*}
f(x,g(x)) = 0 \quad \forall x \in U_0.
\end{align*}
\end{theorem}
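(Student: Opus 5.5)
The plan is to exploit the fact that $f$ is of class $C^k$ on $U\times V$ with $T:=D_2 f(x_0,y_0)$ an isomorphism. I would reduce existence of $g$ to a parametrized fixed point problem solved by the Banach contraction principle, and then bootstrap regularity. After replacing $f$ by $T^{-1}\circ f$ (which preserves $C^k$ regularity by Proposition \ref{prop-smooth-linear}), I may assume $G=F$ and $D_2 f(x_0,y_0)=\Id_F$. For each $x$ near $x_0$ I define
\begin{align*}
\Phi_x(y) := y - f(x,y),
\end{align*}
so that zeros of $f(x,\cdot)$ are exactly the fixed points of $\Phi_x$.

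\emph{Contraction.} Since $D_2 f$ is continuous, I can choose closed balls $\bar B(x_0,\delta)\subset U$ and $\bar B(y_0,\rho)\subset V$ such that $\|\Id - D_2 f(x,y)\|\le \tfrac12$ on the product. By the mean value inequality, each $\Phi_x$ is then $\tfrac12$-Lipschitz on $\bar B(y_0,\rho)$. Shrinking $\delta$ so that $\|f(x,y_0)\|\le \rho/2$, I get $\|\Phi_x(y)-y_0\|\le \tfrac12\|y-y_0\| + \|f(x,y_0)\|\le\rho$, so $\Phi_x$ maps $\bar B(y_0,\rho)$ into itself. The contraction principle then yields a unique fixed point $g(x)$ in this ball, and uniqueness of $g$ follows from uniqueness of the fixed point (after shrinking $U_0$ so that any competing continuous solution stays in the ball). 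The contraction estimate also gives $\|g(x)-g(x')\|\le 2\|f(x,g(x'))-f(x',g(x'))\|$, so $g$ is continuous.

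\emph{Differentiability and $C^k$ regularity.} This is the main obstacle. I would first show differentiability of $g$ with the candidate derivative
\begin{align*}
Dg(x) = -D_2 f(x,g(x))^{-1} D_1 f(x,g(x)).
\end{align*}
This candidate makes sense because $D_2 f(x,g(x))$ stays invertible near $(x_0,y_0)$ by the Neumann series. To verify it, I would expand $f$ at $(x,g(x))$ and use the Lipschitz-type estimate $\|g(x+h)-g(x)\|=O(\|h\|)$, which follows from the contraction, to control the remainder. Once this formula holds, $Dg$ is a composition of continuous maps: inversion on the open set of isomorphisms is smooth, see \cite[Lemma 2.5.5]{Abraham}. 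Hence $g\in C^1$. Finally, induction on $k$ via the chain rule (Proposition \ref{prop-smooth-chain}) gives $g\in C^k$: if $g\in C^{j}$ with $j<k$, then the right-hand side above is $C^{j}$, so $g\in C^{j+1}$.
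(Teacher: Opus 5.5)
The paper gives no proof of this theorem; it quotes it from \cite[Thm. I.5.9]{Lang}. There the implicit mapping theorem is deduced from the inverse mapping theorem. After normalizing $D_2 f(x_0,y_0)=\Id$, one applies the inverse mapping theorem to $\varphi(x,y):=(x,f(x,y))$, whose derivative at $(x_0,y_0)$ is the invertible map $(h,k)\mapsto (h,D_1f(x_0,y_0)h+k)$. One then takes $g(x)$ to be the second component of $\varphi^{-1}(x,0)$, so the $C^k$ regularity of $g$ is inherited directly from the local inverse.

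Your argument takes the other standard route: a contraction with parameter produces $g$ directly, and you then prove differentiability and bootstrap to $C^k$ by hand via $Dg=-D_2f(\cdot,g)^{-1}D_1f(\cdot,g)$. This is self-contained and avoids the inverse mapping theorem. The price is that you redo the differentiability/bootstrap computation that Lang's route performs once, inside the inverse mapping theorem. Your estimates are correct: the self-map property, the $\tfrac12$-Lipschitz bound, the continuity bound $\|g(x)-g(x')\|\le 2\|f(x,g(x'))-f(x',g(x'))\|$, and the induction. The $O(\|h\|)$ bound also holds, once combined with the mean value inequality and local boundedness of $D_1 f$.

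One step needs tightening. You propose ``shrinking $U_0$ so that any competing continuous solution stays in the ball'', but this cannot be done uniformly, because how small the neighborhood must be depends on the competitor $\tilde g$. The fix:
\begin{itemize}
\item Take $U_0$ to be an open ball on which $\|f(x,y_0)\|<\rho/2$. Your self-map estimate gives $\|g(x)-y_0\|\le 2\|f(x,y_0)\|<\rho$, so $g(U_0)\subset B(y_0,\rho)$, the open ball.
\item The set $\{x\in U_0:\tilde g(x)=g(x)\}$ contains $x_0$ and is closed in $U_0$ by continuity.
\item It is open: near a point where $\tilde g=g$, continuity keeps $\tilde g(x)$ inside $\bar B(y_0,\rho)$. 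Uniqueness of the fixed point of $\Phi_x$ there then forces $\tilde g(x)=g(x)$.
\end{itemize}
Connectedness of $U_0$ now gives $\tilde g=g$ on all of $U_0$. This is essentially the connectedness argument behind the uniqueness assertion in Lang.
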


We will also require the following Nagumo type theorem. Let $a : E \to E$ be a Lipschitz continuous mapping. Then existence and uniqueness of solutions for the $E$-valued ODE
\begin{align}\label{ODE}
\left\{
\begin{array}{rcl}
y'(t) & = & a(y(t))
\\ y(0) & = & y
\end{array}
\right.
\end{align}
holds true.

\begin{theorem}\label{thm-Nagumo-Banach}
For a closed subset $\cald \subset E$ the following statements are equivalent:
\begin{enumerate}
\item[(i)] $\cald$ is invariant for the ODE \eqref{ODE}.

\item[(ii)] We have $a(y) \in T_{\cald}^b(y)$ for all $y \in \cald$.
\end{enumerate}
\end{theorem}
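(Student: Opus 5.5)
The statement is the Nagumo-type equivalence for the ODE \eqref{ODE} with Lipschitz $a$, in a Banach space $E$. By invariance we mean that for every $y\in\cald$ the unique global solution of \eqref{ODE} stays in $\cald$ for all $t\ge 0$. Global existence and uniqueness hold because $a$ is Lipschitz.

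\emph{The implication (i) $\Rightarrow$ (ii).} Fix $y\in\cald$ and let $y(\cdot)$ be the solution starting at $y$. By (i), $y(t)\in\cald$ for all $t\ge0$, and by differentiability
\[
y(t)=y+t\,v_t \quad\text{with}\quad v_t:=\frac{y(t)-y}{t}\to a(y) \text{ as } t\to0^+ .
\]
Taking any sequence $t_n\to0^+$ and $v_n:=v_{t_n}$, Proposition \ref{prop-cones}(3) gives $a(y)\in T_{\cald}^b(y)$. This direction is immediate.

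\emph{The implication (ii) $\Rightarrow$ (i).} I would use the classical distance-function argument. Fix $y_0\in\cald$, let $y(\cdot)$ be the solution, and set $g(t):=d_{\cald}(y(t))$. This is Lipschitz on bounded intervals with $g(0)=0$. The aim is the differential inequality
\[
D^+g(t):=\limsup_{h\downarrow0}\frac{g(t+h)-g(t)}{h}\le L\,g(t),
\]
where $L$ is the Lipschitz constant of $a$. Gronwall (in Dini-derivative form, applied to $e^{-Lt}g(t)$) then gives $g\equiv0$, and closedness of $\cald$ yields $y(t)\in\cald$.

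To obtain the inequality, fix $t$ and $\epsilon>0$, and choose $z\in\cald$ with $\|y(t)-z\|\le g(t)+\epsilon h$. Here is the main obstacle: in a general Banach space nearest points need not exist, and the near-point $z$ depends on $h$. I would handle this as follows.
\begin{enumerate}
\item By (ii), $a(z)\in T_{\cald}^b(z)$, so $\liminf_{s\downarrow0} d_{\cald}(z+s\,a(z))/s=0$.
\item The triangle inequality gives
\[
d_{\cald}(y(t+h))\le \|y(t+h)-y(t)-h\,a(y(t))\| + \|y(t)-z\| + h\,\|a(y(t))-a(z)\| + d_{\cald}(z+h\,a(z)).
\]
\end{enumerate}
The first term is $o(h)$, the second is at most $g(t)+\epsilon h$, and the third is at most $hL(g(t)+\epsilon h)$. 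The last term is only small along a sequence $h_n$ depending on $z$, so the liminf in the Bouligand cone does not directly control a limsup.

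The standard remedy is not to estimate $D^+g$ pointwise. Instead I would construct Euler-type $\epsilon$-approximate solutions that stay in $\cald$, following Deimling's proof of Nagumo's theorem in Banach spaces.
\begin{enumerate}
\item Using the liminf property at each current point $x_k\in\cald$, pick a step $h_k\le\epsilon$ and a point $x_{k+1}\in\cald$ with $\|x_{k+1}-x_k-h_k\,a(x_k)\|\le\epsilon h_k$.
\item Join the points piecewise linearly. The resulting path $u_\epsilon$ satisfies $\|u_\epsilon'(s)-a(u_\epsilon(s))\|\le \epsilon+L\,\mathrm{osc}\le C\epsilon$, and its nodes lie in $\cald$.
\item Show that the steps can be chosen so the partition reaches any prescribed $T$. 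Take $h_k$ maximal up to a factor $1/2$ among admissible steps, and argue by contradiction that the times $t_k$ cannot accumulate before $T$. Accumulation would force the nodes to converge, by the bounded speed, to a point of $\cald$, and the cone condition at that limit point gives a uniform step, a contradiction.
\end{enumerate}
Gronwall with Lipschitz $a$ then gives $\|u_\epsilon(s)-y(s)\|\le C\epsilon\,e^{LT}$, and the nodes are $\epsilon$-dense in time. Letting $\epsilon\to0$ yields $d_{\cald}(y(s))=0$ for all $s\le T$, hence $y(s)\in\cald$.

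The reachability of $T$ in step 3 is the step I expect to be delicate.
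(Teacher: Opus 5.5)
Your proof is correct, but it takes a different route from the paper. The paper gives no self-contained argument. It notes that a closed set is locally closed and that a Lipschitz map is positively sublinear in the terminology of \cite{Vrabie}. It then reads off both implications from Thms.~1.1 and 1.2 there, with $f\equiv 0$ and $h(t,y)=a(y)$.

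You instead argue directly. Necessity comes from the difference quotients $(y(t_n)-y)/t_n\to a(y)$ together with Proposition~\ref{prop-cones}(3). Sufficiency comes from Deimling-type Euler polygons with nodes in $\cald$, a maximal-step argument to reach the final time, and a Gronwall comparison with the unique solution.

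The step you flag as delicate does go through. Suppose $t_k\uparrow t^*<T$. Then $h_k\to0$ and $x_k\to x^*\in\cald$. Pick $\eta<\min\{\epsilon,T-t^*\}$ with $d_{\cald}(x^*+\eta a(x^*))<\frac{\epsilon}{2}\eta$. Since $d_{\cald}$ is $1$-Lipschitz and $a$ is continuous, $d_{\cald}(x_k+\eta a(x_k))<\epsilon\eta$ for all large $k$. So $\eta$ is admissible at $x_k$, which contradicts $h_k\ge \bar h_k/2\to 0$.

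Two points should be made explicit in a write-up:
\begin{itemize}
\item The a priori bound on $\|a(x_k)\|$ along the nodes. It follows from $\|a(x)\|\le\|a(y_0)\|+L\|x-y_0\|$ and a discrete Gronwall estimate. You use it both for the defect bound $C\epsilon$ and for the Cauchy property of the nodes.
\item The case $t_k\uparrow T$ without reaching $T$. Handle it by continuous extension, or by running the construction up to $T+1$.
\end{itemize}
You should also phrase admissibility as $d_{\cald}(x_k+h\,a(x_k))<\epsilon h$, so that a point $x_{k+1}\in\cald$ exists even though infima need not be attained in a Banach space.

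As for what each route buys: the citation is short and covers much more general settings (non-autonomous, semilinear, set-valued right-hand sides, locally closed sets), but the reader must match hypotheses in an external framework. Your proof is elementary and self-contained. It uses the Lipschitz assumption exactly where it matters: the Gronwall comparison with the unique solution turns viable approximate solutions into invariance of the actual flow. It also shows why the weak $\liminf$ in the definition of $T_{\cald}^b$ suffices, which your first distance-function attempt could not exploit directly.
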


\begin{proof}
Since the subset $\cald$ is closed, it is also locally closed in the terminology of \cite{Vrabie}. Moreover, since the mapping $a$ is Lipschitz continuous, it satisfies the linear growth condition, and hence it is positively sublinear in the terminology of \cite{Vrabie}. Consequently, the stated result is a consequence of \cite[Thms. 1.1 and 1.2]{Vrabie} (applied with $f \equiv 0$ and $h(t,y) = a(y)$).
\end{proof}

\section{Linear operators in Hilbert spaces}\label{app-operators-Hilbert}

In this appendix we provide the required results about linear operators in Hilbert spaces. In the sequel the symbols $H$ and $H_i$ for some $i \in \bbn$ denote separable Hilbert spaces. We use the notations $L(H_1,H_2)$, $L_1(H_1,H_2)$, $L_2(H_1,H_2)$ for the spaces of all bounded linear operators, nuclear operators and Hilbert-Schmidt operators from $H_1$ to $H_2$.

\begin{proposition}\cite[Prop. B.0.7]{Liu-Roeckner}
The space $L_2(H_1,H_2)$ is also a separable Hilbert space.
\end{proposition}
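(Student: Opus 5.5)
The plan is to fix an orthonormal basis $\{g_k\}_{k\in\bbn}$ of $H_1$ and identify $L_2(H_1,H_2)$ with a closed subspace of an $\ell^2$-type space. The inner product is
\[
\la S,T \ra_{L_2} := \sum_{k=1}^{\infty} \la S g_k, T g_k \ra_{H_2}.
\]
Step one is to check that this series converges absolutely, by Cauchy--Schwarz in $\ell^2$. It is bilinear, symmetric and positive definite. Moreover, the induced norm $\sum_k \|Tg_k\|^2$ does not depend on the chosen basis: by Parseval applied twice, using $\sum_k \|Tg_k\|^2 = \sum_{k,l} |\la T g_k, h_l\ra|^2 = \sum_l \|T^* h_l\|^2$ for an orthonormal basis $\{h_l\}$ of $H_2$.

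Step two is completeness. The map $J : T \mapsto (Tg_k)_{k}$ is an isometry from $L_2(H_1,H_2)$ into the Hilbert space $\ell^2(\bbn;H_2)$. Completeness of $L_2(H_1,H_2)$ therefore amounts to showing that the range of $J$ is closed, and in fact I will show that $J$ is onto. Given $(y_k)\in \ell^2(\bbn;H_2)$, define $T x := \sum_k \la x,g_k\ra y_k$. This series converges, since
\[
\sum_k |\la x,g_k\ra|\,\|y_k\| \le \|x\| \Big(\sum_k \|y_k\|^2\Big)^{1/2}.
\]
The same estimate shows that $T$ is bounded, and clearly $Tg_k = y_k$. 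Hence $J$ is an isometric isomorphism, and $L_2(H_1,H_2)$ is a Hilbert space.

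Step three is separability. Let $\{h_l\}$ be a countable orthonormal basis of $H_2$. The rank-one operators $E_{kl} := \la \cdot, g_k\ra h_l$ satisfy $\la E_{kl},E_{k'l'}\ra_{L_2} = \delta_{kk'}\delta_{ll'}$, so they form an orthonormal system. This system is complete: if $T\perp E_{kl}$ for all $k,l$, then $\la Tg_k,h_l\ra=0$ for all $k,l$, which forces $T=0$. A countable orthonormal basis gives separability (alternatively, $\ell^2(\bbn;H_2)$ is separable since $H_2$ is).

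The only delicate point is the basis-independence in step one together with the surjectivity argument in step two. Neither is hard, so there is no real obstacle; the proof is a routine verification. If finite-dimensional $H_1$ or $H_2$ must be allowed, the index sets are simply finite, and nothing changes.
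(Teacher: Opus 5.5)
Your proof is correct. The paper gives no argument of its own here, only the citation to Liu and R\"ockner. The proof there handles separability exactly as you do, with the orthonormal basis of rank-one operators $\la \cdot, g_k \ra h_l$, but it obtains completeness differently. A Cauchy sequence $(T_n)$ in $L_2(H_1,H_2)$ is Cauchy in $L(H_1,H_2)$, because $\|T\|_{L(H_1,H_2)} \leq \|T\|_{L_2(H_1,H_2)}$, so it converges in operator norm to some $T$. Fatou's lemma applied to $\sum_k \|(T_n - T_m) g_k\|^2$ as $m \to \infty$ then gives $T \in L_2(H_1,H_2)$ and $\|T_n - T\|_{L_2(H_1,H_2)} \to 0$. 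That route uses only completeness of $L(H_1,H_2)$ and never has to build an operator from prescribed data.

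Your route instead exhibits $T \mapsto (T g_k)_{k}$ as an isometric isomorphism onto $\ell^2(\bbn;H_2)$. Both completeness and separability are then inherited from a space already known to be a separable Hilbert space. The cost is the short surjectivity argument; the gain is a concrete model of $L_2(H_1,H_2)$.

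Composed with Parseval in $H_2$, your isomorphism is exactly the identification with $\ell^2(\bbn \times \bbn)$ that the paper uses in Proposition~\ref{prop-Riesz-Parseval-2}. Your surjectivity step also supplies the ``onto'' part of that identification. The paper's proof of that proposition checks only the isometry, yet surjectivity is needed when $\widetilde{\gamma}$ is specified via \eqref{specification-operator}.
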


Furthermore, the notation $L^+$ indicates the respective subsets of nonnegative definite operators, and $L^{++}$ indicates the respective subsets of positive definite operators. Let us recall the following well-known result.

\begin{lemma}\label{lemma-nuc-HS-inequalities}
The following statements are true:
\begin{enumerate}
\item For $T \in L(H_1,H_2)$, $S \in L_1(H_2,H_3)$ and $R \in L(H_3,H_4)$ we have $RST \in L_1(H_1,H_4)$ and
\begin{align*}
\| RST \|_{L_1(H_1,H_4)} \leq \| R \|_{L(H_3,H_4)} \| S \|_{L_1(H_2,H_3)} \| T \|_{L(H_1,H_2)}.
\end{align*}
\item For $T \in L(H_1,H_2)$, $S \in L_2(H_2,H_3)$ and $R \in L(H_3,H_4)$ we have $RST \in L_2(H_1,H_4)$ and
\begin{align*}
\| RST \|_{L_2(H_1,H_4)} \leq \| R \|_{L(H_3,H_4)} \| S \|_{L_2(H_2,H_3)} \| T \|_{L(H_1,H_2)}.
\end{align*}
\item For $T \in L_2(H_1,H_2)$ and $S \in L_2(H_2,H_3)$ we have $ST \in L_1(H_1,H_3)$ and
\begin{align*}
\| ST \|_{L_1(H_1,H_3)} \leq \| S \|_{L_2(H_2,H_3)} \| T \|_{L_2(H_1,H_2)}.
\end{align*}
\end{enumerate}
\end{lemma}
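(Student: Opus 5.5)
This is a standard fact, so the plan is to prove the three parts directly from the definitions, using singular value decompositions and orthonormal bases.

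For (2) I start with the Hilbert--Schmidt bound. Take an orthonormal basis $\{g_k\}$ of $H_1$. Then
\begin{align*}
\| RST \|_{L_2}^2 = \sum_k \| RSTg_k \|^2 \le \| R \|^2 \sum_k \| STg_k \|^2 = \| R \|^2 \| ST \|_{L_2}^2 .
\end{align*}
Next I use $\| ST \|_{L_2} = \| (ST)^* \|_{L_2} = \| T^* S^* \|_{L_2}$, which holds because the adjoint is an isometry on $L_2$ (Lemma \ref{lemma-adjoint-isometry}, or directly via Parseval). Applying the same estimate once more gives $\| T^*S^* \|_{L_2} \le \| T^* \| \, \| S^* \|_{L_2} = \| T \| \, \| S \|_{L_2}$. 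Together these give the claim.

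For (1) I use the singular value decomposition of the nuclear operator. Write
\begin{align*}
S = \sum_n s_n \la \cdot, a_n \ra b_n ,
\end{align*}
with $s_n \ge 0$, $\sum_n s_n = \| S \|_{L_1}$, and $\{a_n\}$, $\{b_n\}$ orthonormal systems. Then
\begin{align*}
RST = \sum_n s_n \la \cdot, T^* a_n \ra R b_n .
\end{align*}
The nuclear norm is the infimum of $\sum \| x_n \| \| y_n \|$ over representations $\sum \la \cdot, x_n \ra y_n$. So this representation gives
\begin{align*}
\| RST \|_{L_1} \le \sum_n s_n \| T^* a_n \| \, \| R b_n \| \le \| R \| \, \| S \|_{L_1} \| T \| .
\end{align*}
It also shows directly that $RST$ is nuclear.

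For (3) I decompose the second factor. Write $T = \sum_n t_n \la \cdot, a_n \ra b_n$ with $\sum t_n^2 = \| T \|_{L_2}^2$. Then $ST = \sum_n t_n \la \cdot, a_n \ra S b_n$, and hence
\begin{align*}
\| ST \|_{L_1} \le \sum_n t_n \| S b_n \| \le \Big( \sum_n t_n^2 \Big)^{1/2} \Big( \sum_n \| S b_n \|^2 \Big)^{1/2} \le \| T \|_{L_2} \| S \|_{L_2} ,
\end{align*}
by Cauchy--Schwarz. In the last step I extend $\{b_n\}$ to an orthonormal basis, so that $\sum_n \| S b_n \|^2 \le \| S \|_{L_2}^2$.

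The only delicate point is which characterization of the nuclear norm is used. If the paper defines $\| \cdot \|_{L_1}$ as $\tr |A|$ rather than as the infimum over representations, I would first recall that the two agree, which is a standard fact in the separable setting. Everything else is routine.
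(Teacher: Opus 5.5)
Your proof is correct. The paper gives no argument of its own for this lemma. It simply cites \cite[Satz VI.5.4]{Werner} and \cite[Rem.~B.0.6.iii and Prop.~B.0.8]{Liu-Roeckner}, so your write-up is a self-contained substitute. Its main benefit is that it makes the normalization of $\| \cdot \|_{L_1}$ explicit, which the citation glosses over.

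You can sharpen that point. The paper defines the nuclear norm as the infimum over representations; this is what the proof of Lemma \ref{lemma-adjoint-isometry} relies on. The identity $\| T \|_{L_1(H)} = \tr(|T|)$ is recorded separately, in Lemma \ref{lemma-nuclear-HS}.

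With this definition, your part (3) needs no reconciliation at all. You only use the singular value decomposition of the Hilbert--Schmidt operator $T$, for which $\sum_n t_n^2 = \| T \|_{L_2}^2$ is immediate. You then bound the cost of one explicit representation of $ST$.

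In part (1), by contrast, starting from the singular value decomposition of $S$ uses $\sum_n s_n \leq \| S \|_{L_1}$. That inequality is exactly the nontrivial half of the equivalence of the two definitions. You can bypass it: take any representation $S = \sum_n \la \cdot, x_n \ra y_n$ with $\sum_n \| x_n \| \, \| y_n \| \leq \| S \|_{L_1} + \varepsilon$, and transport it to $RST = \sum_n \la \cdot, T^* x_n \ra R y_n$.

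Finally, in part (2) the appeal to Lemma \ref{lemma-adjoint-isometry} presupposes $ST \in L_2$, which is what you are proving. Rely on your Parseval alternative instead. The identity $\sum_k \| A g_k \|^2 = \sum_j \| A^* h_j \|^2$ holds in $[0,\infty]$ for every bounded $A$ and any orthonormal bases $\{ g_k \}$, $\{ h_j \}$. It follows by expanding in the bases and interchanging the nonnegative double sums.
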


\begin{proof}
See \cite[Satz VI.5.4]{Werner} and \cite[Rem. B.0.6.iii and Prop. B.0.8]{Liu-Roeckner}.
\end{proof}

\begin{lemma}\label{lemma-adjoint-isometry}
The following statements are true:
\begin{enumerate}
\item The mapping
\begin{align*}
\big( L(H_1,H_2), \| \cdot \|_{L(H_1,H_2)} \big) \to \big( L(H_2,H_1), \| \cdot \|_{L(H_2,H_1)} \big), \quad T \mapsto T^*
\end{align*}
is a linear isometry.

\item The mapping
\begin{align*}
\big( L_1(H_1,H_2), \| \cdot \|_{L_1(H_1,H_2)} \big) \to \big( L_1(H_2,H_1), \| \cdot \|_{L_1(H_2,H_1)} \big), \quad T \mapsto T^*
\end{align*}
is a linear isometry.

\item The mapping
\begin{align*}
\big( L_2(H_1,H_2), \| \cdot \|_{L_2(H_1,H_2)} \big) \to \big( L_2(H_2,H_1), \| \cdot \|_{L_2(H_2,H_1)} \big), \quad T \mapsto T^*
\end{align*}
is a linear isometry.
\end{enumerate}
\end{lemma}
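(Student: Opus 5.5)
The statement is Lemma~\ref{lemma-adjoint-isometry}: taking adjoints is a linear isometry for the operator norm, the nuclear norm and the Hilbert--Schmidt norm. In every case I will first note that $T \mapsto T^*$ is real-linear, which holds since all spaces are real Hilbert spaces (over $\mathbb{C}$ it would only be conjugate-linear). It is also involutive, $(T^*)^* = T$. So it suffices to prove the norm inequality $\|T^*\| \leq \|T\|$ in each of the three norms. Applying the inequality to $T^*$ then gives equality, and it also shows that the map is onto the target space.

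\emph{Operator norm.} I will use $\|T\| = \sup\{ \la Tx, y \ra : \|x\| \leq 1,\ \|y\| \leq 1 \}$. Together with $\la Tx,y \ra = \la x, T^* y \ra$, this expression is symmetric in $T$ and $T^*$, which gives $\|T^*\| = \|T\|$ immediately.

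\emph{Hilbert--Schmidt norm.} Take orthonormal bases $\{a_i\}$ of $H_1$ and $\{b_j\}$ of $H_2$. Applying Parseval's identity twice and using Tonelli to swap the order of summation of the nonnegative terms gives
\begin{align*}
\|T\|_{L_2}^2 = \sum_i \|T a_i\|^2 = \sum_{i,j} |\la T a_i, b_j \ra|^2 = \sum_j \|T^* b_j\|^2 = \|T^*\|_{L_2}^2 .
\end{align*}

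\emph{Nuclear norm.} This is the only step requiring any care, because the equality depends on which definition of the nuclear norm is used. I would work with the representation definition: $\|T\|_{L_1}$ is the infimum of $\sum_n \|x_n\| \|y_n\|$ over all representations $T = \sum_n \la \cdot, x_n \ra y_n$ with $x_n \in H_1$, $y_n \in H_2$. For any such representation,
\begin{align*}
\la T^* z, w \ra = \la z, Tw \ra = \sum_n \la w, x_n \ra \la z, y_n \ra ,
\end{align*}
so $T^* = \sum_n \la \cdot, y_n \ra x_n$, a representation with the same value of $\sum_n \|x_n\| \|y_n\|$. Taking infima gives $\|T^*\|_{L_1} \leq \|T\|_{L_1}$. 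If the paper instead defines the nuclear norm as $\tr |T|$, I would use the polar decomposition $T = U|T|$. This yields $|T^*| = U|T|U^*$, and the unitary invariance of the trace on the initial space of $U$ then gives $\tr|T^*| = \tr|T|$. Alternatively, one can appeal to the equivalence of the two definitions as in \cite{Werner}.
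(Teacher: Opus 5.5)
Your proposal is correct and follows the paper's proof. For the nuclear norm the paper uses exactly your argument: the adjoint of $\la \cdot, x \ra y$ is $\la \cdot, y \ra x$, so every representation of $T$ gives one of $T^*$ with the same cost. For the operator and Hilbert--Schmidt norms the paper only cites Werner and Liu--R\"ockner, and your sup-formula and double-Parseval arguments are the standard proofs of those cited facts.
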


\begin{proof}
The first statement is a consequence of \cite[Satz V.5.2.d]{Werner}. For $x \in H_1$ and $y \in H_2$ the adjoint of the linear operator $S \in L(H_1,H_2)$, $S = \la \cdot, x \ra y$ is given by $S^* = \la \cdot,y \ra x$. Thus, the second statement follows from the definition of the nuclear norm. For the third statement we refer to \cite[Rem. B.0.6.i]{Liu-Roeckner}.
\end{proof}

The following auxiliary result is also well-known; see, e.g. \cite[Thm. 12.10]{Rudin}. Recall that $A^{\circ}$ denotes the polar of a set $A$; see Definition \ref{def-polar}, and also Remark \ref{rem-polar-cone}.

\begin{lemma}\label{lemma-adjoint-ker-ran}
For every $T \in L(H_1,H_2)$ we have $\ker(T^*) = \ran(T)^{\perp} = \ran(T)^{\circ}$.
\end{lemma}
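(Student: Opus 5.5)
We need to prove: for every $T \in L(H_1,H_2)$, $\ker(T^*) = \ran(T)^{\perp} = \ran(T)^{\circ}$. The plan is to prove the two equalities separately, each by a direct double inclusion.

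\textbf{First equality.} We show $\ker(T^*) = \ran(T)^\perp$ using only the defining relation of the adjoint,
\begin{align*}
\la T^* y, x \ra_{H_1} = \la y, Tx \ra_{H_2}, \quad x \in H_1, \; y \in H_2.
\end{align*}
If $T^* y = 0$, the left-hand side vanishes for every $x \in H_1$. Hence $y \perp Tx$ for all $x$, which means $y \in \ran(T)^\perp$.

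Conversely, let $y \in \ran(T)^\perp$. Then $\la T^* y, x \ra = 0$ for all $x \in H_1$. Choosing $x = T^* y$ gives $\| T^* y \|^2 = 0$, so $T^* y = 0$.

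\textbf{Second equality.} We show $\ran(T)^\perp = \ran(T)^\circ$ by exploiting that $\ran(T)$ is a linear subspace, in particular a cone. By Remark \ref{rem-polar-cone},
\begin{align*}
\ran(T)^\circ = \{ y : \la y, z \ra \leq 0 \text{ for all } z \in \ran(T) \}.
\end{align*}
Any $y \in \ran(T)^\perp$ clearly satisfies this inequality. Conversely, suppose $\la y, z \ra \leq 0$ for all $z \in \ran(T)$. Since $-z$ also lies in $\ran(T)$, we get $\la y, -z \ra \leq 0$, that is $\la y, z \ra \geq 0$. Together the two inequalities force $\la y, z \ra = 0$, so $y \in \ran(T)^\perp$.

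We do not expect a genuine obstacle. The only point needing care is invoking Remark \ref{rem-polar-cone}: the polar is defined with the bound $\leq 1$, and the remark converts this to the bound $\leq 0$ on cones. This conversion applies here because $\ran(T)$ is a linear subspace, hence a cone.
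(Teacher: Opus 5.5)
Your proof is correct and complete. The paper gives no argument of its own; it cites the standard fact \cite[Thm.~12.10]{Rudin} and points to Remark~\ref{rem-polar-cone} for the polar. Your argument reproduces exactly the reasoning behind those citations: a double inclusion from the defining identity of $T^*$, plus the observation that the subspace $\ran(T)$ is a cone. It also works verbatim for $T \in L(H_1,H_2)$, not only for operators on a single space.
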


We will often consider the spaces $L(H) := L(H,H)$, $L_1(H) := L_1(H,H)$ and $L_2(H) := L_2(H,H)$. It is well-known that $L_1(H) \hookrightarrow L_2(H) \hookrightarrow L(H)$ with continuous embedding. More precisely, we have the following result; see, e.g. \cite[Satz VI.6.2]{Werner}.

\begin{lemma}
The following statements are true:
\begin{enumerate}
\item We have $L_1(H) \subset L_2(H)$ and $\| T \|_{L_2(H)} \leq \| T \|_{L_1(H)}$ for all $T \in L_1(H)$.

\item We have $L_2(H) \subset L(H)$ and $\| T \|_{L(H)} \leq \| T \|_{L_2(H)}$ for all $T \in L_2(H)$.
\end{enumerate}
\end{lemma}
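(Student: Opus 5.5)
The statement is the norm comparison $L_1(H) \subset L_2(H) \subset L(H)$ with $\| T \|_{L_2(H)} \leq \| T \|_{L_1(H)}$ and $\| T \|_{L(H)} \leq \| T \|_{L_2(H)}$. I would prove the two inequalities in the reverse order, since the first one uses the second.

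For the second statement, let $T \in L_2(H)$ and $x \in H$ with $\| x \| = 1$. Complete $x$ to an orthonormal basis $\{ g_k \}$ with $g_1 = x$. The Hilbert-Schmidt norm does not depend on the chosen basis, so
\begin{align*}
\| Tx \|^2 \leq \sum_{k} \| T g_k \|^2 = \| T \|_{L_2(H)}^2 .
\end{align*}
Taking the supremum over unit vectors $x$ gives $\| T \|_{L(H)} \leq \| T \|_{L_2(H)}$. In particular $L_2(H) \subset L(H)$.

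For the first statement I would use the definition of the nuclear norm as an infimum over representations. Let $T \in L_1(H)$ and choose a representation $T = \sum_n \la \cdot, a_n \ra b_n$ with $\sum_n \| a_n \| \, \| b_n \| < \infty$.
\begin{itemize}
\item For a single rank-one operator $R = \la \cdot, a \ra b$, the Hilbert-Schmidt norm is computed directly with Parseval: $\| R \|_{L_2(H)}^2 = \sum_k |\la g_k, a \ra|^2 \| b \|^2 = \| a \|^2 \| b \|^2$.
\item Applying the triangle inequality in the Hilbert space $L_2(H)$ to the absolutely convergent series gives $\| T \|_{L_2(H)} \leq \sum_n \| a_n \| \, \| b_n \|$. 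Convergence in $L_2(H)$ identifies the limit with $T$, because by the second statement convergence in $L_2(H)$ implies convergence in $L(H)$.
\item Taking the infimum over all such representations yields $\| T \|_{L_2(H)} \leq \| T \|_{L_1(H)}$.
\end{itemize}

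An equivalent route is through the singular values $s_n$ of $T$: one has $\| T \|_{L_2(H)} = (\sum_n s_n^2)^{1/2} \leq \sum_n s_n = \| T \|_{L_1(H)}$.

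The only point needing care is the one in the second bullet: the $L_2(H)$-limit of the partial sums must coincide with $T$. This follows from the continuous embedding $L_2(H) \hookrightarrow L(H)$ established first. Everything else is routine, and the result can also simply be cited, as the paper does, from \cite[Satz VI.6.2]{Werner}.
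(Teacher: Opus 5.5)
Your proof is correct. The paper gives no argument for this lemma; it only cites \cite[Satz VI.6.2]{Werner}. So your proposal supplies a self-contained proof where the paper has none.

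Your main route proves the operator-norm bound first and then estimates the Hilbert--Schmidt norm of a nuclear representation $T = \sum_n \la \cdot, a_n \ra b_n$ term by term. This fits the paper's own conventions well. It uses the infimum-over-representations definition of the nuclear norm, which is exactly what the paper relies on when it deduces the $L_1$-isometry of $T \mapsto T^*$ in Lemma \ref{lemma-adjoint-isometry} from rank-one operators. It also needs nothing beyond Parseval's identity, completeness of $L_2(H)$, and part (2) to identify the $L_2(H)$-limit with $T$.

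You could even bypass that identification step. For each basis vector you have $\| T g_k \| \leq \sum_n |\la g_k, a_n \ra| \, \| b_n \|$. Minkowski's inequality in $\ell^2$ then gives $(\sum_k \| T g_k \|^2)^{1/2} \leq \sum_n \| a_n \| \, \| b_n \|$ directly, with no limit to identify.

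The singular-value route you mention is shorter. However, it presupposes the singular value decomposition of compact operators and the identity $\| T \|_{L_1(H)} = \tr(|T|) = \sum_n s_n$. In the paper that identity is itself only cited (Lemma \ref{lemma-nuclear-HS}), so this route is no more elementary than the citation it would replace.
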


\begin{lemma}\label{lemma-self-adjoint-transfer}
Let $T \in L(H_1)$ and $A \in L(H_1,H_2)$ be linear operators. We define $S \in L(H_2)$ as $S := A T A^*$. Then the following statements are true:
\begin{enumerate}
\item If $T$ is self-adjoint, then $S$ is self-adjoint as well.

\item If $T$ is nonnegative definite, then $S$ is nonnegative definite as well.
\end{enumerate}
\end{lemma}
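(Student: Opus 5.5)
The statement splits into two independent parts, each following from the definitions of self-adjointness and nonnegative definiteness together with the elementary rule $(XYZ)^* = Z^* Y^* X^*$ for bounded operators between Hilbert spaces. I would first record that $S = ATA^* \in L(H_2)$ is well defined and bounded, since $A^* \in L(H_2,H_1)$, $T \in L(H_1)$ and $A \in L(H_1,H_2)$.

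For the first statement, assume $T = T^*$. Then I compute
\begin{align*}
S^* = (A T A^*)^* = (A^*)^* T^* A^* = A T A^* = S,
\end{align*}
using $(A^*)^* = A$. Alternatively, and in the same spirit as the rest of the appendix, one can check $\la S y, z \ra = \la T A^* y, A^* z \ra = \la A^* y, T A^* z \ra = \la y, S z \ra$ for all $y,z \in H_2$, which avoids quoting the product rule for adjoints.

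For the second statement, assume $\la T x, x \ra \geq 0$ for all $x \in H_1$. For arbitrary $y \in H_2$, setting $x := A^* y \in H_1$ gives
\begin{align*}
\la S y, y \ra = \la A T A^* y, y \ra = \la T A^* y, A^* y \ra = \la T x, x \ra \geq 0,
\end{align*}
so $S$ is nonnegative definite. If the paper's notion of nonnegative definiteness includes self-adjointness, as with $L_1^+(H)$ and $L_2^+(H)$, I would combine this inequality with the first statement.

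I do not expect any real obstacle. The only point needing care is the convention about whether nonnegative definite operators are required to be self-adjoint, and the remark above handles either reading.
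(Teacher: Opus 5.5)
Your proof is correct and follows the paper's argument: the adjoint product rule gives $S^* = AT^*A^* = S$ (the paper writes $S^* = AT^*A$, a typo for the expression you have), and the identity $\la Sy,y \ra = \la TA^*y, A^*y \ra \geq 0$ gives nonnegative definiteness. Your remark on whether nonnegative definiteness includes self-adjointness is a harmless extra that the paper does not address.
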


\begin{proof}
Sine $S^* = A T^* A$, the first statement is evident. If $T$ is nonnegative definite, then for all $x \in H_2$ we have
\begin{align*}
\la Sx,x \ra = \la A T A^* x,x \ra = \la T A^* x, A^* x \ra \geq 0,
\end{align*}
proving the second statement.
\end{proof}

For $T \in L_1(H)$ we denote by $\tr(T)$ the \emph{trace} of the nuclear operator $T$, which is defined as
\begin{align*}
\tr(T) := \sum_{j=1}^{\infty} \la T e_j, e_j \ra,
\end{align*}
independent of the choice of the orthonormal basis $\{ e_j \}_{j \in \bbn}$ of $H$.

\begin{lemma}\cite[Satz VI.5.8]{Werner}\label{lemma-trace-functional}
The following statements are true:
\begin{enumerate}
\item The trace $L_1(H) \to \bbr$, $T \mapsto \tr(T)$ is a continuous linear functional with $\| \tr \| = 1$.

\item We have $\tr(T) = \tr(T^*)$ for all $T \in L_1(H)$.
\end{enumerate}
\end{lemma}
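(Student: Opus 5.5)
Part (1) of Lemma \ref{lemma-trace-functional} we take from the standard theory of nuclear operators. We represent $T \in L_1(H)$ as $T = \sum_n \la \cdot, x_n \ra y_n$ with $\sum_n \|x_n\|\,\|y_n\| < \infty$. Then
\begin{align*}
\sum_j \la T e_j, e_j \ra = \sum_n \sum_j \la e_j,x_n\ra \la y_n, e_j\ra = \sum_n \la y_n, x_n \ra .
\end{align*}
Interchanging the two sums is justified by absolute convergence, since Cauchy--Schwarz in $\ell^2$ gives $\sum_j |\la e_j,x_n\ra \la y_n,e_j\ra| \le \|x_n\|\,\|y_n\|$. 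This shows that the trace is well defined and independent of the basis. It also gives $|\tr(T)| \le \sum_n \|x_n\|\,\|y_n\|$, and taking the infimum over all representations yields $|\tr(T)| \le \|T\|_{L_1(H)}$. Linearity is clear, so $\|\tr\| \le 1$. For equality, we test on a rank-one projection $T = \la \cdot, e\ra e$ with $\|e\|=1$: here $\tr(T)=1$ and $\|T\|_{L_1(H)}=1$.

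For part (2) we use the same representation. By the computation in the proof of Lemma \ref{lemma-adjoint-isometry}, $T^* = \sum_n \la \cdot, y_n\ra x_n$, so $\tr(T^*) = \sum_n \la x_n, y_n\ra$. Since $H$ is a real Hilbert space, $\la x_n,y_n\ra = \la y_n,x_n\ra$, and hence $\tr(T^*) = \tr(T)$.

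The only delicate point is the interchange of the two summations, and the absolute-convergence bound above handles it. If the scalar field were complex, part (2) would instead read $\tr(T^*) = \overline{\tr(T)}$.
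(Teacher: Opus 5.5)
Your proof is correct in the paper's setting, where $H$ is a real Hilbert space. The paper gives no argument of its own for this lemma and only cites Werner. What you wrote is the standard textbook proof of that fact, so there is little to compare beyond two small remarks.

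First, the step \emph{taking the infimum over all representations} presupposes that $\|\cdot\|_{L_1(H)}$ is the projective (infimum) norm. If you instead work with $\|T\|_{L_1(H)}=\tr(|T|)$, as in Lemma~\ref{lemma-nuclear-HS}, take the Schmidt decomposition $T=\sum_n s_n\la\cdot,u_n\ra v_n$ as your representation. For it, $\sum_n\|x_n\|\,\|y_n\|=\sum_n s_n=\|T\|_{L_1(H)}$, so the same bound follows directly.

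Second, part (2) does not need the representation at all. Since $T^*\in L_1(H)$ by Lemma~\ref{lemma-adjoint-isometry}, symmetry of the real inner product gives $\la T^*e_j,e_j\ra=\la e_j,Te_j\ra=\la Te_j,e_j\ra$ term by term.
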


In view of the upcoming result, let us recall the notation $|T| := (T^* T)^{1/2}$ for a compact operator $T \in L(H_1,H_2)$. Note that $|T| \in L(H_1)$ is always self-adjoint and nonnegative definite. Furthermore, note that for every self-adjoint and nonnegative definite compact operator $T \in L(H)$ we have $T = |T|$.

\begin{lemma}\label{lemma-nuclear-HS}
The following statements are true:
\begin{enumerate}
\item For every $T \in L_1(H)$ we have $|T| \in L_1^+(H)$ and $\| T \|_{L_1(H)} = \tr( |T| )$.

\item We have $| \tr(T) | \leq \| T \|_{L_1(H)}$ for all $T \in L_1(H)$.

\item For every self-adjoint operator $T \in L_1^+(H)$ we have $\| T \|_{L_1(H)} = \tr(T)$.

\item The inner product on $L_2(H)$ is given by $\la T,S \ra_{L_2(H)} = \tr(S^* T)$ for all $T,S \in L_2(H)$.

\item In particular, we have $\| T \|_{L_2(H)}^2 = \| T^* T \|_{L_1(H)}$ for every $T \in L_2(H)$.
\end{enumerate}
\end{lemma}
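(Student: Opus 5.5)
The statement is Lemma~\ref{lemma-nuclear-HS}, five standard facts about nuclear and Hilbert--Schmidt operators. I would prove them in the order given, using the polar decomposition and the singular value expansion of compact operators. Each $T \in L_1(H)$ is compact and has a Schmidt representation
\begin{align*}
T = \sum_k s_k \la \cdot, x_k \ra y_k,
\end{align*}
where $(x_k)$ and $(y_k)$ are orthonormal systems and $s_k \geq 0$ are the singular values. I take the nuclear norm to be $\| T \|_{L_1(H)} = \sum_k s_k$; in \cite{Werner} this is equivalent to the infimum over nuclear representations.

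For (1), I would note that $|T| = (T^*T)^{1/2} = \sum_k s_k \la \cdot, x_k \ra x_k$. This operator is self-adjoint, nonnegative definite and nuclear. Computing its trace in an orthonormal basis that extends $(x_k)$ gives $\tr(|T|) = \sum_k s_k = \| T \|_{L_1(H)}$.

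For (2), I would compute the trace in any orthonormal basis $(e_j)$:
\begin{align*}
|\tr(T)| \leq \sum_j \sum_k s_k |\la e_j, x_k \ra| \, |\la y_k, e_j \ra| \leq \sum_k s_k \, \| x_k \| \, \| y_k \| = \| T \|_{L_1(H)}.
\end{align*}
Here I swap the sums by absolute convergence and apply Cauchy--Schwarz in $j$. Alternatively, (2) follows from Lemma~\ref{lemma-trace-functional}, which says $\| \tr \| = 1$.

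For (3), if $T \in L_1^+(H)$ is self-adjoint, then $T = |T|$, as noted before the lemma. So (3) follows from (1).

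For (4), I would use the definition $\la T, S \ra_{L_2(H)} = \sum_j \la T e_j, S e_j \ra$. By Lemma~\ref{lemma-nuc-HS-inequalities}(3), $S^* T \in L_1(H)$, so its trace is defined and does not depend on the basis. Then
\begin{align*}
\tr(S^* T) = \sum_j \la S^* T e_j, e_j \ra = \sum_j \la T e_j, S e_j \ra.
\end{align*}

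For (5), I would take $S = T$ in (4) to get $\| T \|_{L_2(H)}^2 = \tr(T^* T)$. Since $T^* T$ is self-adjoint, nonnegative definite and nuclear, (3) gives $\tr(T^* T) = \| T^* T \|_{L_1(H)}$.

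The only real work is in (1) and (2). There the main step is to identify the nuclear norm with the sum of singular values, which I would cite from \cite{Werner} rather than reprove, together with justifying the exchange of summations.
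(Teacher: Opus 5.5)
Your proposal is correct, but it argues differently from the paper. The paper gives no argument of its own and simply cites \cite[Satz VI.5.5]{Werner}, \cite[Rem. B.0.4]{Liu-Roeckner} and \cite[Satz VI.6.2.f]{Werner}. You instead derive all five items from the Schmidt representation $T = \sum_k s_k \la \cdot, x_k \ra y_k$. The only imported fact is then the identification $\| T \|_{L_1(H)} = \sum_k s_k$. Item (2) is your separate Cauchy--Schwarz estimate, and (3)--(5) follow formally from (1) together with Lemma~\ref{lemma-nuc-HS-inequalities}(3). For that lemma you also need $S^* \in L_2(H)$, which is Lemma~\ref{lemma-adjoint-isometry}.

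Your route shows that the whole lemma rests on one structural fact, and you can even remove that citation. The proof of Lemma~\ref{lemma-adjoint-isometry} suggests the nuclear norm is the infimum of $\sum_n \| a_n \| \, \| b_n \|$ over representations $T = \sum_n \la \cdot, a_n \ra b_n$. With that definition, the Schmidt representation gives $\| T \|_{L_1(H)} \leq \sum_k s_k$. For the reverse inequality, apply the same Cauchy--Schwarz/Bessel estimate you use in (2) to $\sum_k s_k = \sum_k \la T x_k, y_k \ra = \sum_k \sum_n \la x_k, a_n \ra \la b_n, y_k \ra$. This gives $\sum_k s_k \leq \sum_n \| a_n \| \, \| b_n \|$ for every such representation.

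The paper's citation route is shorter and avoids committing to a specific definition of the nuclear norm; yours is essentially self-contained.
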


\begin{proof}
The statements follow from \cite[Satz VI.5.5]{Werner}, \cite[Rem. B.0.4]{Liu-Roeckner} and \cite[Satz VI.6.2.f]{Werner}.
\end{proof}

\begin{lemma}\label{lemma-trace-commute}
Let $T,S \in L(H)$ be such that $T \in L_1(H)$ or $T,S \in L_2(H)$. Then we have $ST,TS \in L_1(H)$ and $\tr(ST) = \tr(TS)$.
\end{lemma}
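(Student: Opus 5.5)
The statement to be proved is Lemma \ref{lemma-trace-commute}: if $T \in L_1(H)$, or if $T,S \in L_2(H)$, then $ST, TS \in L_1(H)$ and $\tr(ST) = \tr(TS)$. I would treat the two cases separately.

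\emph{Case $T,S \in L_2(H)$.} Membership $ST, TS \in L_1(H)$ follows directly from Lemma \ref{lemma-nuc-HS-inequalities}(3). For the trace identity, fix an orthonormal basis $\{e_j\}$ and expand via Parseval:
\begin{align*}
\tr(ST) = \sum_j \la T e_j, S^* e_j \ra = \sum_j \sum_k \la T e_j, e_k \ra \la e_k, S^* e_j \ra .
\end{align*}
The double series converges absolutely. Indeed, by Cauchy--Schwarz it is bounded by $\big(\sum_{j,k} |\la Te_j,e_k\ra|^2\big)^{1/2}\big(\sum_{j,k}|\la S^*e_j,e_k\ra|^2\big)^{1/2} = \|T\|_{L_2}\|S^*\|_{L_2}$, which is finite by Lemma \ref{lemma-adjoint-isometry}(3). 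We may therefore swap the order of summation and rewrite $\la e_k, S^* e_j\ra = \la S e_k, e_j\ra$ and $\la T e_j, e_k\ra = \la e_j, T^* e_k\ra$. Parseval in $j$ then gives $\sum_k \la S e_k, T^* e_k\ra = \sum_k \la TS e_k, e_k\ra = \tr(TS)$.

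\emph{Case $T \in L_1(H)$, $S \in L(H)$.} Membership in $L_1(H)$ follows from Lemma \ref{lemma-nuc-HS-inequalities}(1), applied with an identity factor on one side. For the identity, I would write $T$ via its polar decomposition $T = U|T|$ and set $A := U|T|^{1/2}$, $B := |T|^{1/2}$, so that $T = AB$. By Lemma \ref{lemma-nuclear-HS}(1), $|T| \in L_1^+(H)$, hence $|T|^{1/2} \in L_2(H)$ by Lemma \ref{lemma-nuclear-HS}(5), and $A, B \in L_2(H)$ by Lemma \ref{lemma-nuc-HS-inequalities}(2). Since $SA \in L_2(H)$ and $BS \in L_2(H)$, applying the Hilbert--Schmidt case twice gives
\begin{align*}
\tr(ST) = \tr((SA)B) = \tr(B(SA)) = \tr((BS)A) = \tr(A(BS)) = \tr(TS).
\end{align*}
The middle equality $\tr(BSA) = \tr(BSA)$ is just associativity, and each trace involved is a trace of a product of two Hilbert--Schmidt operators.

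The main obstacle is the interchange of summations in the Hilbert--Schmidt case, which is justified by the absolute-convergence bound above. The factorization argument requires the polar decomposition and the square root of $|T|$; both are standard for compact operators.
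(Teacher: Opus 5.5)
Your proof is correct, but it differs from the paper's in both cases.

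\textbf{Hilbert--Schmidt case.} For $T,S \in L_2(H)$ the paper works with the Hilbert--Schmidt inner product. Using Lemma \ref{lemma-nuclear-HS}(4), Lemma \ref{lemma-adjoint-isometry} and Lemma \ref{lemma-trace-functional}(2), it writes $\tr(ST) = \la T,S^* \ra_{L_2(H)} = \la T^*,S \ra_{L_2(H)} = \tr(S^*T^*) = \tr((TS)^*) = \tr(TS)$. You instead expand in a fixed orthonormal basis and interchange the two sums, justified by the Cauchy--Schwarz bound $\|T\|_{L_2(H)}\|S^*\|_{L_2(H)}$. Your route is more elementary, needing only Parseval and an absolutely convergent double series, and it works verbatim over real or complex scalars. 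The paper's version is shorter because it reuses the inner-product formula, the adjoint isometry and $\tr(A^*) = \tr(A)$, all already established.

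\textbf{Nuclear case.} For $T \in L_1(H)$ the paper gives no argument and simply cites Werner (Satz VI.5.8.c). You reduce this case to the Hilbert--Schmidt one via the factorization $T = (U|T|^{1/2})\,|T|^{1/2}$ into two Hilbert--Schmidt factors. This makes the lemma self-contained, at the cost of invoking polar decomposition.

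\textbf{Citation fix.} Lemma \ref{lemma-nuclear-HS}(5) presupposes that the operator is already Hilbert--Schmidt, so it does not give $|T|^{1/2} \in L_2(H)$. Cite Lemma \ref{lemma-square-root-continuous}(1) instead, or compute directly $\sum_j \| |T|^{1/2} e_j \|^2 = \sum_j \la |T| e_j, e_j \ra = \tr(|T|) < \infty$.
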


\begin{proof}
In case $T \in L_1(H)$ the result follows from \cite[Satz VI.5.8.c]{Werner}. If $T,S \in L_2(H)$, then using Lemmas \ref{lemma-adjoint-isometry}, \ref{lemma-nuclear-HS} and \ref{lemma-trace-functional} we obtain
\begin{align*}
\tr(ST) = \la T,S^* \ra_{L_2(H)} = \la T^*,S \ra_{L_2(H)} = \tr(S^* T^*) = \tr((TS)^*) = \tr(TS),
\end{align*}
completing the proof.
\end{proof}

The following result is a consequence of \cite[Prop. B.0.10]{Liu-Roeckner}.

\begin{lemma}\label{lemma-trace-commute-LR}
Let $T \in L_2(H)$ and $S \in L(H)$ be arbitrary. Then we have
\begin{align*}
S T T^*, T^* S T \in L_1(H) \quad \text{and} \quad \tr(S T T^*) = \tr(T^* ST).
\end{align*}
\end{lemma}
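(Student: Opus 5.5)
The statement is Lemma \ref{lemma-trace-commute-LR}: for $T \in L_2(H)$ and $S \in L(H)$ we have $STT^*, T^*ST \in L_1(H)$ and $\tr(STT^*) = \tr(T^*ST)$. I would reduce it to Lemma \ref{lemma-trace-commute} together with the ideal properties in Lemma \ref{lemma-nuc-HS-inequalities}.

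The plan is to group $STT^*$ as a product of two Hilbert-Schmidt operators, $(ST)\,T^*$. Lemma \ref{lemma-nuc-HS-inequalities}(2), applied with $R = S$ and the identity on the right, gives $ST \in L_2(H)$. Lemma \ref{lemma-adjoint-isometry}(3) gives $T^* \in L_2(H)$. Then Lemma \ref{lemma-nuc-HS-inequalities}(3) yields $STT^* = (ST)T^* \in L_1(H)$.

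In the same way, $T^*ST = T^*\,(ST)$ is a product of the two Hilbert-Schmidt operators $T^*$ and $ST$. Hence $T^*ST \in L_1(H)$ as well.

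For the trace identity I would apply Lemma \ref{lemma-trace-commute} to the pair $A := ST \in L_2(H)$ and $B := T^* \in L_2(H)$. This gives $\tr(AB) = \tr(BA)$, that is, $\tr(STT^*) = \tr(T^*ST)$.

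There is essentially no obstacle here. The only points to watch are that $S$ need not be compact, so it must not be treated as Hilbert-Schmidt, and that membership of $ST$ in $L_2(H)$ is taken from the two-sided ideal property in Lemma \ref{lemma-nuc-HS-inequalities}(2). Alternatively, if Lemma \ref{lemma-trace-commute} is viewed as depending on this lemma, I would verify the identity directly: take an orthonormal basis $\{e_j\}$, write $\tr(T^*ST) = \sum_j \la ST e_j, T e_j \ra$, expand $\tr(STT^*) = \sum_k \la T^* e_k, T^* S^* e_k \ra$ via Parseval, and then swap the absolutely convergent double series $\sum_{j,k} \la Te_j,e_k\ra \la e_k, S^*Te_j\ra$ by Fubini.
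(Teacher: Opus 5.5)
Your argument is correct, but it differs from the paper. The paper gives no proof of this lemma; it simply cites \cite[Prop. B.0.10]{Liu-Roeckner}. You instead derive it from results already in the appendix, in four steps:
\begin{enumerate}
\item Group the products as $STT^* = (ST)T^*$ and $T^*ST = T^*(ST)$.
\item Lemma \ref{lemma-nuc-HS-inequalities}(2) and Lemma \ref{lemma-adjoint-isometry}(3) show that $ST$ and $T^*$ are Hilbert--Schmidt.
\item Lemma \ref{lemma-nuc-HS-inequalities}(3) then gives nuclearity of both products.
\item The Hilbert--Schmidt case of Lemma \ref{lemma-trace-commute}, applied to the pair $(ST,T^*)$, gives the trace identity.
\end{enumerate}
This exhibits the statement as a special case of Lemma \ref{lemma-trace-commute} and keeps the appendix self-contained. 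There is no circularity: the paper proves the $L_2$ case of Lemma \ref{lemma-trace-commute} through $\tr(AB) = \la B, A^* \ra_{L_2(H)}$ and the adjoint isometry, without using the present lemma. The citation, by contrast, outsources the verification and does not depend on the paper's own proof of Lemma \ref{lemma-trace-commute}.

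Consequently your Fubini fallback is unnecessary. If you keep it, fix one slip. The Parseval expansion of $\sum_k \la T^*e_k, T^*S^*e_k\ra$ has summand $\la Te_j,e_k\ra\la e_k,STe_j\ra$, not $\la Te_j,e_k\ra\la e_k,S^*Te_j\ra$. With $S^*$, the $j$-outer sum still gives $\tr(T^*ST)$, but the $k$-outer sum gives $\tr(TT^*S)$. That needs a further cyclicity step to become $\tr(STT^*)$.
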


In what follows, we denote by $S_1^+(H) \subset L_1(H)$ the subset of all nuclear operators which are self-adjoint and nonnegative definite. The following result shows that the mapping
\begin{align*}
\big( L_1(H), \| \cdot \|_{L_1(H)} \big) \to \big( S_1^+(H), \| \cdot \|_{L_1(H)} \big), \quad T \mapsto |T|
\end{align*}
is a nonlinear continuous isometry.

\begin{lemma}\label{lemma-abs-operator-continuous}
The following statements are true:
\begin{enumerate}
\item For every $T \in L_1(H)$ we have $|T| \in S_1^+(H)$ and $\| \, |T| \, \|_{L_1(H)} = \| T \|_{L_1(H)}$.

\item For all $T,S \in L_1(H)$ we have
\begin{align}\label{T-abs-value}
\| \, |T| - |S| \, \|_{L_1(H)} \leq \big( 2 \| T+S \|_{L_1(H)} \| T-S \|_{L_1(H)} \big)^{1/2}.
\end{align}
\end{enumerate}
\end{lemma}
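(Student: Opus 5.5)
For the first statement I will exploit the fact that $|T|$ arises from $T^*T$ through the continuous functional calculus, and reduce everything to trace identities. By Lemma \ref{lemma-nuclear-HS} we already know $|T| \in L_1^+(H)$ and $\| T \|_{L_1(H)} = \tr(|T|)$. Since $|T|$ is self-adjoint and nonnegative definite, part (3) of Lemma \ref{lemma-nuclear-HS} gives $\| \, |T| \, \|_{L_1(H)} = \tr(|T|)$. Together these show $|T| \in S_1^+(H)$ and $\| \, |T| \, \|_{L_1(H)} = \| T \|_{L_1(H)}$, which settles the first statement.

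For the estimate \eqref{T-abs-value} I intend to use the Powers--St{\o}rmer inequality. For nonnegative self-adjoint trace class operators $A,B$ it reads
\begin{align*}
\| A^{1/2} - B^{1/2} \|_{L_2(H)}^2 \leq \| A - B \|_{L_1(H)}.
\end{align*}
The plan has the following steps, in order.

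First, set $A := T^*T$ and $B := S^*S$. These lie in $L_1^+(H)$ because $T,S \in L_1(H) \subset L_2(H)$, using Lemma \ref{lemma-nuc-HS-inequalities}. We have $A^{1/2} = |T|$ and $B^{1/2} = |S|$.

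Second, write
\begin{align*}
T^*T - S^*S = \tfrac12\big( (T+S)^*(T-S) + (T-S)^*(T+S) \big).
\end{align*}
Applying Lemma \ref{lemma-nuc-HS-inequalities} and the isometry property of the adjoint from Lemma \ref{lemma-adjoint-isometry} then yields
\begin{align*}
\| A-B \|_{L_1(H)} \leq \| T+S \|_{L_2(H)} \| T-S \|_{L_2(H)} \leq \| T+S \|_{L_1(H)} \| T-S \|_{L_1(H)}.
\end{align*}
Here I also use $\| \cdot \|_{L_2(H)} \leq \| \cdot \|_{L_1(H)}$.

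Third, I pass from the Hilbert--Schmidt norm to the nuclear norm, which costs a factor. This step is where the constant $2$ appears, and it is the main obstacle. The Powers--St{\o}rmer inequality controls $|T|-|S|$ only in $L_2(H)$, while the claim is stated in $L_1(H)$. The passage from $L_2$ to $L_1$ is not free in infinite dimensions. So I would not use Powers--St{\o}rmer in this form for this step.

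Instead I would first try to invoke the known trace-norm inequality for absolute values, $\| \, |T| - |S| \, \|_{L_1} \leq \big(2\|T+S\|_{L_1}\|T-S\|_{L_1}\big)^{1/2}$. This is a standard result (e.g.\ Bhatia, or Kittaneh's refinement of Araki--Yamagami); it is proved by polar decomposition and the Ando--van Hemmen inequality, and I would cite it, since it is exactly \eqref{T-abs-value}.

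If that citation is unavailable, I would prove it through finite-rank approximation. In finite dimensions one takes the Jordan decomposition $|T|-|S| = P-N$ with spectral projections, and then bounds $\tr\big(E(|T|-|S|)\big)$ for projections $E$ using $|T|^2-|S|^2$ and the operator-monotonicity argument. The result then extends to $L_1(H)$ by continuity of $T \mapsto |T|$ on finite-rank truncations, using $\|\cdot\|_{L_1}$-density of finite-rank operators.
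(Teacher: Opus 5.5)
For part (1) your argument is the paper's. For part (2) your decisive step is also the paper's: you quote the trace-norm inequality rather than derive it. You are right that the Powers--St{\o}rmer route only gives $\| |T|-|S| \|_{L_2(H)}^2 \leq \|T+S\|_{L_2(H)}\|T-S\|_{L_2(H)}$, which does not control the $L_1(H)$-norm.

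However, the references you attach do not give \eqref{T-abs-value} for trace-class operators on an infinite-dimensional $H$. Bhatia's theorem is a matrix inequality, and the paper uses it only in a remark for $\dim H<\infty$. Araki--Yamagami and Kittaneh's refinements are Hilbert--Schmidt estimates of the same kind you discarded. The paper instead cites Kosaki's theorem \cite{Kosaki}: $\| |\varphi|-|\psi| \| \leq (2\|\varphi+\psi\|\,\|\varphi-\psi\|)^{1/2}$ for normal functionals on an arbitrary $W^*$-algebra, applied to the predual $L_1(H)$ of $L(H)$. That is the citation that yields \eqref{T-abs-value} as stated.

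Your fallback can be made to work, but both halves need more than you wrote.

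\emph{Passage to $L_1(H)$.} You may not use continuity of $T\mapsto|T|$ in $L_1(H)$, since that is exactly what \eqref{T-abs-value} is meant to provide. Instead, let $P_n$ be the orthogonal projection onto eigenvectors of $|T|$ for its $n$ largest eigenvalues, and set $T_n:=TP_n$. Then $|T_n|=|T|P_n$ and $\|T-T_n\|_{L_1(H)}=\tr\big(|T|(\Id-P_n)\big)\to 0$. Do the same for $S$, and compress to a finite-dimensional subspace containing the ranges of $T_n,S_n,T_n^*,S_n^*$.

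\emph{Finite-dimensional step.} Your sketch (projection onto the positive part of $|T|-|S|$, combined with $|T|^2-|S|^2$) is the Powers--St{\o}rmer mechanism and, as described, again gives only a Hilbert--Schmidt bound. The missing ingredients are these:
\begin{enumerate}
\item Take the Jordan decomposition $|T|^2-|S|^2=K_+-K_-$. Operator monotonicity of the square root gives $|T|\leq(|S|^2+K_+)^{1/2}$ and $|S|\leq(|T|^2+K_-)^{1/2}$.
\item The concave trace inequality $\tr\big((P+R)^{1/2}-P^{1/2}\big)\leq\tr R^{1/2}$ for $P,R\geq 0$ then yields $\| |T|-|S| \|_{L_1(H)}\leq\tr\bigl|\,|T|^2-|S|^2\bigr|^{1/2}$.
\item Only now does your Step 2 identity pay off. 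Rotfel'd's inequality $\tr|U+V|^{1/2}\leq\tr|U|^{1/2}+\tr|V|^{1/2}$, together with Horn's inequality plus Cauchy--Schwarz, $\tr|R_1^*R_2|^{1/2}\leq(\|R_1\|_{L_1(H)}\|R_2\|_{L_1(H)})^{1/2}$, gives exactly $(2\|T+S\|_{L_1(H)}\|T-S\|_{L_1(H)})^{1/2}$.
\end{enumerate}
Without these steps, or the correct citation, neither the product structure nor the constant $\sqrt{2}$ in \eqref{T-abs-value} is reached.
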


\begin{proof}
The first statement is an immediate consequence of Lemma \ref{lemma-nuclear-HS}. Since $L_1(H)$ is the predual of the $W^*$-algebra $L(H)$, the second statement is a consequence of the Theorem on page 123 in \cite{Kosaki}.
\end{proof}

\begin{remark}
In the finite dimensional situation $\dim H < \infty$, the inequality \eqref{T-abs-value} is also a consequence of \cite[Thm. X.2.1]{Bhatia}.
\end{remark}

Moreover, we denote by $S_2^+(H) \subset L_2(H)$ the subset of all Hilbert-Schmidt operators which are self-adjoint and nonnegative definite. The following result shows that the mapping
\begin{align*}
\big( S_1^+(H), \| \cdot \|_{L_1(H)} \big) \to \big( S_2^+(H), \| \cdot \|_{L_2(H)} \big), \quad T \mapsto T^{1/2}
\end{align*}
is a nonlinear continuous mapping such that
\begin{align*}
\| T^{1/2} \|_{L_2(H)} = \sqrt { \| T \|_{L_1(H)} } \quad \text{for all $T \in S_1^+(H)$.}
\end{align*}

\begin{lemma}\label{lemma-square-root-continuous}
The following statements are true:
\begin{enumerate}
\item For every $T \in S_1^+(H)$ we have $T^{1/2} \in S_2^+(H)$ and
\begin{align*}
\| T^{1/2} \|_{L_2(H)}^2 = \| T \|_{L_1(H)}.
\end{align*}

\item For all $T,S \in S_1^+(H)$ we have
\begin{align*}
\| T^{1/2} - S^{1/2} \|_{L_2(H)}^2 \leq \| T-S \|_{L_1(H)}.
\end{align*}
\end{enumerate}
\end{lemma}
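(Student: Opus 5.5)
The first statement is immediate, and the second is the real content.

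For statement (1), let $T \in S_1^+(H)$. Its spectral decomposition is $T = \sum_k \mu_k \la \cdot, q_k \ra q_k$ with $\mu_k \geq 0$ and $\sum_k \mu_k < \infty$. Then $T^{1/2} = \sum_k \sqrt{\mu_k} \la \cdot, q_k \ra q_k$ is self-adjoint and nonnegative definite. Using the orthonormal basis $(q_k)$, completed if necessary, we get $\| T^{1/2} \|_{L_2(H)}^2 = \sum_k \mu_k = \tr(T) = \| T \|_{L_1(H)}$, where the last equality is Lemma \ref{lemma-nuclear-HS}(3). In particular $T^{1/2} \in S_2^+(H)$.

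For statement (2), I would use the Powers--Størmer inequality $\| A - B \|_{L_2(H)}^2 \leq \| A^2 - B^2 \|_{L_1(H)}$ for $A, B \geq 0$, applied with $A = T^{1/2}$ and $B = S^{1/2}$. To prove it directly, set $A, B \in S_2^+(H)$ and $P$ the spectral projection of the self-adjoint Hilbert--Schmidt operator $A - B$ onto $[0,\infty)$, and $Q := \Id - P$. Then $|A - B| = (A - B)(P - Q)$. The key identity is
\begin{align*}
A^2 - B^2 = \tfrac{1}{2}\big( (A-B)(A+B) + (A+B)(A-B) \big).
\end{align*}
Both products lie in $L_1(H)$ by Lemma \ref{lemma-nuc-HS-inequalities}(3), and $\| A^2 - B^2 \|_{L_1(H)} \geq \tr\big( (A^2 - B^2)(P - Q) \big)$ because $\|P - Q\| \leq 1$. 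Writing this trace as $\tfrac12 \tr\big( (A+B)\{(A-B)(P-Q) + (P-Q)(A-B)\} \big)$ via Lemma \ref{lemma-trace-commute}, and noting that $A - B$ commutes with $P - Q$, we obtain $\tr\big( (A+B)|A - B| \big)$.

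It then remains to show $\tr\big( (A+B)|A - B| \big) \geq \tr\big( |A - B|^2 \big) = \| A - B \|_{L_2(H)}^2$. Since $|A - B| = (A - B)P - (A - B)Q$, it suffices to have $(A + B) \geq (A - B)$ when compressed to $\ran P$ and $(A + B) \geq -(A - B)$ when compressed to $\ran Q$. Both are immediate from $B \geq 0$ and $A \geq 0$ respectively. The trace is then split as $\tr(P(A+B)P\,|A-B|) + \tr(Q(A+B)Q\,|A-B|)$, which is legitimate because $|A - B|$ commutes with $P$ and $Q$. Each term dominates the corresponding $\tr(P|A-B|^2)$ or $\tr(Q|A-B|^2)$, since $|A - B|$ restricted to each range is nonnegative and the trace of a product of nonnegative operators is monotone. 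Summing the two gives the result.

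The main obstacle is justifying these trace manipulations in infinite dimensions. One must check that all products are nuclear, which follows from the Hilbert--Schmidt bounds of Lemma \ref{lemma-nuc-HS-inequalities}, and that cyclicity holds, which is Lemma \ref{lemma-trace-commute}. One also needs monotonicity: $\tr(XY) \geq \tr(ZY)$ when $X \geq Z$ and $Y \geq 0$ is Hilbert--Schmidt. This follows by writing $\tr(XY) = \tr\big( Y^{1/2} X Y^{1/2} \big)$ and comparing diagonal entries.
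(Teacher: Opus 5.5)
Your proposal is correct. Part (1) is the paper's own argument: the spectral decomposition gives $\|T^{1/2}\|_{L_2(H)}^2 = \tr(T) = \|T\|_{L_1(H)}$.

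For part (2) your route genuinely differs. The paper gives no argument and simply cites \cite[Lemma 4.1]{Powers-Stormer}. You instead reprove the Powers--St{\o}rmer inequality $\|A-B\|_{L_2(H)}^2 \le \|A^2-B^2\|_{L_1(H)}$ for $A,B \in S_2^+(H)$ by the standard argument, which uses four ingredients:
\begin{enumerate}
\item the symmetry $P-Q$ built from the spectral projections of $A-B$, with $|A-B| = (A-B)(P-Q)$;
\item the symmetrization identity $A^2-B^2 = \frac12\big((A-B)(A+B)+(A+B)(A-B)\big)$;
\item trace duality $|\tr(XU)| \le \|X\|_{L_1(H)}\|U\|$;
\item the compressed inequalities $P(A+B)P \ge P|A-B|P$ and $Q(A+B)Q \ge Q|A-B|Q$, which come from $B \ge 0$ and $A \ge 0$ respectively.
\end{enumerate}
Each step checks out. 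Every product whose trace you take is nuclear by Lemma \ref{lemma-nuc-HS-inequalities}(3), and the cyclic rearrangements you use involve only pairs of Hilbert--Schmidt operators, so Lemma \ref{lemma-trace-commute} applies. The citation buys brevity. Your argument makes the appendix self-contained, relying only on Lemmas \ref{lemma-nuc-HS-inequalities}, \ref{lemma-nuclear-HS} and \ref{lemma-trace-commute}.

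One small point concerns the monotonicity step. The identity $\tr((X-Z)Y) = \tr(Y^{1/2}(X-Z)Y^{1/2})$ is not literally covered by Lemma \ref{lemma-trace-commute}. When $Y$ is only Hilbert--Schmidt, $Y^{1/2}$ lies in the Schatten class $S_4$ and need not be Hilbert--Schmidt. It is cleaner to evaluate the trace directly in an eigenbasis $\{e_j\}$ of $Y$ with eigenvalues $y_j \ge 0$. This gives $\tr((X-Z)Y) = \sum_j y_j \la (X-Z)e_j, e_j \ra \ge 0$, which is what your ``comparing diagonal entries'' amounts to anyway.
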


\begin{proof}
For $T \in S_1^+(H)$ consider the spectral decomposition
\begin{align*}
T = \sum_{j=1}^{\infty} \lambda_j \la \cdot, e_j \ra \, e_j
\end{align*}
with eigenvalues $\lambda_j \geq 0$, $j \in \bbn$ and an orthonormal basis $\{ e_j \}_{j \in \bbn}$ of $H$. Then we have
\begin{align*}
\| T^{1/2} \|_{L_2(H)}^2 &= \sum_{j=1}^{\infty} \| T^{1/2} e_j \|^2 = \sum_{j=1}^{\infty} \la T^{1/2} e_j, T^{1/2} e_j \ra = \sum_{j=1}^{\infty} \la T e_j, e_j \ra
\\ &= \tr(T) = \| T \|_{L_1(H)},
\end{align*}
proving the first statement. For the second statement we refer to \cite[Lemma 4.1]{Powers-Stormer}.
\end{proof}

As an immediate consequence of Lemmas \ref{lemma-abs-operator-continuous} and \ref{lemma-square-root-continuous} we obtain the following result, showing that the mapping
\begin{align*}
\big( L_1(H), \| \cdot \|_{L_1(H)} \big) \to \big( S_2^+(H), \| \cdot \|_{L_2(H)} \big), \quad T \mapsto |T|^{1/2}
\end{align*}
is a nonlinear continuous mapping such that
\begin{align*}
\| |T|^{1/2} \|_{L_2(H)} = \sqrt { \| T \|_{L_1(H)} } \quad \text{for all $T \in L_1(H)$.}
\end{align*}

\begin{corollary}\label{cor-square-abs-continuous}
The following statements are true:
\begin{enumerate}
\item For every $T \in L_1(H)$ we have $|T|^{1/2} \in S_2^+(H)$ and
\begin{align*}
\| \, |T|^{1/2} \, \|_{L_2(H)}^2 = \| T \|_{L_1(H)}.
\end{align*}
\item The mapping $L_1(H) \to S_2^+(H)$, $T \mapsto |T|^{1/2}$ is continuous.
\end{enumerate}
\end{corollary}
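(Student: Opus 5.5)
The plan is to obtain the corollary by composing the two maps treated in Lemma \ref{lemma-abs-operator-continuous} and Lemma \ref{lemma-square-root-continuous}. We factor $T \mapsto |T|^{1/2}$ as $L_1(H) \to S_1^+(H)$, $T \mapsto |T|$, followed by $S_1^+(H) \to S_2^+(H)$, $R \mapsto R^{1/2}$. Both statements then follow from the corresponding properties of these two maps, which are already available.

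For the first statement, let $T \in L_1(H)$. By the first part of Lemma \ref{lemma-abs-operator-continuous} we have $|T| \in S_1^+(H)$ and $\| \, |T| \, \|_{L_1(H)} = \| T \|_{L_1(H)}$. Applying the first part of Lemma \ref{lemma-square-root-continuous} to the operator $|T|$ gives $|T|^{1/2} \in S_2^+(H)$ and
\begin{align*}
\| \, |T|^{1/2} \, \|_{L_2(H)}^2 = \| \, |T| \, \|_{L_1(H)} = \| T \|_{L_1(H)}.
\end{align*}

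For the second statement, let $T, S \in L_1(H)$. We chain the two estimates, using the second part of Lemma \ref{lemma-square-root-continuous} with the pair $|T|, |S| \in S_1^+(H)$ and then \eqref{T-abs-value}:
\begin{align*}
\| \, |T|^{1/2} - |S|^{1/2} \, \|_{L_2(H)}^2 \leq \| \, |T| - |S| \, \|_{L_1(H)} \leq \big( 2 \| T+S \|_{L_1(H)} \| T-S \|_{L_1(H)} \big)^{1/2}.
\end{align*}
Fix $T$ and let $S \to T$ in $L_1(H)$. Then $\| T+S \|_{L_1(H)}$ stays bounded, for instance by $2\|T\|_{L_1(H)} + 1$ once $\|S - T\|_{L_1(H)} \leq 1$. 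Since $\| T-S \|_{L_1(H)} \to 0$, the right-hand side tends to $0$, which proves continuity. In fact the estimate gives local H\"older continuity of exponent $1/4$.

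There is no real obstacle, because all the analytic content sits in the two cited inequalities (Kosaki's estimate and the Powers--St{\o}rmer inequality). The one point to check is that the second part of Lemma \ref{lemma-square-root-continuous} may be applied, which needs $|T|$ and $|S|$ to lie in $S_1^+(H)$. This is guaranteed by the first part of Lemma \ref{lemma-abs-operator-continuous}.
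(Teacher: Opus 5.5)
Your proposal is correct and follows the paper's route: the paper states this corollary as an immediate consequence of Lemma~\ref{lemma-abs-operator-continuous} and Lemma~\ref{lemma-square-root-continuous}, which amounts to composing $T \mapsto |T|$ with $R \mapsto R^{1/2}$. Your chained estimate, which gives local H\"older continuity with exponent $1/4$, simply writes out the continuity argument that the paper leaves implicit.
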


For the following result we consider self-adjoint compact operators $T \in L(H)$ with spectral decomposition
\begin{align*}
T = \sum_{j=1}^{\infty} \lambda_j(T) \la \cdot,e_j \ra \, e_j.
\end{align*}
Concerning the ordering of the eigenvalues $(\lambda_j(T))_{j \in \bbn}$ we assume that $|\lambda_j(T)| \geq |\lambda_{j+1}(T)|$ for all $j \in \bbn$, and that for all $j < k$ with $|\lambda_j(T)| = |\lambda_k(T)|$ and $\lambda_j(T) \neq \lambda_k(T)$ we have $\lambda_j(T) > 0$ and $\lambda_k(T) < 0$.

\begin{proposition}\label{prop-eigenvalue-map}
For all self-adjoint nuclear operators $T,S \in L_1(H)$ we have
\begin{align*}
\sum_{j=1}^{\infty} | \lambda_j(T) - \lambda_j(S) | \leq \| T-S \|_{L_1(H)}.
\end{align*}
\end{proposition}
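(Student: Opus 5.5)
The plan is to prove this Lidskii-type inequality by reducing it to finite dimensions and then using a variational (Ky Fan / Lidskii) argument. The ordering convention is by decreasing absolute value, with positive eigenvalues listed before negative ones of equal modulus. It is more convenient to split each self-adjoint nuclear operator into its positive and negative parts. Write the nonnegative eigenvalues of $T$ in decreasing order as $\lambda_1^+(T)\ge\lambda_2^+(T)\ge\ldots\ge 0$, padded with zeros. Write the moduli of the negative eigenvalues in decreasing order as $\lambda_1^-(T)\ge\ldots\ge 0$, also padded with zeros. The sequence $(\lambda_j(T))_j$ is then a merge of $(\lambda_k^+(T))_k$ and $(-\lambda_k^-(T))_k$.

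The first step is to prove the inequality
\begin{align*}
\sum_k|\lambda_k^+(T)-\lambda_k^+(S)|+\sum_k|\lambda_k^-(T)-\lambda_k^-(S)|\le\|T-S\|_{L_1(H)}.
\end{align*}
To get it, I would approximate $T$ and $S$ in $L_1(H)$ by compressions $P_nTP_n$ and $P_nSP_n$ to finite-dimensional subspaces. I would choose the subspaces increasing and eventually containing any fixed finite set of eigenvectors of both operators. The eigenvalues of the compressions converge to those of $T$ and $S$ in $\ell^1$, with the convention above; this can be seen from Weyl/Courant–Fischer min-max. Both sides therefore pass to the limit, and it suffices to treat $\dim H<\infty$. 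In finite dimensions this is the classical Lidskii–Wielandt theorem. The vector of eigenvalues of $T$ minus that of $S$, each sorted decreasingly, is majorized by the eigenvalue vector of $T-S$. By convexity of $x\mapsto|x|$, this gives $\sum_i|\mu_i^\downarrow(T)-\mu_i^\downarrow(S)|\le\sum_i|\mu_i(T-S)|=\|T-S\|_{L_1}$. Here the sorted eigenvalue list of $T$ in dimension $d$ is exactly $\lambda^+$ followed by zeros and then $-\lambda^-$ reversed, suitably padded. Padding the dimension with extra zeros shows that the decreasingly sorted comparison coincides with the sum of the $\lambda^+$-differences and the $\lambda^-$-differences.

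The second step, which I expect to be the main obstacle, is to pass from this sorted-by-sign comparison to the merged absolute-value ordering in the statement. The sequences $(\lambda_j(T))$ and $(\lambda_j(S))$ may interleave positive and negative eigenvalues differently, so the difference cannot simply be read off. My plan is a rearrangement argument. Both merged sequences are sorted by modulus, and a sequence sorted by modulus is an optimal matching for the $\ell^1$ distance between multisets of reals when the cost is measured in a suitable way. Concretely, I would show that for two real sequences $a$, $b$ arranged by decreasing modulus, with the sign tie-break above, one has $\sum_j|a_j-b_j|\le\sum_j|a_{\pi(j)}-b_j|$ for any bijection $\pi$ matching positives of $a$ with positives of $b$ by rank and negatives with negatives. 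This can be done by an exchange argument on adjacent transpositions, checking the finitely many sign and modulus configurations of a pair. The sign-respecting rank matching costs exactly the left side of the first step. Combining the two steps yields the claim.

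If the exchange inequality fails in some configuration, I would instead try to prove the statement directly. In that approach I would approximate $T$ and $S$ by finite-rank operators and apply the Lidskii majorization to the $2\times2$ block operator $\mathrm{diag}(T,-T)$ versus $\mathrm{diag}(S,-S)$. Its eigenvalue multiset is symmetric, $\pm\lambda_j(T)$, and its sorted list is tied directly to $|\lambda_j|$ ordering. The trace norm of the difference of these block operators is $2\|T-S\|_{L_1}$, which compensates for the doubled count.
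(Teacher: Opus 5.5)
Your second step fails, and it cannot be repaired: the exchange inequality you propose is false, and so is the proposition itself under the paper's modulus ordering. Let $e_1,e_2$ be orthonormal. Let $T$ act as $\mathrm{diag}(1,-1)$ on their span and as $0$ on the orthogonal complement, and let $S$ act as $\mathrm{diag}(1-\epsilon,-1)$ for some $0<\epsilon<1$. The tie-breaking rule gives $\lambda(T)=(1,-1,0,\ldots)$, while $\lambda(S)=(-1,1-\epsilon,0,\ldots)$, hence
\[
\sum_{j=1}^{\infty}|\lambda_j(T)-\lambda_j(S)| = 2+(2-\epsilon) = 4-\epsilon > \epsilon = \|T-S\|_{L_1(H)}.
\]
Your optimality claim runs the wrong way. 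For real multisets the cheapest $\ell^1$ matching is the monotone one, which is exactly your sign-respecting rank matching. The modulus matching can be arbitrarily worse: $a=(1,-0.9)$ and $b=(-1,0.9)$ cost $3.8$ under it versus $0.2$. In fact $T\mapsto(\lambda_j(T))_j$ is not even continuous when a positive and a negative eigenvalue cross in modulus. Your fallback does not rescue this. Lidskii for $\mathrm{diag}(T,-T)$ versus $\mathrm{diag}(S,-S)$ only yields $\sum_j\big||\lambda_j(T)|-|\lambda_j(S)|\big|\le\|T-S\|_{L_1(H)}$, which forgets the signs, and $\big||a|-|b|\big|\le|a-b|$ is the wrong direction.

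What is true is your first step, the sign-split estimate $\sum_k|\lambda_k^+(T)-\lambda_k^+(S)|+\sum_k|\lambda_k^-(T)-\lambda_k^-(S)|\le\|T-S\|_{L_1(H)}$. Your route to it is sound: zero padding, finite-dimensional Lidskii for decreasingly ordered eigenvalues, compressions, and Weyl's pointwise bound plus Fatou in the limit. This is also all the paper's one-line proof (a finite-dimensional Lidskii-type estimate from Bhatia plus approximation) can deliver, since any such estimate concerns the algebraically decreasing ordering. The paper never addresses the passage to the modulus ordering either.

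It is, however, all the paper needs. In Lemma~\ref{lemma-simple-eigenvalues-2} the proposition is applied with $T=C(x)$ nonnegative definite and $\|T-S\|_{L_1(H)}<\epsilon/8$, where $\epsilon\le\mu_r(x)$. The sign-split bound then gives $\lambda_k^+(S)>7\epsilon/8$ for $k\le r$, while every other eigenvalue of $S$ has modulus $<\epsilon/8$. So the modulus ordering of $S$ starts with $\lambda_1^+(S),\ldots,\lambda_r^+(S)$, and the sum in the proposition coincides with the sign-split sum. The right fix is therefore to restate the proposition, either in sign-split form or restricted to this perturbative regime, rather than to look for a rearrangement argument.
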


\begin{proof}
In the finite dimensional situation, this is a consequence of \cite[Lemma IV.3.2]{Bhatia}, and in the general situation, the inequality follows from approximating the operators $T$ and $S$ according to their spectral decompositions.
\end{proof}

\begin{lemma}\label{lemma-kernels}
For every $T \in L(H_1,H_2)$ we have $\ker(T) = \ker(T^* T)$, and thus $\ker(T^*) = \ker(T T^*)$.
\end{lemma}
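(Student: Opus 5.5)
The identity $\ker(T) = \ker(T^* T)$ is proved by the two inclusions; the statement for $\ker(T^*)$ then follows by applying the first part to $T^*$.

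\emph{Inclusion $\ker(T) \subset \ker(T^*T)$.} If $Tx = 0$, then $T^*Tx = T^*0 = 0$.

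\emph{Inclusion $\ker(T^*T) \subset \ker(T)$.} Let $x \in H_1$ with $T^*Tx = 0$. Using the defining property of the adjoint,
\begin{align*}
\| Tx \|^2 = \la Tx, Tx \ra = \la T^* T x, x \ra = 0,
\end{align*}
so $Tx = 0$. This is the only step with any content, and it is immediate.

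\emph{Second statement.} Apply the first part to $T^* \in L(H_2,H_1)$ in place of $T$. Since $(T^*)^* = T$ for bounded operators between Hilbert spaces, this gives
\begin{align*}
\ker(T^*) = \ker\big( (T^*)^* T^* \big) = \ker(T T^*).
\end{align*}

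No step poses a real obstacle. The only points to keep straight are the domains: $T^*T \in L(H_1)$ and $TT^* \in L(H_2)$, so the kernels are compared inside the correct spaces. Note also that neither compactness nor separability is used.
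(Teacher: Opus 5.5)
Your proposal is correct and is essentially the paper's proof: both inclusions, with the nontrivial one obtained from $\|Tx\|^2 = \la T^*Tx, x\ra = 0$. You also spell out the step the paper leaves implicit, namely deriving $\ker(T^*) = \ker(TT^*)$ by applying the first part to $T^*$ and using $(T^*)^* = T$.
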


\begin{proof}
For each $x \in \ker(T)$ we have $T^* T x = 0$, showing that $\ker(T) \subset \ker(T^* T)$. Furthermore, for each $x \in \ker(T^* T)$ we have
\begin{align*}
\| Tx \|^2 = \la Tx,Tx \ra = \la T^* T x,x \ra = 0,
\end{align*}
and hence $Tx = 0$, showing that $\ker(T^* T) \subset \ker(T)$.
\end{proof}

Now we will provide the required results about the Moore-Penrose pseudoinverse. This inverse will be introduced for linear operators $T \in L(H_1,H_2)$ with closed range. Before introducing the Moore-Penrose pseudoinverse, we prepare some auxiliary results about the closed range property.

\begin{lemma}\label{lemma-closed-range-adjoint}
Let $T \in L(H_1,H_2)$ be a linear operator. Then $T$ has closed range if and only if $T^*$ has closed range.
\end{lemma}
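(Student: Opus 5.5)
The strategy is to reduce both directions to the interplay between $\ran(T)$ and $\ker(T^*)$, together with the closed range theorem in Hilbert spaces. By symmetry it suffices to prove one implication. Indeed, $T^{**} = T$, so applying the implication "$T$ closed range $\Rightarrow$ $T^*$ closed range" to $T^*$ yields the converse.

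First, assume $\ran(T)$ is closed. Then $H_2 = \ran(T) \oplus \ran(T)^{\perp}$, and by Lemma \ref{lemma-adjoint-ker-ran} we have $\ran(T)^{\perp} = \ker(T^*)$. Consider the restriction
\begin{align*}
T_0 := T|_{\ker(T)^{\perp}} : \ker(T)^{\perp} \to \ran(T).
\end{align*}
This map is a continuous bijection between Hilbert spaces, so the bounded inverse theorem gives a constant $c > 0$ with
\begin{align*}
\| Tx \| \geq c \| x \| \quad \text{for all } x \in \ker(T)^{\perp}.
\end{align*}

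The main step is to transfer this lower bound to $T^*$. I claim that
\begin{align*}
\| T^* y \| \geq c \| y \| \quad \text{for all } y \in \ran(T) = \ker(T^*)^{\perp}.
\end{align*}
To see this, fix $y \in \ran(T)$ and write $y = T x$ with $x \in \ker(T)^{\perp}$. Then $\| x \| \leq c^{-1} \| y \|$, and
\begin{align*}
\| y \|^2 = \la Tx, y \ra = \la x, T^* y \ra \leq c^{-1} \| y \| \, \| T^* y \|.
\end{align*}
Dividing by $\| y \|$ gives the claimed bound.

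Closedness of $\ran(T^*)$ now follows. Since $\ker(T^*) \oplus \ran(T) = H_2$, we have $\ran(T^*) = T^*(\ran(T))$. The restriction of $T^*$ to $\ran(T)$ is bounded below, so it maps the closed subspace $\ran(T)$ onto a closed set: if $T^* y_n \to z$ with $y_n \in \ran(T)$, then $(y_n)$ is Cauchy, hence $y_n \to y \in \ran(T)$ and $z = T^* y$.

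I expect no real obstacle. The only delicate point is choosing the right domain, $\ker(T)^{\perp}$ for $T$ and $\ran(T)$ for $T^*$, on which each operator is bounded below. Alternatively, one could simply cite the closed range theorem, e.g. \cite[Thm. 4.14]{Rudin}.
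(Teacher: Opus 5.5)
Your proof is correct, but it takes a different route from the paper, which simply cites the closed range theorem \cite[Thm. 4.14]{Rudin} (the option you mention at the end). You instead give a self-contained Hilbert-space argument, whose steps all check out:
\begin{enumerate}
\item The bounded inverse theorem applied to $T|_{\ker(T)^{\perp}} : \ker(T)^{\perp} \to \ran(T)$ gives a lower bound $\| Tx \| \geq c \| x \|$. The bijectivity you assert follows from $H_1 = \ker(T) \oplus \ker(T)^{\perp}$.
\item The identity $\| y \|^2 = \la x, T^* y \ra$ transfers this bound to $T^*$ on $\ran(T) = \ker(T^*)^{\perp}$. The case $y = 0$ is trivial.
\item A Cauchy-sequence argument, together with $\ran(T^*) = T^*(\ran(T))$, gives closedness of $\ran(T^*)$.
\item The converse follows from $T^{**} = T$.
\end{enumerate}
What your route buys is that orthogonal projections replace the duality machinery (annihilators, Hahn--Banach) behind the Banach-space statement. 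It also avoids translating between Rudin's Banach adjoint $T' : H_2' \to H_1'$ and the Hilbert adjoint $T^*$ via the Riesz isomorphisms. In addition, it is quantitative: the same constant $c$ works for $T$ on $\ker(T)^{\perp}$ and for $T^*$ on $\ker(T^*)^{\perp}$, so by symmetry the optimal lower bounds of the two operators coincide. The citation, on the other hand, is a one-liner valid in arbitrary Banach spaces. Both arguments ultimately rest on the open mapping theorem.
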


\begin{proof}
This is a consequence of the closed range theorem; see, e.g. \cite[Thm. 4.14]{Rudin}.
\end{proof}

\begin{lemma}\label{lemma-closed-range-square-root-2}
Let $T \in L(H_1,H_2)$ be a closed range operator. Then we have $\ran(T) = \ran(T T^*)$. In particular $T T^*$ also has closed range.
\end{lemma}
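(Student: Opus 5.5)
We need to prove: if $T \in L(H_1,H_2)$ has closed range, then $\ran(T) = \ran(TT^*)$, and hence $TT^*$ also has closed range.

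The inclusion $\ran(TT^*) \subset \ran(T)$ is immediate, since $TT^*x = T(T^*x)$. For the converse, the plan is to reduce everything to the orthogonal decomposition of $H_1$ and to the range of $T^*$.

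First, because $T$ has closed range, Lemma \ref{lemma-closed-range-adjoint} tells us that $T^*$ also has closed range. We then combine two facts:
\begin{itemize}
\item Lemma \ref{lemma-adjoint-ker-ran}, applied to $T^*$ (using $T^{**}=T$), gives $\ker(T) = \ran(T^*)^{\perp}$.
\item Since $\ran(T^*)$ is closed, $\ran(T^*) = \ran(T^*)^{\perp\perp} = \ker(T)^{\perp}$.
\end{itemize}
This yields the orthogonal decomposition $H_1 = \ker(T) \oplus \ran(T^*)$.

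Now take an arbitrary $y = Tz \in \ran(T)$ with $z \in H_1$. Decompose $z = z_0 + z_1$ with $z_0 \in \ker(T)$ and $z_1 \in \ran(T^*)$, and write $z_1 = T^*w$ for some $w \in H_2$. Then
\begin{align*}
y = Tz = Tz_1 = TT^*w \in \ran(TT^*).
\end{align*}
Hence $\ran(T) \subset \ran(TT^*)$, so the two ranges coincide. In particular $\ran(TT^*) = \ran(T)$ is closed, which is the second claim.

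The only step that needs care is identifying $\ran(T^*)$ with $\ker(T)^{\perp}$. This equality holds only because $\ran(T^*)$ is closed, which is exactly what the closed range theorem (Lemma \ref{lemma-closed-range-adjoint}) supplies; without it we would only get $\overline{\ran(T^*)} = \ker(T)^{\perp}$.
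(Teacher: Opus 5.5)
Your proposal is correct and follows the paper's proof essentially verbatim: it uses Lemma \ref{lemma-closed-range-adjoint} to get $\ran(T^*)$ closed, decomposes $H_1 = \ran(T^*) \oplus \ker(T)$ via Lemma \ref{lemma-adjoint-ker-ran}, and observes that $Tz = Tz_1$ with $z_1 \in \ran(T^*)$. You also make explicit the step $z_1 = T^*w$, which the paper leaves implicit.
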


\begin{proof}
It is clear that $\ran(T T^*) \subset \ran(T)$. Now, let $y \in \ran(T)$ be arbitrary. Then there exists $x \in H_1$ such that $Tx = y$. By Lemma \ref{lemma-closed-range-adjoint} the adjoint operator $T^*$ also has closed range, and hence we can consider the decomposition $x = x_1 + x_2$ according to $H_1 = \ran(T^*) \oplus \ran(T^*)^{\perp}$. Noting that $\ran(T^*)^{\perp} = \ker(T)$ due to Lemma \ref{lemma-adjoint-ker-ran}, we obtain $T x_1 = Tx = y$, showing that $y \in \ran(T T^*)$.
\end{proof}

\begin{lemma}\label{lemma-closed-range-square-root}
Let $T \in L(H_1,H_2)$ be such that $T T^*$ has closed range. Then we have $\ran(T) = \ran(T T^*)$. In particular $T$ also has closed range.
\end{lemma}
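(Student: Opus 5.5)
The plan is to prove the two claims of Lemma \ref{lemma-closed-range-square-root} separately. First I show $\ran(T) = \ran(TT^*)$. Then closedness of $\ran(T)$ is immediate, since $\ran(TT^*)$ is closed by assumption. The inclusion $\ran(TT^*) \subset \ran(T)$ is trivial, so all the work is in the reverse inclusion.

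For the reverse inclusion I first compare closures. By Lemma \ref{lemma-kernels} we have $\ker(T^*) = \ker(TT^*)$. Combining this with Lemma \ref{lemma-adjoint-ker-ran}, applied to $T$ and to the self-adjoint operator $TT^*$, gives
\begin{align*}
\overline{\ran(T)} = \ker(T^*)^{\perp} = \ker(TT^*)^{\perp} = \overline{\ran(TT^*)} = \ran(TT^*),
\end{align*}
where the last equality uses the closed range hypothesis. From this,
\begin{align*}
\ran(T) \subset \overline{\ran(T)} = \ran(TT^*) \subset \ran(T),
\end{align*}
so equality holds throughout. This also shows that $\ran(T)$ is closed.

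The only point needing care is using the correct orthogonal-complement identity, $\overline{\ran(S)} = \ker(S^*)^{\perp}$. This follows from Lemma \ref{lemma-adjoint-ker-ran}, $\ker(S^*) = \ran(S)^{\perp}$, by taking orthogonal complements and using $(A^{\perp})^{\perp} = \overline{\lin A}$ for a subspace $A$. I do not expect any real obstacle here: the argument is a closure comparison and needs neither the closed range theorem nor Lemma \ref{lemma-closed-range-square-root-2}.
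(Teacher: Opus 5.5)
Your proposal is correct and follows the paper's proof essentially verbatim: it uses the same chain $\ran(TT^*) \subset \ran(T) \subset \overline{\ran(T)} = \ker(T^*)^{\perp} = \ker(TT^*)^{\perp} = \overline{\ran(TT^*)} = \ran(TT^*)$ via Lemmas \ref{lemma-adjoint-ker-ran} and \ref{lemma-kernels}. Your added justification of $\overline{\ran(S)} = \ker(S^*)^{\perp}$, together with the appeal to self-adjointness of $TT^*$, makes explicit a step the paper leaves implicit.
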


\begin{proof}
Using Lemma \ref{lemma-adjoint-ker-ran} and Lemma \ref{lemma-kernels} we have
\begin{align*}
\ran(T T^*) \subset \ran(T) \subset \overline{\ran(T)} = \ker(T^*)^{\perp} = \ker(T T^*)^{\perp} = \overline{\ran(T T^*)} = \ran(T T^*),
\end{align*}
proving that $\ran(T) = \ran(T T^*)$.
\end{proof}

As an immediate consequence of Lemmas \ref{lemma-closed-range-adjoint}--\ref{lemma-closed-range-square-root} we obtain the following result about closed range operators:

\begin{proposition}\label{prop-closed-range-char}
For a linear operator $T \in L(H_1,H_2)$ the following statements are equivalent:
\begin{enumerate}
\item[(i)] $T$ has closed range.

\item[(ii)] $T^*$ has closed range.

\item[(iii)] $T T^*$ has closed range.

\item[(iv)] $T^* T$ has closed range.
\end{enumerate}
If these equivalent conditions are fulfilled, then we have
\begin{align*}
\ran(T) = \ran(T T^*) \quad \text{and} \quad \ran(T^*) = \ran(T^* T).
\end{align*}
\end{proposition}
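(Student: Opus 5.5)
Since the statement just proved is Proposition \ref{prop-closed-range-char}, which the text announces as an immediate consequence of Lemmas \ref{lemma-closed-range-adjoint}--\ref{lemma-closed-range-square-root}, the plan is to chain those three lemmas. The strategy is to prove (i) $\Leftrightarrow$ (ii) $\Leftrightarrow$ (iii), and then obtain (iv) by applying the same argument to $T^*$ in place of $T$.

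First, (i) $\Leftrightarrow$ (ii) is exactly Lemma \ref{lemma-closed-range-adjoint}, which rests on the closed range theorem. Next, (i) $\Rightarrow$ (iii) follows from Lemma \ref{lemma-closed-range-square-root-2}: if $T$ has closed range, then $\ran(T) = \ran(TT^*)$, so $TT^*$ has closed range. The converse (iii) $\Rightarrow$ (i) is Lemma \ref{lemma-closed-range-square-root}: if $TT^*$ has closed range, then $\ran(T) = \ran(TT^*)$, so $T$ has closed range.

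For (iv), apply the equivalence (i) $\Leftrightarrow$ (iii) to the operator $T^* \in L(H_2,H_1)$. Since $(T^*)^* = T$, this shows that $T^*$ has closed range if and only if $T^*T$ has closed range. Combined with (i) $\Leftrightarrow$ (ii), this gives (ii) $\Leftrightarrow$ (iv).

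Finally, suppose the equivalent conditions hold. Lemma \ref{lemma-closed-range-square-root-2} applied to $T$ yields $\ran(T) = \ran(TT^*)$. Applied to $T^*$, which has closed range by (ii), it yields $\ran(T^*) = \ran(T^*T)$.

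The only step with genuine content is the closed range theorem inside Lemma \ref{lemma-closed-range-adjoint}; everything else is bookkeeping. The one point to check is the adjoint identity $(T^*)^* = T$ for bounded operators between Hilbert spaces, which holds.
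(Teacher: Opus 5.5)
Your proposal is correct and is exactly the paper's argument. The paper states the proposition as an immediate consequence of Lemmas \ref{lemma-closed-range-adjoint}--\ref{lemma-closed-range-square-root}, chained as you do, with (iv) and the identity $\ran(T^*) = \ran(T^* T)$ obtained by applying the same lemmas to $T^*$.
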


For compact operators, the closed range property can be characterized as follows.

\begin{theorem}\label{thm-closed-range-fd}\cite[Thm. 4.18.b]{Rudin}
For a compact operator $T \in L(H_1,H_2)$ the following statements are equivalent:
\begin{enumerate}
\item[(i)] $T$ has closed range.

\item[(ii)] $\ran(T)$ is finite dimensional.
\end{enumerate}
\end{theorem}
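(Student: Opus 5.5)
The statement to prove is Theorem \ref{thm-closed-range-fd}: for a compact operator $T \in L(H_1,H_2)$, $T$ has closed range if and only if $\ran(T)$ is finite dimensional.

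The implication (ii) $\Rightarrow$ (i) is immediate. Every finite dimensional subspace of a normed space is complete, and hence closed.

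For (i) $\Rightarrow$ (ii), I would use the open mapping theorem.

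1. Assume $R := \ran(T)$ is closed. Then $R$ is a Hilbert space, being a closed subspace of $H_2$.
2. Restrict $T$ to $\ker(T)^{\perp}$. This gives a continuous linear bijection $S : \ker(T)^{\perp} \to R$. Injectivity holds by construction. Surjectivity holds because every $x$ decomposes into a component in $\ker(T)$ and a component in $\ker(T)^{\perp}$, and $T$ kills the first one.
3. By the open mapping (bounded inverse) theorem, $S^{-1} : R \to \ker(T)^{\perp}$ is bounded.
4. Hence $T \circ S^{-1} = \Id_R$, where $T$ is compact and $S^{-1}$ is bounded. The composition of a compact operator with a bounded operator is compact, so $\Id_R$ is compact.
5. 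Consequently the closed unit ball of $R$ is relatively compact. By Riesz's lemma this forces $\dim R < \infty$.

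Alternatively, one could argue as follows. The image under $T$ of the open unit ball contains a ball of $R$ centred at $0$, again by the open mapping theorem, and that image is relatively compact by compactness of $T$. So $R$ is locally compact, and therefore finite dimensional.

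The only nontrivial ingredients are the open mapping theorem and Riesz's characterization of finite dimensionality via compactness of the unit ball. Applying the open mapping theorem requires $R$ to be complete, which is exactly where closedness of the range enters. I therefore see no real obstacle beyond checking that $S$ is indeed a bijection onto $R$.
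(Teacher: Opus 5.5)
Your proof is correct. The paper does not prove this statement itself; it cites \cite[Thm. 4.18.b]{Rudin}. That proof is exactly your alternative argument: by the open mapping theorem, $T$ maps the unit ball of $H_1$ onto an open neighbourhood of $0$ in the closed (hence complete) range, and this neighbourhood is totally bounded by compactness, so $\ran(T)$ is locally compact and therefore finite dimensional. Your main route, via the bounded inverse of $T|_{\ker(T)^{\perp}}$ and compactness of $\Id_{\ran(T)}$, is a repackaging of the same idea.
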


The proofs of the following two results (Theorems \ref{thm-inverse-existence} and \ref{thm-MP-characterizations}) are a consequence of the results from \cite[Chap. II]{Groetsch}.

\begin{theorem}\label{thm-inverse-existence}
For each operator $T \in L(H_1,H_2)$ with closed range there exists a unique operator $T^+ \in L(H_2,H_1)$ such that
\begin{enumerate}
\item $T T^+ = (T T^+)^*$.

\item $T^+ T = (T^+ T)^*$.

\item $T T^+ T = T$.

\item $T^+ T T^+ = T^+$.
\end{enumerate}
\end{theorem}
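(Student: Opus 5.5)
The plan is to build $T^+$ explicitly from the orthogonal decompositions induced by $T$, and then verify the four identities and uniqueness by direct algebra. Since $\ran(T)$ is closed, Lemma \ref{lemma-closed-range-adjoint} shows that $\ran(T^*)$ is closed as well. Lemma \ref{lemma-adjoint-ker-ran} then gives the orthogonal decompositions
\begin{align*}
H_1 = \ker(T) \oplus \ran(T^*), \qquad H_2 = \ran(T) \oplus \ker(T^*).
\end{align*}
The restriction $T_0 := T|_{\ran(T^*)} : \ran(T^*) \to \ran(T)$ is injective, since $\ker(T) \cap \ran(T^*) = \{0\}$. It is also surjective, because every $Tx$ equals $T$ applied to the $\ran(T^*)$-component of $x$. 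Both spaces are closed subspaces, hence Hilbert spaces, so the bounded inverse theorem makes $T_0^{-1}$ bounded. I will then define
\begin{align*}
T^+ := T_0^{-1} P_{\ran(T)} \in L(H_2,H_1),
\end{align*}
where $P_{\ran(T)}$ is the orthogonal projection onto $\ran(T)$.

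Verifying (1)--(4) should be routine. First, $T T^+ = P_{\ran(T)}$, which is self-adjoint. Second, $T^+ T = P_{\ran(T^*)}$: on $\ker(T)$ both sides vanish, and on $\ran(T^*)$ both act as the identity; this is again self-adjoint. Third, $T T^+ T = P_{\ran(T)} T = T$. Fourth, $T^+ T T^+ = P_{\ran(T^*)} T^+ = T^+$, because $\ran(T^+) \subset \ran(T^*)$.

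Uniqueness is where a little care is needed. I will use the standard algebraic argument that needs only the four identities, with no closed-range input. Suppose $X$ and $Y$ both satisfy (1)--(4). Then
\begin{align*}
X = XTX = X(TX)^* = X X^* T^* = X X^* (TYT)^* = X X^* T^* Y^* T^* = X (TX)^*(TY)^* = XTXTY = XTY.
\end{align*}
The symmetric computation, run through the identities involving $XT$ and $YT$, gives
\begin{align*}
Y = YTY = (YT)^* Y = T^* Y^* Y = (TXT)^* Y^* Y = (XT)^*(YT)^* Y = XTYTY = XTY.
\end{align*}
Hence $X = Y$.

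The main thing to watch is the correct ordering of adjoints in these chains. Boundedness of $T^+$ is the only step that genuinely uses the closed range assumption, and it comes straight from the bounded inverse theorem.
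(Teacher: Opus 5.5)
Your proof is correct: the construction $T^{+} = T_0^{-1} P_{\ran(T)}$, the verification of the four identities, and the purely algebraic uniqueness chain all check out, and this is the same approach as in Groetsch, Chap.~II, to which the paper simply defers without giving an argument (there the pseudoinverse is likewise defined by inverting $T$ on $\ker(T)^{\perp} = \ran(T^*)$ and extending by zero on $\ran(T)^{\perp}$). One small correction to your closing remark: closedness of $\ran(T)$ is used not only for boundedness of $T^{+}$, but already to make $T_0^{-1} P_{\ran(T)}$ defined on all of $H_2$, since otherwise the projection would be onto $\overline{\ran(T)}$, where $T_0^{-1}$ is not defined.
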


\begin{definition}
For each $T \in L(H_1,H_2)$ with closed range we call $T^+$ the \emph{Moore-Penrose pseudoinverse} of $T$.
\end{definition}

\begin{theorem}\label{thm-MP-characterizations}
Let $T \in L(H_1,H_2)$ be an operator with closed range, and let $T^+ \in L(H_2,H_1)$ be another linear operator. Then the following statements are equivalent:
\begin{enumerate}
\item[(i)] $T^+$ the Moore-Penrose pseudoinverse of $T$.

\item[(ii)] We have $T^+ T x = x$ for all $x \in \ker(T)^{\perp}$ and $T^+ y = 0$ for all $y \in \ran(T)^{\perp}$.

\item[(iii)] We have $T T^+ = P_{\ran(T)}$ and $T^+ T = P_{\ran(T^+)}$.
\end{enumerate}
\end{theorem}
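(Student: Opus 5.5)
The plan is to prove the cycle (i) $\Rightarrow$ (iii) $\Rightarrow$ (ii) $\Rightarrow$ (i). The existence and uniqueness of the Moore-Penrose pseudoinverse from Theorem \ref{thm-inverse-existence} is used only in the last step. Throughout, the closed range of $T$ gives the orthogonal decomposition $H_2 = \ran(T) \oplus \ran(T)^{\perp}$, and Lemma \ref{lemma-adjoint-ker-ran} gives $\ran(T)^{\perp} = \ker(T^*)$.

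\emph{(i) $\Rightarrow$ (iii).} Set $P := T T^+$.
\begin{itemize}
\item From $T T^+ T = T$ we get $P^2 = T T^+ T T^+ = T T^+ = P$, and $P^* = P$ by property (1). So $P$ is an orthogonal projection.
\item Clearly $\ran(P) \subset \ran(T)$. Conversely, $P T x = T x$ for all $x$, so $\ran(T) \subset \ran(P)$. Hence $P = P_{\ran(T)}$.
\item Analogously, $Q := T^+ T$ satisfies $Q^2 = Q$ by property (4) and $Q^* = Q$ by property (2), so it is an orthogonal projection.
\item Clearly $\ran(Q) \subset \ran(T^+)$. Conversely, $Q T^+ = T^+ T T^+ = T^+$, so $\ran(T^+) \subset \ran(Q)$. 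Hence $Q = P_{\ran(T^+)}$.
\end{itemize}

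\emph{(iii) $\Rightarrow$ (ii).} This is the step requiring the most care, because (iii) does not state directly what $\ran(T^+)$ is. The plan is first to show $\ran(T^+) = \ker(T)^{\perp}$ by comparing kernels.
\begin{itemize}
\item Obviously $\ker(T) \subset \ker(T^+ T)$.
\item From $T T^+ = P_{\ran(T)}$ we get $T T^+ T = P_{\ran(T)} T = T$. Hence $T^+ T x = 0$ implies $T x = 0$, so $\ker(T^+ T) = \ker(T)$.
\item Since $T^+ T = P_{\ran(T^+)}$ is an orthogonal projection, its range is the orthogonal complement of its kernel. Therefore $\ran(T^+) = \ker(T)^{\perp}$. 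In particular $\ran(T^+)$ is closed, so the projection in (iii) is meaningful.
\end{itemize}
With this, both parts of (ii) follow.
\begin{itemize}
\item For $x \in \ker(T)^{\perp} = \ran(T^+)$ we have $T^+ T x = P_{\ran(T^+)} x = x$.
\item For $y \in \ran(T)^{\perp}$ we have $T T^+ y = P_{\ran(T)} y = 0$. Thus $T^+ y \in \ker(T) \cap \ran(T^+) = \ker(T) \cap \ker(T)^{\perp} = \{0\}$.
\end{itemize}

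\emph{(ii) $\Rightarrow$ (i).} Conditions (ii) determine $T^+$ uniquely on all of $H_2$.
\begin{itemize}
\item Any $y \in \ran(T)$ can be written as $y = T x$ with $x \in \ker(T)^{\perp}$: decompose $x$ along $H_1 = \ker(T) \oplus \ker(T)^{\perp}$ and drop the kernel part. Then (ii) forces $T^+ y = x$.
\item On $\ran(T)^{\perp}$, (ii) forces $T^+ = 0$.
\item Since $H_2 = \ran(T) \oplus \ran(T)^{\perp}$, at most one operator satisfies (ii).
\end{itemize}
By Theorem \ref{thm-inverse-existence} the Moore-Penrose pseudoinverse exists, and by the implications (i) $\Rightarrow$ (iii) $\Rightarrow$ (ii) already established it satisfies (ii). By uniqueness, any $T^+$ satisfying (ii) coincides with the Moore-Penrose pseudoinverse, which proves (i).
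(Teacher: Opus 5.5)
Your proof is correct. The paper gives no argument of its own for this statement. It only attributes this theorem and Theorem \ref{thm-inverse-existence} to \cite[Chap. II]{Groetsch}. In the standard treatment there, the pseudoinverse is built directly through (ii): one inverts $T|_{\ker(T)^{\perp}}$ on $\ran(T)$, extends by zero on $\ran(T)^{\perp}$, and then checks the Penrose identities.

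Your cycle (i) $\Rightarrow$ (iii) $\Rightarrow$ (ii) $\Rightarrow$ (i) runs differently. You take existence from Theorem \ref{thm-inverse-existence} as given and close the loop by showing that (ii) determines an operator uniquely. This is legitimate, since that theorem is stated beforehand, and it gives a short self-contained argument in place of the citation. If you want (ii) $\Rightarrow$ (i) not to depend on the existence part at all, you can argue directly. Condition (ii) gives $T^+T = P_{\ker(T)^{\perp}}$, $TT^+ = P_{\ran(T)}$ and $\ran(T^+) \subset \ker(T)^{\perp}$, and all four Penrose identities follow immediately from these.

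One small wording issue in (iii) $\Rightarrow$ (ii): the meaning of $P_{\ran(T^+)}$ is part of the hypothesis, not something you derive. Suppose it is read as the projection onto $\overline{\ran(T^+)}$, matching the paper's convention for $P_T$. Then your kernel comparison yields $\overline{\ran(T^+)} = \ker(T)^{\perp}$. Closedness of $\ran(T^+)$ follows from $\ran(T^+T) \subset \ran(T^+) \subset \overline{\ran(T^+)} = \ran(T^+T)$. In any case, your two subsequent conclusions only use $\overline{\ran(T^+)} = \ker(T)^{\perp}$.
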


The following result in particular shows that for a closed range operator $T$ the Moore-Penrose pseudoinverse $T^+$ has closed range as well.

\begin{theorem}\cite[Thms. 2.1.2 and 2.1.5]{Groetsch}\label{thm-range-of-inverse}
Let $T \in L(H_1,H_2)$ be a closed range operator. Then the following statements are true:
\begin{enumerate}
\item We have $\ran(T^+) = \ran(T^*) = \ran(T^+ T)$. In particular, the operators $T^+$ and $T^+ T$ have closed range.

\item We have $T^+ = (T^* T)^+ T^* = T^* (T T^*)^+$.
\end{enumerate}
\end{theorem}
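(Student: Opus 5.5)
The statement to prove is the last one above, Theorem \ref{thm-range-of-inverse}: for a closed range operator $T \in L(H_1,H_2)$, (1) $\ran(T^+) = \ran(T^*) = \ran(T^+T)$, and (2) $T^+ = (T^*T)^+T^* = T^*(TT^*)^+$. The plan is to use the characterization in Theorem \ref{thm-MP-characterizations}(ii) throughout: $T^+$ is the unique bounded operator that inverts $T$ on $\ker(T)^\perp$ and vanishes on $\ran(T)^\perp$. Since $T$ has closed range, Lemma \ref{lemma-closed-range-adjoint} shows $\ran(T^*)$ is closed. Lemma \ref{lemma-adjoint-ker-ran} then gives $\ker(T)^\perp = \overline{\ran(T^*)} = \ran(T^*)$.

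For (1), the restriction $T|_{\ker(T)^\perp} : \ker(T)^\perp \to \ran(T)$ is a bijection, and $T^+$ is its inverse on $\ran(T)$ and zero on $\ran(T)^\perp$. Hence $\ran(T^+) = \ker(T)^\perp = \ran(T^*)$. Moreover $T^+T$ is the orthogonal projection onto $\ker(T)^\perp$, because it is the identity there and kills $\ker(T)$. So $\ran(T^+T) = \ker(T)^\perp$ as well, and all three sets are closed.

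For (2), Proposition \ref{prop-closed-range-char} says $T^*T$ and $TT^*$ have closed range, so their pseudoinverses exist, and $\ran(T^*T) = \ran(T^*)$, $\ran(TT^*) = \ran(T)$. Set $S := (T^*T)^+T^*$ and check that $S$ satisfies (ii) of Theorem \ref{thm-MP-characterizations}.
\begin{itemize}
\item If $y \in \ran(T)^\perp = \ker(T^*)$, then $T^*y = 0$, so $Sy = 0$.
\item If $x \in \ker(T)^\perp$, then $STx = (T^*T)^+(T^*T)x$. Since $T^*T$ is self-adjoint with $\ker(T^*T) = \ker(T)$ by Lemma \ref{lemma-kernels}, we have $x \in \ker(T^*T)^\perp$. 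Applying (ii) to $T^*T$ gives $(T^*T)^+(T^*T)x = x$.
\end{itemize}
Thus $S = T^+$ by uniqueness.

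Similarly set $R := T^*(TT^*)^+$.
\begin{itemize}
\item For $y \in \ran(T)^\perp = \ran(TT^*)^\perp$, we get $(TT^*)^+y = 0$, hence $Ry = 0$.
\item For $x \in \ker(T)^\perp = \ran(T^*)$, write $x = T^*z$ with $z \in \ker(T^*)^\perp = \ker(TT^*)^\perp$. Then $RTx = T^*(TT^*)^+(TT^*)z = T^*z = x$.
\end{itemize}
So $R = T^+$ too.

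The only delicate point, hardly an obstacle, is keeping track of the closedness needed to identify $\ker(T)^\perp$ with $\ran(T^*)$, both exactly rather than up to closure. This is handled by Lemma \ref{lemma-closed-range-adjoint} together with Proposition \ref{prop-closed-range-char}.
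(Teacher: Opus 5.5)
Your argument is correct. The paper does not prove Theorem \ref{thm-range-of-inverse} itself; it imports the result from \cite[Thms. 2.1.2 and 2.1.5]{Groetsch}, so there is no in-paper argument to match.

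Your route is genuinely different. You derive both parts from characterization (ii) of Theorem \ref{thm-MP-characterizations}, namely that $T^+$ is the bounded operator inverting $T$ on $\ker(T)^{\perp}$ and vanishing on $\ran(T)^{\perp}$. You combine this with the paper's own closed-range facts: Lemmas \ref{lemma-closed-range-adjoint}, \ref{lemma-adjoint-ker-ran}, \ref{lemma-kernels} and Proposition \ref{prop-closed-range-char}.

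What this buys is self-containedness. The only external input left is the existence and characterization in Theorems \ref{thm-inverse-existence} and \ref{thm-MP-characterizations}. There is also no circularity: none of the results you invoke depends on Theorem \ref{thm-range-of-inverse}. Lemma \ref{lemma-inverse-rules} does depend on it, but you never use it. The citation, by contrast, simply buys brevity.

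Two small points to tighten:
\begin{itemize}
\item When you write $x = T^*z$ with $z \in \ker(T^*)^{\perp}$, say explicitly that any preimage can be replaced by its orthogonal projection onto $\ker(T^*)^{\perp}$ without changing $T^*z$.
\item Make ``by uniqueness'' explicit. Either use (ii) $\Rightarrow$ (i) in Theorem \ref{thm-MP-characterizations} together with Theorem \ref{thm-inverse-existence}, or argue directly: (ii) already pins down an operator on $\ran(T) \oplus \ran(T)^{\perp} = H_2$, since for $y = Tx$ with $x \in \ker(T)^{\perp}$ it forces the value $x$.
\end{itemize}
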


\begin{lemma}\label{lemma-inverse-adjoint}
Let $T \in L(H_1,H_2)$ be a closed range operator. Then $T^*$ is also a closed range operator, and we have $(T^*)^+ = (T^+)^*$.
\end{lemma}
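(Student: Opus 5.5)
The plan is to reduce the statement to the characterization in Theorem \ref{thm-MP-characterizations}(ii). There, $(T^*)^+$ is identified as the unique bounded operator acting as a left inverse of $T^*$ on $\ker(T^*)^{\perp}$ and vanishing on $\ran(T^*)^{\perp}$.

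First, Lemma \ref{lemma-closed-range-adjoint} gives that $T^*$ has closed range, so $(T^*)^+$ exists by Theorem \ref{thm-inverse-existence}. It then suffices to verify that $S := (T^+)^* \in L(H_1,H_2)$ satisfies the four Penrose identities of Theorem \ref{thm-inverse-existence} relative to $T^*$; uniqueness then yields $S = (T^*)^+$. This route is the most direct one, since each identity for $T^*$ is just the adjoint of an identity for $T$.

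Concretely, take adjoints of the four identities for $T^+$, using $(AB)^* = B^*A^*$ and $A^{**} = A$:
\begin{enumerate}
\item[(a)] $T^*S = T^*(T^+)^* = (T^+T)^*$. This equals $T^+T$ by property 2, so it is self-adjoint.
\item[(b)] $ST^* = (TT^+)^*$. This equals $TT^+$ by property 1, so it is self-adjoint.
\item[(c)] $T^*ST^* = (TT^+T)^* = T^*$.
\item[(d)] $ST^*S = (T^+TT^+)^* = (T^+)^* = S$.
\end{enumerate}
Together these are exactly properties 1–4 for the pair $(T^*, S)$, with the roles of the first two identities interchanged, so uniqueness gives the claim.

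There is no real obstacle in this argument. The one point needing care is that Theorem \ref{thm-inverse-existence} applies to $T^*$ only because $T^*$ has closed range, which is exactly what Lemma \ref{lemma-closed-range-adjoint} supplies. One should also note that $S$ is bounded, because $T^+$ is.
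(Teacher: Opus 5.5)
Your proof is correct and is essentially the paper's own argument: set $S := (T^+)^*$, get the closed range of $T^*$ from the closed range theorem, check the four Penrose identities for $(T^*,S)$ by taking adjoints of those for $(T,T^+)$, and conclude by the uniqueness in Theorem \ref{thm-inverse-existence}. The only blemish is expository: your opening announces a reduction to Theorem \ref{thm-MP-characterizations}(ii), which you never use, so that sentence can be dropped.
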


\begin{proof}
By Proposition \ref{prop-closed-range-char} the adjoint operator $T^*$ also has closed range. Setting $S := (T^+)^*$, we have $S^* = T^+$, and by Theorem \ref{thm-inverse-existence} we obtain
\begin{align*}
T^* S &= (T^+ T)^* = T^+ T = ( T^* S )^*,
\\ S T^* &= (T T^+)^* = T T^+ = (S T^*)^*,
\\ T^* S T^* &= (T T^+ T)^* = T^*,
\\ S T^* S &= (T^+ T T^+)^* = S.
\end{align*}
In view of Theorem \ref{thm-inverse-existence}, this completes the proof.
\end{proof}

\begin{lemma}\label{lemma-inverse-rules}
Let $T \in L(H)$ be a self-adjoint operator with closed range. Then the following statements are true:
\begin{enumerate}
\item $T^+ \in L(H)$ is also self-adjoint; that is $(T^+)^* = T^+$.

\item We have $T T^+ = T^+ T$.

\item More generally, we have $T^n T^+ = T^+ T^n$ for each $n \in \bbn$.

\item $T^2$ also has closed range, and we have $(T^2)^+ = (T^+)^2$.

\item We have $T^2 = T^2 (T^+)^2 T^2$.
\end{enumerate}
\end{lemma}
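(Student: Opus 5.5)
The statement is Lemma \ref{lemma-inverse-rules}. I would prove its five parts in order, using two tools. The first is the uniqueness characterization of the Moore--Penrose pseudoinverse in Theorem \ref{thm-inverse-existence}. The second is the projection identities in Theorem \ref{thm-MP-characterizations}.

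\emph{Part (1).} By Lemma \ref{lemma-inverse-adjoint} we have $(T^*)^+ = (T^+)^*$. Since $T = T^*$, the left-hand side is $T^+$, so $T^+ = (T^+)^*$.

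\emph{Part (2).} By Theorem \ref{thm-MP-characterizations}, $TT^+ = P_{\ran(T)}$ and $T^+T = P_{\ran(T^+)}$. By Theorem \ref{thm-range-of-inverse}, $\ran(T^+) = \ran(T^*) = \ran(T)$. Hence both products equal the orthogonal projection $P := P_{\ran(T)}$, and in particular $TT^+ = T^+T$.

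\emph{Part (3).} This follows from (2) by induction:
\[
T^{n+1}T^+ = T^n (T T^+) = T^n (T^+ T) = (T^n T^+) T = T^+ T^n T = T^+ T^{n+1}.
\]

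\emph{Part (4).} First, $T^2 = T^*T$ has closed range by Proposition \ref{prop-closed-range-char}. Then I would set $S := (T^+)^2$ and verify the four Penrose equations of Theorem \ref{thm-inverse-existence} for the pair $(T^2, S)$. The key identities are
\[
PT = TP = T, \qquad PT^+ = T^+P = T^+,
\]
where the second pair holds because $\ran(T^+) = \ran(T)$ and $T^+$ is self-adjoint, so $\ker(T^+) = \ran(T)^\perp$. Using (2) and (3), the products collapse:
\[
T^2S = T(TT^+)T^+ = TPT^+ = TT^+ = P,
\]
and similarly $ST^2 = P$. Both are self-adjoint, which gives the first two Penrose equations. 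For the remaining two,
\[
T^2 S T^2 = P T^2 = T^2, \qquad S T^2 S = P S = S,
\]
where the last step uses $PT^+ = T^+$. Uniqueness in Theorem \ref{thm-inverse-existence} then gives $(T^2)^+ = (T^+)^2$.

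\emph{Part (5).} By (4), the right-hand side is $T^2 (T^2)^+ T^2$, which equals $T^2$ by the third Penrose identity.

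The only step requiring care is (4): it needs the range identities $\ran(T^+) = \ran(T) = \ran(T^2)$, i.e.\ the projection form of (2), to collapse the products. Everything else is direct algebra.
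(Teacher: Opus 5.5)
Your proof is correct. Parts (1), (3) and (5) match the paper, but you take a different route in (2) and (4).

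For (2), the paper argues purely algebraically, $TT^+ = (TT^+)^* = (T^+)^*T^* = T^+T$. This uses only the first Penrose equation together with part (1). You instead identify both products with $P = P_{\ran(T)}$ via Theorem \ref{thm-MP-characterizations} and $\ran(T^+) = \ran(T^*) = \ran(T)$.

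For (4), the paper uses the factorization $T^+ = (T^*T)^+T^* = T^*(TT^*)^+$ from Theorem \ref{thm-range-of-inverse}. For self-adjoint $T$ this gives $T^+ = (T^2)^+T = T(T^2)^+$, and hence $(T^+)^2 = (T^2)^+T^2(T^2)^+ = (T^2)^+$ in one line. You instead verify the four Penrose equations for $(T^+)^2$ directly and invoke uniqueness from Theorem \ref{thm-inverse-existence}.

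The paper's version is shorter, but it rests on that factorization formula, which is imported from the literature rather than proved. Yours needs only the elementary identities $PT = TP = T$ and $PT^+ = T^+P = T^+$. Note that $TP = T$, which you state without comment, is just $TT^+T = T$ with $P = T^+T$. Your route also exhibits $T^2(T^+)^2 = (T^+)^2T^2 = P$ explicitly, so (5) is already contained in your check of the third Penrose equation.
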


\begin{proof}
The first statement is an immediate consequence of Lemma \ref{lemma-inverse-adjoint}. Furthermore, by Theorem \ref{thm-inverse-existence} we have
\begin{align*}
T T^+ = (T T^+)^* = (T^+)^* T^* = T^+ T.
\end{align*}
In order to prove the more general identity, we proceed by induction and suppose that $T^n T^+ = T^+ T^n$. By induction hypothesis we obtain
\begin{align*}
T^{n+1} T^+ = T T^n T^+ = T T^+ T^n = T^+ T T^n = T^+ T^{n+1}.
\end{align*}
Moreover, by Proposition \ref{prop-closed-range-char} the operator $T^2$ also has closed range, and by Theorem \ref{thm-range-of-inverse} and Theorem \ref{thm-inverse-existence} we have
\begin{align*}
(T^+)^2 = (T^2)^+ T \circ T (T^2)^+ = (T^2)^+ T^2 (T^2)^+ = (T^2)^+.
\end{align*}
Now, using Theorem \ref{thm-inverse-existence} again it follows that
\begin{align*}
T^2 = T^2 (T^2)^+ T^2 = T^2 (T^+)^2 T^2,
\end{align*}
completing the proof.
\end{proof}

For a closed subspace $U \subset H$ we denote by $P_U$ the orthogonal projection on $U$. Furthermore, for a linear operator $T \in L(H)$ we denote by $P_T$ the orthogonal projection on the closure of the range of $T$; that is $P_T := P_U$ with $U := \overline{\ran(T)}$.

\begin{lemma}\label{lemma-orth-proj-range}
Let $T \in L(H)$ be such that $T$, or equivalently $T T^*$, has closed range. Then we have $P_{T T^*} T = T$.
\end{lemma}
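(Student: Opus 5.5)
We need to prove Lemma \ref{lemma-orth-proj-range}, which claims that $P_{T T^*} T = T$ whenever $T$ (equivalently $T T^*$) has closed range. The plan is to identify the range of $T T^*$ with the range of $T$, and then use that an orthogonal projection acts as the identity on its own range.

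First, by Proposition \ref{prop-closed-range-char}, the closed range hypothesis on $T$ is equivalent to $T T^*$ having closed range. In that case the same proposition gives
\begin{align*}
\ran(T) = \ran(T T^*).
\end{align*}
Because $T T^*$ has closed range, $\overline{\ran(T T^*)} = \ran(T T^*)$. Hence $P_{T T^*}$, defined as the orthogonal projection onto the closure of $\ran(T T^*)$, is exactly the orthogonal projection onto $\ran(T)$.

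Next we fix an arbitrary $x \in H$. Then $Tx \in \ran(T)$, and $\ran(T)$ is the closed subspace onto which $P_{T T^*}$ projects. An orthogonal projection fixes every element of its target subspace, so $P_{T T^*} T x = T x$. Since $x$ was arbitrary, this gives $P_{T T^*} T = T$.

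The argument does not really need the closed range assumption. Lemma \ref{lemma-adjoint-ker-ran} and Lemma \ref{lemma-kernels} give
\begin{align*}
\overline{\ran(T)} = \ker(T^*)^{\perp} = \ker(T T^*)^{\perp} = \overline{\ran(T T^*)}
\end{align*}
for any bounded $T$, so the identity holds in general. We will mention this but present the closed range argument, since that is the setting of the statement.

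No step is a real obstacle. The only point needing care is matching the definitions: $P_{T T^*}$ projects onto the closure of the range, and closedness (or the kernel identity above) is what lets us replace that closure by $\ran(T)$.
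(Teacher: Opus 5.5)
Your proof is correct and follows the paper's argument: Proposition \ref{prop-closed-range-char} gives $\ran(T) = \ran(T T^*)$, so $P_{T T^*} = P_T$, and this projection fixes every $Tx$. Your side remark that closedness is unnecessary, via $\overline{\ran(T)} = \ker(T^*)^{\perp} = \ker(T T^*)^{\perp} = \overline{\ran(T T^*)}$, is also right and worth keeping, because the lemma is invoked in the proof of Proposition \ref{P:C=sigma2}, where no closed range assumption is in force.
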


\begin{proof}
By Proposition \ref{prop-closed-range-char} the operator $T$ has closed range if and only if $T T^*$ has closed range, and in this case we have $P_T = P_{T T^*}$. Since $P_T x = x$ for all $x \in \ran(T)$, the claimed identity $P_{T T^*} T = T$ follows.
\end{proof}

\begin{proposition}\label{prop-series-app}
Let $U \subset H$ be an open subset, and let $C : U \to L_1(H)$ and $T : U \to L(H)$ be mappings such that $C$ is of class $C^1$. Let $\{ e_j \}_{j \in \bbn}$ be an orthonormal basis of $H$. For each $j \in \bbn$ we denote by $C^j : H \to H$ and $T^j : H \to H$ the mappings given by
\begin{align*}
C^j(x) := C(x)e_j \quad \text{and} \quad T^j(x) := T(x)e_j \quad \text{for each $x \in U$.}
\end{align*}
Then for all $x \in U$ the series
\begin{align}\label{series-general-result}
\sum_{j=1}^{\infty} D C^j (x) T^j(x)
\end{align}
is weakly convergent, and for each $u \in H$ we have
\begin{align}\label{series-trace-general-result}
\sum_{j=1}^{\infty} \la u, D C^j(x) T^j(x) \ra = \tr \big( D C^*(x) T(x) u \big).
\end{align}
\end{proposition}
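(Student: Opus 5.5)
The plan is to reduce each term of \eqref{series-general-result} to a matrix coefficient of one fixed operator, so that the partial sums are partial traces. I read the right-hand side of \eqref{series-trace-general-result} the way it is used in Lemma \ref{lemma-series-1} and in the proof of Proposition \ref{P:C=sigma2}. That is, $DC^*(x)T(x)u$ denotes the operator
\begin{align*}
K_u := D(C^*u)(x)\,T(x), \qquad w \mapsto \big(DC^*(x)T(x)w\big)u,
\end{align*}
where $C^*u : U \to H$ is the map $y \mapsto C(y)^*u$.

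\textbf{Step 1 (reduction of the terms).} Write $C^j = \Phi_{e_j}\circ C$ with $\Phi_{e_j}(S)=Se_j$. Proposition \ref{prop-smooth-linear} then gives $DC^j(x)v=(DC(x)v)e_j$. By Lemma \ref{lemma-adjoint-isometry}, the adjoint map $S\mapsto S^*$ is a linear isometry of $L_1(H)$, so Proposition \ref{prop-smooth-linear} yields that $C^*:U\to L_1(H)$ is $C^1$ and that $(DC(x)v)^*=DC^*(x)v$. Applying Proposition \ref{prop-smooth-linear} once more, now with $\Phi_u$, gives $D(C^*u)(x)w=(DC^*(x)w)u$. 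Hence
\begin{align*}
\la u, DC^j(x)T^j(x)\ra = \la (DC(x)T(x)e_j)^*u, e_j\ra = \la K_u e_j, e_j\ra ,
\end{align*}
so the $n$-th partial sum equals $\tr(K_uP_n)$, where $P_n$ is the orthogonal projection onto $\lin\{e_1,\ldots,e_n\}$.

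\textbf{Step 2 (convergence and identification).} If $K_u\in L_1(H)$, then $\tr(K_uP_n)=\tr(P_nK_uP_n)\to\tr(K_u)$ as $n\to\infty$, because $P_n K_u P_n \to K_u$ in $L_1(H)$ and the trace is continuous on $L_1(H)$ by Lemma \ref{lemma-trace-functional}. This gives the claimed value of the series for each $u$. For weak convergence in the sense of Definition \ref{D:weakconvseries}, note that $u\mapsto K_u$ is linear. By Remark \ref{rem-weak-conv-ser} it then suffices to show that $u\mapsto \tr(K_u)$ is bounded, i.e.\ that $\|K_u\|_{L_1(H)}\le c\|u\|$ with $c$ independent of $u$. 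The natural route is $\|K_u\|_{L_1}\le \|D(C^*u)(x)\|_{L_1}\,\|T(x)\|$, using Lemma \ref{lemma-nuc-HS-inequalities}. This requires a bound $\|D(C^*u)(x)\|_{L_1}\le c\|u\|$, which would in turn follow from a uniform nuclear bound on the operators $w\mapsto (DC^*(x)w)u$.

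\textbf{Main obstacle.} The whole argument rests on showing that $D(C^*u)(x)$ is nuclear (or at least that $K_u$ is), with norm controlled by $\|u\|$. The hypothesis only says that $A:=DC^*(x)\in L(H,L_1(H))$. For rank-one-type examples $A(v)=\la v,a\ra S$ the operator $w\mapsto A(w)u$ has rank one, with nuclear norm $\|a\|\,\|S^*u\|$, so the bound holds there. I would first try to prove nuclearity in general via the duality $\la A(w)e_j,u\ra=\tr(R_jA(w))$ with $R_jz:=\la z,u\ra e_j$, i.e.\ via the Banach adjoint $A':L(H)\to H$. If that does not work, I would establish the identity for finite-rank $T(x)$, where every sum is finite, and then extend by approximating $T(x)$ by $T(x)P_n$.

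I expect this step to be genuinely delicate. If $w\mapsto A(w)u$ fails to be nuclear for some bounded $A$ of this form, then the statement can only hold with additional structure. One such structure is the one available in every application in the paper: there $T(x)$ is a finite-rank projection $P_C(x)$ (Assumption \ref{ass-closed-range}), and $K_u$ is then automatically finite rank.
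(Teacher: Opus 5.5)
Your Step 1 is exactly the chain of identities in the paper's proof, and you read the right-hand side of \eqref{series-trace-general-result} correctly as $\tr(K_u)$ with $K_u = D(C^*u)(x)T(x)$, as in Lemma~\ref{lemma-series-1}. The step you flag as the main obstacle is not merely delicate, however. It fails, and with it the statement as formulated (arbitrary $T(x)\in L(H)$, $\dim H=\infty$). So neither the duality route nor the finite-rank approximation you sketch can succeed. Fix $u_0\in H\setminus\{0\}$ and set $C(y)z := \la z,y\ra u_0$. This is a bounded linear map $H\to L_1(H)$ with $\|C(y)\|_{L_1(H)}=\|y\|\,\|u_0\|$, hence of class $C^\infty$ with $DC(x)=C$. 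Take $T\equiv\Id$. Then
\[
DC^j(x)T^j(x) = C(e_j)e_j = \la e_j,e_j\ra u_0 = u_0 \quad \text{for all } j\in\bbn ,
\]
so $\sum_j \la u, DC^j(x)T^j(x)\ra=\sum_j\la u,u_0\ra$ diverges whenever $\la u,u_0\ra\neq0$. Correspondingly, $K_u=\la u,u_0\ra\,\Id\notin L_1(H)$. The symmetrised choice $C(y)z=\la z,y\ra u_0+\la z,u_0\ra y$ shows that self-adjointness of $C(y)$ does not help.

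The paper's proof has the same hole, hidden by notation. It writes $\sum_j\la D(C^*u)(x)T(x)e_j,e_j\ra=\tr\big(D(C^*u)(x)T(x)\big)$ without showing that this operator is nuclear. It then obtains weak convergence from the continuity of $u\mapsto\tr(\Phi u)$ for $\Phi=DC^*(x)T(x)\in L(H,L_1(H))$. But that functional is $u\mapsto\tr\big(DC^*(x)[T(x)u]\big)$, which differs from $\tr(K_u)$. In the example above it equals $\la u,u_0\ra$ while the series diverges. Even when both are finite they disagree: for $C(y)=\la y,e_1\ra P$, with $P$ the projection onto $\lin\{e_1,e_2\}$, and $T=\Id$, the functional gives $2\la u,e_1\ra$ while the series equals $\la u,e_1\ra$.

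Your fallback is the right repair. If $T(x)\in L_1(H)$, then $\|D(C^*u)(x)\|_{L(H)}\le\|DC(x)\|_{L(H,L_1(H))}\|u\|$. Lemma~\ref{lemma-nuc-HS-inequalities} then gives $\|K_u\|_{L_1(H)}\le\|DC(x)\|_{L(H,L_1(H))}\|T(x)\|_{L_1(H)}\|u\|$, and your Step 2 yields both the identity and weak convergence. This covers Lemma~\ref{lemma-series-1}, where $T(x)=P_C(x)$ has finite rank by Assumption~\ref{ass-closed-range}.

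It does not cover every use in the paper, contrary to your remark. Proposition~\ref{P:C=sigma2}(3) applies the result with $T=P_C$ and $C=\Sigma\Sigma^*$ without any closed-range assumption, and there the conclusion fails too. For instance, $\Sigma(y)z=\Sigma_0z+\la z,y\ra u_0$ with $\Sigma_0e_j=e_j/j$ gives $P_C(0)=\Id$, and the series picks up the divergent term $\la u,u_0\ra\sum_j 1/j$.
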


\begin{proof}
For any mapping $S : U \to L(H)$ and any $y \in H$ we denote by $Sy : U \to H$ the mapping $z \mapsto S(z)y$. Let $x \in U$ be arbitrary. Using Proposition \ref{prop-smooth-linear}, for each $u \in H$ we obtain
\begin{align*}
&\sum_{j=1}^{\infty} \la u, D C^j(x) T^j(x) \ra = \sum_{j=1}^{\infty} \la u, D C(x) T^j(x) e_j \ra = \sum_{j=1}^{\infty} \la (D C(x) T^j(x))^* u, e_j \ra
\\ &= \sum_{j=1}^{\infty} \la D C^*(x) T^j(x) u, e_j \ra = \sum_{j=1}^{\infty} \la D (C^* u)(x) T^j(x), e_j \ra
\\ &= \sum_{j=1}^{\infty} \la D (C^* u)(x) T(x) e_j, e_j \ra = \tr \big( D (C^* u)(x) T(x) \big) = \tr \big( D C^*(x) T(x) u \big),
\end{align*}
showing \eqref{series-trace-general-result}. Moreover, noting that due to Lemma \ref{lemma-trace-functional} the mapping
\begin{align*}
H \to \bbr, \quad u \mapsto \tr ( \Phi u )
\end{align*}
is a continuous linear functional for every $\Phi \in L(H,L_1(H))$, by the Fr\'{e}chet-Riesz theorem it follows that the series \eqref{series-general-result} is weakly convergent.
\end{proof}

Now, we provide the required results about continuous linear functionals on Hilbert spaces. By the Riesz representation theorem we have the following well-known result.

\begin{lemma}\label{lemma-Riesz-diff-1}
The mapping $H \to L(H,\bbr)$ given by $x \mapsto \la x,\cdot \ra$ is an isometric isomorphism.
\end{lemma}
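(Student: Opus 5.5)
The statement to prove is Lemma~\ref{lemma-Riesz-diff-1}: the map $J : H \to L(H,\bbr)$, $J(x) := \la x,\cdot \ra$, is an isometric isomorphism. I will check four things in turn: that $J$ is well defined and linear, that it is norm preserving, that it is injective, and that it is surjective.

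\emph{Well-definedness and linearity.} For fixed $x$, the functional $\la x,\cdot\ra$ is linear. By Cauchy--Schwarz, $|\la x,y\ra| \le \|x\|\,\|y\|$, so it is bounded and $\|J(x)\|_{L(H,\bbr)} \le \|x\|$. Linearity of $J$ in $x$ follows from bilinearity of the (real) inner product.

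\emph{Isometry and injectivity.} For $x \neq 0$, evaluate $J(x)$ at $y = x/\|x\|$ to get the value $\|x\|$. This gives $\|J(x)\| \ge \|x\|$, so $J$ is isometric. An isometric linear map is injective, so $J$ is injective.

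\emph{Surjectivity.} This is the only step with any content; it is the Fréchet--Riesz theorem, which is what the paper cites. Let $\ell \in L(H,\bbr)$ with $\ell \neq 0$ (the case $\ell = 0$ is $J(0)$).
\begin{itemize}
\item[1.] The kernel $N := \ker \ell$ is a closed proper subspace, so by the projection theorem $N^\perp \neq \{0\}$.
\item[2.] Choose $z \in N^\perp$ with $\ell(z) = 1$.
\item[3.] For any $y \in H$, the vector $y - \ell(y) z$ lies in $N$, hence is orthogonal to $z$. Expanding $\la z,\, y - \ell(y) z\ra = 0$ gives $\la z,y\ra = \ell(y)\|z\|^2$.
\item[4.] Set $x := z/\|z\|^2$. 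Then $J(x) = \ell$.
\end{itemize}

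Together these show that $J$ is a bijective linear isometry, hence an isometric isomorphism.
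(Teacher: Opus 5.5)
Your proof is correct and follows the paper's approach: the paper simply invokes the Riesz representation theorem, and you have written out its standard proof, using Cauchy--Schwarz plus evaluation at $x/\|x\|$ for the isometry and the kernel/orthogonal-complement argument for surjectivity. The one step you leave implicit, that a nonzero $w \in N^\perp$ has $\ell(w) \neq 0$ and so can be rescaled to some $z$ with $\ell(z)=1$, is immediate because $N \cap N^\perp = \{0\}$.
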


Let $S(H) \subset L(H)$ be the subspace of all self-adjoint operators $T \in L(H)$, and let $L_s^{(2)}(H,\bbr) \subset L^{(2)}(H,\bbr)$ be the subspace of all symmetric bilinear operators $B \in L^{(2)}(H,\bbr)$. By the Lax-Milgram theorem and the open mapping theorem we have the following result.

\begin{lemma}\label{lemma-Lax-Milgram}
The mapping $\Psi : L(H) \to L^{(2)}(H,\bbr)$ given by
\begin{align*}
(\Psi T)(x,y) = \la Tx,y \ra, \quad T \in L(H)
\end{align*}
is a linear isomorphism such that $\Psi(S(H)) = L_s^{(2)}(H,\bbr)$.
\end{lemma}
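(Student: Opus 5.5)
The identity map $T \mapsto \Psi T$ is clearly linear, so I would prove three things in turn: $\Psi T$ is a continuous bilinear form, $\Psi$ is bounded and injective, and $\Psi$ is surjective with continuous inverse. The last step is the only one that needs a real argument; everything else is bookkeeping.

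\emph{Step 1: $\Psi$ is well defined, bounded and injective.} For $T \in L(H)$ the map $(x,y) \mapsto \la Tx,y \ra$ is bilinear. By Cauchy--Schwarz,
\begin{align*}
|(\Psi T)(x,y)| \leq \| T \| \, \| x \| \, \| y \|,
\end{align*}
so $\Psi T \in L^{(2)}(H,\bbr)$ and $\| \Psi T \| \leq \| T \|$. In fact, taking $y = Tx$ gives equality of norms. If $\Psi T = 0$, then choosing $y = Tx$ yields $\| Tx \|^2 = 0$ for every $x$, so $T = 0$.

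\emph{Step 2: $\Psi$ is surjective.} Fix $B \in L^{(2)}(H,\bbr)$ and $x \in H$. The map $y \mapsto B(x,y)$ is a continuous linear functional with norm at most $\| B \| \, \| x \|$. By Lemma \ref{lemma-Riesz-diff-1} there is a unique $Tx \in H$ with
\begin{align*}
B(x,y) = \la Tx, y \ra \quad \text{for all } y \in H, \qquad \| Tx \| \leq \| B \| \, \| x \|.
\end{align*}
Uniqueness gives linearity of $x \mapsto Tx$, and the norm estimate shows $T \in L(H)$. By construction $\Psi T = B$. This Riesz/Lax--Milgram-type representation is the main step. Since $\Psi$ is then a bounded bijection between Banach spaces, the open mapping theorem gives continuity of $\Psi^{-1}$; alternatively, the isometry from Step 1 gives it directly.

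\emph{Step 3: $\Psi(S(H)) = L_s^{(2)}(H,\bbr)$.} We have $T = T^*$ if and only if $\la Tx,y \ra = \la x,Ty \ra = \la Ty,x \ra$ for all $x,y$, that is, if and only if $\Psi T$ is symmetric. Together with bijectivity from Steps 1 and 2, this shows that $\Psi$ maps $S(H)$ onto $L_s^{(2)}(H,\bbr)$.
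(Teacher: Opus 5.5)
Your proposal is correct and follows essentially the paper's route. The paper simply invokes the Lax--Milgram theorem, whose representation part is exactly the Riesz argument you carry out in Step 2, together with the open mapping theorem; your observation that $\| \Psi T \| = \| T \|$ makes the open mapping theorem superfluous, and your Step 3 spells out the symmetric/self-adjoint correspondence that the paper leaves implicit.
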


\begin{remark}\label{remark-derivatives}
Let $U \subset H$ be an open subset, and let $\phi : U \to \bbr$ be a mapping. We fix an arbitrary element $x \in U$.
\begin{enumerate}
\item Suppose that $\phi$ is of class $C^1$. Then we have $D \phi(x) \in L(H,\bbr)$. By Lemma \ref{lemma-Riesz-diff-1} we may regard the first order derivative as an element from $H$, which justifies the notation
\begin{align*}
D \phi(x)v = \la D \phi(x),v \ra, \quad  v \in H.
\end{align*}
\item Suppose that $\phi$ is even of class $C^2$. Then we have $D^2 \phi(x) \in L_s^{(2)}(H,\bbr)$; see, e.g. \cite[Prop. 2.4.14]{Abraham}. By Lemma \ref{lemma-Lax-Milgram} we may regard the second order derivative as an operator from $S(H)$, which justifies the notation
\begin{align*}
D^2 \phi(x)(v,w) = \la D^2 \phi(x)v,w \ra, \quad v,w \in H.
\end{align*}
\end{enumerate}
\end{remark}

Combining Lemma \ref{lemma-Riesz-diff-1} and Parseval's identity we obtain the following result.

\begin{lemma}\label{lemma-Riesz-Parseval}
We have $H \cong L(H,\bbr) = L_2(H,\bbr)$, and
\begin{align*}
\| x \| = \| \la x,\cdot \ra \|_{L(H,\bbr)} = \| \la x,\cdot \ra \|_{L_2(H,\bbr)} \quad \text{for all $x \in H$.}
\end{align*}
\end{lemma}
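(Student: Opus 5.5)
This is a direct consequence of the Riesz representation theorem combined with Parseval's identity. Lemma \ref{lemma-Riesz-diff-1} already gives the isometric isomorphism $H \to L(H,\bbr)$, $x \mapsto \la x,\cdot \ra$, with operator norm equal to $\|x\|$. So two things remain to check. First, every bounded linear functional on $H$ is automatically Hilbert--Schmidt, which gives the equality of sets $L(H,\bbr) = L_2(H,\bbr)$. Second, the Hilbert--Schmidt norm of $\la x,\cdot \ra$ coincides with its operator norm.

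For the norm identity, fix an orthonormal basis $\{e_j\}_{j \in \bbn}$ of $H$ and compute directly from the definition of the Hilbert--Schmidt norm:
\begin{align*}
\| \la x,\cdot \ra \|_{L_2(H,\bbr)}^2 = \sum_{j=1}^{\infty} | \la x,e_j \ra |^2 = \| x \|^2 .
\end{align*}
The last equality is Parseval's identity. Since the Hilbert--Schmidt norm does not depend on the choice of orthonormal basis, this is well defined. Combined with $\| \la x,\cdot \ra \|_{L(H,\bbr)} = \|x\|$, which follows from Cauchy--Schwarz with equality at $v = x/\|x\|$ (or from Lemma \ref{lemma-Riesz-diff-1}), this yields the displayed chain of equalities.

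For the set equality, the inclusion $L_2(H,\bbr) \subset L(H,\bbr)$ is trivial, since Hilbert--Schmidt operators are bounded. Conversely, any $\ell \in L(H,\bbr)$ equals $\la x,\cdot \ra$ for some $x$ by Riesz, and the computation above shows that it has finite Hilbert--Schmidt norm $\|x\|$. So $\ell \in L_2(H,\bbr)$, and the identification $H \cong L_2(H,\bbr)$ is again an isometric isomorphism.

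There is no real obstacle here. The only point needing care is to note that the Hilbert--Schmidt norm is basis independent, so that the Parseval computation with one fixed basis suffices.
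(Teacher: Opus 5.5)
Your proposal is correct and matches the paper, which justifies this lemma in one line by combining the Riesz representation (Lemma \ref{lemma-Riesz-diff-1}) with Parseval's identity, exactly as you do. Your explicit remarks on the set equality $L(H,\bbr) = L_2(H,\bbr)$ and on the basis independence of the Hilbert--Schmidt norm spell out details that the paper leaves implicit.
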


As an immediate consequence we obtain:

\begin{proposition}\label{prop-Riesz-Parseval-1}
We have $H \cong L_2(H,\bbr) \cong \ell^2(\bbn)$, and the mapping
\begin{align*}
H \mapsto \ell^2(\bbn), \quad x \mapsto \big( \la x,e_j \ra \big)_{j \in \bbn}
\end{align*}
is an isometric isomorphism, where $\{ e_j \}_{j \in \bbn}$ denotes any orthonormal basis of $H$.
\end{proposition}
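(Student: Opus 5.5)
The statement to prove is the last one displayed, Proposition \ref{prop-Riesz-Parseval-1}: $H \cong L_2(H,\bbr) \cong \ell^2(\bbn)$, and the coordinate map $x \mapsto (\la x,e_j \ra)_{j \in \bbn}$ is an isometric isomorphism. The plan is to combine the identification from Lemma \ref{lemma-Riesz-Parseval} with the standard Parseval theory for orthonormal bases. No new construction is needed.

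First, Lemma \ref{lemma-Riesz-Parseval} gives $H \cong L_2(H,\bbr)$ isometrically via $x \mapsto \la x,\cdot \ra$. It therefore suffices to show that $J : H \to \ell^2(\bbn)$, $Jx := (\la x,e_j \ra)_{j \in \bbn}$, is an isometric isomorphism. The composite identification $L_2(H,\bbr) \cong \ell^2(\bbn)$ then follows. Concretely, it is $\ell \mapsto (\ell(e_j))_{j}$, and its norm agrees with the Hilbert--Schmidt norm computed in the basis $\{ e_j \}$.

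The key steps, in order, are as follows.

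\emph{Well-defined and isometric.} Linearity of $J$ is clear. Parseval's identity for the orthonormal basis $\{ e_j \}_{j \in \bbn}$ gives $\sum_j |\la x,e_j \ra|^2 = \| x \|^2$. Hence $Jx \in \ell^2(\bbn)$ and $\| Jx \|_{\ell^2} = \| x \|$, so $J$ is an isometry and in particular injective.

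\emph{Surjective.} Take $(a_j) \in \ell^2(\bbn)$. The partial sums $\sum_{j \le n} a_j e_j$ form a Cauchy sequence in $H$, since by orthonormality $\| \sum_{j=m}^n a_j e_j \|^2 = \sum_{j=m}^n |a_j|^2$. By completeness they converge to some $x \in H$. Continuity of the inner product then gives $\la x,e_k \ra = a_k$ for every $k$, so $Jx = (a_j)$.

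The only point needing care is surjectivity, which rests on completeness of $H$. Everything else is the Pythagorean/Parseval identity, so I expect no real obstacle; the argument is routine.
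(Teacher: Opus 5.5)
Your proposal is correct and follows essentially the same route as the paper. The paper states this proposition as an immediate consequence of Lemma \ref{lemma-Riesz-Parseval} combined with Parseval's identity; you just write out the isometry and the completeness-based (Riesz--Fischer) surjectivity step that the paper leaves implicit.
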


Furthermore, the following result holds true.

\begin{proposition}\label{prop-Riesz-Parseval-2}
The following statements are true:
\begin{enumerate}
\item We have $L_2(H) \cong L_2(H,L_2(H,\bbr)) \cong \ell^2(\bbn \times \bbn)$, and the mapping
\begin{align}\label{double-sequence}
L_2(H) \to \ell^2(\bbn \times \bbn), \quad T \mapsto \big( \la T e_i,e_j \ra \big)_{i,j \in \bbn}
\end{align}
is an isometric isomorphism, where $\{ e_j \}_{j \in \bbn}$ denotes any orthonormal basis of $H$.

\item An operator $T \in L_2(H)$ is self-adjoint if and only if the associated double sequence in \eqref{double-sequence} is symmetric.
\end{enumerate}
\end{proposition}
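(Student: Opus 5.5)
The plan is to prove both statements by expanding everything in the orthonormal basis $\{ e_j \}_{j \in \bbn}$ and using Parseval's identity (Proposition \ref{prop-Riesz-Parseval-1}).

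\emph{Isometry.} For $T \in L_2(H)$ I would write $a_{ij} := \la T e_i, e_j \ra$. By the definition of the Hilbert-Schmidt norm and Parseval's identity applied to each vector $T e_i$,
\begin{align*}
\| T \|_{L_2(H)}^2 = \sum_{i=1}^{\infty} \| T e_i \|^2 = \sum_{i=1}^{\infty} \sum_{j=1}^{\infty} | \la T e_i, e_j \ra |^2 = \sum_{i,j=1}^{\infty} |a_{ij}|^2 .
\end{align*}
Hence the map \eqref{double-sequence} is well defined into $\ell^2(\bbn \times \bbn)$, it is clearly linear, and it is isometric, so in particular injective.

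\emph{Surjectivity.} This is the only step needing a construction. Given $(a_{ij}) \in \ell^2(\bbn \times \bbn)$, I would first set $y_i := \sum_j a_{ij} e_j$, which converges in $H$ because each row is square-summable. Then I would define
\begin{align*}
T x := \sum_{i=1}^{\infty} \la x, e_i \ra y_i , \quad x \in H.
\end{align*}
This series converges absolutely, since by Cauchy-Schwarz
\begin{align*}
\sum_{i=1}^{\infty} | \la x, e_i \ra | \, \| y_i \| \leq \| x \| \Big( \sum_{i=1}^{\infty} \| y_i \|^2 \Big)^{1/2},
\end{align*}
and $\sum_i \| y_i \|^2 = \sum_{i,j} |a_{ij}|^2 < \infty$. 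The same estimate gives boundedness of $T$. Moreover $T e_i = y_i$, so $\sum_i \| T e_i \|^2 < \infty$, which shows $T \in L_2(H)$, and $T$ has matrix $(a_{ij})$.

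\emph{The middle identification.} To get $L_2(H) \cong L_2(H, L_2(H,\bbr))$, I would map $T$ to $x \mapsto \la T x, \cdot \ra$. By Lemma \ref{lemma-Riesz-Parseval}, $\| \la T e_i, \cdot \ra \|_{L_2(H,\bbr)} = \| T e_i \|$, so summing over $i$ shows the Hilbert-Schmidt norms coincide. Surjectivity follows from the Riesz isomorphism of Lemma \ref{lemma-Riesz-diff-1} applied pointwise.

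\emph{Self-adjointness.} For $T \in L_2(H)$ we have $T^* \in L_2(H)$ by Lemma \ref{lemma-adjoint-isometry}, and its matrix entries are
\begin{align*}
\la T^* e_i, e_j \ra = \la e_i, T e_j \ra = a_{ji} ,
\end{align*}
i.e. the transposed double sequence. Since the map \eqref{double-sequence} is injective, $T = T^*$ holds if and only if $a_{ij} = a_{ji}$ for all $i,j \in \bbn$.

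I do not expect a genuine obstacle; the only care needed is the convergence in the surjectivity step, which the Cauchy-Schwarz estimate above settles.
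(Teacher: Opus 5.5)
Your proof is correct and follows essentially the paper's route: Parseval's identity for the isometry, the pointwise Riesz map for $L_2(H) \cong L_2(H,L_2(H,\bbr))$, and matrix entries for self-adjointness. The only differences are that you explicitly verify surjectivity onto $\ell^2(\bbn \times \bbn)$, which the paper asserts without proof, and that in part (2) you combine $\la T^* e_i, e_j \ra = \la T e_j, e_i \ra$ with injectivity of the isometry, instead of the paper's direct double-series expansion of $\la Tx, y \ra$.
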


\begin{proof}
The linear operator $\Phi : L_2(H) \to L_2(H,L_2(H,\bbr))$ given by
\begin{align*}
(\Phi T)x = \la Tx,\cdot \ra, \quad T \in L_2(H)
\end{align*}
is an isometric isomorphism. Indeed, let $T \in L_2(H)$ be arbitrary. By Lemma \ref{lemma-Riesz-Parseval} we have
\begin{align*}
\| \Phi T \|_{L_2(H,L_2(H,\bbr))}^2 &= \sum_{i=1}^{\infty} \| (\Phi T)e_i \|_{L_2(H,\bbr)}^2 = \sum_{i=1}^{\infty} \| \la T e_i, \cdot \ra \|_{L_2(H,\bbr)}^2
\\ &= \sum_{i=1}^{\infty} \| T e_i \|^2 = \| T \|_{L_2(H)}^2.
\end{align*}
Moreover, we have
\begin{align*}
\sum_{i=1}^{\infty} \| \la T e_i, \cdot \ra \|_{L_2(H,\bbr)}^2 = \sum_{i=1}^{\infty} \sum_{j=1}^{\infty} |\la T e_i, e_j \ra|^2,
\end{align*}
proving the first statement. For the second statement, it suffices to show that every linear operator $T \in L(H)$ such that $\la T e_i,e_j \ra = \la T e_j, e_i \ra$ for all $i,j \in \bbn$ is self-adjoint. Indeed, for all $x,y \in H$ we have
\begin{align*}
\la Tx,y \ra &= \bigg\la T \bigg( \sum_{i=1}^{\infty} \la x, e_i \ra e_i \bigg), \sum_{j=1}^{\infty} \la y, e_j \ra e_j \bigg\ra
\\ &= \sum_{i=1}^{\infty} \sum_{j=1}^{\infty} \la x, e_i \ra \la y, e_j \ra \la T e_i,e_j \ra = \sum_{i=1}^{\infty} \sum_{j=1}^{\infty} \la x, e_i \ra \la y, e_j \ra \la T e_j,e_i \ra
\\ &= \bigg\la T \bigg( \sum_{j=1}^{\infty} \la y, e_j \ra e_j \bigg), \sum_{i=1}^{\infty} \la x, e_i \ra e_i \bigg\ra = \la Ty,x \ra,
\end{align*}
completing the proof.
\end{proof}

\end{appendix}

\end{document}